\title{Spherical twists and Lagrangian spherical manifolds}
\newtheorem{thm}{Theorem}[section]
\newtheorem{prop}[thm]{Proposition}
\newtheorem{lemma}[thm]{Lemma}
\newtheorem{rmk}[thm]{Remark}
\newtheorem{question}[thm]{Question}
\newtheorem{defn}[thm]{Definition}
\newtheorem{corr}[thm]{Corollary}
\newtheorem{convention}[thm]{Convention}
\theoremstyle{definition}
\newtheorem{defnlemma}[thm]{Definition-Lemma}
\newtheorem{example}[thm]{Example}
\newtheorem{assumption}[thm]{Assumption}
\DeclareMathOperator{\perf}{perf}
\DeclareMathOperator{\cchar}{char}
\DeclareMathOperator{\rank}{rank}
\DeclareMathOperator{\sign}{sign}
\DeclareMathOperator{\ind}{ind}
\DeclareMathOperator{\virdim}{virdim}
\DeclareMathOperator{\mbb}{mb}
\DeclareMathOperator{\critp}{critp}
\newcommand{\wt}[1]{\widetilde{#1}}
\newcommand{\ov}[1]{\overline{#1}}
\newcommand{\bdf}{\begin{defn}}
\newcommand{\edf}{\end{defn}}
\newcommand{\bthm}{\begin{thm}}
\newcommand{\ethm}{\end{thm}}
\newcommand{\blem}{\begin{lemma}}
\newcommand{\elem}{\end{lemma}}
\newcommand{\bcor}{\begin{corr}}
\newcommand{\ecor}{\end{corr}}
\newcommand{\bprop}{\begin{prop}}
\newcommand{\eprop}{\end{prop}}
\newcommand{\brmk}{\begin{rmk}}
\newcommand{\ermk}{\end{rmk}}
\newcommand{\bpf}{\begin{proof}}
\newcommand{\epf}{\end{proof}}
\newcommand{\bex}{\begin{example}}
\newcommand{\eex}{\end{example}}
\newcommand{\etalchar}[1]{$^{#1}$}
\numberwithin{equation}{section}
\def\eD{\EuScript{D}}
\def\eE{\EuScript{E}}
\def\eF{\EuScript{F}}
\def\eK{\EuScript{K}}
\def\eL{\EuScript{L}}
\def\eM{\EuScript{M}}
\def\eP{\EuScript{P}}
\def\eT{\EuScript{T}}
\def\C{\mathbb{C}}
\def\bH{\mathbb{H}}
\def\K{\mathbb{K}}
\def\N{\mathbb{N}}
\def\bP{\mathbb{P}}
\def\R{\mathbb{R}}
\def\Z{\mathbb{Z}}
\def\RP{\mathbb{RP}}
\def\bP{\mathbb{P}}
\def\fE{\mathbf{E}}
\def\fJ{\mathbf{J}}
\def\fP{\mathbf{P}}
\def\fU{\mathbf{U}}
\def\fL{\mathbf{L}}
\def\fq{\mathbf{q}}
\def\fp{\mathbf{p}}
\def\fc{\mathbf{c}}
\def\fu{\mathbf{u}}
\def\fe{\mathbf{e}}
\def\ff{\mathbf{f}}
\def\fx{\mathbf{x}}
\def\fY{\mathbf{Y}}
\def\cA{\mathcal{A}}
\def\cB{\mathcal{B}}
\def\cC{\mathcal{C}}
\def\cD{\mathcal{D}}
\def\cE{\mathcal{E}}
\def\cF{\mathcal{F}}
\def\cG{\mathcal{G}}
\def\cH{\mathcal{H}}
\def\cJ{\mathcal{J}}
\def\cL{\mathcal{L}}
\def\cM{\mathcal{M}}
\def\cQ{\mathcal{Q}}
\def\cR{\mathcal{R}}
\def\cS{\mathcal{S}}
\def\cT{\mathcal{T}}
\def\cX{\mathcal{X}}
\def\cY{\mathcal{Y}}
\def\ov{\overline}
\def\w{\omega}
\def\xkm2{\overline{X}_{k-2}}
\begin{document}

\author{Cheuk Yu Mak and Weiwei Wu}

\AtEndDocument{\bigskip{\footnotesize
  \textsc{Cheuk Yu Mak, DPMMS, University of Cambridge, Cambridge, UK} \par
  \textit{E-mail address}: \texttt{cym22@dpmms.cam.ac.uk} \par

}\bigskip{\footnotesize
  \textsc{Weiwei Wu, Department of Mathematics,
      University of Georgia,
      Athens, Georgia 30602} \par
  \textit{E-mail address}: \texttt{weiwei.wu@math.uga.edu} \par

}}

\date{\today}
\maketitle

\begin{abstract}

We study Dehn twists along Lagrangian submanifolds that are finite quotients of spheres.
We decribe the induced auto-equivalences to the derived Fukaya category and explain its relation to twists along spherical functors.

\end{abstract}

\tableofcontents

\section{Introduction}

One of the most groundbreaking work on symplectomorphisms goes back to Seidel's thesis on Dehn twist along Lagrangian spheres 
and his study on the induced auto-equivalence to the derived Fukaya category
\cite{Se03}, \cite{Seidelbook}.
As predicted via homological mirror symmetry, this auto-equivalence induces non-geometric symmetries in the derived category of coherent sheaves on the mirror \cite{ST01}.
It turns out that this auto-equivalence, called a spherical twist, can be described purely categorically 
and 
there are a lot of generalizations of spherical twists and spherical objects, including $\mathbb{P}$-twist \cite{HT06}, family twist \cite{Horja05}, etc.

Many of these examples are motivated by the corresponding symplectomorphisms associated to Lagrangian objects--more precisely, the speculation on the effects of the symplectimorphisms 
in Floer theory provides a way to construct new auto-equivalences in derived categories on the mirrors.  
The nature of Lagrangian Dehn twists allows various generalizations of this kind, for example, to submanifolds whose geodesics are all closed with a same period.  However, most of such 
auto-equivalences are only conjecturally 
related to their symplectomorphism counterparts.  In \cite{MW15}, the authors verified that
 Dehn twists along Lagrangian submanifolds that are diffeomorphic to projective spaces match the mapping cone operation
 predicted by the $\mathbb{P}$-twist on the Fukaya category, as well as its family version, as conjectured in \cite{HT06}.  

In this paper, we investigate a new type of Dehn twist and their twist auto-equivalences.

\begin{question}
 What is the induced auto-equivalence of Dehn twist along a a spherical Lagrangian, i.e. a Lagrangian submanifold $P$ whose universal cover is $S^n$?
\end{question}

A particularly interesting feature of these twist auto-equivalences, 
which distinguishes this question from all previous twist auto-equivalences, is its sensitivity to the characteristic of the ground field.

Consider the basic example of $P=\RP^n$.  In characteristic zero, $P$ is a spherical object instead of a $\mathbb{P}$-object in the Fukaya category.  
In Corollary \ref{c:PtwistSStwist} we showed that the induced auto-equivalence is a composition of two spherical twists.  
However, when $char=2$, 
the auto-equivalence becomes a $\bP$-twist as defined in \cite{HT06}.  
Indeed, given any spherical Lagrangian, its twist auto-equivalence decomposes into a composition of spherical twists 
in characteristic zero,
but when one considers ground field of non-zero characteristics, such twists yield an entire family of new auto-equivalences.
The main novelty here is that,
even though it is rare in the algebraic geometry literature to study how twist auto-equivalences change under a change of characteristic,
it is a natural thing to do in symplectic geometry because all these auto-equivalences
are connected to geometry: they come from symplectimorphisms.
In particular, via mirror symmetry, it should shed light on new discovery of  twist auto-equivalences in algebraic/arithmetic geometry.

To explain our result, let $\mathbb{K}$ be a field of any characteristic and $\Gamma \subset SO(n+1)$ be a finite subgroup such that there exists 
$\widetilde{\Gamma} \subset Spin(n+1)$ so that the covering homomorphism $Spin(n+1) \to SO(n+1)$ restricts to an isomorphism $\widetilde{\Gamma} \simeq \Gamma$.
Let $P$ be a Lagrangian submanifold that is diffeomorphic to $S^n/\Gamma$ in a Liouville manifold $(M,\w)$ with $2c_1(M,\omega)=0$.
Pick a Weinstein neighborhood $U$ of $P$ and take the universal cover $\fU$ of $U$.
The preimage of $P$ is a Lagrangian sphere $\fP$ in $\fU$.
We can pick a parametrization to identify $\fP$ with the unit sphere in $\R^{n+1}$, and the deck transformation with $\Gamma \subset SO(n+1)$.
Then we can define the Dehn twist $\tau_{\fP}$ along $\fP$ in $\fU$.
Since $\tau_{\fP}$ is defined by geodesic flow with respect to the round metric on $\fP$
and the antipodal map lies in the center of $SO(n+1)$, $\tau_{\fP}$ is $\Gamma$-equivariant and descends to a symplectomorphism $\tau_P$ in $U$. 
We call $\tau_P$ the \textit{Dehn twist along} $P$.

We equip $P$ with the induced spin structure from $S^n$ and with the universal local system $E$ corresponding to the canonical representation of $\Gamma:=\pi_1(P)$ to $\K[\Gamma]$.
The pairs $(P,E)$ defines an object $\eP$ in the compact Fukaya category $\mathcal{F}$.
For any Lagrangian brane (ie. an exact Lagrangian submanifold with a choice of grading, spin structure and local system)
$\eE$ in $(M,\w)$, we have
a left $\Gamma$-module structure on $hom_{\mathcal{F}}(\eE,\eP)$ and a right $\Gamma$-module structure on $hom_{\mathcal{F}}(\eP,\eE)$. Our main result is

\begin{thm}\label{t:Twist formula}
Let $(M^{2n},\w)$ be a Liouville manifold with $2c_1(M)=0$ and $n \ge 3$.
For any Lagrangian brane $\eE \in \mathcal{F}$, there is a quasi-isomorphism of the obejcts
 \begin{equation}
 \tau_P(\eE) \simeq Cone(hom_{\mathcal{F}}(\eP,\eE) \otimes_{\Gamma} \eP \xrightarrow{ev_{\Gamma}} \eE) \label{eq:EqTwist}
 \end{equation}
 in $\mathcal{F}^{\perf}$, where $ev_{\Gamma}$ is the evaluation map (see Section \ref{ss:EquivariantEv}) and $\mathcal{F}^{\perf}$ is the category of 
 perfect $A_{\infty}$ right $\cF$ modules 
 (see Definition \ref{d:PerfMod}).
\end{thm}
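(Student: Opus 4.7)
The natural approach is to reduce the statement to Seidel's classical exact triangle \cite{Seidelbook} for the Lagrangian sphere $\fP$ in the universal cover $\fU$ of the Weinstein neighborhood, and then descend equivariantly under the action of $\Gamma$. Since $\tau_P$ is supported in $U$, one first confines all Floer strips contributing to both sides of (\ref{eq:EqTwist}) to $U$, by a neck-stretching or action-estimate argument. Lifting the relevant local data to $\fU$, the downstairs twist $\tau_P$ becomes the genuine sphere twist $\tau_{\fP}$ along $\fP$. The universal local system $E \cong \K[\Gamma]$ is precisely the bookkeeping device recording the $|\Gamma|$ sheets of the cover, so that $hom_{\mathcal{F}(M)}(\eE,\eP)$ is identified with $hom_{\mathcal{F}(\fU)}(\wt{\eE},\fP)$ as a left $\Gamma$-module (and symmetrically on the other side as a right $\Gamma$-module), where $\wt{\eE}$ denotes a lift of $\eE\cap U$.

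Applying Seidel's theorem on the cover yields
$$\tau_{\fP}(\wt{\eE}) \simeq Cone\bigl(hom(\fP,\wt{\eE}) \otimes \fP \xrightarrow{ev} \wt{\eE}\bigr).$$
Because the round metric defining $\tau_{\fP}$ is $\Gamma$-invariant, Seidel's auxiliary data (the Lefschetz-fibration model, almost complex structures, inhomogeneous perturbations, and universal choices of moduli representatives) can be arranged $\Gamma$-equivariantly. Tracking the $\Gamma$-action through the proof promotes this to a $\Gamma$-equivariant exact triangle of complexes in which the morphism spaces and the evaluation map are $\Gamma$-equivariant.

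To descend, observe that $\Gamma$ acts freely on $\fP$, so the $\Gamma$-coinvariants of $hom(\fP,\wt{\eE}) \otimes \fP$ are canonically $hom_{\mathcal{F}}(\eP,\eE) \otimes_{\Gamma} \eP$; the lifted evaluation descends to the equivariant evaluation $ev_{\Gamma}$ of Section \ref{ss:EquivariantEv}; and $\wt{\eE}$, $\tau_{\fP}(\wt{\eE})$ descend to $\eE$, $\tau_P(\eE)$ respectively in $\mathcal{F}^{\perf}$. Combining these identifications with the equivariant triangle upstairs produces the desired quasi-isomorphism.

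The main obstacle is the $\Gamma$-equivariant enhancement of Seidel's proof: his argument passes through a delicate Lefschetz-fibration model, a careful choice of vanishing thimbles, and a long exact sequence of Floer complexes, and one must ensure that every geometric and analytic ingredient admits a compatible $\Gamma$-action while still achieving transversality of the Cauchy-Riemann operators. This is also where the hypothesis $n\ge 3$ enters, in the same way as for the classical Seidel triangle. A secondary technical issue is the rigorous confinement of holomorphic curves to $U$, which is needed so that the local lift-and-descend argument validates (\ref{eq:EqTwist}) in the global Fukaya category $\mathcal{F}(M)$.
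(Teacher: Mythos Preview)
Your proposal has a fundamental gap in its first step, the ``confinement'' of holomorphic curves to $U$. The Lagrangians $\eE$ you are testing against are global objects in $M$: while $\tau_P$ is supported in $U$, the Lagrangians $L_0,L_1$ underlying $\eE^0,\eE^1$ meet $U$ only in a finite union of cotangent fibers and extend throughout $M\setminus U$. Floer strips contributing to $CF(L_0,\tau_P L_1)$ and to the cone \eqref{eq:EqTwist} are \emph{not} confined to $U$; indeed the neck-stretching along $\partial U$ shows they break into buildings with nontrivial components in $SM^+$ (the completion of $M\setminus U$) as well as in $T^*P$. Consequently there is no ``$\wt{\eE}$'' in $\fU$ to which Seidel's sphere-twist triangle applies: the lift of $\eE\cap U$ is just a disjoint union of fibers, and an exact triangle for those fibers in $T^*\fP$ carries no information about the global morphism spaces $hom_{\cF(M)}(\eE^0,\tau_P(\eE^1))$. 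The descent-by-coinvariants step therefore has nothing global to descend from.

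What the paper actually does is quite different. It stretches the neck along $\partial U$ directly in $M$ and analyzes the limiting holomorphic buildings (Proposition~\ref{p:nosidebubble}). Each rigid curve contributing to a differential on either side factors, for $\tau\gg1$, into an ``outside'' piece in $SM^+$ and an ``inside'' piece in $T^*P$. The outside pieces are literally the same for $C_0$ and $C_1$, while the inside pieces are matched one-for-one via explicit local counts in $T^*\fP$ (Corollary~\ref{c:countU}); the universal cover is used only for these local computations, not as a global model. This yields a bijective, sign-preserving identification of all the relevant moduli and hence the cohomological isomorphism (Proposition~\ref{p:CohLevelIso}). The upgrade to a quasi-isomorphism in $\cF^{\perf}$ then proceeds by producing an explicit degree-zero cocycle $c_\eD$ and showing $\mu^2(c_\eD,-)$ is a quasi-isomorphism (Section~\ref{s:categorical}). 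The hypothesis $n\ge3$ enters through the SFT dimension counts that exclude side bubbles and extra building components (e.g.\ Corollary~\ref{c:IndexClosedCap}, Lemma~\ref{l:noUcurve}), not through any feature of Seidel's classical argument.
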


\begin{corr}\label{c:PtwistSStwist}
 If $P$ is diffeomorphic to $\RP^{n}$ for $n=4k-1$ and $\cchar(\K) \neq 2$, then there are two orthogonal spherical objects in $\mathcal{F}$
 coming from equipping $P$ with different rank one local systems.  $\tau_P(\eE)$ is quasi-isomorphic to the composition of the spherical twists to $\eE$ along these two spherical objects.
 
 If $P=\RP^{n}$ for $n$ odd and $\cchar(\K) =2$, then $P$ is a $\bP$-object and $\tau_P(\eE)$ is quasi-isomorphic to applying $\bP$-twist to $\eE$ along $P$.
\end{corr}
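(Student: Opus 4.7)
The plan is to specialize Theorem \ref{t:Twist formula} to $P \cong \RP^n$, where $\Gamma = \Z/2 = \langle g \rangle$, and analyze the cone in \eqref{eq:EqTwist} according to whether $2$ is invertible in $\K$.

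First I consider $\cchar(\K) \neq 2$. I will use the central idempotents $e_\pm = (1 \pm g)/2 \in \K[\Z/2]$ to split the universal local system as $E \cong E_+ \oplus E_-$ into the trivial and sign rank-one pieces, which gives a decomposition $\eP \simeq \eP_+ \oplus \eP_-$ in $\cF^{\perf}$ with $\eP_\pm := (P, E_\pm)$. The hypothesis $n = 4k-1$ is precisely what makes $\RP^n$ orientable and spin, so each $\eP_\pm$ carries a genuine Lagrangian brane structure. Since $n$ is odd the antipodal map on $S^n$ has degree $+1$ and acts trivially on $H^*(S^n;\K)$, so its sign-isotypic component vanishes; an elementary computation will then give
\[
HF^*(\eP_\pm, \eP_\pm) \cong \K \oplus \K[-n], \qquad HF^*(\eP_+, \eP_-) = 0,
\]
showing that $\eP_\pm$ are orthogonal spherical objects. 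Using the Peirce decomposition of $\K[\Z/2]$, the tensor product in \eqref{eq:EqTwist} splits as
\[
hom_{\cF}(\eP, \eE) \otimes_{\K[\Z/2]} \eP \;\simeq\; \bigoplus_{\pm} hom_{\cF}(\eP_\pm, \eE) \otimes_{\K} \eP_\pm,
\]
and the overall cone reorganizes as an iterated cone which I identify with the composition $T_{\eP_+} \circ T_{\eP_-}(\eE)$ of the two spherical twists. Orthogonality ensures the two twists commute, so the composition is unambiguous.

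Next I consider $\cchar(\K) = 2$. Here $\K[\Z/2] \cong \K[t]/(t^2)$ is a local Frobenius algebra and no nontrivial splitting of $E$ is available. I will first verify that $\eP$ is a $\bP$-object by computing its endomorphism algebra and expecting $HF^*(\eP, \eP) \cong H^*(\RP^n;\K) = \K[h]/(h^{n+1})$ with an appropriate grading, which matches the $\bP$-algebra structure of \cite{HT06}. Then I will rewrite the tensor product using the bimodule short exact sequence $0 \to \K \xrightarrow{\,t\,} \K[t]/(t^2) \to \K \to 0$, which presents $hom_{\cF}(\eP, \eE) \otimes_{\K[\Z/2]} \eP$ as the cone of multiplication by $t$ on $hom_{\cF}(\eP, \eE) \otimes_\K \eP$. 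Substituting back into \eqref{eq:EqTwist} produces a double cone that I will match, on the nose, with the Huybrechts--Thomas definition of the $\bP$-twist applied to $\eE$ along $\eP$.

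The main obstacle will be the char $2$ case, where the tensor product over the non-semisimple algebra $\K[t]/(t^2)$ must be matched precisely with the double-cone $\bP$-twist formula, and where the degree conventions require care (the generator of $HF^*(\eP,\eP)$ sits in degree $1$ rather than the degree-$2$ convention for classical $\bP^m$-objects in \cite{HT06}). The char $\neq 2$ part is largely formal once orthogonality and the spherical-object properties are in place, and these in turn hinge on the spin/orientation conditions supplied by $n = 4k-1$.
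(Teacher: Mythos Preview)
Your char $\neq 2$ argument is correct and matches the paper's: the idempotent splitting $\eP \simeq \eP_+ \oplus \eP_-$, the orthogonality and spherical computations, and the reorganization of the cone as a composition of spherical twists all agree with the paper (which cites Lemma~\ref{l:semisimpleCal} and Corollary~\ref{c:HSphere} rather than arguing via the antipodal action, but the content is the same).

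Your char $=2$ case has a genuine confusion. You assert that $HF^*(\eP,\eP) \cong H^*(\RP^n;\K) = \K[h]/(h^{n+1})$, but this is false: by Lemma~\ref{l:universal}, $HF^*(\eP,\eP) \cong H^*(S^n) \otimes_\K \K[\Z/2]$, which in char $2$ is $H^*(S^n) \otimes \K[t]/(t^2)$, a four-dimensional algebra concentrated in degrees $0$ and $n$. The $\bP$-object in the statement is $P$ with the \emph{trivial} local system, not the universal local system $\eP$; it is $HF^*(P,P) \cong H^*(\RP^n;\K)$ that gives $\K[h]/(h^{n+1})$. Consequently your proposed double-cone manipulation, which places $hom_\cF(\eP,\eE)\otimes_\K \eP$ as the intermediate object, is not the right shape.

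The paper's route is to use the non-split extension $0 \to \K \to \K[\Z/2] \to \K \to 0$ on the \emph{object} side: it yields $\eP \simeq Cone(P[-1] \to P)$ where the map is the unique nontrivial degree-one self-map of $P$. Substituting this cone presentation of $\eP$ into \eqref{eq:EqTwist} and invoking \cite[Remark~4.4]{Se17} identifies $T_\eP(\eE)$ with the $\bP$-twist along $P$. So the exact sequence you wrote down is the right ingredient, but it should be applied to exhibit $\eP$ as an extension of $P$ by $P[-1]$ rather than to filter the tensor product directly.
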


Similar to the spherical twist, there is a purely categorical reformulation of this auto-equivalence in terms of twist along a spherical functor \cite{AL17}.
The terminology will be reviewed in Section \ref{s:Algebraic perspective}.
In particular, we will define an $R$-spherical object (Definition \ref{d:R-twist}) for any $\K$-algebra $R$.
Roughly speaking, an $R$-spherical object is an object with cohomological endomorphism algebra being $R \otimes_\K H^*(S^n)$ that satisfies a Calabi-Yau condition.
We will explain how an $R$-spherical object defines a spherical functor and hence an auto-equivalence (Proposition \ref{p:R-twist})
when an additional mild assumption is satisfied (Assumption \ref{a:perfect}).
Finally, we will explain that Theorem \ref{t:Twist formula} can be reformulated as follows:

\begin{thm}\label{t:Twist formula(alg)}
 Under the assumption of Theorem \ref{t:Twist formula}, $\eP$ is a $\K[\Gamma]$-spherical object and it defines a spherical functor
 \begin{align*}
  \cS: \K[\Gamma]^{\perf} \to \mathcal{F}^{\perf}
 \end{align*}
given by $V \mapsto V \otimes_\Gamma \eP$.
Moreover, for any $\eE \in \mathcal{F}$, we have
\begin{align*}
 \tau_P(\eE) \simeq  \cT_{\cS}(\eE)
\end{align*}
in $\mathcal{F}^{\perf}$, where $\cT_{\cS}$ is the twist auto-equivalence of $\mathcal{F}^{\perf}$ associated to $\cS$.
\end{thm}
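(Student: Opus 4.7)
The plan is to decouple the statement into three pieces: (i) verify that $\eP$ satisfies the axioms of a $\K[\Gamma]$-spherical object in the sense of Definition \ref{d:R-twist}; (ii) invoke Proposition \ref{p:R-twist} to extract a spherical functor $\cS$ whose action on objects is $V \mapsto V \otimes_\Gamma \eP$; and (iii) match the categorical twist $\cT_\cS(\eE)$ with the mapping cone on the right-hand side of \eqref{eq:EqTwist}, at which point Theorem \ref{t:Twist formula} does the geometric work.

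For (i), the first task is to compute $H^*(hom_{\cF}(\eP,\eP))$ as an algebra. The idea is to lift everything to the Weinstein neighborhood $\fU$ of the Lagrangian sphere $\fP$, where the local Floer cohomology is $H^*(S^n;\K)$. The universal local system $E$ is the regular representation $\K[\Gamma]$, and a pull-push (Eilenberg--Moore / deck-transformation) argument should identify $hom_{\cF}(\eP,\eP) \cong \K[\Gamma] \otimes_\K H^*(S^n;\K)$ as an algebra, with the $\K[\Gamma]$-bimodule structure coming from the left/right action discussed just before Theorem \ref{t:Twist formula}. The Calabi-Yau / duality datum is provided by the grading and spin structure induced from $\fP$ together with exactness plus $2c_1(M)=0$, which give a perfect Floer pairing; the $\Gamma$-equivariance of the Dehn twist ensures that this pairing is $\Gamma$-invariant, which is exactly the Calabi-Yau condition required by Definition \ref{d:R-twist}. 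Assumption \ref{a:perfect} should be immediate since $\K[\Gamma]$ is a finite-dimensional $\K$-algebra and $\eP$ is a compact Lagrangian brane, so $hom_{\cF}(\eP,\eE)$ is a perfect $\K[\Gamma]$-module for every compact $\eE$.

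For (ii), once $\eP$ is $\K[\Gamma]$-spherical, Proposition \ref{p:R-twist} produces the spherical functor $\cS\co \K[\Gamma]^{\perf}\to \cF^{\perf}$. By construction $\cS(V) = V \otimes_\Gamma \eP$ on objects, and the right adjoint $\cS^R$ is $\eE \mapsto hom_{\cF}(\eP,\eE)$ with its right $\Gamma$-action. The twist auto-equivalence of $\cF^{\perf}$ is then
\begin{equation*}
\cT_\cS(\eE) \;=\; Cone\bigl(\cS \cS^R(\eE) \xrightarrow{\,ev\,} \eE\bigr)
\;=\; Cone\bigl(hom_{\cF}(\eP,\eE)\otimes_\Gamma \eP \xrightarrow{\,ev_\Gamma\,} \eE\bigr),
\end{equation*}
where one checks that the counit of the adjunction coming from the spherical-functor formalism coincides with the equivariant evaluation $ev_\Gamma$ defined in Section \ref{ss:EquivariantEv}; this is essentially the definition of $\cS^R$ via Yoneda.

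For (iii), comparing this expression with \eqref{eq:EqTwist} and applying Theorem \ref{t:Twist formula} gives $\tau_P(\eE)\simeq \cT_\cS(\eE)$ for all Lagrangian branes $\eE$, which extends to $\cF^{\perf}$ by triangulated/perfect-module nonsense since both sides are exact functors of $\eE$. The main obstacle I anticipate is step (i): specifically, identifying the Calabi-Yau pairing equivariantly with respect to the $\K[\Gamma]$-bimodule structure, and confirming that the algebra structure on $H^*(hom_{\cF}(\eP,\eP))$ is genuinely the tensor product $\K[\Gamma]\otimes H^*(S^n)$ (rather than some twisted version) — this requires careful bookkeeping of the deck-transformation action on Floer generators and of the signs coming from the spin structure on $P$ induced from $\fP$. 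Everything else, including the identification of $\cS^R$ and the counit with $ev_\Gamma$, is a formal consequence of the spherical-functor machinery recalled in Section \ref{s:Algebraic perspective}.
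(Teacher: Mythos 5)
Your decomposition into (i) verify $\eP$ is $\K[\Gamma]$-spherical, (ii) get the spherical functor via Proposition \ref{p:R-twist}, (iii) match the cones and invoke Theorem \ref{t:Twist formula}, matches the paper's route (Lemmas \ref{l:universal}, \ref{l:homIdnetify}, \ref{l:tensorIdentify}, Proposition \ref{p:algebraicLagTwist} and Theorem \ref{t:SphericalFun}). However, two of your filler arguments are not correct as stated, and one step is significantly harder than you anticipate.

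First, your justification of Assumption \ref{a:perfect} does not go through: finite-dimensionality of $\K[\Gamma]$ together with compactness of $\eP$ does not imply $hom_\cF(\eP,\eE)$ is a perfect $\K[\Gamma]$-module. When $\cchar(\K)$ divides $|\Gamma|$, the group algebra $\K[\Gamma]$ is not semisimple and admits finitely generated modules of infinite projective dimension, so ``finitely generated'' is strictly weaker than ``perfect.'' The actual reason perfection holds (Lemma \ref{l:homIdnetify}) is a chain-level freeness statement: because $E$ is the \emph{universal} local system, each summand $Hom_\K(E_{x(0)},\eE_{x(1)})$ of $CF(\eP,\eE)$ is a free right $\K[\Gamma]$-module, so the complex is finitely generated semi-free and hence perfect. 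This is special to the universal local system and would fail for a generic local system.

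Second, your attribution of the Calabi--Yau condition in Definition \ref{d:R-twist} to ``the $\Gamma$-equivariance of the Dehn twist'' is misplaced. The duality datum has nothing to do with the Dehn twist; it is the $\Gamma$-equivariance of the $A_\infty$-structural maps with respect to the left/right $\Gamma$-actions induced by the universal local system (Corollary \ref{c:ActionCommute}), combined with the ordinary Floer--Poincar\'e duality coming from the grading, spin structure, and exactness of $P$. You should also note that establishing the algebra isomorphism $H^*(S^n)\otimes_\K \K[\Gamma]\simeq HF^*(\eP,\eP)$ (which you correctly flag as the crux of step (i)) is the content of Lemma \ref{l:universal}, obtained by the unwinding device of Lemma \ref{l:gammaModIsom} and Lemma \ref{l:Identifyingm1m2}, not directly by an Eilenberg--Moore argument.

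Third, step (iii) is not purely formal, contrary to your last sentence. The geometric cone in \eqref{eq:EqTwist} is built from the \emph{underived} equivariant tensor product $hom_\cF(\eP,\eE)\otimes_\Gamma\eP$ defined in Section \ref{ss:EquivariantEv}, whereas the categorical twist $\cT_\cS$ uses the derived bar tensor product $-\otimes_R^{\mathbb{L}}\cB_{\cF_\eP}$. Identifying these requires introducing the intermediary bimodule $\widetilde{\cB}_\eP$ and proving Lemmas \ref{l:homIdnetify} and \ref{l:tensorIdentify}, culminating in Proposition \ref{p:algebraicLagTwist}, which is where the identification of the adjunction counit with $ev_\Gamma$ as in \eqref{eq:twistedEv} actually happens. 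So the ``Yoneda, hence formal'' claim hides the real work of this part of the proof.
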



\subsection*{Examples and outlooks} 
\label{sub:examples_and_outlooks}

The current paper is focused on the foundations of the theory of twist auto-equivalences associated to $\tau_P$ and is the starting point of a series of 
subsequent works investigating examples involving Lagrangian spherical space forms.  Although we will not discuss in-depth these examples, we include several ongoing works that will appear in the near future to give 
the readers a peek on potential applications of the twist formula and relations to works in the literature.

\begin{itemize}
  \item In an upcoming paper \cite{MR18}, the first author and Ruddat construct 
 Lagrangian embeddings of graph manifolds (e.g. spherical space forms) systematically in some Calabi-Yau 3-folds using toric degenerations and tropical curves. 
Construction in smooth toric varieties and open Calabi-Yau manifolds using tropical curves can be found in \cite{Mik} and \cite{Mat}, respectively.


Lagrangian spherical space forms $P$ have been studied in some physics literature (see e.g. \cite{HaghighatKlemm}) and 
the Dehn twist along $P$ can be realized as the monodromy 
around a special point in the complex moduli.
Our study in this paper can be viewed as the mirror-dual of the intensive study of monodromy actions on 
the derived category of coherent sheaves in the stringy K\"ahler moduli (\cite{AHK05}, \cite{Horja05}, \cite{DonovanSegal}, \cite{DonovanWemyss}, \cite{HS16}, etc).

 \item  Hong, Lau and the first author study the local mirror symmetry in all characteristics in a subsequent paper \cite{HLM} when two Lens spaces $P$, $P'$ are plumbed together. In this case, the lens spaces can be identified 
 with fat spherical objects in the sense of Toda  \cite{Toda07} in specific characteristics.  In particular, 
 this justifies that Dehn twists along Lens spaces are mirror to fat spherical twists in this case.

 Independently, in the upcoming work \cite{ESW}, Evans, Smith and Wemyss relate Fukaya categories of plumbings of $3$-spheres along a circle with derived categories of sheaves on local Calabi-Yau 3-folds containing two floppable curves.  Both Lens space twists and fat spherical twists naturally arise in specific characteristics in that setting.

\item In principle, Theorem \ref{t:Twist formula} can be deduced from the Lagrangian cobordism formalism \cite{BC13}, \cite{BC2}, \cite{BCIII}.  There are 
several ingredients that needs to be incorporated, though.  The cobordism for a result of this form will require one to use an 
immersed Lagrangian cobordism that does not have clean intersections, which would not even have compactness results on holomorphic disks.  
A fix could be to generalize the \textit{bottleneck immersed cobordism} \cite{MW15} to the categorical level, which should yield the desired mapping cone relation.  

Note that this bottleneck immersed formalism is different from an ongoing work of the immersed Lagrangian cobordism due to Biran and Cornea, 
where the latter should also enter the picture.  We have not adopted this approach due to the relevant tools are still under constructions, 
but such an alternative approach should be of independent interests.

\item Another possible approach to Theorem \ref{t:Twist formula}, explained to us by Ivan Smith, is to realize the
Dehn twists as the monodromy in certain symplectic fibrations and apply the Ma'u-Wehrheim-Woodward quilt formalism \cite{MWWfunctor}. 
It is particularly suitable to study the case that $P=\mathbb{RP}^n$ because, in this case, 
$\tau_P$ can be realized as the monodromy of a Morse-Bott Lefschetz fibration, and the techniques developed by Wehrheim and Woodward 
in \cite{WeWo16} can possibly be adopted.
When $P$ is a general spherical space form, the symplectic fibration is no longer Morse-Bott and more technicality will be involved.
Carrying out this approach would be of independent interests because it would possibly give a functor level statement.

\end{itemize}

Examples mentioned above mostly focused on lens spaces where the group $\Gamma$ is a cyclic group.   The algebro geometric counterparts of Dehn twists along more general spherical space forms will be investigated in future works.




The paper is organized as follows.
In Section \ref{sec:floer_cohomology_with_local_systems}, we review the Fukaya category for Lagrangian with local systems 
and discuss the object $\eP$ and the evaluation map in \eqref{eq:EqTwist}.
In Section \ref{sec:review_of_symplectic_field_theory_and_dimension_formulae}, we explain how to use symplectic field theory
to compute some moduli which define the $A_{\infty}$ structure of the Fukaya category.
In Section \ref{sec:quasi_isomorphism}, we apply symplectic field theory on Floer differential and product to prove 
a cohomological version of Theorem \ref{t:Twist formula}.
In Section \ref{s:categorical}, we  construct an appropriate degree zero cocycle that is found based on Section \ref{sec:quasi_isomorphism}
to induce the quasi-isomorphism in \eqref{eq:EqTwist} (and hence finish the proof of Theorem \ref{t:Twist formula}).
The discussion on the algebraic aspect will be given in Section \ref{s:Algebraic perspective}, where $R$-spherical objects and $R$-spherical twists will be introduced.

\section*{Acknowledgment}
The authors thank Richard Thomas for his interest in real projective space twist in our previous work \cite{MW15}, which motivates
our investigation on Dehn twist along general spherical space forms in this project.
The first author is deeply indepted to Paul Seidel for many insightful conversation and encouragement.  This work would be impossible without his support.
The authors are very grateful to Ivan Smith for many helpful discussion, which in particular, greatly simplified the technical difficulties involved in this project.
Discussion with Mohammed Abouzaid, Matthew Ballard, Yu-Wei Fan, Sheel Ganatra, Mauricio Romo, Ed Segal, Zachary Sylvan and Michael Wemyss have influenced our understanding on spherical functors and we thank all of them.
We also thank Ailsa Keating, Nick Sheridan, Michael Usher and Jingyu Zhao for helpful communication.

C.Y.M was supported by the National Science Foundation under agreement No. DMS-1128155 
and by EPSRC (Establish Career Fellowship EP/N01815X/1), and W.W. is partially supported by Simons Collaboration Grant 524427.
Any opinions, findings and conclusions or recommendations expressed in this material are those of the author(s) and do not necessarily reflect the views of the National Science Foundation nor Simons Foundation.


\vskip 1cm

\noindent{\bf Some standing notations.}

\begin{itemize}
  \item $\Gamma$ is a finite group.
  \item $P$ is a Lagrangian submanifold diffeomorphic to $S^n/\Gamma$ for some $\Gamma \subset SO(n+1)$ and $P$ is spin (see Remark \ref{r:Spin})
  \item $\mathbf{L}$ is the universal cover of $L$ and $\pi:\fL \to L $ (or $\pi:T^*\fL \to T^*L$) is the covering map.
  \item $\fp \in \fL$ is a lift of $p \in L$.
  \item $c_{\fp,\fq}$ is the geometric intersection of $\pi( T^*_{\fp} \fP \cap \tau_{\mathbf{P}}(T^*_{\fq} \fP)) \in T^*P$ (see \eqref{e:genCorr}).
  \item $\eP$ is $P$ being equipped with the universal local system.
\end{itemize}


\section{Floer theory with local systems}  
\label{sec:floer_cohomology_with_local_systems}

In this section, we review the Floer theory for Lagrangian with local systems (see \cite{Abouzaid12}). 
We always assume that $(M,\omega)$ is a Louville manifold with $2c_1(M,\omega)=0$, and a choice of a trivialization of $(\Lambda_\C^{top}T^*M)^{\otimes 2}$ is chosen.  All 
Lagrangians are equipped with a $\mathbb{Z}$-grading and a spin structure.
More discussion about grading can be found in \cite{SeGraded}, \cite[Section 11,12]{Seidelbook}.

\subsection{Fukaya categories with local systems}\label{ss:Fuk}

Let $L$ be a closed exact Lagrangian submanifold in $(M,\omega)$ with a base point $o_L \in L$.
Let $E$ be a finite rank local system on $L$ with a flat connection $\nabla$.
For a path $c:[0,1] \to L$, we use $I_c$ to denote the parallel transport  from $E_{c(0)}$ to $E_{c(1)}$ along $c$ with respect to the connection $\nabla$.
We use the monodromy action from $\Gamma:=\pi_1(L)$ to $E_{o_L}$ to identify $(E,\nabla)$ as a {\it right} $\Gamma$-module.
More explicitly, the right action is given by
\begin{align}
 &\rho: \Gamma \to End(E_{o_L}) \label{eq:RightAction}\\
 &g \mapsto (y \mapsto I_{g}y)
\end{align}
for $y \in E_{o_L}$ and $g \in \Gamma$.
In particular, $(yg)h=I_{h}(I_{g}y)=I_{g*h}y=y(g*h)$, where $*$ stands for concatenation of paths (i.e. $g$ goes first).
We use $\eE$ to denote the triple $(L,E,\nabla)$.
For a Hamiltonian diffeomorphism $\phi \in Ham(M,\omega)$, we define $\phi( \eE):=(\phi(L),\phi_* E, \phi_* \nabla)$.

Let $\eE^i:=(L_i,E^i, \nabla^i)$ for $i=0,1$.
A family of {\it compactly supported} Hamiltonian functions $H=(H_t)_{t \in [0,1]}$ is called \textbf{$(L_0,L_1)$-admissible} if
\begin{align}\label{eq:admissibleHam}
 \phi^H(L_0) \pitchfork L_1
\end{align}
where $\phi^H$ is the time one flow of the Hamiltonian vector field $X_H=(X_{H_t})_{t \in [0,1]}$.
Let $\cX(L_0,L_1)$ be the set of $H$-Hamiltonian chord from $L_0$ to $L_1$ (i.e. $x:[0,1] \to M$ such that $\dot{x}(t)=X_H(x(t))$, $x(0) \in L_0$ and $x(1) \in L_1$).
The Floer cochain complex between $\eE^0$ and $\eE^1$ is defined by
\begin{align}\label{eq:CochainComplex}
CF(\eE^0,\eE^1):=\oplus_{x \in \cX(L_0,L_1)} Hom_{\mathbb{K}}(E^0_{x(0)},E^1_{x(1)}) 
\end{align}

Now, we want to introduce some notations to define the differential for $CF(\eE^0,\eE^1)$
as well as the $A_\infty$-structure for a collection of Lagrangians with local systems.

Let $\cR^{d+1}$ be the space of holomorphic disks with $d+1$ boundary punctures.
For each $S \in \cR^{d+1}$, one of the boundary punctures is distinguished and it is denoted by $\xi_0$.
The other boundary punctures are ordered counterclockwisely along the boundary and are denoted by $\xi_1, \dots, \xi_d$, respectively.
We denote the boundary component of $S$ from $\xi_j$ to $\xi_{j+1}$ by $\partial_j S$ for $j=0,\dots,d-1$.
The boundary component from $\xi_d$ to $\xi_0$ is denoted by $\partial_d S$.
For $j=1,\dots,d$, we pick an outgoing/positive strip-like end for $\xi_j$, which is a holomorphic embedding $\epsilon_j: \R_{\ge 0} \times [0,1] \to S$ such that
\begin{align}\label{eq:outgoingStripEnds}
 \left\{
 \begin{array}{ll}
  \epsilon_j(s,0) \in \partial_{j-1} S \\
  \epsilon_j(s,1) \in \partial_{j} S \\
  \lim_{s \to \infty} \epsilon_j(s,t)=\xi_j
 \end{array}
\right.
\end{align}
We also pick an incoming/negative strip-like end for $\xi_0$, which is a holomorphic embedding $\epsilon_0: \R_{\le 0} \times [0,1] \to S$ such that
\begin{align}\label{eq:incomingStripEnds}
 \left\{
 \begin{array}{ll}
  \epsilon_0(s,0) \in \partial_{0} S \\
  \epsilon_0(s,1) \in \partial_{d} S \\
  \lim_{s \to -\infty} \epsilon_0(s,t)=\xi_0
 \end{array}
\right.
\end{align}
The strip-like ends are assumed to have pairwise disjoint image and they vary smoothly with respect to  $S$ in $\cR^{d+1}$.

Let $\{\eE^j\}_{j=0}^d$ be a finite collection of Lagrangians with local systems.
For $j=1,\dots,d$, let $H_j$ be a $(L_{j-1},L_j)$-admissible Hamiltonian (see \eqref{eq:admissibleHam}).
We also pick a $(L_0,L_d)$-admissible Hamiltonian $H_0$.
For each $S \in \cR^{d+1}$ and each collection $\{H_j\}_{j=0}^d$, we pick a $C_{cpt}^\infty(M)$-valued one-form $K \in \Omega^1(S,C_{cpt}^\infty(M))$.
Let $X_K \in \Omega^1(S,C^{\infty}(M,TM))$ be the corresponding Hamiltonian-vector-field-valued one-form.
We require that
\begin{align} \label{eq:FloerK}
 \left\{
 \begin{array}{ll}
  \epsilon_j^* X_K=X_{H_j}dt \\
  X_K|_{\partial_j S}=0
 \end{array}
\right.
\end{align}
When $d=1$, we assume that $K(s,t)=H_{0,t}=H_{1,t}$ for all $(s,t) \in \R \times [0,1]$.
We also assume that $K$ varies smoothly with respect to $S$ and is consistent with respect to gluing near boundary strata of the Deligne-Mumford-Stasheff compactification of $\cR^{d+1}$.

Let $J^M$ be an $\omega$-compatible almost complex structure that is cylindrical over the infinite end of $M$ (see Definition \ref{d:cylindricalJ}).
Let $\cJ(M,\omega)$ be the space of $\omega$-compactible almost complex structures $J$ such that $J=J^M$ outside a compact set.
For $j=0,\dots,d$, let $J_j=(J_{j,t})_{t \in [0,1]}$ be a family such that $J_{j,t} \in \cJ(M,\omega)$ for all $t$.
For each $S \in \cR^{d+1}$ and each collection $\{J_j\}_{j=0}^d$, we pick a domain-dependent $\omega$-compatible almost complex structure $J=(J_z)_{z \in S}$
such that 
\begin{align}
\left\{\label{eq:FloerJ}
\begin{array}{ll}
 J_z \in \cJ(M,\omega) \text{ for all }z\\
 J \circ \epsilon_j(s,t)=J_{j,t} \text{ for all }j,s,t
\end{array}
\right.
\end{align}
When $d=1$, we require that $J=(J_{s,t})_{(s,t) \in \R \times [0,1]}=(J_t)_{t \in [0,1]}$ is independent of the $s$-direction.
We assume that $J$ varies smoothly with respect to $S$ in $\cR^{d+1}$
and is consistent with respect to gluing near boundary strata of the Deligne-Mumford-Stasheff compactification of $\cR^{d+1}$.

Let $x_j \in \cX(L_{j-1},L_{j})$ for $j=1,\dots,d$ and $x_0 \in \cX(L_0,L_d)$.
For $d>1$, we define $\eM^{K,J}(x_0;x_d,\dots,x_1)$ to be the space of smooth maps $u:S \to M$ such that
\begin{align} \label{eq:FloerModuli}
 \left\{
 \begin{array}{ll}
 S \in \cR^{d+1} \\
 (du-X_K)^{0,1}=0 \text{ with respect to }(J_z)_{u(z)} \\
 u(\partial_j S) \subset L_j \text{ for all }j  \\
 \lim_{s \to \pm \infty} u(\epsilon_j(s,t))=x_j(t) \text{ for all }j
 \end{array}
\right.
\end{align}
When $d=1$, we define $\eM^{K,J}(x_0;x_1)$ to be the corresponding space of maps after modulo the $\R$ action by translation in the $s$-coordinate.
For simplicity, we may use $\eM(x_0;x_d,\dots,x_1)$ to denote $\eM^{K,J}(x_0;x_d,\dots,x_1)$ for an appropriate choice of $(K,J)$.

\begin{rmk}\label{r:ModuliNotationJ}
 In Section \ref{sec:review_of_symplectic_field_theory_and_dimension_formulae}, we will encounter situations where $K \equiv 0$ and $J$ is a domain
 independent almost complex structure.
 In these cases, $J$ has to be chosen carefully to achieve regularity so we will emphasize $J$ and denote the moduli by $\eM^J(x_0;x_d,\dots,x_1)$. 
\end{rmk}

When every element in $\eM(x_0;x_d,\dots,x_1)$ is transversally cut out, $\eM(x_0;x_d,\dots,x_1)$ is a smooth manifold 
of dimension $|x_0|-\sum_{j=1}^d |x_j|+(d-2)$, where $|\cdot |$ denotes the Maslov grading (see Section \ref{ss:Grading}).

For each transversally cut out rigid element $u \in \eM(x_0;x_d,\dots,x_1)$, we define
 \begin{align}
 &\mu^u:Hom(E^{d-1}_{x_d(0)},E^d_{x_d(1)}) \times \dots \times Hom(E^0_{x_1(0)},E^1_{x_1(1)}) \to Hom(E^0_{x_0(0)},E^d_{x_0(1)}) \nonumber \\
 &\mu^u(\psi^d,\dots,\psi^1)(a)= \sign(u) I_{\partial_d u} \circ \psi^d \circ \dots \circ \psi^1 \circ I_{\partial_0 u} (a) \label{eq:AinfLocalSystem}
 \end{align}
where $\partial_d u=u|_{\partial_d S}$ for $\partial_d S$ being equipped with the counterclockwise orientation, and $\sign(u) \in \{\pm 1\}$ is the sign determined by $u$ (see Appendix \ref{sec:orientations}).
Finally, we define the $A_{\infty}$-operation by
\begin{align}
&\mu^d:CF(\eE^{d-1},\eE^d) \times \dots \times CF(\eE^0,\eE^1) \to CF(\eE^0,\eE^d) \nonumber \\
&\mu^d(\psi^d,\dots,\psi^1)=\sum_{u \in \eM(x_0;x_d,\dots,x_1), u \text{ rigid}}  \mu^u(\psi^d,\dots,\psi^1) \label{eq:AinfLocalSystem2}
\end{align}

To summarize, we have a Fukaya category $\mathcal{F}$ of closed exact Lagrangians with local systems.
An object of $\mathcal{F}$ is a closed embedded exact Lagrangian equipped with a finite rank local system, a $\mathbb{Z}$-grading and a spin structure.
For $\eE^0,\eE^1 \in \cF$, we pick a $(L_0,L_1)$-admissible Hamiltonian $H$ and a family of almost complex structure $J$ 
to define the Floer cochain \eqref{eq:CochainComplex}, which is the morphism space $hom_\cF(\eE^0,\eE^1)$.
For each collection of objects $\{\eE^j\}_{j=0}^d$, we pick $K$ and $J$ subject to the conditions \eqref{eq:FloerK} \eqref{eq:FloerJ}.
After choosing $K,J$, we can define 
$\eM^{K,J}(x_0;x_d,\dots,x_1)$ \eqref{eq:FloerModuli} for any $x_0 \in \cX(L_0,L_d)$ and $x_j \in \cX(L_{j-1},L_{j})$ for $j=1,\dots,d$.
When $K$ and $J$ are chosen generically, $\eM^{K,J}(x_0;x_d,\dots,x_1)$ is a smooth manifold and we use \eqref{eq:AinfLocalSystem} and
\eqref{eq:AinfLocalSystem2} to define $\{\mu^d\}_{d=1}^\infty$.
The fact that $K$ and $J$ are chosen consistently with respect to the Deligne-Mumford-Stasheff compactification implies that $\{\mu^d\}_{d=1}^\infty$ satisfies the $A_{\infty}$-relations.

\subsection{Unwinding local systems}\label{ss:unwinding}

The goal of this subsection is to give a computable presentation of $CF(\eE^0,\eE^1)$, where $\eE^i$ are local systems of the same underlying Lagrangian.  
In particular, the identification \eqref{eq:TrivializedIdentification} and \eqref{eq:groupCoh} will be used frequently later.

Let $L$ be a closed exact Lagrangian and $\mathbf{L}$ be its universal cover with covering map $\pi:\mathbf{L} \to L$.  Let $o_L \in L$ be a base point of $L$ and we pick a lift $o_\mathbf{L} \in \mathbf{L}$ such that $\pi(o_\mathbf{L})=o_L$.
We assume throughout that $\Gamma:=\pi_1(L,o_L)$ is a finite group so that $\fL$ is compact.
For each $\fq \in \mathbf{L}$, there is a unique path $c_\fq$ (up to homotopy) from  $o_\mathbf{L}$ to $\fq$
and we identify $\fq$ with the homotopy class $[\pi \circ c_\fq]$.
We have a {\it left} $\Gamma$-action on $\mathbf{L}$ given by
\begin{align}\label{eq:GammaActionGen}
 g \fq :=g*[(\pi \circ c_\fq)]
\end{align}
for $g \in \pi_1(L,o_L)$, 
where $g*[(\pi \circ c_\fq)]$ is a homotopy class of path from $o_L$ to $\pi(\fq)$ and we identify it as a point in $\mathbf{L}$.
It is clear that $h(g\fq)=(h*g)\fq$.
If we pick a Morse function and a Riemannian metric on $L$ to define a Morse cochain complex $C^*(L)$,
we can lift the function and metric to $\mathbf{L}$ to define a Morse cochain complex $C^*(\mathbf{L})$.
The $\Gamma$-action on $\fL$ induces a left $\Gamma$-action on $C^*(\mathbf{L})$.
The $\Gamma$-invariant part of $C^*(\mathbf{L})$ can be identified with $C^*(L)$, in other words,
\begin{align}
 C^*(L)=Rhom_{\mathbb{K}[\Gamma]-mod}(\mathbb{K},C^*(\mathbf{L}))=(C^*(\fL))^\Gamma
\end{align}
We want to discuss the analogue when $L$ is equipped with local systems.

Given a local system $E$ on $L$, we use $\mathbf{E}=\pi^*E$ to denote the pull-back local system.
For a path $c:[0,1] \to \mathbf{L}$, we use $I_c$ to denote the parallel transport with respect to the pull-back flat connection on $\fE$.

Let $E^i$ be local systems on $L$ for $i=0,1$.
We have {\it right} actions (see \eqref{eq:RightAction})
\begin{align}
 \rho^i:\Gamma \to End(E^i_{o_L})
\end{align}
for $i=0,1$. It induces a {\it left} $\Gamma$-module structure on $Hom_\K(E^0_{o_L},E^1_{o_L})$ by
\begin{align}\label{eq:InducedLeftAction}
 \psi \mapsto g \cdot \psi :=\rho^1(g^{-1}) \circ \psi \circ \rho^0(g)
\end{align}

\begin{lemma}\label{l:gammaModIsom}
 Let $E^i$ be local systems on $L$ for $i=0,1$.
 Then there is a DG left $\Gamma$-module isomorphism
 \begin{align}
  \Phi: CF((\fL,\fE^0),(\fL,\fE^1)) \simeq C^*(\mathbf{L}) \otimes_{\mathbb{K}} Hom_\K(E^0_{o_L},E^1_{o_L}) \label{eq:TrivializedIdentification}
 \end{align}
 where the differential on $C^*(\mathbf{L}) \otimes_{\mathbb{K}} Hom_\K(E^0_{o_L},E^1_{o_L})$ is only the differential on the first factor, and the $\Gamma$-action on it is given by
 $g \cdot (x\otimes \psi):=gx \otimes g \cdot \psi $ (see \eqref{eq:GammaActionGen} and \eqref{eq:InducedLeftAction}).
\end{lemma}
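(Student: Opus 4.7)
The plan is to reduce the computation to ordinary Morse cohomology of $\fL$ by exploiting that $\fL$, being the universal cover of the closed manifold $L$ with finite fundamental group $\Gamma$, is itself compact and simply connected. Consequently the pulled-back local systems $\fE^i=\pi^*E^i$ are trivializable, and the trivialization will turn the Floer complex into a tensor product in the desired form.

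First, I would compute the Floer complex with a very small time-independent Hamiltonian. Pick a Morse function $\ff$ on $\fL$ together with a generic metric, and choose a $C^2$-small $(\fL,\fL)$-admissible Hamiltonian $H$ on $\fU$ whose restriction near $\fL$ agrees with $\ff$ composed with the bundle projection of a Weinstein neighborhood. By the standard reduction of Floer to Morse theory for cotangent bundles, the Hamiltonian chords are in bijection with $\mathrm{Crit}(\ff)$, and the rigid Floer strips are in bijection with the Morse negative-gradient flow lines of $\ff$; in particular, with this choice the underlying cochain space is $\bigoplus_{x\in\mathrm{Crit}(\ff)} \Hom_\K(\fE^0_x,\fE^1_x)$ and the Floer differential without local system coefficients is the Morse differential of $C^*(\fL)$.

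Next, I would construct $\Phi$ using the canonical flat trivialization. For each $\fq\in\fL$, set $\Phi_\fq:\Hom_\K(\fE^0_\fq,\fE^1_\fq)\to\Hom_\K(E^0_{o_L},E^1_{o_L})$ by $\psi\mapsto (I^1_{c_\fq})^{-1}\circ\psi\circ I^0_{c_\fq}$. Since $\fL$ is simply connected, $c_\fq$ is well defined up to homotopy, so $\Phi_\fq$ is canonical; assembling over critical points gives $\Phi$. To check $\Phi$ is a cochain map, recall that for a rigid strip $u$ with positive end $x_0$ and negative end $x_1$ the local-system contribution is $I^1_{\partial_1 u}\circ\psi\circ I^0_{\partial_0 u}$. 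Applying $\Phi_{x_0}$ on the output and $\Phi_{x_1}^{-1}$ on the input, the resulting composition becomes parallel transport around a lift in $\fL$ of the loop $c_{x_1}\cdot\partial_j u\cdot c_{x_0}^{-1}$; since $\fL$ is simply connected this loop is nullhomotopic, and the composite collapses to the identity on $\Hom_\K(E^0_{o_L},E^1_{o_L})$. Thus $\Phi\circ\mu^1=(d_{\mathrm{Morse}}\otimes\mathrm{id})\circ\Phi$.

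Finally, I would verify equivariance. Using the definition $g\fq=g*[\pi\circ c_\fq]$, the lift of $g*\pi(c_\fq)$ is a path in $\fL$ from $o_\fL$ to $g\fq$, so one may take $c_{g\fq}$ to equal this lift. Hence parallel transport on $\fE^i$ satisfies the cocycle relation $I^i_{c_{g\fq}}=I^i_{c_\fq}\circ\rho^i(g)$. The deck action sends $\psi\in\Hom_\K(\fE^0_\fq,\fE^1_\fq)$ to the same map viewed as an element of $\Hom_\K(\fE^0_{g\fq},\fE^1_{g\fq})$, so plugging into the formula for $\Phi_{g\fq}$ yields $\Phi_{g\fq}(g\cdot\psi)=\rho^1(g^{-1})\circ\Phi_\fq(\psi)\circ\rho^0(g)$, which matches the prescribed action in \eqref{eq:InducedLeftAction}. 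The chord part transforms by $x\mapsto gx$ by construction, so $\Phi$ is $\Gamma$-equivariant.

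The main obstacle is not any individual step but the consistent bookkeeping of conventions: the interplay between left actions on $\fL$ and right actions on the fibers $E^i_{o_L}$, and the fact that the parallel transports appearing in $\mu^u$ must be computed along paths in $\fL$ while the representations $\rho^i$ are defined in terms of loops in $L$. The cocycle identity $I^i_{c_{g\fq}}=I^i_{c_\fq}\circ\rho^i(g)$ is what reconciles these and drives both the reduction to an ordinary tensor product and the equivariance statement.
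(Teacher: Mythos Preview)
Your proof is correct and follows essentially the same approach as the paper: both use the Morse model, trivialize $\fE^i$ via parallel transport along the paths $c_\fq$, reduce the differential check to simple-connectedness of $\fL$, and verify $\Gamma$-equivariance by computing $I_{c_{g\fq}^{-1}}\circ I_{c_\fq}$. Your explicit isolation of the cocycle identity $I^i_{c_{g\fq}}=I^i_{c_\fq}\circ\rho^i(g)$ is a slightly cleaner packaging of what the paper does by direct computation $I_{c_{g\fq}^{-1}}I_{c_\fq}=I_{g^{-1}}$, but the content is identical.
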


\begin{proof}
We use the Morse model to compute the Floer cochain complex.
Let $C^*(L)$ be a Morse cochain complex and $C^*(\mathbf{L})$ be its lift.
We use $\partial_L$ and $\partial_{\fL}$ to denote the differential of $C^*(L)$ and $C^*(\mathbf{L})$, respectively.
 
For each $\fq \in \fL$ and both $i=0,1$, there is a canonical identification 
\begin{align}
 I_{c_{\fq}^{-1}}: \fE^i_{\fq} \to \fE^i_{o_{\fL}}
\end{align}
where $c_{\fq}$ is the unique (up to homotopy) path from  $o_\fL$ to $\fq$.
Therefore, it induces a trivialization of $\fE^i$.
We can also trivialize $Hom_\K(\fE^0,\fE^1)$ using the canonical isomorphism
\begin{align}
& Hom_\K(\fE^0_\fq,\fE^1_\fq) \to Hom_\K(\fE^0_{o_\fL},\fE^1_{o_\fL})=Hom_\K(E^0_{o_L},E^1_{o_L}) \label{eq:trivialization} \\
& \psi \mapsto I^1_{c_\fq^{-1}} \circ \psi \circ I^1_{c_\fq} \label{eq:trivialization2}
\end{align}

 Using the trivialization \eqref{eq:trivialization}, \eqref{eq:trivialization2}, we have a graded vector space isomorphism \eqref{eq:TrivializedIdentification}.
 To compare the differential on both sides of \eqref{eq:TrivializedIdentification}, let $\fu$ be a Morse trajectory from $\fq_0$ to $\fq_1$ contributing to $\partial_\fL$ and hence the differential of $CF((\fL,\fE^0),(\fL,\fE^1))$.
 For $\fq_1 \otimes \psi \in C^*(\mathbf{L}) \otimes_{\mathbb{K}} Hom_\K(E^0_{o_L},E^1_{o_L})$,
 \begin{align}
  &\Phi(\mu^\fu(\Phi^{-1}(\fq_1 \otimes \psi))) \\
  =&\sign(\fu)\fq_0 \otimes I_{c_{\fq_0}^{-1}}I_{\partial_1 \fu}I_{c_{\fq_1}} \psi I_{c_{\fq_1}^{-1}}I_{\partial_0 \fu}I_{c_{\fq_0}} \\
  =&\sign(\fu)\fq_0 \otimes \psi
 \end{align}
 where the second equality uses the fact that $\pi_1(\fL)=1$.
 Therefore, $\Phi$ is an isomorphism of differential graded vector spaces if we define
 the differential on $C^*(\mathbf{L}) \otimes_{\mathbb{K}} Hom_\K(E^0_{o_L},E^1_{o_L})$ to be $\partial_{\fL}$ acting on the first factor.
 
 Finally, we want to compare the left $\Gamma$-module structures.
 In $CF((\fL,\fE^0),(\fL,\fE^1))$, the action on $\psi \in Hom_\K(\fE^0_{\fq},\fE^1_{\fq})=Hom_\K(E^0_{q},E^1_q)$ is given by
 \begin{align}
  \psi \mapsto g\psi=\psi
 \end{align}
 where the last $\psi$ lies in $Hom_\K(\fE^0_{g\fq},\fE^1_{g\fq})=Hom_\K(E^0_{q},E^1_q)$.
 For $\fq \otimes \psi \in C^*(\mathbf{L}) \otimes_{\mathbb{K}} Hom_\K(E^0_{o_L},E^1_{o_L})$,
  \begin{align}
  &\Phi(g(\Phi^{-1}(\fq \otimes \psi))) \\
  =&g\fq \otimes I_{c_{g\fq}^{-1}}I_{c_{\fq}} \psi I_{c_{\fq}^{-1}}I_{c_{g\fq}} \\
  =&g\fq \otimes I_{g^{-1}}\psi I_g
 \end{align}
 which is exactly the one given in \eqref{eq:GammaActionGen} and \eqref{eq:InducedLeftAction}.
 It finishes the proof.
\end{proof}

We have the following consequence of Lemma \ref{l:gammaModIsom}:

\begin{lemma}\label{l:cochainModel}
  Let $E^i$ be local systems on $L$ for $i=0,1$.
 Then
 \begin{align}\label{eq:groupCoh}
  CF(\eE^0,\eE^1)=Rhom_{\mathbb{K}[\Gamma]-mod}(\mathbb{K},C^*(\mathbf{L}) \otimes_{\mathbb{K}} Hom_\K(E^0_{o_L},E^1_{o_L}))
 \end{align}
 
\end{lemma}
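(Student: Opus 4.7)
The plan is to combine the DG $\Gamma$-module isomorphism of Lemma \ref{l:gammaModIsom} with the classical fact that Floer cohomology downstairs with a local system computes the derived $\Gamma$-invariants of the Floer cohomology upstairs.

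First, I would pass to a Morse model for $CF(\eE^0,\eE^1)$. For a generic Morse-Smale function $f$ on $L$, this Floer complex is quasi-isomorphic to the local-system Morse cochain complex $C^*(L;\mathcal{H}om(E^0,E^1))$, where $\mathcal{H}om(E^0,E^1)$ is the local system on $L$ whose fiber at $o_L$ is $Hom_\K(E^0_{o_L},E^1_{o_L})$ equipped with the left $\Gamma$-action \eqref{eq:InducedLeftAction}. The differential counts Morse flow lines twisted by the parallel transport of $\mathcal{H}om(E^0,E^1)$. This step is a standard PSS-type comparison and requires no novel input beyond the setup of Section \ref{ss:Fuk}.

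Second, I would invoke the chain-level identity
\begin{align*}
C^*(L;V) \simeq Rhom_{\K[\Gamma]\text{-mod}}(\K,\, C^*(\fL) \otimes_\K V_{o_L})
\end{align*}
valid for any finite-rank local system $V$ on $L$ whose underlying $\Gamma$-module structure is given by $V_{o_L}$. Concretely, I would lift $f$ to a $\Gamma$-invariant Morse-Smale function on $\fL$; then the $\Gamma$-equivariant Morse cochain complex on $\fL$ is $C^*(\fL)\otimes V_{o_L}$ as a DG $\Gamma$-module (by the same unwinding of parallel transport to $o_\fL$ used in the proof of Lemma \ref{l:gammaModIsom}), and resolving $\K$ by a projective $\K[\Gamma]$-resolution and taking $\K[\Gamma]$-linear homs recovers the downstairs Morse complex $C^*(L;V)$ by standard equivariant Morse theory.

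Third, I would apply Lemma \ref{l:gammaModIsom} with $V = \mathcal{H}om(E^0,E^1)$ to identify $CF((\fL,\fE^0),(\fL,\fE^1))$ with $C^*(\fL) \otimes_\K Hom_\K(E^0_{o_L},E^1_{o_L})$ as DG left $\Gamma$-modules, and substitute into the previous step to conclude \eqref{eq:groupCoh}.

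The main obstacle is the second step when $\cchar(\K)$ divides $|\Gamma|$: there the naive $\Gamma$-invariants differ from the derived functor $Rhom_{\K[\Gamma]}(\K,-)$, so one must resolve $\K$ by a projective (e.g.\ bar) resolution and verify that the resulting double complex is quasi-isomorphic to the downstairs Morse complex $C^*(L;V)$. This sensitivity to characteristic is precisely what makes the subsequent theorems interesting — it is the source of the family of new auto-equivalences advertised in the introduction — so handling it correctly at the chain level is both a technical and a conceptually important point.
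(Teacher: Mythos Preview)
Your approach is essentially the paper's, but you have misidentified the obstacle and thereby overcomplicated the second step. The paper's argument is shorter: since $\Gamma$ acts freely on $\fL$, it acts freely on the lifted Morse critical points, so $CF((\fL,\fE^0),(\fL,\fE^1))$ is a bounded complex of \emph{free} $\K[\Gamma]$-modules. As $\K[\Gamma]$ is a Frobenius algebra for finite $\Gamma$, free modules are injective, hence acyclic for $Hom_{\K[\Gamma]}(\K,-)$; thus the naive invariants already compute the derived invariants, in every characteristic. The paper then checks directly that the $\Gamma$-invariant subcomplex coincides on the nose with the downstairs Morse complex $CF(\eE^0,\eE^1)$ (matching generators $\sum_g g\fq\otimes\psi \leftrightarrow q\otimes\psi$ and differentials $\sum_g \mu^{g\fu}\leftrightarrow \mu^u$), and then invokes Lemma~\ref{l:gammaModIsom}.

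So there is no need to resolve $\K$ or to pass through a double complex, and no verification of a quasi-isomorphism is required: the identity \eqref{eq:groupCoh} holds literally as $(CF((\fL,\fE^0),(\fL,\fE^1)))^\Gamma$. The characteristic sensitivity you allude to is real but enters only later, in Lemma~\ref{l:semisimpleCal}, when one tries to \emph{compute} the cohomology of this $Rhom$ via the group-cohomology spectral sequence; the chain-level identification itself is unconditional.
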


\begin{proof}
We use the notation in the proof of Lemma \ref{l:gammaModIsom}.
Let $u$ be a Morse trajectory from $q_0$ to $q_1$ contributing to $\partial_L(q_1)$.
Let $\fq_0 \in \fL$ be a lift of $q_0$ and let $\fq_1 \in \fL$ be the corresponding lift of $q_1$
such that $u$ lifts to a Morse trajectory $\fu$ from $\fq_0$ to $\fq_1$.
Let $\psi \in Hom_\K(E^0_{q_1},E^1_{q_1})$. By \eqref{eq:AinfLocalSystem}, we have
\begin{align}
 \mu^u(\psi)=\sign(u)I_{\partial_1u} \psi I_{\partial_0u}
\end{align}
By definition, $Hom_\K(\fE^0_{g\fq_j},\fE^1_{g\fq_j}) \cong Hom_\K(E^0_{q_j},E^1_{q_j})$ for $j=0,1$ and for all $g \in \Gamma$.
Therefore, for $\psi \in Hom_\K(\fE^0_{g\fq_1},\fE^1_{g\fq_1}) \subset CF((\fL,\fE^0),(\fL,\fE^1))$,
\begin{align}
 \mu^{g\fu}(\psi)&=\sign(g\fu)I_{\partial_1 g\fu} \psi I_{\partial_0 g\fu} \\
 &=\sign(u)I_{\partial_1u} \psi I_{\partial_0u}
\end{align}
where $\mu^{g\fu}$ is the term in the differential of $CF((\fL,\fE^0),(\fL,\fE^1))$ contributed by $g\fu$, and the second equality
uses $Hom_\K(\fE^0_{g\fq_1},\fE^1_{g\fq_1}) \cong Hom_\K(E^0_{q_1},E^1_{q_1})$.
The $\Gamma$ action on the generators ($\fq \otimes \psi \mapsto g\fq \otimes \psi$) and differentials ($\mu^u \mapsto \mu^{g\fu}$) of $CF((\fL,\fE^0),(\fL,\fE^1))$
are free and the invariant part can be identified with $CF(\eE^0,\eE^1)$ so
 \begin{align}
  CF(\eE^0,\eE^1)=Rhom_{\mathbb{K}[\Gamma]-mod}(\mathbb{K},CF((\fL,\fE^0),(\fL,\fE^1)))=(CF((\fL,\fE^0),(\fL,\fE^1)))^{\Gamma} \label{eq:GammaInvariantModel}
 \end{align}
and the result follows from Lemma \ref{l:gammaModIsom}.
\end{proof}

\subsection{The universal local system}\label{ss:UniSystem}

In this subsection, we introduce the universal local system and hence, in particular, the object $\eP$ in Theorem \ref{t:Twist formula}.
Some elemenary properties of the universal local system will also be given. 

\begin{defn}[Universal local system]\label{d:uniSystem}
The {\bf universal local system} $E$ on $L$ is a local system that is uniquely determined by the following conditions:
As a vector space, $E_q=\K \langle \pi^{-1}(q) \rangle$ for $q \in L$.\footnote{$E$ has finite rank because we assumed $|\Gamma|=|\pi_1(L,o_L)| < \infty$}
For any $y \in \pi^{-1}(q)$ and $c:[0,1] \to L$ such that $c(0)=q$,
the parallel transport of $E$ satisfies $I_c(y)=\fc(1)$, where $\fc:[0,1] \to \fL$ is the unique path such that $\pi \circ \fc=c$ and $\fc(0)=y$.
\end{defn}

As usual, we have the monodromy right $\Gamma$-action $\rho$ on $E_{o_L}$ \eqref{eq:RightAction}.
On top of that, we can use the left $\Gamma$ action on $\fL$ \eqref{eq:GammaActionGen} to induce (by extending it linearly) a left $\Gamma$ action on $E_q$ for all $q \in L$.
These two actions on $E_{o_L}$ commute and in general, we have

\begin{lemma}\label{l:ActionCommute}
Let $E$ be the universal local system on $L$. 
 For $q \in L$, $y \in E_q$, $g \in \Gamma$ and $c:[0,1] \to L$ such that $c(0)=q$, we have
 \begin{equation}
  g(I_cy)=I_c(gy)
 \end{equation}
\end{lemma}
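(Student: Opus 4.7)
The plan is to reduce to basis elements and then exploit the fact that the left $\Gamma$-action on $\mathbf{L}$ is by deck transformations of the covering $\pi\colon \mathbf{L} \to L$.

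First I would observe that both $g\cdot$ and $I_c$ are $\mathbb{K}$-linear, so it suffices to check the identity on basis elements $y \in \pi^{-1}(q) \subset E_q$. For such $y$, by Definition \ref{d:uniSystem}, $I_c(y) = \mathbf{c}(1)$ where $\mathbf{c}\colon[0,1]\to \mathbf{L}$ is the unique lift of $c$ with $\mathbf{c}(0)=y$. Similarly $I_c(gy) = \mathbf{c}'(1)$ where $\mathbf{c}'$ is the unique lift with $\mathbf{c}'(0) = gy$.

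Next I would verify that the left $\Gamma$-action on $\mathbf{L}$ defined in \eqref{eq:GammaActionGen} acts by deck transformations, i.e.\ $\pi(g\mathbf{q}) = \pi(\mathbf{q})$ for all $g \in \Gamma$ and $\mathbf{q}\in \mathbf{L}$. This is immediate from the definition $g\mathbf{q} = g * [\pi\circ c_{\mathbf{q}}]$, since prepending a loop based at $o_L$ to a path from $o_L$ to $\pi(\mathbf{q})$ yields another path ending at $\pi(\mathbf{q})$. Consequently, for any path $\mathbf{c}\colon[0,1]\to\mathbf{L}$, the composition $t \mapsto g\cdot \mathbf{c}(t)$ is still a lift of $\pi\circ\mathbf{c}$ along $\pi$.

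Applying this to our $\mathbf{c}$, the path $g\mathbf{c}\colon t \mapsto g\cdot \mathbf{c}(t)$ satisfies $\pi\circ(g\mathbf{c}) = \pi\circ\mathbf{c} = c$ and $(g\mathbf{c})(0) = g\cdot y = gy$. By uniqueness of path lifting, $g\mathbf{c} = \mathbf{c}'$. Evaluating at $t=1$ gives
\begin{equation*}
I_c(gy) = \mathbf{c}'(1) = g\cdot \mathbf{c}(1) = g\cdot I_c(y),
\end{equation*}
as required. There is no substantive obstacle here; the only point to be careful about is to check that the left action \eqref{eq:GammaActionGen} coincides with (a choice of) deck transformation action, after which uniqueness of lifts closes the argument.
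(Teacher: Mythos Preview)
Your proof is correct and follows essentially the same approach as the paper: reduce to basis elements $y\in\pi^{-1}(q)$ and use the covering-space structure. The paper's proof is a one-line computation via the path-concatenation identification $g(I_cy)=g*[\pi\circ c_y]*[c]=I_c(gy)$, whereas you unpack this as the statement that the $\Gamma$-action is by deck transformations and then invoke uniqueness of path lifting; these are the same argument in slightly different language.
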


\begin{proof}
 Without loss of generality, let $y \in \pi^{-1}(q)$.
 We can identify $y$ with the homotopy class $[\pi \circ c_y]$ from $o_L$ to $q$. Then we have (see \eqref{eq:GammaActionGen})
  \begin{equation}
  g(I_cy)=g*[\pi \circ c_y]*[c]=I_c(gy)
 \end{equation}
 where $[c]$ is the homotopy class of path from $c(0)$ to $c(1)$ that $c$ represents.
\end{proof}

Since we have a left action on $E_q$ for all $q \in L$, it induces a left $\Gamma$ action on $CF(\eE',\eE)$
\begin{align}\label{eq:Left_on_cE}
 \psi \mapsto g\psi 
\end{align}
for any $\eE' \in Ob(\cF)$.
Similarly, for any $\eE' \in Ob(\cF)$, we have the induced right $\Gamma$ action on $CF(\eE,\eE')$
\begin{align}\label{eq:Right_on_cE}
 \psi(\cdot ) \mapsto \psi(g \cdot) 
\end{align}

As an immediate consequence of Lemma \ref{l:ActionCommute} and the definition of $\mu^u$ (see \eqref{eq:AinfLocalSystem}), we have

\begin{corr}\label{c:ActionCommute}
Let $E$ be the universal local system on $L$.
 Let $\eL_1 \dots, \eL_r, \eK_1,\dots,\eK_s \in Ob(\cF)$.
 Let $y_j \in CF(\eK_j,\eK_{j+1})$ for $j=1,\dots,s-1$,
 $x_j \in CF(\eL_j,\eL_{j+1})$ for $j=1,\dots,r-1$, $\psi_2 \in CF(\eE,\eK_1)$ and $\psi_1 \in CF(\eL_r,\eE)$,
 we have
 \begin{align}
 \mu^u(y_{s-1},\dots,y_1,\psi_2g,\psi_1,x_{r-1},\dots,x_1)= \mu^u(y_{s-1},\dots,y_1,\psi_2,g\psi_1,x_{r-1},\dots,x_1) \label{eq:movingG}
 \end{align}
for all $g \in \Gamma$, where $u$ is an element in the appropriate moduli contributing to the $A_{\infty}$-structural maps of $\cF$.
When $s=0$ (resp. $r=0$), we have
 \begin{align}
 &g\mu^u(\psi_1,x_{r-1},\dots,x_1)= \mu^u(g\psi_1,x_{r-1},\dots,x_1) \text{, respectively} \\
 &\mu^u(y_{s-1},\dots,y_1,\psi_2g)= \mu^u(y_{s-1},\dots,y_1,\psi_2)g
 \end{align}

\end{corr}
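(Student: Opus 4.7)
The plan is to expand both sides of \eqref{eq:movingG} using the formula \eqref{eq:AinfLocalSystem} for $\mu^u$, and to observe that the two expressions differ only in the position of the element $g \in \Gamma$ relative to a single parallel transport in the universal local system $E$ on $P$. The identity will then follow at once from Lemma \ref{l:ActionCommute}, which says that the $\Gamma$-action on $E$ commutes with parallel transport.

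More concretely, the portion of $\partial u$ that lies on $P$ and joins the output endpoint of the chord for $\psi_1$ to the input endpoint of the chord for $\psi_2$ contributes a parallel transport $I$ in $E$ that sits between $\psi_1$ and $\psi_2$ inside the composition defining $\mu^u$. The right action $\psi_2 g$ in \eqref{eq:Right_on_cE} inserts a factor of $g$ just \emph{after} $I$ in this composition, while the left action $g \psi_1$ in \eqref{eq:Left_on_cE} inserts the same factor just \emph{before} $I$. All other ingredients -- the parallel transports on the remaining boundary components of $u$, the inputs $y_j$ and $x_j$, and the combinatorial sign $\sign(u)$ -- are identical on the two sides of \eqref{eq:movingG}. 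Hence the two compositions agree as soon as one knows that $g$ slides past $I$, which is precisely the content of Lemma \ref{l:ActionCommute}.

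The degenerate cases are handled by the same argument. When $s = 0$ there is no $\psi_2$, and the $P$-labelled boundary occupies the outer component $\partial_d u$; the identity $g\mu^u(\psi_1, x_{r-1}, \dots, x_1) = \mu^u(g\psi_1, x_{r-1}, \dots, x_1)$ then follows by sliding $g$ through the outer parallel transport $I_{\partial_d u}$ in $E$. The case $r = 0$ is symmetric, using $I_{\partial_0 u}$ instead.

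Since the corollary is essentially a formal consequence of Lemma \ref{l:ActionCommute} together with the definition of $\mu^u$, I do not foresee a substantial obstacle. The only care required is the bookkeeping needed to see that, in each of the three cases, the left and right $\Gamma$-actions on the $\eE$-morphisms amount to inserting a factor of $g$ on one or the other side of a single parallel transport in $E$, so that Lemma \ref{l:ActionCommute} applies directly.
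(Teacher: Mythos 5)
Your argument is correct and matches what the paper has in mind: the paper states this result as an immediate consequence of Lemma~\ref{l:ActionCommute} and the definition~\eqref{eq:AinfLocalSystem} of $\mu^u$, and your proposal simply makes that immediacy explicit by locating the single $P$-labelled boundary arc between the $\psi_1$-puncture and the $\psi_2$-puncture (or to/from the output puncture in the degenerate cases), observing that the two sides of~\eqref{eq:movingG} differ only by whether $g$ is inserted before or after the parallel transport $I_{\partial_r u}$ along that arc, and then applying $g \circ I_c = I_c \circ g$ from Lemma~\ref{l:ActionCommute}. No gap, and no meaningful divergence from the paper's intended proof.
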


\begin{rmk}
 We offer an alternative way to understand \eqref{eq:movingG} using $\fL$ instead of $\eE$.
 For each $q \in L$ and a lift $\fq$ of $q$, we can view $\fq$ as a point in $\fL$ or as an element in $E_q$.
 Therefore, we can identify the generators of $CF(T^*_qL,\eE)$ and the generators of $CF( \cup_{g \in \Gamma} T^*_{g\fq}\fL,\fL)$ by
 \begin{align}\label{e:LStoLift}
  E_q \ni \fq  \mapsto \fq \in T^*_\fq\fL \cap \fL 
 \end{align}
Dually, $CF(\eE, T^*_qL)$ can be identified with $CF(\fL, \cup_{g \in \Gamma} T^*_{g\fq}\fL)$ by
 \begin{align}\label{e:LStoLiftvee}
  Hom_\K(E_q,\K) \ni \fq^\vee  \mapsto \fq \in \fL \cap T^*_\fq\fL 
 \end{align}
 The right action \eqref{eq:Right_on_cE} on $Hom_\K(E_q,\K)$ is given by $\fq^\vee g=(g^{-1}\fq)^\vee$, which corresponds to the right action on $\fL$ by $\fq g=g^{-1}\fq$.
 
 Now, we want to make connection with Corollary \ref{c:ActionCommute}.
 We adopt the notations from Corollary \ref{c:ActionCommute}.
 For simplicity, we assume that $K_1$ and $L_1$ are Lagrangians without local systems and $\psi_1 =\fq_1 \in E_{q_1}$, $\psi_2=\fq_2^\vee \in Hom_\K(E_{q_2},\K)$.
 Let $\gamma$ be $\partial_{r+1}S$, which is the component of $\partial S$ with label $L$.
 
 Since the parallel transport of $E$ can be identified with moving the points in $\fL$, for $\mu^u$ to be non-zero and contribute to the RHS of \eqref{eq:movingG}, there is exactly
 one $g \in \Gamma$ and one lift of $u|_\gamma$, which is denoted by $\fu:\gamma \to \fL$, such that $\fu$ goes from $g\fq_1$ to $\fq_2$. 
 For each $h \in \Gamma$, the maps $h \fu:\gamma \to \fL$ are the other lifts of $u|_\gamma$ and $h\fu$ goes from $hg\fq_1$ to $h\fq_2$.
 
 Roughly speaking, one can define a Floer theory by counting $(u, \fu)$, where $u$ is as in Corollary \ref{c:ActionCommute}
 and $\fu$ is a lift of $u|_\gamma$. This definition is explained in details in \cite{Da12} and the outcome is the same as Lagrangian Floer theory with local systems. 
 In this setting, the pair $(u,h\fu)$ contributes to
 \begin{align}
  \mu^{(u,h\fu)}(y_{s-1},\dots,y_1,h\fq_2,hg\fq_1,x_{r-1},\dots,x_1)
 \end{align}
 and it equals to $\mu^{(u,\fu)}(y_{s-1},\dots,y_1,\fq_2,g\fq_1,x_{r-1},\dots,x_1)$.
Under the identification \eqref{e:LStoLift}, \eqref{e:LStoLiftvee}, it means that (when $h=g^{-1}$)
 \begin{align*}
  \mu^{(u,\fu)}(y_{s-1},\dots,y_1,\fq_2^\vee,g\fq_1,x_{r-1},\dots,x_1)=\mu^{(u,g^{-1}\fu)}(y_{s-1},\dots,y_1,(g^{-1}\fq_2)^\vee,\fq_1,x_{r-1},\dots,x_1)
 \end{align*}
 which is exactly the same as \eqref{eq:movingG}
\end{rmk}

The rest of this subsection is devoted to the discussion of $CF(\eE,\eE)$ when $E$ is the universal local system of $L$.
Let $R:=\K[\Gamma]$ and $1_\Gamma$ be the unit of $\Gamma$.
For $h \in \Gamma$, we define $\tau_h \in Hom_\K(R,R)$ by 
\begin{align}
 \tau_h(g)=\left\{
 \begin{array}{ll}
  1_\Gamma &\text{ if }g=h^{-1} \\
  0 &\text{ if } g \in \Gamma \backslash \{h^{-1}\}
 \end{array}
 \right.
\end{align}
Note that $R \cong E_{o_L}$ so, by Lemma \ref{l:cochainModel}, we have
\begin{align}
CF(\eE,\eE)=(C^*(\fL) \otimes Hom_\K(R,R))^{\Gamma}
\end{align}
In particular, we have $\mu^1,\mu^2$ on $(C^*(\fL) \otimes Hom_\K(R,R))^{\Gamma}$ inherited from $CF(\eE,\eE)$.
In Lemma \ref{l:gammaModIsom}, we proved that $\mu^1$ coincides with the Morse differential $\partial_\fL$ on the first factor.
The same line of argument can prove that $\mu^2$ coincides with the 
Floer multipliciation on $C^*(\fL)$ tensored with the composition in $Hom_\K(R,R)$ (i.e. $\mu^2_{\fL}(-,-) \otimes - \circ - $).

Let $\Phi_2:C^*(\fL) \otimes R \to (C^*(\fL) \otimes Hom_\K(R,R))^{\Gamma}$ be the graded vector space isomoprhism given by
\begin{align}
 \Phi_2:x \otimes h \mapsto \sum_{g \in \Gamma} gx \otimes g \cdot \tau_h=\sum_{g \in \Gamma} gx \otimes I_{g^{-1}} \tau_h I_g \label{eq:Phi2}
\end{align}

\begin{lemma}\label{l:Identifyingm1m2}
We have the following equalities
\begin{align}
 \Phi_2^{-1} \circ \mu^1 \circ \Phi_2(x \otimes h)&=\partial_{\fL}(x) \otimes h \label{eq:Simplifiedm1}\\
 \Phi_2^{-1} \circ \mu^2 \circ (\Phi_2(x_2 \otimes h_2),\Phi_2(x_1 \otimes h_1))&=\mu^2_{\fL}(x_2,h_2x_1) \otimes h_2h_1 \label{eq:Simplifiedm2}
\end{align}
As a consequence of \eqref{eq:Simplifiedm1}, we have $H^*(CF(\eE,\eE))=H^*(\fL) \otimes R$ as a vector space.
\end{lemma}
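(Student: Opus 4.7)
The plan is to combine two ingredients from the preceding discussion: the identification of Lemma \ref{l:gammaModIsom} realising $CF(\eE,\eE)$ as the $\Gamma$-invariants of $C^*(\fL) \otimes Hom_\K(R,R)$, and the observation stated immediately before the lemma that under this model $\mu^1$ restricts to $\partial_\fL \otimes \id$ while $\mu^2$ is $\mu^2_\fL$ tensored with composition in $Hom_\K(R,R)$. Since $\Gamma$ acts freely on the Morse generators of $C^*(\fL)$ (being deck transformations on the universal cover), the averaging operator $v \mapsto \sum_g g \cdot v$ is a bijection from orbit representatives onto the $\Gamma$-invariant part by a dimension count, so $\Phi_2$ is an isomorphism of graded vector spaces.

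For \eqref{eq:Simplifiedm1} the verification is immediate: $\Gamma$-equivariance of the lifted Morse differential $\partial_{\fL}$ gives
\[
\mu^1(\Phi_2(x \otimes h)) = \sum_g \partial_\fL(gx) \otimes g\cdot \tau_h = \sum_g g\,\partial_\fL(x) \otimes g\cdot \tau_h = \Phi_2(\partial_\fL(x) \otimes h).
\]
The cohomology assertion $H^*(CF(\eE,\eE)) = H^*(\fL) \otimes R$ follows at once, since $\Phi_2$ then intertwines $\mu^1$ with the differential $\partial_\fL \otimes \id_R$ on $C^*(\fL) \otimes R$.

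For \eqref{eq:Simplifiedm2} the main calculation is to simplify $(g_2 \cdot \tau_{h_2}) \circ (g_1 \cdot \tau_{h_1})$. Using $g\cdot \tau_h = I_{g^{-1}} \tau_h I_g$ together with the fact that $\tau_h$ is the rank-one map sending $h^{-1}$ to $1_\Gamma$, a direct unwinding should yield
\[
(g_2 \cdot \tau_{h_2}) \circ (g_1 \cdot \tau_{h_1}) = \delta_{g_1,\, g_2 h_2}\, (g_2 \cdot \tau_{h_2 h_1}).
\]
Plugging this into the double sum for $\mu^2(\Phi_2(x_2 \otimes h_2), \Phi_2(x_1 \otimes h_1))$, only the diagonal terms $g_1 = g_2 h_2$ survive; combined with the $\Gamma$-equivariance $\mu^2_\fL(g_2 x_2, g_2 h_2 x_1) = g_2\, \mu^2_\fL(x_2, h_2 x_1)$, the expression collapses to $\sum_{g_2} g_2\, \mu^2_\fL(x_2, h_2 x_1) \otimes g_2 \cdot \tau_{h_2 h_1} = \Phi_2(\mu^2_\fL(x_2, h_2 x_1) \otimes h_2 h_1)$, as required.

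The main obstacle will be the bookkeeping in this composition formula: it mixes the right $\Gamma$-action by parallel transport with the induced conjugation left action \eqref{eq:InducedLeftAction}, and the clean Kronecker delta on the right-hand side emerges only because $\tau_h$ has rank one. Once this algebraic identity is in hand, the collapse of the double sum to a single orbit sum matching $\Phi_2$ of the target is automatic from the freeness of the $\Gamma$-action on the first factor.
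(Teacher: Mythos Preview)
Your proposal is correct and follows essentially the same approach as the paper's proof. Both arguments reduce $\mu^1$ to the $\Gamma$-equivariance of $\partial_{\fL}$, and for $\mu^2$ both identify the same Kronecker-delta condition $g_1 = g_2 h_2$ (the paper writes it as $1_\Gamma * g_1^{-1} * g_2 = h_2^{-1}$) from the rank-one nature of $\tau_h$, then collapse the double sum via $\Gamma$-equivariance of $\mu^2_{\fL}$; your packaging of the composition identity as $(g_2\cdot\tau_{h_2})\circ(g_1\cdot\tau_{h_1}) = \delta_{g_1,\,g_2 h_2}\,(g_2\cdot\tau_{h_2 h_1})$ is slightly cleaner but amounts to the same computation the paper carries out line by line.
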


\begin{proof}
 For $x \otimes h \in C^*(\fL) \otimes R$,
 \begin{align}
  &\Phi_2^{-1} \circ \mu^1 \circ \Phi_2(x \otimes h) \\
  =&\Phi_2^{-1} ( \sum_{g \in \Gamma} \partial_\fL(gx) \otimes I_{g^{-1}} \tau_h I_g )\\
  =&\Phi_2^{-1} ( \sum_{g \in \Gamma} g\partial_\fL(x) \otimes I_{g^{-1}} \tau_h I_g )\\
  =&\partial_{\fL}(x) \otimes h 
 \end{align}
where the second equality uses Corollary \ref{c:ActionCommute}.

For $x_i \otimes h_i \in C^*(\fL) \otimes R$, $i=1,2$, we have
 \begin{align}
  &\Phi_2^{-1} \circ \mu^2 \circ (\Phi_2(x_2 \otimes h_2),\Phi_2(x_1 \otimes h_1))\\
  =&\Phi_2^{-1} ( \sum_{g_1,g_2 \in \Gamma} \mu^2_\fL(g_2x_2,g_1x_1) \otimes I_{g_2^{-1}} \tau_{h_2} I_{g_2} I_{g_1^{-1}} \tau_{h_1} I_{g_1})
 \end{align}
For $\tau_{h_2} I_{g_2} I_{g_1^{-1}} \tau_{h_1}$ and hence $I_{g_2^{-1}} \tau_{h_2} I_{g_2} I_{g_1^{-1}} \tau_{h_1} I_{g_1}$ to be non-zero, we must have
\begin{align}
 1_{\Gamma} * g_1^{-1} * g_2=h_2^{-1}
\end{align}
and for any $g_2$, there is a unique $g_1$  ($=g_2h_2$) such that $g_1^{-1}g_2=h_2^{-1}$.
Therefore, the sum becomes
\begin{align}
 &\Phi_2^{-1} (\sum_{g_2 \in \Gamma} \mu^2_\fL(g_2x_2,g_2h_2x_1) \otimes I_{g_2^{-1}} \tau_{h_2} I_{h_2^{-1}} \tau_{h_1} I_{g_1} I_{g_2^{-1}} I_{g_2}) \\
=&\Phi_2^{-1} ( \sum_{g_2 \in \Gamma} g_2\mu^2_\fL(x_2,h_2x_1) \otimes I_{g_2^{-1}} \tau_{h_2} I_{h_2^{-1}} \tau_{h_1} I_{h_2} I_{g_2}) \\
=&\Phi_2^{-1} (\sum_{g_2 \in \Gamma} g_2\mu^2_\fL(x_2,h_2x_1) \otimes I_{g_2^{-1}} \tau_{h_2h_1} I_{g_2} )\\
=&\mu^2_{\fL}(x_2,h_2x_1) \otimes h_2h_1
 \end{align}
where the second equality uses that $\mu^2_\fL$ is $\Gamma$-equivariant, and the third equality uses $\tau_{h_2} I_{h_2^{-1}} \tau_{h_1} I_{h_2}= \tau_{h_1} I_{h_2}=\tau_{h_2h_1}$.

\end{proof}

\subsection{Spherical Lagrangians}
In this subsection, we apply the results from the previous subsections to the case that $L=P$ and
\begin{align}\label{eq:GammaCondition}
 \text{$P$ is diffeomorphic to $S^n/\Gamma$ for some $\Gamma \subset SO(n+1)$ and $P$ is spin}
\end{align}

\begin{rmk}\label{r:Spin}
A finite free quotient of a sphere $S^n/\Gamma$ is spin if and only if 
there exists $\widetilde{\Gamma} \subset Spin(n+1)$ such that the covering homomorphism
$Spin(n+1) \to SO(n+1)$ restricts to an isomoprhism $\widetilde{\Gamma} \simeq \Gamma$. 
\end{rmk}

First, we apply the discussion from Section \ref{ss:unwinding}:

\begin{lemma}\label{l:semisimpleCal}
 Let $E^i$ be local systems on $P$ for $i=0,1$.
 If $\cchar(\K)$ does not divide $|\Gamma|$, then
 $HF(\eE^0,\eE^1)=H^*(S^n)\otimes Hom_{\mathbb{K}[\Gamma]}(\eE^0_{o_L},\eE^1_{o_L})$ as a $\mathbb{K}$-vector space.
\end{lemma}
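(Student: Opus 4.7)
The plan is to chase through Lemma \ref{l:cochainModel} in the special case $L=P$, using the hypothesis on $\cchar(\K)$ to collapse the derived Hom into the naive invariants functor. Applying Lemma \ref{l:cochainModel} I get
\[
CF(\eE^0,\eE^1) = Rhom_{\K[\Gamma]-mod}\bigl(\K,\; C^*(\fP) \otimes_\K Hom_\K(E^0_{o_L}, E^1_{o_L})\bigr),
\]
where $\fP$ denotes the universal cover of $P$. Since $P$ is diffeomorphic to $S^n/\Gamma$ we have $\fP \cong S^n$, and the differential on the inner complex acts only on the first tensor factor by Lemma \ref{l:gammaModIsom}, so its cohomology is $H^*(S^n) \otimes_\K Hom_\K(E^0_{o_L}, E^1_{o_L})$ carrying a residual left $\Gamma$-action.

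Next, because $\cchar(\K)$ does not divide $|\Gamma|$, Maschke's theorem tells us that $\K[\Gamma]$ is semisimple; every $\K[\Gamma]$-module is then projective (hence injective). Consequently $Rhom_{\K[\Gamma]-mod}(\K,-)$ is an exact functor which commutes with passage to cohomology and coincides with the plain invariants functor $(-)^\Gamma$. Combining with the previous step yields
\[
HF(\eE^0,\eE^1) = \bigl(H^*(S^n) \otimes_\K Hom_\K(E^0_{o_L}, E^1_{o_L})\bigr)^\Gamma.
\]

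The remaining step is to identify this with $H^*(S^n) \otimes Hom_{\K[\Gamma]}(E^0_{o_L}, E^1_{o_L})$. Here I would observe that $\Gamma \subset SO(n+1)$ acts on $S^n$ by orientation-preserving diffeomorphisms, so the induced action on $H^*(S^n)$ is trivial in every degree (trivially on $H^0$, and trivially on $H^n$ by orientation-preservation). The invariants of the tensor product therefore factor as $H^*(S^n) \otimes_\K Hom_\K(E^0_{o_L}, E^1_{o_L})^\Gamma$, and under the conjugation action \eqref{eq:InducedLeftAction} the $\Gamma$-invariants of $Hom_\K(E^0_{o_L}, E^1_{o_L})$ are by definition the $\Gamma$-equivariant maps $Hom_{\K[\Gamma]}(E^0_{o_L}, E^1_{o_L})$.

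I do not expect any technical obstacle: the argument is essentially formal once Lemma \ref{l:cochainModel} is in hand. The only substantive (rather than purely homological) input is the triviality of the deck action on $H^*(S^n)$, which is immediate from $\Gamma \subset SO(n+1)$, while the semisimplicity supplied by the characteristic hypothesis is what reduces the derived Hom to ordinary invariants.
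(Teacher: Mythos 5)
Your proof is correct and follows essentially the same route as the paper's: both start from Lemma~\ref{l:cochainModel}, invoke Maschke's theorem to reduce the derived Hom to the naive invariants functor, and factor the invariants as $H^*(S^n)\otimes Hom_{\K[\Gamma]}(E^0_{o_L},E^1_{o_L})$. The paper packages the reduction as a Leray spectral sequence that degenerates at $E_2$ because $Ext^{>0}_{\K[\Gamma]}(\K,-)=0$, whereas you use the equivalent but more direct statement that $\K$ is projective over the semisimple ring $\K[\Gamma]$, making $Hom_{\K[\Gamma]}(\K,-)=(-)^\Gamma$ exact and hence commute with homology; these are the same fact in two guises. One small point in your favor: you explicitly verify that the residual $\Gamma$-action on $H^*(S^n)$ is trivial (orientation-preservation, since $\Gamma\subset SO(n+1)$), a step the paper leaves implicit when it writes the $E_2$-page with the group acting only on the $Hom_\K$ factor.
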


\begin{proof}
 We apply the Leray spectral sequence to Lemma \ref{l:cochainModel}.
 The $E_2$-page is given by
 $$E_2^{p,q}=H^p(\Gamma, H^q(S^n) \otimes_{\mathbb{K}} Hom_\K(\eE^0_{o_L}, \eE^1_{o_L}))$$
 where the $\Gamma$-action is given by $x \otimes \psi \mapsto x \otimes g \cdot \psi $
 and $g \cdot \psi =\rho^1(g^{-1}) \circ \psi \circ \rho^0(g)$.
 As a result, we have
 $$E_2^{p,q}=H^q(S^n) \otimes Ext^p_{\Gamma}(\Gamma, Hom_\K(\eE^0_{o_L}, \eE^1_{o_L}))$$
 When $\cchar(\K)$ does not divide $|\Gamma|$, $\mathbb{K}[\Gamma]$ is semi-simple by Maschke's theorem.
 Therefore, $Ext^p_{\Gamma}(\Gamma, Hom_\K(\eE^0_{o_L}, \eE^1_{o_L})) \neq 0$ only if $p=0$.
 It implies that the spectral sequence degenerate at $E_2$-page and the result follows from the fact that
 $Ext^0_{\Gamma}(\Gamma, Hom_\K(\eE^0_{o_L}, \eE^1_{o_L}))$ consists of $\psi \in Hom_\K(\eE^0_{o_L}, \eE^1_{o_L})$
 such that $g \cdot \psi=\psi$, which is clearly $Hom_{\mathbb{K}[\Gamma]}(\eE^0_{o_L},\eE^1_{o_L})$.
\end{proof}

\begin{corr}\label{c:HSphere}
  Let $\eE^0$ be any local system on $P$ corresponding to an irreducible representation of $\Gamma$.
 If $\cchar(\K)$ does not divide $|\Gamma|$, then $HF(\eE^0,\eE^0)=H^*(S^n)$.
\end{corr}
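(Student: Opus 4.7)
The plan is to reduce the corollary immediately to Lemma \ref{l:semisimpleCal} applied in the diagonal case $\eE^1 = \eE^0$. That lemma, which is valid precisely in the regime where $\cchar(\K) \nmid |\Gamma|$, yields
\begin{equation*}
HF(\eE^0,\eE^0) = H^*(S^n) \otimes_{\K} \Hom_{\K[\Gamma]}(\eE^0_{o_L},\eE^0_{o_L})
\end{equation*}
as a graded $\K$-vector space. So the entire content of the corollary is identifying the second tensor factor with the ground field.

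For this, the plan is to invoke Schur's lemma. Since $\eE^0_{o_L}$ is by hypothesis an irreducible $\K[\Gamma]$-module, the endomorphism algebra $\End_{\K[\Gamma]}(\eE^0_{o_L})$ is a division algebra over $\K$, and under the standard assumption that $\K$ is a splitting field for $\Gamma$ (e.g.\ algebraically closed), it reduces to $\K$ itself. Plugging this into the previous display gives $HF(\eE^0,\eE^0) = H^*(S^n)$.

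There is essentially no obstacle here beyond the Schur-type identification, which is pure representation theory. The only point worth flagging is the hypothesis on $\K$: over a non-splitting field one would only obtain $HF(\eE^0,\eE^0) = H^*(S^n) \otimes_\K D$ for $D = \End_{\K[\Gamma]}(\eE^0_{o_L})$ a division algebra. In the applications that follow, $\K$ will be large enough for $D = \K$, so the statement as written suffices.
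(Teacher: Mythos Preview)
Your proof is correct and follows essentially the same approach as the paper: apply Lemma~\ref{l:semisimpleCal} with $\eE^1=\eE^0$ and then invoke Schur's lemma to identify the endomorphism factor with $\K$. The paper adds one extra sentence noting that the ring structure on $HF(\eE^0,\eE^0)$ is also forced by dimension and degree reasons, which you might include for completeness.
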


\begin{proof}
 It follow from Lemma \ref{l:semisimpleCal} and Schur's lemma $Hom_{\mathbb{K}[\Gamma]}(\eE^0_{o_L},\eE^0_{o_L})=\mathbb{K}$. 
 Notice that, the ring structure is also determined uniquely by dimension and degree reason.
\end{proof}

Now, we want to compute the cohomological endomorphism algebra structure of the universal local system on $P$ using Lemma \ref{l:Identifyingm1m2}.
Since the universal local system on $P$ plays a distinguished role in the paper, we denote it by $\eP$.
We define $\mu^1,\mu^2$ on $C^*(\fP) \otimes R$ by \eqref{eq:Simplifiedm1} and \eqref{eq:Simplifiedm2}, respectively.
By \eqref{eq:Simplifiedm1}, we know that $H^*(C^*(\fP) \otimes R)$ is given by $H^*(\fP) \otimes R$.
We are going to determine the algebra structure in the next lemma.
Before that, we recall a convention 

\begin{convention}\label{conv:DG2Ainf}
 If $C$ is a differential graded algebra (eg. a $\K$-algebra with no differential), then $C$ is viewed as an $A_{\infty}$ algebra by
 \begin{align}
  \mu^1(a)&=(-1)^{|a|}\partial(a) \\
  \mu^2(a_1,a_0)&=(-1)^{|a_0|}a_1a_0
 \end{align}
and $\mu^k=0$ for $k \ge 3$, where $a,a_0,a_1 \in C$ and $\partial$ is the differential of $C$.
\end{convention}

\begin{lemma}\label{l:universal}
Let $\eP$ be the universal local system on $P$ and $R:=\K[\Gamma]$.
 Then the Floer cohomology $HF(\eP,\eP)=H^*(S^n) \otimes_{\K} R$ as a $\K$-algebra, where the 
 ring structure on the right is the product of the standard ring structure.
\end{lemma}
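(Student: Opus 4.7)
The plan is to run Lemma \ref{l:Identifyingm1m2} all the way down to cohomology. That lemma already gives, via the isomorphism $\Phi_2$, an identification of $CF(\eP,\eP)$ with $C^*(\fP)\otimes R$ as a chain complex, and the chain-level formula
\begin{equation*}
\mu^2\bigl(\Phi_2(x_2\otimes h_2),\Phi_2(x_1\otimes h_1)\bigr)=\Phi_2\bigl(\mu^2_{\fP}(x_2,h_2 x_1)\otimes h_2 h_1\bigr).
\end{equation*}
Since $\mu^1$ is the Morse differential on the first factor, one already knows $H^*(CF(\eP,\eP))\cong H^*(\fP)\otimes R=H^*(S^n)\otimes R$ as a graded vector space; the only remaining task is to promote the chain-level product to the asserted tensor-product algebra structure on cohomology.

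First, I would pass to cohomology in the product formula. On the first tensor factor, $\mu^2_{\fP}$ is the Floer product (equivalently the Morse cup product) on $C^*(\fP)$, which on cohomology is the cup product on $H^*(\fP)=H^*(S^n)$. On the second factor the operation is literally $h_2h_1$, i.e.\ multiplication in $R=\K[\Gamma]$. So everything comes down to showing that the insertion of $h_2$ acting on $x_1$ becomes trivial on cohomology.

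Next, I would show that the $\Gamma$-action on $H^*(\fP)$ is trivial. This is where the standing assumption that $\Gamma\subset SO(n+1)$ with $\fP\simeq S^n$ is essential: each $g\in\Gamma$ acts on $\fP$ by an orientation-preserving diffeomorphism, hence acts as the identity on $H^0(S^n)=\K$ and on $H^n(S^n)=\K$, and therefore trivially on $H^*(S^n)$. Consequently $[\mu^2_{\fP}(x_2,h_2x_1)]=[x_2]\cup[x_1]$ on cohomology, so
\begin{equation*}
\bigl([x_2]\otimes h_2\bigr)\cdot\bigl([x_1]\otimes h_1\bigr)=\bigl([x_2]\cup[x_1]\bigr)\otimes (h_2h_1),
\end{equation*}
which is precisely the product on the tensor algebra $H^*(S^n)\otimes_{\K}R$.

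The only delicate point is bookkeeping the signs arising from Convention \ref{conv:DG2Ainf} when passing from the $A_\infty$ product of Lemma \ref{l:Identifyingm1m2} to the (graded-commutative) cup product on $H^*(S^n)$ and the unshifted associative product on $R$; these match after the standard shift, since $R$ is concentrated in degree zero and so the Koszul signs are dictated entirely by the $H^*(S^n)$ factor. I do not anticipate any genuine obstacle: the main content has already been extracted in Lemma \ref{l:Identifyingm1m2}, and the single new input is the orientation-preserving nature of $\Gamma\subset SO(n+1)$, which kills its action on $H^*(S^n)$.
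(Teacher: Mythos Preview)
Your proposal is correct and follows essentially the same route as the paper: both reduce to Lemma~\ref{l:Identifyingm1m2} and then use that the $\Gamma$-action on $H^*(\fP)=H^*(S^n)$ is trivial. The only cosmetic difference is that the paper computes explicitly with the generators $[\sum_g g\fe]\otimes h$ and $[x]\otimes h$ and checks $[h_2x]=[x]$ directly, whereas you phrase the same fact conceptually via $\Gamma\subset SO(n+1)$ acting orientation-preservingly.
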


\begin{proof}
Pick a Morse model such that $C^*(P)$ has only one degree $0$ generator $e$ and one degree $n$ generator $f$.
 The corresponding Morse complex $C^*(\fP)$ has 
 $|\Gamma|$ degree $0$ generator $\{g\fe\}_{g \in \Gamma}$ and $|\Gamma|$ degree $n$ generator $\{g\ff\}_{g \in \Gamma}$.
 It is clear that $\sum_g g\fe$ represents the unit of $H^0(\fP)$.
Therefore, $\{[\sum_g g\fe] \otimes h\}_{h \in \Gamma}$ are the degree $0$ generators of $H(C^*(\fP) \otimes R)$ (see \eqref{eq:Simplifiedm1}).
 
 Similarly, if $x$ represents a generator of $H^n(\fP)$, then $\{[x] \otimes h\}_{h \in \Gamma}$
 are the degree $n$ generators of $H(C^*(\fP) \otimes R)$.
 It follows from \eqref{eq:Simplifiedm2} that
 \begin{align}
  &\mu^2([\sum_g g\fe] \otimes h_2,[\sum_g g\fe] \otimes h_1)=[\sum_g g\fe] \otimes h_2h_1 \\
  &\mu^2([x] \otimes h_2,[\sum_g g\fe] \otimes h_1])=[x] \otimes h_2h_1 \\
  &\mu^2([\sum_g g\fe] \otimes h_2,[x] \otimes h_1)=(-1)^{|x|}[h_2x] \otimes h_2h_1=(-1)^{|x|}[x] \otimes h_2h_1
 \end{align}
 Therefore, $H(C^*(\fP) \otimes R)=H^*(S^n) \otimes_{\K} R$  as a $\K$-algebra (see Convention \ref{conv:DG2Ainf}).
The result now follows from Lemma \ref{l:cochainModel}, \ref{l:Identifyingm1m2} (see \eqref{eq:GammaInvariantModel}, \eqref{eq:Phi2}).
\end{proof}

Let $\theta_g=1_{H^0(S^n)} \otimes g \in  H^0(S^n) \otimes R$. 
By Lemma \ref{l:universal}, we have a left $\Gamma$-action on $HF(\eE,\eP)$ given by
\begin{align}\label{eq:AnotherLeftAction}
 x \mapsto [\mu^2(\theta_g,x)]
\end{align}
for any $\eE \in \cF$.
On the other hand, we have another left $\Gamma$-action on $CF(\eE,\eP)$ given by \eqref{eq:Left_on_cE}, which descends to a  left $\Gamma$-action
on the cohomology $HF(\eE,\eP)$.

\begin{lemma}\label{l:ActionCoincide}
 When $\eE=\eP$, the two left $\Gamma$-actions \eqref{eq:AnotherLeftAction} and \eqref{eq:Left_on_cE} on $\theta_{1_\Gamma} \in HF(\eP,\eP)$ coincide.
\end{lemma}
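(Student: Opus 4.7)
The plan is to trace both actions through the explicit cochain-level model from Lemmas \ref{l:cochainModel}, \ref{l:gammaModIsom}, \ref{l:Identifyingm1m2}, and \ref{l:universal}. Since $\theta_{1_\Gamma}$ is the unit of the ring $HF(\eP,\eP) = H^*(S^n) \otimes R$ by Lemma \ref{l:universal}, the action \eqref{eq:AnotherLeftAction} immediately produces $[\mu^2(\theta_g,\theta_{1_\Gamma})] = \theta_g$, so the task reduces to showing that the action \eqref{eq:Left_on_cE} also sends $\theta_{1_\Gamma}$ to $\theta_g$.

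First, I would pin down an explicit cocycle representative of $\theta_g$. Applying $\Phi_2$ from \eqref{eq:Phi2} to $[\sum_{k \in \Gamma} k\fe] \otimes g$ and using that $\sum_k k\fe$ is $\Gamma$-invariant, one obtains the element $[\sum_k k\fe] \otimes \sum_{g' \in \Gamma} I_{g'^{-1}} \tau_g I_{g'}$ in $(C^*(\fP) \otimes Hom_\K(R,R))^\Gamma$. A short direct computation, using the definition of $\tau_g$ and the fact that $I_{g'}$ is the right monodromy action by $g'$, shows that the endomorphism $\sum_{g'} I_{g'^{-1}}\tau_g I_{g'}$ of $R$ is exactly left multiplication by $g$. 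In particular, $\theta_{1_\Gamma}$ is represented by $[\sum_k k\fe] \otimes \id_R$, which is consistent with its role as the unit.

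Next, I trace the action \eqref{eq:Left_on_cE} through the trivialization \eqref{eq:trivialization}-\eqref{eq:trivialization2}. Once trivialized, a cochain becomes an element of $Hom_\K(R,R)$ at each fiber, and the action $\psi \mapsto g\psi$ of \eqref{eq:Left_on_cE} acts by post-composition with the left $\Gamma$-action on the target's universal local system. By Lemma \ref{l:ActionCommute}, this left action commutes with the parallel transports used in the trivialization, so it descends cleanly to post-composition with left multiplication by $g$ on $Hom_\K(R,R)$ in the model $(C^*(\fP) \otimes Hom_\K(R,R))^\Gamma$. Applied to the representative $[\sum_k k\fe] \otimes \id_R$ of $\theta_{1_\Gamma}$, the outcome is $[\sum_k k\fe] \otimes (\text{left multiplication by }g)$, which is precisely the cocycle for $\theta_g$ identified above.

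The main obstacle is bookkeeping: three distinct but interrelated $\Gamma$-structures act on $Hom_\K(R,R)$, namely the right monodromy action $\rho$, the conjugation action \eqref{eq:InducedLeftAction} used to form $\Gamma$-invariants, and the independent left multiplication coming from the universal local system on the target. Lemma \ref{l:ActionCommute} is the statement that the third commutes with the other two, and this is the ingredient that allows the chain-level identification $g \cdot \theta_{1_\Gamma} = \theta_g$ to descend to cohomology.
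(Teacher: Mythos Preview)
Your proposal is correct and follows essentially the same strategy as the paper: both identify the cocycle representing $\theta_g$ as (after trivialization) the endomorphism of $R$ given by left multiplication by $g$, and both use Lemma~\ref{l:ActionCommute} to ensure the left action \eqref{eq:Left_on_cE} is compatible with the trivializing parallel transports. The only cosmetic difference is that the paper pushes the computation one step further---undoing $\Phi$ and descending to $Hom(E_{o_L},E_{o_L}) \subset CF(\eP,\eP)$ before reading off the endomorphism---whereas you stay in the $\Gamma$-invariant model $(C^*(\fP)\otimes Hom_\K(R,R))^\Gamma$ throughout; your version is slightly more streamlined for that reason.
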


\begin{proof}
 We use the notations in the proof of Lemma \ref{l:universal}.
 The element $\theta_h \in H^0(S^n) \otimes R$ is represented by $\sum_g g\fe \otimes h \in C^0(\fP) \otimes R$.
 We have (see \eqref{eq:Phi2})
 \begin{align}
  \Phi_2(\sum_g g\fe \otimes h)&=\sum_{g_2,g_1}  g_2g_1\fe \otimes I_{g_2^{-1}} \tau_h I_{g_2} \\
  &=\sum_{g}  g\fe \otimes I_{g^{-1}}( \sum_{g'} I_{g'} \tau_h I_{(g')^{-1}})I_{g}
 \end{align}
Undoing the trivialization \eqref{eq:TrivializedIdentification}, we have
\begin{align}
 \Phi^{-1}(\Phi_2(\sum_g g\fe \otimes h))=\sum_{g}  g\fe \otimes I_{c_\fe}( \sum_{g'} I_{g'} \tau_h I_{(g')^{-1}})I_{c_\fe^{-1}}
\end{align}
where $c_\fe:[0,1] \to \fP$ is a path from $o_\fP$ to $\fe$.
With respect to the identification $(CF((\fP,\fE),(\fP,\fE)))^{\Gamma} =CF(\eP,\eP)$ (see \eqref{eq:GammaInvariantModel}),
\begin{align}
 \Phi^{-1}(\Phi_2(\sum_g g\fe \otimes h))=I_{\pi \circ c_\fe}( \sum_{g'} I_{g'} \tau_h I_{(g')^{-1}})I_{(\pi \circ c_\fe)^{-1}} \in Hom(E_e,E_e) \subset CF(\eP,\eP)
\end{align}
Without loss of generality, we can assume $e=o_L$ so
\begin{align}
\sum_{g'} I_{g'} \tau_h I_{(g')^{-1}} \in Hom(E_{o_L},E_{o_L}) \subset CF(\eP,\eP)
\end{align}
represents $\theta_h$ under the isomorphism $HF^0(\eP,\eP)=H^0(S^n) \otimes R$.

For each $y \in \Gamma \subset E_{o_L}$, there is a unique $g'$($=h*y$)
such that $\tau_h I_{(g')^{-1}}(y) \neq 0$.
Therefore, $\sum_{g'} I_{g'} \tau_h I_{(g')^{-1}}(y) =hy$ for all $y \in E_{o_L}$.
In particular, it means that 
\begin{align}
 \sum_{g'} I_{g'} \tau_h I_{(g')^{-1}}= h  (\sum_{g'} I_{g'} \tau_{1_{\Gamma}} I_{(g')^{-1}})
\end{align}
so $\theta_{h}=h \theta_{1_{\Gamma}}$ and hence $\mu^2(\theta_h,\theta_{1_{\Gamma}})=(-1)^{|\theta_{1_{\Gamma}}|}\theta_h=h\theta_{1_{\Gamma}}$ as desired.

\end{proof}

\begin{rmk}\label{r:CohomologicalUnit}
 From the proof of Lemma \ref{l:ActionCoincide}, we see that the identity morphism at $E_{o_L}$ represents the cohomological unit.
 It is in general true that if one picks a Morse cochain complex for a Lagrangian submanifold $L$ such that there is a unique degree $0$
 generator $e_L$ representing the cohomological unit of $C^*(L)$, then the identity morphism of $E_{o_{L}}$ is a cohomological unit of $CF(\eE,\eE)$, where $\eE$
 is a local system on $L$.
\end{rmk}

\begin{corr}\label{c:ActionCoincide}
 The two left $\Gamma$-actions \eqref{eq:AnotherLeftAction} and \eqref{eq:Left_on_cE} on $HF^k(\eE,\eP)$ coincide, up to $(-1)^k$, for all $\eE \in \cF$.
\end{corr}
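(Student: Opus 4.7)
The plan is to reduce the statement to the special case handled in Lemma \ref{l:ActionCoincide} by combining the $\Gamma$-equivariance provided by Corollary \ref{c:ActionCommute} with the fact that $\theta_{1_\Gamma}$ is a cohomological unit at $\eP$. The first ingredient I would isolate is that, by Remark \ref{r:CohomologicalUnit} together with the cochain-level computation inside the proof of Lemma \ref{l:ActionCoincide}, the identity map $\id_{E_{o_P}} \in \Hom(E_{o_P},E_{o_P})$ represents $\theta_{1_\Gamma}$ and is a cohomological unit of $CF(\eP,\eP)$. The second ingredient, also already implicit in that proof, is the cochain-level equality $\theta_g = \theta_{1_\Gamma} \cdot g$, where $\cdot\, g$ denotes the right $\Gamma$-action on $CF(\eP,\eP)$ coming from the source $\eP$ carrying the universal local system; indeed both sides equal the parallel transport $I_g$ viewed as an endomorphism of $E_{o_P}$.

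With these in hand, I would apply Corollary \ref{c:ActionCommute} with $r = s = 1$, taking the $\eE$ in that corollary to be our $\eP$, $\eK_1 = \eP$, and $\eL_1$ to be the external Lagrangian brane $\eE$. For any $x \in CF(\eE,\eP)$ this yields
\begin{align*}
\mu^2(\theta_g, x) \;=\; \mu^2(\theta_{1_\Gamma} \cdot g,\, x) \;=\; \mu^2(\theta_{1_\Gamma},\, g\cdot x).
\end{align*}
Passing to cohomology and invoking the $A_\infty$-unit identity $\mu^2(e, y) = (-1)^{|y|} y$ from Convention \ref{conv:DG2Ainf}, I would conclude that for any cocycle $x$ of degree $k$
\begin{align*}
[\mu^2(\theta_g, x)] \;=\; (-1)^k\, [g\cdot x]
\end{align*}
in $HF^k(\eE,\eP)$, which is precisely the comparison of the two left $\Gamma$-actions \eqref{eq:AnotherLeftAction} and \eqref{eq:Left_on_cE} claimed in the corollary.

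The only genuine obstacle is bookkeeping rather than analysis: one must verify that the right action on $\psi_2$ appearing in Corollary \ref{c:ActionCommute} is precisely the right action on $CF(\eP,\cdot)$ induced by the source $\eP$, while the left action on $\psi_1$ is the left action on $CF(\cdot,\eP)$ induced by the target $\eP$, and that the representative $\id_{E_{o_P}}$ of $\theta_{1_\Gamma}$ is genuinely carried to $\theta_g$ by the former. Once this matching is checked, no further moduli-space analysis is needed beyond what is already packaged into Corollary \ref{c:ActionCommute} and Lemma \ref{l:ActionCoincide}.
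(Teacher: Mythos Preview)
Your proof is correct and follows essentially the same route as the paper: both arguments combine the chain-level identification of $\theta_g$ with an action of $g$ on $\theta_{1_\Gamma}$ (extracted from the proof of Lemma \ref{l:ActionCoincide}), Corollary \ref{c:ActionCommute}, and the fact that $\theta_{1_\Gamma}$ is a cohomological unit. The only cosmetic difference is that you invoke the $r=s=1$ case of Corollary \ref{c:ActionCommute} together with the right action $\theta_g=\theta_{1_\Gamma}\cdot g$, while the paper uses the $s=0$ case together with the left action $\theta_g=g\theta_{1_\Gamma}$. One small slip in your justification: the endomorphism of $E_{o_P}$ representing $\theta_g$ (and $\theta_{1_\Gamma}\cdot g$) is left multiplication $y\mapsto gy$, not the parallel transport $I_g$, which by \eqref{eq:RightAction} is right multiplication $y\mapsto yg$; this does not affect the argument, since the equality $\theta_g=\theta_{1_\Gamma}\cdot g$ still holds.
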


\begin{proof}
 Let $x \in HF(\eE,\eP)$.
 We have
 \begin{align}
  [\mu^2(\theta_g,x)]&=[\mu^2(\mu^2(\theta_g,\theta_{1_{\Gamma}}),x)]\\
  &=[\mu^2(g\theta_{1_{\Gamma}},x)] \\
  &=[g\mu^2(\theta_{1_{\Gamma}},x)] \\
  &=(-1)^{|x|}gx
 \end{align}
where the first equality uses Lemma \ref{l:universal}, the second equality uses Lemma \ref{l:ActionCoincide}, the third equality uses Corollary \ref{c:ActionCommute}
and the last equality uses that $\theta_{1_{\Gamma}}$ is a cohomological unit.
\end{proof}


Similarly, for any $\eE \in \cF$, we have a right $\Gamma$-action on $HF(\eP,\eE)$ given by
\begin{align}\label{eq:AnotherRightAction}
 x \mapsto [\mu^2(x,\theta_g)]
\end{align}
and another right action on $HF(\eP,\eE)$ given by \eqref{eq:Right_on_cE}.
The analogue of Corollary \ref{c:ActionCoincide} holds, (i.e. $\mu^2(\theta_{1_{\Gamma}},\theta_{h})=\theta_{h}= \theta_{1_{\Gamma}}h$)
and we leave the details to readers.

\begin{corr}\label{c:ActionCoincideRight}
 The two right $\Gamma$-actions \eqref{eq:AnotherRightAction} and \eqref{eq:Right_on_cE} on $HF(\eP,\eE)$ coincide (without additional factor of $-1$) for all $\eE \in \cF$.
\end{corr}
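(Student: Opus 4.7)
The plan is to mirror the proof of Corollary \ref{c:ActionCoincide}. First I would establish the right-action analogue of Lemma \ref{l:ActionCoincide}, namely that $\theta_h = \theta_{1_\Gamma} \cdot h$ in $HF^0(\eP, \eP)$, where the dot denotes the right $\Gamma$-action \eqref{eq:Right_on_cE}. The argument should be essentially symmetric to that of Lemma \ref{l:ActionCoincide}: using Lemma \ref{l:cochainModel} and the isomorphism $\Phi_2$ from \eqref{eq:Phi2}, the class $\theta_h$ is represented by $\sum_{g'} I_{g'} \tau_h I_{(g')^{-1}} \in \Hom(E_{o_L}, E_{o_L}) \subset CF(\eP, \eP)$, which sends $y \in \Gamma \subset E_{o_L}$ to $hy$. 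Since $\theta_{1_\Gamma}$ is the cohomological unit (Remark \ref{r:CohomologicalUnit}) represented by the identity endomorphism, the cochain $\theta_{1_\Gamma} \cdot h$ acts as $y \mapsto \theta_{1_\Gamma}(hy) = hy$ by the definition \eqref{eq:Right_on_cE}, so the two representatives agree on the nose.

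With this in hand, for $x \in HF(\eP, \eE)$ and $g \in \Gamma$ I would compute
\begin{align*}
[\mu^2(x, \theta_g)] &= [\mu^2(x, \theta_{1_\Gamma} \cdot g)] \\
&= [\mu^2(x, \theta_{1_\Gamma}) \cdot g] \\
&= [x] \cdot g,
\end{align*}
where the first equality uses the analogue of Lemma \ref{l:ActionCoincide} just established, the second invokes the $r = 0$ case of Corollary \ref{c:ActionCommute} to push the right $\Gamma$-action outside of $\mu^2$, and the third uses that $\theta_{1_\Gamma}$ is a cohomological unit, so $\mu^2(x, \theta_{1_\Gamma}) = (-1)^{|\theta_{1_\Gamma}|} x = x$.

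The absence of the sign $(-1)^{|x|}$ that appeared in Corollary \ref{c:ActionCoincide} reflects that the unit here is inserted on the right (yielding $\mu^2(x, \theta_{1_\Gamma}) = x$ by Convention \ref{conv:DG2Ainf}, since $|\theta_{1_\Gamma}| = 0$), rather than on the left as in the left-action argument. I do not anticipate any serious obstacle; all the conceptual ingredients are already present in Lemma \ref{l:ActionCoincide}, Corollary \ref{c:ActionCommute} and the $A_\infty$-unit axiom, and only the signs need to be tracked symmetrically.
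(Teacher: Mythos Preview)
Your proposal is correct and follows precisely the approach the paper sketches: establish the right-action analogue of Lemma~\ref{l:ActionCoincide} (namely $\theta_h=\theta_{1_\Gamma}h$, which the paper states as $\mu^2(\theta_{1_\Gamma},\theta_h)=\theta_h=\theta_{1_\Gamma}h$), then mimic the computation of Corollary~\ref{c:ActionCoincide} using Corollary~\ref{c:ActionCommute} and the unit property. Your tracking of the sign via Convention~\ref{conv:DG2Ainf}, giving $\mu^2(x,\theta_{1_\Gamma})=(-1)^{|\theta_{1_\Gamma}|}x=x$ rather than $(-1)^{|x|}x$, is exactly the point.
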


\subsection{Equivariant evaluation}\label{ss:EquivariantEv}

In this subsection, we want to give the definition of 
\begin{align}
 T_{\eP}(\eE):=Cone(hom_{\mathcal{F}}(\eP,\eE) \otimes_{\Gamma} \eP \xrightarrow{ev} \eE)
\end{align}
that arises in \eqref{eq:EqTwist} in the context of Fukaya category.
We will keep the exposition minimal and self-contained here, and a thorough treatment of 
the general algebraic mechanism is postponed to Section \ref{s:Algebraic perspective}.

Let $\cF^{\perf}$ be the DG category of perfect $A_{\infty}$ right $\cF$-modules.
We have a cohomologically full and faithful Yoneda embedding \cite[Section (2g)]{Seidelbook}
\begin{align}
 \cY: \cF \to \cF^{\perf}
\end{align}
By abuse of notation, we use $\eE$ to denote $\cY(\eE)$ for $\eE \in Ob(\cF)$.

Let $P$ be a Lagrangian brane such that $\pi_1(P)=\Gamma$, and $\eP$ be the object with underlying Lagrangian $P$ equipped with the universal local system $E$.
Let $\eE \in Ob(\cF)$.
By Corollary \ref{c:ActionCommute}, we know that (see  \eqref{eq:Right_on_cE})
\begin{align}
 \mu^1_\cF(\psi)g=\mu^1_\cF(\psi g)
\end{align}
for $\psi \in hom_{\mathcal{F}}(\eP,\eE)$ so $hom_{\mathcal{F}}(\eP,\eE)$ is a DG right $\Gamma$-module.

Given a DG right $\Gamma$-module $V$, we define an object $V\otimes_\Gamma \eP\in Ob(\cF^{perf})$ as follows:
For every $X \in Ob(\cF)$, we have a cochain complex
\begin{align}
 (V \otimes_{\Gamma} \eP)(X):=V \otimes_{\Gamma} hom_\cF(X,\eP)
\end{align}
where the left  $\Gamma$-actions on $hom_\cF(X,\eP)$ is given by \eqref{eq:Left_on_cE}.
By Corollary \ref{c:ActionCommute}, we have
\begin{align}
\left\{
 \begin{array}{ll}
  \mu^1_V(vg) \otimes \psi=\mu^1_V(v)g \otimes \psi \\
  v \otimes \mu^1_\cF(g\psi)=v \otimes g\mu^1_\cF(\psi)
 \end{array}
\right.
\end{align}
for $v \otimes \psi \in V \otimes_{\Gamma} hom_\cF(X,\eP)$ so 
\begin{align}
\mu^{1|0}: v \otimes \psi \mapsto (-1)^{|\psi|-1}\mu^1_V(v) \otimes \psi + v \otimes \mu_\cF^1(\psi)
\end{align}
is a well-defined differential on $V \otimes_{\Gamma} hom_\cF(X,\eP)$.

The $A_{\infty}$ right $\cF$ module structure on $ V \otimes_{\Gamma} \eP$ is given by
\begin{align}
 \mu^{1|d-1}: (v \otimes \psi, x_{d-1}, \dots,x_1) \mapsto v \otimes \mu_\cF^d(\psi,x_{d-1},\dots,x_1)
\end{align}
for $v \otimes \psi \in V \otimes_{\Gamma} hom_\cF(X_d,\eP)$
and $x_j \in hom_\cF(X_{j},X_{j+1})$.
The morphism $\mu^{1|d-1}$ is well-defined by Corollary \ref{c:ActionCommute} and we leave it to readers to check that $\{\mu^{1|j}\}_{j=0}^\infty$
satisfies $A_{\infty}$ module relations \cite[Equation (1.19)]{Seidelbook}.
In particular, we have an $A_{\infty}$ right $\cF$ module
$hom_{\mathcal{F}}(\eP,\eE) \otimes_{\Gamma} \eP$.

Now we want to define an $A_{\infty}$ morphism
\begin{align}
 ev_\Gamma: hom_{\mathcal{F}}(\eP,\eE) \otimes_{\Gamma} \eP \to \eE
\end{align}
as follows. For $\psi^2 \otimes \psi^1 \in hom_{\mathcal{F}}(\eP,\eE) \otimes_{\Gamma} hom_\cF(X_d,\eP)$
and $x_j \in hom_\cF(X_{j},X_{j+1})$, we define
\begin{align}
 ev_\Gamma^d: (\psi^2 \otimes \psi^1, x_{d-1}, \dots,x_1) \mapsto \mu^{d+1}_\cF(\psi^2,\psi^1, x_{d-1}, \dots,x_1)。 \label{eq:evaluationMap}
\end{align}
The well-definedness follows from Corollary \ref{c:ActionCommute} again.
The fact that $ev_\Gamma=\{ev_\Gamma^d\}_{d=1}$ defines an $A_{\infty}$ morphism follows from the $A_{\infty}$ relations of $\cF$.
As a consequence, we can define 
\begin{align}
 T_{\eP}(\eE):=Cone(hom_{\mathcal{F}}(\eP,\eE) \otimes_{\Gamma} \eP \xrightarrow{ev_\Gamma} \eE) \label{eq:TcE}
\end{align}
as the $A_{\infty}$ mapping cone for the $A_{\infty}$ morphism $ev_\Gamma$ (see \cite[Section (3e)]{Seidelbook}).
In particular, for $X \in Ob(\cF)$, we have a cochain complex
\begin{align}
 T_{\eP}(\eE)(X)=(hom_{\mathcal{F}}(\eP,\eE) \otimes_{\Gamma} hom_{\cF}(X,\eP))[1] \oplus hom_{\cF}(X,\eE)
\end{align}
with differential and multiplication given by
\begin{align}
 \mu^1_{T_{\eP}(\eE)}(\psi^2 \otimes \psi^1,x)&=((-1)^{|\psi^1|-1}\mu^1_\cF(\psi^2) \otimes \psi^1 + \psi^2 \otimes \mu_\cF^1(\psi^1),\mu^1_\cF(x)+\mu^2_\cF(\psi^2, \psi^1)) \label{eq:mappingCone}\\
 \mu^2_{T_{\eP}(\eE)}((\psi^2 \otimes \psi^1,x),a)&=(\psi^2 \otimes \mu^2_\cF(\psi^1,a), \mu^2_\cF(x,a)+\mu^3_\cF(\psi^2,\psi^1,a)) \label{eq:mappingMu2}
 \end{align}

Finally, we want to state a functorial property of $T_{\eP}(\eE)$.

\begin{corr}[Corollary \ref{c:functoralityAuto}]\label{c:ConeIndepAuxData}
 Let  $\cF_0, \cF_1$ be the Fukaya categories with respect to two different sets of choices of auxiliary data. 
 The Lagrangian branes $\eP, \eE$ above will be denoted by $\eP_j, \eE_j$, respectively, when
 we regard them as objects in $\cF_j$, for $j=0,1$.
 Let $\cG:\cF_0 \to \cF_1$ be a quasi-equivalence
 sending $\eP_0$ to $\eP_1$ and $\eE_0$ to $\eE_1$.
 Then 
 \begin{align*}
  \cG(T_{\eP_0}( \eE_0)) \simeq T_{\eP_1}(\eE_1)
 \end{align*}

\end{corr}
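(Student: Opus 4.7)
The plan is to reduce the statement to three standard facts: (i) any $A_\infty$ quasi-equivalence $\cG:\cF_0\to\cF_1$ extends to a quasi-equivalence $\cG^{\perf}:\cF_0^{\perf}\to\cF_1^{\perf}$, (ii) $A_\infty$ functors preserve mapping cones up to quasi-isomorphism, and (iii) the construction $V\otimes_\Gamma\eP$ and the evaluation map $ev_\Gamma$ are themselves functorial in $\eP$ and $\eE$, once one works modulo the chain-level subtleties of the $\Gamma$-action.

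First, I would observe that it suffices to construct a homotopy-commutative square
\[
\begin{tikzcd}[column sep=large]
\cG^{\perf}\bigl(hom_{\cF_0}(\eP_0,\eE_0)\otimes_\Gamma \eP_0\bigr)\arrow[r,"\cG^{\perf}(ev^0_\Gamma)"]\arrow[d,"\simeq"] & \cG^{\perf}(\eE_0)\arrow[d,"\simeq"] \\
hom_{\cF_1}(\eP_1,\eE_1)\otimes_\Gamma \eP_1 \arrow[r,"ev^1_\Gamma"] & \eE_1
\end{tikzcd}
\]
in $\cF_1^{\perf}$ in which the vertical arrows are quasi-isomorphisms. Given such a square, fact (ii) together with the invariance of cones under quasi-isomorphisms of morphisms will yield $\cG(T_{\eP_0}(\eE_0))\simeq T_{\eP_1}(\eE_1)$.

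The right vertical arrow is provided by $\cG(\eE_0)\simeq \eE_1$ by hypothesis. For the left vertical arrow I would proceed in two steps. First, the $A_\infty$ functor $\cG$ induces a chain map $\cG_{\eP_0,\eE_0}\co hom_{\cF_0}(\eP_0,\eE_0)\to hom_{\cF_1}(\eP_1,\eE_1)$ which is a quasi-isomorphism because $\cG$ is a quasi-equivalence and both $\eP_0\mapsto\eP_1$, $\eE_0\mapsto\eE_1$. By Corollary \ref{c:ActionCoincideRight} the local-system right $\Gamma$-action \eqref{eq:Right_on_cE} agrees, at the level of cohomology, with right composition by the classes $\theta_g\in HF^0(\eP_j,\eP_j)$; since $\cG$ takes $\theta_g^{(0)}$ to a representative of $\theta_g^{(1)}$ (both being characterised categorically by Lemma \ref{l:universal} and Lemma \ref{l:ActionCoincide}), the map $\cG_{\eP_0,\eE_0}$ is $\Gamma$-equivariant up to homotopy. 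Second, having chosen a projective (e.g.\ bar) resolution of $\K$ as a $\Gamma$-module, the derived tensor product with $\eP$ sends such homotopy-equivariant quasi-isomorphisms to quasi-isomorphisms in $\cF_1^{\perf}$, and this gives the desired left vertical arrow. Homotopy commutativity of the square then follows because $ev_\Gamma$ is built out of the $A_\infty$ compositions $\mu^{d+1}_\cF$ in \eqref{eq:evaluationMap}, and these are preserved by $\cG$ on the nose up to the $A_\infty$ homotopies that define a functor.

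The main obstacle is the chain-level mismatch between the local-system $\Gamma$-action and the categorical $\theta$-action under $\cG$: $\cG$ only preserves $A_\infty$ composition up to higher terms, so $\cG_{\eP_0,\eE_0}$ is not strictly $\Gamma$-equivariant for the action \eqref{eq:Right_on_cE}, merely homotopy-equivariant via the intertwining supplied by Corollary \ref{c:ActionCoincideRight}. The cleanest way to bypass this, which I would use if the direct homotopical bookkeeping becomes cumbersome, is to invoke the algebraic perspective of Section \ref{s:Algebraic perspective}: $T_{\eP}(\eE)$ is the image of $\eE$ under the twist auto-equivalence of the spherical functor $V\mapsto V\otimes_\Gamma\eP$, and any quasi-equivalence $\cG$ sending $\eP_0$ to $\eP_1$ intertwines the two spherical functors up to natural isomorphism, and hence intertwines their twist auto-equivalences.
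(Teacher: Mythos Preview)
Your proposal is correct, and your fallback in the final paragraph is exactly the paper's route: the paper proves this as Corollary~\ref{c:functoralityAuto} by combining Proposition~\ref{p:algebraicLagTwist} (which identifies $T_{\eP}$ with the twist auto-equivalence $\cT_{F_\otimes(\widetilde{\cB}_\eP)}$ of the spherical functor built from the bimodule $\widetilde{\cB}_\eP$) and Lemma~\ref{l:conjugationInvariant} (which shows that for any quasi-equivalence $\cG$, the associated graph-bimodule functor $F_\otimes(\cB_\cG)$ conjugates the twist $Cone(c_{sr})$ for $\cF_\eP$ to the twist $Cone(c_{sr}')$ for $\cG\circ\cF_\eP$). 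The paper also notes that a direct argument \`a la \cite[Lemma~5.6]{Seidelbook} exists and leaves it to the reader, which is the spirit of your first approach.

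One remark on your direct approach: your introduction of a bar resolution to handle homotopy-$\Gamma$-equivariance is slightly off-target in this setting, because $hom_{\cF}(X,\eP)$ is already a free $\Gamma$-module at the chain level (each generator sits in a copy of $E_q\cong\K[\Gamma]$), so $\otimes_\Gamma$ is the underived tensor product. The chain-level equivariance obstacle you identify is real for an abstract $A_\infty$ quasi-equivalence, and this is precisely why the paper routes through the bimodule formalism rather than chasing the square directly: once $T_\eP$ is recast as $\cT_{F_\otimes(\widetilde{\cB}_\eP)}$, the $\Gamma$-action is absorbed into the bimodule structure $\widetilde{\cB}_\eP$ (via \eqref{eq:sphericalBimod2}), and Lemma~\ref{l:conjugationInvariant} handles the intertwining at the level of $R$--$\cA$ bimodules without having to separately track $\Gamma$-equivariance of the functor $\cG$.
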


There is a direct proof along the same line as \cite[Lemma $5.6$]{Seidelbook} and is left to interested readers.  We will show that it is also an immediate consequence
from the general disucssion in Section \ref{s:Algebraic perspective}.

\section{Symplectic field theory package} 
\label{sec:review_of_symplectic_field_theory_and_dimension_formulae}

In this section, we review the necessary background for symplectic field theory.
More details can be found in \cite{SFTcompact}, \cite{EES05}, \cite{EES07}, \cite{CEL10}, \cite{CDGG} and etc.

\subsection{The set up} 
\label{sub:the_set_up_and_dimension_formulae}


Let $(Y,\alpha)$ be a contact manifold with a contact form $\alpha$.

\begin{defn}\label{d:cylindricalJ}
A cylindrical almost complex structure on $SY:=(\R \times Y,d(e^r\alpha))$ is an almost complex structure such that
\begin{itemize}
 \item $J$ is invariant under $\R$ action
 \item $J(\partial_r)=R_{\alpha}$, where $R_{\alpha}$ is the Reeb vector field of $\alpha$
 \item $J(\ker(\alpha))=\ker(\alpha)$
 \item $d\alpha(\cdot, J\cdot)|_{\ker(\alpha)}$ is a metric on $\ker(\alpha)$
\end{itemize}
\end{defn}

The set of cylindrical almost complex structures is denoted by $\mathcal{J}^{cyl}(Y,\alpha)$.
If $I \subset \mathbb{R}$ is an interval, we call $J$ a cylindrical almost complex structure of $(I \times Y,d(e^r\alpha))$ if $J=J'|_{I \times Y}$
for some $J' \in \mathcal{J}^{cyl}(Y,\alpha)$.
Let $(M,\omega,\theta)$ be a Liouville domain with a separating contact hypersurface $(Y, \alpha=\theta|_Y)$ such that $Y \cap \partial M=\emptyset$.
By the neighborhood theorem, there is a neighborhood $N(Y) \subset M$ of $Y$ such that we have a symplectomorphism
\begin{align}
\Phi_{N(Y)}:(N(Y),\omega|_{N(Y)}) \simeq ((-\epsilon,\epsilon) \times Y,d(e^t \alpha)) \label{eq:Yneighborhood}
\end{align}
for some $\epsilon >0$.

Let $J^0$ be a compatible almost complex structure on $M$ such that $(\Phi_{N(Y)})_*(J^0|_{N(Y)})$ is cylindrical.
We say that a smooth family of compactible almost complex structure $(J^{\tau})_{\tau \in [0,\infty)}$ on $M$ is {\it adjusted to $N(Y)$} if
\begin{align}
 \left\{ \label{eq:SFTJ}
 \begin{array}{ll}
 J^\tau|_{M \setminus N(Y)}=J^0|_{M \backslash N(Y)} \text{ for all } \tau \\
 \text{for each $\tau$, we have }\Phi^{\tau}_{N(Y)}:(N(Y),J^{\tau}|_{N(Y)}) \simeq ((-(\tau+\epsilon),\tau+\epsilon) \times Y, (J^\tau)') 
 \end{array}
\right.
\end{align}
where $\Phi^{\tau}_{N(Y)}$ is an isomorphism of almost complex manifolds, the diffeomorphism $\Phi^{\tau}_{N(Y)} \circ (\Phi_{N(Y)})^{-1}$ is the identity on the $Y$ factor, and 
$(J^\tau)'$ is the unique cylindrical almost complex structure such that $(J^\tau)'|_{(-\epsilon,\epsilon) \times Y}=(\Phi_{N(Y)})_*(J^0|_{N(Y)})$.

Let $M^-$ be the Liouville domain in $M$ bounded by $Y$ and $M^+=M \backslash (M^- \backslash \partial M^-)$.
Let $SM^-$ and $SM^+$ be the positive and negative symplectic completion of $M^-$ and $M^+$, respectively.
Given $(J^{\tau})_{\tau \in [0,\infty)}$, there is a unique almost complex structure $J^-$, $J^Y$ and $J^+$ on $SM^-$, $SY$ and $SM^+$, respectively, such that
$(M^-,J^\tau|_{M^-})$, $(N(Y),J^{\tau}|_{N(Y)})$ and $(M^+,J^\tau|_{M^+})$ converges to $(SM^-,J^-)$, $(SY,J^Y)$ and $(SM^+,J^+)$, respectively, as $\tau$ goes to infinity.
More details about this splitting procedure can be found in \cite[Section $3$]{SFTcompact}.

\begin{rmk}\label{r:FlexibleJ}
 There is a variant for being adjusted to $N(Y)$.
 For a fixed number $R \ge 0$, we call a smooth family of compactible almost complex structure $(J^{\tau})_{\tau \in [3R,\infty)}$ on $M$ is {\it $R$-adjusted to $N(Y)$} if
 \eqref{eq:SFTJ} is satisfied but the property of $(J^\tau)'$ is replaced by the following conditions.
 \begin{align*}
 \left\{ 
 \begin{array}{ll}
 (J^\tau)'|_{[-(\tau+\epsilon-2R),\tau+\epsilon-2R] \times Y} \text{ is cylindrical for all }\tau \\
 (J^\tau)'|_{(-\epsilon,\epsilon) \times Y}=(\Phi_{N(Y)})_*(J^0|_{N(Y)}) \text{ for all }\tau \\
 (J^{\tau_1})'|_{(-(\tau_1+\epsilon),-(\tau_1+\epsilon-2R)] \times Y}= (\phi_{\tau_1,\tau_2}^-)_* (J^{\tau_2})'|_{(-(\tau_2+\epsilon),-(\tau_2+\epsilon-2R)] \times Y} 
 \text{ for all } \tau_1,\tau_2\\
 (J^{\tau_1})'|_{[\tau_1+\epsilon-2R,\tau_1+\epsilon) \times Y}= (\phi_{\tau_1,\tau_2}^+)_* (J^{\tau_2})'|_{[\tau_2+\epsilon-2R,\tau_2+\epsilon) \times Y} \text{ for all } \tau_1,\tau_2
 \end{array}
\right.
\end{align*}
 where $\phi_{\tau_1,\tau_2}^-:(-(\tau_2+\epsilon),-(\tau_2+\epsilon-2R)] \times Y \to (-(\tau_1+\epsilon),-(\tau_1+\epsilon-2R)] \times Y $
 and $\phi_{\tau_1,\tau_2}^+:[\tau_2+\epsilon-2R,\tau_2+\epsilon) \times Y \to [\tau_1+\epsilon-2R,\tau_1+\epsilon) \times Y$
 are the $r$-translation.

When $R=0$, being $R$-adjusted to $N(Y)$ is the same as being adjusted to $N(Y)$.
For $R > 0$, we can also define $J^\pm, J^Y$ accordingly.
 In this case, $J^+$ (resp. $J^-$) are cylindrical over the end $(-\infty,-2R] \times \partial M^+ \subset SM^+$ (resp. $[2R,\infty) \times \partial M^- \subset SM^-$).
\end{rmk}

Let $L$ be a Lagrangian submanifold in $M$ such that 
$L \cap N(Y)=(-\epsilon,\epsilon)\times \Lambda$ for some (possibly empty) Legendrian submanifold $\Lambda$.
Let $L^\pm:=L \cap M^\pm$. We define $SL^-=L^- \cup (\R_{\ge 0} \times \Lambda) \subset SM^-$
and $SL^+=L^+ \cup (\R_{\le 0} \times \Lambda) \subset SM^+$ which are the cylindrical extensions of $L^-$ and $L^+$ with respect to the symplectic completion.
We denote $\R \times \Lambda \subset SY$ by $S\Lambda$.

The main ingredient we needed from \cite{SFTcompact} is the following compactness result in symplectic field theory.

\begin{thm}[\cite{SFTcompact} Theorem 10.3 and Section 11.3; see also \cite{CEL10}]\label{t:SFTcompactness}
    Let $L_j$, $j=0,\dots,d$ be a collection of embedded exact Lagrangian submanifolds in $M$ such that $L_i \pitchfork L_j$ for all $i \neq j$.  
    Let $(Y,\alpha)\subset M$ be a contact type hypersurface and 
    $(N(Y),\w|_{N(Y)})\cong ((-\epsilon,\epsilon)\times Y, d(e^r\alpha))$ be a neighborhood of $Y$ such that 
    $L_i\cap N(Y)=(-\epsilon,\epsilon)\times \Lambda_i$ for some (possibly empty) Legendrian submanifold $\Lambda_i$ of $Y$.

    Let $J^\tau$ be a smooth family of almost complex structures $R$-adjusted to $N(Y)$.
    Let $x_0 \in CF(L_0,L_d)$ and $x_j \in CF(L_{j-1},L_j)$ for $j=1,\dots,d$.
    If there exists a sequence $\{\tau_k\}_{k=1}^\infty$ such that $\lim_{k \to \infty} \tau_k =\infty$, and a sequence
    $u_k \in \eM^{J^{\tau_k}}(x_0;x_d,\dots,x_1)$,  then $u_k$ converges to a holomorphic building $u_\infty=\{u_v\}_{v \in V(\eT)}$ in the sense of \cite{SFTcompact}.
\end{thm}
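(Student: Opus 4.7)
The plan is to invoke the main compactness theorem of Bourgeois--Eliashberg--Hofer--Wysocki--Zehnder from \cite{SFTcompact} -- in the Lagrangian version developed in \cite{CEL10} -- and to verify that the $R$-adjusted setup here fits their framework. Since the moduli $\eM^{J^{\tau_k}}$ uses $K \equiv 0$ and each $J^{\tau_k}$ is domain-independent (Remark \ref{r:ModuliNotationJ}), every $u_k$ is an honest $J^{\tau_k}$-holomorphic map from a fixed disk with $d+1$ boundary punctures, with boundary on $L_0, \ldots, L_d$ and positive/negative asymptotics at the fixed intersection points $x_0, x_1, \ldots, x_d$.

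The first step is to establish a uniform energy bound. Exactness of the $L_j$ together with fixed asymptotics at the $x_j$ yields a bound on $\int u_k^*\omega$ independent of $k$ via Stokes' theorem and the primitives of $\theta$ on each $L_j$ (plus the action differences of the chords/intersection points). Over the cylindrical portion of the neck, where $J^{\tau_k}$ is cylindrical and the $L_i$ restrict to Legendrian cylinders $\R \times \Lambda_i$, this symplectic energy translates into a uniform Hofer energy bound as in \cite[Section 9]{SFTcompact}. With this bound in hand, I would decompose the target for each $\tau_k$ into three regions: the piece $SM^+$ (minus its negative cylindrical end beyond $r=-2R$), the central cylindrical neck of length $2(\tau_k+\epsilon-2R)$, and the piece $SM^-$ (minus its positive cylindrical end beyond $r=2R$). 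Applying Gromov and SFT compactness separately on each region extracts subsequential limits that assemble into a holomorphic building $\{u_v\}_{v\in V(\eT)}$ with levels in $SM^+$, possibly several intermediate levels in $SY$, and $SM^-$, with asymptotic Reeb chords on the $\Lambda_i$ and Reeb orbits in $Y$ matching across levels.

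The main point requiring care is the $R$-adjusted flexibility, since $J^{\tau_k}$ is not cylindrical everywhere in $[-(\tau_k+\epsilon),\tau_k+\epsilon]\times Y$, only on the central portion of length $2(\tau_k+\epsilon-2R)$. However, by definition of $R$-adjusted, the two non-cylindrical caps of length $R$ at the ends of the neck are $\tau$-independent (identified via the translations $\phi^\pm_{\tau_1,\tau_2}$), so any bubbling or breaking in them is indistinguishable from bubbling in $SM^\pm$ after passing to the limit; I would simply absorb these fixed caps into the top and bottom levels of the building. The remaining genuinely cylindrical portion is then what gets stretched to infinity, and the SFT compactness of \cite{SFTcompact} together with its Lagrangian boundary extension of \cite{CEL10} apply verbatim. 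The main obstacle is purely bookkeeping: organizing strip-like ends at the punctures $x_j$ simultaneously with Reeb-type breakings at interior nodes of the domain tree $\eT$, but this is precisely what the unified compactification of \cite{CEL10} handles.
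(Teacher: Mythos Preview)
The paper does not give a proof of this theorem at all: it is stated as a citation of \cite[Theorem 10.3 and Section 11.3]{SFTcompact} and \cite{CEL10}, followed only by a remark that domain-independent $J^\tau$ and non-transversal $u_k$ are permitted, and then a lengthy \emph{description} of the structure of the limiting building $u_\infty$. Your sketch therefore goes further than the paper does, and it is essentially correct as an outline of how the cited results apply in the present $R$-adjusted setting: the uniform energy bound from exactness, the absorption of the $\tau$-independent non-cylindrical caps (of length $2R$, not $R$) into $SM^\pm$, and the invocation of \cite{CEL10} for the Lagrangian boundary version are all the right ingredients. Nothing more is expected here.
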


We remark that each $J^\tau$ above is a domain independent almost complex structure (see Remark \ref{r:ModuliNotationJ})
and we do not need to assume $u_k$ to be transversally cut out to apply Theorem \ref{t:SFTcompactness}.

The rest of this subsection is devoted to the description/definition of $u_\infty=\{u_v\}_{v \in V(\eT)}$ in Theorem \ref{t:SFTcompactness}.
The definition is quite well-known so we only give a quick review and introduce necessary notations along the way.

First, $\eT$ is a tree with $d+1$ semi-infinite edges and one of them is distinguished which is called the root.
The other semi-infinite edges are ordered from $1$ to $d$ and called the leaves.
Let $V(\eT)$ be the set of vertices of $T$.
For each $v \in V(\eT)$, we have a punctured Riemannian surface $\Sigma_v$.
If $\partial \Sigma_v \neq \emptyset$, there is a distinguished boundary puncture which is denoted by $\xi^v_0$.
After filling the punctures of $\Sigma_v$, it is a topological disk so we can label the other boundary punctures of $\Sigma_v$
by $\xi^v_1, \dots \xi^v_{d_v}$ counterclockwise along the boundary, where $d_v+1$ is the number of boundary punctures of $\Sigma_v$.
Let $\partial_{j} \Sigma_v$ be the component of $\partial \Sigma_v$ that goes from $\xi^v_j$ to $\xi^v_{j+1}$ for $j=0,\dots,d_v-1$, and 
$\partial_{d_v} \Sigma_v$ be the component of $\partial \Sigma_v$ that goes from $\xi^v_{d_v}$ to $\xi^v_{0}$.
If $\partial \Sigma_v = \emptyset$, then $\Sigma_v$ is a sphere after filling the punctures.

There is a bijection $f_v$ from the punctures of $\Sigma_v$ to the edges in $\eT$ adjacent to $v$.
Moreover, $f_v(\xi^v_0)$ is the edge closest to the root of $\eT$ among edges adjacent to $v$.
If $v,v'$ are two distinct vertices adjacent to $e$, then $f_v^{-1}(e)$ and $f_{v'}^{-1}(e)$
are either both boundary punctures or both interior punctures.
We call $e$ a {\it boundary edge} (resp. an {\it interior edge}) if $f_v^{-1}(e)$ is a boundary (resp. an interior) puncture.
We can glue $\{\Sigma_v\}_{v \in V(\eT)}$ along the punctures according to the edges and $\{f_v\}_{v \in V(\eT)}$ 
(i.e. $\Sigma_v$ is glued with $\Sigma_{v'}$ by identifying $f_v^{-1}(e)$ with $f_{v'}^{-1}(e)$ if $v,v'$ are two distinct vertices adjacent to $e$).
After gluing, we will get back $S$, the domain of $u_k$, topologically.
Therefore, there is a unique way to assign Lagrangian labels to $\partial \Sigma_v$ such that it is compatible with gluing and coincides
with that on $\partial S$ after gluing all $\Sigma_v$ together.
We denote the resulting Lagrangian label on $\partial_j \Sigma_v$ by $L_{v,j}$.

There is a level function 
$l_\eT:V(\eT) \to \{0,\dots, n_\eT\}$ for some positive integer $n_\eT$.
If $l_\eT(v)=0$, then  $u_v : \Sigma_v \to SM^-$ is a $J^-$-holomorphic curve such that $u_v({\partial_j \Sigma_v}) \subset SL_{v,j}^-$.
If $l_\eT(v)=1,\dots, n_\eT-1$, then $u_v : \Sigma_v \to SY$ is a $J^Y$-holomorphic curve such that $u_v({\partial_j \Sigma_v}) \subset S\Lambda_{v,j}$.
If $l_\eT(v)=n_\eT$, then  $u_v : \Sigma_v \to SM^+$ is a $J^+$-holomorphic curve such that $u_v({\partial_j \Sigma_v}) \subset SL_{v,j}^+$.

If $v \neq v'$ are adjacent to the same edge $e$ in $\eT$, then $|l_\eT(v)-l_\eT(v')| \le 1$.
If $l_\eT(v)+1=l_\eT(v')$ and $e$ is a boundary (resp. interior) edge, then there is a Reeb chord (resp. orbit)
which is the positive asymptote of $u_v$ at $f^{-1}_v(e)$, and the negative asymptote of $u_{v'}$ at $f^{-1}_{v'}(e)$ (see Convention \ref{con:Ends}).
If $l_\eT(v)=l_\eT(v')$, then $e$ is necessarily a boundary edge, $l_\eT(v)=l_\eT(v') \in \{0,n_\eT\}$ and  
$u_v,u_{v'}$ converges to the same Lagrangian intersection point at $f^{-1}_v(e),f^{-1}_{v'}(e)$, respectively.
If $e$ is the $j^{th}$ semi-infinite edge adjacent to $v$, then $u_v$ is asymptotic to $x_j$ at $f^{-1}_v(e)$.

Finally, for each $j=1,\dots,n_\eT-1$, there is at least one $v \in V(\eT)$ such that $l_\eT(v)=j$ and $u_v$ is not a trivial cylinder
(i.e. $u_v$ is not a map $\R \times [0,1] \to SY$ (or $\R \times S^1 \to SY$) such that 
\begin{align}
 u_v(s,t)=(f_r(s),f_Y(t)) \in \R \times Y \label{eq:TrivialCyl}
\end{align}
for some $f_r$, $f_Y$).
We use $\eM^{J^{\infty}}(x_0;x_d,\dots,x_1)$ to denote the set of such holomorphic buildings.

 Let $V^{core}$ be the set of vertices $v \in V(\eT)$ such that more than one Lagrangian appears in the Lagrangian labels of $\partial \Sigma_v$.
 Let $V^{\partial}$ be the set of vertices $v \in V(\eT)$ such that there is only one Lagrangian appears in the Lagrangian labels of $\partial \Sigma_v$.
Let $V^{int}$ be the set of vertices $v \in V(\eT)$ such that $\partial \Sigma_v= \emptyset$.
In particular, we have $V(\eT)=V^{core} \sqcup V^{\partial} \sqcup V^{int}$.
 Let $\eT^{core}$, $\eT^{\partial}$ and $\eT^{int}$ be the subgraphs of $\eT$ which consists of vertices $V^{core}$, $V^{\partial}$ and $V^{int}$, respectively,
 and edges adjacent to its vertices (see Figure \ref{fig:eg_tree-2} for an example).
 
\begin{lemma}\label{l:TreeConfig}
 The graphs $\eT^{(1)}:=\eT^{core} \backslash \eT^{int} $ and $\eT^{(2)}:=(\eT^{core} \cup \eT^{\partial}) \backslash \eT^{int}$  are planar trees (so, in particular, they are connected).
\end{lemma}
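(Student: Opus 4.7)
Both graphs are acyclic as subgraphs of the tree $\eT$, and planarity is inherited from the planar embedding of $\eT$ arising from its realization as the dual graph of a nodal degeneration of the disk $S$. It therefore suffices to prove connectedness of each. The cyclic sequence of Lagrangian labels $L_0,L_1,\ldots,L_d$ along $\partial S$, with the transition $L_{j-1}\to L_j$ occurring at $x_j$, will be the central structural input.

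The first step is to show that $|V^{core}|=1$ by a winding argument. For any $v\in V^{core}\cup V^{\partial}$, traversing each boundary component of $\Sigma_v$ counterclockwise yields a closed walk on the cyclic label graph whose forward transitions are precisely the $x_j$'s lying on that component; closure of the walk forces the count of $x_j$'s on each component to be a nonnegative multiple of $d+1$. Summing over all boundary components of all vertices recovers the total $d+1$, so exactly one boundary component contains all of the $x_j$'s, and its vertex is the unique element of $V^{core}$. Consequently $\eT^{(1)}$ consists of a single vertex together with some dangling edges (semi-infinite at the $x_j$'s, or going to removed $V^{\partial}$ and $V^{int}$ vertices), so it is trivially a planar tree.

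For $\eT^{(2)}$, my plan is to exhibit a connected spanning subgraph built entirely from boundary edges. The key observation is that every $v\in V^{core}\cup V^{\partial}$ has a boundary arc on $\partial S$: in the nodal picture only boundary \emph{punctures} are identified to form boundary nodes, so every boundary arc of $\Sigma_v$ persists and becomes, after smoothing those nodes, a sub-arc of $\partial S$. Traversing $\partial S$ cyclically therefore visits every vertex of $V^{core}\cup V^{\partial}$, with consecutive distinct vertices separated by a boundary node and hence joined by a boundary edge of $\eT$. Since every boundary edge of $\eT$ has both endpoints in $V^{core}\cup V^{\partial}$ (vertices in $V^{int}$ are closed and thus carry no boundary punctures), no boundary edge lies in $\eT^{int}$, and all such edges survive in $\eT^{(2)}$, providing a connected spanning subgraph. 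Thus $\eT^{(2)}$ is a planar tree.

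The main obstacle will be ruling out the pathological possibility that some boundary circle of some $\Sigma_v$ closes off into an interior curve of $\Sigma$ under smoothing (making the underlying vertex invisible from $\partial S$). This is precisely where the tree property of $\eT$ is essential: such a closing-off would introduce a cycle in the boundary-edge subgraph of $\eT$, contradicting acyclicity. Combined with the disk topology of $S$, which forces all boundary arcs to contribute to the single boundary component of the smoothed model, this rules out the pathology.
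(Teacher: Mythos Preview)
Your argument for $\eT^{(1)}$ has a genuine gap: the claim that $|V^{core}|=1$ is false. For a concrete counterexample, take $d=2$ and let the building arise from pinching a single boundary chord of $S$ whose endpoints lie on $\partial_0 S$ and $\partial_1 S$. The two resulting disk components carry boundary labels $\{L_0,L_1\}$ and $\{L_0,L_1,L_2\}$ respectively, so both lie in $V^{core}$, and neither contains all three of the $x_j$. The flaw in the winding argument is the implicit assumption that the Lagrangian label stays constant across internal boundary nodes of the building and changes only at the $x_j$: in fact, at any node coming from a chord between $\partial_a S$ and $\partial_b S$ with $a\neq b$, the label jumps between $L_a$ and $L_b$ on each adjacent component. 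Once such steps are permitted, closure of the walk no longer forces the number of $x_j$ on a component to be a multiple of $d+1$.

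The paper's approach is quite different and does not attempt to bound $|V^{core}|$. It takes the smallest subtree $G\subset\eT$ spanning $V^{core}$ and shows directly that $G$ contains no vertex of $V^{\partial}$ or $V^{int}$. A vertex of $V^{int}$ on a path between two boundary-carrying vertices would prevent the glued domain from being a disk; a vertex of $V^{\partial}$ (all of whose boundary arcs carry a single label $L_i$) on a path between two $V^{core}$ vertices would force the same Lagrangian to label more than one arc of $\partial S$, contradicting pairwise transversality of the $L_j$. Hence $G\subset\eT^{(1)}$, and $\eT^{(1)}$ is connected.

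Your argument for $\eT^{(2)}$ via traversal of $\partial S$, together with the observation that no boundary circle of any $\Sigma_v$ can close off internally (since $\eT$ is a tree and $S$ is a disk), is correct and gives a pleasant alternative to the paper's spanning-subtree argument for that case. Note, however, that this traversal does not adapt to $\eT^{(1)}$: the walk along $\partial S$ will in general pass through $V^{\partial}$ vertices between consecutive $V^{core}$ vertices, and those intermediate vertices are absent from $\eT^{(1)}$.
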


\begin{proof}
 Let $G$ be the smallest subtree of $\eT$ containing $\eT^{(1)}$. If there is a vertice $v$ in $G$ such that $v \in V^{int}$, then it would imply that $S$, the domain of $u_k$, is not a disk.
 If there is a vertice $v$ in $G$ such that $v \in V^{\partial}$, then  it would imply that there is a Lagrangian 
 that appears more than once in the Lagrangian label of $\partial S$. Both of these situations are not possible.
 
 Similarly, let $G'$ be the smallest subtree of $\eT$ containing $\eT^{(2)}$. If there is a vertice $v$ in $G'$ such that $v \in V^{int}$, then 
 it would imply that $S$ is not a disk and we get a contradiction.
 
 As a result, $G=\eT^{(1)}$ and $G'=\eT^{(2)}$ so
 both $\eT^{(1)}$ and $\eT^{(2)}$  are trees.
 
 The fact that $\eT^{(1)}$ and $\eT^{(2)}$   are planar follows from the fact that we can order the boundary punctures of $\Sigma_v$,
 for $v \in V^{core} \cup V^{\partial}$, in a way that is compatible with the boundary orientation.
\end{proof}

\begin{figure}
  \centering
  \includegraphics[]{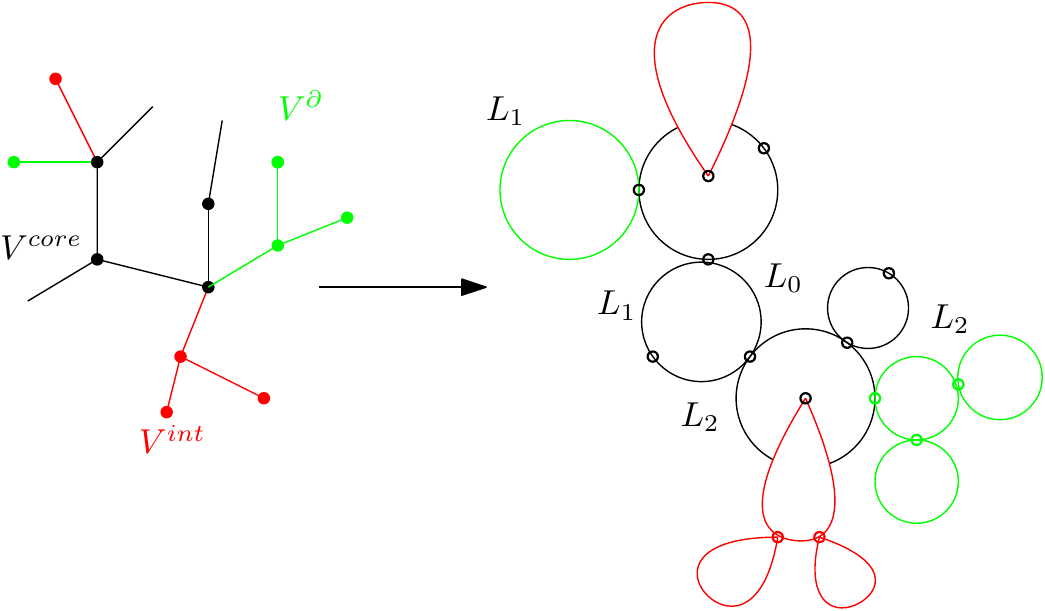}
  \caption{A tree $\eT$ with $2$ leaves. Black dots: elements in $V^{core}$; Green dots: elements in $V^{\partial}$; Red dots: elements in $V^{int}$;
  Black tree: $\eT^{core} \backslash (\eT^{\partial} \cup \eT^{int})$; Green sugbraph: $\eT^{\partial} \backslash \eT^{int}$; Red subgraph: $\eT^{int}$}
  \label{fig:eg_tree-2}
\end{figure}

\begin{convention}\label{con:Ends}
 We need to explain the convention of strip-like ends and cylindrical ends we use for punctures of $\Sigma_v$.
 Let $e$ be an edge in $\eT$ and $v \neq v'$ are the vertices adjacent to $e$.

 First assume that $l_\eT(v)+1=l_\eT(v')$.
 If $e$ is a boundary (resp. interior) edge, we use an outgoing/positive strip-like end \eqref{eq:outgoingStripEnds} (resp. cylindrical end) for $f_v^{-1}(e)$,
 where an outgoing/positive cylindrical end for $f_v^{-1}(e)$ is a holomorphic embedding of $\epsilon_{v,e}:\{z=s\exp(\sqrt{-1}t) \in \C| s \ge 1\} \to \Sigma_v$ such that
 $\lim_{|z| \to \infty} \epsilon_{v,e}(z)=f_v^{-1}(e)$.
 With respect to coordinates given by the strip-like (resp. cylindrical) end $\epsilon_{v,e}$, we have
 \begin{align}
 \left\{
 \begin{array}{ll}
  \lim_{s \to \infty} \pi_Y(u_v(\epsilon_{v,e}(s,t)))=x(Tt) \text{   (resp. } \gamma(Tt) \text{)} \\
  \lim_{s \to \infty} \pi_\R(u_v(\epsilon_{v,e}(s,t)))=\infty
 \end{array}
\right.
 \end{align}
for some Reeb chord $x$ (resp. orbit $\gamma$) and some $T>0$, where $\pi_Y,\pi_\R$ are the projection from $SY$ to the two factors.
In this case, we call $x$ (resp. $\gamma$) the positive asymptote of $u_v$ at $f_v^{-1}(e)$.

 On the other hand, we use an incoming/negative strip-like end \eqref{eq:incomingStripEnds} (resp. cylindrical end) for $f_{v'}^{-1}(e)$,
 where an  incoming/negative cylindrical end for $f_{v'}^{-1}(e)$ is a 
 holomorphic embedding of $\epsilon_{v',e}:\{z=s\exp(\sqrt{-1}t) \in \C| 0 < s \le 1\} \to \Sigma_{v'}$ such that
 $\lim_{|z| \to 0} \epsilon_{v',e}(z)=f_{v'}^{-1}(e)$.
 With respect to coordinates given by the strip-like (resp. cylindrical) end $\epsilon_{v',e}$, we have
 \begin{align}
 \left\{
 \begin{array}{ll}
  \lim_{s \to 0} \pi_Y(u_v(\epsilon_{v',e}(s,t)))=x(Tt) \text{   (resp. } \gamma(Tt) \text{)} \\
  \lim_{s \to 0} \pi_\R(u_v(\epsilon_{v',e}(s,t)))=-\infty
 \end{array}
\right.
 \end{align}
 or some Reeb chord $x$ (resp. orbit $\gamma$) and some $T>0$.
 In this case, we call $x$ (resp. $\gamma$) the negative asymptote of $u_{v'}$ at $f_{v'}^{-1}(e)$.

 
 If $l_\eT(v)=l_\eT(v')$ and, say $v$ is closer to the root of $\eT$ than $v'$, then
 we use an  outgoing/positive strip-like end for $f_v^{-1}(e)$ and
  an incoming/negative strip-like end for  $f_{v'}^{-1}(e)$.
  Similarly, the intersection point that they are asymptotic to is the positive asymptote of $u_v$ at $f_v^{-1}(e)$ and the negative 
  asymptote of $u_{v'}$ at $f_{v'}^{-1}(e).$
\end{convention}

\subsection{Gradings}\label{ss:Grading}

Let $P \subset (M,\omega,\theta)$ be a Lagrangian submanifold which satisfies \eqref{eq:GammaCondition}.
In particular, $H^1(P,\R)=0$ and $P$ is an exact Lagrangian.
The round metric on $S^n$ descends to a Riemannian metric on $P$.
Let $U$ be a Weinstein neighborhood of $P$ and we identify $\partial U$ with the set of covectors of $P$ having a common small fixed norm.
Without loss of generality, we can assume that $\theta|_U=\theta_{T^*P}$, where $\theta_{T^*P}$ is the standard Liouville one-form on $T^*P$.
Let $\alpha_0:=\theta|_{\partial U}$ be the standard contact form on $\partial U$.
Eventually, we will apply Theorem \ref{t:SFTcompactness} along a perturbation  $(\partial U)'$ of $\partial U$.
Since $((\partial U)',\theta|_{(\partial U)'}) \simeq (\partial U, \alpha')$ for a perturbation $\alpha'$ of $\alpha_0$, we will need to understand
the Reeb dynamic of $\alpha'$.
Therefore, it is helpful to explain the Reeb dynamic of $(\partial U, \alpha_0)$ first.
We assume $\Lambda_i:=L_i \cap \partial U$ are (possibly empty) unions of cospheres at points of $P$.
There are four types of asymptotes that can appear for $u_v$ near the punctures.

\begin{enumerate}
 \item Lagrangian intersection points between $SL_i^\pm$ and $SL_j^\pm$ in $SM^\pm$,
 \item Reeb chords from $\Lambda_i$ to  $\Lambda_j$ in $Y$ for $i \neq j$, 
 \item Reeb chords from $\Lambda_i$ to  itself in $Y$, and
 \item  Reeb orbits from $\Lambda_i$ to  itself in $Y$
\end{enumerate}

We want to discuss the grading for each of these types.

\subsubsection{Type one}

Let $\Omega^2$ be the nowhere-vanishing section of $(\Lambda_\C^{top}T^*M)^{\otimes 2}$ which equals to $1$ with respect to the chosen trivialization (see the beginning of Section \ref{sec:floer_cohomology_with_local_systems}).
For a Lagrangian subspace $V \subset T_pM$ and a choice of basis $\{X_1, \dots, X_n\}$ of $V$, we define
\begin{align}
 Det^2_{\Omega}(V):=\frac{\Omega^2(X_1,\dots,X_n)}{\|\Omega^2(X_1,\dots,X_n)\|} \in S^1
\end{align}
which is independent of the choice of basis.
We assumed that the Lagrangians are $\mathbb{Z}$-graded.
It means that $L_i$ is equipped with a continuous function $\theta_{L_i}:L_i \to \R$, which is called the grading function,
such that $e^{2\pi\sqrt{-1} \theta_{L_i}(p)}=Det^2_{\Omega}(T_pL_i)$ for all $p \in L_i$.

At each transversal intersection point $x \in L_i \cap L_j$, we have two graded Lagrangian planes $T_xL_i$, $T_xL_j$ inside $T_xM$.
The grading of $x$ as a generator of $CF(L_i,L_j)$ is given by the Maslov grading from $T_xL_i$ to $T_xL_j$ which is
\begin{align}
 |x|=\iota(T_xL_i,T_xL_j)=n+\theta_{L_j}(x)-\theta_{L_i}(x)-2Angle(T_xL_i,T_xL_j) \label{eq:MaslovGrading}
\end{align}
where $Angle(T_xL_i,T_xL_j)= \sum_{j=1}^n \beta_j$ and $\beta_j \in (0,\frac{1}{2})$ are such that there is a unitary basis $u_1, \dots,u_n$
of $T_xL_i$ satisfying $T_xL_j=Span_\R\{e^{2\pi\sqrt{-1}\beta_j}u_j\}_{j=1}^n$.
If we regard $x$ as an element in $CF(L_j,L_i)$, then we have  $\iota(T_xL_j,T_xL_i)=n-\iota(T_xL_i,T_xL_j)$.

\begin{convention}\label{c:DualGenNotation}
 For a generator $x \in CF(L_i,L_j)$, we use $x^\vee$ to denote the generator of $CF(L_j,L_i)$ which represents the same intersection point as $x$.
 Therefore, we have $|x|=n-|x^\vee|$.
\end{convention}

Since $SM^-=T^*P$, there is a preferred choice of trivialization of $(\Lambda_\C^{top}T^*SM^-)^{\otimes 2}$ such that the grading 
functions on cotangent fibers and the zero section are constant functions (see \cite{SeGraded}).
Without loss of generality, we can assume that the restriction to $M^-$ of the choice of trivialization of $(\Lambda_\C^{top}T^*M)^{\otimes 2}$ we picked 
coincides with that of $(\Lambda_\C^{top}T^*SM^-)^{\otimes 2}$.
We call that the cotangent fibers and the zero section are in {\it canonical relative grading} if the following holds:
\begin{align}\label{eq:CanonicalGrading}
 CF(P,T^*_qP) \text{ is concentrated at degree $0$}
\end{align}
for all $q \in P$.

We refer readers to \cite{SeGraded}, \cite[Section $11,12$]{Seidelbook} for more about Maslov gradings.

\subsubsection{Type two}\label{sss:TypeTwoGrading}

In general, if we have a Reeb chord $x=(x(t))_{t \in [0,1]}$ from  $\Lambda_0$ to $\Lambda_1$ in a contact manifold $(Y,\alpha)$ such that $S\Lambda_i$ are graded Lagrangians in $SY$ for both $i$, 
we will assign a grading to $x$ by regarding $x$ as a Hamiltonian chord 
between graded Lagrangians  $S\Lambda_0$ and $S\Lambda_1$ in the symplectic manifold $SY$ as follows:
There is an appropriate Hamiltonian $H$ in $SY$ that depends only on the radial coordinate $r$ 
such that the Reeb vector field $R_{\alpha}$ in $Y$ coincides with the restriction of the Hamiltonian vector field $X_H$ to $\{0\} \times Y$.
Let $\phi^H$ be the time-one flow of $H$.
We identify $x(t) \in Y$ with $(0,x(t)) \in SY$ so $x$ is a $H$-Hamiltonian chord.
We have graded Lagrangian subspaces $(\phi^H)_*T_{x(0)}S\Lambda_0$ and $T_{x(1)}S\Lambda_1$ in $T_{x(1)}SY$.
Let 
\begin{align}
 K_x:=(\phi^H)_*T_{x(0)}S\Lambda_0 \cap T_{x(1)}S\Lambda_1 \label{eq:Kx}
\end{align}
The grading $|x|$ of $x$ is defined to be
\begin{align}
 |x|=\iota((\phi^H)_*T_{x(0)}S\Lambda_0/K_x, T_{x(1)}S\Lambda_1/K_x) \label{eq:ChordIndex}
\end{align}
where the Maslov grading (see \eqref{eq:MaslovGrading}) is computed in the symplectic vector space $T_{x(1)}M/(K_x+ J(K_x))$.
More details about the Maslov grading assigned to non-transversally intersecting graded Lagrangian subspaces can be found in, for example, \cite[Section $4.1$]{MW15}.

Now, we go back to our situation and assume $x$ is a Reeb chord from $\Lambda_i$ to $\Lambda_j$ in $(\partial U,\alpha_0)$.
Since $L_i$ is graded, $S\Lambda_i$ has a grading function in $S(\partial U)$ inherited from $L_i$. 
The computation of $|x|$  is done in the literature (e.g. \cite{AbbS12} \cite{Ab12}, where they indeed proved $HW(T_{\mathbf{q}_i})\cong k[u]$ for $|u|=-(n-1)$) and we recall it here.

Without loss of generality, we assume $\Lambda_i$ and $\Lambda_j$ are connected.
Let $q_i,q_j \in P$ be such that $T^*_{q_i}P \cap \partial U=\Lambda_i$ and $T^*_{q_j}P \cap \partial U=\Lambda_j$.
We equip the cotangent fibers and $P$ with the canonical relative grading (see \eqref{eq:CanonicalGrading}).
The grading functions of $L_i$ and $L_j$ differs from the grading functions of  $T^*_{q_i}P$ and $T^*_{q_j}P$
near $\Lambda_i$ and $\Lambda_j$, respectively, by an integer.
In the following, we will assume the grading functions coincide and the actual $|x|$ can be recovered by adding back the integral differences of the grading functions. 

Let $\fq_i \in \fP$ be a lift of $q_i$.
Each Reeb chord $x$ from $\Lambda_i$ to $\Lambda_j$ corresponds to a geodesic from $q_i$ to $q_j$, which can be lifted to
a geodesic $\fx$ from $\fq_i$ to a point $\fq_j \in \fP$ such that $\pi(\fq_j)=q_j$.
If 
\begin{align}\label{eq:GenericPointCondition}
 \text{$\fq_j$ is not the antipodal point of $\fq_i$ and $\fq_j \neq \fq_i$}
\end{align}
then there is a unique closed geodesic (assumed to have length $2\pi$) passing through $\fq_i$ and $\fq_j$.
Therefore, for each interval $I_k=(k\pi,(k+1 \pi))$, $k\in \N$, there is a unique geodesic from $\fq_i$ to $\fq_j$
with length lying inside $I_k$.
If the length of $\fx$ lies in $I_k$, then
\begin{align}
 |x|=-k(n-1) \label{l:ChordNondegenGrading}
\end{align}
For generic $q_i,q_j$, every lifts $\fq_i,\fq_j$ of $q_i,q_j$ satisfies \eqref{eq:GenericPointCondition}.

\subsubsection{Type three}\label{sss:TypeThreeGrading}

There are four kinds of Reeb chords from $\Lambda_i$ to itself.
First, if $x$ is a Reeb chord from one connected component of $\Lambda_i$ to a different one, then the computation of $|x|$ reduces to the previous case (Section \ref{sss:TypeTwoGrading}).
For the remaining three kinds, we assume $\Lambda_i=T^*_{q_i}P \cap \partial U$, i.e. it has exactly one connected component.
Let $\fq_i$ be a lift of $q_i$ and $\fx:[0,1] \to \fP$ be the lift of the geodesic such that $\fx(0)=\fq_i$, $\fq_i':=\fx(1)$ and $\pi(\fq_i')=q_i$.
The three possibilities are
\begin{enumerate}
 \item $\fq_i,\fq_i'$ satisfy \eqref{eq:GenericPointCondition} (with $\fq_i'$ replacing $\fq_j$), or
 \item $\fq_i'$ is the antipodal point of $\fq_i$, or
 \item $\fq_i=\fq_i'$
\end{enumerate}
For the first case, the computation of $|x|$ reduces to the previous one again (Section \ref{sss:TypeTwoGrading}).
For the second and the third cases, we have (see \eqref{eq:Kx} for the meaning of $K_x$)
\begin{align}
 &K_x= T_{x(1)}L_i \\
 &|x|=\iota((\phi^H)_*T_{x(0)}L_i/K_x, T_{x(1)}L_j/K_x)=-k(n-1)
\end{align}
where $k\pi$ is the length of $\fx$, and the term $-k(n-1)$ is exactly the (integral)
difference of the values of the grading functions at $(\phi^H)_*T_{x(0)}L_i$ and $T_{x(1)}L_i$ as graded Lagrangian planes.

We want to point out that in the second and third cases $x$ lies in a Morse-Bott family $S_{x}$ of Reeb chords from $\Lambda_i$ to itself
and $\dim(S_{x})=n-1$.

\subsubsection{Type four}\label{sss:TypeFourGrading}

Reeb orbits of $\partial U$ are graded by the  Robbin-Salamon index \cite{RobbinSalamon} (see also \cite[Section $5$]{Bo02}), which is a generalization of the
Conley-Zehnder index to the degenerated case.
To define the  Robbin-Salamon index of a Reeb orbit $\gamma$, we need to pick a 
symplectic trivialization $\Phi_\gamma$ of $\xi$ along  $\gamma$ subject to the following compatiblity condition:
Together with the obvious trivialization of $\R \langle \partial_r,R_{\alpha_0} \rangle$, $\Phi_\gamma$ gives a 
symplectic trivialization of $TM$ along $\gamma$, and hence a trivialization of $(\Lambda_\C^{top}T^*M)^{\otimes 2}$ along $\gamma$.
The compatiblity condition is that the induced trivialization of $(\Lambda_\C^{top}T^*M)^{\otimes 2}$ along $\gamma$ 
coincides with the trivialization of $(\Lambda_\C^{top}T^*M)^{\otimes 2}$ we picked in the beginning of Section \ref{sec:floer_cohomology_with_local_systems}.
One may show that there is $\Phi_\gamma$ satisfying the compatiblity condition.

We can now define a path of symplectic matrices $(\Phi_t)_{t \in [0,1]}$ given by $\Phi_t:=(\phi^{R}_t)_*:\xi_{\gamma(0)} \to \xi_{\gamma(t)} \simeq \xi_{\gamma(0)}$,
where $\phi^{R}_t$ is the time-$t$ flow generated by $R_{\alpha_0}$ and the last isomorphism is given by $\Phi_\gamma$.
We can assign the Robbin-Salamon index for $\Phi_t$ as follows:
First, we isotope (relative to end points) $\Phi_t$ to a path of symplectic matrices $\Phi_t'$
such that $\ker(\Phi_t'-Id) \neq 0$ happens at finitely many times $t=t_1,\dots,t_k $ and for each $t_j$,
the crossing form $J\frac{d}{dt}|_{t=t_j}(\Phi_t')$ is non-degnerate on $\ker(\Phi_{t_j}'-Id)$.
The signature of $J\frac{d}{dt}|_{t=t_j}(\Phi_t')$ is denoted by $\sigma(t_j)$ and the Robbin-Salamon index is defined by
\begin{align}
 \mu_{RS}(\Phi_t):=\frac{1}{2}\sigma(0) +\sum_{j=1}^k \sigma(t_j) +\frac{1}{2}\sigma(1) \label{eq:RSindexFormula}
\end{align}
where $\sigma(1)$ is defined to be zero if $\Phi_1$ is invertible.
The index is independent of the choice of $\Phi_{t_j}'$.
The  Robbin-Salamon index of $\gamma$ with respect to the trivialization $\Phi_\gamma$ is
\begin{align}
 \mu_{RS}(\gamma):=\mu_{RS}(\Phi_t)
\end{align}
Any two choices of $\Phi_\gamma$ satisfying the compatiblity condition would give the same index.

There are two kinds of Reeb orbits $\gamma$ in $\partial U$, namely, contractible in $U$ or not.
We are only interested in the case that $\gamma$ is contractible in $U$, which means that it can be lifted to a Reeb orbit in $\partial \fU$
that is contractible in $\fU$.
The lifted Reeb orbit corresponds to a geodesic loop $l_{\gamma}$ in $\fP$.
The Robbin-Salamon index $\mu_{RS}(\gamma)$ is computed in \cite[Lemma $7$]{Hind04} (the proof there can be directly generalized to all $n$)
\begin{align}
 \mu_{RS}(\gamma)=2k(n-1)
\end{align}
where $k$ is the covering multiplicity of $l_{\gamma}$ with respect to the simple geodesic loop, or equivalently, $2k\pi$ is the length of $l_{\gamma}$.

We want to point out that $\gamma$ lies in a Morse-Bott family $S_{\gamma}$ of (unparametrized) Reeb orbits and $\dim(S_{\gamma})=2n-2$.

\subsection{Dimension formulae}

In this section, we first review the virtual dimension formula from \cite{Bo02}, where the domain of the pseudo-holomorphic map only has interior punctures.
Then we consider the case that the domain only has boundary punctures, and finally obtain the general formula by gluing.

Let $(Y^{\pm},\alpha^{\pm})$ be contact manifolds with contact forms $\alpha^\pm$.
We assume that every Reeb orbit $\gamma$ of $Y^\pm$ lies in a Morse-Bott family $S_{\gamma}$ of (unparametrized) Reeb orbits.
Let $(X,\omega_X)$ be a symplectic manifold such that there exists a compact set $K_X \subset X$ and $T_X \in \R_{>0}$ so that
$(X \backslash K_X, \omega_X|_{X \backslash K_X})$ is the disjoint union of the ends $([T_X,\infty) \times Y^+, d(e^r\alpha^+))$
and $((-\infty,-T_X] \times Y^-, d(e^r\alpha^-))$.
In this case, we have

\begin{lemma}[\cite{Bo02}, Corollary $5.4$]\label{l:VirdimFormula1}
 Let $\Sigma$ be a punctured Riemannian surface of genus $g$ and $\partial \Sigma =\emptyset$.
 Let $J$ be a compactible almost complex structure on $X$ that is cylindrical over the ends. 
 Let $u:\Sigma \to X$ be a $J$-holomorphic map with positive asymptotes $\{\gamma_j^+ \}_{j=1}^{s_+}$
 and negative asymptotes $\{\gamma_j^- \}_{j=1}^{s_-}$ (see Convention \ref{con:Ends}). 
 Then the virtual dimension of $u$ is given by
 \begin{align}
  \virdim(u)=&(n-3)(2-2g-s^+-s^-)+\sum_{j=1}^{s^+} \mu_{RS}(\gamma^+_j)-\sum_{j=1}^{s^-} \mu_{RS}(\gamma^-_j) 
  +\frac{1}{2}\sum_{j=1}^{s^+} \dim(S_{\gamma^+_j}) \nonumber \\
  &+\frac{1}{2}\sum_{j=1}^{s^-} \dim(S_{\gamma^-_j})+2c_1^{rel}(TX)([u]) \label{eq:VirdimFormula1}
 \end{align}
where $2c_1^{rel}(TX)([u])$ is 
the relative first Chern class computed with respect to the fixed symplectic trivializations along the Reeb orbits that we chose to compute $\mu_{RS}$ (see Section \ref{sss:TypeFourGrading}). 
\end{lemma}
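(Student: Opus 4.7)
The plan is to compute the Fredholm index of the linearized Cauchy--Riemann operator $D_u$ at $u$ on a weighted Sobolev completion, and then separately incorporate the contribution from varying the complex structure on the domain and the Morse--Bott degeneracy of the asymptotic Reeb orbit families. The key technical tool is to cap off the punctures of $\Sigma$ using the chosen symplectic trivializations $\Phi_{\gamma^\pm_j}$ of $\xi$ along the asymptotes, extending $u^{*}TX$ to a complex vector bundle $E$ of rank $n$ over the closed surface $\overline{\Sigma}$ obtained by gluing disks at each of the $s^+ + s^-$ punctures.

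First I would set up $D_u$ on a small exponentially weighted Sobolev space $W^{1,p,\delta}(\Sigma,u^{*}TX)$, where the weight $\delta>0$ is chosen small enough that $D_u$ becomes Fredholm despite the Morse--Bott degeneracy of the asymptotic operator. By the normal form for contact Hamiltonians near a Morse--Bott family (due to Bourgeois \cite{Bo02}), near each asymptote the linearized operator splits into a nondegenerate part on the symplectic complement of $TS_{\gamma^\pm_j}$ and a trivial operator on $TS_{\gamma^\pm_j}$, which is where the correction $\tfrac{1}{2}\dim S_{\gamma^\pm_j}$ will enter.

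Second, after capping off using $\Phi_{\gamma^\pm_j}$, I would apply Riemann--Roch on $\overline{\Sigma}$ to the extended bundle $E$: the complex Fredholm index of the $\bar\partial$-operator is $c_1(E)+n(1-g)$, so the real index is $2c_1(E)+n\chi(\overline{\Sigma})$. The difference between this capped-off index and $\ind(D_u)$ on the original punctured surface is computed puncture by puncture and yields, in the nondegenerate analog, the correction $\sum \mu_{CZ}(\gamma^+_j)-\sum \mu_{CZ}(\gamma^-_j)$. Replacing the nondegenerate Conley--Zehnder indices by the Robbin--Salamon indices $\mu_{RS}$ and absorbing the kernel of the asymptotic operator on each $S_{\gamma^\pm_j}$ into the domain of $D_u$, the analog correction reads
\begin{equation*}
\sum_{j=1}^{s^+}\Bigl(\mu_{RS}(\gamma^+_j)+\tfrac{1}{2}\dim S_{\gamma^+_j}\Bigr)-\sum_{j=1}^{s^-}\Bigl(\mu_{RS}(\gamma^-_j)-\tfrac{1}{2}\dim S_{\gamma^-_j}\Bigr),
\end{equation*}
with the sign convention for positive versus negative punctures matching \eqref{eq:RSindexFormula}. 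Combined with $c_1(E)=c_1^{rel}(TX)([u])$ by construction of the cap-off, this gives the Fredholm index of $D_u$ on the fixed domain.

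Finally, the virtual dimension of $u$ as an element of the moduli space records both $\ind(D_u)$ and the variation of the complex structure on $\Sigma$; the Deligne--Mumford space of genus $g$ surfaces with $s^++s^-$ punctures contributes $-3\chi(\overline{\Sigma})+2(s^++s^-) = -3\chi(\Sigma)-(s^++s^-)$ real parameters, and after accounting for the two additional real parameters per puncture which are absorbed into the asymptotic evaluation constraints, the net domain-variation contribution simplifies to $-3\chi(\Sigma)$. Adding this to $\ind(D_u)$ produces the $(n-3)\chi(\Sigma)$ coefficient and yields \eqref{eq:VirdimFormula1} after substituting $\chi(\Sigma)=2-2g-s^+-s^-$. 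The main obstacle is the Morse--Bott analysis of the asymptotic operator and the precise identification of the correction term with $\tfrac{1}{2}\dim S_{\gamma^\pm_j}$; this is the content of Bourgeois's thesis, and I would cite \cite{Bo02} rather than reproduce it.
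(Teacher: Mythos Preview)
Your approach is essentially the same as the paper's sketch: both derive the Fredholm index from the nondegenerate Riemann--Roch formula $\ind(D_u)=n\chi(\Sigma)+2c_1^{rel}+\sum\mu_{RS}(\gamma^+)-\sum\mu_{RS}(\gamma^-)$, add the Morse--Bott correction coming from the kernel of the asymptotic operator, and finally add the dimension of the Teichm\"uller space. The overall strategy and the final formula are correct.

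The one place where your bookkeeping is muddled is the passage from $\ind(D_u)$ to $\virdim(u)$. The paper works on the full bundle $u^*TX$ (not $u^*\xi$), so the asymptotic kernel at each puncture has dimension $\dim S_{\gamma}+2$, the extra $2$ coming from the invariant directions $\partial_r, R_\alpha$. This produces an additional $(s^++s^-)$ in $\ind(u)$ (equation \eqref{eq:IndexU1}), after which one simply adds the honest Teichm\"uller dimension $6g-6+2(s^++s^-)$. You instead record only the $\tfrac12\dim S_\gamma$ correction and then claim a ``net domain-variation contribution'' of $-3\chi(\Sigma)=6g-6+3(s^++s^-)$, which is $(s^++s^-)$ larger than the Teichm\"uller dimension. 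Your justification for this extra $(s^++s^-)$---``two additional real parameters per puncture which are absorbed into the asymptotic evaluation constraints''---does not parse (the arithmetic needs one extra parameter per puncture, not two, and these are target degrees of freedom, not domain ones). The missing $(s^++s^-)$ is precisely the contribution of the $\partial_r,R_\alpha$ block $I_{2\times 2}$ to the Robbin--Salamon kernel count; once you track it there, the Teichm\"uller contribution is the standard $6g-6+2(s^++s^-)$ and no ad hoc adjustment is needed.
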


\begin{proof}[Sketch of proof]
 As explained in Section \ref{sss:TypeFourGrading}, the trivialization $\Phi_{\gamma^\pm_j}$ of $\xi$ along $\gamma^\pm_j$ determines a path of symplectic matrices 
 $\Phi_t^{\pm,j}$. We can trivialize $TX$ along $\gamma^\pm_j$ using $\Phi_{\gamma^\pm_j}$ by adding the invariant directions $\partial_r, R_{\alpha}^\pm$.
 The corresponding path of symplectic matrices become $\overline{\Phi}_t^{\pm,j}=\Phi_t^{\pm,j} \oplus I_{2 \times 2}$, where $I_{2 \times 2}$ is the $2$ by $2$ identity matrix.
 By additivity property of $\mu_{RS}$, we have $\mu_{RS}(\overline{\Phi}_t^{\pm,j})=\mu_{RS}(\Phi_t^{\pm,j})+\mu_{RS}(I_{2 \times 2})=\mu_{RS}(\Phi_t^{\pm,j})$.
 
 If $\ker(\overline{\Phi}_1^{\pm,j}-Id)=0$ (which is never the case) for all $\gamma^\pm_j$, then the index of $u$ is given by
  \begin{align}
  \ind(u)=&n(2-2g-s^+-s^-)+\sum_{j=1}^{s^+} \mu_{RS}(\gamma^+_j)-\sum_{j=1}^{s^-} \mu_{RS}(\gamma^-_j)+2c_1^{rel}(TX)([u]) \label{eq:indOrbit}
 \end{align}
 If $\ker(\overline{\Phi}_1^{\pm,j}-Id) \neq 0$, then it contributes $\dim(\ker(\overline{\Phi}_1^{\pm,j}-Id))=\dim(S_{\gamma^\pm_j})+2$ (resp. $0$) to $\ind(u)$
 when $\gamma^\pm_j$ is a positive (resp. negative) asymptote.
 However, the definition of $\mu_{RS}$ already takes into account $\frac{1}{2}\dim(\ker(\overline{\Phi}_1^{\pm,j}-Id))$ so we have
  \begin{align}
 \ind(u)=&n(2-2g-s^+-s^-)+\sum_{j=1}^{s^+} (\mu_{RS}(\gamma^+_j)+\frac{1}{2}(\dim(S_{\gamma^+_j})+2)) \\
 &-\sum_{j=1}^{s^-} (\mu_{RS}(\gamma^-_j)-\frac{1}{2}(\dim(S_{\gamma^-_j})+2))+2c_1^{rel}(TX)([u]) \\
  =&n(2-2g-s^+-s^-)+\sum_{j=1}^{s^+} \mu_{RS}(\gamma^+_j)-\sum_{j=1}^{s^-} \mu_{RS}(\gamma^-_j) 
  +\frac{1}{2}\sum_{j=1}^{s^+} \dim(S_{\gamma^+_j}) \nonumber \\
  &+\frac{1}{2}\sum_{j=1}^{s^-} \dim(S_{\gamma^-_j})+(s^++s^-)+2c_1^{rel}(TX)([u]) \label{eq:IndexU1}
 \end{align}
 Finally, to obtain the virtual dimension, we need to
 add the dimension of the Teichm\"uller  space that $\Sigma$ lies, which is $6g-6+2(s^++s^-)$.
 It gives the formula \eqref{eq:VirdimFormula1}.
\end{proof}

Lemma \ref{l:VirdimFormula1} applies when $Y^-=\emptyset$ (resp. $Y^+=\emptyset$).
In this case,
we only have positive (resp. negative) asymptotes and $s^-=0$ (resp. $s^+=0$).
However, when $\omega$ is exact, and if $u$ only has negative asymptotes, then $u$ is necessarily a constant map by action reason (see Corollary \ref{c:actionBoundoutside}) so it is not interesting.

\begin{example}
 The virtual dimension of $u:\C \to T^*S^n$ with the puncture asymptotic to a simple Reeb orbit is given by
 \begin{align}
  \virdim(u)=(n-3)(2-2(0)-1-0)+2(n-1)+\frac{1}{2}(2n-2)=4n-6
 \end{align}
 because $c_1^{rel}(TT^*S^n)=0$.
When $n=2$, then we have $\virdim(u)=2$ which is obtained in \cite[Lemma $7$]{Hind04}.
\end{example}

Now, we consider the relative setting.
A Lagrangian cobordism $L$ in $X$ 
is a Lagrangian
such that there exists $T>T_X$ so that $L \cap (-\infty,-T] \times Y^-=(-\infty,-T] \times \Lambda^-$
and $L \cap [T, \infty) \times Y^+=[T, \infty) \times \Lambda^+$
for some Legendrian submanifolds $\Lambda^\pm$ in $Y^\pm$.
Let $L_0,L_1$ be exact Lagrangian cobordisms such that $L_0 \pitchfork L_1$.
We assume that every Reeb chord $x$ from $\Lambda_0^{\pm}$ to $\Lambda_1^{\pm}$ lies in a Morse-Bott family $S_x$ of Reeb chords.
In this case, we define (see \eqref{eq:Kx})
\begin{align}
\mbb(x)=\dim(S_x)+1=\dim(K_x)
\end{align}
If $x \in L_0 \cap L_1$, we define $\mbb(x)=0$.
As always, we assume that $L_0,L_1$ are $\Z$-graded so all elements in $L_0 \cap L_1$, and all Reeb chords from
$\Lambda_0^{\pm}$ to $\Lambda_1^{\pm}$ are graded (see Section \ref{ss:Grading}).

\begin{lemma}\label{l:VirdimFormula2}
 Let $S \in \cR^{d+1}$ be equipped with Lagrangian labels $L_j$ on $\partial_j S$, where each $L_j$ is a Lagrangian cobordism.
 Let $J$ be a compatible almost complex structure on $X$ that is cylindrical on the ends.
 Let $u:S \to X$ be a $J$-holomorphic map with positive asymptotes $\{x_j^+ \}_{j=1}^{r^+}$ and
 negative asymptotes $\{x_j^- \}_{j=1}^{r^-}$ such that $u(\partial_j S) \subset L_j$.  Assume all asymptotes are Morse-Bott, then the virtual dimension of $u$ is given by
 \begin{align}
  \virdim(u)=n(1-r^-)+\sum_{j=1}^{r^-} (\iota(x^-_j)+\mbb(x^-_j))-\sum_{j=1}^{r^+} \iota(x^+_j)+(r^-+r^+-3) \label{eq:VirdimFormula2}
 \end{align} 
\end{lemma}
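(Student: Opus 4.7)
The proof will follow the same template as that of Lemma \ref{l:VirdimFormula1}: first compute the Fredholm index of the linearized $\dbar$-operator $D_u$, then add the Teichm\"uller dimension of $\cR^{d+1}$, which for a disk with $d+1=r^++r^-$ boundary punctures equals $d-2=r^++r^--3$. The new ingredients relative to Lemma \ref{l:VirdimFormula1} are that $u$ has boundary carrying Lagrangian labels and that the punctures are boundary punctures with Reeb chord (rather than orbit) asymptotes, so one invokes the Riemann-Roch theorem for punctured bordered surfaces with totally real boundary conditions as in \cite{EES05}, \cite{EES07}, \cite{CEL10}, together with the fact that exactness of the $L_j$ and the Liouville hypothesis kill any relative first Chern class contribution.

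I would first treat the non-degenerate case, assuming $K_{x^\pm_j}=\{0\}$ for all $j$, so that $\mbb(x^\pm_j)=0$. Applying Riemann-Roch for the weighted $\dbar$-operator on $S$ with asymptotic operator data at each boundary puncture yields
\begin{align*}
\ind(u) = n(1-r^-) + \sum_{j=1}^{r^-}\iota(x^-_j) - \sum_{j=1}^{r^+}\iota(x^+_j).
\end{align*}
The baseline constant $n(1-r^-)$ combines the Euler characteristic term from the compactified disk with a normalization shift of $-n$ per negative asymptote, reflecting the way $\iota$ is defined in \eqref{eq:ChordIndex} relative to the boundary Maslov class; each non-degenerate positive chord contributes $-\iota(x^+_j)$ and each non-degenerate negative chord contributes $+\iota(x^-_j)$, following the standard Floer-theoretic sign convention in which positive asymptotes play the role of outputs.

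Next I would incorporate the Morse-Bott corrections by perturbing each Morse-Bott chord asymptote to a non-degenerate one via a Morse function on $S_{x^\pm_j}$ and tracking the change in $\ind(u)$, mirroring the argument in Lemma \ref{l:VirdimFormula1}. At a Morse-Bott negative puncture the asymptotic operator of $D_u$ has kernel of dimension $\dim K_{x^-_j}=\mbb(x^-_j)$, and this kernel contributes an additional $+\mbb(x^-_j)$ to $\ind(u)$, geometrically accounting for the freedom to vary the limiting chord within $S_{x^-_j}$ together with the $\R$-direction. At a Morse-Bott positive puncture the grading convention \eqref{eq:ChordIndex} already computes $\iota(x^+_j)$ in the symplectic quotient by $K_{x^+_j}$, so the analogous correction is absorbed into $\iota(x^+_j)$ itself and no extra $\mbb(x^+_j)$ term appears; this is the origin of the asymmetry in the statement. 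Adding the Teichm\"uller dimension $r^++r^--3$ then yields the claimed formula.

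The main obstacle I expect is the careful bookkeeping of this Morse-Bott correction, in particular justifying that the $\mbb$ correction indeed appears only at negative asymptotes. This ultimately reduces to a careful comparison between the Robbin-Salamon spectral flow of the asymptotic operator at each puncture and the convention used to define $\iota$ in \eqref{eq:ChordIndex}; while conceptually routine, it requires keeping careful track of signs and quotients in parallel with the proof sketch of Lemma \ref{l:VirdimFormula1}.
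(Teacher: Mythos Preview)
Your proposal follows the same template as the paper: start from the non-degenerate index formula (the paper simply cites \cite[Proposition 11.13]{Seidelbook} for \eqref{eq:indChord}), add Morse--Bott corrections at the degenerate asymptotes, then add $\dim\cR^{d+1}=r^++r^--3$. The structure is correct and matches the paper.

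However, your explanation of \emph{why} the $\mbb$ correction appears only at negative asymptotes is not right. You say the correction at a positive end is ``absorbed into $\iota(x^+_j)$'' because $\iota$ is computed in the quotient by $K_{x^+_j}$; but $\iota(x^-_j)$ is defined by exactly the same quotient construction \eqref{eq:ChordIndex}, so this cannot be the source of the asymmetry. The paper's actual reasoning is different: in the orbit formula \eqref{eq:indOrbit} positive asymptotes contribute with a plus sign and the kernel dimension lands at positive ends, whereas in the chord formula \eqref{eq:indChord} positive asymptotes contribute with a \emph{minus} sign. This sign reversal, which the paper traces to the opposite conventions used to define $\mu_{RS}$ for orbits versus $\iota$ for chords, is what flips which end picks up the full kernel contribution. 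So at a degenerate chord asymptote, the negative end contributes $\dim K_{x^-_j}=\mbb(x^-_j)$ and the positive end contributes $0$, exactly opposite to Lemma~\ref{l:VirdimFormula1}. You correctly flagged this step as the main obstacle; the resolution is the sign-convention comparison, not an absorption into $\iota$.
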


\begin{proof}[Sketch of proof]
 When all $x^\pm_j$ are Lagrangian intersection points, then the index of $u$ is given by (see \cite[Proposition $11.13$]{Seidelbook})
 \begin{align}
  \ind(u)=n(1-r^-)+\sum_{j=1}^{r^-} \iota(x^-_j)-\sum_{j=1}^{r^+} \iota(x^+_j) \label{eq:indChord}
 \end{align}
If $x^\pm_j$ is a Reeb chord, then the intersection of the graded Lagrangian subspaces $K_{x^\pm_j}$ is non-zero.
Similar to the proof of Lemma \ref{l:VirdimFormula1}, there are extra contributions to $\virdim(u)$ from the asymptotes.
This time, $x^\pm_j$ contributes $\dim(K_{x^\pm_j})=\mbb(x^\pm_j)$ (resp. $0$) to $\ind(u)$ when $x^\pm_j$ is a negative (resp. positive) asymptote.
The reversing of the roles of positive and negative asymptotes between here and the proof of Lemma \ref{l:VirdimFormula1}
can be understood from the fact that in \eqref{eq:indOrbit}, positive asymptotes contribute positively while in \eqref{eq:indChord}, positive asymptotes contribute negatively,
which in turn boils down to the reversing convention of the definition of indices between orbits and chords.

After all, we have
 \begin{align}
  \ind(u)=n(1-r^-)+\sum_{j=1}^{r^-} (\iota(x^-_j)+\mbb(x^-_j))-\sum_{j=1}^{r^+} \iota(x^+_j) \label{eq:IndexU2}
 \end{align}
 The last term of \eqref{eq:VirdimFormula2} comes from the dimension of $\cR^{d+1}$.
\end{proof}

Lemma \ref{l:VirdimFormula2} applies when $Y^-=\emptyset$ or $Y^+=\emptyset$.

\begin{example}
Let $q_0,q_1 \in S^n$ and $\Lambda_{i}$ be the unit cospheres at $q_i$.
Let $x$ be the shortest Reeb chord from $\Lambda_{0}$ to $\Lambda_1$ in the unit cotangent bundle of $S^n$.
The virtual dimension of $u:S \to T^*S^n$ such that $S \in \cR^3$, $u(\partial_0 S) \subset S^n$, 
$u(\partial_1 S) \subset T^*_{q_0}S^n$, $u(\partial_2 S) \subset T^*_{q_1}S^n$
with positive asymptotes $q_0$ and $x$ at $\xi_1$ and $\xi_2$, respectively, and a negative asymptote $q_1$ at $\xi_0$ is given by
\begin{align}
 \virdim(u)=n(1-1)+0-0-0=0 \label{eq:ExampleVirdim}
\end{align}
The computation of \eqref{eq:ExampleVirdim} is done by using the canonical relative grading (see Section \ref{ss:Grading}).
\end{example}

Now, we combine Lemma \ref{l:VirdimFormula1} and \ref{l:VirdimFormula2}.

\begin{lemma}\label{l:VirdimFormula3}
 Let $S$ be a disk with $r^++r^-$ boundary punctures and $s^++s^-$ interior punctures.
 Let $u:S \to X$ be a $J$-holomorphic map with 
 positive asymptotes $\{x_j^+ \}_{j=1}^{r^+}$ and
 negative asymptotes $\{x_j^- \}_{j=1}^{r^-}$  at boundary punctures, and 
  positive asymptotes $\{\gamma_j^+ \}_{j=1}^{s_+}$
 and negative asymptotes $\{\gamma_j^- \}_{j=1}^{s_-}$ at interior punctures such that $u(\partial S)$ lies in the corresponding Lagrangians determined by the boundary asymptotes.
 Then the virtual dimension of $u$ is given by
 \begin{align}
  \virdim(u)=
  &(n-3)(1-s^+-s^-)+\sum_{j=1}^{s^+} \mu_{RS}(\gamma^+_j)-\sum_{j=1}^{s^-} \mu_{RS}(\gamma^-_j) 
  +\frac{1}{2}\sum_{j=1}^{s^+} \dim(S_{\gamma^+_j}) \nonumber \\
  &+\frac{1}{2}\sum_{j=1}^{s^-} \dim(S_{\gamma^-_j})+2c_1^{rel}(TX)([u]) \nonumber \\
  &+\sum_{j=1}^{r^-} (\iota(x^-_j)+\mbb(x^-_j))-\sum_{j=1}^{r^+} \iota(x^+_j)-(n-1)r^-+r^+ \label{eq:VirdimFormula3}
 \end{align} 
\end{lemma}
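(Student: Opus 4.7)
The plan is to combine the proofs of Lemmas \ref{l:VirdimFormula1} and \ref{l:VirdimFormula2}, exploiting the fact that the Fredholm index of the linearized $\bar\partial$-operator on a bordered punctured Riemann surface splits additively into a topological part plus contributions localized at each puncture. In Lemma \ref{l:VirdimFormula1} one reads off what an interior puncture asymptotic to a Morse--Bott Reeb orbit contributes to the index, and in Lemma \ref{l:VirdimFormula2} one reads off what a boundary puncture asymptotic to a Morse--Bott Reeb chord (or Lagrangian intersection point) contributes. The general index formula for a disk domain with mixed asymptotic behavior is then assembled by adding these contributions and correcting the topological baseline and the Teichm\"uller factor.

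More precisely, for a disk domain $S$ with $r^++r^-$ boundary punctures and $s^++s^-$ interior punctures, the Cauchy--Riemann operator index can be written schematically as
\begin{align*}
\ind(u) = n\chi(\bar{S}) + 2c_1^{rel}(TX)([u]) + \sum_{\text{int. punct.}}I_{RS}(\gamma^\pm_j) + \sum_{\text{bdry punct.}}I_{\mathrm{Mas}}(x^\pm_j),
\end{align*}
where $\chi(\bar{S})=1$ since the unpunctured domain is a closed disk. The interior contributions $I_{RS}(\gamma^\pm_j)$ are exactly those isolated in the proof of Lemma \ref{l:VirdimFormula1}, namely $\pm\mu_{RS}(\gamma^\pm_j)+\tfrac12\dim S_{\gamma^\pm_j}+1$, where the extra $+1$ accounts for the two invariant directions $\langle\partial_r,R_\alpha\rangle$ used to upgrade a trivialization of $\xi$ to one of $TX$. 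The boundary contributions are those from the proof of Lemma \ref{l:VirdimFormula2}: $\iota(x^-_j)+\mbb(x^-_j)$ for a negative asymptote and $-\iota(x^+_j)$ for a positive one, together with a shift of $-n$ per negative boundary asymptote which, combined with $n\chi(\bar S)$, reproduces the $n(1-r^-)$ term of Lemma \ref{l:VirdimFormula2}. Adding $\ind(u)$ to the dimension of the moduli space of disk domains with ordered boundary punctures and interior punctures, which is $r^++r^-+2(s^++s^-)-3$, then yields the virtual dimension, and collecting terms produces the claimed formula.

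The main difficulty is purely bookkeeping: one has to be careful about (i) the signs attached to $\mu_{RS}$ and $\iota$ depending on whether an asymptote is positive or negative (cf.\ Convention \ref{con:Ends}), (ii) the Morse--Bott corrections $\tfrac12\dim S_{\gamma^\pm_j}$ versus $\mbb(x^\pm_j)$ for interior versus boundary asymptotes (which trace back to opposite reversing conventions for the indices of orbits and chords, as noted in the proof of Lemma \ref{l:VirdimFormula2}), and (iii) the compatibility of the chosen trivializations of $\xi$ along Reeb orbits with the global trivialization of $(\Lambda^{top}_{\C}T^*M)^{\otimes 2}$. As a sanity check, setting $s^\pm=0$ should recover Lemma \ref{l:VirdimFormula2} exactly, and setting $r^\pm=0$ should yield the $g=0$ disk analogue of Lemma \ref{l:VirdimFormula1} (the topological baseline differs from Lemma \ref{l:VirdimFormula1} by $(n-3)$, reflecting $\chi(\text{disk})=\chi(\text{sphere})-1$); verifying these limits pins down every numerical coefficient unambiguously.
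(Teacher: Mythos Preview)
Your overall strategy---assembling the index from local puncture contributions and then adding the Teichm\"uller dimension---is sound and close in spirit to the paper's, but the paper packages the argument more cleanly via a connected-sum decomposition: write $S = S_1 \# S_2$ where $S_1$ is a disk carrying only the boundary punctures and $S_2$ is a sphere carrying only the interior punctures, invoke the general rule $\ind(u)=\ind(u_1)+\ind(u_2)-2n$ for the index under connected sum (following \cite[Proposition~11.13]{Seidelbook}), and then plug in \eqref{eq:IndexU1} and \eqref{eq:IndexU2} as black boxes. Adding the Teichm\"uller dimension $(r^++r^--3)+2(s^++s^-)$ then gives \eqref{eq:VirdimFormula3} directly. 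This sidesteps any need to re-identify local contributions.

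In your version there is a concrete numerical slip. You claim the interior-puncture contribution to $\ind(u)$, measured relative to the baseline $n\chi(\bar S)$ with $\bar S$ the \emph{closed} disk, is $\pm\mu_{RS}(\gamma^\pm_j)+\tfrac12\dim S_{\gamma^\pm_j}+1$. But formula \eqref{eq:IndexU1} reads $\ind(u)=n(2-2g-s^+-s^-)+\cdots+(s^++s^-)$, where the leading factor is $n$ times the Euler characteristic of the \emph{punctured} surface. If you insist on the closed-surface baseline $n\chi(\bar\Sigma)$, each interior puncture must carry an extra $-n$ from topology, so the correct contribution is $\pm\mu_{RS}+\tfrac12\dim S-(n-1)$, not $+1$. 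With your value the $r^\pm=0$ sanity check produces $(n-3)+3(s^++s^-)$ in place of $(n-3)(1-s^+-s^-)$, so the check does flag the error---but as written the displayed contribution is off by $-n$ per interior puncture. The connected-sum route avoids this trap entirely because the $-2n$ correction and the already-verified formulas \eqref{eq:IndexU1}, \eqref{eq:IndexU2} absorb all the bookkeeping.
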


\begin{proof}
  We follow the proof in \cite[Proposition $11.13$]{Seidelbook}.
  The domain $S$ is the connected sum of a disk $S_1$ with $r^++r^-$ boundary punctures and a sphere $S_2$ with  $s^++s^-$ interior punctures.
  Let $u_1:S_1 \to X$ be a $J$-holomorphic map with 
 positive asymptotes $\{x_j^+ \}_{j=1}^{r^+}$ and
 negative asymptotes $\{x_j^- \}_{j=1}^{r^-}$ 
 such that $u_1(\partial S_1)$ lies in the corresponding Lagrangians determined by the boundary asymptotes.
 Let $u_2:S_2 \to X$ be a $J$-holomorphic map with 
 positive asymptotes  $\{\gamma_j^+ \}_{j=1}^{s_+}$
 and negative asymptotes $\{\gamma_j^- \}_{j=1}^{s_-}$.
 Then, we have
 \begin{align}
  \ind(u)=\ind(u_1)+\ind(u_2)-2n
 \end{align}
which can be computed by \eqref{eq:IndexU1} and \eqref{eq:IndexU2}.
Finally, to get the virtual dimension, we need to
 add the dimension of the Teichm\"uller  space that $S$ lies, which is $(r^-+r^+-3)+2(s^++s^-)$.
 It gives the formula \eqref{eq:VirdimFormula3}.
\end{proof}

We want to use Lemma \ref{l:VirdimFormula1} and \ref{l:VirdimFormula3}
to derive some corollaries for the holomorphic buildings $u_{\infty}=(u_v)_{v \in V(\eT)}$ obtained in Theorem \ref{t:SFTcompactness}.
Let $v \neq v'$ be adjacent to the edge $e$. 
If $l_\eT(v)+1=l_\eT(v')$, then there is a Reeb chord $x$ (or orbit $\gamma$) which is the positive asymptote of $u_v$ at $f_v^{-1}(e)$ and
the negative asymptote of $u_{v'}$ at $f_{v'}^{-1}(e)$.
Let $u_{v} \#_x u_{v'}$ (resp $u_{v} \#_\gamma u_{v'}$) be a pseudo-holomorpic map with boundary and asymptotic conditions determined by gluing $u_v$ and $u_{v'}$ along $x$ (resp. $\gamma$).
By a direct application of Lemma \ref{l:VirdimFormula1} and \ref{l:VirdimFormula3}, we get
 \begin{align} \label{eq:virdimGlue1}
  \left\{
  \begin{array}{ll}
   \virdim(u_{v} \#_x u_{v'})&=\virdim(u_{v})+\virdim(u_{v'})-\dim(S_x) \\
   \virdim(u_{v} \#_\gamma u_{v'})&=\virdim(u_{v})+\virdim(u_{v'})-\dim(S_\gamma)
  \end{array}
\right.
 \end{align}
On the other hand, if $l_\eT(v)=l_\eT(v')$ so that there is a Lagrangian intersection point $x$ which is the positive asymptote of $u_v$ at $f_v^{-1}(e)$ and
the negative asymptote of $u_{v'}$ at $f_{v'}^{-1}(e)$, then we  have
\begin{align}
 \virdim(u_{v} \#_x u_{v'})&=\virdim(u_{v})+\virdim(u_{v'})+1 \label{eq:virdimGlue2}
\end{align}
where $u_{v} \#_x u_{v'}$ is defined analogously.

\subsection{Action}\label{ss:action}

This subsection discuss the action of the generators.
A similar discussion can be found in \cite{SFTcompact} and \cite[Section $3$]{CDGG}.

Let $L_0,L_1$ be exact Lagrangians in $(M,\omega,\theta)$.
It means that, for $j=0,1$, there exists a primitive function $f_j \in C^{\infty}(L_j,\R)$ such that $df_j=\theta|_{L_j}$.
For a Lagrangian intersection point $p \in CF(L_0,L_1)$, the action is 
\begin{align}
 A(p)=f_0(p)-f_1(p)
\end{align}
so $ A(p^\vee)=-A(p)$ (see Convention \ref{c:DualGenNotation}).
For a contact hypersurface $(Y,\alpha=\theta|_Y) \subset (M,\omega, \theta)$
and a Reeb chord $x:[0,l_x] \to Y$ from $\Lambda_0=L_0 \cap Y$ to $\Lambda_1=L_1 \cap Y$.
The length of $x$ is
\begin{align}
 L(x)= \int x^*\alpha =l_x
\end{align}
and the action is
\begin{align}
 A(x)=L(x)+f_0(x(0))-f_1(x(l_x)) \label{eq:ActionChordDefn}
\end{align}
Reeb orbits are special kinds of Reeb chords so the length and action of a Reeb orbit $\gamma$ is
\begin{align}
 L(\gamma)=A(\gamma)=\int \gamma^*\alpha
\end{align}

We have the following action control

\begin{lemma}[see \cite{CDGG}(Lemma $3.3$, Proposition $3.5$)]\label{l:actionBound}
 Let $u_{\infty}=\{u_v\}_{v \in V(\eT)}$ be a holomorphic building obtained in Theorem \ref{t:SFTcompactness}.
 If $u_v$ has positive asymptotes $\{x_j^+\}_{j=1}^{r^+}$, $\{\gamma_j^+\}_{j=1}^{s^+}$
 and negative asymptotes $\{x_j^-\}_{j=1}^{r^-}$, $\{\gamma_j^-\}_{j=1}^{s^-}$, then
 \begin{align}
  E_{\omega}(u_v):=\sum_{j=1}^{r^+} A(x_j^+) + \sum_{j=1}^{s^+} A(\gamma_j^+) -\sum_{j=1}^{r^-} A(x_j^-)-\sum_{j=1}^{s^-} A(\gamma_j^-) \ge 0 \label{eq:SumofA}
 \end{align}
The equality holds if and only if $u_v$ is a trivial cylinder (see \eqref{eq:TrivialCyl}).
 
\end{lemma}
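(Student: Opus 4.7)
My plan is to prove Lemma \ref{l:actionBound} by the standard Stokes' theorem argument in symplectic field theory. On each of the three possible targets $SM^-$, $SY$, or $SM^+$ containing the image of $u_v$, the symplectic form is exact: $\omega = d\theta$, where $\theta$ denotes the Liouville one-form on $SM^\pm$ and the contact one-form $e^r\alpha$ on $SY$. Since $J$ is $\omega$-compatible, $u_v^*\omega$ is a pointwise non-negative multiple of the area form on $\Sigma_v$, so $\int_{\Sigma_v} u_v^*\omega \geq 0$, with equality exactly when $du_v$ takes values in $\ker \omega$ everywhere on the image.

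The main step is to identify $\int_{\Sigma_v} u_v^*\omega$ with the signed sum $E_\omega(u_v)$ in \eqref{eq:SumofA}. Exhausting $\Sigma_v$ by compact subdomains obtained by excising small strip-like or cylindrical neighborhoods of each puncture (chosen as in Convention \ref{con:Ends}), Stokes' theorem expresses $\int_{\Sigma_v} u_v^*\omega$ as the limit of boundary integrals of $u_v^*\theta$. On a Lagrangian-labelled segment $\partial_j \Sigma_v$ with label $L$, we write $\theta|_L = df_L$ for a primitive $f_L$, so this contribution collapses to the difference of $f_L$-values at the two adjacent corner punctures. Around a boundary puncture asymptotic to a Reeb chord $x$, Morse-Bott exponential convergence as in \cite{Bo02, SFTcompact} shows the integral over a small semicircle tends to $\pm L(x)$, with $+$ for a positive asymptote and $-$ for a negative one; when combined with the primitive corrections coming from the adjacent Lagrangian segments, this reassembles exactly $\pm A(x)$ via the definition in \eqref{eq:ActionChordDefn}. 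Around an interior puncture asymptotic to a Reeb orbit $\gamma$, the analogous limit gives $\pm \int \gamma^*\alpha = \pm A(\gamma)$. For boundary punctures which are Lagrangian intersection points $p \in SM^\pm$, the Reeb length is zero and only the differences $f_0(p) - f_1(p) = A(p)$ survive, with the correct sign by orientation. Summing over all punctures yields $\int_{\Sigma_v} u_v^*\omega = E_\omega(u_v)$, proving the inequality.

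For the equality case, $E_\omega(u_v) = 0$ forces $du_v$ to land in $\ker \omega$ pointwise on the target. On $SM^\pm$ this kernel is trivial, forcing $u_v$ to be constant; but a constant map cannot satisfy the prescribed asymptotic conditions at any puncture, so this possibility is excluded by the holomorphic building structure. On $SY$, the kernel of $d(e^r\alpha)$ is spanned by $\partial_r$ and the Reeb vector field $R_\alpha$, so $u_v$ factors through a one-dimensional leaf of this characteristic foliation; combined with $J$-holomorphicity and the asymptotic matching, this forces $\Sigma_v$ to be a strip or cylinder and $u_v$ to have the form \eqref{eq:TrivialCyl}, i.e. a trivial cylinder over a common positive and negative asymptote. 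The main technical point requiring care is the Morse-Bott exponential convergence at the punctures, which is what justifies passing to the limit in the boundary integrals; this is precisely the content of the Morse-Bott asymptotic hypothesis and is developed in \cite{Bo02, SFTcompact, CDGG}.
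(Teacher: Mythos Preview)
Your overall plan—Stokes' theorem plus pointwise non-negativity of the pulled-back $2$-form—is exactly the standard argument, and the paper simply cites \cite{CDGG} rather than writing it out. However, the implementation you give breaks at the Reeb asymptotes because of your choice of primitive.

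You take $\theta=e^{r}\alpha$ on $SY$ and the Liouville form (which is again $e^{r}\alpha$ on the cylindrical end) on $SM^{\pm}$. At a \emph{positive} Reeb asymptote the curve escapes to $r\to+\infty$, so over the truncating arc one has
\[
\int_{\epsilon_j(\{R\}\times[0,1])} u_v^{*}\theta \;\approx\; e^{\,r(R)}\!\int_0^1 \alpha\big(T\dot{x}(Tt)\big)\,dt \;=\; e^{\,r(R)}\,L(x)\;\longrightarrow\;+\infty,
\]
not $L(x)$. Hence $\int_{\Sigma_v}u_v^{*}\omega$ is infinite whenever $u_v$ has a positive Reeb chord or orbit (so in particular for every nontrivial $u_v$ mapping to $SY$ or $SM^{-}$), and the Stokes identity you assert does not hold. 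Dually, at a negative Reeb asymptote in $SM^{+}$ the arc integral of $e^{r}\alpha$ tends to $0$ rather than $-L(x)$, so even there you recover only $\int u_v^{*}\omega=E_{\omega}(u_v)+\sum_{-}L(\cdot)\ge 0$, which is strictly weaker than \eqref{eq:SumofA}.

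The fix is to replace $d(e^{r}\alpha)$ on the cylindrical parts by the degenerate but still $J$-non-negative form $\pi_Y^{*}d\alpha$, with primitive $\pi_Y^{*}\alpha$. For cylindrical $J$ one has $d\alpha(v,Jv)\ge 0$ on $\xi=\ker\alpha$ and $d\alpha$ vanishes on $\langle\partial_r,R_{\alpha}\rangle$, so $u_v^{*}d\alpha\ge 0$ pointwise; now the arc integral at a Reeb asymptote genuinely converges to $\pm L(x)$. On $SM^{\pm}$ one patches: integrate $u_v^{*}\omega$ over $u_v^{-1}(M^{\pm})$ and $u_v^{*}d\alpha$ over the preimage of the cylindrical end, observe both are non-negative, and check that the interface terms along $u_v^{-1}(\{0\}\times Y)$ cancel (since $\theta|_{\{0\}\times Y}=\alpha$). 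The sum is then exactly $E_{\omega}(u_v)$; this is the computation carried out in \cite[Lemma~3.3, Prop.~3.5]{CDGG}. Your equality-case discussion is essentially correct once this corrected form is used: vanishing of $u_v^{*}d\alpha$ forces $du_v$ into $\langle\partial_r,R_{\alpha}\rangle$, and together with $J$-holomorphicity this yields the trivial-cylinder form \eqref{eq:TrivialCyl}.
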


Since $A(\gamma) >0$ for any Reeb orbit $\gamma$ and $A(x)>0$ if $x$ is a non-constant Reeb chord such that $x(0)=x(l_x)$,
a direct consequence of Lemma \ref{l:actionBound} is

\begin{corr}\label{c:actionBoundoutside}
 If $u_v:\Sigma_v \to SM^+$ has only negative asymptotes, then at least one of the asymptotes 
 is not a Reeb orbit nor a Reeb chord $x$ such that $x(0)=x(l_x)$.
\end{corr}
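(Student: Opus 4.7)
The plan is to apply the action inequality of Lemma \ref{l:actionBound} directly to $u_v$. Since $u_v$ has no positive asymptotes, the $r^+$ and $s^+$ sums in \eqref{eq:SumofA} are empty, so the lemma reads
\begin{align*}
 E_{\omega}(u_v)=-\sum_{j=1}^{r^-}A(x_j^-)-\sum_{j=1}^{s^-}A(\gamma_j^-)\ge 0.
\end{align*}
Moreover, $u_v$ is a map into $SM^+$ rather than into $SY$, so by definition it is not a trivial cylinder in the sense of \eqref{eq:TrivialCyl}; hence the equality case of Lemma \ref{l:actionBound} is excluded and in fact $E_{\omega}(u_v)>0$. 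Equivalently,
\begin{align*}
 \sum_{j=1}^{r^-}A(x_j^-)+\sum_{j=1}^{s^-}A(\gamma_j^-)<0.
\end{align*}

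Now assume for contradiction that every negative asymptote of $u_v$ is either a Reeb orbit or a non-constant Reeb chord $x$ with $x(0)=x(l_x)$. For a Reeb orbit $\gamma$ we have $A(\gamma)=L(\gamma)>0$ by definition. For a non-constant Reeb chord $x$ with $x(0)=x(l_x)$, both endpoints of $x$ lie on the same Legendrian component, so the corresponding Lagrangian labels on the two boundary arcs adjacent to that puncture agree; picking the same primitive on both sides makes the boundary contribution $f_0(x(0))-f_1(x(l_x))$ in \eqref{eq:ActionChordDefn} vanish, leaving $A(x)=L(x)=l_x>0$.

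Under this assumption, every term in the displayed sum above would be strictly positive, contradicting the strict negativity of the total. Therefore at least one negative asymptote of $u_v$ must fail to be a Reeb orbit or such a closed Reeb chord, as claimed. The only conceivable degenerate case, in which $u_v$ has no asymptotes at all, is ruled out by exactness of $(M,\omega)$: a closed $J^+$-holomorphic curve in the exact manifold $SM^+$ must be constant, but a constant map to $SM^+$ cannot arise as a non-trivial component in the SFT limit of Theorem \ref{t:SFTcompactness}. The main (and only genuine) point of the argument is the observation that trivial cylinders live in $SY$ and not in $SM^+$, which turns the weak inequality of Lemma \ref{l:actionBound} into a strict one.
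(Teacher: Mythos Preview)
Your proof is correct and follows the same approach as the paper: apply Lemma~\ref{l:actionBound} and use that Reeb orbits and closed Reeb chords have strictly positive action. Two minor remarks. First, the detour through strict inequality is unnecessary: even the weak inequality $E_\omega(u_v)\ge 0$ already contradicts $-\sum A<0$, which is what you get once at least one asymptote has strictly positive action. Second, your justification that the primitive terms cancel for a chord with $x(0)=x(l_x)$ is right in conclusion but the phrasing ``picking the same primitive'' obscures the actual reason: the point $x(0)=x(l_x)$ lies in $\Lambda_a\cap\Lambda_b$, and since the Lagrangians are cylindrical near $Y$ and pairwise transverse, this forces $L_a=L_b$, so the two primitives are literally the same function.
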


\begin{lemma}\label{l:actionBoundGlobal}
Let $u_{\infty}=\{u_v\}_{v \in V(\eT)}$ be a holomorphic building obtained in Theorem \ref{t:SFTcompactness}.
 If $\sum_{j=0}^d |A(x_j)| <T$, then for every $v \in V(\eT)$, the action of every asymptote of $u_v$ lies in $[-T,T]$.
\end{lemma}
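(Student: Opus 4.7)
The plan is to exploit the non-negativity of the energy $E_\omega(u_v)$ from Lemma \ref{l:actionBound} together with a telescoping sum over subtrees of $\eT$. The key observation is that if $W \subset V(\eT)$ is any subset which, together with its adjacent edges inside $\eT$, forms a connected subgraph, then summing $E_\omega(u_v)$ over $v \in W$ causes every asymptote sitting at an internal edge of the subgraph to appear exactly twice (once positively for one endpoint, once negatively for the other), so its contribution cancels. What survives is the signed sum of actions of asymptotes at those edges of $\eT$ which have exactly one endpoint in $W$.

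Applied to $W = V(\eT)$ itself, the identity reads
\begin{align*}
\sum_{v \in V(\eT)} E_\omega(u_v) \;=\; \sum_{j=1}^d A(x_j) - A(x_0),
\end{align*}
which is only a consistency check. To bound an internal asymptote, fix any edge $e$ of $\eT$ and let $A$ be the component of $\eT \setminus e$ containing the root semi-infinite edge, $B$ the other component, with $J_A, J_B \subset \{1,\dots,d\}$ the partition of leaves induced by $A, B$. Let $x_e$ denote the Reeb chord, Reeb orbit, or Lagrangian intersection point associated with $e$, and let $\epsilon \in \{+1,-1\}$ record whether $x_e$ appears as a positive asymptote for the vertex in $A$ adjacent to $e$ (then $\epsilon = +1$) or negative (then $\epsilon = -1$). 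Applying the telescoping identity separately to $W = V(A)$ and $W = V(B)$ yields
\begin{align*}
0 \;\le\; \sum_{v \in V(A)} E_\omega(u_v) &= \sum_{j \in J_A} A(x_j) - A(x_0) + \epsilon\, A(x_e), \\
0 \;\le\; \sum_{v \in V(B)} E_\omega(u_v) &= \sum_{j \in J_B} A(x_j) - \epsilon\, A(x_e),
\end{align*}
where the non-negativity of each side is Lemma \ref{l:actionBound}.

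From the two inequalities one immediately reads
\begin{align*}
A(x_0) - \sum_{j \in J_A} A(x_j) \;\le\; \epsilon\, A(x_e) \;\le\; \sum_{j \in J_B} A(x_j),
\end{align*}
so that $|A(x_e)| \le \sum_{j=0}^d |A(x_j)| < T$. Since every asymptote of every $u_v$ is either one of the external asymptotes $x_0,\dots,x_d$ (whose actions are bounded by $T$ by hypothesis) or corresponds to some internal edge $e$ of $\eT$, the bound $|A(\,\cdot\,)| \le T$ holds in all cases.

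The argument is essentially bookkeeping, so there is no serious conceptual obstacle; the only point requiring care is the sign $\epsilon$, i.e.\ verifying that when a chord/orbit asymptote $x_e$ sits at an internal edge, it enters the energy balance of one side positively and the other side negatively (this being exactly the content of Convention \ref{con:Ends}, which assigns compatible positive/negative ends to the two punctures identified along $e$). The case in which $e$ connects vertices at the same level $l_\eT(v)=l_\eT(v') \in \{0, n_\eT\}$, so that $x_e$ is a Lagrangian intersection point rather than a Reeb asymptote, is handled identically, since the energy $E_\omega$ treats such boundary punctures on the same footing.
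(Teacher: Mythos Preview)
Your proof is correct and takes essentially the same approach as the paper: both exploit the telescoping of $\sum_{v \in W} E_\omega(u_v)$ over a subtree $W$ obtained by cutting $\eT$ at the edge $e$ in question, so that all internal contributions cancel and what remains bounds $A(x_e)$ in terms of $\sum_j |A(x_j)|$. The only cosmetic difference is that the paper argues by contradiction using a single subtree, whereas you argue directly using both components of $\eT \setminus e$ to obtain the two-sided bound.
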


\begin{proof}
 Let us assume the contrary.
 Then there is an asymptote of $u_v$ with action lying outside $[-T,T]$. 
 We assume that this is a boundary asymptote and denote it by $x$. The case for interior asymptote is identical.
 If $A(x)>T$ (resp. $A(x)<-T$), we pick $v' \in V(\eT)$ (which might be $v$ itself) such that $x$ is a negative (resp. positive) asymptote of $u_{v'}$. 
 Let $e$ be the edge in $\eT$ corresponds to this asymptote.
 Let $G$ be the subtree of $\eT \backslash \{e\}$ containing $v'$.

 Denote $x_{v,i}$ by the asymptotes corresponding to the vertex $v$.  
 Let $sgn(x)=0$ (resp. $sgn(x)=1$) if $x$ is a positive (resp. negative) asymptote.
 Then 
\begin{equation}
       \begin{aligned}
  0 &\le \sum_{v \in G} E_\omega(u_v) \\
    &= \sum_{v\in G}\sum_{i}(-1)^{sgn(x_{i,v})}A(x_{i,v})\\
    &\le (-1)^{sgn(x)}A(x)+ \sum_{j} |A(x_j)|<0
 \end{aligned}
\end{equation}

Here $x_{i,v}$ runs over all asymptotes of $u_v$, and $j$ over all semi-infinite edges.  The second inequality holds because all finite edges are cancelled between the components they connect.  This concludes the lemma.

\end{proof}

\begin{lemma}\label{l:PositiveActionAsymptote}
Let $u_{\infty}=\{u_v\}_{v \in V(\eT)}$ be a holomorphic building obtained in Theorem \ref{t:SFTcompactness}.
 If $v \in V^{\partial} \cup V^{int}$, then only the action of the asymptote of $u_v$
that corresponds to the edge $e_v$ closest to the root of $\eT$ contributes positively to $E_{\omega}(u_v)$.
\end{lemma}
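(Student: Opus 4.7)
The plan is to prove the lemma by a subtree energy-cancellation argument, in the spirit of the proof of Lemma~\ref{l:actionBoundGlobal}, combined with the structural description of $\eT$ given by Lemma~\ref{l:TreeConfig}. The core idea is that if one removes the edge $e_v$ from $\eT$, the side containing $v$ is insulated from every $V^{core}$ vertex, and hence from every semi-infinite edge of $\eT$, so the telescoping sum of non-negative energies $E_\omega(u_w)$ collapses to a single asymptote term.

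I will first verify this combinatorial claim. The root vertex $v_{\mathrm{root}}$ of $\eT$ lies in $V^{core}$, because its boundary carries the two different Lagrangian labels $L_0$ and $L_d$ coming from $x_0\in L_0\cap L_d$. By Lemma~\ref{l:TreeConfig}, $\eT^{(1)}$ is a connected subtree of $\eT$ containing exactly the $V^{core}$ vertices. For any $w\in V^{core}$, the $\eT^{(1)}$-path from $w$ to $v_{\mathrm{root}}$ uses only $V^{core}$ vertices joined by edges of $\eT$; since $\eT$ is itself a tree, this is the unique $\eT$-path from $w$ to $v_{\mathrm{root}}$, and it cannot pass through $v\in V^\partial\cup V^{int}$. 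Deleting $e_v$ therefore leaves $w$ in the same component as $v_{\mathrm{root}}$, i.e.\ in the opposite component from $v$. Writing $G_v$ for the component of $\eT\setminus\{e_v\}$ containing $v$, this shows $G_v\cap V^{core}=\emptyset$. Since every semi-infinite edge $\xi_j$ with $j\ge 1$ is attached to a vertex carrying both $L_{j-1}$ and $L_j$ on its boundary (hence a $V^{core}$ vertex) and $\xi_0$ is at $v_{\mathrm{root}}$, no semi-infinite edge of $\eT$ lies in $G_v$. Exactly the same argument, applied after removing any other edge $e'$ at $v$, shows that the component $H_{e'}$ of $\eT\setminus\{e'\}$ not containing $v$ also contains no $V^{core}$ vertex and no semi-infinite edge.

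With the combinatorial claim in hand, I will sum the non-negative quantities $E_\omega(u_w)\ge 0$ from Lemma~\ref{l:actionBound} over $w\in G_v$. Each edge internal to $G_v$ appears with opposite signs on its two endpoint vertices and cancels; by the claim no semi-infinite edge contributes at all; and the only uncancelled term is the contribution of the asymptote $a_{e_v}$ to $E_\omega(u_v)$, which we write as $\epsilon_{v,e_v}A(a_{e_v})$ with $\epsilon_{v,e_v}=\pm 1$ according to whether this asymptote is positive or negative in the sense of \eqref{eq:SumofA}. The resulting inequality is $\epsilon_{v,e_v}A(a_{e_v})\ge 0$. Repeating the argument with $H_{e'}$ in place of $G_v$ telescopes the sum to $\epsilon_{w',e'}A(a_{e'})\ge 0$, where $w'$ is the endpoint of $e'$ opposite $v$; since a shared asymptote enters the energies of its two adjacent vertices with opposite signs, this is equivalent to $\epsilon_{v,e'}A(a_{e'})\le 0$. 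Hence the asymptote at $e_v$ contributes non-negatively to $E_\omega(u_v)$ while every other asymptote contributes non-positively, which is the assertion of the lemma. The main obstacle is the combinatorial claim—specifically, the use of the fact that $\eT^{(1)}$ is a genuine subtree of $\eT$ so that $\eT^{(1)}$-paths coincide with $\eT$-paths; once this is in place, the energy accounting is a direct generalization of the telescoping computation appearing in the proof of Lemma~\ref{l:actionBoundGlobal}.
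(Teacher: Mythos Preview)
Your proof is correct and rests on the same energy-telescoping idea as the paper's. The paper argues by induction on the number of vertices in $G_v$: the base case is a single leaf, and for the inductive step one observes (via Lemma~\ref{l:TreeConfig}) that the neighbour $v'$ across any edge $e\neq e_v$ again lies in $V^{\partial}\cup V^{int}$, so the inductive hypothesis applied to $v'$ forces the asymptote at $e$ to contribute negatively to $E_\omega(u_v)$; non-negativity of $E_\omega(u_v)$ then pins the positive contribution to $e_v$. You instead unroll this induction into two global telescoping sums, one over $G_v$ and one over $H_{e'}$, after first isolating the combinatorial fact that these subtrees miss $V^{core}$ and all semi-infinite edges. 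Both arguments use Lemma~\ref{l:TreeConfig} in the same essential way; yours makes the role of the root vertex lying in $V^{core}$ more explicit, while the paper's induction packages the $e_v$ and $e'$ cases into a single step. One cosmetic difference: your sums yield only $\ge 0$ at $e_v$ and $\le 0$ elsewhere, whereas the paper's induction produces strict inequalities; the non-strict version already proves the lemma as stated (no asymptote other than the one at $e_v$ contributes positively) and suffices for every later application.
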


\begin{proof}
Let $G_v$ be the subtree of $\eT \backslash \{e_v\}$ containing $v$.
We apply induction on the number of vertices in $G_v$.

If $G_v$ has only one vertex, then $0<E_{\omega}(u_v)$ is only contributed
by the asymptote corresponds to $e_v$ so the base case is done.

Now we consider the general case.
Let $e$ be an edge in $G_v$ (so $e \neq e_v$).
Let $v' \neq v$ be the other vertex adjacent to $e$ so $v' \in V^{\partial} \cup V^{int}$ by Lemma \ref{l:TreeConfig}.
By induction on $G_{v'}$, we know that the asymptote corresponds to $e$ contributes positively to $E_{\omega}(u_{v'})$
and hence negatively to $E_{\omega}(u_{v})$.
Finally, for $E_{\omega}(u_{v})$ to be non-negative, we need to have at least one term which contributes positively to $E_{\omega}(u_{v})$.
This can only be contributed by the asymptote corresponding to $e_v$.
\end{proof}

\begin{lemma}[Distinguished asymptote]\label{l:DisAsymptote}
 Let $u_{\infty}=\{u_v\}_{v \in V(\eT)}$ be a holomorphic building obtained in Theorem \ref{t:SFTcompactness}.
 If $\partial \Sigma_v \neq \emptyset$ and $u_v$ is not a trivial cylinder (see \eqref{eq:TrivialCyl}),
 then there is a boundary asymptote $x$ of $u_v$ that appears only once among all the asymptotes $\{x_i^\pm\}$ of $u_v$.
\end{lemma}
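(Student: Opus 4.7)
The plan is to argue by contradiction: suppose every boundary asymptote $x$ of $u_v$ appears at least twice in the multiset $\{x_i^\pm\}$, and derive that $u_v$ must be a trivial strip/cylinder, contradicting the hypothesis.

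First I would observe that interior punctures are asymptotic to Reeb orbits while boundary punctures are asymptotic to Reeb chords or Lagrangian intersection points, so the hypothesis on boundary asymptotes is equivalent to saying that every boundary asymptote appears at least twice among the boundary asymptotes alone. I would then dispose of the easy low-complexity cases: if $\Sigma_v$ has a single boundary puncture, the unique boundary asymptote trivially appears only once; if $\Sigma_v$ is a strip (two boundary punctures, no interior punctures, genus zero) and both boundary asymptotes coincide as the same chord/intersection point $x$, then Lemma \ref{l:actionBound} gives
\[ E_\omega(u_v) = A(x) - A(x) = 0, \]
forcing $u_v$ to be a trivial strip and contradicting the assumption.

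For general $\Sigma_v$ with $d_v + 1 \geq 2$ boundary punctures, I would combine two ingredients. First, the action identity from Lemma \ref{l:actionBound}, namely
\[ \sum_j (n_j^+ - n_j^-) A(z_j) + \sum_k (m_k^+ - m_k^-) A(\gamma_k) = E_\omega(u_v) > 0, \]
where the first sum runs over distinct boundary asymptotes $z_j$ appearing with signed multiplicities $n_j^\pm$ satisfying $n_j^+ + n_j^- \geq 2$, and the second over interior orbits. Second, the topological fact that $\partial \Sigma_v$ is a single circle (punctured by the boundary marked points): going counterclockwise around $\partial \Sigma_v$, each puncture asymptotic to a chord from $\Lambda_a$ to $\Lambda_b$ produces a directed transition $L_a \to L_b$ (if the puncture is positive) or $L_b \to L_a$ (if negative) in the cyclic sequence of Lagrangian labels, and these transitions must balance for the cyclic sequence to close.

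The heart of the argument, and the main obstacle, is to convert the combinatorial cyclic-balance constraint into a genuine action cancellation forcing $E_\omega(u_v) \leq 0$. My plan would be to single out the boundary asymptote $x^*$ with strictly maximal action (treating ties separately). If $x^*$ appeared exclusively with one sign, the cyclic-balance condition would demand compensating transitions coming only from asymptotes of strictly smaller action, which one can show cannot contribute enough to the action balance; if $x^*$ appeared with mixed signs, then the paired $(+,-)$ occurrences contribute zero to the action sum, and one can iterate the argument on the residual multiset of asymptotes, eventually reducing to the easy cases. For vertices $v$ whose image lies in the symplectization $SY$, a cleaner alternative is available via the maximum principle applied to the radial coordinate $r \circ u_v$: since $u_v$ is non-trivial, this function is non-constant and subharmonic, so its ``leading'' positive puncture (the one where the chord length realizes the top action) must be unique. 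The main technical subtlety is reconciling these local arguments at each of the three possible target types ($SM^-$, $SY$, $SM^+$) into a uniform conclusion.
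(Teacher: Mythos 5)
Your plan treats the lemma as a purely local statement about the individual curve $u_v$, but the statement is in fact a global one about $u_v$ as a component of the holomorphic building $u_\infty$, and the paper's proof relies on this in an essential way. The key input you are missing is Lemma \ref{l:PositiveActionAsymptote}: for a vertex $v \in V^{\partial}\cup V^{int}$, only the asymptote corresponding to the edge $e_v$ closest to the root of $\eT$ can contribute positively to $E_\omega(u_v)$. That lemma is proved by an induction over subtrees of $\eT$ and therefore knows about the surrounding building; it is not something you can recover from the curve $u_v$ alone.

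The concrete place where your local argument fails is precisely the case $v \in V^{\partial}$, i.e.\ when only one Lagrangian labels all of $\partial\Sigma_v$. Then every boundary asymptote is a Reeb chord from $\Lambda$ to itself, so the cyclic-balance constraint on Lagrangian transitions around $\partial\Sigma_v$ is vacuous: every chord reads $L \to L$ and closes trivially. There is nothing combinatorial to exploit. A curve in the symplectization $SY$ with boundary on a single $S\Lambda$ can, in principle, have several positive chord punctures and no negative ones (such disks are exactly what Legendrian contact homology counts), so your ``maximal-action asymptote'' bookkeeping has no contradiction to reach. The maximum-principle shortcut you offer for $SY$ is also not a valid substitute: subharmonicity of $r\circ u_v$ forbids an interior maximum but says nothing about the uniqueness of the puncture where the top asymptotic action is achieved, and distinct punctures may have equal actions. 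The paper handles $v\in V^{\partial}$ entirely by invoking Lemma \ref{l:PositiveActionAsymptote}, and without it your case analysis cannot close.

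For $v\in V^{core}$ your observations are closer to the paper's. The case of three or more Lagrangian labels on $\partial\Sigma_v$ is indeed settled by boundary conditions alone (the asymptote at a label transition is forced to be unique), matching what the paper does. With exactly two labels the paper again leans on the global lemma: all asymptotes other than the two ``transition'' punctures correspond to edges heading into $V^{\partial}$ subtrees and therefore contribute negatively to $E_\omega(u_v)$; then if the two transition asymptotes coincide, orientation of the Lagrangian boundary condition forces one to be positive and one negative so their action contributions cancel, giving $E_\omega(u_v)\le 0$ and rigidity by Lemma \ref{l:actionBound}. Your ``maximal-action plus cyclic balance'' heuristic is aiming at a similar cancellation, but without Lemma \ref{l:PositiveActionAsymptote} you have no control over the sign of the non-transition asymptotes' contributions, which is what makes the energy estimate close.
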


\begin{proof}
By Lemma \ref{l:PositiveActionAsymptote}, when $v \in V^{\partial}$, the asymptote of $u_v$ at $\xi^v_0$ 
is the only asymptote that contributes positively to energy and hence appears only once among the asymptotes of $u_v$.


 Now, we consider $v \in V^{core}$.
 If there are more than two Lagrangians appear in the Lagrangian labels of $\partial \Sigma_v$, say $\partial_j S$
 and $\partial_{j+1} S$ are labelled by $L_{k_1}$ and $L_{k_2}$, respectively, for $k_1 \neq k_2$, then the asymptote that $u_v$
 converges to at $\xi^v_{j+1}$ can only appears once among the asymptotes of $u_v$, by Lagrangian boundary condition reason.
 
 If there are exactly two Lagrangians appear in the Lagrangian labels of $\partial \Sigma_v$, then there are exactly two $j$ 
 such that the Lagrangian labels on $\partial_{j} S$ and $\partial_{j+1} S$ are different.
 Let the two $j$ be $j_1$ and $j_2$.
 It is clear that $f_v(\xi_{j_1+1}^v)$ and $f_v(\xi_{j_2+1}^v)$ are the only two edges in $\eT^{core} \backslash (\eT^{\partial} \cup \eT^{int})$ that are adjacent to $v$.
 Therefore, by our first observation, the action of the asymptotes corresponding the other edges of $v$ contributes negatively to  $E_{\omega}(u_v)$.
 
 If $u_v$ converges to the same Reeb chord at $\xi_{j_1+1}^v$ and $\xi_{j_2+1}^v$, then one of it must be a 
 positive asymptote and the other is a negative asymptote by Lagrangian boundary condition.
 Therefore, the contribution to $E_{\omega}(u_v)$ by this same asymptote cancels.
 Similarly, if $u_v$ converges to the same Lagrangian intersection point at $\xi_{j_1+1}^v$ and $\xi_{j_2+1}^v$, 
 then the contribution to $E_{\omega}(u_v)$ by this same asymptote cancels because of the order of the Lagrangian boundary condition.
 As a result, we have $E_{\omega}(u_v) \le 0$ which happens only when $u_v$ is a trivial cylinder (see \eqref{eq:TrivialCyl}), by Lemma \ref{l:actionBound}.
\end{proof}

\begin{rmk}
Notice that, when $u_v$ maps to $SY$, the sum \eqref{eq:SumofA} becomes
\begin{align}
\sum_{j=1}^{r^+} L(x_j^+) + \sum_{j=1}^{s^+} L(\gamma_j^+) -\sum_{j=1}^{r^-} L(x_j^-)-\sum_{j=1}^{s^-} L(\gamma_j^-) \label{eq:SimSumofA}
\end{align}
because the terms involving the primitive functions on the Lagrangians add up to zero. 
\end{rmk}

\subsection{Morsification}

We come back to our focus on $U=T^*P$, where $P$ satisfies \eqref{eq:GammaCondition}. We will need to use a perturbation of the standard contact form $\alpha_0$ on $\partial U$ to achieve transversality later. In this section, we explain how the action and index of the Reeb chord/orbit are changed under such a perturbation.

 As explained in Section \ref{ss:Grading}, $(\partial U,\alpha_0)$ is foliated by Reeb orbits.
 The quotient of $\partial U$ by the Reeb orbits is an orbifold, which is denoted by $Q_{\partial U}$.
 We can choose a Morse function $f_Q:Q_{\partial U} \to \R$ compatible with the strata of $Q_{\partial U}$
 and lifts $f_Q$ to a $R_{\alpha_0}$-invariant function $f_\partial:\partial U \to \R$ (see \cite[Section $2.2$]{Bo02}).
 Let $\critp(f_Q)$ be the set of critical points of $f_Q$.
Let $\alpha=(1+\delta f_{\partial})\alpha_0$, which is a contact form for $|\delta| \ll 1$.
 Let $L(\partial U)$ be the length of a generic simple Reeb orbit of $\partial U$.
 
 \begin{lemma}[\cite{Bo02} Lemma $2.3$]\label{l:Morsification}
  For all $T> L(\partial U)$, there exists $\delta>0$ such that every simple $\alpha$-Reeb orbit $\gamma$ with $L(\gamma)<T$ is non-degenerate and is a simple $\alpha_0$-Reeb orbit.
  Moreover, the set of simple $\alpha$-Reeb orbits $\gamma$ with $L(\gamma)<T$ is in bijection to $\critp(f_Q)$.
  
  Furthermore, if $\gamma$ is the $m$-fold cover of a simple $\alpha$-Reeb orbit $\gamma^s$ such that $L(\gamma)<T$, then
  \begin{align}
   \mu_{RS}^{\alpha}(\gamma) \ge \mu_{RS}^{\alpha_0}(\gamma) - \frac{1}{2}\dim(S_{\gamma}) \label{eq:IndexChange}
  \end{align}
where $\mu_{RS}^{\alpha}(\gamma)$, $\mu_{RS}^{\alpha_0}(\gamma)$ are the Robbin-Salamon index of $\gamma$ with respect to $\alpha$ and $\alpha_0$, respectively,
and $S_{\gamma}$ is the Morse-Bott family with respect to $\alpha_0$ that $\gamma$ lies.
 \end{lemma}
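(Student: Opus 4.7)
The plan is to adapt the argument of Bourgeois \cite{Bo02}, which establishes precisely this kind of Morse--Bott perturbation statement in much greater generality, to our setting where $(\partial U,\alpha_0)$ is the unit cotangent bundle of $P=S^n/\Gamma$ with the round metric. The first step would be an explicit calculation of the perturbed Reeb vector field: from $\alpha(R_\alpha)=1$ and $\iota_{R_\alpha}d\alpha=0$ one obtains
\[
R_\alpha=\frac{1}{1+\delta f_\partial}\bigl(R_{\alpha_0}+X_\delta\bigr),
\]
where $X_\delta\in\ker\alpha_0$ is the unique vector field with $\iota_{X_\delta}d\alpha_0|_{\ker\alpha_0}=-\delta\,df_\partial|_{\ker\alpha_0}$. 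Since $f_\partial$ is constant along $R_{\alpha_0}$-orbits, $X_\delta$ is $R_{\alpha_0}$-invariant and descends to a vector field on the orbit space $Q_{\partial U}$ which, up to time reparametrisation, agrees with $\delta$ times the gradient of $f_Q$ with respect to the metric induced by $d\alpha_0|_{\ker\alpha_0}$.

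Next I would establish the bijection between simple $\alpha$-Reeb orbits of length less than $T$ and $\critp(f_Q)$. Because the $\alpha_0$-flow is periodic with primitive period $L(\partial U)$, a compactness/Arzel\`a--Ascoli argument together with the implicit function theorem applied to the time-$L(\partial U)$ return map shows that, for $\delta$ small enough (depending on $T$), any simple $\alpha$-Reeb orbit of length less than $T$ stays $C^1$-close to an unperturbed orbit and, to leading order in $\delta$, its closure condition reduces to the vanishing of $\nabla f_Q$ at the projected point on $Q_{\partial U}$. Conversely each critical point of $f_Q$ produces such an orbit. Non-degeneracy of the resulting Reeb orbit is then equivalent to non-degeneracy of the Hessian of $f_Q$ at the matching critical point, which is automatic by the Morse condition on $f_Q$.

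The final and most delicate ingredient is the index inequality \eqref{eq:IndexChange}. The linearised $\alpha$-flow along $\gamma$ yields a path $\Phi^\alpha_t$ of symplectic matrices that is a small perturbation of $\Phi^{\alpha_0}_t$; the degenerate terminal eigenspace of dimension $\dim S_\gamma+1$ shrinks after perturbation to the one-dimensional Reeb direction, and the lost degeneracy resolves into finitely many interior crossings whose crossing forms are governed by the Hessian of $f_Q$ at the corresponding critical point. Applying the signature formula \eqref{eq:RSindexFormula} expresses $\mu_{RS}^\alpha(\gamma)-\mu_{RS}^{\alpha_0}(\gamma)$ as the total signature of these interior crossings minus a boundary correction of $\tfrac{1}{2}\dim S_\gamma$; since each signature lies in $[-\dim S_\gamma,\dim S_\gamma]$, the inequality \eqref{eq:IndexChange} follows. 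The main obstacle is this last step: constructing a symplectic normal form along $\gamma$ in which the interior crossings of $\Phi_t^\alpha$ can be identified explicitly with eigenvalue crossings of the Hessian of $f_Q$, and checking compatibility with the trivialisation of $\xi$ along $\gamma$ used to define $\mu_{RS}$.
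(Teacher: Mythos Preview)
Your plan for the first two assertions (computing the perturbed Reeb field, reducing closed orbits to critical points of $f_Q$ via the implicit function theorem, and reading off non-degeneracy from the Morse condition) is exactly the content of Bourgeois's Lemma~2.3, which the paper simply cites. So that part is fine, if more detailed than the paper.

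For the index inequality \eqref{eq:IndexChange}, however, you are working much harder than necessary, and the obstacle you flag at the end is entirely self-imposed. The paper's argument bypasses any normal-form construction or identification of crossings with Hessian eigenvalues. It runs as follows: first isotope $\Phi_t^{\alpha_0}$, relative to its endpoints, to a path $\widetilde\Phi_t^{\alpha_0}$ whose interior crossings with the Maslov cycle are all transverse (this is always possible and preserves $\mu_{RS}$). For $\delta$ small, $\Phi_t^\alpha$ is $C^1$-close to $\widetilde\Phi_t^{\alpha_0}$, so every transverse interior crossing persists with the same signature, and the $t=0$ boundary term is unchanged. The only possible discrepancy is at $t=1$: for $\alpha_0$ the endpoint is degenerate with $\dim\ker(\widetilde\Phi_1^{\alpha_0}-\mathrm{Id})=\dim S_\gamma$, contributing $\tfrac12\sigma^{\alpha_0}(1)$ with $|\sigma^{\alpha_0}(1)|\le\dim S_\gamma$, whereas for $\alpha$ the endpoint is non-degenerate. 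Hence the difference $\mu_{RS}^\alpha(\gamma)-\mu_{RS}^{\alpha_0}(\gamma)$ is bounded below by $-\tfrac12\dim S_\gamma$. No knowledge of \emph{which} critical point of $f_Q$ the orbit sits over, and no symplectic normal form, is needed.

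Your route would in fact yield the sharper equality $\mu_{RS}^\alpha(\gamma)=\mu_{RS}^{\alpha_0}(\gamma)-\tfrac12\dim S_\gamma+\mathrm{ind}_{f_Q}(p)$ (this is the standard Morse--Bott index formula), but the paper only needs the inequality, and obtains it from a one-line stability property of the Robbin--Salamon index. Note also two small slips in your write-up: the degenerate eigenspace in $\xi$ has dimension $\dim S_\gamma$, not $\dim S_\gamma+1$ (you seem to be counting the Reeb direction, which lives outside $\xi$); and your final bookkeeping ``total signature of interior crossings minus $\tfrac12\dim S_\gamma$, each signature in $[-\dim S_\gamma,\dim S_\gamma]$'' does not by itself give the bound $-\tfrac12\dim S_\gamma$ without the further input that the new crossings contribute non-negatively.
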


 \begin{proof}
  The first statement follows from \cite[Lemma $2.3$]{Bo02}.
  
  For the second statement, we need to compare the path of symplectic matrices $\Phi_t^{\alpha}$, $\Phi_t^{\alpha_0}$ corresponding to $\alpha$ and $\alpha_0$, respectively.
  We can isotope $\Phi_t^{\alpha_0}$ relative to end points, by changing the trivialization, to $\wt\Phi_t^{\alpha_0}$ such that  $\ker(\wt\Phi_t^{\alpha_0}-Id) \neq 0$
  only happens at finitely many $t \in [0,1]$.
  For a fixed $T$, we can choose $\delta$ sufficiently small such that $\Phi_t^{\alpha}$ and $\wt\Phi_t^{\alpha_0}$ are arbitrarily close but with $\ker(\wt\Phi_t^{\alpha}(1)-Id) \neq 0$.
  As a result, only the last contribution to $\mu_{RS}^{\alpha_0}(\gamma)$ at $t=1$ may not persist (see \eqref{eq:RSindexFormula}) and we obtain the result.
 \end{proof}
 
 \begin{corr}\label{c:IndexClosedCap}
  For all $T> L(\partial U)$, there exists $\delta>0$ such that every $\alpha$-Reeb orbit $\gamma$ with $L(\gamma)<T$ and being contractible in $U$ has $\mu_{RS}^{\alpha}(\gamma) \ge n-1$.
 As a result, the virtual dimension of $u:\C \to SM^-$ with positive asymptote $\gamma$ satisfies $\virdim(u) \ge 2n-4$
 \end{corr}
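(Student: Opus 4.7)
The plan is to combine Lemma \ref{l:Morsification} with the explicit index computation for $\alpha_0$-Reeb orbits recalled in Section \ref{sss:TypeFourGrading}, and then to feed the resulting bound into the virtual dimension formula of Lemma \ref{l:VirdimFormula1}. First I fix $T > L(\partial U)$ and take $\delta > 0$ as provided by Lemma \ref{l:Morsification}. Let $\gamma$ be an $\alpha$-Reeb orbit with $L(\gamma)<T$ that is contractible in $U=T^*P$. Applying Lemma \ref{l:Morsification}, I write $\gamma$ as the $m$-fold cover of a simple $\alpha$-orbit $\gamma^s$, where $\gamma^s$ is itself a simple $\alpha_0$-orbit, and I let $S_\gamma$ denote the $\alpha_0$ Morse--Bott family containing $\gamma$ (so $\dim S_\gamma = 2n-2$).

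The crucial step is to identify $\mu_{RS}^{\alpha_0}(\gamma)$. Since $\gamma$ is contractible in $U$, its lift to $\partial\fU = ST^*S^n$ closes up to a Reeb orbit there, corresponding to a closed geodesic loop in $S^n$ of length $2k\pi$ for some integer $k \ge 1$. The computation recalled in Section \ref{sss:TypeFourGrading} then gives $\mu_{RS}^{\alpha_0}(\gamma) = 2k(n-1)$. Substituting into the inequality \eqref{eq:IndexChange} yields
\begin{align*}
\mu_{RS}^{\alpha}(\gamma) \;\ge\; 2k(n-1) - \tfrac{1}{2}(2n-2) \;=\; (2k-1)(n-1) \;\ge\; n-1,
\end{align*}
which establishes the first half of the statement.

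For the virtual dimension assertion I apply Lemma \ref{l:VirdimFormula1} to $u \colon \C \to SM^- = T^*P$ with a single positive puncture asymptotic to $\gamma$ (so $g = 0$, $s^+ = 1$, $s^- = 0$). The relative Chern class term vanishes because the chosen trivialization of $(\Lambda_\C^{top}T^*M)^{\otimes 2}$ restricts on $M^-$ to the canonical trivialization on $T^*P$. Using the lower bound on $\mu_{RS}^{\alpha}(\gamma)$ together with $\dim(S_\gamma)\ge 0$, the formula collapses to
\begin{align*}
\virdim(u) \;=\; (n-3) + \mu_{RS}^{\alpha}(\gamma) + \tfrac{1}{2}\dim(S_\gamma) \;\ge\; (n-3) + (n-1) \;=\; 2n-4.
\end{align*}

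The only real subtlety lies in the identification $\mu_{RS}^{\alpha_0}(\gamma) = 2k(n-1)$ with $k \ge 1$: one must use that it is the contractibility of $\gamma$ in $U$ (rather than that of $\gamma^s$) which guarantees the lift to $S^n$ is a genuine closed geodesic, so that $k$ really counts full loops around a prime great circle rather than, say, a geodesic arc between two distinct lifts of a basepoint of $P$. Once this geometric point is pinned down, the rest of the argument is a bookkeeping exercise using results already established in the excerpt.
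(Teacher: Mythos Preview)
Your proof is correct and follows the same route as the paper: invoke Lemma~\ref{l:Morsification} to pass from $\alpha$ to $\alpha_0$, use the computation $\mu_{RS}^{\alpha_0}(\gamma)=2k(n-1)$ with $k\ge 1$ from Section~\ref{sss:TypeFourGrading}, and then apply Lemma~\ref{l:VirdimFormula1}. One small point of notation: the $S_\gamma$ appearing in the virtual-dimension formula for $u$ should be the Morse--Bott family with respect to $\alpha$ (not $\alpha_0$), and since $\gamma$ is non-degenerate for $\alpha$ this family is zero-dimensional---the paper accordingly writes $\virdim(u)=(n-3)+\mu_{RS}^\alpha(\gamma)$ directly. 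Your use of the trivial bound $\dim(S_\gamma)\ge 0$ papers over this distinction and the conclusion is unaffected, but you should be aware that you have conflated the two families.
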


 \begin{proof}
  The underlying simple Reeb orbit $\gamma^s$ of $\gamma$ must have $L(\gamma^s)<T$ so it is also a $\alpha_0$-Reeb orbit, by Lemma \ref{l:Morsification}.
  Since $\gamma$ is contractible in $U$, by the explanation in Section \ref{sss:TypeFourGrading}, we have
  $\mu_{RS}^{\alpha_0}(\gamma)=2k(n-1)$ for some $k>0$ and $\dim(S_{\gamma})=2n-2$.
  Therefore, $\mu_{RS}^{\alpha}(\gamma) \ge n-1$ by Lemma \ref{l:Morsification} and $\virdim(u)=(n-3)+\mu_{RS}^{\alpha}(\gamma) \ge 2n-4$.

 \end{proof}

We have a similar index calculation for Reeb chord. Let $\Lambda_q \subset \partial U$ be the cosphere at $q$.
 
 \begin{lemma}\label{l:IndexChangeChord}
  There exists $f_Q$ such that for all $T> L(\partial U)$, there exists $\delta>0$ such that 
   every $\alpha$-Reeb chord $x$ from $\Lambda_{q_1}$ to $\Lambda_{q_2}$ with $L(x)<T$ has $|x| \le 0$ in the canonical relative grading. Here, we allow $q_1=q_2$.
   
   Moreover, if $q_i$ are in relatively generic position on $P$, for each lift $\fq_i$ of $q_i$, there is exactly one such chord $x_{\mathbf{q_1},\mathbf{q_2}}$ with $|x_{\mathbf{q_1},\mathbf{q_2}}|=0$ in canonical relative grading such that $x_{\mathbf{q_1},\mathbf{q_2}}$ can be lifted to a Reeb chord from 
   $\Lambda_{\fq_1}$ to $\Lambda_{\fq_2}$.
 \end{lemma}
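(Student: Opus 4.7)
The plan is to mirror the proof of Lemma \ref{l:Morsification} and Corollary \ref{c:IndexClosedCap}, applied to Reeb chords between cosphere Legendrians rather than to closed Reeb orbits.

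For the unperturbed contact form $\alpha_0$, the Reeb flow on $\partial U$ is the cogeodesic flow of the round metric on $P$. Hence Reeb chords from $\Lambda_{q_1}$ to $\Lambda_{q_2}$ correspond to geodesics in $P$ from $q_1$ to $q_2$, and after lifting to $\fP = S^n$ to geodesics from a fixed lift $\fq_1$ to some lift of $q_2$. By Sections \ref{sss:TypeTwoGrading}--\ref{sss:TypeThreeGrading}, in the canonical relative grading one has $|x|_{\alpha_0} = -k(n-1)$ when the corresponding lifted geodesic has length in $I_k = (k\pi,(k+1)\pi)$. In particular $|x|_{\alpha_0} \le 0$ and vanishes only for $k=0$, which forces the chord to be the unique shortest geodesic between non-identical, non-antipodal lifts; the degenerate situations (cases $(2)$--$(3)$ of Section \ref{sss:TypeThreeGrading}) all require $k \ge 1$ and live in Morse-Bott families $S_x$ of dimension $n-1$.

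I would then carry out the Morsification as in Lemma \ref{l:Morsification}, choosing $f_Q$ on $Q_{\partial U}$ so that its critical points project to relatively generic points of $P$. For $\delta > 0$ sufficiently small (depending on $T$), every $\alpha$-Reeb chord of length $< T$ from $\Lambda_{q_1}$ to $\Lambda_{q_2}$ either comes from an isolated $\alpha_0$-chord (where $|x|_\alpha = |x|_{\alpha_0} \le 0$) or from a critical point of $f_Q|_{S_x}$ for some $(n-1)$-dimensional family $S_x$. In the latter case, the chord analog of the crossing-form argument in the proof of Lemma \ref{l:Morsification}, applied to the path $t\mapsto (\phi^{R_\alpha}_t)_\ast T_{x(0)}S\Lambda_{q_1}$ relative to $T_{x(1)}S\Lambda_{q_2}$, yields
\begin{align*}
|x|_\alpha \;\le\; |x|_{\alpha_0} + \tfrac{1}{2}\dim S_x \;=\; -k(n-1) + \tfrac{n-1}{2} \;\le\; 0
\end{align*}
for all $k \ge 1$. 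This gives the first assertion.

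For the moreover statement, when $q_1, q_2$ are in relatively generic position, every pair of lifts $(\fq_1, \fq_2)$ is non-identical and non-antipodal, and the shortest geodesic between them is unique and sits in the isolated non-degenerate stratum ($K_x = \R\langle\partial_r\rangle$, $\dim S_x = 0$). Its grading $0$ is preserved under the perturbation because the only remaining degeneracy is the universal direction $\R\langle\partial_r\rangle$, which persists for every contact form adjusted to the cotangent ends. Hence the chord survives the perturbation as a single $\alpha$-Reeb chord $x_{\fq_1,\fq_2}$, as required. The main obstacle is establishing the chord analog of the index-change inequality \eqref{eq:IndexChange} with the correct one-sided bound, namely that the Morsification cannot raise the grading by more than $\tfrac{1}{2}\dim S_x$; this can be arranged by a suitable choice of $f_Q$ and an explicit analysis of the crossing form at $t=1$ following \cite{Bo02}, and is the step most in need of care.
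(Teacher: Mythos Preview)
Your approach is essentially the paper's: split into the non-degenerate case (where the index is unchanged under small perturbation) and the Morse--Bott case (where the index can only jump by an amount controlled by $\dim S_x$), and then observe that $k\ge 1$ in the degenerate case forces $|x|\le 0$ either way. The second statement is handled exactly as you say.

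One correction: the index-change bound you quote for chords is off by a factor of two. The orbit inequality in Lemma~\ref{l:Morsification} carries a $\tfrac{1}{2}$ because the Robbin--Salamon index for closed orbits picks up half the crossing form at the endpoint $t=1$. For chords the grading $|x|$ is the integer-valued Maslov grading $\iota$ of \eqref{eq:ChordIndex}, computed in the quotient by $K_x$; when you perturb a degenerate chord so that $\dim K_x$ drops from $n$ to $1$, the grading can jump by as much as the full $\dim S_x = n-1$, not half of it. So the correct estimate is
\[
|x|_\alpha \;\le\; |x|_{\alpha_0} + \dim S_x \;=\; -k(n-1) + (n-1) \;\le\; 0 \quad (k\ge 1),
\]
which is exactly the paper's line. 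Your conclusion survives because even this weaker bound still gives $|x|\le 0$; but the $\tfrac{1}{2}\dim S_x$ you wrote is not what the crossing-form argument actually yields in the chord setting, and you should not expect to prove it.
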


 \begin{proof}
 For the first statement, when $\delta>0$ is sufficiently small, $x$ is $C^1$-close to a $\alpha_0$-Reeb chord from $\Lambda_q$ to itself.
 Recall from Section \ref{sss:TypeThreeGrading} that, a non-degenerate $\alpha_0$-Reeb chord $x_0$ from $\Lambda_q$ to itself has $\iota(x_0) \le 0$.
 Therefore, if $x$ is $C^1$-close to $x_0$, then $\iota(x) \le 0$.
 
 On the other hand, a degenerated $\alpha_0$-Reeb chord $x_0$ from $\Lambda_q$ to itself has $\iota(x_0)=-k(n-1) \le -(n-1)$ for some $k>0$.
 We have $\dim(S_{x_0})=n-1$ so if $x$ is $C^1$-close to $x_0$, then $\iota(x) \le \iota(x_0)+\dim(S_{x_0}) \le -(n-1)+(n-1)=0$.

 For the second statement, we only need to notice that $|x_{\mathbf{q_1},\mathbf{q_2}}|=0$ if an only if the chord can be lifted to (a perturbation of)
 the unique geodesic between $\mathbf{q_1}$ and $\mathbf{q_2}$ with length less than $\pi$ from \eqref{l:ChordNondegenGrading}.
 \end{proof}

 Note that, we do not need to assume $x$ is non-degenerate in Lemma \ref{l:IndexChangeChord}.

 After choosing $\alpha$ in Lemma \ref{l:Morsification}, there are only finitely many simple Reeb orbits of length less than $T$.
 They correspond to finitely many geodesic loops in $P$.
 Therefore, for generic (on the complement of the geodesic loops) $q \in P$, $\Lambda_q$ does not intersect with simple Reeb orbits of length less than $T$.
 Moreover, for generic perturbation of $f_Q$, we can achieve the following:
 
 \begin{lemma}\label{l:genericity}
  We assume $n \ge 2$.
   For generic $C^2$-small perturbation of $f_Q$ away from $\critp(f_Q)$ (such that the set $\critp(f_Q)$ is unchanged), 
every $\alpha$-Reeb chord $x$ from $\Lambda_q$ to itself with $L(x) <T$ satisfies $x(t) \notin \Lambda_q$ for $t \in (0,L(x))$. 
Moreover, we can assume every such $x$ is non-degenerate.
 \end{lemma}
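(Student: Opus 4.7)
The plan is to realize both conclusions as generic properties over a Banach space of perturbations via Sard--Smale. Let $\cB$ denote the Banach space of $C^2$-small functions on $Q_{\partial U}$ supported away from $\critp(f_Q)$; for $h\in\cB$ write $\alpha_h := (1+\delta(f_Q+h)_\partial)\alpha_0$. By Lemma~\ref{l:Morsification} and Arzel\`a--Ascoli, the set of $\alpha_h$-Reeb chords from $\Lambda_q$ to itself of length $<T$ is uniformly pre-compact for $\|h\|_{C^2}$ small.

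First I would handle non-degeneracy. Form the universal moduli space
\[
\cM^{univ} := \{(h,x,L) \mid h\in\cB,\ 0<L<T,\ \dot x=R_{\alpha_h}\circ x,\ x(0),x(L)\in\Lambda_q\}
\]
as the zero set of a smooth Fredholm section of a Banach bundle of paths (with $L$ a free parameter). The linearization in $(h,x,L)$ is surjective because an $h$-variation supported near the interior of the image of $x$ produces arbitrary smooth perturbations of the conformal factor $g=1+\delta(f_Q+h)_\partial$, and hence moves $R_{\alpha_h}|_x$ in sufficiently many contact-hyperplane directions. Consequently $\cM^{univ}$ is a Banach manifold and the projection $\pi:\cM^{univ}\to\cB$ is Fredholm of index zero; its regular values form a comeagre subset $\cB^{nd}\subset\cB$, and for $h\in\cB^{nd}$ every chord with $L<T$ is non-degenerate, hence isolated and finite in number.

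Next I would enforce interior avoidance. Define the universal evaluation
\[
ev:\cM^{univ}\times (0,1)\longrightarrow \partial U,\qquad (h,x,L,\tau)\longmapsto x(\tau L),
\]
and argue that $ev$ is transverse to $\Lambda_q$. Varying $\tau$ contributes the Reeb direction $R_{\alpha_h}(x(\tau L))$, which is transverse to $\Lambda_q$ since geodesic flow is horizontal while the cosphere $\Lambda_q$ is vertical. The remaining $n-1$ normal directions are supplied by $h$-variations supported in a Reeb-invariant tubular neighborhood of the orbit through $x(\tau L)$: infinitesimally these produce arbitrary translations of $x(\tau L)$ within the contact hyperplane. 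Thus $ev^{-1}(\Lambda_q)$ is a Banach submanifold of codimension $n$, and its projection to $\cB$ is Fredholm of index $1-n\le -1$; Sard--Smale yields a comeagre $\cB^{av}\subset\cB$ avoiding this image. Any $h\in \cB^{nd}\cap\cB^{av}$ then satisfies both conclusions of the lemma.

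The main obstacle is verifying the infinitesimal surjectivity used above in the edge cases where $x(\tau L)$ lies on a critical Reeb orbit of $f_Q$ (where $h$ is constrained to vanish), or where $x$ crosses the same Reeb orbit at several interior times (coupling the allowed $h$-variations through the $R_{\alpha_0}$-invariance of $g$). Both phenomena occur in positive codimension: the critical orbits are finitely many and meet $\Lambda_q$ transversely in finitely many points, while multi-crossings impose extra independent conditions on the chord. These exceptional loci therefore project onto a meagre subset of $\cB$ which can be absorbed into the comeagre sets $\cB^{nd}, \cB^{av}$ above. The hypothesis $n\ge 2$ is used precisely to ensure that the core codimension count $n-1\ge 1$ is strictly positive.
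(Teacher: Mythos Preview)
Your Sard--Smale approach is correct and is a genuinely different route from the paper's. The paper argues more constructively: it first uses a Hessian-type perturbation (as in Bourgeois) to make the finitely many interior hits $x(t_i)\in\Lambda_q$ transverse in the sense that the flowed tangent planes $d\phi_{t_i-t_j}(T_{x(t_j)}\Lambda_q\oplus T\dot x)$ meet $T_{x(t_i)}\Lambda_q$ transversally; this guarantees that small perturbations create no new chords and that the set of chords with $L<T$ is finite. It then chooses a $C^2$-small contactomorphism supported near the $x(t_i)$ to displace the interior intersections, observes that $\tau_*\alpha$ differs from $\alpha$ only by a conformal factor supported near those points, and inducts over the finitely many chords.

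Your parametric argument packages both conclusions into a single Fredholm/codimension count, which is cleaner and closer to the standard transversality machinery; it also stays literally within the class of perturbations of $f_Q$, whereas the paper's contactomorphism step produces a conformal factor that is not a priori $R_{\alpha_0}$-invariant. On the other hand, the paper's hands-on argument makes the mechanism transparent (push off, nothing new appears) and avoids having to analyze the coupled edge cases you flag. One remark on your obstacle paragraph: you do not actually need $h$ to be supported near the orbit through $x(\tau L)$ to move that point---a variation of $h$ supported along the chord at earlier times $s<\tau L$ propagates through the linearized flow and already moves $x(\tau L)$ freely in $\xi$, while an independent variation supported in $(\tau L,L)$ fixes up the endpoint constraint. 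With that observation, neither the critical-orbit nor the multi-crossing issue obstructs surjectivity, and you can drop the separate codimension argument for those loci.
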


 \begin{proof}
We regard a chord as an intersection between the image $\phi(\R\times \Lambda_q)$ and $\Lambda_q\subset M$.  Here $\phi(t,x)=\phi_t(x)$.  From \cite[Lemma 2.3]{Bo02}, if we consider a small perturbation $\alpha_\epsilon:=(1+\epsilon \bar f_T)\alpha$ and denote $R_\epsilon$ as the perturbed Reeb vector field, then $R_\epsilon=R+X$, where

\begin{align*}\label{e:perturbedField}
     &i(X)d\alpha=\frac{\epsilon \bar f_T}{(1+\epsilon\bar f_T)^2},\\
     &     \alpha(X)=-\frac{\epsilon\bar f_T}{1+\epsilon \bar f_T}.
\end{align*}

Assume that there is a chord $x(t)$, $t\in[0,1]$ such that for some $x(t_i)$, for $i=1,\cdots,k$, $0<t_1<\cdots<t_k<1$, $x(t_i)\in \Lambda_q$.  Denote $p=x(1)$  For generic perturbation of $\alpha$, we may assume $x(t)\notin \Lambda_q$ unless $t=t_i$ for some $i$.  

Moreover, we may assume all intersections $x(t_i)$ are transversal in the following sense: let $\phi_s$ be the time-$s$ flow of the Reeb vector field, then $d\phi_{t_i-t_j}(T_{x(t_j)}\Lambda_q\oplus T_qx(t))$ intersects transversally to $T_{x(t_i)}\Lambda_q$ for all $i, j\in \{1,\dots, k\}$.  This transversality can be achieved similarly to the case of Reeb orbits: if one considers a perturbation $\alpha_\epsilon$ supported near a chord as above, while $df_p=0$ on the chord, then the linearized Reeb flow has an additional term $\frac{\epsilon}{1+\epsilon f_p}JHess(f_p)$ (see \cite{Bo02}), which suffices to perturb $\phi_*(\partial_t\oplus T_{x(0)}\Lambda_q)$ to be transversal to $\Lambda_q|_p$.

After an $\epsilon f_p$-perturbation as above, there is a unique chord that is $O(\epsilon)$-close to $x(t)$.  To see this, notice $x(t)$ corresponds to a Lagrangian intersection between $\Lambda_q\times\R$ and $\phi_1(\Lambda_q\times\R)$ mod out by $\R$-translation.  The transversality assumption ensures the perturbation does not creates new chords locally, and also finiteness of chords with action less than a fixed value $T$.

We may now choose a contactomorphism $\tau$ with small $C^2$-norm supported near $x(t_i)$, which pushes $x(t_i)$ off $\Lambda_q$ for all $i$, and consider the contact form $\tau_*\alpha$.  Since we did not change the contact structure, $\Lambda_q$ remains Legendrian and the perturbation on the contact form is by a function $f$ supported near $x(t_i)$.  $\tau(x(t))$ is then a Reeb chord with no interior intersection with $\Lambda_q$, and from the transversality assumption and argument above, there is no new chords created.  The induction on the number of chords concludes the lemma.

 \end{proof}

 \begin{corr}\label{c:ReebDynamicMorsified}
  We assume $n \ge 2$.
  For all $T> L(\partial U)$ and $k \in \N$, there exists $\delta>0$, $f_\partial:\partial U \to \R $ and pairwise distinct $q_1,\dots, q_k \in P$ 
  such that $\alpha=(1+\delta f_\partial)\alpha_0$ satisfies

  \begin{enumerate}[(1)]
   \item every simple $\alpha$-Reeb orbit $\gamma$ that is contractible in $U$ and $L(\gamma) < T$ is non-degenerate and $\mu_{RS}(\gamma) \ge n-1$, and
   \item every $\alpha$-Reeb chord $x$ from $\cup_{i=1}^k \Lambda_{q_i}$ to $\cup_{i=1}^k \Lambda_{q_i}$ with $L(x)<T$ is non-degenerate, satisfies $x(t) \notin \cup_{i=1}^k \Lambda_{q_i}$ for $t \in (0,L(x))$
   and $|x| \le 0$ with respect to canonical relative grading.
  \end{enumerate}
  As a consequence, the image of the
  $\alpha$-Reeb chords $x$ from $\cup_{i=1}^k \Lambda_{q_i}$ to $\cup_{i=1}^k \Lambda_{q_i}$ with $L(x)<T$ are pairwise disjoint, and they are disjoint from the image of 
  simple $\alpha$-Reeb orbits.
 \end{corr}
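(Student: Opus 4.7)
The plan is to assemble the corollary by combining Corollary \ref{c:IndexClosedCap}, Lemma \ref{l:IndexChangeChord} and Lemma \ref{l:genericity}, and then making the basepoints $q_1,\dots,q_k$ generic enough so all conditions hold simultaneously. Since each step only requires a small $C^2$-perturbation of the auxiliary data, conditions achieved earlier can be preserved under later perturbations, provided the perturbations are chosen small enough relative to the finitely many open conditions being imposed.

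First, I would fix the target action threshold $T > L(\partial U)$ and apply Corollary \ref{c:IndexClosedCap} to obtain a Morse function $f_Q$ on $Q_{\partial U}$ (lifted to an $R_{\alpha_0}$-invariant $f_\partial$ on $\partial U$) and a constant $\delta_0 > 0$ so that, for every $0 < \delta \le \delta_0$, the form $\alpha = (1+\delta f_\partial)\alpha_0$ satisfies condition (1): every simple $\alpha$-Reeb orbit $\gamma$ contractible in $U$ with $L(\gamma) < T$ is non-degenerate and has $\mu_{RS}(\gamma) \ge n-1$. Because $\delta_0$ is small, the simple $\alpha$-Reeb orbits of length less than $T$ remain $C^1$-close to a finite collection of closed geodesics on $P$, whose total image is a measure-zero subset $Z \subset P$.

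Next, I would choose pairwise distinct points $q_1,\dots,q_k \in P \setminus Z$ so that the cospheres $\Lambda_{q_i}$ avoid all simple $\alpha$-Reeb orbits of length $< T$, and so that every pair $(\fq_i, \fq_j)$ of lifts to $\fP$ satisfies the generic relative position \eqref{eq:GenericPointCondition}. This is possible because the constraints are all open-dense. With this choice, I would apply Lemma \ref{l:IndexChangeChord} (shrinking $\delta$ if needed) to conclude that every $\alpha$-Reeb chord $x$ from $\cup_i \Lambda_{q_i}$ to $\cup_i \Lambda_{q_i}$ with $L(x) < T$ satisfies $|x| \le 0$ in the canonical relative grading.

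Then I would further perturb $f_Q$ by an arbitrarily $C^2$-small perturbation supported away from $\critp(f_Q)$ (as in Lemma \ref{l:genericity}) to arrange that every such chord $x$ is non-degenerate and has $x(t) \notin \cup_i \Lambda_{q_i}$ for $t \in (0, L(x))$. Since the perturbation is small and localized away from $\critp(f_Q)$, the critical points (and hence the simple Reeb orbits, by Lemma \ref{l:Morsification}) as well as the index inequalities from the previous steps are preserved. Finally, for the consequence, suppose two such chords $x, x'$ shared an interior point $p$; since the Reeb flow is deterministic, flowing backwards from $p$ would force the starting points of $x$ and $x'$ on $\cup_i \Lambda_{q_i}$ to occur at the same time (otherwise one chord would have an interior point on $\cup_i \Lambda_{q_i}$, contradicting (2)), and likewise for the endpoints, so $x = x'$. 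The same argument with the absence of intersection between $\Lambda_{q_i}$ and the simple orbits (established in the second step) gives disjointness from simple Reeb orbit images.

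The main obstacle is a bookkeeping one: ensuring that the sequence of perturbations (first $f_Q$ for the orbit index, then the choice of $q_i$, then a further $C^2$-small perturbation of $f_Q$ for chord genericity) can be arranged so each step only strengthens, rather than destroys, the properties obtained previously. This is ultimately controlled by openness of non-degeneracy and of the strict inequality $\mu_{RS}(\gamma) \ge n-1$, combined with the fact that all relevant action bounds are finite so only finitely many orbits/chords are involved.
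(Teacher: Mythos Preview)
Your proposal is correct and follows essentially the same route as the paper's proof: invoke Corollary \ref{c:IndexClosedCap} for condition (1), then Lemma \ref{l:genericity} for the genericity of chords, with the index bound $|x|\le 0$ following from Lemma \ref{l:IndexChangeChord} since all perturbations are $C^2$-small. Your write-up is in fact more detailed than the paper's---you make explicit the generic choice of $q_i$ away from the orbit images, the order-of-perturbations bookkeeping, and the deterministic-flow argument for the disjointness consequence, all of which the paper leaves to the reader.
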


 \begin{proof}
  After choosing $\delta, f_Q$ such that $(1)$ is satisfied by Lemma \ref{l:Morsification} and Corollary \ref{c:IndexClosedCap}, and we can apply Lemma \ref{l:genericity} to $\cup_{i=1}^k \Lambda_{q_i}$.
  Since the perturbation is arbitrarily $C^2$-small, we have $|x| \le 0$ by Lemma \ref{l:IndexChangeChord} and \eqref{l:ChordNondegenGrading}.
 \end{proof}

 In the rest of the paper, we always choose a contact form $\alpha $ on $\partial U$ such that Corollary \ref{c:ReebDynamicMorsified} holds, we denote the set of simple $\alpha$-Reeb orbit $\gamma$ with  $L(\gamma) < T$ by $\cX^o_T$.
 Similarly, we denote the set of $\alpha$-Reeb chord $x$ from $\cup_{i=1}^k \Lambda_{q_i}$ to $\cup_{i=1}^k \Lambda_{q_i}$ with $L(x)<T$ by $\cX^c_T$.

\subsection{Regularity}\label{ss:Regularity}

In this section, we address the regularity of curves $u_v$ in the holomorphic buildings obtained in Theorem \ref{t:SFTcompactness} for $v \in V^{core} \cup V^{\partial}$.
We adapt the techniques developed in \cite{EES05}, \cite{EES07}, \cite{Dragnev} and \cite{CDGG}.
The key observation that is made in \cite{EES05} is that if there is an asymptote that only appears once among the boundary asymptotes of a pseudo-holomorphic curve,
then one can achieve regularity by perturbing $J$ near the asymptote.

The main difference of our situation is that
we do not work in a contact manifold that is a contactization of an exact symplectic manifold, hence we don't have a projection of holomorphic curve as in \cite{EES05}\cite{EES07}.  We remedy this by localizing our consideration to a neighborhood of the Reeb chord.

We first explain the space of almost complex structure we use.  In what follows, we always assume that a contact form $\alpha$ on $\partial U$ is chosen such that Corollary \ref{c:ReebDynamicMorsified} is satisfied.

\begin{lemma}[Neighborhood Theorem]\label{l:ReebNeighborhood}
 For any Reeb chord $x \in \cX_T^c$, there exists a neighborhood $N_x$ of $Im(x)$, an open ball $B_x \subset \R^{2n-2}$ containing the origin, an open interval $I_x \subset \R$ 
 and a diffeomorphism $\phi_{N_x}:N_x \to B_x \times I_x$ such that 
 \begin{align}
  \left\{
  \begin{array}{ll}
   \alpha=\phi_{N_x}^*(dz+\sum_{i=1}^{n-1} x_i dy_i) \\
   \pi_{B_x}(\phi_{N_x}(x(t)))=0
  \end{array}
  \right.
 \end{align}
where $(x_i,y_i) \in B_x$, $z \in I_x$ and $\pi_{B_x}:B_x \times I_x \to B_x$ is the projection to the first factor.
\end{lemma}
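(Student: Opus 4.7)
The plan is to produce the chart in two stages: first use the Reeb flow to trivialize the $s$-direction along $\mathrm{Im}(x)$, then apply a local Darboux theorem for Liouville $1$-forms on a transverse slice. Morally this is the standard contact Darboux theorem carried along the arc $\mathrm{Im}(x)$.

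First I would verify that $\mathrm{Im}(x)$ is an embedded compact arc in $\partial U$. Since $x$ solves the nonsingular Reeb ODE on $[0,L(x)]$, ODE uniqueness rules out interior self-intersections, and Lemma \ref{l:genericity} guarantees that $x(t) \notin \cup_{i=1}^k \Lambda_{q_i}$ for $t \in (0, L(x))$, which eliminates the remaining possibility that the interior of the chord returns to an endpoint.

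Next I would pick a small $(2n-2)$-dimensional slice $D_0 \subset \partial U$ through $x(0)$ transverse to $R_\alpha$ and set $\Psi : D_0 \times I_x \to \partial U$, $\Psi(p,s) = \phi^{R_\alpha}_s(p)$, where $\phi^{R_\alpha}_s$ is the Reeb flow and $I_x \supset [0,L(x)]$ is a slightly enlarged open interval. Compactness and embeddedness of $\mathrm{Im}(x)$ imply that, for $D_0$ sufficiently small, $\Psi$ is a diffeomorphism onto a tubular neighborhood $N_x$, with $\Psi(0,s) = x(s)$ once $x(0)$ is taken as the origin in $D_0$. In these coordinates $R_\alpha = \partial_s$, so
\begin{equation*}
\Psi^*\alpha = ds + \beta,
\end{equation*}
with $\beta$ carrying no $ds$-component. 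The Reeb identity $\iota_{\partial_s} d\alpha = 0$ combined with Cartan's formula gives $\mathcal{L}_{\partial_s}\beta = 0$, so $\beta$ descends to a $1$-form on $D_0$, and the contact condition forces $d\beta$ to be symplectic on $D_0$.

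The remaining task is a local normal form on $D_0$: find coordinates $(x_1,y_1,\dots,x_{n-1},y_{n-1})$ centered at the origin in which $\beta = \sum_i x_i\, dy_i$. Using the gauge freedom of replacing $s$ by $\widetilde s = s + f(p)$ (which changes $\beta$ to $\beta - df$ without disturbing the $ds$-structure), one first normalizes $\beta|_0 = 0$ by choosing $f$ with $df|_0 = \beta|_0$; a linear change on $D_0$ then normalizes $d\beta|_0 = \sum_i dx_i \wedge dy_i|_0$; finally a Moser interpolation between $\beta$ and the model $\sum_i x_i\, dy_i$ produces the required diffeomorphism of $D_0$. I expect this Moser step to be the only technical content, but it is standard Darboux theory for Liouville $1$-forms and decouples entirely from the ambient Reeb geometry.
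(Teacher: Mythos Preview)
Your approach is correct and takes a genuinely different decomposition from the paper's. The paper runs a Moser argument directly on the contact form in dimension $2n-1$: it first chooses coordinates near the arc so that $\alpha$ and $d\alpha$ agree with the standard model on $TY|_{\mathrm{Im}(x)}$, then interpolates $\alpha_t = (1-t)\alpha_{\mathrm{std}} + t\alpha$ and applies the contact Moser trick to obtain a flow fixing $\mathrm{Im}(x)$ pointwise. You instead exploit the Reeb flow to quotient out the arc direction first, reducing everything to a Liouville/symplectic Darboux problem on a single $(2n-2)$-dimensional slice; the invariance $\mathcal{L}_{\partial_s}\beta = 0$ is what makes this reduction lossless. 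Your route is arguably more conceptual (it uses the Reeb symmetry to drop a dimension), while the paper's is the standard relative contact Darboux as in Geiges. Two minor points to tighten: your step~7 Moser on the $1$-forms $\beta_t$ only matches them up to an exact term, so you need one more use of the gauge $s \mapsto s + h$ at the end (equivalently: do symplectic Darboux on $d\beta$, then absorb the closed difference via Poincar\'e); and for the embeddedness check you should also note $x(0) \neq x(L(x))$, which holds since the $q_i$ were chosen off the short Reeb orbits.
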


\begin{proof}
 It follows from Moser argument. We give a sketch following \cite[Theorem $2.5.1$]{GeigesBook}.
 Since $d \alpha$ is non-degnerate on $T_pY/T_pIm(x)$ for all $p \in Im(x)$, we can use exponential map with respect to an appropriate metric to find 
 coordinates $(x_1,y_1,\dots,x_{n-1},y_{n-1},z)$ near $Im(x)$ such that $Im(x)=\{x_i=y_i=0\}$ and on $TY|_{Im(x)}$,
 \begin{align}
  \left\{
  \begin{array}{ll}
   \alpha(\partial_z)=1, \iota_{\partial_z} d\alpha=0 \\
   \partial_{x_i}, \partial_{y_i} \in \ker(\alpha), d\alpha=\sum_{i=1}^{n-1} dx_i \wedge dy_i
  \end{array}
  \right.
 \end{align}
 Let $\alpha_{\R^{2n-1},std}=dz+\sum_{i=1}^{n-1} x_i dy_i$ and $\alpha_t=(1-t)\alpha_{\R^{2n-1},std}+t \alpha$. It follows that
on $TY|_{Im(x)}$,
\begin{align}
 \alpha_t=\alpha, d\alpha_t=d\alpha \text{ for all }t
 \end{align}
 In particular, $\alpha_t$ is a family of contact forms in a sufficiently small neighborhood of $Im(x)$.
 By Moser trick, there exists a vector field $X_t$ near $Im(x)$ such that the flow $\psi_t$ satisfies $\psi_t^* \alpha_t=\alpha_{\R^{2n-1},std}$ for all $t \in [0,1]$
 and $X_t(p)=0$ for all $p \in Im(x)$. We set $\phi_{N_x}=(\psi_1)^{-1}$.
\end{proof}

\begin{rmk}\label{r:ModifyContactForm}
 If we replace $dz+\sum_{i=1}^{n-1} x_i dy_i$ by $dz+\sum_{i=1}^{n-1} x_i dy_i+dy_1$ in Lemma \ref{l:ReebNeighborhood}, the lemma still holds. 
\end{rmk}

\begin{corr}\label{c:LiftingJ}
  Let $B_x$ be one chosen in Lemma \ref{l:ReebNeighborhood} or Remark \ref{r:ModifyContactForm}.   If $J'$ is a compactible almost complex structure on $B_x$, then there is a  cylindrical almost complex structure $J$ on $\R \times N_x$ such that $(\pi_{B_x} \circ \pi_Y)_* \circ J(v)=J' \circ (\pi_{B_x} \circ \pi_Y)_*(v)$ for all $v \in \xi$.
\end{corr}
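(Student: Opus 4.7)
The strategy is to use the normal form coordinates from Lemma \ref{l:ReebNeighborhood} to lift $J'$ through the derivative of $\pi_{B_x} \circ \pi_Y$, and then extend by the Reeb and $\R$ directions.

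First, I will analyze the projection in coordinates. Write $N_x \cong B_x \times I_x$ with coordinates $(x_i, y_i, z)$ in which $\alpha = dz + \sum x_i\, dy_i$. Then the Reeb vector field is $R_\alpha = \partial_z$ and the contact distribution $\xi = \ker \alpha$ is spanned pointwise by the frame $\{\partial_{x_i},\, \partial_{y_i} - x_i \partial_z\}_{i=1}^{n-1}$. The differential of $\pi_{B_x}$ sends these frame vectors to $\partial_{x_i}$ and $\partial_{y_i}$ respectively, so $(\pi_{B_x})_*|_{\xi}$ is a fiberwise linear isomorphism $\xi \to \pi_{B_x}^\ast TB_x$. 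A direct computation shows $d\alpha|_\xi = \pi_{B_x}^\ast \omega_{\mathrm{std}}$ with $\omega_{\mathrm{std}} = \sum dx_i \wedge dy_i$. Crucially, because the chosen frame has no $z$-dependence, the identification above is invariant under the Reeb flow.

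Second, I will define $J|_\xi$ as the unique endomorphism that makes $(\pi_{B_x})_*|_\xi$ complex linear with respect to $J'$. By the previous step this is $R_\alpha$-invariant. On $T(\R \times N_x)$ I complete the construction by setting $J(\partial_r) = R_\alpha$ (forcing $J(R_\alpha) = -\partial_r$) and extending by $\R$-invariance in the $r$-variable. All four conditions of Definition \ref{d:cylindricalJ} are then verified: $\R$-invariance and the prescribed values on $\partial_r$ and $R_\alpha$ are built in, $J(\xi)=\xi$ by construction, and the positivity of $d\alpha(\cdot, J\cdot)$ on $\xi$ reduces, via the identification $d\alpha|_\xi = \pi_{B_x}^\ast \omega_{\mathrm{std}}$ and the fiberwise isomorphism $(\pi_{B_x})_*|_\xi$, to the $\omega_{\mathrm{std}}$-compatibility of $J'$ on $B_x$. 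The desired intertwining relation $(\pi_{B_x} \circ \pi_Y)_* \circ J = J' \circ (\pi_{B_x} \circ \pi_Y)_*$ on $\xi$ is immediate from the definition.

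Finally, if one needs to view $J$ as the restriction to $\R \times N_x$ of a cylindrical almost complex structure defined on all of $SY$, one interpolates with any fixed $J_0 \in \mathcal{J}^{cyl}(\partial U, \alpha)$ by means of a cut-off function on $Y$ that equals $1$ near $\mathrm{Im}(x)$ and vanishes outside $N_x$; convex combinations of $\omega_{\mathrm{std}}$-compatible complex structures on $\xi$ remain compatible, so the extension still lies in $\mathcal{J}^{cyl}(\partial U, \alpha)$. The only non-trivial point in the whole argument is arranging that $J|_\xi$ be Reeb-invariant (so that the resulting $J$ is genuinely $\R$-invariant and hence cylindrical), and that is precisely what the normal form of Lemma \ref{l:ReebNeighborhood} is designed to ensure.
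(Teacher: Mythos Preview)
Your argument is correct and follows exactly the same construction as the paper: split $T(\R\times N_x)=\R\langle\partial_r,R_\alpha\rangle\oplus\xi$, pull back $J'$ to $\xi$ via the fiberwise isomorphism $(\pi_{B_x})_*|_\xi$, and set $J(\partial_r)=R_\alpha$; your explicit verification of the cylindrical axioms simply fleshes out what the paper leaves as ``one can check.'' One small caution about your optional final paragraph: convex combinations of $\omega$-compatible almost complex structures are \emph{not} in general compatible (the space $\cJ(\omega)$ is contractible but not convex), so the interpolation with $J_0$ would require a different device---but this global extension is not part of the corollary as stated, so it does not affect your proof.
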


\begin{proof}
 We can use the symplectic decomposition $T_{(r,z)}(\R \times N_x)=\R\langle \partial_r, R_\alpha \rangle \oplus \xi_z$
 and the isomorphism $(\pi_{B_x})_*: \xi_z \simeq T_{\pi_{B_z}(z)}B_x$
 to define $J$ such that $J(\partial_r)=R_\alpha$ and $J(v)=((\pi_{B_x}\circ \pi_Y)_*)^{-1} \circ J' \circ (\pi_{B_x} \circ \pi_Y)_*(v)$ for $v \in \xi_z$.
 One can check that $J$ is a cylindrical almost complex structure.
\end{proof}

For the $T$ chosen in Corollary \ref{c:ReebDynamicMorsified},
there are finitely many Reeb orbits or Reeb chord from $\cup_j \Lambda_{q_j}$ to $\cup_j \Lambda_{q_j}$ with length less than $T$.
Moreover, the simple Reeb orbits $\cX^{o}_T$ and the Reeb chords $\cX^{c}_T$ have pairwise disjoint images.

For each $x \in \cX^{c}_T $, we pick a neighborhood $N_x$ of $Im(x)$ using Remark \ref{r:ModifyContactForm}.
We assume that all these neighborhoods are pairwise disjoint and disjoint from the Reeb orbits of $\alpha$.

Let $x \in \cX^{c}_T$, $x(0) \in \Lambda_{q_0}$ and $x(L(x)) \in \Lambda_{q_1}$.
By Corollary \ref{c:ReebDynamicMorsified}, for sufficiently small $N_x$, we can assume that 
\begin{align}
D_{i,x}:=\Lambda_{q_i} \cap N_x   \label{eq:Dix}
\end{align}
is a disk for $i=0,1$.
Moreover, by the fact that $x$ is non-degenerate, we know that $\pi_{B_x}(D_{0,x})$ and $\pi_{B_x}(D_{1,x})$ are transversally intersecting Lagrangians.
There exists a compatible $J_{B_x}$ on $B_x$ such that $J_{B_x}$ is integrable near the origin.
By possibly perturbing $\Lambda_{q,i}$, or equivalently perturbing $\alpha$, we can assume that $\pi_{B_x}(D_{i,x})$ are real analytic submanifolds 
near origin for all $x$. 
We fix a choice of $J_{B_x}$ for each $x \in \cX^c_T$.

Let $\cJ^{cyl}(\partial U; \{N_x\}_{x \in \cX^{c}_T})$ be the space of $J \in \cJ^{cyl}(\partial U)$
such that $J$ is $R_{\alpha}$-invariant in $N_x$ and
there is a compactible almost complex structures $J'$ on $B_x$
so that $J'=J_{B_x}$ near the origin and $(\pi_{B_x} \circ \pi_Y)_* \circ J(v)=J' \circ (\pi_{B_x} \circ \pi_Y)_*(v)$ for all $v \in \xi$.
By Corollary \ref{c:LiftingJ}, we know that 
$\cJ^{cyl}(\partial U; \{N_x\}_{x \in \cX^{c}_T } )\neq \emptyset$.

Let $Y \subset (M,\omega,\theta)$ be a perturbation of $\partial U$ such that $(Y,\theta|_Y) \cong (\partial U, \alpha)$.
By abuse of notation, we denote $\theta|_Y$ by $\alpha$.
We define $N(Y)$ as in \eqref{eq:Yneighborhood}.
We can pick $J^0$ such that $(\Phi_{N(Y)})_*J^0|_{N(Y)} \in \cJ^{cyl}(\partial U; \{N_x\}_{x \in \cX^{c}_T})$.
Let $\{J^{\tau}\}_{\tau \in [3R,\infty)}$ be a smooth family $R$-adjusted to $(Y, \alpha)$
as explained in Section \ref{sec:review_of_symplectic_field_theory_and_dimension_formulae} (see Remark \ref{r:FlexibleJ}).

Let $\{L_j\}_{j=0}^d$ be a collection of Lagrangians satisfying the assumptions of Theorem \ref{t:SFTcompactness}.
Moreover, we assume that $\Lambda_j=\cup_{i=1}^{c_j} \Lambda_{q_{k_{j,i}}}$ for some $q_{k_{j,i}}$ in Corollary \ref{c:ReebDynamicMorsified}.
If $T$ was chosen sufficiently large, there exists $0<T^{adj}<T$ (depending only on the primitives of $\{L_j\}$, see Section \ref{ss:action})
such that 
\begin{align}
 \text{for all Reeb chords $x$ from $\Lambda_i$ to $\Lambda_j$, $|A(x)|<T^{adj}$ implies $|L(x)|<T$} \label{eq:AdjustedAction}
\end{align}
Without loss of generality, we can assume $T^{adj}$ exists
and $\sum_{j=0}^d|A(x_j)|<T^{adj}$.
Applying Theorem \ref{t:SFTcompactness} and Lemma \ref{l:actionBoundGlobal}, we get a holomorphic building $u_{\infty}=\{u_v\}_{v \in V(\eT)}$
such that all the asymptotes of $u_v$ are either Lagrangian intersection points, Reeb chords in $\cX^{c}_T$ or multiple cover of Reeb orbits in $\cX^{o}_T$.
For $u_{\infty}$, we have the following regularity result.

\begin{prop}\label{p:IntermediateRegular}
There is a residual set  $\cJ^{cyl, reg} \subset \cJ^{cyl}(\partial U; \{N_x\}_{x \in \cX^{c}_T})$ such that 
if (the cylindrical extension of) $(\Phi_{N(Y)})_*J^0|_{N(Y)}$ lies in $\cJ^{cyl, reg}$, then for $v \in V^{core} \cup V^{\partial}$
and $l_\eT(v) \in \{1,\dots, n_\eT-1\}$, the $J^Y$-holomorphic curve $u_v$ is transversally cut out.
\end{prop}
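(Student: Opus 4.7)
The plan is to run the standard Sard-Smale transversality scheme for pseudo-holomorphic curves in the symplectization $SY$, adapted to the constraint that the cylindrical $J$ must lie in $\cJ^{cyl}(\partial U; \{N_x\}_{x \in \cX^c_T})$, following the template of Dragnev \cite{Dragnev} with refinements from \cite{EES05, EES07, CDGG}. For each fixed choice of asymptotic and topological data---a finite set, since all asymptotes have length less than $T$---set up the universal moduli space parametrised over $\cJ^{cyl}(\partial U; \{N_x\})$, and reduce regularity to showing that for each non-trivial intermediate-level $u_v$ the universal linearised operator is surjective. Sard-Smale then produces $\cJ^{cyl,reg}$ as the common set of regular values of the projection to the parameter space. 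The core content is to exhibit, on each such $u_v$, an injective point at which $J$ can be freely perturbed within the constrained class.

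The injective point is produced via the projection $\pi_x := \pi_{B_x} \circ \pi_Y$ (Corollary \ref{c:LiftingJ}) together with Lemma \ref{l:DisAsymptote}. By that lemma, when $v \in V^{core} \cup V^{\partial}$ and $u_v$ is not a trivial cylinder, there is a boundary puncture $\xi^v_\ast$ of $\Sigma_v$ whose asymptotic Reeb chord $x \in \cX^c_T$ appears exactly once among the asymptotes of $u_v$. Near $\xi^v_\ast$, composition with $\pi_x$ produces a $J_{B_x}$-holomorphic half-strip $w$ in $B_x$ with Lagrangian boundary in $\pi_{B_x}(D_{0,x}) \cup \pi_{B_x}(D_{1,x})$, asymptotic to the transverse intersection point $0 \in B_x$. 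Since $J_{B_x}$ is integrable near the origin and the projected Lagrangians have been arranged to be real-analytic there, asymptotic analysis combined with the Carleman similarity principle yields an expansion $w(s,t) = e^{\lambda s}(\eta(t) + o(1))$ with $\lambda < 0$ and $\eta$ a nonzero eigenfunction of the asymptotic operator; in particular $w$ is non-constant and has injective points accumulating at $\xi^v_\ast$. Pulling back one such point produces $z_0 \in \Sigma_v$ such that the full $\pi_x$-fiber through $u_v(z_0)$ meets the image of $u_v$ only at $z_0$.

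The main technical obstacle is closing the Fredholm transversality argument inside the constrained class $\cJ^{cyl}(\partial U; \{N_x\})$. This is handled by observing that infinitesimal perturbations $\delta J'$ of the horizontal almost complex structure on $B_x$, supported away from the origin and from the loci covered by the other Reeb chords and simple Reeb orbits, lift via Corollary \ref{c:LiftingJ} to admissible perturbations of the cylindrical $J$. The one-preimage property at $z_0$ then ensures that for any nonzero element $\zeta$ in the cokernel of the linearised operator one can pick $\delta J$ supported in a small neighborhood of $u_v(z_0)$ whose pairing with $\zeta$ is nonzero; otherwise $\zeta$ would vanish on an open set and hence identically, by unique continuation for the adjoint Cauchy-Riemann equation. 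The $\R$-translation invariance of $J^Y$ poses no difficulty because the translation action affects only the automorphism quotient on the domain, not the parameter space. Iterating over the finitely many asymptotic profiles allowed by $T^{adj}$ yields the desired residual set $\cJ^{cyl,reg}$.
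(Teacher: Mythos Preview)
Your strategy shares its skeleton with the paper's proof: both invoke Lemma~\ref{l:DisAsymptote} to obtain a distinguished Reeb-chord asymptote $x$ and then exploit perturbations of $J$ lifted from $B_x$. The gap is in your sentence ``Pulling back one such point produces $z_0 \in \Sigma_v$ such that the full $\pi_x$-fiber through $u_v(z_0)$ meets the image of $u_v$ only at $z_0$.'' Asymptotic analysis of the half-strip $w$ near $\xi^v_\ast$ only tells you that $w$ restricted to that end is eventually injective; it says nothing about the \emph{other} connected components of $\cR:=u_v^{-1}(N_x)$. There may well be finitely many additional boundary or interior points $p_1,\dots,p_s\in\Sigma_v$ at which $\bar u := \pi_x\circ u_v$ hits $0\in B_x$, and the local branches of $\bar u$ near those $p_j$ will in general pass through any value $w(z_0)$ close to $0$. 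Since admissible $\delta J$ inside $N_x$ are both $r$- and $z$-invariant, perturbing ``near $u_v(z_0)$'' really means perturbing over the whole fiber $\pi_x^{-1}(V)$ for a ball $V\subset B_x$, so the pairing $\langle\zeta,\delta J\circ du\circ j\rangle$ picks up contributions from all of $\bar u^{-1}(V)$, not just from $z_0$. The paper confronts exactly this multi-sheeting via the reflection trick of \cite{EES07} (its Lemma~\ref{l:EESargument}): because $\xi^v_\ast$ is a boundary puncture, its local image lies on only one side $U_1$ of the real-analytic Lagrangian branch $\pi_{B_x}(D_{1,x})$, whereas each $p_j$ (after Schwarz doubling across the boundary when $p_j\in\partial\cR$) contributes symmetrically on both sides $U_1,U_2$. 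One first perturbs in $U_2$ to annihilate the summed contribution of the $p_j$, and then perturbs in $U_1$ to force the cokernel component to vanish near $\xi^v_\ast$.

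There is a second point you elide. The lifted perturbations $\delta J$ vanish on $\langle\partial_r,R_\alpha\rangle$ and preserve $\xi$, so they only test the $\xi$-component $l_2$ of a cokernel element $l=(l_1,l_2)$ in the splitting $T(SY)=\R\langle\partial_r,R_\alpha\rangle\oplus\xi$. Thus even granting your injective point, the $\delta J$-pairing alone does not show that the full $\zeta$ vanishes on an open set. The paper handles $l_1$ separately: once $l_2|_{\cR}=0$ is established, it uses $\eta$-variations (not $J$-variations) supported in $\cR$, together with the fact that $J$ is standard on the $(r,z)$-factor, to run the Dragnev argument and conclude $l_1|_{\cR}=0$; only then does unique continuation give $l\equiv 0$.
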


\begin{proof}
 By Lemma \ref{l:DisAsymptote}, $u_v$ has a boundary asymptote $x$ that appears only once among its asymptotes.
 We want to show that transversality can be achieved by considering variation of almost complex structures in $SN_x:=\R \times N_x$.
 
 Let $\Lambda^{tot}=\cup_i \Lambda_{q_i}$ and $S\Lambda=\cup_i S\Lambda_{q_i}$ where  $\Lambda_{q_i}$ are obtained in Corollary \ref{c:ReebDynamicMorsified}.
 There is a Banach manifold $\cB$ consisting of maps
 \begin{align}
  u:(\Sigma_v, \partial \Sigma_v) \to (SY, S\Lambda^{tot})
 \end{align}
in an appropriate Sobolev class with positive weight (see \cite{Abbas}, \cite{Dragnev}).
Let $U_{\Delta}$ be an appropriate Banach manifold that is dense inside $\cJ^{cyl}(\partial U; \{N_x\}_{x \in \cX^{c}_T})$.
The map
\begin{align}
 (u,J) \mapsto \overline{\partial}_J u \label{eq:section}
\end{align}
defines a section $\eF$ of a bundle $\eE^{0,1} \to \cB \times U_\Delta$ with differential 
\begin{align}
D\eF(u,J): T_u\cB \times T_JU_\Delta \to \eE^{0,1}_u  \\
(\eta,\fY) \mapsto D_u(\eta)+ \fY(u) \circ du \circ j_{\Sigma_v}
\end{align}
where $j_{\Sigma_v}$ is the complex structure on $\Sigma_v$.
By a choice a metric, we identify 
\begin{align}
T_u\cB \simeq \Gamma(u^*TSY, u|_{\partial \Sigma_v}^* S\Lambda^{tot}) 
\end{align}
where the right hand side is the completion of the space of smooth sections in $u^*TSY$, which takes value in $u|_{\partial \Sigma_v}^* S\Lambda^{tot}$ along the boundary,
with respect to an appropriate Sobolev norm.
On the other hand, we have $\eE^{0,1}_u=\Omega^{0,1}(u^*TSY)$, where the right hand side is the completion of the space of smooth $u^*TSY$-valued $(0,1)$-form 
with respect to an appropriate Sobolev norm.
We want to argue  $D\eF(u,J)$ is surjective at $(u,J)$ using that fact that there exists a
boundary asymptote $x \in \cX^c_T$ of $u$  that appears only once among its asymptotes and $\overline{\partial}_J u=0$.

Suppose not, then there exists $0 \neq l \in \eE^{0,1}_u$ such that 
\begin{align}
 \langle l, D\eF(u,J)(\eta,\fY) \rangle_{L^2, \Sigma_v} =0 \label{eq:CokerElement}
\end{align}
for all $\eta \in T_u \cB$ and $\fY \in T_JU_\Delta$.
 By unique continuation principle, it suffices to show that $l=0$ on some non-discrete set of $\Sigma_v$ to get a contradiction.
 
 Let $\cR=u^{-1}(N_x) \subset \Sigma_v$ and we will show that for $\eta$ supported in $\cR$ and $\fY$ supported in $SN_x$, it is sufficient to get $l|_\cR=0$.
 By Lemma \ref{l:ReebNeighborhood}, we can identify $SN_x$ with $\R_r \times (B_x)_{x_i,y_i} \times (I_x)_z$.
 Let $\overline{u}=\pi_{B_x} \circ \pi_{Y} \circ u_v|_\cR$.
 In the coordinates $((r,z),(\{x_i\},\{y_i\}))$, we can write $l|_\cR=(l_1,l_2)$.
 For $\eta=0$ and $\fY$ supported in $SN_x$ \footnote{$\fY$ vanishes along $\partial_r,\partial_z$ and takes values in $\partial_{x_i}, \partial_{y_i}$}, \eqref{eq:CokerElement} becomes 
 \begin{align}
 \langle l_2, \fY(\overline{u}) \circ d\overline{u} \circ j_{\Sigma_v} \rangle_{L^2, \cR} =0 \label{eq:CokerElement2}
\end{align}
 where $\fY$ is $r,z$-invariant in $SN_x$ by the definition of $\cJ^{cyl}(\partial U; \{N_x\}_{x \in \cX^{c}_T})$ so $\fY(\overline{u})$ is well-defined.
 
 \begin{lemma}\label{l:EESargument}
  It follows from \eqref{eq:CokerElement2} that $l_2=0$.
 \end{lemma}

 Assuming Lemma \ref{l:EESargument}, it suffices to show that $l_1=0$.
 Similarly, $l_1$ admits the unique continuation property (see \cite[page $754$]{Dragnev}) so we only need to show that $l_1=0$ on some non-discrete set of $\cR$.
 For $\fY=0$ and $\eta$ supported in $\cR$, \eqref{eq:CokerElement} becomes
 \begin{align}
 \langle l_1, D(\pi_{r,z}) \circ D_u \eta \rangle_{L^2, \cR} =0 \label{eq:CokerElement3}
\end{align}
 where $\pi_{r,z}: SN_x \to \R_r \times (I_x)_z$ is the projection.
 Notice that $J|_{T(\R_r \times (I_x)_z)}$ is the standard complex structure, and $\eta$ depends on the domain $\cR$ rather than the target $SN_x$.
 Therefore, we can find an interior point $p$ of $\cR$ and construct $\eta$ appropriately supported near $p$ to show that $l_1=0$.
 The details of the construction of $\eta$ can be found in \cite[page $754$]{Dragnev}.

 As a result, $l|_\cR=0$ and hence $l \equiv 0$.
 The existence of $\cJ^{cyl, reg}$ follows from applying Sard's-Smale theorem to the projection $\eF^{-1}(0) \to U_\Delta$.
\end{proof}

\begin{proof}[Proof of Lemma \ref{l:EESargument}]
 The proof is the same as \cite[Lemma $4.5(1)$]{EES07}. For readers' convenience, we will recall the proof using our notation.
 
 By the definition of $\cJ^{cyl}(\partial U; \{N_x\}_{x \in \cX^{c}_T})$, $\overline{u}$ is a $J'$-holomorphic curve for some compatible almost complex structure $J'$
 on $B_x$ such that $J'=J_{B_x}$ near origin.
 Moreover, exactly one boundary puncture, denoted by $\xi_{j_x}$, of $\cR$ is mapped to the origin by our choice of $x$.
 
 By the asymptotic behavior of holomorphic disks, we can assume that for sufficently small $\delta>0$, there exists a neighborhood $(E_0,\partial E_0) \subset (\cR, \partial \cR)$
 of $\xi_{j_x}$ such that
 \begin{enumerate}[(i)]
  \item $(\overline{u}(E_0),\overline{u}(\partial E_0)) \subset (B(0,2\delta), \pi_{B_x}(D_{0,x} \cup D_{1,x}) \cup \partial B(0,2\delta))$,
  \item $\pi_{B_x}(D_{0,x} \cup D_{1,x}) \cap \partial B(0,2\delta)$ are two real analytic disjoint  branches,
  \item $\overline{u}(\partial E_0)$ contains two regular oriented curves $\gamma\subset D_{0,x}$, $\tilde{\gamma}\subset D_{1,x}$ in $B(0,2\delta)$, respectively.
 \end{enumerate}
 
 Here $B(0,2\delta)$ is a $2\delta$-ball centered at the origin and $D_{i,x}$ are defined in \eqref{eq:Dix}.
 
 To prove $l_2$ is zero we consider the variation of $J'$ near a point on $\gamma$.
 To this end, we need to keep track of other parts of $\cR$ that map onto $\gamma$.

 Let $p_1 \dots, p_r \in \partial \cR$ be the preimages under $\overline{u}$ of $0$ with the property that one of the components
 of the punctured neighborhood of $p_j$ in $\partial \cR$ maps to $\gamma$.
 This set is finite and is identified with the set of boundary intersections between $u$ and $\R\times x$.
 
 Let $p_{r+1}, \dots p_s \in \cR \backslash \partial \cR$ be the preimages under $\overline{u}$ of $0$ with the property that the preimage of $\gamma$
 under $\overline{u}$ intersects some neighborhood of $p_j$ in a $1$-dimensional subset.
 By monotonicity lemma and maximum principle, this set is also finite, and is identified with the interior intersections between $u$ and $\R\times x$.
 
 For $1 \le j \le s$, let $E_j \subset \cR$ denote the connected coordinate neighborhood of $\overline{u}^{-1}(B(0,2\delta))$
 near $p_j$. Let $U_1=\overline{u}(E_0)$ and $U_2$ be the Schwartz reflection of $U_1$ through $\tilde{\gamma}$ (see Figure \ref{fig:EEStrick}).

 By monotonicity lemma and maximum principle, we can find $x_i \in U_i \backslash (B(0,\delta) \cup \pi_{B_x}(D_{0,x} \cup D_{1,x}))$
 and small neighborhoods $B(x_i,\epsilon)$, $\epsilon \ll r$, such that 
 \begin{align}
  \overline{u}^{-1}(B(x_i,\epsilon)) \subset \cup_{j=0}^s E_j
 \end{align}
Note that for $j \ge 1$, $x_1 \in \overline{u}(E_j)$ if and only if $x_2 \in \overline{u}(E_j)$.
We exclude from our list any such $j \ge 1$ with $x_i \notin \overline{u}(E_j)$, $i=1,2$.
To simplify notation, we continue to index this possibly shortened list by $1 \le j \le s$.

For $1 \le j \le r$, we double the domain $E_j$ through its real analytic boundary $\partial E_j$.
We also double the local map $\overline{u}|_{E_j}$.
We continue to denote the open disk by $E_j$.
For $0 \le j \le s$, let $u_j=\overline{u}|_{E_j}$.
We can also double (for $1 \le j \le r$) the cokernel element $l_2$ (which is anti-holomorphic) locally and define (for $0 \le j \le s$) $(l_2)_j=l_2|_{E_j}$.

There exists a disk $E \subset \C$ and a map $f_E$ defined on $E$ such that for $1 \le j \le s$, there exists positive integers $k_j$
and bi-holomorphic identifications $\phi_j$ of $E$ with $E_j$ such that $(l_2)_j(\phi_j(z))=f_E(z^{k_j})$ for $z \in E$.

Via our choice of perturbation of the complex structure, we can choose $\fY$ to be supported in $B(x_2, \epsilon)$.
We get
\begin{align}
 \langle \sum_{j=1}^s (l_2)_j(\phi_j(z)), \fY(u_j \circ \phi_j) \circ d(u_j \circ \phi_j) \circ j_{E} \rangle_{L^2,E}=0 
\end{align}
where $j_E$ is the complex structure on $E$.
Varying $\fY$, this implies 
\begin{align}
\sum_{j=1}^s (l_2)_j(\phi_j(z)) =0 \label{eq:FirstVansihing}
\end{align}

We can also choose $\fY$ to be supported in $B(x_1, \epsilon)$. We get 
\begin{align*}
 \langle \sum_{j=1}^s (l_2)_j(\phi_j(z)), \fY(u_j \circ \phi_j) \circ d(u_j \circ \phi_j) \circ j_{E} \rangle_{L^2,E}
 +\langle (l_2)_0(z), \fY(u_0) \circ du_0 \circ j_{E_0} \rangle_{L^2,E_0}=0
\end{align*}
Since the first term is $0$ by \eqref{eq:FirstVansihing}, by varying $\fY$, it implies $l_2|_{E_0}=(l_2)_0=0$ and hence $l_2 \equiv 0$. 
 \end{proof}

\begin{figure}
  \centering
  \includegraphics[]{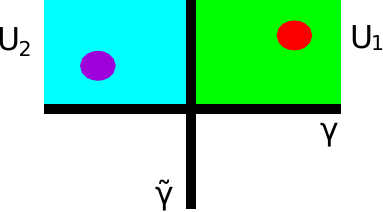}
  \caption{Green region: $U_1$; Blue region: $U_2$; Red dot: $B(x_1,\epsilon)$; Purple dot: $B(x_2,\epsilon)$. Only $u_0(E_0)$ hits $U_1$ but not $U_2$ among $u_j(E_j)$ because
  unlike $p_j$ (for $1 \le j \le s$), $\xi_{j_x}$ is a boundary puncture.}
  \label{fig:EEStrick}
\end{figure}

Proposition \ref{p:IntermediateRegular} only concerns $u_v$ in the intermediate levels in the building $u_\infty$.
We need another proposition to address the regularity when $u_v$ lies in the top/bottom level of $u_\infty$.
We will explain the case that $l_\eT(v)=n_\eT$ (i.e. top level) and the other case is similar.

Let $J_{M^+}$ be a compactible almost complex structure of $SM^+$
such that it is integrable near $SL_i^+ \pitchfork SL_j^+$, $i \neq j$.
We assume that $SL_i^+,SL_j^+$ are real analytic near $SL_i^+ \pitchfork SL_j^+$.

For $J^Y \in \cJ^{cyl}(Y,\alpha)$, we let 
$\cJ^{+}(SM^+)$ to be the set of compactible almost complex structure $J$ such that $J=J_{M^+}$ near $\cup_{i \neq j} SL_i^+ \cap SL_j^+$
and there exists $R>0$
so that $J^+|_{(-\infty,-R] \times \partial M^+}=J^Y|_{(-\infty,-R] \times Y}$.

\begin{prop}\label{p:TopRegular}
 There is a residual set  $\cJ^{+, reg} \subset \cJ^{+}(SM^+)$ such that 
if $J^+ \in \cJ^{+, reg}$, then for $v \in V^{core} \cup V^{\partial}$
and $l_\eT(v)=n_\eT$, the $J^+$-holomorphic curve $u_v$ is transversally cut out.
\end{prop}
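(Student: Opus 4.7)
The plan is to follow the pattern of the proof of Proposition~\ref{p:IntermediateRegular} verbatim in its Banach manifold / Sard--Smale skeleton, with the simplification that now $J^+$ may be varied on a much larger open region of $SM^+$ rather than just inside $SN_x$. As before, the decisive input will be Lemma~\ref{l:DisAsymptote}, which furnishes a boundary asymptote $x$ of $u_v$ that appears exactly once among its asymptotes. This $x$ is either a Lagrangian intersection point of $SL_i^+ \cap SL_j^+$ (a positive asymptote, when $v \in V^{core}$ is attached to the root) or a Reeb chord (a negative asymptote, in $\cX^c_T$).

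Set up the Banach manifold $\cB$ of maps $(\Sigma_v,\partial \Sigma_v) \to (SM^+, \cup_j SL_j^+)$ in an appropriate weighted Sobolev class with prescribed asymptotics, and let $U_\Delta^+$ be a Banach submanifold dense in $\cJ^+(SM^+)$. The $\dbar$-section $\eF(u,J) = \dbar_J u$ has linearization
\begin{equation*}
 D\eF(u,J)(\eta,\fY) = D_u \eta + \fY(u)\circ du \circ j_{\Sigma_v},
\end{equation*}
and we must prove it is surjective at every $J^+$-holomorphic $u=u_v$ satisfying the stated conditions. Let
\begin{equation*}
 F := SM^+ \setminus \Bigl(\, \textstyle \bigcup_{i\neq j} \text{(nbhd of $SL_i^+ \cap SL_j^+$)} \,\cup\, ((-\infty,-R]\times \partial M^+)\,\Bigr)
\end{equation*}
denote the open region of $SM^+$ on which $J^+$ is unconstrained within $\cJ^+(SM^+)$.

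The core step is to show that $u_v$ has an injective point whose image lies in $F$. The distinguished asymptote $x$ guarantees that the strip-like end of $u_v$ at $x$ is embedded and injective: if $x$ is a Lagrangian intersection point, this follows from the asymptotic form of $J^+$-holomorphic strips at a transverse intersection, and if $x$ is a non-degenerate Reeb chord it follows from the corresponding asymptotic analysis at a hyperbolic end together with the fact that no other puncture of $\Sigma_v$ converges to $x$. Combined with the unique continuation principle for $J^+$-holomorphic maps, this forces the set of injective points of $u_v$ to be open and dense in $\Sigma_v$. Since $u_v$ is not a trivial cylinder (it is in the top level of $u_\infty$) and has Lagrangian boundary on $\cup_j SL_j^+$, it cannot be contained in the cylindrical tail $(-\infty,-R]\times \partial M^+$, nor in any fixed neighborhoods of the integrable intersection points (by the maximum principle and the exactness hypothesis); hence injective points mapping into $F$ exist.

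Granted this, the surjectivity of $D\eF(u,J)$ is forced by the standard argument: if $0\neq l \in \eE^{0,1}_u$ were $L^2$-orthogonal to the image, we choose $\eta=0$ and $\fY$ supported in an arbitrarily small ball inside $F$ around $u_v(p)$ for an injective point $p$, and vary $\fY$ to obtain $l(p)=0$; unique continuation for the cokernel element $l$ then gives $l\equiv 0$, a contradiction. Surjectivity of $D\eF$ on the universal moduli space $\eF^{-1}(0)$ and Sard--Smale applied to the projection $\eF^{-1}(0)\to U_\Delta^+$ then produce the residual set $\cJ^{+,reg}\subset \cJ^+(SM^+)$. The main technical subtlety I expect is purely bookkeeping: verifying that the fixed loci of $J^+$ (the neck and the integrable pieces near $SL_i^+\cap SL_j^+$) cannot absorb all the injective points, but this is controlled by the exactness of $\w$ together with Corollary~\ref{c:actionBoundoutside} and Lemma~\ref{l:DisAsymptote} in exactly the same way they were used in the proof of Proposition~\ref{p:IntermediateRegular}.
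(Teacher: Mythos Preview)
Your approach is correct and is close in spirit to the paper's, but the two routes diverge in the Lagrangian intersection case. The paper splits into two cases according to whether the distinguished asymptote $x$ from Lemma~\ref{l:DisAsymptote} is a Lagrangian intersection point or a Reeb chord. In the Reeb chord case the paper does exactly what you do: the preimage of a small neighborhood of $(-\infty,-R]\times Im(x)$ is a single strip-like end, giving a somewhere-injective point, and one perturbs $J^+$ there (the key remark being that the cylindrical constraint in $\cJ^+(SM^+)$ only holds for \emph{some} $R$, so compactly supported perturbations at any finite level are allowed; your $F$ unnecessarily excises the entire tail). In the Lagrangian intersection case, however, the paper invokes the EES doubling argument (as in Proposition~\ref{p:IntermediateRegular} and Lemma~\ref{l:EESargument}), working locally near $p$ where $J^+=J_{M^+}$ is integrable and the $SL_i^+$ are real analytic.

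Your route is more uniform: you observe that a single injective point forces $u_v$ to be simple, whence injective interior points are open and dense, and then argue that $u_v(\Sigma_v)$ must meet $F$. This avoids the doubling argument entirely. The trade-off is that you are implicitly appealing to a Lazzarini-type structure result for disks with Lagrangian boundary (openness and density of injective points for simple curves), which is exactly what the authors flag in Remark~\ref{rem:Alex} as an alternative approach not fully developed in the SFT setting. Your justification that $u_v$ cannot be contained in the constrained loci also needs sharpening: ``maximum principle and exactness'' is not quite the mechanism. The correct reason is that the constrained region is a \emph{disjoint} union of small balls around the $SL_i^+\cap SL_j^+$ together with the cylindrical tail, while $u_v$ is connected and (by the energy argument in the proof of Lemma~\ref{l:DisAsymptote}) has at least two asymptotes lying in distinct components of this region, so it must cross $F$.
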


\begin{proof}
 By Lemma \ref{l:DisAsymptote}, $u_v$ has a boundary asymptote $x$ that appears only once among its asymptotes.
If the distinguished asymptote of $u_v$ is a Lagrangian intersection point, then we can apply the argument in \cite[Lemma $4.5(1)$]{EES07} again to achieve the regularity of $u_v$.
If the distinguished asymptote of $u_v$ is a Reeb chord, we denote the corresponding puncture by $\xi_{j_x}^v$. 
By the asymptotic behavior of $u_v$, for a sufficiently large $R$, the preimage of a 
small neighborhood of $(-\infty,-R] \times Im(x)$ under $u_v$ is a neighborhood of $\xi_{j_x}^v$.
Therefore, we can find a somewhere injectivity point near $\xi_{j_x}^v$.
Unlike the situation in $SY$, where the almost complex structure is cylindrical, we can perturb $J$ in $SM^+$ as long as $J$ is cylindrical outside a compact set.
Therefore, we can use the somewhere injectivity point to achieve regularity (see \cite[Proposition $4.19$]{CDGG} for exactly the same argument).
\end{proof}

Similarly, one define $\cJ^{-}(SM^-)$ analogously and we have

\begin{prop}\label{p:BottomRegular}
 There is a residue set  $\cJ^{-, reg} \subset \cJ^{-}(SM^-)$ such that 
if $J^- \in \cJ^{-, reg}$, then for $v \in V^{core} \cup V^{\partial}$
and $l_\eT(v)=0$, the $J^-$-holomorphic curve $u_v$ is transversally cut out.
\end{prop}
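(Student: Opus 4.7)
The strategy is to mirror the proof of Proposition \ref{p:TopRegular} with the obvious sign reversals. First I would define $\cJ^{-}(SM^-)$ to be the set of $\omega$-compatible almost complex structures $J$ on $SM^-$ such that (i) $J = J_{M^-}$ near $\cup_{i \neq j} SL_i^- \cap SL_j^-$, where $J_{M^-}$ is a fixed almost complex structure integrable near every such transverse intersection (with all $SL_i^-$ real-analytic there); and (ii) there exists $R > 0$ with $J|_{[R, \infty) \times \partial M^-} = J^Y|_{[R, \infty) \times Y}$, identifying the cylindrical end of $SM^-$ with $[0,\infty)\times\partial M^-$ via the Liouville flow.

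I would then invoke the standard universal moduli space framework: form a Banach manifold $\cB^-$ of $W^{k,p}$-maps $(\Sigma_v, \partial\Sigma_v) \to (SM^-, SL^{-,tot})$ carrying exponential weights at the punctures and the prescribed asymptotic data, a Banach manifold $U_\Delta^-$ dense in $\cJ^{-}(SM^-)$, and the Cauchy-Riemann section $\eF(u, J) = \overline\partial_J u$ with its linearization $D\eF(u,J)$. By Lemma \ref{l:DisAsymptote}, for any $v \in V^{core} \cup V^{\partial}$ with $l_\eT(v) = 0$ the curve $u_v$ admits a boundary asymptote $x$ appearing exactly once among its asymptotes, which is either (a) a Lagrangian intersection point in some $SL_i^- \cap SL_j^-$ ($i \neq j$), or (b) a Reeb chord at the positive cylindrical end.

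In case (a) I would run the Dragnev-style argument of \cite[Lemma 4.5(1)]{EES07} in a local chart at $x$ where $J_{M^-}$ is integrable and the Lagrangians are real analytic, exactly as in Lemma \ref{l:EESargument}: the fact that $x$ is the unique asymptote visiting this chart is what kills cokernel elements. In case (b), exponential convergence of $u_v$ at $\xi_{j_x}^v$ to the trivial half-strip over $x$, together with the uniqueness of $x$ among the asymptotes, furnishes via monotonicity a somewhere injective point $p$ of $u_v$ with $\pi_\R(u_v(p)) < R$; at such a point $J^-$ can be freely perturbed without disturbing the constraints defining $\cJ^{-}(SM^-)$, and the argument of \cite[Proposition 4.19]{CDGG} then produces a variation killing any given cokernel element. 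Applying Sard-Smale to the projection $\eF^{-1}(0) \to U_\Delta^-$ yields the desired residual set $\cJ^{-,reg}$.

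The main obstacle, as in the proof of Proposition \ref{p:TopRegular}, is locating the somewhere injective point $p$ in the Reeb-chord case so that its image lies outside the forced-cylindrical region $[R,\infty) \times \partial M^-$; this is arranged by choosing $R$ large enough and then using asymptotic analysis to ensure that close to $\xi_{j_x}^v$ the curve $u_v$ stays $C^1$-close to the trivial half-strip and crosses the level set $\{r = R\}$ transversally, while no other part of $u_v$ can enter a tube of radius comparable to the diameter of $x$ around $[R, \infty) \times \mathrm{Im}(x)$ since $x$ occurs only once as an asymptote.
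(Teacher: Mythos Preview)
Your proposal is correct and follows exactly the approach the paper intends: the paper gives no explicit proof for this proposition, writing only ``Similarly, one define $\cJ^{-}(SM^-)$ analogously and we have'' before the statement, thereby deferring entirely to the proof of Proposition~\ref{p:TopRegular}. Your elaboration of that analogy---invoking Lemma~\ref{l:DisAsymptote} for the distinguished asymptote, handling the Lagrangian-intersection case via the \cite{EES07} reflection argument, and handling the Reeb-chord case by locating a somewhere injective point near the puncture where $J^-$ is not yet forced to be cylindrical---matches the paper's reasoning for the top level with the obvious sign reversals.

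One minor remark on phrasing: the somewhere injective point in the Reeb-chord case is found by asymptotic analysis (exponential convergence to the trivial half-strip over $x$, plus uniqueness of $x$ among the asymptotes), not by the monotonicity lemma. Also, your final paragraph is slightly more elaborate than necessary: since the $R$ in the definition of $\cJ^{-}(SM^-)$ is allowed to depend on $J$, the injective point---which sits at some finite $r$-level---automatically lies in the region where $J$ may be freely perturbed, so the transversal-crossing discussion can be dropped.
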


\begin{rmk}\label{rem:Alex}
    There is a possible alternative approach to the above regularity results if one could generalize the work of Lazzarini \cite{La00} \cite{La11} and Perrier \cite{Pe18} to the SFT settings. This seems promising at least for $SM^\pm$, but the general regularity of $SY$ might be more difficult.
\end{rmk}

\subsection{No side bubbling}

We can now summarize the previous discussion on $u_{\infty}$.

Let $L_j$, $j=0,\dots,d$ be a collection of embedded exact Lagrangian submanifolds in $(M,\omega,\theta)$ such that $L_i \pitchfork L_j$ for all $i \neq j$.
    Let $P$ be a Lagrangian such that \eqref{eq:GammaCondition} is satisfied ($P$ can be one of the $L_j$).
    Let $U$ be a Weinstein neighborhood of $P$ and we assume that $\theta|_U$ coincides with the canonical Liouville one form on $T^*P$.
    For $T \gg 1$, we pick $\alpha$ satisfing Corollary \ref{c:ReebDynamicMorsified} and $T^{adj}$ satisfying \eqref{eq:AdjustedAction}.
    
Let $Y$ be a perturbation of $\partial U$ such that $(Y,\theta|_Y) \cong (\partial U,\alpha)$.
We denote $\theta|_Y$ by $\alpha$.
    We have a neighborhood $\Phi_{N(Y)}:(N(Y),\w|_{N(Y)})\cong ((-\epsilon,\epsilon)\times Y, d(e^r\alpha))$ of $Y$.
We assume that 
$L_j\cap N(Y)=(-\epsilon,\epsilon)\times \Lambda_{j}$ where $\Lambda_j=\Lambda_{q_{m_j}}=T^*_{q_{m_j}}P \cap Y$ for some $q_{m_j} \in P$ in Corollary \ref{c:ReebDynamicMorsified}. 

    Let $J^\tau$ be a smooth family of almost complex structures $R$-adjusted to $N(Y)$.
    such that $J^Y \in \cJ^{cyl,reg}$, where  $\cJ^{cyl,reg}$ is obtained in Proposition \ref{p:IntermediateRegular}.
    We also assume that $J^\pm \in \cJ^{\pm,reg}$, where $\cJ^{\pm,reg}$ is obtained in Proposition \ref{p:TopRegular}, \ref{p:BottomRegular}.
    
    Let $x_0 \in CF(L_0,L_d)$ and $x_j \in CF(L_{j-1},L_j)$ for $j=1,\dots,d$.
    We assume that 
    \begin{align}
     \sum_{j=0}^d |A(x_j)|<T^{adj}
    \end{align}

Suppose that there exists a sequence $\{\tau_k\}_{k=1}^\infty$ such that $\lim_{k \to \infty} \tau_k =\infty$, and a sequence
    $u_k \in \eM^{J^{\tau_k}}(x_0;x_d,\dots,x_1)$.
    We assume that $\virdim(u_k)=0$.
    Let $u_\infty=\{u_v\}_{v \in V(\eT)}$ be the holomorphic building obtained in Theorem \ref{t:SFTcompactness}.
Then we have
    
\begin{prop}[No side bubbling]\label{p:nosidebubble}
If $n\ge 3$, then $V^{int}=\emptyset$ and $n_\eT=1$.
Moreover, if $v \in V^{\partial}$, then $u_v$ is a rigid $J^+$-holomorphic map with exactly one boundary asymptote which is negative and goes to a Reeb chord.
Furthermore, the $d+1$ semi-infinite edges in $\eT$ are the only edges that correspond to Lagrangian intersection points.
\end{prop}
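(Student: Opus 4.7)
The plan is a global dimension count. Iterating the gluing identities \eqref{eq:virdimGlue1} and \eqref{eq:virdimGlue2} over all interior edges of $\eT$ yields the master identity
\[
\virdim(u_k) \;=\; \sum_{v \in V(\eT)} \virdim(u_v) \;+\; \#\{\text{same-level Lagrangian edges}\} \;-\; \sum_{e\ \text{Reeb}} \dim(S_{e}),
\]
where $S_e$ is the Morse-Bott family of the asymptote carried by a Reeb-chord or Reeb-orbit edge $e$. Corollary \ref{c:ReebDynamicMorsified} makes every chord $x_e$ non-degenerate, so $\dim(S_{x_e})=0$. For multiply-covered Reeb-orbit asymptotes the Morse-Bott defect $\dim(S_{\gamma_e})$ is offset by the $\tfrac{1}{2}\dim(S_{\gamma_e})$ terms in the virtual-dimension formulas \eqref{eq:VirdimFormula1}, \eqref{eq:VirdimFormula3}, and the Robbin--Salamon inequality \eqref{eq:IndexChange} of Lemma \ref{l:Morsification} shows the $\mu_{RS}$ gain strictly exceeds this loss. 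Combined with the regularity bound $\virdim(u_v)\ge 0$ from Propositions \ref{p:IntermediateRegular}, \ref{p:TopRegular}, and \ref{p:BottomRegular}, the hypothesis $\virdim(u_k)=0$ forces each $\virdim(u_v)$ to vanish and kills all same-level Lagrangian edges, which is precisely the final assertion of the proposition.

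Next I eliminate $V^{int}$. A vertex $v\in V^{int}$ with $l_{\eT}(v)=n_{\eT}$ would carry only negative Reeb-orbit asymptotes and is forbidden by Corollary \ref{c:actionBoundoutside}. A vertex $v\in V^{int}$ with $l_{\eT}(v)=0$ is a punctured sphere in $SM^{-}=T^{*}P$ whose asymptotes are contractible in $U$ (since $u_v$ itself caps them); Corollary \ref{c:IndexClosedCap} together with \eqref{eq:VirdimFormula1} then gives $\virdim(u_v) \ge 2n-4 \ge 2$ for $n\ge 3$, contradicting $\virdim(u_v)=0$. A vertex $v\in V^{int}$ at an intermediate level lies in a connected component of $\eT^{int}$ that must descend to the bottom level (the top-level case having been closed); chasing this subtree down to its lowest vertex, the same index bound yields a contradiction there. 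Hence $V^{int}=\emptyset$.

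Next I force $n_{\eT}=1$. If $n_{\eT}\ge 2$, each intermediate level must host a $u_v$ with $v\in V^{core}\cup V^{\partial}$ that is not a trivial cylinder (trivial cylinders being excluded by the definition of an SFT building). Combining Lemma \ref{l:VirdimFormula3}, Lemma \ref{l:IndexChangeChord} (every Morsified chord has $|x|\le 0$), and the fact that $\R$-translation produces a $1$-parameter family of solutions forces $\virdim(u_v)\ge 1$, contradicting the master identity. Finally for the $V^{\partial}$ claim: with $n_{\eT}=1$ and $V^{int}=\emptyset$, such a $v$ must live in the top level $SM^{+}$ (a $V^{\partial}$ component at the bottom would be a disk in $T^*P$ with boundary on a single exact Lagrangian, hence trivial by exactness); Lemma \ref{l:PositiveActionAsymptote} combined with $\virdim(u_v)=0$ and Corollary \ref{c:actionBoundoutside} then leaves no room for more than one boundary puncture or any interior puncture, so the unique boundary asymptote is the (negative) distinguished one at $\xi_{0}^{v}$ and is a Reeb chord.

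The main obstacle is making the intermediate-level analysis completely rigorous. Closed components in $SY$ are not covered by the regularity results of Section \ref{ss:Regularity}, so an auxiliary transversality input is needed (for instance $\R$-invariant perturbations combined with an EES--Dragnev somewhere-injectivity projection), and the multi-cover issue for Reeb-orbit asymptotes must be handled carefully via \eqref{eq:IndexChange} to ensure that the gain in $\mu_{RS}$ strictly outweighs the Morse--Bott loss in the gluing formula.
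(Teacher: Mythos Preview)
Your overall architecture (global dimension count, then eliminate $V^{int}$, then collapse levels, then pin down $V^{\partial}$) matches the paper, but there are two genuine gaps where your argument breaks down and the paper uses a different idea.

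\textbf{Closed components.} You invoke Propositions \ref{p:IntermediateRegular}--\ref{p:BottomRegular} to get $\virdim(u_v)\ge 0$ for all $v$ and then treat the closed case as a transversality detail. But those propositions explicitly apply only to $v\in V^{core}\cup V^{\partial}$; the paper says flatly ``for $v\in V^{int}$, we cannot address the regularity.'' Your individual-vertex analysis of $V^{int}$ then fails: you claim that for $v\in V^{int}$ with $l_{\eT}(v)=0$ the asymptotes are contractible ``since $u_v$ itself caps them,'' but a punctured sphere with several positive orbits caps none of them individually. The paper avoids both issues with a single move (Lemma \ref{l:noclosedstringbubble}): take a whole connected component $G$ of $\eT^{int}$, observe via Corollary \ref{c:actionBoundoutside} that none of its pieces lie in $SM^+$, and glue to obtain a single plane $\#_{v\in G}u_v$ in $SM^-$ with a unique positive asymptote $\gamma^0$. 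Now $\gamma^0$ is genuinely capped, hence contractible in $U$, and $\virdim(G)$ is computed from $\gamma^0$ alone via Corollary \ref{c:IndexClosedCap}, giving $\virdim(G)\ge 2n-4>0$. This is what feeds into the master identity \eqref{eq:VirDimG}, not an individual $\virdim(u_v)$; no regularity for closed curves and no Morse--Bott bookkeeping for multiply-covered orbits is needed.

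\textbf{Bottom-level $V^{\partial}$.} Your claim that a $V^{\partial}$ component at the bottom ``would be a disk in $T^*P$ with boundary on a single exact Lagrangian, hence trivial by exactness'' is wrong: such a component has positive Reeb-chord asymptotes on the boundary, so it is not a closed disk and exactness says nothing. The paper instead computes the index in canonical relative grading: if all asymptotes $y_0,\dots,y_{d_v}$ are Reeb chords then $\virdim(u_v)=n-\sum_j|y_j|-(2-d_v)\ge n-2>0$ by Corollary \ref{c:ReebDynamicMorsified}, contradicting rigidity. The single-asymptote conclusion then follows by a tree argument: if some top-level $v\in V^{\partial}$ had more than one boundary puncture, the tree structure would force another $V^{\partial}$ vertex at level $0$.
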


\begin{proof}

For a subtree $G \subset \eT$, 
we use $\virdim(G)$ to denote the virtual dimension of the map $\#_{v \in G} u_v$, where 
$\#_{v \in G} u_v$ refers to the map obtained by gluing all $u_v$ such that $v \in G$ along the asymptotes determined by the edges.
By \eqref{eq:virdimGlue1}, \eqref{eq:virdimGlue2} and the fact that all Reeb chords/orbits arising as asymptotes of $u_v$ are non-degnerate,
we have
\begin{align}
 \virdim(G)= \sum_{v \in G} \virdim(u_v)+k_G \label{eq:VirDimG}
\end{align}
where $k_G$ is the number of edges that correspond to Lagrangian intersections points and connect two distinct vertices in $G$.
By assumption, $\virdim(\eT)=0$.
Since $u_v$ are transversally cut out for $v \in V^{core} \cup V^{\partial}$ (Proposition \ref{p:IntermediateRegular}, \ref{p:TopRegular}, \ref{p:BottomRegular}), 
we have $\virdim(u_v) \ge 0$.
For $v \in V^{int}$, we cannot address the regularity but we have the following.

\begin{lemma}\label{l:noclosedstringbubble}
 For each connected component $G$ of $\eT^{int}$, we have $\virdim(G) >0$.
\end{lemma}

\begin{proof}
Let $v \in G$ be the vertex closest to the root.
By \ref{l:PositiveActionAsymptote}, we have a distinguished interior puncture $\eta^0 \in \Sigma_v$
which contributes positively to $E_\alpha(u_v)$.
Let $\gamma^0$ be the Reeb orbit that $u_v$ is asymptotic to at $\eta^0$.
Since $A(\gamma^0)=L(\gamma^0)>0$, $\gamma^0$ must be a positive asymptote of $u_v$.

Notice that, by Corollary \ref{c:actionBoundoutside}, there is no $v \in G$ such that $u_v$ maps to $SM^+$.
Therefore, $\#_{v \in G} u_v$ is a topological disk in $SM^-$ so $\gamma^0$ is contractible in $U$.
Moreover, $\virdim(G)$ is determined by $\gamma^0$ and it is given by $2n-4>0$ (see Corollary \ref{c:IndexClosedCap}).
\end{proof}

By combining \eqref{eq:VirDimG}, $\virdim(\eT)=0$, $\virdim(u_v) \ge 0$ for $v \in V^{core} \cup V^{\partial}$ and Lemma \ref{l:noclosedstringbubble}, we conclude
that $V^{int}= \emptyset$, $k_G=0$ and $\virdim(u_v)=0$ for all $v$.

Notice that if $u_v$ is not a trivial cylinder but $l_\eT(v) \notin \{0, n_\eT\}$, then $\virdim(u_v) \ge 1$ because one can translate $u_v$ along the $r$-direction.
Therefore, all intermediate level curves are trivial cylinders so $n_\eT=1$.
The last thing to show is that if $v \in V^{\partial}$, then $l_\eT(v)=1$ and $u_v$ has only one boundary asymptote.

We argue by contradiction. Suppose $l_\eT(v)=0$.
Due to the boundary condition, all asymptotes of $u_v$ are Reeb chords $y_0,\dots,y_{d_v}$.
Inside $SM^-$, we can compute the index of Reeb chords using the canonical relative grading.
By Corollary \ref{c:ReebDynamicMorsified}, we have $\iota(y_j) \le 0$ for all $j$.
It means that $\virdim(u_v)=n-\sum_{j=0}^{d_v} \iota(y_j)-(2-d_v) \ge n-2>0$.
This is a contradiction so $l_\eT(v)=1$ for all $v \in V^{\partial}$.

Finally, if there exists $v \in V^{\partial}$ such that $u_v$ has more than one boundary asymptote, then by the fact that $\eT$ is a tree, we must have $v \in V^{\partial}$
such that $l_\eT(v)=0$. 
This is a contradiction so we finish the proof of Proposition \ref{p:nosidebubble}.

\end{proof}

\subsection{Gluings in SFT} 
\label{sub:gluing_in_sft}


To conclude our general discussion on generalities of neck-stretching, we recall the following gluing theorem for SFT, which will play an important role in our proof.

\begin{thm}\label{t:SFTgluing}
  Let $u_{\infty}=(u_v)_{v \in V(\eT)} \in \eM^{J^{\infty}}(x_0;x_d,\dots,x_1)$ be a holomorphic building such that $u_v$ is transversally cut out for all $v$ and $\virdim(u_{\infty})=0$.  Assume also that all asymptotic Reeb chords are non-degenerate.

  Then for any small neighborhood $N_{u_{\infty}}$ of $u_{\infty}$ in an appropriate topology,
  there exists $\Upsilon>0$ sufficienly large such that for each $\tau>\Upsilon$, there is a unique
  $u^{\tau} \in \eM^{J^{\tau}}(x_0;x_d,\dots,x_1)$ lying inside $N_{u_{\infty}}$.
  Moreover, $u^\tau$ is regular and $\{u^\tau\}_{\tau \in [\Upsilon,\infty)}$
  converges in SFT sense to $u_{\infty}$ as $\tau$ goes to infinity.
\end{thm}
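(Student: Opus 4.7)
The plan is to execute a standard SFT gluing argument, adapted to the present mixed setting where the Reeb asymptotes are all boundary Reeb chords (by Proposition \ref{p:nosidebubble} we could largely restrict to $n_\eT = 1$, but for the statement as written we treat the general case). First I would construct, for each sufficiently large $\tau$, a pre-glued approximate solution $u^{\tau}_{\mathrm{app}}$ from $u_\infty$ as follows: at every finite edge $e$ of $\eT$ along which $u_v$ and $u_{v'}$ are asymptotic to a common Reeb chord $x$ (resp.\ Reeb orbit $\gamma$), one cuts the strip-like (resp.\ cylindrical) ends of $u_v, u_{v'}$ at radius $\tau/2$, reparametrises so that the two half-cylinders sit on opposite sides of a long strip $[-\tau,\tau]\times[0,1]\times Y$ (or $[-\tau,\tau]\times S^1 \times Y$), and interpolates using cut-off functions along the asymptotic trivialisations furnished by the non-degeneracy of $x$ (and by Corollary \ref{c:ReebDynamicMorsified} for $\gamma$). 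In the intermediate symplectization levels the trivial-cylinder components guaranteed by Proposition \ref{p:nosidebubble} give the unique way of matching the shifts in the $\R$-coordinate.

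Next I would set up the linearised Cauchy--Riemann operator $D_{u^{\tau}_{\mathrm{app}}}$ on weighted Sobolev spaces $W^{1,p,\delta}$ with an exponential weight $e^{\delta|s|}$ near each puncture, where $\delta > 0$ is smaller than the smallest spectral gap associated to any asymptotic Reeb chord/orbit that appears (possible because each $x$ and each $\gamma$ is non-degenerate or Morse--Bott with kernel matched by trivial cylinders). The key estimate is a uniform right inverse: because each individual $D_{u_v}$ is surjective by hypothesis with a bounded right inverse $Q_v$, one builds a right inverse $Q^\tau$ for $D_{u^{\tau}_{\mathrm{app}}}$ by the usual cut-off/patching procedure, and shows $\|Q^\tau\| \leq C$ independent of $\tau$ via the standard argument (if it failed, a rescaling limit would produce a non-trivial kernel element for some $D_{u_v}$ on the weighted space, contradicting surjectivity).

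With uniform bounds in hand, a Newton iteration or Banach fixed-point contraction applied to $F^\tau(\xi) := \bar\partial_{J^\tau}(\exp_{u^{\tau}_{\mathrm{app}}} \xi)$ produces, for every $\tau \geq \Upsilon$, a unique small solution $\xi^\tau$ of $F^\tau(\xi^\tau) = 0$ with $\|\xi^\tau\|_{W^{1,p,\delta}} \to 0$; set $u^\tau := \exp_{u^{\tau}_{\mathrm{app}}} \xi^\tau$. The exponential weight $\delta$ together with the quadratic estimate on $F^\tau$ ensures that $u^\tau$ attains the correct asymptotic chords $x_0,\ldots,x_d$, and the uniform bound on the right inverse gives automatic regularity of $u^\tau$ (its linearised operator is a small perturbation of $D_{u^{\tau}_{\mathrm{app}}}$). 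SFT convergence $u^\tau \to u_\infty$ is immediate from the construction: in any compact region, $u^\tau$ converges in $C^\infty$ to the corresponding piece of $u_\infty$, and the neck regions become arbitrarily long cylinders over the matching Reeb chords/orbits.

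For the uniqueness clause, I would argue that any sequence $\tilde u^{\tau_k} \in N_{u_\infty}$ of honest $J^{\tau_k}$-holomorphic curves must, after passing to a subsequence, SFT-converge to a building $\tilde u_\infty$ in $N_{u_\infty}$; shrinking $N_{u_\infty}$, this limit equals $u_\infty$. Then the inverse function theorem applied at $u^{\tau_k}$ (which is regular with uniformly bounded right inverse) implies $\tilde u^{\tau_k} = u^{\tau_k}$ for $k$ large. The main obstacle throughout is the uniform right-inverse estimate in the presence of the Reeb-chord necks, since the constants must not deteriorate as $\tau \to \infty$; the Morse--Bott setup at orbits is handled by the weighted Sobolev framework of \cite{Bo02} together with the fact (guaranteed by Proposition \ref{p:nosidebubble}) that in our situation only boundary Reeb chords actually appear in the gluing.
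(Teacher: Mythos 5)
The paper does not give a proof of this theorem: immediately after the statement it records that the result is the standard SFT gluing theorem and refers to the literature (Appendix~A of \cite{Lip06}, and, for conical Lagrangian boundary conditions, \cite[Proposition~4.6]{EES07} and \cite[Section~8]{EES05}). Your outline is a correct rendering of what those references carry out: pre-gluing across the broken necks using the asymptotic trivialisations provided by non-degeneracy, Fredholm theory on $W^{1,p,\delta}$ with exponential weight $\delta$ below the relevant spectral gap, patching the per-level right inverses $Q_v$ into a right inverse $Q^\tau$ for the preglued operator with $\tau$-uniform norm (via the rescaling contradiction argument), Newton iteration/contraction mapping to solve, and the uniqueness clause via SFT-compactness followed by the implicit function theorem at the constructed solution. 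One small caveat: your remark about matching $\R$-shifts at intermediate symplectization levels is a genuine issue in full generality, since the gluing parameter space must account for those translations; however, in the paper's actual application the building has already been shown (Proposition~\ref{p:nosidebubble}) to satisfy $n_\eT=1$, so there are no intermediate levels and the matching issue does not arise---a fact the authors implicitly rely on in restricting to two-level buildings throughout Sections~\ref{sec:quasi_isomorphism} and~\ref{s:categorical}.
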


A nice account for the above SFT gluing result can be found in Appendix A of \cite{Lip06}.  In the presence of conical Lagrangian boundary conditions, see also \cite[Proposition 4.6]{EES07} and \cite[Section $8$]{EES05}.   

The typical application of Proposition \ref{p:nosidebubble} and Theorem \ref{t:SFTgluing} goes as follows.
Given a collection of Lagrangians such that the assumption of Theorem \ref{t:SFTcompactness} is satisfied, 
we want to determine the signed count of rigid elements in $\eM^{J^{\tau}}(x_0;x_d,\dots,x_1)$ for some large $\tau$.
When $d=1$ (resp. $d=2$), the signed count is responsible to the Floer differential (resp. Floer multiplication).
If we pick $u_k \in \eM^{J^{\tau_k}}(x_0;x_d,\dots,x_1)$ such that $\lim_{k \to \infty} \tau_k=\infty$, we get a holomorphic building $u_\infty$ by Theorem \ref{t:SFTcompactness}.
By Proposition \ref{p:nosidebubble}, $u_\infty$ satisfies the assumption of Theorem \ref{t:SFTgluing}.
Therefore, for sufficently large $\tau$, $\eM^{J^{\tau}}(x_0;x_d,\dots,x_1)$ is in bijection to $\eM^{J^{\infty}}(x_0;x_d,\dots,x_1)$.
Moreover, all elements in $\eM^{J^{\tau}}(x_0;x_d,\dots,x_1)$ are transversally cut out.
It means that the Floer differential (resp. Floer multiplication) can be computed by determining $\eM^{J^{\infty}}(x_0;x_d,\dots,x_1)$, which is exactly what we will do in the
following section.

\section{Cohomological identification} 

\label{sec:quasi_isomorphism}





Let $P$ be a Lagrangian such that \eqref{eq:GammaCondition} is satisfied and $\eP$ be the universal local system on $P$.
We pick a parametrization of $P$ so that $\tau_P$ can be defined.
In this section, we want to prove that 

\begin{prop}\label{p:CohLevelIso}
For $\eE^0,\eE^1 \in Ob(\cF)$, we have cohomological level isomoprhism
\begin{align}
 H(hom_{\cF^{\perf}}(\eE^0,T_{\eP}(\eE^1))) \simeq H(hom_{\cF}(\eE^0,\tau_P(\eE^1)))
\end{align}
\end{prop}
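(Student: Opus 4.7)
The plan is to apply the symplectic field theory package of Section~\ref{sec:review_of_symplectic_field_theory_and_dimension_formulae} to neck-stretch along $\partial U$, where $U$ is a Weinstein neighborhood of $P$, in order to compute $HF(\eE^0, \tau_P(\eE^1))$ and identify it, as a cohomology, with the cohomology of the mapping cone defining $T_\eP(\eE^1)(\eE^0)$ via Yoneda; see \eqref{eq:TcE}--\eqref{eq:mappingCone}.

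First I would choose geometric representatives so that the Dehn twist is realized via geodesic flow inside $U$ and so that all Lagrangian factors intersect $\partial U$ transversely in unions of cospheres at generic points $q_i \in P$ obtained from Corollary~\ref{c:ReebDynamicMorsified}. After this setup, the intersections $\eE^0 \cap \tau_P(\eE^1)$ naturally split into two groups: (a) intersections outside $U$, which are canonically identified with $\eE^0 \cap \eE^1$; and (b) intersections inside $U$, which via the Dehn twist model correspond to geodesic chords in $P$ between the base points of $\eE^0$-fibers and the Dehn-twisted $\eE^1$-fibers. Lifting to the universal cover $\fP$ as in Section~\ref{ss:unwinding} and using Lemma~\ref{l:ActionCommute}, the group (b) intersections biject with a basis of $(hom_\cF(\eP,\eE^1) \otimes_\Gamma hom_\cF(\eE^0,\eP))[1]$, so the graded generators of $CF(\eE^0, \tau_P(\eE^1))$ agree with those of $T_\eP(\eE^1)(\eE^0)$.

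Next I would pick a neck-stretching family $J^\tau$ so that $J^Y, J^\pm$ satisfy the regularity of Propositions~\ref{p:IntermediateRegular}, \ref{p:TopRegular}, \ref{p:BottomRegular}, and analyze rigid Floer strips. By Theorem~\ref{t:SFTcompactness} together with Proposition~\ref{p:nosidebubble} (which requires $n \ge 3$), any such strip converges after stretching to a two-level building consisting of a single top-level $J^+$-curve $u^+$ in $SM^+$, a possibly empty collection of rigid bottom-level $J^-$-disks in $T^*P$ meeting the Reeb chord asymptotes of $u^+$, and only trivial cylinders in between. The grading constraint $|x|\le 0$ from Lemma~\ref{l:IndexChangeChord} combined with \eqref{eq:VirdimFormula3} forces $u^+$ to carry at most one Reeb chord asymptote, necessarily of index zero, and each bottom disk to be the small geodesic triangle determined by Lemma~\ref{l:IndexChangeChord}. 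I would then enumerate the resulting configurations and match them term-by-term with the four summands of $\mu^1_{T_\eP(\eE^1)}$ in \eqref{eq:mappingCone}: no-neck-crossing strips between type (a) generators yield $\mu^1_\cF$ on $hom_\cF(\eE^0,\eE^1)$; no-neck-crossing contributions within type (b), coming from a bottom Morse-type differential on one of the two tensor factors, yield the two $\mu^1_\cF$ terms on the tensor factors; and strips with one neck-crossing Reeb chord of index zero yield exactly the evaluation term $\mu^2_\cF(\psi^2,\psi^1) = ev_\Gamma^1$ from \eqref{eq:evaluationMap}. Theorem~\ref{t:SFTgluing} guarantees this correspondence is a bijection for sufficiently stretched $\tau$, producing a cochain-level map that is an isomorphism, and in particular a cohomological one.

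The principal obstacle is the combined $\Gamma$-monodromy and sign bookkeeping in the preceding steps, since each neck-crossing Reeb chord admits $|\Gamma|$ lifts to $T^*\fP$ and each bottom disk likewise lifts $|\Gamma|$-fold. Corollary~\ref{c:ActionCommute} and the universality of $\eP$ must be invoked repeatedly to show that, after passing to the $\Gamma$-equivariant quotient inherent in the Fukaya category with local systems, the total contribution matches the $\otimes_\Gamma$ structure on generators and the $ev_\Gamma$ map on differentials, with signs from Appendix on orientations cancelling in the correct pattern. The hypothesis $n \ge 3$ is essential both for ruling out closed-string bubbles in the bottom level (Lemma~\ref{l:noclosedstringbubble}) and for ensuring that the unique index-zero Reeb chord of Lemma~\ref{l:IndexChangeChord} is the only neck-crossing asymptote that appears in rigid configurations.
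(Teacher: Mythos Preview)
Your overall strategy (neck-stretch along $\partial U$, split generators into those inside and outside $U$, match the resulting differentials with the mapping cone formula) is precisely the paper's. However, your description of how the SFT limit decomposes contains a real error that would make the matching step fail as written.

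You claim that for strips between two type (b) generators the limit is ``no-neck-crossing'' and that the contribution is a ``bottom Morse-type differential on one of the two tensor factors''. This is wrong: such a strip (the paper's Type (A1$'$)/(A2$'$)) has both endpoints $c_{\fp,\fq}$, $c_{\fp',\fq}$ inside $U$, but the strip itself must leave $U$ (the relevant Lagrangians inside $U$ are two cotangent fibers and a Dehn-twisted fiber, which meet only at the single points $c_{\fp,\fq}$, $c_{\fp',\fq}$). In the SFT limit it becomes a two-vertex building: a $J^-$-holomorphic \emph{triangle} in $T^*P$ with asymptotes $c_{\fp',\fq}$, $c_{\fp,\fq}$ and a single index-zero Reeb chord $x_{\fp',\fp}$, together with a $J^+$-disk in $SM^+$ bounded by $L_0$ alone with negative end $x_{\fp',\fp}$; see Lemma~\ref{l:stretchingStripA'}. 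Likewise the $\mu^1_\cF$-term on $hom_\cF(\eE^0,\eP)$ you want to match this against is itself computed with Lagrangians $L_0,P$, and under neck-stretching it breaks into the \emph{same} $J^+$-disk but a \emph{different} $J^-$-triangle $\eM^{J^-}(p';p,x_{\fp',\fp})$ with boundary on fibers and the zero section (Lemma~\ref{l:stretchingStripA}).

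So to conclude that these two differentials agree you must show that the two bottom-level triangle counts are equal, and similarly for Type (C)/(C$'$) the bottom-level triangle versus bigon counts. This is the paper's Theorem~\ref{t:localCount} and Corollary~\ref{c:countU}, proved by an auxiliary computation in an $A_3$ Milnor fiber; it is not automatic and is the crux of the whole argument. Your proposal does not mention it. You also do not address why there are no strips from a type (a) generator back to a type (b) generator (Type (D$'$), excluded by Lemma~\ref{l:A3Dtype}) nor why type (b) to type (b) strips cannot jump to a different pair of fibers (Type (A3$'$)); without these vanishings the identification would not even be a chain map.
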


We will only consider the case that $\eE^i=L_i$ are Lagrangians without local system.
The proof of the general case is identical except that the notations become more involved.
In slightly more geometric terms, we would like to directly construct a chain map $\iota$ from 
\begin{equation}
C_0:=Cone(CF(\eP, L_1)  \otimes_\Gamma CF(L_0,\eP) \xrightarrow{ev_\Gamma} CF(L_0,L_1)) 
\end{equation}
 to 
\begin{equation}
 C_1:=CF(L_0, \tau_P L_1)
\end{equation}
which induces isomoprhism on cohomology.

By applying a Hamiltonian perturbation, we assume that $L_0 \pitchfork L_1$ and each connected component of $L_i\cap U$ is a cotangent fiber in $U$.  
The cotangent fiber $T^*_{q}P \cap U$ has $|\Gamma|$ diffenrent lifts $\{T^*_{g\mathbf{q}}\fP\}_{g\in \Gamma} \cap \fU$ in $\fU$, where $\fU \subset T^*\fP$ is the universal cover of $U$.  
We assume the Dehn twist $\tau_P$ is supported inside $U$ and we have a commutative diagram:
\[
  \begin{tikzcd}
    \fU \arrow{r}{\tau_{\mathbf{P}}} \arrow{d}{ \pi} & \fU \arrow{d}{\pi} \\
    U \arrow{r}{\tau_P} & U
  \end{tikzcd}
\]
where $\pi:\fU \to U$ is the covering map.
As always, we assume that $L_0,L_1$ are equipped with $\mathbb{Z}-$gradings and spin structures.

\subsection{Correspondence of intersections}\label{s:Intersection}

We denote the set of generators in $C_0$ by $\cX(C_0)$, which is divided into two types $\cX_a(C_0)$ and $\cX_b(C_0)$:

\begin{itemize}
  \item $\cX_a(C_0)$: generators in $hom(\eP, L_1)\otimes_\Gamma hom(L_0,\eP)[1]$
  \item $\cX_b(C_0)$: generators in $hom(L_0,L_1)$
\end{itemize}

More precisely, $\cX_b(C_0)=L_0 \cap L_1$ and $\cX_a(C_0)$ is the set of elements
of the form $[\mathbf{q}^\vee\otimes g\mathbf{p} ]\sim [\mathbf{q}^\vee g\otimes \mathbf{p} ]\sim [(g^{-1}\mathbf{q})^\vee\otimes \mathbf{p}]$, where 
we are using the correspondence \eqref{e:LStoLift} and \eqref{e:LStoLiftvee}.
On the other hand, we denote $L_0\cap \tau_P L_1$ by $\cX(C_1)$ which is a set of generators for $C_1$.

Let $p \in L_0 \cap P$, $q \in L_1 \cap P$ and $\fp, \fq \in \fP$ be a lift of $p$ and $q$, respectively.  We also introduce the following notation
\begin{align}
\begin{split}
  &\fc_{\fp, \fq}:\text{ the unique intersection }T^*_\fp \fP \cap \tau_\fP (T^*_\fq \fP)\\
  &c_{\fp,\fq}:=\pi(\textbf{c}_{ \fp,\fq}) \text{, which is an intersection of $L_0\cap\tau_P L_1$}
\end{split}
\end{align}

\begin{lemma}\label{l:CorrIntersections}
 There is a grading-preserving bijection $\iota: \cX(C_0) \to \cX(C_1)$.
\end{lemma}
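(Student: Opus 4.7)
The idea is to decompose both $\cX(C_0)$ and $\cX(C_1)$ according to whether an intersection point lies outside or inside the Weinstein neighborhood $U$ of $P$. Outside $U$, $\tau_P$ acts trivially, so $L_0\cap\tau_PL_1\cap(M\setminus U)=L_0\cap L_1\cap(M\setminus U)$. The Hamiltonian perturbation assumption that the components of $L_i\cap U$ are disjoint cotangent fibers forces $L_0\cap L_1\subset M\setminus U$, so the identity map gives a grading-preserving bijection $\cX_b(C_0)=L_0\cap L_1 \to L_0\cap\tau_PL_1\cap(M\setminus U)$.

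Inside $U$, I will define $\iota([\fq^\vee\otimes\fp]):=c_{\fp,\fq}$. Well-definedness under the equivalence $[\fq^\vee\otimes g\fp]=[(g^{-1}\fq)^\vee\otimes\fp]$ reduces to the identity $c_{g\fp,\fq}=c_{\fp,g^{-1}\fq}$, which in turn follows from the $\Gamma$-equivariance of $\tau_\fP$: the relation $\tau_\fP\circ h=h\circ\tau_\fP$ for $h\in\Gamma$ gives $\fc_{h\fp,h\fq}=h\cdot\fc_{\fp,\fq}$, and projecting by $\pi$ yields $c_{h\fp,h\fq}=c_{\fp,\fq}$. Bijectivity over each pair $(p,q)\in(L_0\cap P)\times(L_1\cap P)$ is a counting argument: for fixed reference lifts $\fp,\fq$ the classes $\{[\fq^\vee\otimes g\fp]\}_{g\in\Gamma}$ form a $|\Gamma|$-element basis of $\hom(\eP,L_1)\otimes_\Gamma\hom(L_0,\eP)$ above $(p,q)$, and the free $\Gamma$-action on lifts in $\fU$ ensures the points $c_{g\fp,\fq}$ are pairwise distinct; surjectivity follows because any $c\in L_0\cap\tau_PL_1\cap U$ lifts to some $\fc\in T^*_{\fp'}\fP\cap\tau_\fP(T^*_{\fq'}\fP)\subset\fU$, hence $c=c_{\fp',\fq'}$.

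The grading check is the technical heart. By the cone convention \eqref{eq:mappingCone}, the degree of $\fq^\vee\otimes\fp$ in $C_0$ equals $|\fq^\vee|+|\fp|-1$, which must match the Maslov grading of $c_{\fp,\fq}$ in $CF(L_0,\tau_PL_1)$. Lifting to $\fU$, this reduces to a Maslov index computation in $T^*\fP$: with the canonical relative grading \eqref{eq:CanonicalGrading} one has $|\fp|=n$ (from $CF(T^*_\fp\fP,\fP)$) and $|\fq^\vee|=0$ (from $CF(\fP,T^*_\fq\fP)$), so the predicted degree is $n-1$; on the other side, the classical Maslov index of the intersection $\fc_{\fp,\fq}$ created by the Dehn twist along the Lagrangian sphere $\fP\cong S^n$ returns exactly $n-1$ for generic $\fp,\fq$. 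The grading shifts coming from the (non-canonical) grading functions on $L_0$ and $L_1$ enter with matching signs on the two sides of the identity, so the general case reduces to the canonical one. I expect the main obstacle to be the sign bookkeeping combining $|\fq^\vee|=n-|\fq|$ (Convention \ref{c:DualGenNotation}), the cone shift $-1$, and the Dehn-twist Maslov formula; each ingredient is individually routine, but their combination is where sign errors most easily creep in.
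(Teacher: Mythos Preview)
Your proof is correct and follows essentially the same approach as the paper: decompose into the $\cX_b$ and $\cX_a$ parts, use the identity outside $U$, define $\iota(\fq^\vee\otimes\fp)=c_{\fp,\fq}$ inside $U$, verify well-definedness via $\Gamma$-equivariance of $\tau_\fP$, and reduce the grading check to the cover $\fU\cong T^*S^n$. Your treatment is in fact more explicit than the paper's on two points---the bijectivity counting argument and the Maslov computation $|\fq^\vee|+|\fp|-1=0+n-1=n-1=|c_{\fp,\fq}|$---whereas the paper simply cites the $P=S^n$ case as well-known from Seidel's long exact sequence.
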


\begin{proof}
First, there is an obvious graded identification between $\cX_b(C_0)$ and the intersections of $L_0\cap \tau_P L_1$ outside $U$,
so we only need to explain how to define $\iota|_{\cX_a(C_0)}$.


We define $\iota|_{\cX_a(C_0)}$ by
\begin{equation}\label{e:genCorr}
     \iota|_{\cX_a(C_0)}: \mathbf{q}^\vee\otimes \mathbf{p} \mapsto  c_{\fp,\fq} 
\end{equation}
This map is well-defined because 
\begin{align}
 \iota(\mathbf{q}^\vee g^{-1} \otimes g\mathbf{p})=\iota((g\mathbf{q})^\vee\otimes g\mathbf{p})=\pi(\textbf{c}_{ g\fp,g\fq})=  c_{\fp,\fq}
\end{align}
The last equality comes from the equivariance of $\tau_\fP$.
It is clear that $\iota|_{\cX_a(C_0)}$ is a bijection from $\cX_a(C_0)$ to  the intersections of $L_0\cap \tau_P L_1$ inside $U$.

To see that $\iota|_{\cX_a(C_0)}$ preserves the grading, we only need to observe that $\pi$ interwines the canonical trivialization of 
$(\Lambda^{\otimes top}_{\mathbb{C}}(T^*\fU))^{\otimes 2}$ and $(\Lambda^{\otimes top}_{\mathbb{C}}(T^*U))^{\otimes 2}$ so the computation reduces to the case 
that $P=S^n$, which is well-known (see e.g. \cite{Se03}).
\end{proof}

Using Lemma \ref{l:CorrIntersections}, we define $\cX_a(C_1)=\iota(\cX_a(C_0))$ and $\cX_b(C_1)=\iota(\cX_b(C_0))$.
We summarize our notation in Figure \ref{fig:Notation}.


\begin{figure}
  \centering
  \includegraphics[]{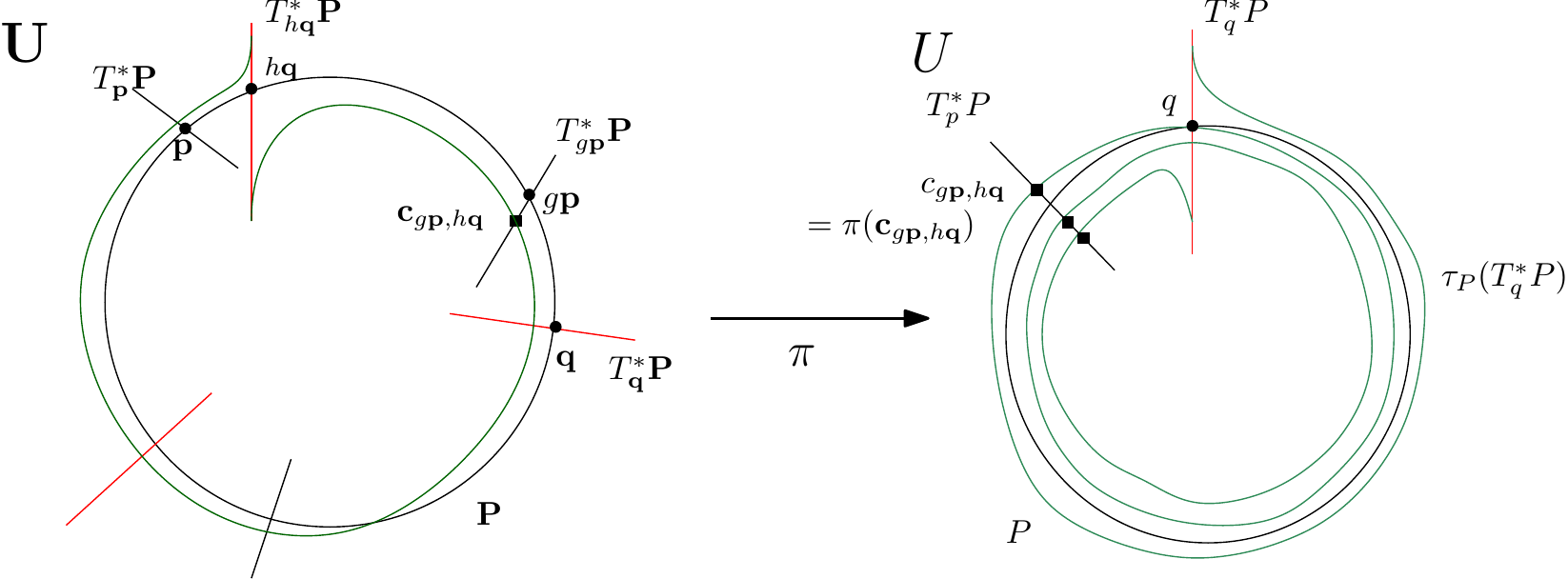}
  \caption{Generator correspondence between $C_0$ and $C_1$}
  \label{fig:Notation}
\end{figure}

\subsection{Overall strategy} 
\label{sub:correspondence_of_differentials}

The differentials in $C_0$ can be divided into four types.

\begin{itemize}
  \item Type (A1): differentials in $hom(L_0,\eP)$, i.e. pseudo-holomorphic strips in 
  $\eM( \mathbf{p'} ; \mathbf{p})$
  \item Type (A2): differentials in $hom(\eP, L_1)$, i.e. pseudo-holomorphic strips in 
  $\eM((\mathbf{q}')^\vee ; \mathbf{q}^\vee)$ 
  \item Type (B): differentials in $hom(L_0, L_1)$, i.e. pseudo-holomorphic strips in 
  $\eM(x_0;x_1)$ 
  \item Type (C): differentials from the evaluation map, i.e. pseudo-holomorphic triangles in 
  $\eM(x; \mathbf{q}^\vee, \mathbf{p})$ 
\end{itemize}

\begin{figure}[tb]
  \centering
  \includegraphics[]{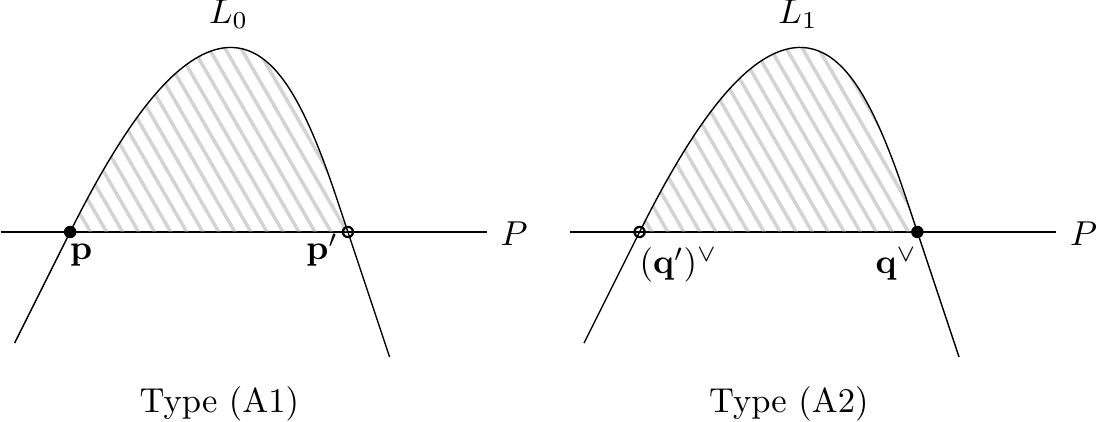}
  \includegraphics[]{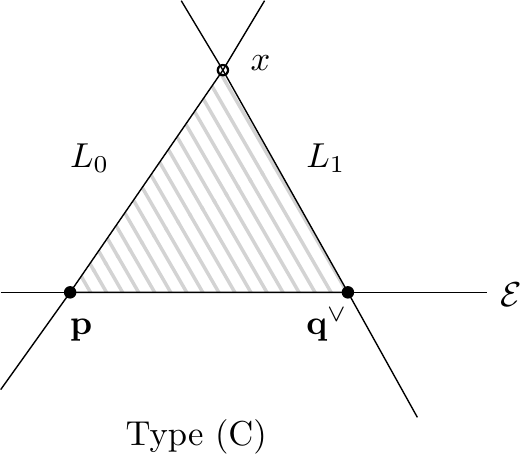}
  \caption{Types of holomorphic curves in $C_0$}
  \label{fig:C0}
\end{figure}

For $C_1$, we divide the differentials similarly, using correspondence of generators $\iota$.  Concretely, we have:

\begin{itemize}
  \item Type (A1'): pseudo-holomorphic strips in $\eM(c_{\fp',\fq}; c_{\fp,\fq})$;
  \item Type (A2'): pseudo-holomorphic strips in $\eM(c_{\fp,\fq'}; c_{\fp,\fq})$;
  \item Type (A3'): pseudo-holomorphic strips in $\eM(c_{\fp',\fq'}; c_{\fp,\fq})$ that are not in Type(A1') and (A2');
  \item Type (B'): pseudo-holomorphic strips in $\eM(x_0;x_1)$; 
  \item Type (C'): pseudo-holomorphic strips in $\eM(x; c_{\fp,\fq})$;
  \item Type (D'): pseudo-holomorphic strips in $\eM(c_{\fp,\fq};x)$;
\end{itemize}
where $x,x_0,x_1 \in \cX_b(C_1)$.

\begin{figure}[tb]
  \centering
  \includegraphics[]{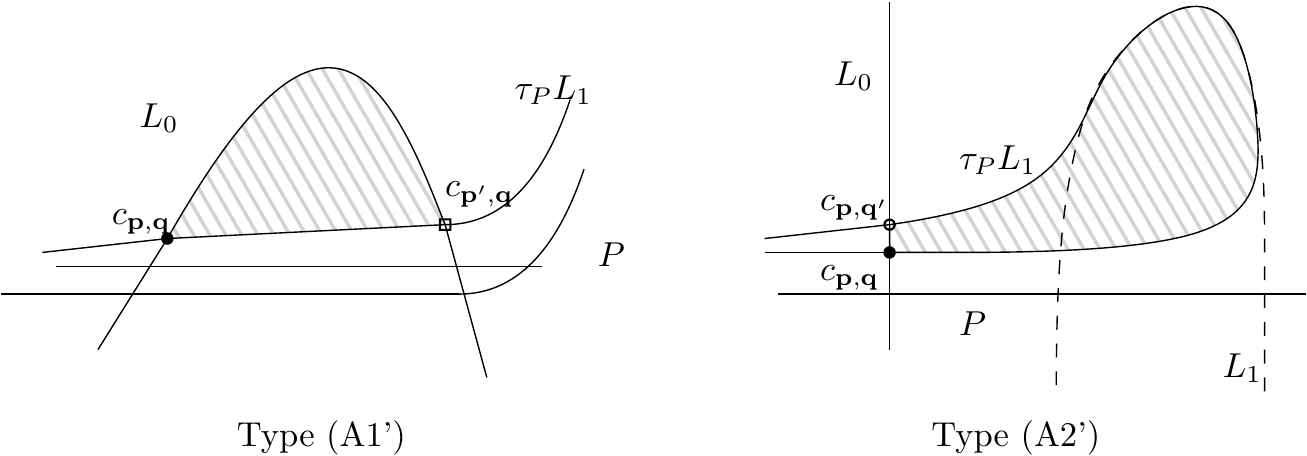}
  \includegraphics[]{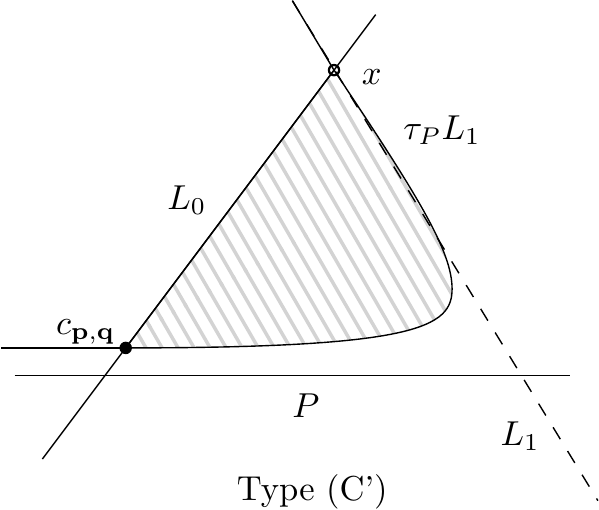}
  \caption{Types of holomorphic curves in $C_1$}
  \label{fig:C1}
\end{figure}


By the discussion in Section \ref{sub:gluing_in_sft}, we know that for an appropriate choice of $\{J^\tau\}$
and $\tau \gg 1$, all the rigid $J^\tau$-holomorphic polygons in the moduli above are transversally cut out and 
they are in bijection to the corresponding holomorphic buildings.
By studying the holomorphic buildings, we will show that there are bijective correspondences
\begin{align}
 &\eM^{J^\tau}( \mathbf{p'} ; \mathbf{p}) \simeq \eM^{J^\tau}(c_{\fp',\fq}; c_{\fp,\fq}) \text{ for all }\fq; \\
 &\eM^{J^\tau}((\mathbf{q}')^\vee ; \mathbf{q}^\vee) \simeq \eM^{J^\tau}(c_{\fp,\fq'}; c_{\fp,\fq}) \text{ for all }\fp;\\
 &\eM^{J^\tau}(x_0;x_1) \simeq \eM^{J^\tau}(x_0;x_1); \label{eq:typeB}\\
 &\eM^{J^\tau}(x; \mathbf{q}^\vee, \mathbf{p}) \simeq \eM^{J^\tau}(x; c_{\fp,\fq}); \\
 & \text{Type (A3') and (D') are empty with respect to $J^\tau$}.
\end{align}
where the two sides of \eqref{eq:typeB} are with respect to boundary conditions $(L_0,L_1)$ and $(L_0, \tau_P(L_1))$, respectively.
In other words, for $\tau \gg 1$, $\iota:C_0 \to C_1$  is an isomorphism which clearly implies Proposition \ref{p:CohLevelIso}.

In the following subsections, we ignore the sign and only conisder the case that $\cchar(\K)=2$.
The complete proof of Proposition \ref{p:CohLevelIso}, where orientation of moduli is taken into account, will be given in Appendix \ref{sec:orientations}.

\subsection{Neck-stretching limits of holomorphic strips and triangles} 
\label{sub:neck_stretching_limits_of_holomoprhic_strips_and_triangles}


In this section, we will list all possible holomorphic buildings $u_{\infty}=\{u_v\}_{v \in V(\eT)}$ that arises as the limit (when $\tau$ goes to infinity) 
of curves in the moduli discussed in Section \ref{sub:correspondence_of_differentials}.
By Proposition \ref{p:nosidebubble}, we know that $u_\infty$ satisfies the following conditions
\begin{align}
  \label{eq:SFTlimitCondition}
 \begin{array}{ll}
  (i)\text{ The total level }n_\eT=1, \\
  (ii) \text{ }\virdim(u_v)=0 \text{ for all }v. \\
  (iii)\text{ All compact edges in $\eT$ correspond to Reeb chords}.\\
  (iv)\text{ If }v \in V^{\partial}, \text{ then $l_\eT(v)=1$ and $u_v$ has exactly one boundary asymptote.}
 \end{array}
\end{align}
Therefore, we assume \eqref{eq:SFTlimitCondition} hold throughout this section.

\begin{lemma}\label{l:Maslov2}
 In the case (iv) of \eqref{eq:SFTlimitCondition}, let $v \in V^{\partial}$ and $x$ be the negative asymptote of $u_v$. Then $|x|=1$.
\end{lemma}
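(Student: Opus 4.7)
The plan is to read off $|x|$ directly from the virtual dimension formula. By Proposition \ref{p:nosidebubble} together with the hypothesis $v \in V^{\partial}$, the domain $\Sigma_v$ is a disk with exactly one boundary puncture (asymptotic to the Reeb chord $x$) and no interior punctures (since $V^{int}=\emptyset$), the boundary label is a single Lagrangian $L$, and the proof of Proposition \ref{p:nosidebubble} in fact forces $\virdim(u_v)=0$. Moreover, $u_v$ is transversally cut out by Proposition \ref{p:TopRegular}.

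Applying Lemma \ref{l:VirdimFormula2} with $r^-=1$, $r^+=0$ and $s^\pm=0$, the formula collapses to
\begin{align*}
 0=\virdim(u_v)=n(1-1)+(\iota(x)+\mbb(x))-0+(1+0-3)=\iota(x)+\mbb(x)-2.
\end{align*}
The contact form $\alpha$ on $\partial U$ has been chosen via Corollary \ref{c:ReebDynamicMorsified} so that every Reeb chord in $\cX^c_T$ is non-degenerate; in particular, the Morse-Bott family $S_x$ containing $x$ is $0$-dimensional, so $\mbb(x)=\dim(S_x)+1=1$. Substituting back yields $\iota(x)=|x|=1$, as asserted.

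The entire argument is essentially one line of index arithmetic; the only conceptual point worth confirming is that the term $r^-+r^+-3=-2$ is the correct contribution to the dimension formula when the domain is a once-punctured disk rather than a polygon with $d+1\ge 3$ punctures. This is consistent with the $2$-dimensional affine automorphism group of $\bH$ fixing the puncture at infinity, by which one implicitly quotients, and it causes no issue here. Observe also that there is no tension with Lemma \ref{l:IndexChangeChord}: the bound $|x|\le 0$ there is phrased in canonical relative grading for chords between cotangent fibers, whereas the grading of $x$ as a chord of $L$ can differ from the canonical grading by the difference of the integer grading shifts of the two cosphere components of $L\cap\partial U$ at the endpoints of $x$. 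The lemma merely asserts that rigid $u_v$ only occur for those specific Reeb chords for which that shift produces $|x|=1$.
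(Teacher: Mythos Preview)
Your proof is correct and follows essentially the same route as the paper: apply the virtual dimension formula of Lemma~\ref{l:VirdimFormula2} with $r^-=1$, $r^+=0$, use $\virdim(u_v)=0$, and invoke non-degeneracy of the chord to get $\mbb(x)=1$. The additional remarks you include about the automorphism group of the once-punctured disk and the compatibility with Lemma~\ref{l:IndexChangeChord} are reasonable clarifications but not needed for the argument itself.
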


\begin{proof}
  By  Lemma \ref{l:VirdimFormula2} and $\virdim(u_v)=0$, we have
 \begin{align}
   0=\virdim(u_v)=|x|+\mbb(x)-2=|x|-1
 \end{align}
Therefore, $|x|=1$.

\end{proof}

\begin{lemma}\label{l:noUcurve}
 If $l_\eT(v)=0$, then $u_v$ has at least one asymptote that is not a Reeb chord.
\end{lemma}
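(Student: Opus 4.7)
The plan is to argue by contradiction using the virtual dimension formula for $u_v$ together with the index bound on Reeb chords supplied by the Morsified contact form $\alpha$. Suppose for contradiction that every asymptote of $u_v$ is a Reeb chord. Recall from the discussion after Theorem~\ref{t:SFTcompactness} that a compact edge of $\eT$ joining two vertices at the same level must correspond to a Lagrangian intersection point, never a Reeb chord. Hence every Reeb chord asymptote of $u_v$ must come from an edge travelling strictly upward from level $0$ to level $1$, and so is a positive asymptote. Thus $u_v$ has $r^{-}=0$ negative asymptotes and some number $r^{+}\ge 0$ of positive Reeb chord asymptotes $\{x_j^{+}\}_{j=1}^{r^{+}}$, each of which lies in $\cX^c_T$.

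Next I would apply Lemma~\ref{l:VirdimFormula3} to $u_v \colon \Sigma_v\to SM^{-}=T^*P$. Proposition~\ref{p:nosidebubble} gives $s^{\pm}=0$, and since $T(T^*P)$ has trivial canonical bundle the relative first Chern class term vanishes. The formula therefore collapses to
\begin{align*}
\virdim(u_v) \;=\; (n-3) \;-\; \sum_{j=1}^{r^{+}} \iota(x_j^{+}) \;+\; r^{+}.
\end{align*}
Corollary~\ref{c:ReebDynamicMorsified}(2) says that $\iota(x_j^{+})\le 0$ in the canonical relative grading for every $j$, and hence
\begin{align*}
\virdim(u_v) \;\ge\; (n-3)+r^{+}.
\end{align*}
Combining this with $\virdim(u_v)=0$, which is condition~(ii) of \eqref{eq:SFTlimitCondition}, I obtain the inequality $r^{+}\le 3-n$.

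For $n\ge 3$ this forces $r^{+}=0$. But then $u_v$ carries no boundary asymptotes at all; combined with $s^{\pm}=0$ this means $\Sigma_v$ is a closed disk, so $u_v$ is a holomorphic disk in the exact symplectic manifold $T^*P$ with Lagrangian boundary on a union of cotangent fibers and conical extensions thereof, and must be constant. Then no edge of $\eT$ is incident to $v$, which contradicts either the connectedness of $\eT$ (when the building has more than one vertex) or the presence of the $d+1$ semi-infinite edges carrying the original asymptotes $x_0,\dots,x_d$ (when $\eT=\{v\}$, in which case at least one $x_j$ is not a Reeb chord and so the lemma holds trivially). The only step where the hypothesis $n\ge 3$ is used is the final inequality. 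I do not foresee a genuine obstacle: every ingredient, namely the virtual dimension formula of Lemma~\ref{l:VirdimFormula3}, the index bound of Corollary~\ref{c:ReebDynamicMorsified}, and the structural result Proposition~\ref{p:nosidebubble}, has already been established, and the argument is essentially a bookkeeping combination of these.
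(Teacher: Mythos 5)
Your proof is correct and follows essentially the same approach as the paper: apply the virtual dimension formula in the canonical relative grading and use the index bound $|x_j^+|\le 0$ from Corollary~\ref{c:ReebDynamicMorsified} to force $\virdim(u_v)>0$ when $n\ge 3$. The paper's version is a touch shorter because it observes directly that $r^+\ge 1$ (the vertex $v$ must be adjacent to at least one edge of the tree, which under your hypotheses is a positive Reeb chord), so $\virdim(u_v)\ge n-3+r^+\ge n-2>0$ at once, making your separate treatment of $r^+=0$ unnecessary.
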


\begin{proof}
 Suppose not. Let $y_1,\dots,y_k$ be the asymptotes of $u_v$ which are all positive Reeb chord.
 Notice that the shift of gradings for any individual boundary condition does not affect the virtual dimension of $u_v$.  
Therefore we can use the canonical relative grading to compute the virtual dimension of $u_v$.
 By Lemma \ref{l:VirdimFormula2} and Corollary \ref{c:ReebDynamicMorsified}, we have
 \begin{align}
  \virdim(u_v)=n- \sum_{j=1}^k  |y_j| -(3-k) \ge n-3+k \ge n-2 >0
 \end{align}
which contradicts to the assumption \eqref{eq:SFTlimitCondition} that $\virdim(u_v)=0$.
\end{proof}

\begin{lemma}\label{l:A3Dtype}
 Every generator $c_{\fp,\fq} \in CF(T^*_p P, \tau_P(T^*_qP))$ satisfies $|c_{\fp,\fq}|=n-1$ with respect to the canonical relative grading.
 Moreover, if  $c_{\fp,\fq}$ is the only asymptote of a non-constant $J^-$-holomorphic map $u_v: \Sigma_v \to SM^-=T^*P$ that is not a Reeb chord, then 
 $c_{\fp,\fq}$ must be positive as an asymptote of $u_v$.
\end{lemma}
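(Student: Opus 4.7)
The first assertion follows from a lifting argument. Because the covering map $\pi\colon T^*\fP\to T^*P$ is chosen so that it intertwines the canonical trivializations of $(\Lambda^{top}_{\mathbb{C}}T^*)^{\otimes 2}$, it preserves Maslov gradings of transverse Lagrangian intersections. Hence the grading of $c_{\fp,\fq}$ agrees with that of its unique lift $\fc_{\fp,\fq}\in T^*_\fp\fP\cap \tau_\fP(T^*_\fq\fP)$ inside $T^*\fP=T^*S^n$. There the canonical relative grading of the pair (cotangent fibers, zero section) is set up as in \eqref{eq:CanonicalGrading}, and Seidel's computation for the model Dehn twist along $S^n$ (see \cite{Se03}) gives $|\fc_{\fp,\fq}|=n-1$.

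For the second claim, suppose for contradiction that $c_{\fp,\fq}$ is a negative asymptote. In the setting where the lemma is used, $u_v$ is a component of the SFT building in Proposition \ref{p:nosidebubble}, so $\virdim(u_v)=0$ and $u_v$ is transversely cut out by Proposition \ref{p:BottomRegular}. All Reeb-chord asymptotes $y$ of $u_v$ satisfy $|y|\le 0$ and $\mbb(y)=1$ by Corollary \ref{c:ReebDynamicMorsified}, while $\mbb(c_{\fp,\fq})=0$ and $|c_{\fp,\fq}|=n-1$ by Part 1. Writing $a^-$ for the number of Reeb-chord negative asymptotes and $b=r^+$ for the number of positive ones, Lemma \ref{l:VirdimFormula2} gives
\begin{align*}
0=\virdim(u_v)=-(n-2)a^-+(n-3)+b+\sum|y^-|-\sum|y^+|,
\end{align*}
so in particular $\sum|y^+|-\sum|y^-|=(n-3)+b-(n-2)a^-$. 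Since each $|y|$ lies in $\{0,-(n-1),-2(n-1),\dots\}$, the integer solutions of this equation with $n\ge 3$ form a short finite list, each involving Reeb chords of specific Maslov indices $-k(n-1)$.

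To rule these configurations out, lift $u_v$ to $\tilde u_v\colon\Sigma_v\to T^*\fP$ (possible since $\Sigma_v$ is simply connected). In $T^*\fP=T^*S^n$ with the standard primitives, $A(\fc_{\fp,\fq})$ equals the length $d_\fP(\fp,\fq)<\pi$ of the shortest geodesic between the chosen lifts, and a Reeb chord $\tilde y$ of index $-k(n-1)$ from $\Lambda_{\fx_1}$ to $\Lambda_{\fx_2}$ has length $L(\tilde y)=2\lceil k/2\rceil\pi+(-1)^k d_\fP(\fx_1,\fx_2)$. Since $\tilde u_v$ is non-constant and maps into a Liouville domain with exact Lagrangian boundary, Lemma \ref{l:actionBound} yields the strict energy inequality
\begin{align*}
\sum L(\tilde y^+)\;>\;d_\fP(\fp,\fq)+\sum L(\tilde y^-).
\end{align*}
Tracing the boundary of $\tilde u_v$ around $\Sigma_v$, the sequence of base points of the cotangent fibers visited—connected successively by the geodesics underlying $\fc_{\fp,\fq}$ and the $\tilde y^\pm$—forms a closed broken geodesic chain on $S^n$. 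Applying the triangle inequality $d_\fP(\fa,\fc)\le d_\fP(\fa,\fb)+d_\fP(\fb,\fc)$ around this chain, and pairing the $(-1)^k d_\fP$-contributions coming from the length formula above, one obtains the reverse inequality $\sum L(\tilde y^+)\le d_\fP(\fp,\fq)+\sum L(\tilde y^-)$, producing the desired contradiction.

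The principal obstacle is keeping the triangle-inequality bookkeeping accurate in the presence of multiple Reeb-chord asymptotes of different indices and of arcs lying on $\tau_\fP(T^*_\fx\fP)$, which project to a geodesic sweeping across the zero section of $\fP$; the correct starting and ending lifts on such arcs must be identified so that the $(-1)^k$ terms in the length formula cancel as needed. This is manageable case by case, handling the lowest-index configuration $(b,a^-,\sum|y^\pm|)=(1,1,0)$ first—where $L(\tilde y^+)=d_\fP(\fq,\fX)$ and $L(\tilde y^-)=d_\fP(\fX,\fp)$ immediately violate the triangle inequality—and then extending inductively to the higher-index configurations permitted by the $\virdim=0$ equation.
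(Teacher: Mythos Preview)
Your proof of the first assertion is essentially the same as the paper's: lift to $T^*\fP=T^*S^n$ and invoke the known grading computation for the model Dehn twist.

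For the second assertion, however, you have overlooked the one fact that makes the paper's argument a one-line computation: $SM^-=T^*P$ is the positive completion of a Liouville domain and has \emph{only} a positive cylindrical end. Consequently every Reeb-chord asymptote of $u_v$ is automatically positive, i.e.\ $a^-=0$ in your notation. Plugging $a^-=0$ into your own virtual-dimension formula gives
\[
0=(n-3)+b-\sum|y^+|\;\ge\;(n-3)+b\;\ge\;n-2\;>\;0,
\]
since each $|y^+|\le 0$ by Corollary~\ref{c:ReebDynamicMorsified} and $b\ge 1$ (a non-constant curve in $T^*P$ must have at least one positive asymptote, by Lemma~\ref{l:actionBound}). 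This is exactly the paper's proof.

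Because you allowed the impossible case $a^->0$, you were led to an elaborate action/length/triangle-inequality argument that is both unnecessary and, as written, incomplete: the ``short finite list'' of solutions is not actually finite once $a^-$ is unconstrained, and the inductive case-by-case bookkeeping you describe is only sketched. The correct route is simply to observe that negative Reeb-chord asymptotes cannot occur in $SM^-$.
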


  \begin{proof}
       To see that $|c_{\fp,\fq}|=n-1$, it suffices to show that $|\fc_{\fp,\fq}|=n-1$.
       One can compute it directly by noting that $\tau_\fP (T^*_\fq \fP)=\fP[1] \# T^*_\fq \fP$, where $\fP[1]$ is the grading shift of $\fP$ by $1$
       and $\#$ denotes the graded Lagrangian surgery at the point $\fq$ (see \cite{SeGraded} or \cite{MW15}).
       Alternatively, one can see it using the Dehn twist exact sequence \cite{Se03}
       \begin{align}
        0 \to HF^k(T^*_\fp \fP, \tau_\fP(T^*_\fq\fP)) \to  \oplus_{a+b-1=k} HF^a(\fP, T^*_\fq\fP)\otimes HF^b(T^*_\fp \fP, \fP) \to 0
       \end{align}
and the fact that the second non-trivial term is non-zero only when $a=0$ and $b=n$.     

On the other hand, if $c_{\fp,\fq}$ is a negative asymptote and the remaining asymptotes are denoted by $y_1,\dots,y_k$, we would have (computed in canonical relative grading)
\begin{align}
 \virdim(u_v)=|c_{\fp,\fq}| - \sum_{i=1 }^k |y_i| - (2-k) \ge n-2>0
\end{align}
which contradicts to the assumption \eqref{eq:SFTlimitCondition} that $\virdim(u_v)=0$.
\end{proof}

Now, we can describe the SFT limits of various moduli.

\begin{lemma}[Type (A1)]\label{l:stretchingStripA}
 Let $u_{\infty}=(u_v)_{v \in V(\eT)}$ be a non-empty SFT limit of curves in $\eM^{J^\tau}( \mathbf{p'} ; \mathbf{p})$.
 Then $\eT$ consists of exactly two vertices $v_1,v_2$ and 
\begin{itemize}
  \item $u_{v_1}$ is a $J^-$-holomorphic triangle with negative asymptote $p':=\pi(\fp')$ and positive asymptotes $x,p$
  where $x$ is a Reeb chord with $|x|=0$ in the canonical relative grading; 
  \item $v_2 \in V^{\partial}$ so, by Lemma \ref{l:Maslov2}, $u_{v_2}$ is a $J^+$-holomorphic curve with one negative asymptote $x$ such that $|x|=1$ in the actual grading.
\end{itemize}
\end{lemma}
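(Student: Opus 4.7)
The plan is to combine the tight tree structure from Proposition \ref{p:nosidebubble} with a topological observation about the $P$-boundary of $u_k$ and a telescoping grading identity along the resulting fiber chain. Invoking Proposition \ref{p:nosidebubble} together with Lemma \ref{l:Maslov2}, the limit $u_\infty$ has $n_\eT = 1$, $V^{int} = \emptyset$, all compact edges correspond to Reeb chords, and each $v \in V^\partial$ is a rigid $J^+$-holomorphic disk at the top level with a unique negative Reeb chord asymptote $y$ satisfying $|y|=1$. Because $\mathbf{p}$ and $\mathbf{p}'$ are intersections of $L_0 \cap P$ lying inside $U$, both must appear as asymptotes of bottom-level components.

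Next I would observe that the $P$-labeled boundary arc of $u_k$ joining $\mathbf{p}$ to $\mathbf{p}'$ lies entirely inside $U$, and since $P \cap \partial U = \emptyset$ it cannot be broken by a Reeb chord puncture under neck-stretching; hence it persists as a single $P$-labeled boundary arc of one bottom-level component $u_{v_1}$, whose asymptotes at its two endpoints are $\mathbf{p}$ and $\mathbf{p}'$. Combined with Lemma \ref{l:noUcurve}, which forces every bottom component to carry at least one non-Reeb asymptote, this identifies $u_{v_1}$ as the unique bottom-level component, so every other vertex of $\eT$ lies in $V^\partial$ at the top level.

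It remains to show that $u_{v_1}$ has exactly one Reeb chord asymptote and to compute its grading. The fiber-side boundary of $u_{v_1}$ is a chain of arcs on cotangent fibers of $L_0 \cap U$ separated by positive Reeb chord punctures $x_1, \dots, x_k$; writing $q_0 := p'$, $q_k := p$ and $q_1, \dots, q_{k-1}$ for the fibers visited along the chain, and letting $m_q$ denote the shift of $\theta_{L_0}|_{L_0 \cap T^*_qP}$ from the canonical fiber grading, the Maslov index formula \eqref{eq:MaslovGrading} yields
\[
  |\mathbf{p}'| - |\mathbf{p}| = m_p - m_{p'}, \qquad |x_i| = |x_i|_{can} + (m_{q_i} - m_{q_{i-1}}).
\]
Summing the second relation and telescoping along the chain gives $\sum_{i=1}^k |x_i| = \sum_{i=1}^k |x_i|_{can} + (m_p - m_{p'}) = \sum_{i=1}^k |x_i|_{can} + 1$, using the rigid-strip condition $|\mathbf{p}'| - |\mathbf{p}| = 1$. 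Plugging in $|x_i|=1$ (from Lemma \ref{l:Maslov2}) and $|x_i|_{can} \leq 0$ (from Corollary \ref{c:ReebDynamicMorsified}(2), which applies since the action bound ensures $L(x_i) < T$) yields $k - 1 \leq 0$, i.e., $k \leq 1$. Since $\mathbf{p} \neq \mathbf{p}'$ have distinct images $p \neq p'$ (each cotangent fiber meets $P$ in a single point), the chain must have at least one fiber transition, forcing $k \geq 1$; thus $k = 1$ and $|x_1|_{can} = 0$, giving exactly the claimed configuration.

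The main obstacle I anticipate is setting up the grading-shift formula for Reeb chords between distinct cotangent fibers of $L_0 \cap U$ cleanly, which requires careful bookkeeping of how $\theta_{L_0}$ differs from the canonical fiber grading across the various components of $L_0 \cap U$; once that is in place, the topological persistence of the $P$-arc and the telescoping identity close the argument.
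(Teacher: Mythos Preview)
Your proof is correct and follows the same strategy as the paper: both use connectedness of the $P$-labeled boundary arc to place $p,p'$ on a single bottom component $u_{v_1}$, identify the remaining vertices as side bubbles in $V^\partial$, and then run a grading count forcing exactly one Reeb chord asymptote. The only difference is in the last step: the paper computes $\virdim(u_{v_1})=0$ directly in the canonical relative grading (where $|p'|=|p|$ and each $|y_j|\le 0$, yielding $0\ge k-1$), whereas you reach the equivalent inequality by telescoping the fiber-by-fiber grading shifts $m_q$ between the actual $L_0$-grading and the canonical one---the two arguments encode the same index identity.
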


\begin{figure}[tb]
  \centering
  \includegraphics[scale=1.2]{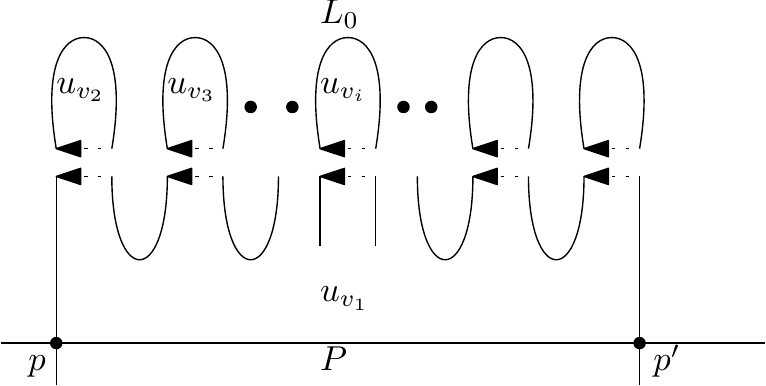}
  \caption{Multiple side bubbles}
  \label{fig:side}
\end{figure}

\begin{proof}
 Notice that, by the boundary condition $P$, $p$ and $p'$ must be asymptotes of the same $u_v$.
 We call it $u_{v_1}$.
 We label the other vertices of $\eT$ by $v_2, \dots, v_k$ for some $k \ge 0$.
 By boundary condition again, we know that $v_j \in V^{\partial}$ for $j > 1$.
 By \eqref{eq:SFTlimitCondition}, we have $l_\eT(v_j)=1$ for $j >1$.
 Moreover, all $v_j$ are adjacent to $v_1$ because $u_{v_j}$ has a negative asymptote (see Figure \ref{fig:side}).
By Lemma \ref{l:VirdimFormula2} and Corollary \ref{c:ReebDynamicMorsified} again,
 \begin{align}
   0=\virdim(u_{v_1})=|p'|-|p|- \sum_{j=1}^k |y_j|-(1-k) \ge k-1
 \end{align}
so $k=0,1$. However, $k \neq 0$ by boundary condition.
As a result, $k=1$ and we denote $y_1$ by $x$.

Finally, to compute $|x|$ in the canonical relative grading, we just need to make a grading shift $|p'|-|p|=1$ to $T_{p'}^*P$.
It gives $|x|=0$ in the canonical relative grading.
 
\end{proof}

Similarly, we have

\begin{lemma}[Type (A1')]\label{l:stretchingStripA'}
 Let $u_{\infty}=(u_v)_{v \in V(\eT)}$ be a non-empty SFT limit of curves in $\eM^{J^\tau}( c_{\mathbf{p'},\fq} ; c_{\mathbf{p},\fq})$.
 Then $\eT$ consists of exactly two vertices $v_1,v_2$ and 
\begin{itemize}
  \item $u_{v_1}$ is a $J^-$-holomorphic triangle with negative asymptote $c_{\mathbf{p'},\fq}$ and positive asymptotes $x,c_{\mathbf{p},\fq}$
  where $x$ is a Reeb chord with $|x|=0$ in the canonical relative grading; 
  \item $v_2 \in V^{\partial}$ so $u_{v_2}$ is a $J^+$-holomorphic curve with one negative asymptote $x$ such that $|x|=1$ in the actual grading.
\end{itemize}
\end{lemma}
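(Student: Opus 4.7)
The plan is to adapt the proof of Lemma \ref{l:stretchingStripA} essentially verbatim. The only structural change is that the $L_1$-side boundary of the strip now maps to $\tau_P L_1$ rather than $L_1$, which near $c_{\fp,\fq}$ and $c_{\fp',\fq}$ agrees with the connected Lagrangian $\tau_P(T^*_q P) \subset U$. The asymptotes $c_{\fp,\fq}$ and $c_{\fp',\fq}$ are Lagrangian intersection points, so by Proposition \ref{p:nosidebubble} they must appear as boundary asymptotes of vertices in $V^{core}$ at level $l_\eT = 0$.

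First, I would argue that both asymptotes are contained in the same bottom-level core component $u_{v_1}$. By Proposition \ref{p:nosidebubble}, all compact edges of $\eT$ correspond to Reeb chords, so the $\tau_P L_1$-side boundary arc of the original strip (connecting $c_{\fp',\fq}$ to $c_{\fp,\fq}$) cannot be split by any Lagrangian intersection asymptote. Moreover, each level-$1$ vertex lies in $V^\partial$ with a single boundary asymptote, hence every top-level disk bubble attached along this arc returns to the same bottom-level component through its one Reeb-chord asymptote, so the boundary arc never leaves $u_{v_1}$. The remaining vertices $v_2, \dots, v_k$ all lie in $V^\partial$ with $l_\eT = 1$ and a single negative Reeb-chord asymptote (by \eqref{eq:SFTlimitCondition}(iv)), and each is adjacent to $v_1$ in $\eT$ by tree connectivity.

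Next, I would bound $k$ by a dimension count. Since the virtual dimension is invariant under a grading shift of any single Lagrangian boundary condition, I compute $\virdim(u_{v_1})$ in the canonical relative grading. By Lemma \ref{l:A3Dtype}, $|c_{\fp,\fq}| = |c_{\fp',\fq}| = n-1$ in this grading, and by Corollary \ref{c:ReebDynamicMorsified} each Reeb-chord asymptote $y_j$ satisfies $|y_j| \le 0$. Applying Lemma \ref{l:VirdimFormula2} with $r^- = 1$ and $r^+ = k$ gives
\[
0 = \virdim(u_{v_1}) = (n-1) - (n-1) - \sum_{j=1}^{k-1} |y_j| - (1-k) \ge k - 1.
\]
Thus $k \le 1$, while non-emptiness of the SFT limit together with the boundary condition of the original strip forces at least one $V^\partial$ bubble, giving $k = 1$. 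Equality in the dimension bound then forces $|y_1| = 0$ in the canonical relative grading; translating back to the actual grading via the standard unit-shift (as in the last step of the A1 proof) yields $|y_1| = 1$.

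The only genuine subtlety, and the step I expect to need the most care, is the same-component argument in Step 1: one must rule out the possibility that $c_{\fp,\fq}$ and $c_{\fp',\fq}$ are carried by distinct bottom-level components $v_1 \neq v_1'$ joined by a path of level-$1$ vertices in $\eT$. This is clean here because $V^\partial$ components at level $1$ have exactly one Reeb-chord asymptote (by \eqref{eq:SFTlimitCondition}(iv)) and there are no intermediate non-trivial core components (since $n_\eT = 1$), so there is simply no mechanism in $\eT$ for linking two distinct level-$0$ core components. Once this is secured, the rest is a direct transcription of the A1 argument, with $p, p'$ replaced by $c_{\fp,\fq}, c_{\fp',\fq}$ and the pair of canonical gradings $(0,0)$ replaced by $(n-1, n-1)$, leaving the final arithmetic unchanged.
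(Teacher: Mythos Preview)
Your overall strategy matches the paper's, but Step~1 has a genuine gap precisely at the point where (A1') differs from (A1), and there is an indexing slip in the dimension count.

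In Lemma~\ref{l:stretchingStripA} the same-component claim is immediate because $P\cap\partial U=\emptyset$: the $P$-labelled boundary arc can carry no Reeb-chord puncture, so it sits in a single level-$0$ component.  For (A1') neither $L_0$ nor $\tau_P L_1$ has this property, and your assertion that ``each level-$1$ vertex lies in $V^\partial$'' does \emph{not} follow from $n_\eT=1$.  Nothing in \eqref{eq:SFTlimitCondition} rules out a level-$1$ vertex in $V^{core}$: a $J^+$-disk with boundary on both $SL_0^+$ and $S(\tau_P L_1)^+$ and only negative Reeb-chord asymptotes would bridge two distinct level-$0$ core components, which is exactly the configuration you need to exclude.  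The clean fix---and presumably what the paper means by ``similarly''---is to invoke Lemma~\ref{l:A3Dtype}.  Let $u_{v_1}$ be the level-$0$ component whose asymptote is the output $c_{\fp',\fq}$; this is a \emph{negative} asymptote, so by Lemma~\ref{l:A3Dtype} it cannot be the unique non-Reeb asymptote of $u_{v_1}$.  Since all compact edges are Reeb chords by \eqref{eq:SFTlimitCondition}(iii), the only other non-Reeb asymptote available is $c_{\fp,\fq}$, which is therefore also carried by $u_{v_1}$.  (Once this is in hand your remaining claims follow: any level-$1$ $V^{core}$ vertex would need two label-change edges to level-$0$ $V^{core}$ vertices, but there is only one such vertex, so $\eT$ would acquire a cycle.)  This is the same mechanism the paper uses in Lemma~\ref{l:bigon} and in ruling out types~(A3') and~(D').

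On the dimension count: with your convention that $v_2,\dots,v_k$ are the $V^\partial$ vertices you have $k-1$ Reeb chords, so $r^-=1$, $r^+=k$, and the last term in \eqref{eq:VirdimFormula2} is $r^-+r^+-3=k-2$, not $-(1-k)=k-1$.  The correct inequality reads $0=\virdim(u_{v_1})\ge k-2$, giving $k\le 2$; together with ``at least one $V^\partial$ bubble'' this yields $k=2$, i.e.\ exactly one Reeb chord $x$ with $|x|=0$ in the canonical grading, as stated.
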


We omit the corresponding statements for type (A2) and (A2') because of the similarity.
Next we consider

\begin{lemma}[Type (B), (B')]\label{l:stretchingStripB}
 Let $u_{\infty}=(u_v)_{v \in V(\eT)}$ be a non-empty SFT limit of curves in $\eM^{J^\tau}(x_0;x_1)$.
 Then $\eT$ consists of exactly one vertex $v$ and $l_\eT(v)=1$.
\end{lemma}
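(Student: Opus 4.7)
The plan is to use the structural conclusions \eqref{eq:SFTlimitCondition} of Proposition \ref{p:nosidebubble}, together with Lemma \ref{l:noUcurve}, to force every vertex of $\eT$ into the top level, and then let the tree condition collapse $\eT$ to a single vertex. This will handle Types (B) and (B') uniformly, since in either case the asymptotes $x_0,x_1$ lie outside $U$: by our standing perturbation, the components of $L_i\cap U$ are cotangent fibers at pairwise distinct base points, so $L_0\cap L_1$ (and likewise $L_0\cap \tau_P L_1$, as $\tau_P$ is supported in $U$) does not meet $U$. Consequently, both semi-infinite edges of $\eT$ attach to vertices with level $1$.

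Next I would rule out the existence of any level-$0$ vertex $v$. By \eqref{eq:SFTlimitCondition} we have $V^{int}=\emptyset$, and Proposition \ref{p:nosidebubble} puts every $V^\partial$ vertex at level $1$; hence such a $v$ would have to lie in $V^{core}$. Its boundary conditions $SL_0^-\cup SL_1^-$ are conical extensions of a union of cotangent fibers over pairwise distinct points of $P$, so no pair of these fibers intersects. Combined with the absence of interior punctures, every asymptote of $u_v$ would be a Reeb chord, directly contradicting Lemma \ref{l:noUcurve}. Therefore no level-$0$ vertex exists, and as a byproduct no $V^\partial$ vertex exists either: such a vertex, being at level $1$ with a single negative Reeb chord asymptote, must glue downward along that chord to a level-$0$ neighbor (by the SFT matching convention for edges connecting consecutive levels), which is now unavailable.

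Hence every vertex of $\eT$ belongs to $V^{core}$ and sits at level $1$. By \eqref{eq:SFTlimitCondition}(iii), any compact edge of $\eT$ must correspond to a Reeb chord; but Reeb-chord edges necessarily connect vertices of consecutive levels, so $\eT$ can have no compact edges at all. Since $\eT$ is a connected tree containing the two semi-infinite edges associated to $x_0$ and $x_1$, it must therefore consist of a single vertex $v$ with $l_\eT(v)=1$, as claimed. The only subtlety to double-check is the level-matching convention for Reeb-chord edges, but this is part of the definition of the limit building recalled above Convention \ref{con:Ends}, so the argument is essentially a bookkeeping of Proposition \ref{p:nosidebubble} with no further analytic input required.
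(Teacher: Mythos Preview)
Your argument has a gap in the Type (B') case. You assert that $L_0\cap\tau_P L_1$ does not meet $U$, but this is false: the points $c_{\fp,\fq}\in\cX_a(C_1)$ are precisely intersections of $L_0$ and $\tau_P L_1$ inside $U$. The correct statement is only that $x_0,x_1\in\cX_b(C_1)$ lie outside $U$ by definition of $\cX_b$. More importantly, your second paragraph describes the level-$0$ boundary conditions for Type (B') as ``conical extensions of a union of cotangent fibers'', but $(\tau_P L_1)^-$ consists of Dehn-twisted fibers, and these do meet $SL_0^-$ (again at the $c_{\fp,\fq}$). Hence your deduction that a level-$0$ vertex has only Reeb chord asymptotes does not follow from non-intersection of the boundary Lagrangians.

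The fix is to invoke \eqref{eq:SFTlimitCondition}(iii) already at this step rather than only in your final paragraph: since all compact edges of $\eT$ correspond to Reeb chords, and the two semi-infinite edges carry $x_0,x_1\in M^+$ (hence attach to level-$1$ vertices), any level-$0$ vertex automatically has only Reeb chord asymptotes, regardless of whether the Lagrangians intersect inside $SM^-$. Lemma~\ref{l:noUcurve} then gives the contradiction, and your last paragraph concludes correctly. This is exactly the paper's (two-line) argument.
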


\begin{proof}
 If $\eT$ has a vertex $v$ such that $l_\eT(v)=0$, then all the asymptotes of $v$ are Reeb chords which contradicts to Lemma \ref{l:noUcurve}.
 Therefore, $l_\eT(v)=1$ for all $v \in V(\eT)$ and it holds only when $\eT$ consists of exactly one vertex.
\end{proof}

\begin{lemma}[Type (C)]\label{l:triangle}
 Let $u_{\infty}=(u_v)_{v \in V(\eT)}$ be a non-empty SFT limit of curves in $\eM^{J^\tau}(x; \mathbf{q}^\vee, \mathbf{p})$.
 Then $\eT$ consists of exactly two vertices $v_1,v_2$ and 
\begin{itemize}
  \item $u_{v_1}$ is a $J^-$-holomorphic triangle with positive asymptotes $y,\mathbf{q}^\vee,\mathbf{p}$,
  where $y$ is a Reeb chord with $|y|=0$ in the canonical relative grading; 
  \item $u_{v_2}$ is a $J^+$-holomorphic curve with two negative asymptotes $x$ and $y$.
\end{itemize}
\end{lemma}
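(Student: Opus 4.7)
}
The plan is to first apply Proposition \ref{p:nosidebubble} to strip the structure of $u_\infty=(u_v)_{v\in V(\eT)}$ down to a very rigid combinatorial skeleton: $V^{int}=\emptyset$, $n_\eT=1$, every compact edge corresponds to a non-degenerate Reeb chord (no compact Lagrangian-intersection edges), and each $v\in V^{\partial}$ is at level $1$ and carries exactly one boundary asymptote, which is a negative Reeb chord. Because $\mathbf{q}^\vee$ and $\mathbf{p}$ are Lagrangian intersections inside $U$, they must be positive asymptotes of some level-$0$ vertex; the $\fP$-arc of the triangle's boundary joining them is connected, so by the tree structure of $\eT$ both are asymptotes of a common vertex $v_1$ with $l_\eT(v_1)=0$. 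Similarly, $x$ is a Lagrangian intersection outside $U$ between two distinct Lagrangians $L_0$ and $L_1$, so it must be a negative asymptote of some vertex $v_2$ with $l_\eT(v_2)=1$; moreover $v_2\in V^{core}$, since a $V^\partial$-vertex has boundary on only one Lagrangian and cannot realize such an intersection.

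Next I would rule out any additional level-$0$ vertices. If $v'\ne v_1$ were another vertex with $l_\eT(v')=0$, then $v'$ cannot carry any of the semi-infinite asymptotes $\mathbf{p}$, $\mathbf{q}^\vee$, or $x$, so every asymptote of $u_{v'}$ is a Reeb chord; this contradicts Lemma \ref{l:noUcurve}. Hence $v_1$ is the unique level-$0$ vertex, and every compact Reeb-chord edge of $\eT$ is incident to $v_1$.

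The combinatorics is then pinned down by a virtual-dimension count on $v_1$. Label its positive Reeb-chord asymptotes $y_1,\ldots,y_m$, and compute in the canonical relative grading on $SM^-=T^*P$; by \eqref{eq:CanonicalGrading} and Convention \ref{c:DualGenNotation}, $|\mathbf{q}^\vee|_{\mathrm{can}}=0$ and $|\mathbf{p}|_{\mathrm{can}}=n$. Applying Lemma \ref{l:VirdimFormula2} with $r^-=0$ and $r^+=m+2$ yields
\begin{align*}
\virdim(u_{v_1})=n-\sum_{j=1}^{m}|y_j|_{\mathrm{can}}-|\mathbf{q}^\vee|_{\mathrm{can}}-|\mathbf{p}|_{\mathrm{can}}+(m-1)=m-1-\sum_{j=1}^{m}|y_j|_{\mathrm{can}}.
\end{align*}
By Corollary \ref{c:ReebDynamicMorsified}, every $|y_j|_{\mathrm{can}}\le 0$, so $\virdim(u_{v_1})\ge m-1$; combined with $\virdim(u_{v_1})=0$ from \eqref{eq:SFTlimitCondition}, this forces $m\le 1$ and each $|y_j|_{\mathrm{can}}=0$. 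Connectedness of $\eT$, together with the fact that $V^\partial$-vertices are leaves attached only to level-$0$ vertices, requires $v_2$ to be joined to $v_1$ by at least one Reeb-chord edge, so $m=1$ and this unique Reeb chord $y$ is precisely the edge from $v_1$ to $v_2$. In particular there are no $V^\partial$-vertices, and $\eT=\{v_1,v_2\}$ with the stated asymptotic profile; the dimension count on $u_{v_2}$ (giving $|x|+|y|=n$ in actual grading) then recovers the rest of the statement.

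The main obstacle I anticipate is the bookkeeping that ensures the unique Reeb chord produced by the dimension count at $v_1$ cannot be consumed by a side component: this is where the combination of tree-connectivity with the absence of alternative level-$0$ vertices (from Lemma \ref{l:noUcurve}) does the real work, and where one has to be careful that the $V^\partial$-leaves permitted by Proposition \ref{p:nosidebubble} do not sneak in through the Reeb-chord budget at $v_1$.
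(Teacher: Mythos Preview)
Your proposal is correct and follows essentially the same route as the paper: isolate the unique level-$0$ vertex $v_1$ carrying both $\mathbf{p}$ and $\mathbf{q}^\vee$ via the connected $\fP$-boundary arc, rule out any other level-$0$ vertices by Lemma~\ref{l:noUcurve}, and then run the virtual-dimension count on $u_{v_1}$ in the canonical relative grading to bound the number of Reeb-chord asymptotes by one. The paper phrases the count in terms of the total number of vertices $k$ (so your $m$ equals the paper's $k-1$), but the inequality and its consequences are identical; your closing worry about $V^\partial$-leaves is already handled by your own argument, since any such leaf would have to attach to $v_1$ and would push $m\ge 2$.
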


\begin{proof}
  Again, we use the same argument as in the proof of Lemma \ref{l:stretchingStripA}.
  There is $v_1 \in \eT$ such that
  $u_{v_1}$ is a holomorphic polygon and $\mathbf{q}^\vee$, $\mathbf{p}$
  are asymptotes of $u_{v_1}$.
  All other vertices are adjacent to $v_1$: otherwise, there will be components in $T^*\textbf{P}$ with only Reeb asymptotes, contradicting Lemma \ref{l:noUcurve}.  Denote these vertices by $v_2, \dots, v_k$.
  There is exactly one $j>1$ (say $j=2$) such that $v_j \notin V^{\partial}$ and $x$ is an asymptote of $u_{v_j}$.
  For $\eT$ to be a tree, $u_{v_2}$ has exactly one negative Reeb chord asymptote, which is denoted by $y_2$. 
Let the negative asymptote for $u_{v_j}$ (for $j>2$) be $y_j$.
  

For $u_{v_1}$  to be rigid, we have
\begin{align*}
 0=n-|\fp|-|\fq^\vee|-\sum_{j=2}^k |y_j|-(2-k) \ge n-n-0+k-2
\end{align*}
so $k \le 2$. However, we have $k \ge 2$ so we get $k=2$. 
Moreover, the canonical relative grading of $y_2$ is $0$.

\end{proof}

\begin{rmk}\label{rem:dualizeC}
    Later on, we will also make use of the moduli space $\eM^{J^\tau}(\mathbf{p}^\vee;x^\vee,\mathbf{q}^\vee)$.  
    The shape of neck-stretching limit will remain the same as Type (C), because this is simply a modification of some of the strip-like ends 
    (from outgoing to incoming, and vice versa)
    and does not change the behavior of the underlying curve.
\end{rmk}

\begin{lemma}[Type (C')]\label{l:bigon}
 Let $u_{\infty}=(u_v)_{v \in V(\eT)}$ be a non-empty SFT limit of curves in $\eM^{J^\tau}(x; c_{\fp,\fq})$.
 Then $\eT$ consists of exactly two vertices $v_1,v_2$ and 
\begin{itemize}
  \item $u_{v_1}$ is a $J^-$-holomorphic bigon with positive asymptotes $y,c_{\mathbf{p},\fq}$,
  where $y$ is a Reeb chord with $|y|=0$ in the canonical relative grading; 
  \item $u_{v_2}$ is a $J^+$-holomorphic curve with two negative asymptotes $x$ and $y$.
\end{itemize}
\end{lemma}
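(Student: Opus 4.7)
The plan is to follow the same neck-stretching analysis used in Lemmas \ref{l:stretchingStripA} and \ref{l:triangle}, adapted to the case where one asymptote lies inside $U$ as a Dehn-twist-type intersection $c_{\fp,\fq}$ and the other asymptote lies outside $U$. By the standing conditions \eqref{eq:SFTlimitCondition}, we may assume $n_\eT = 1$, every vertex has $\virdim = 0$ and is regular, $V^{int} = \emptyset$, and all compact edges are Reeb chords. So the tree has only level-0 and level-1 vertices, and the semi-infinite edges are labelled by $c_{\fp,\fq}$ (inside $U$, so on a level-0 vertex $v_1$) and $x$ (outside $U$, so on a level-1 vertex $v_2$).

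First I would argue that $v_1$ is the \emph{unique} level-0 vertex: any other level-0 vertex would have only Reeb-chord asymptotes, which is ruled out by Lemma \ref{l:noUcurve}. Hence $c_{\fp,\fq}$ is the unique non-Reeb asymptote of $u_{v_1}$, so Lemma \ref{l:A3Dtype} applies and forces $c_{\fp,\fq}$ to be a \emph{positive} asymptote of $u_{v_1}$, with grading $|c_{\fp,\fq}| = n-1$ in the canonical relative grading. All remaining asymptotes $y_2, \ldots, y_k$ of $u_{v_1}$ are Reeb chords, and since there is no level $-1$, they are all positive; each satisfies $|y_j| \le 0$ in canonical relative grading by Corollary \ref{c:ReebDynamicMorsified}.

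Next I would plug these into the virtual-dimension formula of Lemma \ref{l:VirdimFormula2}: with no negative asymptotes and $k$ positive asymptotes, $r^-=0$ and $r^+=k$, so
\[
 0 \;=\; \virdim(u_{v_1}) \;=\; n \;-\; (n-1) \;-\; \sum_{j=2}^{k} |y_j| \;+\; (k-3) \;=\; k - 2 - \sum_{j=2}^k |y_j|.
\]
Combined with $|y_j| \le 0$, this forces $k = 2$ and $|y_2| = 0$. Writing $y := y_2$, we conclude that $u_{v_1}$ is a bigon with the two positive asymptotes $c_{\fp,\fq}$ and $y$, and $y$ has canonical relative grading $0$.

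Finally, since $u_{v_1}$ carries a unique Reeb-chord asymptote $y$, the tree $\eT$ contains exactly one compact edge, so $v_2$ is the unique level-1 vertex adjacent to $v_1$ along $y$. Consequently $u_{v_2}$ has $y$ as a negative asymptote, and together with the output asymptote $x$ of the original strip these are the only asymptotes of $u_{v_2}$, both negative. I do not expect any genuine obstacle here: the only slightly subtle point is excluding additional level-1 bubbles in $V^{\partial}$, but this is automatic because each such bubble would contribute an extra positive Reeb-chord asymptote to $u_{v_1}$, contradicting the already established count $k=2$.
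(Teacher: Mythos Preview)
Your proposal is correct and follows essentially the same approach as the paper's proof, which is a one-line dimension count parallel to Lemma \ref{l:triangle}. The only difference is bookkeeping: you let $k$ denote the total number of asymptotes of $u_{v_1}$ (including $c_{\fp,\fq}$) and obtain $k=2$, whereas the paper lets $k$ count only the Reeb-chord asymptotes and obtains $k=1$; the extra justification you supply for uniqueness of the level-$0$ vertex and for excluding further level-$1$ bubbles is implicit in the paper's appeal to Lemma \ref{l:triangle}.
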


\begin{proof}
  The argument is entirely parallel to Lemma \ref{l:triangle}.
  Let $u_{v_1}$ be the $J^-$-holomorphic curve such that $c_{\fp,\fq}$ is an asymptote of it.
  Let the other asymptotes of $u_{v_1}$ be $y_1, \dots, y_k$.
  For $u_{v_1}$ to be rigid, by Lemma \ref{l:A3Dtype},
  \begin{align}
   0=\virdim(u_{v_1})=n-|c_{\mathbf{p},\fq}|-\sum_{j=1}^k |y_j|-(2-k) \ge n-(n-1)-2+k=k-1
  \end{align}
 so $k=1$ because $u_{v_1}$ has at least one positive Reeb chord asymptote.
\end{proof}

Our final task is to show that type (A3') and (D') are empty for $\tau \gg 1$.

\begin{lemma}[Type (A3')]
 Let $u_{\infty}=(u_v)_{v \in V(\eT)}$ be a SFT limit of curves in $\eM^{J^\tau}(c_{\fp',\fq'}; c_{\fp,\fq})$ that are not in Type(A1') and (A2').
 Then $u_{\infty}$ is empty.
\end{lemma}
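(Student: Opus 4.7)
The plan is to rule out any non-empty SFT limit of a curve in $\eM^{J^\tau}(c_{\fp',\fq'};c_{\fp,\fq})$ that is not already of Type (A1') or (A2'), by combining a virtual dimension count with a lifting-to-$T^*\fP$ argument. By Proposition \ref{p:nosidebubble}, any such limit building $u_\infty=\{u_v\}_{v \in V(\eT)}$ satisfies $n_\eT=1$, $V^{int}=\emptyset$, and $\virdim(u_v)=0$ for every $v$. Since $c_{\fp,\fq}$ and $c_{\fp',\fq'}$ both lie in $U$, they must be asymptotes of $J^-$-holomorphic level-$0$ components in $SM^-=T^*P$. I split the analysis according to whether they lie on the same level-$0$ component.

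\textbf{Case 1.} Both are asymptotes of a single component $u_{v_1}$, with $c_{\fp,\fq}$ positive, $c_{\fp',\fq'}$ negative, and $k$ positive Reeb chord asymptotes $y_1,\dots,y_k$. Applying Lemma \ref{l:VirdimFormula2} in the canonical relative grading — so that $|c_{\fp,\fq}|=|c_{\fp',\fq'}|=n-1$, $\mbb=0$ for intersection points, $\mbb=1$ for non-degenerate chords, and $|y_j|\le 0$ by Corollary \ref{c:ReebDynamicMorsified} — a direct calculation gives
\[
 \virdim(u_{v_1})=k-1-\sum_{j=1}^k |y_j|,
\]
which forces $k=1$ and $|y_1|=0$; hence $u_{v_1}$ is a triangle. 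Its three boundary arcs each lie on a single cotangent fiber or Dehn-twisted cotangent fiber, so the arc opposite to $y_1$ (i.e., the arc joining $c_{\fp,\fq}$ and $c_{\fp',\fq'}$) lies on one such fiber. Now lift $u_{v_1}$ to $T^*\fP$ by declaring that $c_{\fp,\fq}$ lifts to $\fc_{\fp,\fq}$. Since the lifted opposite arc stays on a single lifted (Dehn-twisted) fiber, the lift of $c_{\fp',\fq'}$ must be either $\fc_{\fp,\fq'_0}$ (when the arc lies on $\tau_\fP(T^*_\fq\fP)$) or $\fc_{\fp'_0,\fq}$ (when it lies on $T^*_\fp\fP$). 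Using the diagonal $\Gamma$-equivalence $(\fp',\fq')\sim(g\fp',g\fq')$, this exhibits the original strip as Type (A2') or (A1'), contradicting the standing assumption.

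\textbf{Case 2.} Now $c_{\fp,\fq}$ and $c_{\fp',\fq'}$ lie on distinct level-$0$ components $u_{v_1}$ and $u_{v_2}$. Writing $z_1,\dots,z_{k_2}$ for the positive Reeb chord asymptotes of $u_{v_2}$, the same formula gives
\[
 \virdim(u_{v_2})=n-3+k_2-\sum_{j=1}^{k_2}|z_j|.
\]
Setting this to zero and using $|z_j|\le 0$ forces $k_2\le 3-n$, which, for $n\ge 3$, leaves only $n=3,\ k_2=0$. But then $u_{v_2}$ is a $J^-$-cap whose unique non-Reeb asymptote is the negative asymptote $c_{\fp',\fq'}$, which is excluded by Lemma \ref{l:A3Dtype}.

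The virdim computations are routine, and the main subtlety is the lifting step in Case~1: one must track the positive/negative asymptotes carefully through the SFT degeneration and verify that the endpoint of the lifted boundary arc, viewed modulo the diagonal $\Gamma$-action, genuinely realises the input data $(\fp',\fq')$ as lying in a Type (A1') or (A2') orbit.
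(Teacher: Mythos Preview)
Your argument is correct and follows the paper's strategy, but you make explicit the step the paper compresses into the phrase ``by boundary condition.'' The paper's three-line proof simply asserts that $c_{\fp,\fq}$ cannot lie on the same level-$0$ component as $c_{\fp',\fq'}$ and then invokes Lemma~\ref{l:A3Dtype}; your Case~1 is precisely the justification of that assertion (the virtual dimension forces a triangle, and the shared boundary arc between the two $c$-punctures, after lifting, forces the output to be of Type~(A1') or~(A2')), while your Case~2 is the direct appeal to Lemma~\ref{l:A3Dtype}.

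Two minor remarks. In Case~1 your parenthetical labels are swapped: if the shared arc lies on $\tau_\fP(T^*_\fq\fP)$ then the lift of the output is $\fc_{\fp'_0,\fq}$ (same $\fq$, hence Type~(A1')), and if it lies on $T^*_\fp\fP$ the lift is $\fc_{\fp,\fq'_0}$ (Type~(A2')); this does not affect the conclusion. In Case~2 you may skip the separate virdim computation and invoke Lemma~\ref{l:A3Dtype} directly, since $c_{\fp',\fq'}$ is then the unique non-Reeb asymptote of $u_{v_2}$ and is negative.
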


\begin{proof}
 There is $v \in V(\eT)$ such that $c_{\fp',\fq'}$ is a negative asymptote of $u_v$.
 By boundary condition, $c_{\fp,\fq}$ cannot be an asymptote of $u_v$.
 The existence of $u_v$ violates Lemma \ref{l:A3Dtype}.
\end{proof}

By Lemma \ref{l:A3Dtype} again, we have

\begin{lemma}[Type (D')]\label{l:stretchingStripD}
 Let $u_{\infty}=(u_v)_{v \in V(\eT)}$ be a SFT limit of curves in $\eM^{J^\tau}(c_{\fp,\fq};x)$.
 Then $u_{\infty}$ is empty.
\end{lemma}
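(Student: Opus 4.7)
The plan is to obtain a direct contradiction by invoking Lemma \ref{l:A3Dtype}. Suppose, for contradiction, that $u_\infty = (u_v)_{v \in V(\eT)}$ is a non-empty SFT limit of a sequence $u_{\tau_k} \in \eM^{J^{\tau_k}}(c_{\fp,\fq}; x)$ with $\tau_k \to \infty$. Since $c_{\fp,\fq}$ is the negative asymptote at the root of $\eT$, there is a vertex $v_1$ at the bottom level (so $l_\eT(v_1) = 0$ and $u_{v_1}$ maps into $SM^- = T^*P$) such that $c_{\fp,\fq}$ appears as a \emph{negative} asymptote of $u_{v_1}$.

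Next, I would identify all non-Reeb asymptotes of $u_{v_1}$. The Lagrangian labels attached to $u_{v_1}$ are components of $L_0 \cap U$ and $\tau_P(L_1) \cap U$, which by assumption are unions of cotangent fibers and their $\tau_P$-images, all contained in $U$. The other distinguished generator of the original strip, $x$, lies in $\cX_b(C_1)$, i.e.\ $x \in (L_0 \cap \tau_P L_1) \setminus U$, so by Lagrangian boundary conditions $x$ cannot appear as an asymptote of any component mapping into $SM^-$. Moreover, by condition (iii) of \eqref{eq:SFTlimitCondition}, every compact edge of $\eT$ corresponds to a Reeb chord rather than a Lagrangian intersection. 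Together these observations force $c_{\fp,\fq}$ to be the \emph{unique} non-Reeb asymptote of $u_{v_1}$.

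Finally, I would apply Lemma \ref{l:A3Dtype} to $u_{v_1}$ (which is non-constant by stability of the SFT limit): the conclusion of that lemma asserts that, under exactly these hypotheses, $c_{\fp,\fq}$ must be a \emph{positive} asymptote of $u_{v_1}$. This directly contradicts the fact that $c_{\fp,\fq}$ appears as a negative asymptote of $u_{v_1}$, ruling out the existence of $u_\infty$.

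The only point requiring care is the exclusion of a degenerate scenario in which $u_{v_1}$ is constant (so that Lemma \ref{l:A3Dtype} does not apply directly); this will be handled by standard SFT stability considerations, noting that a constant component asymptotic to a single Lagrangian intersection point would be unstable in the compactified moduli. With this caveat dispatched, the entire argument is a one-step application of Lemma \ref{l:A3Dtype}, as already used in the parallel case of Type (A3').
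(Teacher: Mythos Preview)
Your proposal is correct and follows exactly the same route as the paper: the paper's proof is literally the single phrase ``By Lemma \ref{l:A3Dtype} again,'' and you have simply unpacked the implicit steps (locating the bottom-level component $u_{v_1}$ carrying $c_{\fp,\fq}$ as a negative asymptote, using condition (iii) of \eqref{eq:SFTlimitCondition} and the location of $x$ outside $U$ to see that $c_{\fp,\fq}$ is its unique non-Reeb asymptote, then invoking Lemma \ref{l:A3Dtype}). Your caveat about constant $u_{v_1}$ is harmless: a constant map cannot be asymptotic to a Reeb chord, so such a component would have only the single puncture at $c_{\fp,\fq}$ and is excluded by stability.
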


\subsection{Local contribution} 
\label{sub:twist_local_model}
In this section, we will determine the algebraic count of some moduli of rigid $J^-$-holomorphic curves in  $SM^-=T^*P$.

Let $q_1,q_2,q_3 \in P$ be three generic points such that $\cup_i \Lambda_{q_i}$ satisfies Corollary \ref{c:ReebDynamicMorsified}. 
Let $\fq_i \in \fP$ be a lift of $q_i$ for $i=1,2,3$.
Let $\fJ^-$ be the almost complex structure on $T^*\fP$ that is lifted from $J^-$. 
Since the contact form $\theta|_{\partial \fU}$ equals to the lift of $\alpha=\theta|_{\partial U}$,
by Lemma \ref{l:IndexChangeChord}, there is a unique Reed chord $x_{i,j}$ from $\Lambda_{\fq_i}$ to $\Lambda_{\fq_j}$
such that $|x_{i,j}|=0$ in the canonical relative grading.
Let $\fq_i \in CF(T^*_{\fq_i} \fP, \fP)$ and $\fc_{i,j} \in CF(T^*_{\fq_i} \fP, \tau_{\fP}(T^*_{\fq_j} \fP))$ be the chains represented by the unique geometric intersection in the respective chain complexes.


We are interested in the algebaic counts of following moduli spaces

\begin{enumerate}[(1)]
   \item $\eM^{\fJ^-}(\fq_1;\fq_2,x_{1,2})$, $\eM^{\fJ^-}(q_{2}^\vee;x_{1,2},\fq_1^\vee)$ and $\eM^{\fJ^-}(\emptyset;\fq_2, x_{1,2}, \fq_1^\vee)$,
   \item $\eM^{\fJ^-}(\fc_{3,2};x_{1,2},\fc_{3,1})$,
   \item $\eM^{\fJ^-}(\fc_{1,3};\fc_{2,3},x_{1,2})$,
   \item $\eM^{\fJ^-}(\emptyset;\fc_{2,1},x_{1,2})$.
 \end{enumerate}

  \begin{figure}[]
   \centering
   \includegraphics[scale=0.9]{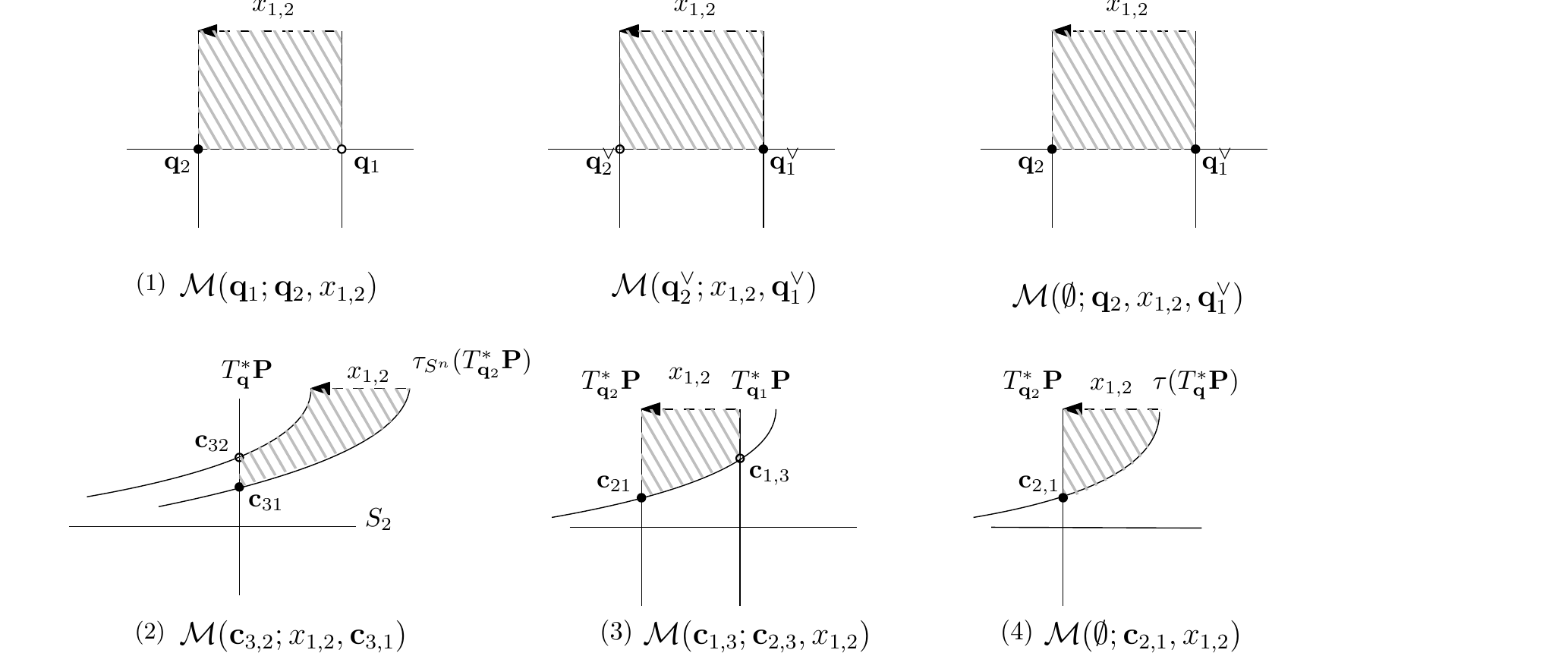}
   \caption{Six moduli spaces in Theorem \ref{t:localCount}}
 \end{figure}

\begin{thm}\label{t:localCount}
    The algebraic count of the above moduli spaces are all $\pm 1$. 
\end{thm}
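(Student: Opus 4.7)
The plan is to handle the three moduli in (1) first as the fundamental ``basic'' count, then reduce the Dehn-twisted moduli in (2)--(4) to (1) via a graded Lagrangian surgery description of $\tau_\fP(T^*_{\fq_j}\fP)$ combined with another neck-stretch. Throughout, I work in $T^*\fP = T^*S^n$ equipped with the round metric, so that explicit geodesic geometry is available; the chord $x_{i,j}$ corresponds to the unique short geodesic (of length $<\pi$) from $\fq_i$ to $\fq_j$, and the almost complex structure $\fJ^-$ may be chosen close to the canonical one away from the perturbation region.

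For moduli (1), the first space $\eM^{\fJ^-}(\fq_1;\fq_2,x_{1,2})$ is the standard triangle computing the right module structure of $HF^*(T^*_{\fq_1}\fP,\fP)$ over $HW^*(T^*_{\fq_1}\fP,T^*_{\fq_2}\fP)$: by Abbondandolo--Schwarz type arguments (and the fact that $HW^*(T^*_\fq\fP)\simeq H_{-*}(\Omega \fP)$ with the Pontryagin product represented by concatenation of geodesics), the shortest chord $x_{1,2}$ acts on the canonical generator $\fq_2$ by producing $\pm \fq_1$, hence a unique rigid triangle. For $\eM^{\fJ^-}(\fq_2^\vee;x_{1,2},\fq_1^\vee)$, I would apply the Floer-theoretic cyclic/Poincar\'e duality (Convention \ref{c:DualGenNotation}) swapping the roles of input and output and dualising on the zero section, which identifies this moduli with the first modulo a sign. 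For $\eM^{\fJ^-}(\emptyset;\fq_2, x_{1,2}, \fq_1^\vee)$, the same cyclic symmetry turns one negative strip-like end into a positive one at $\fq_1^\vee$, giving a bijection with the first moduli again up to sign. So all three counts in (1) are $\pm 1$.

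For (2)--(4), the key identification is the graded surgery formula $\tau_\fP(T^*_{\fq_j}\fP) \simeq \fP[1]\# T^*_{\fq_j}\fP$ at $\fq_j$ (\cite{SeGraded}, \cite{MW15}), under which the intersection $\fc_{i,j}$ corresponds to the unique intersection $T^*_{\fq_i}\fP \cap \fP$ that travels through the surgery handle. Perform a second neck-stretch along the boundary of a small Weinstein neighborhood of $\fq_j$ (resp. $\fq_1,\fq_2$ for moduli (4)). A curve in (2), (3), or (4) then splits into (a) a ``core'' triangle or disk whose boundary now sits on $T^*_{\fq_i}\fP$, $\fP$, $T^*_{\fq_j}\fP$ (resp. $\fP$) --- exactly one of the three moduli in (1) --- plus (b) a local ``neck'' piece in the surgery handle which by the standard local surgery analysis (see the Lagrangian surgery exact triangle literature) contributes a unique rigid configuration. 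Thus each count in (2)--(4) reduces to a count in (1), times $\pm 1$.

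The main obstacles will be: (i) carrying out the second neck-stretch carefully so that transversality is preserved and all intermediate bubbling is excluded by the index/action tools of Section \ref{sec:review_of_symplectic_field_theory_and_dimension_formulae} (this is the same technology already used in Proposition \ref{p:nosidebubble}, applied to the surgery handle instead of $\partial U$); and (ii) the sign bookkeeping, which is delicate because the surgery reduction shifts grading by $1$ and the cyclic identifications in Stage~1 each introduce a Koszul-type sign. I would defer the sign verification to Appendix \ref{sec:orientations}, where the coherent orientation framework is already set up, and here only verify that the algebraic count is nonzero modulo $2$, which suffices for the $\pm 1$ claim once one knows the moduli are transversely cut out and of the expected dimension zero.
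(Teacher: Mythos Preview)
Your approach differs substantially from the paper's, and there is a genuine gap in your reduction of the Dehn-twisted moduli (2)--(4) to (1).

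First, for moduli (2) the boundary Lagrangians are $T^*_{\fq_3}\fP$, $\tau_\fP(T^*_{\fq_1}\fP)$, and $\tau_\fP(T^*_{\fq_2}\fP)$: \emph{two} of the three are Dehn-twisted fibers, so two surgery handles $\fP[1]\# T^*_{\fq_j}\fP$ (with the same $\fP$) are in play simultaneously. A single degeneration ``near $\fq_j$'' does not reduce this to (1); you would either have to iterate two surgery degenerations and control their interaction, or first apply $\tau_\fP^{-1}$ to convert to a single surgery on $T^*_{\fq_3}\fP$ --- neither of which you address. (Your parenthetical ``resp.\ $\fq_1,\fq_2$ for moduli (4)'' is also confused: moduli (4) involves only one twisted fiber; it is moduli (2) that involves two.)

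Second, and more seriously, the ``second neck-stretch'' you propose is a Lagrangian surgery degeneration (shrinking the handle), not an ambient SFT stretch along a contact hypersurface. Near a small sphere about $\fq_j$ the Lagrangians $\fP$, $T^*_{\fq_j}\fP$, and the handle are not cylindrical in the sense required by Section~\ref{sec:review_of_symplectic_field_theory_and_dimension_formulae}, so Proposition~\ref{p:nosidebubble} does not apply. The surgery degeneration is separate technology (FOOO Chapter~10 style), and you would need to set up compactness, regularity, and no-side-bubbling for it from scratch; saying it is ``the same technology'' is not justified.

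The paper avoids all of this by an indirect but clean argument. It embeds the local picture into an $A_3$ Milnor fiber with spheres $S_1,\fP,S_3$ and $S_2$ a small perturbation of $S_1$, so that inside $\fU$ the $S_i$ are cotangent fibers at $\fq_i$. In this closed setting the needed counts are determined by elementary Floer theory: multiplication by the cohomological unit $e\in CF(S_1,S_2)$ gives $\#\eM(\fq_1;\fq_2,e)=\pm1$, $\#\eM(\fc_{1,3};\fc_{2,3},e)=\pm1$, $\#\eM(\fc_{3,2};e,\fc_{3,1})=\pm1$, and Seidel's long exact sequence gives $\#\eM(pt;\fc_{2,1})=\pm1$. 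One then stretches along $\partial\fU$ --- the \emph{same} stretch already set up --- factoring each global count as (desired local count)$\times$(fixed outer piece). Since a product of two integers equal to $\pm1$ forces each factor to be $\pm1$, the theorem follows. No surgery degeneration or wrapped Fukaya computation is needed.
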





\begin{proof}[Proof of Theorem \ref{t:localCount}]

We will apply SFT stretching on the the following ``big local model''.

Consider an $A_3$ Milnor fiber consisting of the plumbing of three copies of $T^*S^n$.
We denote the Lagrangian spheres by $S_1$, $\fP$ and $S_3$, respectively, where $S_1 \cap S_3= \emptyset$.
We can identify a neighborhood of $\fP$ with $\fU$.
By Hamiltonian isotopy if necessary, we assume that $\fU \cap S_j$ is a pair of disjoint cotangent fibers for $j=1,3$.  
We perturb $S_1$ to $S_2$ by a perfect Morse function, so that  $\fU \cap S_2$ is another cotangent fiber.

It will be clear that we should, for $j=1,2,3$, naturally abuse the notation to denote $\fq_j \in CF(S_j,\fP)$, which is the only generator in the corresponding cochain complex.
Let $e, pt\in CF(S_1,S_2)$ be the minimum and maximum of the Morse function, respectively, where $e$ represents the identity in cohomology.  
On the cohomological level, it is clear that $[\fq_2][e]=\pm [\fq_1]$ and $[e][\fq_1^\vee]=\pm [\fq_2^\vee]$.  This implies the algebraic count 

\begin{equation}\label{e:p12}
    \begin{aligned}
        &\#\eM(\fq_1; \fq_2, e)=\pm 1\\
        &\#\eM(\fq_2^\vee; e,\fq_1^\vee)=\pm 1.  
     \end{aligned}
\end{equation}

 \begin{figure}[]
   \centering
   \includegraphics[scale =0.5]{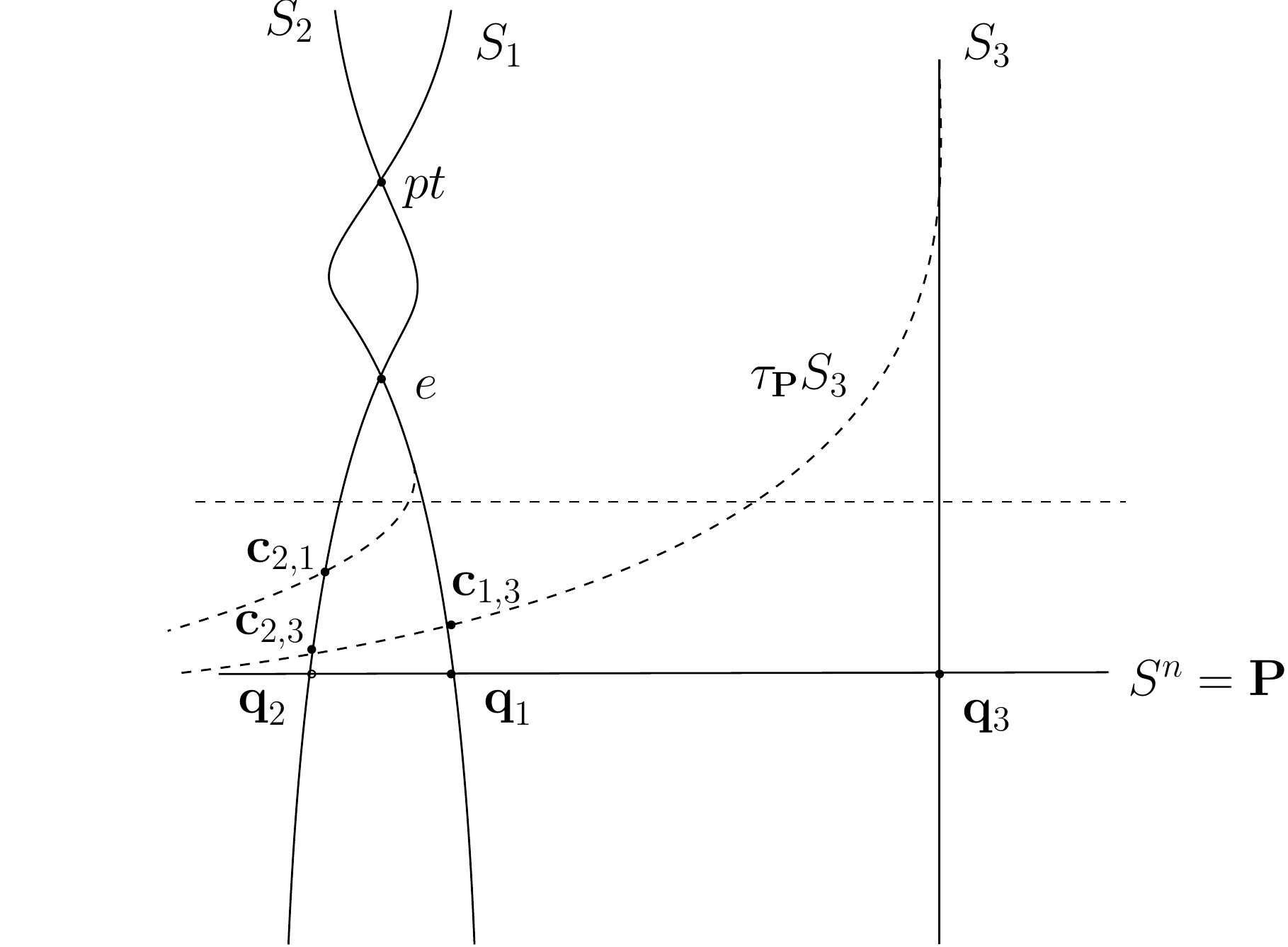}
   \caption{Big local model before stretch}
   \label{fig:local}
 \end{figure}

We now apply the same argument to other cochain complexes. For $i \neq j$, let $\fc_{i,j}\in CF^*(S_i,\tau_{\fP}(S_j))$.
be the only generator in their corresponding complex.  Again, the multiplication by $[e]$ on $[\fc_{1,3}] $ and $ [\fc_{3,1}]$ yields

\begin{align}
   \label{e:p'12}\#\eM(\fc_{1,3}; \fc_{2,3}, e)=\pm 1, \\
   \label{e:ptilde12}\#\eM(\fc_{3,2}; e,\fc_{3,1})= \pm 1.
\end{align}

For the case of $\fc_{2,1}\in CF(S_2,\tau_\fP (S_1))$, it is immediate from Seidel's exact sequence that 
$\rank HF(S_2,\tau_\fP (S_1))=1$, concentrated on degree $0$.  
$CF(S_2,\tau_S S_1)$ has two additional generators $|\fc_{2,1}|=n-1$ and $|pt|=n$, which cancel each other.  Therefore, one has 

\begin{equation}\label{e:pbar12}
     \#\eM(pt; \fc_{2,1})=\pm 1
\end{equation}

To deduce Theorem \ref{t:localCount}, we perform a neck-stretching along $\partial \fU$.
It means that we choose a family of almost complex structure $\fJ^{\tau}$ adapted to $\partial \fU$ and see how the $\fJ^{\tau}$-holomorphic curves
converge as $\tau$ goes to infinity.
We require that the limiting almost complex structure on $S \fU$ coincides with $\fJ^-$ and we denote the limiting almost complex structure outside $\fU$ by $\fJ^+$.
$S_1$ and $S_2$ give two fibers in $\fU$, and every holomorphic curve in $\eM^{\fJ^\tau}(\fq_1; \fq_2,e)$ will converge, in the $\fU$ part, to a curve
in $\eM^{\fJ^-}(\fq_1;\fq_2, x_{1,2})$ (see Lemma \ref{l:stretchingStripA}).  This implies
\[
  (\#\eM^{\fJ^-}(\fq_1;\fq_2, x_{1,2}))\cdot (\#\eM^{\fJ^+}(x_{1,2};e))=\#\eM^{\fJ^\tau}(\fq_1; \fq_2,e)=\pm 1.
\]
Since all counts are integers, it follows that $\#\eM^{J^-}(\fq_1;\fq_2,x_{1,2})= \pm 1$
which implies the same is true for $\#\eM^{J^-}(\fq_2^\vee;x_{1,2},\fq_1^\vee)$ and $\#\eM^{J^-}(\emptyset;\fq_2,x_{1,2},\fq_1^\vee)$.

The same stretching argument, along with \eqref{e:p'12}\eqref{e:ptilde12}\eqref{e:pbar12} yields

\begin{align}
    &(\#\eM^{\fJ^-}(\fc_{1,3};\fc_{2,3}, x_{1,2}))\cdot (\#\eM^{\fJ^+}(x_{1,2};e))=\#\eM^{\fJ^\tau}(\fc_{1,3}; \fc_{2,3}, e)=\pm 1,\\
  &(\#\eM^{\fJ^-}(\fc_{3,2};x_{1,2},\fc_{31}))\cdot (\#\eM^{\fJ^+}(x_{1,2};e))=\#\eM^{\fJ^\tau}(\fc_{3,2};e, \fc_{3,1})=\pm 1,\\
    &(\#\eM^{\fJ^-}(\emptyset;\fc_{2,1},x_{1,2}))\cdot (\#\eM^{\fJ^+}(x_{1,2},pt;\emptyset))=\#\eM^{\fJ^\tau}(pt;\fc_{2,1})=\pm 1.
\end{align}
which give the remaining algebraic counts. 

Finally, notice that even though $S_2$ is obtained by a perturbation of $S_1$, we can actually Hamiltonian isotope
$S_2$ so that $S_2 \cap P$ is the preassigned $q_2$ and there is no new intersection 
between $S_2$ and $S_1,S_3$ being created during the isotopy. 
With this choice of $S_2$ and the stretching argument explained above, Theorem \ref{t:localCount} follows.

\end{proof}

One may define the analogous moduli spaces similarly on $T^*P$ for cotangent fibers $T^*_{q_i} P$.  
By equivariance, every rigid $J^-$-holomorphic curve lifts to $|\Gamma|$ many rigid $\fJ^-$-holomorphic curves and 
every rigid $\fJ^-$-holomorphic curve descends to a rigid $J^-$-holomorphic curve.


With this understood, we have

\begin{corr}\label{c:countU}
   The algebraic count of the following moduli spaces are $\pm1$.
\end{corr}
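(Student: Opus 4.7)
The plan is to deduce the counts in $T^*P$ directly from Theorem~\ref{t:localCount} via $\Gamma$-equivariance of the set up. By construction, $\fJ^-$ is the lift of $J^-$ under the covering map $\pi:T^*\fP \to T^*P$, and every Lagrangian boundary condition that appears in the list (the zero section $P$, its Dehn twist image, or cotangent fibers $T^*_{q_i}P$) lifts to a $\Gamma$-invariant Lagrangian in $T^*\fP$. Consequently, pseudo-holomorphic curves lift and descend along $\pi$: every $\fJ^-$-holomorphic curve in $T^*\fP$ projects to a $J^-$-holomorphic curve in $T^*P$, and every $J^-$-holomorphic curve in $T^*P$ admits exactly $|\Gamma|$ lifts to $T^*\fP$, since the deck group acts freely.

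Next, one matches asymptote data on the two sides. For each $q_i$ fix a lift $\fq_i \in \fP$; by Lemma~\ref{l:IndexChangeChord}, for each pair $(q_i,q_j)$ and each choice of lifts $(\fq_i,\fq_j)$ there is a unique Reeb chord $x_{i,j}$ of index zero from $\Lambda_{\fq_i}$ to $\Lambda_{\fq_j}$, and similarly for each triple $(\fp,\fq)$ there is a unique generator $\fc_{\fp,\fq}$ of $CF(T^*_\fp\fP,\tau_\fP(T^*_\fq\fP))$. Projecting under $\pi$, a curve in $T^*P$ with asymptotes $q_i$, $x_{i,j}$, or $c_{\fp,\fq}$ lifts uniquely once one specifies which deck-translate of each asymptote is used. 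Hence, fixing a lifted asymptote at every end of the domain, the projection $\pi$ induces a bijection between the rigid moduli in $T^*\fP$ (as in Theorem~\ref{t:localCount}) and the corresponding moduli in $T^*P$.

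Combining the bijection with Theorem~\ref{t:localCount} immediately gives $\pm 1$ for the algebraic counts on $T^*P$, provided the bijection is sign-preserving. This is the only subtle point: one needs to check that the trivialization of $(\Lambda^{top}_\C T^*M)^{\otimes 2}$ chosen at the beginning of Section~\ref{sec:floer_cohomology_with_local_systems}, the spin structure on $P$ induced from $S^n$, and the resulting orientations on the moduli spaces (Appendix~\ref{sec:orientations}) are all compatible with the covering projection $\pi$. For the trivialization and the grading, this holds by the very construction of the canonical relative grading in Section~\ref{ss:Grading}; for the spin structure, this follows from the assumption \eqref{eq:GammaCondition} together with Remark~\ref{r:Spin}, under which the spin structure on $P$ pulls back to the standard spin structure on $\fP=S^n$. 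The orientation formalism for moduli spaces is natural under covering maps and pulls back accordingly, so signs transfer verbatim and the $\pm 1$ counts of Theorem~\ref{t:localCount} descend to the stated $\pm 1$ counts on $T^*P$.
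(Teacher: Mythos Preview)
Your approach is essentially the same as the paper's: the corollary is stated immediately after the observation that, by $\Gamma$-equivariance, every rigid $J^-$-holomorphic curve in $T^*P$ lifts to $|\Gamma|$ rigid $\fJ^-$-holomorphic curves in $T^*\fP$ and conversely, so the counts transfer from Theorem~\ref{t:localCount}. You supply more detail than the paper does, particularly on the sign compatibility (trivialization, spin structure, orientation formalism under the covering), which the paper leaves implicit; this elaboration is correct and welcome.
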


\begin{enumerate}[(1)]
  \item $\eM^{J^-}(p';p,x_{\fp',\fp})$, $\eM^{J^-}(q'^\vee;x_{\fq,\fq'},q^\vee)$ and $\eM^{J^-}(\emptyset;p,x_{\fq,\fp},q^\vee)$, 
  \item $\eM^{J^-}(c_{\fp,\fq'};x_{\fq,\fq'},c_{\fp,\fq})$
  \item $\eM^{J^-}(c_{\fp',\fq};c_{\fp,\fq},x_{\fp',\fp})$
  \item $\eM^{J^-}(\emptyset;c_{\fp,\fq},x_{\fq,\fp})$
\end{enumerate}
where $x_{\fp',\fp}$ is the unqiue Reeb chord of canonical relative grading $0$ from $\Lambda_{p'}$ to $\Lambda_{p}$ which can be lifted to a Reeb chord from
$\Lambda_{\fp'}$ to $\Lambda_{\fp}$. The definition of $x_{\fq,\fq'}$ and $x_{\fq,\fp}$ are similar.

\subsection{Matching differentials} 
\label{sub:matching_differentials}

We now are ready to prove Proposition \ref{p:CohLevelIso}.  
The first lemma relates algebraic counts of differentials of Type (A1) and (A1').

\begin{lemma}\label{l:A1}
For $\tau \gg 1$, the algebraic count of following moduli spaces are equal

\begin{itemize}
  \item $\eM^{J^\tau}(c_{\fp',\fq}; c_{\fp,\fq})$, differentials in $hom(L_0,\tau_P (L_1))$ from $c_{\fp,\fq}$ to $c_{\fp',\fq}$,
  \item $\eM^{J^\tau}(\mathbf{p}';\mathbf{p})$, differentials in $hom(L_0, \eP)$ from $\mathbf{p}$ to $\mathbf{p}'$ 
\end{itemize}
\end{lemma}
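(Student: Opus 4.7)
The plan is to neck-stretch both moduli along $\partial U$ and match their SFT limit buildings componentwise, reducing the comparison to an identical count in $SM^+$ times a $\pm 1$ local count in $SM^- = T^*P$.

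First I would apply Theorem \ref{t:SFTcompactness} together with Proposition \ref{p:nosidebubble} to a sequence in either moduli, and then invoke Lemma \ref{l:stretchingStripA} or Lemma \ref{l:stretchingStripA'} as appropriate. The resulting SFT limit $u_\infty = \{u_{v_1}, u_{v_2}\}$ has exactly two components on both sides: a $J^-$-holomorphic triangle $u_{v_1}$ in $SM^-$ whose three asymptotes are the two Lagrangian intersection generators and a Reeb chord $x$ of canonical relative grading $0$, glued along $x$ to a $J^+$-holomorphic disk $u_{v_2}$ in $SM^+$ with boundary on $L_0$ and a single negative asymptote at $x$. The grading constraint together with Corollary \ref{c:ReebDynamicMorsified} and Lemma \ref{l:IndexChangeChord} forces $x = x_{\fp',\fp}$ in both cases; in Type (A1') the Reeb chord still has both endpoints on the fibers of $L_0$, and the equivariant lift of the triangle to $T^*\fP$, determined by the choices of lifts $\fp,\fp',\fq$ implicit in the generators $c_{\fp,\fq}, c_{\fp',\fq}$, pins down the same short chord as in Type (A1).

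Next I would apply Theorem \ref{t:SFTgluing}, which yields for $\tau \gg 1$ a bijection between rigid $J^\tau$-holomorphic curves and rigid limit buildings, with each building transversally cut out. The signed count thus factors on both sides as
\begin{align*}
\#\eM^{J^\tau}(\mathbf{p}';\mathbf{p}) &= \#\eM^{J^-}(p'; p, x_{\fp',\fp}) \cdot N^+, \\
\#\eM^{J^\tau}(c_{\fp',\fq}; c_{\fp,\fq}) &= \#\eM^{J^-}(c_{\fp',\fq}; c_{\fp,\fq}, x_{\fp',\fp}) \cdot N^+,
\end{align*}
where $N^+ := \#\eM^{J^+}(x_{\fp',\fp}; \emptyset)$ depends only on $L_0$ and $x_{\fp',\fp}$ and is therefore identical on the two sides. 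By Corollary \ref{c:countU}(1) and (3), both $J^-$ factors equal $\pm 1$, so reducing modulo $2$ (as declared at the end of Section \ref{sub:correspondence_of_differentials}) the two counts coincide.

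The one delicate point, which I expect to be the main obstacle, is the matching of signs beyond mod $2$: although the absolute values of the two local counts agree via the $A_3$-Milnor-fiber comparison of Theorem \ref{t:localCount}, one must check that the determinant-line orientations transport consistently through the equivariant lift to $T^*\fP$, through SFT gluing, and through the generator correspondence \eqref{e:genCorr} in order to upgrade the equality to $\Z$-coefficients. This bookkeeping is deferred to Appendix \ref{sec:orientations} and is not attempted in the present section.
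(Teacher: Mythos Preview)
Your proposal is correct and follows essentially the same approach as the paper: neck-stretch, invoke Lemmas \ref{l:stretchingStripA} and \ref{l:stretchingStripA'} to identify the two-component building, observe that the $J^+$-piece $\#\eM^{J^+}(x_{\fp',\fp};\emptyset)$ is literally the same for both moduli, and use Corollary \ref{c:countU}(1),(3) to equate the two $J^-$-counts. Your remarks on SFT gluing (Theorem \ref{t:SFTgluing}) and on deferring the sign analysis to Appendix \ref{sec:orientations} match the paper's treatment.
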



\begin{proof}

 To prove the lemma, we look at the SFT limit of these moduli when $\tau$ goes to infinity.
 Let $u_\infty^1$ and $u_{\infty}^2$ be a limiting holomorphic building from curves in $\eM^{J^\tau}(c_{\fp',\fq}; c_{\fp,\fq})$ and
 $\eM^{J^\tau}(\mathbf{p}';\mathbf{p})$, respectively.
 Lemma \ref{l:stretchingStripA} and \ref{l:stretchingStripA'}, 
 $u_{\infty}^i$ consist of a $J^-$-holomorphic curve $u_{v_1}^i$ and a $J^+$-holomorphic curve $u_{v_2}^i$.
 Moreover, $u_{v_2}^i$ lies in $\eM^{J^+}(x_{\fp,\fp'};\emptyset)$ for both $i$.
 On the othe hand, $u_{v_1}^1$ lies in $\eM^{J^-}(c_{\fp',\fq};c_{\fp,\fq}, x_{\fp',\fp})$ and
 $u_{v_1}^2$ lies in $\eM^{J^-}(\textbf{p}';\textbf{p},x_{\fp',\fp})$.
 



Therefore, for $\tau \gg 1$,

\begin{align*}
  &\#\eM^{J^\tau}(\textbf{p}',\textbf{p})\\
  =&\#\eM^{J^+}(x_{\fp,\fp'};\emptyset)\cdot\#\eM^{J^-}(p';p,x_{\fp',\fp})\\
  =&\#\eM^{J^+}(x_{\fp,\fp'};\emptyset)\cdot\#\eM^{J^-}(c_{\fp',\fq};c_{\fp,\fq}, x_{\fp',\fp})\\
  =&\#\eM^{J^\tau}(c_{\fp',\fq}; c_{\fp,\fq})
\end{align*}
where the second equality uses Corollary \ref{c:countU} (1) and (3).
\end{proof}

Similarly, we compare the differentials of Type (A2) and (A2').

\begin{lemma}\label{l:A2}
    For $\tau \gg 1$, the algebraic count of following moduli spaces are equal
    \begin{itemize}
  \item $\eM^{J^{\tau}}(c_{\fp,\fq'};c_{\fp,\fq})$,
  \item $\eM^{J^\tau}(\mathbf{q}'^\vee;\mathbf{q}^\vee)$.
\end{itemize}
\end{lemma}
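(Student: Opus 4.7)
The proof follows the same strategy as Lemma \ref{l:A1}, with the roles of the two boundary components swapped. The plan is to apply SFT stretching to both moduli spaces, identify the resulting buildings, and use the local computations from Corollary \ref{c:countU} to relate them through a common $J^+$-factor.

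First, I would take a sequence in each moduli space and extract a limiting holomorphic building via Theorem \ref{t:SFTcompactness}. The analogue of Lemma \ref{l:stretchingStripA'} (for Type (A2)) and its companion (for Type (A2')) applies: each building consists of exactly two vertices $v_1, v_2$, where $u_{v_1}$ is a $J^-$-holomorphic triangle in $SM^-=T^*P$ and $u_{v_2} \in V^{\partial}$ is a $J^+$-holomorphic curve with a unique negative boundary asymptote $x$ which is a Reeb chord of canonical relative grading $0$. By Lemma \ref{l:IndexChangeChord}, after choosing lifts $\fq,\fq'$, the distinguished chord is $x_{\fq,\fq'}$, the unique Reeb chord from $\Lambda_{\fq}$ to $\Lambda_{\fq'}$ of canonical relative grading $0$.

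Crucially, the $J^+$-component lives entirely in $SM^+$ with boundary on $L_1$ (this is the ``top'' component, attached at the $q$-fiber side), and is independent of whether we started with a strip in $\eM^{J^\tau}(\mathbf{q}'^\vee;\mathbf{q}^\vee)$ or in $\eM^{J^\tau}(c_{\fp,\fq'};c_{\fp,\fq})$. Hence both total counts factor through the same number $\#\eM^{J^+}(x_{\fq,\fq'};\emptyset)$. On the $J^-$-side, the triangle for Type (A2) lies in $\eM^{J^-}(\mathbf{q}'^\vee;x_{\fq,\fq'},\mathbf{q}^\vee)$, while the triangle for Type (A2') lies in $\eM^{J^-}(c_{\fp,\fq'};x_{\fq,\fq'},c_{\fp,\fq})$. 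By Corollary \ref{c:countU} parts (1) and (2), both algebraic counts equal $\pm 1$ (in fact, inspection of the proof of Theorem \ref{t:localCount} shows they agree after accounting for signs, which is deferred to Appendix \ref{sec:orientations}).

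Combining these, for $\tau \gg 1$ the SFT gluing result (Theorem \ref{t:SFTgluing}) gives
\begin{align*}
\#\eM^{J^\tau}(\mathbf{q}'^\vee;\mathbf{q}^\vee) &= \#\eM^{J^-}(\mathbf{q}'^\vee;x_{\fq,\fq'},\mathbf{q}^\vee) \cdot \#\eM^{J^+}(x_{\fq,\fq'};\emptyset) \\
&= \#\eM^{J^-}(c_{\fp,\fq'};x_{\fq,\fq'},c_{\fp,\fq}) \cdot \#\eM^{J^+}(x_{\fq,\fq'};\emptyset) \\
&= \#\eM^{J^\tau}(c_{\fp,\fq'};c_{\fp,\fq}).
\end{align*}
The only genuinely subtle point—matching signs rather than just absolute values—is handled in exactly the same way as for Lemma \ref{l:A1}: one verifies that the gluing signs and the orientation identifications from the parallel transport on $\fP$ are compatible, which is the content of Appendix \ref{sec:orientations}. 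Modulo that bookkeeping, the argument is a formal dualization of Lemma \ref{l:A1}.
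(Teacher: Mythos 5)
Your proof is correct and follows essentially the same route as the paper's: both stretch along $\partial U$, both note that the $J^+$-component is a disk with boundary on $L_1^+$ that is shared between the two moduli, and both use Corollary~\ref{c:countU}(1) and~(2) to match the $J^-$-triangles $\eM^{J^-}(\mathbf{q}'^\vee;x_{\fq,\fq'},\mathbf{q}^\vee)$ and $\eM^{J^-}(c_{\fp,\fq'};x_{\fq,\fq'},c_{\fp,\fq})$, so the two counts factor through the same $\#\eM^{J^+}(x_{\fq,\fq'};\emptyset)$. The only superficial difference is that you explicitly invoke the (omitted) Type (A2)/(A2') analogue of Lemma~\ref{l:stretchingStripA'} and Lemma~\ref{l:IndexChangeChord} to identify the distinguished chord, while the paper simply notes that the argument is word-for-word the same as Lemma~\ref{l:A1}.
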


\begin{proof}
The proof is almost word-by-word taken from Lemma \ref{l:A1}.  Lemma \ref{l:stretchingStripA}, \ref{l:stretchingStripA'} and Corollary \ref{c:countU} (1) and (2) implies

\begin{align*}
  &\#\eM^{J^\tau}(\textbf{q}'^\vee;\textbf{q}^\vee)\\
  =&\#\eM^{J^+}(x_{\fq,\fq'})\cdot\#\eM^{J^-}(q'^\vee;x_{\fq,\fq'},q^\vee)\\
  =&\#\eM^{J^+}(x_{\fq,\fq'})\cdot\#\eM^{J^-}(c_{\fp,\fq'};x_{\fq,\fq'}, c_{\fp,\fq})\\
  =&\#\eM^{J^\tau}(c_{\fp,\fq'}; c_{\fp,\fq})
\end{align*}

\end{proof}


The last lemma addresses differentials of Type (C) and (C').

\begin{lemma}\label{l:C}
For $\tau \gg 1$, the algebraic count of following moduli spaces are equal
    \begin{itemize}
  \item $\eM^{J^\tau}(x; \mathbf{q}^\vee,\mathbf{p})$, for some $x\in CF^*(L_0,L_1)$ represented by an intersection outside $U$,
  \item $\eM^{J^\tau}(x; c_{\fp,\fq})$.
\end{itemize}
\end{lemma}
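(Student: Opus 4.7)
The plan is to mirror the neck-stretching arguments used in Lemmas \ref{l:A1} and \ref{l:A2}. I will take SFT limits of sequences of rigid curves in $\eM^{J^\tau}(x; \mathbf{q}^\vee, \mathbf{p})$ and $\eM^{J^\tau}(x; c_{\fp,\fq})$ as $\tau \to \infty$, and show that the resulting holomorphic buildings decompose into a $J^+$-piece outside $U$ that is common to both situations and a $J^-$-piece inside $T^*P$ whose contribution is pinned down by Corollary \ref{c:countU}.

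First I would invoke Lemma \ref{l:triangle} for the Type (C) side and Lemma \ref{l:bigon} for the Type (C') side. In both cases the limiting building has exactly two vertices: a $J^-$-holomorphic curve $u_{v_1}$ and a $J^+$-holomorphic curve $u_{v_2}$ joined along a single Reeb chord $y$ of canonical relative grading $0$. The $J^+$-piece $u_{v_2}$ is in both cases a strip with two negative asymptotes $x$ and $y$, so its contribution is the same number $\#\eM^{J^+}(x; y)$ independent of which moduli we started with. By Lemma \ref{l:IndexChangeChord} together with the requirement that the ends in $SM^-$ lift compatibly, the chord $y$ must be the unique short chord $x_{\fq,\fp}$ that appears in Corollary \ref{c:countU}.

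Next I would identify the $J^-$-components. In the Type (C) case $u_{v_1}$ is a triangle lying in $\eM^{J^-}(\emptyset; \mathbf{p}, x_{\fq,\fp}, \mathbf{q}^\vee)$, whose count is $\pm 1$ by Corollary \ref{c:countU}(1). In the Type (C') case $u_{v_1}$ is a bigon lying in $\eM^{J^-}(\emptyset; c_{\fp,\fq}, x_{\fq,\fp})$, whose count is also $\pm 1$ by Corollary \ref{c:countU}(4). Combining these identifications with the SFT gluing result Theorem \ref{t:SFTgluing} (which produces a unique $\tau \gg 1$ curve from each limiting building) yields
\begin{align*}
\#\eM^{J^\tau}(x; \mathbf{q}^\vee, \mathbf{p})
&= \#\eM^{J^-}(\emptyset; \mathbf{p}, x_{\fq,\fp}, \mathbf{q}^\vee)\cdot \#\eM^{J^+}(x; x_{\fq,\fp}) \\
&= \#\eM^{J^-}(\emptyset; c_{\fp,\fq}, x_{\fq,\fp})\cdot \#\eM^{J^+}(x; x_{\fq,\fp}) \\
&= \#\eM^{J^\tau}(x; c_{\fp,\fq}),
\end{align*}
which gives the desired equality, consistent with the working hypothesis of this section that $\cchar(\K)=2$ so that signs can be suppressed.

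The main obstacle I expect is not the counting itself, which is a routine application of the stretching template, but rather verifying that the orientation induced by gluing the $J^+$-strip onto the $(\mathbf{q}^\vee, \mathbf{p})$-triangle matches the one induced by gluing the same $J^+$-strip onto the $c_{\fp,\fq}$-bigon; this signed comparison is exactly the sort of bookkeeping the authors defer to Appendix \ref{sec:orientations}, and it is subtle because the triangle has three positive asymptotes while the bigon has two, so the gluing sign conventions differ and must be reconciled through the local model of Theorem \ref{t:localCount}.
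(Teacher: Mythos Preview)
Your proposal is correct and follows essentially the same route as the paper: stretch the neck, invoke Lemmas \ref{l:triangle} and \ref{l:bigon} to split each building into a common $J^+$-strip with negative asymptotes $x$ and $x_{\fq,\fp}$ and a $J^-$-piece whose count is $\pm 1$ by Corollary \ref{c:countU}(1) and (4), then glue back. The only cosmetic discrepancy is that the $J^+$-piece has \emph{two negative} asymptotes, so the paper writes it as $\#\eM^{J^+}(x,x_{\fq,\fp};\emptyset)$ rather than $\#\eM^{J^+}(x; x_{\fq,\fp})$; your observation about the sign bookkeeping being deferred to Appendix \ref{sec:orientations} is also exactly how the paper handles it.
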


\begin{proof}
  The strategy is still similar.  Apply the same neck-stretching as in Lemma \ref{l:A1} and \ref{l:A2}, one obtains a building consisting of a 
  triangle and a bigon for $\eM^{J^\tau}(x; \mathbf{q}^\vee, \mathbf{p})$, thanks to Lemma \ref{l:triangle}; and a building consisting of 
  two bigons for $\eM^{J^\tau}(x; c_{\fp,\fq})$ from Lemma \ref{l:bigon}.  
Therefore
\begin{align*}
  &\#\eM^{J^\tau}(x; \mathbf{q}^\vee, \mathbf{p})\\
  =&\#\eM^{J^+}(x,x_{\fq,\fp};\emptyset) \cdot \#\eM^{J^-}(\emptyset;  p, x_{\fq,\fp},q^\vee)\\
  =&\#\eM^{J^+}(x,x_{\fq,\fp};\emptyset) \cdot \#\eM^{J^-}(\emptyset; c_{\fp,\fq}, x_{\fq,\fp})\\
  =&\#\eM^{J^\tau}(x; c_{\fp,\fq})
\end{align*}
where the second equality uses Corollary \ref{c:countU} (1) and (4).
\end{proof}

As the end product of this section, we have

\begin{proof}[Proof of Proposition \ref{p:CohLevelIso} when $\cchar(\K)=2$]
 For $\tau \gg 1$, the differential on $C_0$ and $C_1$ can be identified by Lemma \ref{l:A1}, \ref{l:A2}, \ref{l:stretchingStripB}, \ref{l:C} and \ref{l:A3Dtype}.
\end{proof}

The proof of Proposition \ref{p:CohLevelIso} when $\cchar(\K) \neq 2$ is given in Appendix \ref{sec:orientations}.

\section{Categorical level identification}\label{s:categorical}

In this section, we want to prove Theorem \ref{t:Twist formula} by showing the following:

\begin{thm}\label{t:reformulation}
 For any object $\eE^1 \in Ob(\cF)$, we can perform a Hamiltonian perturbation for $\eE^1$ to obtain another object $(\eE^1)'$ of $\cF$ such that
 there is a degree zero cochain $c_\eD \in hom^0_{\cF^{\perf}}(\tau_P((\eE^1)'),T_\eP(\eE^1))$ so that $c_\eD$ is a cocycle, and
 \begin{align}
  \mu^2(c_\eD, \cdot):hom^0_{\cF^{\perf}}(\eE^0,\tau_P((\eE^1)')) \to hom^0_{\cF^{\perf}}(\eE^0,T_\eP(\eE^1)) \label{eq:QIsoms}
 \end{align}
is a quasi-isomorphism for all $\eE^0 \in Ob(\cF)$

In particular, $\tau_P(\eE^1) \simeq \tau_P((\eE^1)') \simeq T_\eP(\eE^1)$ as perfect $A_{\infty}$ right $\cF$-modules.
\end{thm}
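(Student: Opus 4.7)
The plan is to promote the cohomological isomorphism of Proposition \ref{p:CohLevelIso} into a chain-level quasi-isomorphism by constructing the cocycle $c_\eD$ explicitly. Via the Yoneda embedding and the mapping cone description \eqref{eq:mappingCone}, a degree zero element of $hom_{\cF^{\perf}}^0(\tau_P((\eE^1)'), T_\eP(\eE^1))$ unpacks into a leading component
\[
(\alpha,\beta) \in \bigl[hom_\cF(\eP,\eE^1) \otimes_\Gamma hom_\cF(\tau_P((\eE^1)'),\eP)\bigr]^{-1} \oplus hom^0_\cF(\tau_P((\eE^1)'),\eE^1),
\]
together with higher-order pieces $c_\eD^{|d}$ for $d \geq 1$ encoding the $A_\infty$ pre-module morphism structure; closedness translates into a hierarchy of quadratic relations, the zeroth being $\mu^1_\cF(\beta) + \mu^2_\cF(\alpha_2,\alpha_1) = 0$ together with the tensor-factor compatibility on $\alpha$.

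First I would choose $(\eE^1)'$ as a small Hamiltonian pushoff of $\eE^1$, arranged so that $CF((\eE^1)', \eE^1)$ admits a distinguished cohomological unit $e$, and so that $(\eE^1)' \cap \tau_P(\eE^1)$ splits cleanly, as in Section \ref{s:Intersection}, into inside-$U$ intersections of the form $c_{\fp,\fq}$ and outside-$U$ intersections identified bijectively with generators of $CF((\eE^1)', \eE^1)$. This is precisely the geometric setup that makes Lemma \ref{l:CorrIntersections}'s correspondence $\iota$ transparent between the complexes $C_0$ and $C_1$ for this choice of test object.

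Next I would define the leading part of $c_\eD$ by taking $\beta$ to be the image of the cohomological unit $e$ inside $hom^0(\tau_P((\eE^1)'),\eE^1)$ via the outside-$U$ identification, and $\alpha$ to be the sum over the inside-$U$ intersections of the generators $\mathbf{q}^\vee \otimes_\Gamma \mathbf{p}$ that are paired with them by $\iota$. The cocycle condition should follow from a neck-stretching analysis of the same shape as Lemma \ref{l:A1}--\ref{l:C}: contributions to $\mu^1_\cF(\beta)$ landing on inside-$U$ components are precisely accounted for by Type (C) triangles, which is exactly what $\mu^2_\cF(\alpha_2,\alpha_1)$ computes, while the tensor-factor compatibility of $\alpha$ reduces to Type (A1) and (A2) strip counts. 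The higher components $c_\eD^{|d}$ are then built by counting holomorphic polygons with one output asymptotic to one of the generators of $c_\eD$ and $d$ additional inputs from morphism complexes in $\cF$, with the pre-module $A_\infty$-equations coming from the standard codimension-one boundary analysis of one-dimensional moduli.

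The main obstacle I anticipate is not the leading cocycle condition (which is essentially a reformulation of Section \ref{sec:quasi_isomorphism}), but the verification of the full $A_\infty$ pre-module morphism equations for the higher components $c_\eD^{|d}$. This demands a parametric version of the SFT compactness and gluing of Sections \ref{sec:review_of_symplectic_field_theory_and_dimension_formulae}--\ref{sec:quasi_isomorphism} applied to moduli with variable marked inputs, where one must again rule out exceptional bubbling via dimension bounds; here the hypothesis $n \geq 3$ reenters through Proposition \ref{p:nosidebubble}. Once $c_\eD$ is in hand, the quasi-isomorphism \eqref{eq:QIsoms} reduces to checking that $\mu^2(c_\eD,-)$ realizes $\iota$ on cohomology for arbitrary $\eE^0 \in \cF$, which follows by construction from Proposition \ref{p:CohLevelIso} together with the functoriality statement Corollary \ref{c:ConeIndepAuxData}.
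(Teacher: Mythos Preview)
Your proposal has the right architecture but contains two genuine gaps and one red herring.

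First, the red herring: the ``higher-order pieces $c_\eD^{|d}$'' you worry about are not an obstacle. Since $\tau_P((\eE^1)')$ is a Yoneda module, a module morphism to $T_\eP(\eE^1)$ is determined by its value at the identity, namely the pair $(\alpha,\beta)$; the higher components are supplied automatically by the module structure of $T_\eP(\eE^1)$. The paper works directly with the chain complex $\eD = T_\eP(\eE^1)(\tau_P((\eE^1)'))$ and never has to build or verify higher pieces separately.

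The first real gap is your choice of $\alpha$. Taking ``the sum of all generators $\fq^\vee \otimes_\Gamma \fp$'' will not produce a cocycle: the cocycle equation forces $\mu^2_\cF(\alpha)$ to cancel $\mu^1_\cF(\beta)$, and $\mu^1_\cF(t_\eD)$ is \emph{not} the sum of all $c_{i,g,j}^\vee$. The paper instead shows, via an action-filtration argument (Lemma~\ref{l:actionFiltration}, Proposition~\ref{p:matrixA}), that $\mu^1(t_\eD)$ is lower-triangular in the indices $(i,j)$ with $\pm\mathrm{id}$ on the diagonal. Then, rather than writing $\alpha$ down explicitly, the paper uses a rank comparison: Corollary~\ref{c:SupportingPoints} shows every degree-zero class is represented by some $t_\eD + \alpha$, and the equality $\mathrm{rank}\,H^0(\eD)=\mathrm{rank}\,HF^0((\eE^1)',\eE^1)$ (from Proposition~\ref{p:CohLevelIso}) forces the existence of a cocycle with $\beta=t_\eD$. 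The triangular structure of $\alpha$ (Proposition~\ref{p:cocycleElements}) is then read off from the triangularity of $\mu^1(t_\eD)$ together with the isomorphism $\Phi^\otimes_{i,g,j}$ of Proposition~\ref{p:xitauxj}.

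The second gap is the last step. That $\mu^2(c_\eD,-)$ ``realizes $\iota$ on cohomology'' does \emph{not} follow by construction from Proposition~\ref{p:CohLevelIso}: that proposition produces a specific chain isomorphism $\iota$, whereas $\mu^2(c_\eD,-)$ is a different map which you must independently show is a quasi-isomorphism. The paper does this by a filtration argument on the test complex $hom(\eE^0,\tau_P((\eE^1)'))$ (inside-$U$ vs.\ outside-$U$ generators): on the subcomplex, $\mu^2(c_\eD,-)$ reduces to multiplication by the unit $e_\eE$ (Proposition~\ref{p:SubComQIso}); on the quotient, the lower-triangular structure of $c_\eD$ makes $\mu^2_q(c_\eD,-)$ filtered with invertible associated graded (Lemma~\ref{l:FiltrationV}, Proposition~\ref{p:FinalIsom}). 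This triangular structure is exactly why the action-filtration input was needed in the construction of $c_\eD$ in the first place.
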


The overall strategy goes as follows.
By Proposition \ref{p:CohLevelIso}, Corollary \ref{c:ConeIndepAuxData} and the fact that Hamiltonian isotopic objects are quasi-isomorphic,
we know that
\begin{align}
 H(hom_{\cF^{\perf}}(\tau_P((\eE^1)'),T_\eP(\eE^1)))=HF(\tau_P(\eE^1),\tau_P(\eE^1)) \label{eq:CohIsom}
\end{align}
In particular, there are non-exact degree zero cocycles in $hom_{\cF^{\perf}}(\tau_P((\eE^1)'),T_\eP(\eE^1))$.
We will pick an appropriate one which is denoted by $c_\eD$.
By a Hamiltonian perturbation if necessary, to check that $c_\eD$ satisfies \eqref{eq:QIsoms},
it suffices to check it for those $\eE^0$ such that $L_0$ intersects $L_1$, $(L_1)'$ and $P$ transversally.
Therefore, we can apply neck-stretching along $\partial U$ to compute $\mu^2(c_\eD, \cdot)$ for $\tau \gg 1$ (see Section \ref{sub:gluing_in_sft}).


\subsection{Hunting for degree zero cocycles}

To find a degree zero cocycle, we need to first analyze the differential of 
$hom_{\cF^{\perf}}(\tau_P((\eE^1)'),T_\eP(\eE^1))$ by neck-stretching.

Let $L_1'$ be a $C^2$-small Hamiltonian push-off of $L_1$ such that $L_1' \cap U$ is a union of cotangent fibers.
Let $q_1,\dots,q_{d_{L_1}} \in CF(L_1,P)$ and $q_1',\dots,q_{d_{L_1}}' \in CF(L_1',P)$
be the cochain representatives of the geometric intersection points, where $d_{L_1}=\# (P \cap L_1)=\#(P \cap L_1')$.  We also number the
intersection points so that $d_P(q_i,q_i')\ll\epsilon$ in the standard quotient round metric.
Let $\Lambda_{q_i},\Lambda_{q_j'} \subset \partial U$ be the cospheres at $q_i$ and $q_j'$, respectively.
We assume $q_i,q_j'$ satisfy Corollary \ref{c:ReebDynamicMorsified}.
Let $\fq_i,\fq_j'$ be a lift of $q_i,q_j'$, respectively, for all $i,j$.
Our focus will be the cochain complex
\begin{align}
\eD:=hom_{\cF^{\perf}}(\tau_P((\eE^1)'),T_\eP(\eE^1))=(CF(\eP,\eE^1)\otimes_{\Gamma} CF(\tau_P((\eE^1)'),\eP))[1] \oplus CF(\tau((\eE^1)'),\eE^1) \label{eq:eD}
\end{align}
which is generated by elements supported at the intersection points
\begin{align}
 \left\{
 \begin{array}{ll}
 q_i^\vee  \otimes \tau_P(q_j'), &\text{ for }   i,j=1,\dots,d_{L_1} \\
c_{i,g,j}^\vee:=c_{\fq_i,g\fq_j'}^\vee, &\text{ for }g \in \Gamma, i,j=1,\dots,d_{L_1}\\
w_k, &\text{ for }k=1,\dots,\# (L_1' \cap L_1)
 \end{array}
\right.
\end{align}
The first two kinds of intersection points are inside $U$ while $\{w_k\}$ are outside $U$.
Elements supported at $c_{i,g,j}^\vee$ and $w_k$ are given by 
\begin{align}
 Hom_\K(\tau_P((\eE^1)')_{c_{\fq_i,g\fq_j'}},\eE^1_{c_{\fq_i,g\fq_j'}}) \text{   and   } Hom_\K(\tau_P((\eE^1)')_{w_k},\eE^1_{w_k})
\end{align}
respectively.
On the other hand, the elements supported at $q_i^\vee  \otimes \tau_P(q_j')$ are generated by
\begin{align}\label{eq:EleSupportedAtTensor}
 (\psi^2 \otimes \fq_i^\vee)  \otimes (g\tau_\fP(\fq_j') \otimes \psi^1), &\text{ for } \psi^2 \in \eE^1_{q_i}, \psi^1 \in Hom_\K(\tau_P((\eE^1)')_{\tau_P(q_j')},\K),  g \in \Gamma
\end{align}
Here we use the commutativity $\pi(\tau_\fP(\fq_j'))=\tau_P(\pi(\fq_j'))=\tau_P(q_j')$.

\begin{lemma}
 With respect to canonical relative grading, we have
 \begin{align}
\left\{
\begin{array}{ll}
 |q_i^\vee|=0, &\text{for }q_i^\vee \in hom(P,T^*_{q_i} P) \\
 |\tau_P(q_j')|=1, &\text{for }\tau_P(q_j') \in hom(\tau_P(T^*_{q_j'} P),P) \\
 |c_{i,g,j}^\vee|=1, &\text{for }c_{i,g,j}^\vee= \pi(\tau_\fP(T^*_{g\fq_j'} \fP) \cap T^*_{\fq_i} \fP) \in hom(\tau_P(T^*_{q_j'} P),T^*_{q_i} P)
\end{array}
\right.
\end{align}
\end{lemma}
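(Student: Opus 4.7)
The first assertion, $|q_i^\vee|=0$, is immediate from the canonical relative grading \eqref{eq:CanonicalGrading}: by setup, $CF(P,T^*_{q_i}P)$ is concentrated in degree zero, and $q_i^\vee\in hom(P,T^*_{q_i}P)$ is such a generator. No further computation is needed.

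The third assertion, $|c_{i,g,j}^\vee|=1$, follows by lifting to the universal cover $\mathbf{U}=T^*\mathbf{P}$ and invoking the index computation already carried out in the proof of Lemma \ref{l:A3Dtype}. Under the covering $\pi:\mathbf{U}\to U$, which is a local symplectomorphism preserving the canonical trivialization of $(\Lambda^{top}_\C T^*M)^{\otimes 2}$, the generator $c_{i,g,j}^\vee$ corresponds to $\mathbf{c}_{\mathbf{q}_i,g\mathbf{q}_j'}^\vee\in CF(\tau_{\mathbf{P}}(T^*_{g\mathbf{q}_j'}\mathbf{P}),T^*_{\mathbf{q}_i}\mathbf{P})$. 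Lemma \ref{l:A3Dtype} gives $|\mathbf{c}_{\mathbf{p},\mathbf{q}}|=n-1$ in canonical relative grading, extracted from the Seidel exact triangle applied in the universal cover. Convention \ref{c:DualGenNotation} then yields $|\mathbf{c}_{\mathbf{p},\mathbf{q}}^\vee|=n-(n-1)=1$, and this equality descends to $|c_{i,g,j}^\vee|=1$ because the covering preserves gradings.

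The second assertion, $|\tau_P(q_j')|=1$, is the substantive calculation. My plan is to work in the universal cover $\mathbf{U}=T^*\mathbf{P}$ with $\mathbf{P}=S^n$ equipped with its round metric, so that $\tau_{\mathbf{P}}$ is the explicit model Dehn twist from smoothed geodesic flow at time $\pi$. The intersection $\tau_{\mathbf{P}}(T^*_{\mathbf{q}_j'}\mathbf{P})\cap\mathbf{P}$ is a single transverse point, and its Maslov grading can be read off from formula \eqref{eq:MaslovGrading}. The key input is the grading shift that $\tau_{\mathbf{P}}$ induces on cotangent fibers, which may be extracted from the Dehn twist exact triangle
\[ T^*_{\mathbf{q}_j'}\mathbf{P}\to \tau_{\mathbf{P}}(T^*_{\mathbf{q}_j'}\mathbf{P})\to \mathbf{P}[1] \]
read in the Fukaya category of $\mathbf{U}$, combined with $HF(\mathbf{P},T^*_{\mathbf{q}_j'}\mathbf{P})=\K$ concentrated in degree zero. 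Applying $hom(-,\mathbf{P})$ to this triangle identifies the unique transversal generator of $CF(\tau_{\mathbf{P}}(T^*_{\mathbf{q}_j'}\mathbf{P}),\mathbf{P})$ (coming from the cone shift $\mathbf{P}[1]$ part) as sitting in degree one; the generator from $hom(T^*_{\mathbf{q}_j'}\mathbf{P},\mathbf{P})$ in degree $n$ is not represented by a geometric intersection after perturbation. Then $|\tau_{\mathbf{P}}(\mathbf{q}_j')|=1$ descends via $\pi$ to give the claim.

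The main obstacle is bookkeeping the choice of grading on $\tau_P$ inherited from Section \ref{sec:floer_cohomology_with_local_systems}: once one confirms that the Hamiltonian-isotopy lift of $\tau_P$ to a graded symplectomorphism produces the shift $[1]$ in the exact triangle above (rather than some other integer), the Maslov angle computation at the single transverse intersection is a direct verification, and all three equalities follow.
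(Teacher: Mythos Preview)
Your proof is correct and essentially matches the paper's: the first and third assertions are handled identically, and for the second assertion the paper applies $HF(P,-)$ to the Seidel long exact sequence to find $|\tau_P(q_j')^\vee|=n-1$ and then dualizes via Convention~\ref{c:DualGenNotation}, whereas you apply $hom(-,\mathbf{P})$ in the universal cover to reach $|\tau_P(q_j')|=1$ directly---these are dual forms of the same rank-one argument. Your closing concern about bookkeeping the graded lift of $\tau_P$ is unnecessary: the shift $[1]$ in the triangle $T^*_{\mathbf{q}_j'}\mathbf{P}\to \tau_{\mathbf{P}}(T^*_{\mathbf{q}_j'}\mathbf{P})\to \mathbf{P}[1]$ is intrinsic to Seidel's cone description and does not depend on a further choice.
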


\begin{proof}
 The fact that $|q_i^\vee|=0$ follows from the definition of canonical relative grading \eqref{eq:CanonicalGrading}.
 $|c_{i,g,j}^\vee|=1$ follows from $|c_{i,g,j}|=n-1$ (see Lemma \ref{l:A3Dtype}).
 Finally, from the long exact sequence
\begin{align}
   HF^k(P, T^*_{q_j'} P) \to HF^k(P, \tau_P(T^*_{q_j'} P)) \to HF^{k+1}(P, P) \to HF^{k+1}(P, T^*_{q_j'} P) 
\end{align}
and the fact that $HF(P,\tau_P(T^*_{q_j'} P)) $ has rank $1$,
we know that $HF^{0}(P, P) \simeq HF^{0}(P, T^*_{q_j'} P)$ and 
$HF^k(P, \tau_P(T^*_{q_j'} P)) \to HF^{k+1}(P, P)$ is an isomorphism when $k=n-1$.
Therefore, $|\tau_P(q_j')^\vee|=n-1$ and $|\tau_P(q_j')|=n-|\tau_P(q_j')^\vee|=1$.
\end{proof}

Without loss of generality, we assume that there is a unique $w_k$ with degree $0$ and we denote it by $e_L$.
All other $w_k$ has $|w_k|>0$. With generators understood, we now recall that the differential for element $\psi_x$ supported at $x=c_{i,g,j}^\vee$ or $x=w_k$ is given by
$\mu^1(\psi_x)=\mu^1_\cF(\psi_x)$, and for element  supported at $q_i^\vee  \otimes \tau_P(q_j')$ is given by (see \eqref{eq:mappingCone})
\begin{align}
 \mu^1(\psi^2 \otimes \fq_i^\vee  \otimes g\tau_\fP(\fq_j') \otimes \psi^1)
 =& (-1)^{|g\tau_\fP(\fq_j')|}\mu^1_\cF(\psi^2 \otimes \fq_i^\vee) \otimes (g\tau_\fP(\fq_j') \otimes \psi^1)+(\psi^2 \otimes \fq_i^\vee) \otimes \mu^1_\cF(g\tau_\fP(\fq_j') \otimes \psi^1) \nonumber\\
 &+\mu^2_\cF(\psi^2 \otimes \fq_i^\vee,g\tau_\fP(\fq_j') \otimes \psi^1) \label{eq:differentials}
\end{align}
Our focus will be put on
 $\mu^1_\cF(\psi_{e_L})$ and  $\mu^2_\cF(\psi^2 \otimes \fq_i^\vee,g\tau_\fP(\fq_j') \otimes \psi^1)$.

\subsubsection{Computing $\mu^1_\cF(\psi_{e_L})$}


Let $h:L_1 \to \R$ be a smooth function such that $dh=\theta|_{L_1}$.
We define $h_i:=h|_{\Lambda_{q_i}}$ which are constants because $L_1$ is cylindrical near $\Lambda_{q_i}$.
Hamiltonian push-off induces $h':L_1' \to \R$ such that $dh'=\theta|_{L_1'}$ and $h_i':=h'|_{\Lambda_{q_i'}}$ are constants.
By possibly reordering the index set of $i$, we assume that $h_1 \le  h_2 \le \dots \le h_d$.
For each $i$, by relabelling if necessary, we also assume that $q_i'$ is the closest to $q_i$ among points in $\{q_j'\}_{j=1}^{d_{L_1}}$, and 
$\fq_i'$ is the closest to $\fq_i$ among points in $\{g\fq_i'\}_{g \in \Gamma}$.

We recall from \eqref{eq:ActionChordDefn} that the action of a Reeb chord $x$ from $\Lambda_{q_j'}$ to $\Lambda_{q_i}$ is given by
\begin{align}
A(x):= L(x) +h_j' -h_i 
\end{align}


\begin{lemma}\label{l:action_chord}
 There is a constant $\epsilon>0$ depending only on $\{q_i\}_{i=1}^{d_{L_1}}$ and $L_1$ such that when $L_1'$ is a sufficiently small Hamiltonian push-off of $L_1$,
 \begin{itemize}
 \item $A(x)> \epsilon$ if $x$  is a Reed chord from $\Lambda_{q_j'}$ to $\Lambda_{q_i}$ and $j>i$, and
 \item $A(x)> \epsilon$ if $x$  is a Reed chord from $\Lambda_{q_i'}$ to $\Lambda_{q_i}$ but not the shortest one.
 \end{itemize}
\end{lemma}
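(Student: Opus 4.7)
The plan is to exploit the formula $A(x) = L(x) + h_j' - h_i$ together with the convergence $h_j' \to h_j$ and $q_j' \to q_j$ as $L_1'$ approaches $L_1$, and to separate the two bullets into a finite case analysis depending on the geometry of the points $\{q_i\} \subset P$ and the values $\{h_i\}$. Since only finitely many Reeb chords of bounded action are involved (anything of length larger than the truncation $T$ of Corollary \ref{c:ReebDynamicMorsified} will have action $> \epsilon$ automatically once $|h_j' - h_i|$ is smaller than, say, $T/2$), it suffices to bound the finitely many candidate chords uniformly.

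For the first bullet, I would split the index set $\{(i,j) : j > i\}$ into $I_{>} = \{(i,j) : h_j > h_i\}$ and $I_{=} = \{(i,j) : j>i,\ h_j = h_i\}$. For $(i,j) \in I_{>}$, let $\delta := \tfrac{1}{2}\min_{(i,j) \in I_>} (h_j - h_i) > 0$; once the push-off is small enough, $h_j' - h_i > \delta$, and since every Reeb chord has positive length we get $A(x) > \delta$. For $(i,j) \in I_{=}$, the points $q_i \neq q_j$ are distinct, so $d_P(q_i,q_j) > 0$; by continuity the length of any Reeb chord from $\Lambda_{q_j'}$ to $\Lambda_{q_i}$ is bounded below by a definite fraction of $d_P(q_i,q_j)$ for $L_1'$ close enough, while $|h_j' - h_i|$ can be made arbitrarily small, yielding a uniform lower bound on $A(x)$.

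For the second bullet, I would exploit that the short chord from $\Lambda_{q_i'}$ to $\Lambda_{q_i}$ corresponds to a geodesic of length approximately $d_P(q_i', q_i) \to 0$, whereas any other chord corresponds to a geodesic whose length is bounded below by a constant $\ell_0(q_i, P) > 0$ coming from the length spectrum of $P$ (essentially the next shortest geodesic loop based at $q_i$). Since $\{q_i\}$ is a fixed finite collection on the compact manifold $P$, one obtains a uniform $\ell_0 > 0$; upper-semicontinuity of this spectrum under small perturbation of endpoints means non-shortest chords from $\Lambda_{q_i'}$ to $\Lambda_{q_i}$ still have length $\geq \ell_0/2$ for $L_1'$ close enough. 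Combined with $|h_i' - h_i|$ small, this gives the desired uniform lower bound on $A(x)$.

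The main technical obstacle is making the lower bound genuinely uniform: one needs a finiteness statement (provided by the truncation $T$ together with Corollary \ref{c:ReebDynamicMorsified}) so that only finitely many Reeb chord types enter the analysis, and one must also keep track of the small deformation of the contact form used in the Morsification, which multiplies chord lengths by $1 + O(\delta)$ and hence does not affect the qualitative estimates. After collecting all these bounds and taking $\epsilon$ to be the minimum of the finitely many positive constants produced above, the result follows by shrinking the Hamiltonian push-off $L_1'$ accordingly.
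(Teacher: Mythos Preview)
Your proposal is correct but more elaborate than the paper's. The paper observes a single uniform length lower bound: there is $\epsilon > 0$ (depending only on $\{q_i\}$ and $L_1$) with $L(x) > 3\epsilon$ for every Reeb chord from $\Lambda_{q_j}$ to $\Lambda_{q_i}$ with $i \neq j$, and for every non-constant Reeb chord from $\Lambda_{q_i}$ to itself; after the small push-off this becomes $L(x) > 2\epsilon$ for all the chords appearing in either bullet. Combining this with the ordering convention $h_1 \le \dots \le h_d$ (set up just before the lemma), which gives $h_j' - h_i > -\epsilon$ for $j \ge i$ once the push-off is small, yields $A(x) = L(x) + h_j' - h_i > \epsilon$ for both bullets in one stroke. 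Your split of the first bullet into $I_>$ and $I_=$ is therefore unnecessary: since $q_i \neq q_j$ in both subcases, the length bound already does all the work, and there is no need to exploit strict positivity of $h_j' - h_i$ separately. Your truncation argument via $T$ is likewise redundant, as the length lower bound holds for all such chords, not just those of bounded length.
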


\begin{proof}
 There is a constant $\epsilon>0$ depending only on $\{q_i\}_{i=1}^{d_{L_1}}$ and $L_1$ such that $L(x) > 3\epsilon$
 if $x$ is either a Reeb chord from $\Lambda_{q_j}$ to $\Lambda_{q_i}$ and $i \neq j$, or it is a {\bf non-constant} Reeb chord from $\Lambda_{q_i}$ to itself.
 We can choose a small Hamiltonian perturbation such that $L(x) > 2\epsilon$
 if either $x$ is a Reeb chord from $\Lambda_{q_j'}$ to $\Lambda_{q_i}$, or a non-shortest  Reeb chord from $\Lambda_{q_i'}$ to $\Lambda_{q_i}$.
 If $j \ge i$, we have $h_j \ge h_i$ so we can assume the Hamiltonian chosen is small enough such that $h_j' -h_i > -\epsilon$ and therefore $A(x)=L(x) +h_j' -h_i > \epsilon$ in both cases listed in the lemma.
\end{proof}

For each $i$, we denote the shortest Reeb chord from $\Lambda_{q_i'}$ to $\Lambda_{q_i}$ by $x_{i',i}$.
With respect to canonical relative grading, we have $|x_{i',i}|=0$.
Since $\fq_i'$ is the closest to $\fq_i$ among points in $\{g\fq_i'\}_{g \in \Gamma}$, if we lift the Reeb chord $x_{i',i}$ to a Reeb chord starting from 
$\Lambda_{\fq_i'}$, then it ends on $\Lambda_{\fq_i}$.

The following lemmata (\ref{l:DiffIdentify}, \ref{l:actionFiltration} and \ref{l:IdentityTriangle}) concern some moduli of rigid bigon with input being $e_L$.
We start with the case when the output lies outside $U$.

\begin{lemma}\label{l:DiffIdentify}
For $\tau \gg 1$, rigid elements in $\eM^{J^{\tau}}(w_k;e_L)$ with respect to boundary conditions $(\tau_P(L_1'),L_1)$ and $(L_1',L_1)$ (i.e. they
contirbute to the differential in $CF(\tau_P(L_1'),L_1)$ and $CF(L_1',L_1)$), respectively, can be 
canonically identified.
\end{lemma}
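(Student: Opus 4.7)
The plan is to use the neck-stretching setup already developed in Section \ref{sec:review_of_symplectic_field_theory_and_dimension_formulae}, combined with the key geometric observation that $\tau_P$ is supported inside $U$, so that $\tau_P(L_1')$ and $L_1'$ agree as subsets of $M\setminus U$. Since $w_k$ and $e_L$ are intersection points of $L_1'$ and $L_1$ lying outside $U$, any rigid SFT limit building produced by stretching the neck along $\partial U$ should live entirely in $SM^+$, where the two boundary conditions are indistinguishable.

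First I would fix a smooth family $\{J^\tau\}$ that is $R$-adjusted to a perturbation $Y$ of $\partial U$, with $J^+ \in \cJ^{+,reg}$ and $J^Y \in \cJ^{cyl,reg}$ as in Propositions \ref{p:TopRegular}, \ref{p:IntermediateRegular}. For each of the two boundary conditions $(\tau_P(L_1'),L_1)$ and $(L_1',L_1)$, pick a sequence $u_k \in \eM^{J^{\tau_k}}(w_k; e_L)$ with $\tau_k \to \infty$ and apply Theorem \ref{t:SFTcompactness} to extract a limiting building $u_\infty$. By Proposition \ref{p:nosidebubble}, $u_\infty$ satisfies \eqref{eq:SFTlimitCondition}, and since both asymptotes $e_L, w_k$ are Lagrangian intersection points lying outside $U$, Lemma \ref{l:stretchingStripB} applies and forces $u_\infty$ to consist of a single vertex $v$ with $l_\eT(v) = 1$, i.e. a $J^+$-holomorphic strip in $SM^+$ with no Reeb chord asymptotes.

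Next I would invoke the observation that $\tau_P$ is compactly supported in $U$ by construction, hence $\tau_P(L_1') \cap (M \setminus U) = L_1' \cap (M \setminus U)$, and their cylindrical extensions $S\tau_P(L_1')^+$ and $SL_1'^+$ in $SM^+$ coincide exactly. Therefore the set of rigid $J^+$-holomorphic strips in $SM^+$ with boundary conditions $(\tau_P(L_1'),L_1)$ asymptotic to $e_L$ and $w_k$ is literally identical to the set with boundary conditions $(L_1',L_1)$. This gives a tautological bijection on the level of SFT limits, and combined with Theorem \ref{t:SFTgluing} applied to each side, we obtain for $\tau$ sufficiently large a canonical bijection
\begin{equation*}
\eM^{J^\tau}(w_k; e_L)_{(\tau_P(L_1'),L_1)} \;\simeq\; \eM^{J^\infty}(w_k; e_L) \;\simeq\; \eM^{J^\tau}(w_k; e_L)_{(L_1',L_1)}
\end{equation*}
between rigid elements. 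The signs agree as well since the orientation data on the relevant strips depends only on the $SM^+$-part of the configuration.

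I do not expect a serious obstacle here: the lemma is essentially a direct corollary of Lemma \ref{l:stretchingStripB} together with the compact support of $\tau_P$. The only point requiring a touch of care is ensuring that the families $\{J^\tau\}$ used for the two boundary conditions can be taken to agree on $SM^+$ (which we are free to arrange, since the auxiliary choices on $SM^+$ are independent of the Lagrangian labels inside $U$) so that the bijection is truly canonical rather than merely a bijection of cardinalities.
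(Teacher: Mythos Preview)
Your proposal is correct and follows essentially the same approach as the paper: invoke the reasoning of Lemma \ref{l:stretchingStripB} to see that the SFT limit consists of a single $J^+$-holomorphic strip in $SM^+$, then use that $\tau_P(L_1')$ and $L_1'$ coincide outside $U$ so the two moduli spaces are literally the same there. The paper's proof is a two-line version of exactly this argument, leaving the gluing step and the identification of boundary conditions implicit.
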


\begin{proof}
By the same 
reasoning as in Lemma \ref{l:stretchingStripB}, as $\tau$ goes to infinity, 
the holomorphic building $u_{\infty}=(u_v)_{v \in V(\eT)}$ consists of exactly one vertex $v$ and $u_v$ maps to $SM^+$.
The result follows.
\end{proof}

In Lemma \ref{l:action_chord}, the $\epsilon$ is independent of perturbation.
 Therefore, we can choose a perturbation such that the action of $e_L$ in $hom(L_1',L_1)$ (and hence in $hom(\tau(L_1'),L_1)$) is less than $\epsilon$.
In this case, we have

\begin{lemma}\label{l:actionFiltration}
Let $\epsilon$ satisfy Lemma \ref{l:action_chord}.
If $A(e_L) < \epsilon$, then for all $j>i$ and $g \in \Gamma$ (or $j=i$ and $g \neq 1_\Gamma$), there is no rigid element in $\eM^{J^{\tau}}(c_{i,g,j}^\vee;e_L)$ for $\tau \gg 1$.
\end{lemma}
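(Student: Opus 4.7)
The plan is to derive a contradiction from an action estimate, using SFT neck-stretching to localize the relevant Reeb chord. Suppose for contradiction there exist $\tau_k \to \infty$ and rigid $u_k \in \eM^{J^{\tau_k}}(c_{i,g,j}^\vee; e_L)$. Theorem \ref{t:SFTcompactness} and Lemma \ref{l:actionBoundGlobal} give, after passing to a subsequence, a limit building $u_\infty = (u_v)_{v \in V(\eT)}$ whose asymptotes satisfy the bounds of Corollary \ref{c:ReebDynamicMorsified}. Proposition \ref{p:nosidebubble} then forces $n_\eT = 1$, $V^{int} = \emptyset$, all compact edges to carry Reeb chords, and $e_L$ and $c_{i,g,j}^\vee$ to be the only Lagrangian intersection asymptotes of the whole building; moreover each $V^\partial$ plane has a single negative Reeb chord asymptote.

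First I would argue that $u_\infty$ consists of exactly two components. Since $e_L$ sits outside $U$ and $c_{i,g,j}^\vee$ inside $U$, they attach to distinct vertices $u_0^+ \subset SM^+$ and $u_0^- \subset SM^-$. By Lemma \ref{l:noUcurve} every level-$0$ vertex must carry a Lagrangian intersection asymptote, and the only such inside $U$ is $c_{i,g,j}^\vee$, so $u_0^-$ is the unique level-$0$ vertex. Applying the dimension formula \eqref{eq:VirdimFormula2} to $u_0^-$ with $|c_{i,g,j}^\vee| = 1$ (canonical relative grading), $\mbb(c_{i,g,j}^\vee) = 0$, and $|y_s| \le 0$ from Corollary \ref{c:ReebDynamicMorsified} forces $u_0^-$ to be a bigon with a single positive Reeb chord asymptote $y$ of canonical relative grading $0$. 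Any additional $V^\partial$ plane would have to attach its Reeb chord asymptote to a level-$0$ vertex, i.e.\ to $u_0^-$; but $u_0^-$ has only the one Reeb chord asymptote $y$, so admitting such a plane would disconnect the tree. Hence the entire building is $u_0^- \cup_y u_0^+$.

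The key geometric step is the identification of $y$ after lifting. Since $\tau_P$ is supported strictly in the interior of $U$, one has $\tau_P|_{\partial U} = \mathrm{id}$, so the cylindrical ends of $\tau_P(T^*_{q_j'}P)$ and $T^*_{q_i}P$ meeting $u_0^-$ near $\partial U$ coincide with $\Lambda_{q_j'}$ and $\Lambda_{q_i}$; thus $y$ is a Reeb chord from $\Lambda_{q_j'}$ to $\Lambda_{q_i}$. Lifting $u_0^-$ to $\fU = T^*\fP$: the boundary arc on $\tau_P(T^*_{q_j'}P)$ meets $c_{i,g,j}^\vee$, whose lift is the point $c_{\fq_i, g\fq_j'}^\vee \in \tau_\fP(T^*_{g\fq_j'}\fP) \cap T^*_{\fq_i}\fP$, so the lifted Reeb chord runs from $\Lambda_{g\fq_j'}$ to $\Lambda_{\fq_i}$. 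When $j > i$, Lemma \ref{l:action_chord} directly gives $A(y) > \epsilon$. When $j = i$ and $g \neq 1_\Gamma$, our convention that $\fq_i'$ is the closest lift of $q_i'$ to $\fq_i$ ensures $g\fq_i' \neq \fq_i'$, so $y$ is not the shortest Reeb chord from $\Lambda_{q_i'}$ to $\Lambda_{q_i}$, and Lemma \ref{l:action_chord} again yields $A(y) > \epsilon$.

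Finally, applying Lemma \ref{l:actionBound} to $u_0^+$ gives $A(e_L) \ge A(y) > \epsilon$, contradicting the hypothesis $A(e_L) < \epsilon$. The main obstacle is the lifting step: one must carefully match the boundary Lagrangians of the unlifted $u_0^-$ with their lifts so that the group element $g$ is encoded on the Legendrian endpoint of the lifted chord. Without this bookkeeping the case $j = i, g \neq 1_\Gamma$ cannot be separated from the shortest-chord configuration, and no uniform lower bound on $A(y)$ would be available.
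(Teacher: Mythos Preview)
Your proof is correct and follows essentially the same approach as the paper: pass to the SFT limit, identify the level-$0$ component containing $c_{i,g,j}^\vee$ as a bigon with a single positive Reeb chord $y$ lifting to a chord from $\Lambda_{g\fq_j'}$ to $\Lambda_{\fq_i}$, invoke Lemma~\ref{l:action_chord} to get $A(y)>\epsilon$, and derive the contradiction with $A(e_L)<\epsilon$ via energy positivity. The paper abbreviates the structural analysis by referring to Lemma~\ref{l:bigon} and closes with Lemma~\ref{l:actionBoundGlobal}, whereas you spell out the two-component structure explicitly and apply Lemma~\ref{l:actionBound} directly to $u_0^+$; these are cosmetic differences.
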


\begin{proof}
 Suppose not, then we will have a holomorphic building $u_{\infty}=(u_v)_{v \in V(\eT)}$ as $\tau$ goes to infinity.
 Let $u_{v_1}$ be the $J^-$-holomorphic curve such that $c_{i,g,j}^\vee$ is an asymptote of $u_{v_1}$.
 One can argue as in Lemma \ref{l:bigon} to show that $u_{v_1}$ has exactly one positive Reeb chord asymptote $x$.
 Moreover, $x$ can be lifted to a Reeb chord from $\Lambda_{g\fq_j'}$ to $\Lambda_{\fq_i}$ by boundary condition.
 When $j>i$ and $g \in \Gamma$ (or $j=i$ and $g \neq 1_\Gamma$), we have $A(x)> \epsilon$ by Lemma \ref{l:action_chord}.
 Since $A(e_L) < \epsilon$ by assumption, we get a contradiction by Lemma \ref{l:actionBoundGlobal}.
\end{proof}

\begin{lemma}\label{l:IdentityTriangle}
 For $L_1'$ sufficently close to $L_1$ and $\tau \gg 1$, the algebraic count of rigid elements in $\eM^{J^{\tau}}(c_{i,1_\Gamma,i}^\vee;e_L)$ is $\pm 1$.
\end{lemma}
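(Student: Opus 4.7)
The plan is to apply SFT neck-stretching along $\partial U$ and factor the count as a product of a $J^-$ and a $J^+$ contribution. Let $u_\tau \in \eM^{J^\tau}(c^\vee_{i,1_\Gamma,i}; e_L)$ be a sequence of rigid strips with $\tau \to \infty$. Theorem \ref{t:SFTcompactness} produces a limit holomorphic building. The action hypothesis $A(e_L)<\epsilon$, together with Lemmas \ref{l:action_chord} and \ref{l:actionBoundGlobal}, bounds every intermediate Reeb chord asymptote by action $\epsilon$, while the dimension and boundary analysis from Lemmas \ref{l:stretchingStripA'} and \ref{l:bigon} forces the limit to consist of exactly two vertices $v_1,v_2$ connected by a single Reeb chord $y$, with $u_{v_1}$ a $J^-$-bigon in $T^*P$ (output $c^\vee_{i,1_\Gamma,i}$, input $y$) and $u_{v_2}$ a $J^+$-bigon in $SM^+$ (input $e_L$, output $y$). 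Rigidity plus the equivariant lift of $u_{v_1}$ to $T^*\fP$, matching the boundary conditions on $\tau_\fP(T^*_{\fq_i'}\fP)$ and $T^*_{\fq_i}\fP$, identify $y=x_{i',i}$, the unique canonical-grading-zero Reeb chord from $\Lambda_{\fq_i'}$ to $\Lambda_{\fq_i}$ of Lemma \ref{l:IndexChangeChord}.

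Invoking SFT gluing (Theorem \ref{t:SFTgluing}) yields
\begin{equation*}
\#\eM^{J^\tau}(c^\vee_{i,1_\Gamma,i}; e_L) = \#\eM^{J^-}(c^\vee_{i,1_\Gamma,i}; x_{i',i}) \cdot \#\eM^{J^+}(x_{i',i}; e_L),
\end{equation*}
and the first factor equals $\pm 1$ by Corollary \ref{c:countU}(4) via the standard duality identifying two-input caps with one-input-one-output bigons. To evaluate the second factor, I will compare with the auxiliary triangle moduli $\eM^{J^\tau}(q_i'^\vee; q_i^\vee, e_L)$ with boundary labels $(P,L_1,L_1')$. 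An identical stretching argument isolates the same intermediate chord $y=x_{i',i}$ and gives
\begin{equation*}
\#\eM^{J^\tau}(q_i'^\vee; q_i^\vee, e_L) = \#\eM^{J^-}(q_i'^\vee; x_{i',i}, q_i^\vee) \cdot \#\eM^{J^+}(x_{i',i}; e_L),
\end{equation*}
where Corollary \ref{c:countU}(1) forces the $J^-$-factor to be $\pm 1$. Hence it suffices to show $\#\eM^{J^\tau}(q_i'^\vee; q_i^\vee, e_L) = \pm 1$.

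This last count is precisely the coefficient of $q_i'^\vee$ in $\mu^2(q_i^\vee, e_L)\in CF(L_1',P)$. Since $e_L$ represents the cohomological unit in $HF(L_1',L_1)$, the map $\mu^2(\cdot, e_L)\colon CF(L_1,P)\to CF(L_1',P)$ is chain-homotopic to the continuation induced by the small Hamiltonian isotopy $\phi_H$, which pairs $q_i^\vee$ with its near-by push-off $q_i'^\vee$. For $L_1'$ a sufficiently $C^2$-small perturbation, the action filtration of Lemma \ref{l:action_chord} excludes contributions from other $q_j'^\vee$ with $j\neq i$, and a Piunikhin--Salamon--Schwarz-type Morse-model computation in a Weinstein neighborhood of $L_1$ pins the remaining coefficient to $\pm 1$. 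The main obstacle is to make this cochain-level argument rigorous; a cleaner alternative is to perform a Morse--Bott degeneration of the $J^+$-bigon $\eM^{J^+}(x_{i',i}; e_L)$ directly as $H\to 0$, interpreting the bigon as a matched pair of a gradient flow line on $L_1$ from the minimum $e_L$ to a point on $\Lambda_{q_i}$ together with a (constant) holomorphic disk on $L_1$, whose count evaluates to $\pm 1$ by the cohomological-unit property. Once this is established, combining both $\pm 1$ factors finishes the proof.
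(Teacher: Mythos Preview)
Your overall strategy coincides with the paper's: stretch the neck, factor the count as $\#\eM^{J^-}(c_{i,1_\Gamma,i}^\vee;x_{i',i})\cdot\#\eM^{J^+}(x_{i',i};e_L)$, kill the first factor with Corollary~\ref{c:countU}(4), and evaluate the second by comparing with an auxiliary triangle involving $q_i^\vee$, $(q_i')^\vee$, and $e_L$. The paper uses exactly this auxiliary triangle (written there as $\eM(q_i^\vee;e_L,(q_i')^\vee)$ for $\mu^2\colon hom(L_1',L_1)\times hom(P,L_1')\to hom(P,L_1)$; your labeling $(P,L_1,L_1')$ with $e_L\in CF(L_1,L_1')$ is slightly off, since $e_L$ lives in $CF(L_1',L_1)$, but this is cosmetic).

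The genuine gap is the step you yourself flag as ``the main obstacle'': showing that the auxiliary triangle count equals $\pm 1$ \emph{at the chain level for the specific $J^\tau$ with $\tau\gg 1$}. The cohomological-unit property only gives $[\mu^2(e_L,(q_i')^\vee)]=\pm[q_i^\vee]$ in $HF$, which does not pin down the coefficient of $q_i^\vee$ on the nose when other generators $q_j^\vee$ may lie in the same degree. Your appeal to Lemma~\ref{l:action_chord} does not help here: that lemma controls Reeb-chord actions on $\partial U$, whereas the triangle necessarily crosses $N(Y)$ (it connects $q_i,q_i'\in U$ to $e_L\notin U$), so it sees the stretching and cannot be confined to a Weinstein neighbourhood of $L_1$ by a small-energy argument alone. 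Your alternative Morse--Bott degeneration ``as $H\to 0$'' is a different limit from $\tau\to\infty$ and is not developed; making it interact correctly with neck-stretching would require its own analysis.

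The paper fills this gap with a cascade (homotopy) argument: it fixes a generic $J$ for which $\#\eM^{J}(q_i^\vee;e_L,(q_i')^\vee)=\pm 1$ is immediate from the unit property and $C^2$-closeness of $L_1'$ to $L_1$, then runs a one-parameter family $(J_t)_{t\in[0,1]}$ from $J$ to $J^\tau$ and shows that every possible bifurcation contributes zero. Three cases are checked. Case~(i): a bubbled strip in $CF(L_1',L_1)$ with input $e_L$ contributes zero because $e_L$ is a cocycle. Case~(ii): a bubbled curve with boundary on $P$ passing through some $q_j$ or $q_j'$ with $j\neq i$ has energy bounded below by a constant $\delta>0$ independent of $L_1'$ (monotonicity lemma), so for $L_1'$ close enough to $L_1$ the total available energy $A(e_L)+A((q_i')^\vee)-A(q_i^\vee)<\delta$ forbids it. Case~(iii): a bubbled bigon in $CF(L_1',L_1)$ with input $e_L$ and output of degree~$0$ is impossible since $e_L$ is the unique degree-$0$ generator. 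This bifurcation analysis is the substantive content you are missing; once it is in place, sending $\tau\to\infty$ and invoking Lemma~\ref{l:triangle} yields $\#\eM^{J^+}(x_{i',i};e_L)=\pm 1$ as you want.
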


\begin{proof}
Similar as before, every limiting holomorphic building $u_{\infty}=(u_v)_{v \in V(\eT)}$ consists of two vertices (see Lemma \ref{l:bigon}).
By boundary condition, the bottom level curve $u_{v_1}$ lies in  $\eM^{J^-}(c_{i,1_\Gamma,i}^\vee; x_{i',i})$, which has algebaic count $\pm 1$ by Corollary \ref{c:countU}(4).
Therefore, it suffices to determine the algebraic count of $\eM^{J^+}(x_{i',i};e_L)$.

We consider the rigid elements in the moduli $\eM(q_i^\vee;e_L,(q_i')^\vee)$ for a compactible almost complex structure $J$, which is reponsible to the $q_i^\vee$-coefficient of $\mu^2(e_L,(q_i')^\vee)$
for the operation $\mu^2(\cdot,\cdot):hom(L_1',L_1) \times hom(P,L_1') \to hom(P,L_1)$.
Therefore, it has algebraic count $\pm 1$
with respect to $J$ when $L_1'$ is $C^2$-close to $L_1$.

Next, we will use a cascade (homotopy) type argument which goes back to Floer and argue that the algebraic count of $\eM^{J^\tau}(q_i^\vee;e_L,(q_i')^\vee)$ is
$\pm 1$ for all $\tau$.
A detailed account for a cascade (homotopy) type argument involving higher multiplications can be found in, for example, \cite{AbouzaidSeidel} (see also \cite[Section $10e$]{Seidelbook}, \cite{Auroux10}).  

Let us recall the overall strategy of the cascade argument tailored for our situation.
Pick a path of compactible almost complex structures $(J_t)_{t \in [0,1]}$ from $J$ to $J^\tau$ for some finite time $\tau$.
For a generic path of almost complex structure $(J_t)_{t \in [0,1]}$, there are finitely many $0<t_1<\dots<t_k <1$ such that
there exists $J_{t_l}$ stable maps with input $e_L,(q_i')^\vee$, output $q_i^\vee$ and consisting of more than one component.
When $(J_t)_{t \in [0,1]}$ is generic, these stable maps consist of exactly
two components, which are a $J_{t_l}$-holomorphic triangle and bigon, respectively.  
Moreover, one of the components must be of virtual dimension $0$, and the other one is of dimension $-1$.  
In this case, we say \textit{a bifurcation occurs at} $t_l$, and denote the component of virtual dimension $-1$ as $u$.

If a bifurcation occurs at $t_l$, for example, then $\eM^{J_{t}}(q_i^\vee;e_L,(q_i')^\vee)$
has the same diffeomorphism type when $t\in (t-\epsilon,t_l)$ for some small $\epsilon>0$.  
The \textit{birth-death} bifurcation gives a cobordism between $J_{t_l-\epsilon}$ and $J_{t_l}$-stable triangles; and the \textit{death-birth} bifurcation 
gives a cobordism between $J_{t_l-\epsilon}$ triangles and $J_{t_l}$ 
stable triangles with  exactly one component. 
When $t$ approaches $t_l$ from the right, we get the cooresponding cobordisms.
The change of algebraic count from 
$\eM^{J_{t_l-\epsilon}}(q_i^\vee;e_L,(q_i')^\vee)$ to 
$\eM^{J_{t_l+\epsilon}}(q_i^\vee;e_L,(q_i')^\vee)$ is called the contribution to $\eM^{J_{t}}(q_i^\vee;e_L,(q_i')^\vee)$
by the bifuration at time $t_l$.

 Therefore, to show that the algebraic count persists to be $\pm1$, we will analyze each bifurcation moment $t_l$ below and 
 prove the contribution to $\eM^{J_{t}}(q_i^\vee;e_L,(q_i')^\vee)$ is zero.  For simplicity we let $l=1$.   
 Since there are exactly two irreducible components at $t=t_1$, one of them has to has virtual dimension 0 and the other one
 has dimension $-1$.  Let $u$ denote the component of virtual dimension $-1$, and we divide the possible stable $J_{t_1}$-holomorphic triangles into three cases:

 \begin{enumerate}[(i)]
   \item both $q_i^\vee$ and $(q_i')^\vee$ are asymptotes of $u$;
   \item exactly one of $q_i^\vee$ and $(q_i')^\vee$ is an asymptote of $u$;
   \item none of $q_i^\vee$ and $(q_i')^\vee$ are asymptotes of $u$.
 \end{enumerate}



\noindent\textit{Case (i):} If both $q_i^\vee$ and $(q_i')^\vee$ are asymptotes of $u$, then the last asymptote $x$ of $u$ must be a generator of $CF(L_1', L_1)$ by boundary condition.
Moreover, $x$ is a degree $1$ element of $CF(L_1', L_1)$ because $\virdim(u)=-1$ and $|e_L|=0$.
This bifurcation contributes to a change in the algebraic count of $\eM^{J_t}(q_i^\vee;e_L,(q_i')^\vee)$
by the algebraic count of rigid elmements from $\eM^{J_{t_1}}(x;e_L)$ (when $t>t_1$, 
the moduli $\eM^{J_{t_1}}(x;e_L)$ and $\eM^{J_{t_1}}(q_i^\vee;x,(q_i')^\vee)$ glue together to give a change).
However, the algebraic count of rigid elmements from $\eM^{J_{t_1}}(x;e_L)$ is zero because $e_L$ is a cocycle.\\

\noindent\textit{Case (ii):} If exactly one of $q_i^\vee$ and $(q_i')^\vee$ is an asymptote of $u$, then $P$ is a Lagrangian boundary condition of one of 
the component of $\partial \Sigma_u$, where $\Sigma_u$ is the domain of $u$.
By this boundary component, there is another point $q_j$ or $q_j'$ for some $j \neq i$ which is an asymptote of $u$.
Since there is a lower bound between the distance from $q_i$ (or $q_i'$) to $q_j$ (or $q_j'$) for $j \neq i$,
we can apply monotonicity Lemma at an appropriate point in $Im(u) \cap P$ to get a constant $\delta >0$ depending only on $\{q_i\}_{i=1}^{d_{L_1}}$ but not $L_1'$ such that
the energy $E_\omega(u) > \delta$.
If we chose $L_1'$ to be sufficently close to $L_1$ such that $A(e_L)+A((q_i')^\vee)-A(q_i^\vee)< \delta$, then for $u$ to contribute
to a change of algebraic count of $\eM^{J_t}(q_i^\vee;e_L,(q_i')^\vee)$, $u$ has to be glued with a rigid $J_{t_1}$-holomorphic
curve of negative energy, which does not exist.\\

\noindent\textit{Case (iii):} If none of $q_i^\vee$ and $(q_i')^\vee$ are asymptotes of $u$, then $u$ is a bigon with one asymptote being $e_L$ and the other 
asymptote, denoted by $x$, being a generator of $CF(L_1', L_1)$.
Moreover, $|x|=0$ because $\virdim(u)=-1$.
It is a contradiction because $e_L$ is the only generator of $CF(L_1', L_1)$ with degree $0$ and constant maps have virtual dimension $0$.

As a result, no bifurcation can possibly contribute to a change to the algebraic count and $\#\eM^{J^{\tau}}(q_i^\vee;e_L,(q_i')^\vee)=\pm1$ for all $\tau$.
By letting $\tau$ go to infinity, the argument in Lemma \ref{l:triangle} implies that 
the limiting holomorphic building $u_{\infty}=(u_v)_{v \in V(\eT)}$ consist of two vertices.
Moreover, we have $u_{v_1} \in \eM^{J^-}(q_i^\vee;x_{i',i},(q_i')^\vee)$ and $u_{v_2} \in \eM^{J^+}(x_{i',i};e_L)$.
It implies that the algebraic count of rigid element in $\eM^{J^+}(x_{i',i};e_L)$ is $\pm 1$.
The proof finishes.

\end{proof}

\begin{rmk}\label{rem:possibleBifurcation}
    The proof above shows that the algebraic count of $\eM^{J^+}(x_{i',i};e_L)$ is $\pm 1$, which will be used in Proposition \ref{p:matrixA} again.
\end{rmk}

Using the Hamiltonian push-off, we have the identifications
\begin{align}
 \tau_P((\eE^1)')_{w_k} \simeq (\eE^1)'_{w_k} \simeq \eE^1_{w_k}, \text{ and } \tau_P((\eE^1)')_{c_{\fq_i,g\fq_j'}} \simeq \eE^1_{c_{\fq_i,g\fq_j'}}
\end{align}
for all $w_k$ and $c_{\fq_i,g\fq_j}$. In particular, we can define $t_\eD$ to be the identity morphism at $e_L$:
\begin{align}
 t_\eD:=id \in Hom_\K(\tau_P((\eE^1)')_{e_L}, \eE^1_{e_L}) \subset hom_{\cF^{\perf}}(\tau_P((\eE^1)'),T_\eP(\eE^1))=\eD \label{eq:teD}
\end{align}
Alternatively, we can tautologically view $t_\eD$ as an element in $Hom_\K((\eE^1)'_{e_L}, \eE^1_{e_L})$
and it is the cohomological unit of $CF((\eE^1)',\eE^1)$ (see Remark \ref{r:CohomologicalUnit}).
We denote $t_\eD$ by $e_\cE$ when we view it as an element in $CF((\eE^1)',\eE^1)$.

Let us take local systems on the Lagrangians into account.  Since $\pi_1(U \cap L_1)=1$, we can identify stalks of the local 
system $\eE^1_p$ over each $p\in U\cap L_1$ using the flat connection 
(equivalently, assume the connection is trivial in $U \cap L_1$).  
Similary, identify all $(\eE^1)'_{p'}$ for $p' \in U \cap L_1'$.  
This also induces an identification of stalks on $\tau_P(T^*_qP)$, since local systems therein are pushforwards of the ones over a fiber.

We can now summarize the previous lemmata.

\begin{prop}\label{p:matrixA}
 For $L_1'$ sufficienly close to $L_1$ and $\tau \gg 1$, we have
 \begin{align}
  \mu^1(t_\eD)=\sum_{i,j,g} \psi_{c_{i,g,j}^\vee}
 \end{align}
 where $\psi_{c_{i,g,j}^\vee} \in Hom_\K(\tau_P((\eE^1)')_{c_{\fq_i,g\fq_j'}},\eE^1_{c_{\fq_i,g\fq_j'}})$ and
 \begin{align}
  \left\{
  \begin{array}{ll}
   \psi_{c_{i,g,j}^\vee}=0 \text{ if }j>i \text{ and } g \in \Gamma \text{ (or }j=i \text{ and } g \neq 1_\Gamma\text{)} \\
   \psi_{c_{i,1_\Gamma,i}^\vee}=\pm id \in Hom_\K(\tau_P((\eE^1)')_{c_{\fq_i,\fq_i'}},\eE^1_{c_{\fq_i,\fq_i'}})
  \end{array}
\right.
 \end{align}

\end{prop}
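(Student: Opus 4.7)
First I would observe that since $t_\eD$ lies entirely in the second summand $CF(\tau_P((\eE^1)'),\eE^1)$ of the mapping cone $\eD$, the mapping cone differential formula \eqref{eq:mappingCone} applied to $t_\eD$ produces an output only in that same summand (the evaluation/cone map points in the opposite direction). Consequently $\mu^1(t_\eD)=\mu^1_\cF(t_\eD)\in CF(\tau_P(L_1'),L_1)$, and it suffices to analyse its components supported at each geometric intersection point $w_k$ and $c_{i,g,j}^\vee$.

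To handle the $w_k$-components, I would apply Lemma \ref{l:DiffIdentify}: the rigid moduli $\eM^{J^{\tau}}(w_k;e_L)$ computed with boundary $(\tau_P(L_1'),L_1)$ are canonically identified with those computed with boundary $(L_1',L_1)$. Choosing $L_1'$ to be a sufficiently $C^2$-small Hamiltonian push-off of $L_1$, we may further arrange (as in standard PSS-type arguments) that the Floer complex $CF(L_1',L_1)$ agrees with the Morse complex of a small Morse function on $L_1$, with $e_\cE$ corresponding to a minimum. Then $e_\cE$ is a strict cocycle in $CF(L_1',L_1)$, which forces each $w_k$-component of $\mu^1(t_\eD)$ to vanish. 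For the $c_{i,g,j}^\vee$-components with $j>i$, or with $j=i$ and $g\ne 1_\Gamma$, the vanishing is exactly Lemma \ref{l:actionFiltration}, provided the push-off is small enough that $A(e_\cE)<\epsilon$ in the sense of Lemma \ref{l:action_chord}.

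For the remaining diagonal components $c_{i,1_\Gamma,i}^\vee$, Lemma \ref{l:IdentityTriangle} supplies the algebraic count $\pm 1$ for $\eM^{J^{\tau}}(c_{i,1_\Gamma,i}^\vee;e_L)$, so all that remains is to identify the induced map of local system stalks as $\pm id$. For this I would invoke the trivialisations introduced just before Proposition \ref{p:matrixA}: since $\pi_1(U\cap L_1)=1$, the stalks of $\eE^1$ over $U\cap L_1$ (respectively $(\eE^1)'$ over $U\cap L_1'$, and the pushforward $\tau_P((\eE^1)')$ over $\tau_P(L_1')\cap U$) are canonically identified by parallel transport. With respect to these identifications, the boundary parallel transports $I_{\partial_0 u}$ and $I_{\partial_1 u}$ along any rigid $u\in\eM^{J^{\tau}}(c_{i,1_\Gamma,i}^\vee;e_L)$ are the identity, so the algebraic contribution to $\mu^1(t_\eD)$ at $c_{i,1_\Gamma,i}^\vee$ reduces to $\sign(u)\cdot id$. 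Combining the three pieces gives the claimed formula.

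The main technical point is the last step, namely verifying that the parallel transports are trivial under the chosen trivialisations. This requires tracing how the local system on $\tau_P((\eE^1)')$ is defined as a pushforward of $(\eE^1)'$ via $\tau_P$, and checking that for the degenerate triangle–bigon buildings of Lemma \ref{l:IdentityTriangle} the relevant paths lie in simply connected pieces of the Lagrangians, so no monodromy is incurred. Once this compatibility is set up carefully, the rest of the proof is a direct assembly of Lemmata \ref{l:DiffIdentify}, \ref{l:actionFiltration}, and \ref{l:IdentityTriangle}.
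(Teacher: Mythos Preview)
Your overall structure matches the paper's proof exactly: the vanishing at $w_k$ comes from Lemma~\ref{l:DiffIdentify} together with $e_\eE$ being a cocycle, the vanishing at $c_{i,g,j}^\vee$ for $j>i$ (or $j=i$, $g\neq 1_\Gamma$) is Lemma~\ref{l:actionFiltration}, and the diagonal terms use the count of Lemma~\ref{l:IdentityTriangle}. The issue is in your final paragraph, where you claim the parallel transports $I_{\partial_0 u}$ and $I_{\partial_1 u}$ are the identity because ``the relevant paths lie in simply connected pieces of the Lagrangians''.

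This is not true as stated. A rigid strip $u\in\eM^{J^\tau}(c_{i,1_\Gamma,i}^\vee;e_L)$ has one boundary arc on $L_1$ running from $e_L$ (outside $U$) to $c_{i,1_\Gamma,i}^\vee$ (inside $U$); nothing forces this arc to stay in a simply connected region, since $L_1$ itself is not assumed simply connected. The trivialisation you invoke only holds over $U\cap L_1$, and $e_L\notin U$. The paper handles this by working at the SFT limit, where the building has two pieces $u_1'\in\eM^{J^-}(c_{i,1_\Gamma,i}^\vee;x_{i',i})$ inside $U$ and $u_2\in\eM^{J^+}(x_{i',i};e_L)$ outside. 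For the inside piece your simply-connected argument is fine. For the outside piece the key input you are missing is an energy estimate: when $L_1'$ is $C^2$-close to $L_1$ the action of $u_2$ is arbitrarily small, so by the monotonicity lemma $u_2$ lies entirely in a Weinstein neighbourhood of $L_1$. Projecting the strip to the zero section then exhibits an isotopy between its two boundary arcs, forcing $I_{\partial_0 u_2}$ and $I_{\partial_1 u_2}$ to be mutually inverse under the Hamiltonian-pushoff identification of stalks at $e_L$. Without this monotonicity step there is no control on where $u_2$ goes in $M\setminus U$, and the two boundary arcs could in principle differ by nontrivial monodromy of $\eE^1$.
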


\begin{proof}
 By Lemma \ref{l:DiffIdentify} and the fact that $e_\eE$ is a cocyle in $CF((\eE^1)', \eE^1)$, we know that $  \mu^1(t_\eD)=\sum_{i,j,g} \psi_{c_{i,g,j}^\vee}$.
 The fact that 
 $\psi_{c_{i,g,j}^\vee}=0$ if $j>i$ and $g \in \Gamma$ (or $j=i$ and $ g \neq 1_\Gamma$) follows from Lemma \ref{l:actionFiltration}.
 Finally, to see that $\psi_{c_{i,1_\Gamma,i}^\vee}=id$ we need to understand the moduli 
 $\eM^{J^{\tau}}(c_{i,1_\Gamma,i}^\vee;e_L)$ and the parallel transport maps given by the rigid elements in it.


 Consider the holomorphic building when $\tau=\infty$, we have two components
 $u_1 \in \eM^{J^-}(q_i^\vee;x_{i',i},(q_i')^\vee)$ and $u_2 \in \eM^{J^+}(x_{i',i};e_L)$ by Lemma \ref{l:triangle} and Remark \ref{rem:dualizeC}.  
 When $L_1'$ is sufficiently $C^2$-close to $L_1$, the action of $u_1,u_2$ can be as small as we want. 
 It implies that, by monotonicity lemma, $u_2$ lies in a Weinstein neigborhood of $L_1$.

 It in turn implies that, for each strip $u_2$ in the limit, the associated output is 
 $\psi_{i',i}=\pm id$ when the input at $e_L$ is $t_\eD$ (the sign of $\psi_{i',i}$ depends on the sign of $u_2$).  
 This is because we have identified the stalks of $\eE^1$ and $(\eE^1)'$ at the point $e_L$, and  
 the associated parallel transports $I_{\partial_0 u}$ and $I_{\partial_1 u}$ on their respective boundary conditions are inverse to 
 each other (in fact, the strip itself provides an isotopy after projecting to $L_1$ in the Weinstein neighborhood).   
 Since we have proved that the algebraic count of 
 $\eM^{J^+}(x_{i',i};e_L)$ is $\pm 1$ (see Remark \ref{rem:possibleBifurcation}), the associated
 output by all elements in $\eM^{J^+}(x_{i',i};e_L)$ is $\pm id$, when the input at $e_L$ is $t_\eD$.
 
 To get the proposition, we now replace $u_1$ by $u_1'\in\eM^{J^{\tau}}(c_{i,1_\Gamma,i}^\vee;x_{i',i})$.  
 As explained earlier, we have identified the fibers of the local systems of $\eE^1$ and $\tau_P(\eE^1)'$ at $c_{\fq_i,\fq_i'}$.
 Since the parallel transports of  $\eE^1$ and $\tau_P(\eE^1)'$ inside $U$ are trivial,
 if the input at $x_{i',i}$ is $\pm id$, so is the output.  
 By Lemma \ref{l:IdentityTriangle}, the algebraic count of $\eM^{J^{\tau}}(c_{i,1_\Gamma,i}^\vee;x_{i',i})$ is $\pm1$ 
 and each strip contributes $\pm id$ (and the sign of $\pm id$ only depends on the sign of the strip), 
 therefore,  the total countribution is $\pm id$, as desired.

 \end{proof}

\begin{rmk}
 In summary, when $L_1'$ is sufficently close to $L_1$,
 $e_L$ being a cohomological unit is responsible for the algebraic count of $\eM^{J^\tau}(q_i^\vee;e_L,(q_i')^\vee)$ being $\pm 1$ and hence the $q_i^\vee$-coefficient of
 $\mu^2(e_L,(q_i')^\vee)$ being $\pm 1$.
 On the other hand, $e_\eE$ being a cohomological unit is reponsible for the $\fq_i^\vee$-coefficient of $\mu^2(e_\eE,(\fq_i')^\vee)$ being $1$.
 Lemma \ref{l:IdentityTriangle} and Proposition \ref{p:matrixA} are obtained by replacing the bottom level curves at the SFT limit. 
\end{rmk}

\subsubsection{Computing $\mu^2_\cF(\psi^2 \otimes \fq_i^\vee,g\tau_\fP(\fq_j') \otimes \psi^1)$}

Next, we want to study $\mu^1((\psi^2 \otimes \fq_i^\vee)  \otimes (g\tau_\fP(\fq_j') \otimes \psi^1))$ (see \eqref{eq:EleSupportedAtTensor}, \eqref{eq:differentials}). 
In particular, we want to focus on the term $\mu^2_\cF(\psi^2 \otimes \fq_i^\vee,g\tau_\fP(\fq_j') \otimes \psi^1)$ so
we need to discuss the moduli $\eM(c_{i,g,j}^\vee;q_i^\vee,\tau_P(q_j'))$ and $\eM(w_k;q_i^\vee,\tau_P(q_j'))$.

\begin{lemma}\label{l:noOutside}
 For $\tau \gg1$, there is no rigid element in $\eM^{J^\tau}(w_k;q_i^\vee,\tau_P(q_j'))$.
\end{lemma}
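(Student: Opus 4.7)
The plan is to apply SFT neck-stretching along $Y$ to the moduli $\eM^{J^\tau}(w_k; q_i^\vee, \tau_P(q_j'))$ and derive a contradiction from a virtual dimension count. Pick any sequence $\tau_k \to \infty$ and rigid $u_k \in \eM^{J^{\tau_k}}(w_k; q_i^\vee, \tau_P(q_j'))$; by Theorem \ref{t:SFTcompactness} we obtain a limiting holomorphic building $u_\infty = (u_v)_{v \in V(\eT)}$. Since $n \geq 3$, Proposition \ref{p:nosidebubble} gives $V^{int} = \emptyset$, $n_\eT = 1$, every compact edge of $\eT$ corresponds to a Reeb chord, and each $u_v$ is individually rigid, so $\virdim(u_v) = 0$.

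I would next identify the unique bottom-level vertex. Both inputs $q_i^\vee$ and $\tau_P(q_j')$ lie inside $U$, so they must appear as asymptotes of level-$0$ vertices. The boundary arc of the original triangle labeled $P$ joins these two corners and cannot be broken by the degeneration, because $P \cap Y = \emptyset$ and hence $P$ admits no cylindrical extension into $SM^+$. Therefore $q_i^\vee$ and $\tau_P(q_j')$ are asymptotes of the same bottom vertex $v_1$. By Lemma \ref{l:noUcurve} every level-$0$ vertex has at least one non-Reeb asymptote; but by Proposition \ref{p:nosidebubble} any such asymptote must be one of the three Lagrangian corners of the original triangle, and the third corner $w_k$ lies in $M^+$, hence cannot be an asymptote of any bottom curve. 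This forces $v_1$ to be the unique bottom vertex, with precisely two Lagrangian corners $q_i^\vee$ and $\tau_P(q_j')$ together with some number $s \geq 1$ of positive Reeb chord asymptotes (at least one, to replace the $w_k$-corner via a Reeb chord to a top-level vertex).

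With $v_1$ pinned down I would compute its virtual dimension in the canonical relative grading. We have $|q_i^\vee| = 0$, and $|\tau_P(q_j')| = 1$: applying Seidel's Dehn twist exact triangle $\fP \to T^*_{\fq_j'}\fP \to \tau_{\fP}(T^*_{\fq_j'}\fP)$ in the universal cover and $hom(-,\fP)$ forces $HF^*(\tau_{\fP}(T^*_{\fq_j'}\fP), \fP)$ to be concentrated in degree $1$. By Corollary \ref{c:ReebDynamicMorsified}, every Reeb chord asymptote satisfies $|y_i| \leq 0$. Lemma \ref{l:VirdimFormula2} with $r^- = 0$ and $r^+ = s + 2$ then yields
\[
\virdim(u_{v_1}) = n - (0 + 1) - \sum_{i=1}^s |y_i| + (s - 1) = (n - 2) + s - \sum_{i=1}^s |y_i| \geq (n-2) + s \geq n - 1 \geq 2,
\]
contradicting $\virdim(u_{v_1}) = 0$. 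Hence no such building $u_\infty$ exists, and Theorem \ref{t:SFTgluing} implies that $\eM^{J^\tau}(w_k; q_i^\vee, \tau_P(q_j'))$ contains no rigid element for $\tau \gg 1$.

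The main obstacle is the precise identification of $v_1$ together with its Lagrangian corners (ruling out additional corners at $c_{i,g,j}$-type intersections) and the grading $|\tau_P(q_j')| = 1$; both reduce to facts already available from Proposition \ref{p:nosidebubble} and the Dehn twist exact triangle. Once these are in place, the dimension count is routine and the strict inequality $\virdim(u_{v_1}) \geq n - 1 \geq 2$ for $n \geq 3$ closes the argument.
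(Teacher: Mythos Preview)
Your proof is correct and follows essentially the same approach as the paper: identify the bottom-level vertex $v_1$ carrying both $q_i^\vee$ and $\tau_P(q_j')$ via the $P$-boundary arc, then compute $\virdim(u_{v_1})$ in the canonical relative grading to get a strictly positive number, contradicting rigidity. The paper's proof is terser---it does not bother to argue $s \ge 1$ and simply uses $\virdim(u_{v_1}) \ge n-2+k > 0$ for $k \ge 0$ and $n \ge 3$---but your additional justification of $s \ge 1$ and the re-derivation of $|\tau_P(q_j')|=1$ (which the paper established in the lemma immediately preceding this one) are harmless elaborations of the same argument.
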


\begin{proof}
 We argue by contradiction as before. Let $u_{\infty}=(u_v)_{v \in V(\eT)}$ be a limiting holomorphic building.
 By boundary condition, there is $v_1 \in V(\eT)$ such that $q_i^\vee, \tau_P(q_j')$ are asymptotes of $u_{v_1}$.
 The other asymptotes of $u_{v_1}$ are positive Reeb chords $y_1,\dots,y_k$.
 The virtual dimension of $u_{v_1}$  can be computed using canonical relative grading, and is given by
 \begin{align*}
 \virdim(u_{v_1})=n(1-0)-|q_i^\vee|-|\tau_P(q_j')|-\sum_{s=1}^k |y_s|-(1-k) \ge n-0-1-(1-k)>0  
 \end{align*}
 because $n \ge 3$. It contradicts to $\virdim(u_{v_1})=0$.
\end{proof}

\begin{figure}[tb]
  \centering
  \includegraphics[]{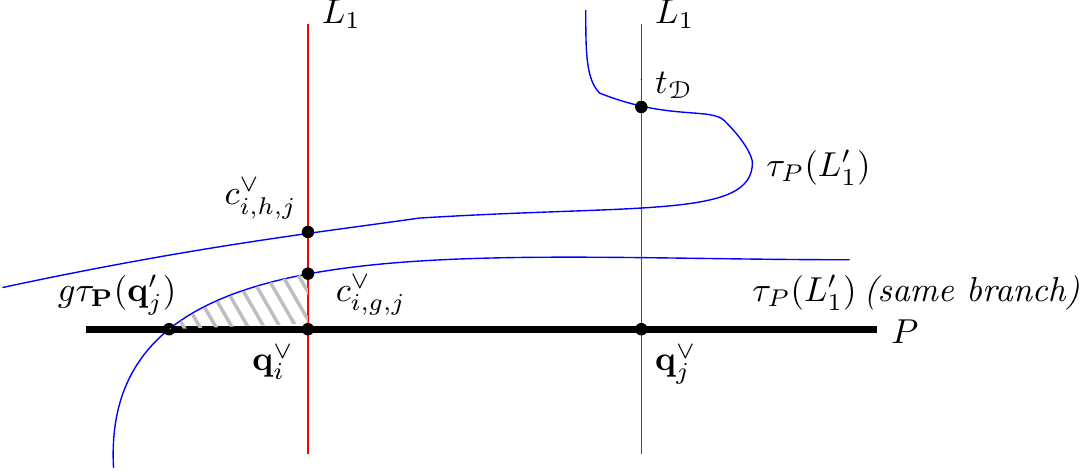}
  \caption{Holomorphic triangles in $\mathbf{U}$}
  \label{fig:smallMultiplication}
\end{figure}

\begin{lemma}\label{l:WrongBoundaryCondition}
 For $\tau \gg1$, there is no rigid element in $\eM^{J^\tau}(c_{\bar i,\bar g,\bar j}^\vee;q_i^\vee,\tau_P(q_{j}'))$ unless $c_{\bar i,\bar g,\bar j} = c_{i,g,j}$ for some $g \in \Gamma$.
\end{lemma}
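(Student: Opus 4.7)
The plan is to apply SFT neck-stretching along $\partial U$ and invoke Theorem \ref{t:SFTgluing}: it suffices to show, for $\tau=\infty$, that no holomorphic building $u_\infty=\{u_v\}_{v\in V(\eT)}$ in $\eM^{J^{\infty}}(c_{\bar i,\bar g,\bar j}^\vee; q_i^\vee,\tau_P(q_j'))$ exists unless $\bar i=i$ and $\bar j=j$. Since all three Lagrangian intersection asymptotes lie inside $U$, each must be an asymptote of a bottom-level ($J^-$-holomorphic) component in $SM^-=T^*P$. By Proposition \ref{p:nosidebubble} we have $V^{int}=\emptyset$, $n_\eT=1$, every internal edge of $\eT$ corresponds to a Reeb chord, and each $u_v$ satisfies $\virdim(u_v)=0$.

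First I would carry out a $\virdim$ analysis in the canonical relative grading on $SM^-$, using the grading information from Lemma \ref{l:A3Dtype} and \eqref{eq:CanonicalGrading} together with $|y|\le 0$ for every positive Reeb chord $y$ by Corollary \ref{c:ReebDynamicMorsified}. Applying Lemma \ref{l:VirdimFormula2}, any bottom-level component whose set of Lagrangian intersection asymptotes is a non-empty subset of $\{q_i^\vee,\tau_P(q_j')\}$ (i.e.\ does not include $c_{\bar i,\bar g,\bar j}^\vee$) satisfies $\virdim\ge n-2$, which is strictly positive for $n\ge 3$, contradicting $\virdim=0$. Rigidity therefore forces all three Lagrangian intersection asymptotes into a single bottom-level component $u_{v_0}$. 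For this triple-intersection component the virdim formula collapses to $\virdim(u_{v_0})=k-\sum|y_s|$, where $k$ is the number of positive Reeb-chord asymptotes; combined with $\sum|y_s|\le 0$ and $k\ge 0$, this forces $k=0$. Hence $u_{v_0}$ is a holomorphic triangle in $SM^-$ carrying precisely the three original Lagrangian intersection asymptotes and no Reeb chords, and the entire building consists of $u_{v_0}$ alone.

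The concluding step is a boundary-matching argument. The boundary arc $\partial_0\subset\partial\Sigma_{v_0}$ labelled $\tau_P(L_1')$ is a single connected arc, hence lies on a single connected component $\tau_P(T^*_{q_{?}'}P)$ of $\tau_P(L_1')\cap SM^-=\bigsqcup_\ell \tau_P(T^*_{q_\ell'}P)$. Its two endpoints are the intersections $c_{\bar i,\bar g,\bar j}^\vee\in\tau_P(T^*_{q_{\bar j}'}P)\cap T^*_{q_{\bar i}}P$ and $\tau_P(q_j')\in\tau_P(T^*_{q_j'}P)\cap P$, which force $?=\bar j$ and $?=j$ respectively, so $\bar j=j$. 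The same argument applied to the arc $\partial_2$ labelled $L_1$, with endpoints $c_{\bar i,\bar g,\bar j}^\vee$ and $q_i^\vee$, forces $\bar i=i$. Consequently $c_{\bar i,\bar g,\bar j}=c_{i,\bar g,j}$, and the lemma follows with $g=\bar g$.

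The main obstacle I anticipate is correctly ruling out configurations in which the building contains top-level $V^{core}$ components bridging several bottom-level components via Reeb-chord edges, which could a priori permit distributing the three Lagrangian intersections among multiple bottom components. The decisive observation is that every Reeb-chord asymptote of any bottom-level component is positive (the neck $\partial U$ sits at the positive end of $SM^-$) and has non-positive canonical relative index, so Reeb-chord asymptotes cannot reduce the virdim deficit of a bottom component lacking $c_{\bar i,\bar g,\bar j}^\vee$ below $n-2$, regardless of which top-level $V^{core}$ components they may connect to; once this sign/index bookkeeping is in place, the rest of the argument is routine combinatorics on the tree $\eT$.
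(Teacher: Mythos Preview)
Your proof is correct and uses the same core ingredients as the paper (Proposition~\ref{p:nosidebubble} plus the virtual-dimension estimate from Lemma~\ref{l:VirdimFormula2} with the grading input $|q_i^\vee|=0$, $|\tau_P(q_j')|=1$, $|c_{\bar i,\bar g,\bar j}^\vee|=1$ and $|y|\le 0$), but your organisation is more thorough than the paper's. The paper argues by contradiction: assuming $c_{\bar i,\bar g,\bar j}\notin T^*_{q_i}P\cap\tau_P(T^*_{q_j'}P)$, it asserts in one line ``by boundary condition'' that $c_{\bar i,\bar g,\bar j}^\vee$ is \emph{not} on the component $u_{v_1}$ carrying $q_i^\vee$ and $\tau_P(q_j')$, then invokes the $\virdim\ge n-2$ computation of Lemma~\ref{l:noOutside}. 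Your direct argument is cleaner here: you first force all three Lagrangian intersections onto a single bottom component, then show $k=0$ (no Reeb chords), and only then read off $\bar i=i$, $\bar j=j$ from connectedness of the $L_1$- and $\tau_P(L_1')$-boundary arcs. This last step is exactly what makes the paper's ``by boundary condition'' rigorous, since a priori a bottom component could carry several $\tau_P(L_1')$-arcs on different fibre components separated by Reeb-chord punctures; your $k=0$ conclusion rules this out. In effect you have also proved the first paragraph of the paper's next lemma (Lemma~\ref{l:microlocaltriangle}), which establishes the same single-triangle structure.

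One small sharpening: you need not treat the cases where only one of $q_i^\vee,\tau_P(q_j')$ lies on a given bottom component, since the $P$-labelled arc (with $\Lambda_P=\emptyset$) already forces both onto the same component; but your estimates cover those cases anyway, so no harm done.
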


\begin{proof}
 Assume $u_{\infty}=(u_v)_{v \in V(\eT)}$ be a limiting holomorphic building.
 If $c_{\bar i,\bar g,\bar j} \neq c_{i,g,j}$ for all $g \in \Gamma$, then $c_{\bar i,\bar g,\bar j} \notin T^*_{q_i}P \cap \tau_P(T^*_{q_j'} P)$.
 By boundary condition, there is $v_1 \in V(\eT)$ such that $q_i^\vee, \tau_P(q_j')$ are asymptotes of $u_{v_1}$ but $c_{\bar i,\bar g,\bar j}^\vee$ is {\bf not}
 an asymptote of $u_{v_1}$.
 Therefore, all other asymptotes of $u_{v_1}$ are positive Reeb chords and we get a contradiction as in Lemma \ref{l:noOutside}.
\end{proof}

The following lemma computes the $\mu^2$ map with trivial local systems on $L_1$ and $L_1'$.

\begin{lemma}\label{l:microlocaltriangle}
For $\tau \gg 1$, the  $c_{i,h,j}^\vee$-coefficient of $\mu^2(\fq_i^\vee,g\tau_P(\fq_j'))$ is $\pm 1$ when $h=g$ and is $0$ when $h \neq g$.
Here $\mu^2: hom(\eP,L_1) \times hom(\tau_P(L_1'),\eP) \to hom(\tau_P(L_1'),L_1)$
is the multiplication.
\end{lemma}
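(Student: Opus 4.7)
The proof proceeds by SFT neck-stretching along $\partial U$ and equivariant lifting to $\fU = T^*\fP$, parallel to the analysis in Section \ref{sec:quasi_isomorphism}. For $\tau \gg 1$, rigid elements of $\eM^{J^\tau}(c_{i,h,j}^\vee; \fq_i^\vee, g\tau_P(\fq_j'))$ converge to holomorphic buildings $u_\infty = (u_v)_{v \in V(\eT)}$ satisfying the structural conditions of Proposition \ref{p:nosidebubble}. Since all three corners of the triangle sit inside $U$, by the $\Gamma$-equivariance of $J^\tau$, each such building lifts uniquely to a building in $\fU$ once we fix the lift $\fq_i$ of $q_i$; the other corners then lift canonically to $\tau_\fP(g\fq_j')$ and $c_{\fq_i, h\fq_j'}$ as determined by the specified generators.

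The vanishing when $h \neq g$ follows by matching boundary conditions in the lifted picture. The boundary arc of the lifted triangle labeled by $\tau_P(L_1')$ must lie in a single connected component of $\pi^{-1}(\tau_P(L_1') \cap U) = \bigsqcup_{\tilde g, \tilde j} \tau_\fP(T^*_{\tilde g \fq_{\tilde j}'}\fP)$. The input corner $\tau_\fP(g\fq_j')$ forces this component to be $\tau_\fP(T^*_{g\fq_j'}\fP)$, whereas the output corner $c_{\fq_i, h\fq_j'}$ forces it to be $\tau_\fP(T^*_{h\fq_j'}\fP)$. When $h \neq g$ these two fibers are distinct and disjoint, so no consistent lift exists and the moduli is empty.

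For $h = g$, I first show that the building reduces to a single bottom-level $J^-$-holomorphic triangle $u_{v_1}$ containing all three corners, with no Reeb chord asymptotes and no top-level caps attached. By Lemma \ref{l:noUcurve}, every bottom-level $V^{core}$ component must contain at least one non-Reeb asymptote, so any multi-component configuration would distribute the three corners across multiple level-zero vertices connected through level-one vertices by Reeb chord edges. Applying the virtual dimension formula \eqref{eq:VirdimFormula2} in canonical relative grading (so $|\fq_i^\vee| = 0$, $|\tau_\fP(g\fq_j')| = 1$, $|c_{\fq_i, g\fq_j'}^\vee| = 1$, together with $|y| \le 0$ for all Reeb chords by Corollary \ref{c:ReebDynamicMorsified}) forces $u_{v_1}$ to be a pure triangle in $T^*\fP$ with Lagrangian boundaries $\fP$, $T^*_{\fq_i}\fP$, $\tau_\fP(T^*_{g\fq_j'}\fP)$ and corners $\fq_i$, $\tau_\fP(g\fq_j')$, $c_{\fq_i, g\fq_j'}$.

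It remains to verify that the count of such rigid triangles equals $\pm 1$. I plan to adapt the ``big local model'' strategy from Theorem \ref{t:localCount}: embed the local configuration into an $A_3$-type Milnor fiber where the corresponding global $\mu^2$ on the Fukaya category is pinned down by cohomological ring structure (or equivalently by Seidel's exact sequence applied to cotangent fibers), and then isolate the $T^*\fP$ contribution by reverse neck-stretching. The main obstacle is the careful combinatorial ruling-out of multi-component bottom-level configurations in the SFT limit; the net virtual dimension $0$ constrains the totals, but potential splittings (for example a bigon output $\eM^{J^-}(c^\vee;y)$ attached via a short Reeb chord $y$ to a triangle input $\eM^{J^-}(y;\fq_i^\vee,\tau_\fP(g\fq_j'))$) must be excluded case by case, which proceeds analogously to the proofs of Lemmas \ref{l:triangle} and \ref{l:bigon}.
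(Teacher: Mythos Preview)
Your overall strategy matches the paper's, but there is a logical gap in the order of your argument. You lift the building to $\fU$ \emph{before} establishing that it lies entirely in $SM^-=T^*P$; this is not justified, since Proposition~\ref{p:nosidebubble} still allows top-level $V^{\partial}$ caps in $SM^+$, which admit no lift to $\fU$. In particular, your vanishing argument for $h\neq g$ presupposes that the $\tau_P(L_1')$-boundary arc of the lifted curve is confined to a single component $\tau_\fP(T^*_{g\fq_j'}\fP)$; if Reeb-chord asymptotes were present along that arc, the lift could jump components across them and the conclusion would fail. The paper fixes this by first pinning all three corners to a single bottom vertex $u_{v_1}$ (the $\eP$-boundary forces $q_i^\vee$ and $\tau_P(q_j')$ together, and the argument of Lemma~\ref{l:WrongBoundaryCondition} then forces $c_{i,h,j}^\vee$ onto $u_{v_1}$ as well), after which the single estimate
\[
\virdim(u_{v_1}) = |c_{i,h,j}^\vee| - |q_i^\vee| - |\tau_P(q_j')| - \textstyle\sum_s |y_s| + k \;\ge\; 1-0-1+k \;=\; k
\]
forces $k=0$ Reeb chords. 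Only then does the curve (for $\tau\gg 1$) lie in $U$ and lift to $\fU$, whereupon your boundary-component argument gives $h=g$. So the ``main obstacle'' you anticipate dissolves into this one-line dimension count; no case-by-case exclusion of splittings is needed.

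For the $\pm 1$ count the paper also takes a shorter route than your proposed reverse neck-stretch. Once the curves are known to lift to $\fU=T^*S^n$, there is a sign-preserving bijection with the corresponding moduli in an $A_3$ Milnor fiber with spheres $S_1,S,S_2$, and the count there is computed \emph{purely cohomologically}: the Dehn-twist long exact sequence shows $[\mu^2(\tau(q'),\,\cdot\,)]\colon HF^{n-1}(S_1,\tau(S_2))\to HF^n(S_1,S)$ is an isomorphism, hence $\mu^2(\tau(q'),c)=\pm q$; associativity with $\mu^2(q,q^\vee)=f$ then gives $\mu^2(c,q^\vee)=\pm\tau(q')^\vee$, and dually $\mu^2(q^\vee,\tau(q'))=\pm c^\vee$. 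No further stretching is required.
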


\begin{proof}
First, we want to argue that any $u \in \eM^{J^{\tau}}(c_{i,h,j}^\vee;q_i^\vee,\tau_P(q_j'))$ contributing to $\mu^2(\fq_i^\vee,g\tau_P(\fq_j'))$ has image completely lying
inside $U$ when $\tau \gg 1$.
We argue as before. Let $u_{\infty}=(u_v)_{v \in V(\eT)}$ be a limiting holomorphic building.
By boundary condition, there is $v_1 \in V(\eT)$ such that $q_i^\vee, \tau_P(q_j')$ are asymptotes of $u_{v_1}$.
If $c_{i,h,j}^\vee$ is not an asymptote of $u_{v_1}$, then we get a contradiction as in Lemma \ref{l:WrongBoundaryCondition}.
Therefore, $u_{v_1}$ has asymptotes $c_{i,h,j}^\vee$, $q_i^\vee$, $\tau_P(q_j')$ and positive Reeb chords $y_1,\dots,y_k$.
The virtual dimension of $u_{v_1}$ is given by
 \begin{align*}
 \virdim(u_{v_1})=|c_{i,h,j}^\vee|-|q_i^\vee|-|\tau_P(q_j')|-\sum_{s=1}^k |y_s|+k \ge 1-0-1+k=k  
 \end{align*}
 It means that $k=0$ so $u_{v_1}$ has no positive Reeb chord and the claim follows.
 
 In particular, we can lift $u \in \eM^{J^{\tau}}(c_{i,h,j}^\vee;q_i^\vee,\tau_P(q_j'))$ to the universal cover $\fU$.
 By considering the boundary condition, it is clear that we must have $h=g$ for $u$ to exist.
 Now, to compute the $c_{i,g,j}^\vee$-coefficient of $\mu^2(\fq_i^\vee,g\tau_P(\fq_j'))$, we use the following model. 
 
We consider an $A_3$-Milnor fiber as in the proof of Theorem \ref{t:localCount} but rename the objects to keep the notation aligned with the current situation.  For example, we denote the Lagrangian spheres by $S_1$ ,$S$ and $S_2$ such that $S_1 \cap S_2 =\emptyset$.
Let $\tau$ be the Dehn twist along $S$, $q^\vee \in CF(S,S_1)$, $q' \in CF(S_2,S)$, $\tau(q') \in CF(\tau(S_2),S)$, $c^\vee \in CF(\tau(S_2),S_1)$ and $e,f \in CF(S,S)$.
We have $|q^\vee|=0$, $|q'|=n$, $|\tau(q')|=1$, $|c^\vee|=1$, $|e|=0$ and $|f|=n$.
Consider the following commutative diagram (up to sign)
\begin{align*}
\xymatrixcolsep{4pc}\xymatrix{
HF(\tau(S_2),S)\times HF(S_1,\tau(S_2)) \times HF(S,S_1) \ar[d]^-{\mu^2(\tau(q'),\cdot) \times Id} \ar[r]^-{Id \times \mu^2} &
HF(\tau(S_2),S)\times HF(S,\tau(S_2)) \ar[d]^{\mu^2(\tau(p'),\cdot)}\\
HF(S_1,S)\times HF(S,S_1) \ar[r]^-{\mu^2}& HF(S,S)
}
\end{align*}
All the Floer cohomology has rank $1$ except that $HF(S,S)$ has rank $2$.
The bottom arrow gives $\mu^2(q,q^\vee)=f$.
By the long exact sequence
\begin{align}
 HF^{k}(S_1,S_2) \to HF^k(S_1,\tau(S_2)) \to HF^{k+1}(S_1,S) \to HF^{k+1}(S_1,S_2)
\end{align}
and the fact that $HF(S_1,S_2)=0$, we know that $HF^{n-1}(S_1,\tau(S_2)) \to HF^{n}(S_1,S)$ is an isomorphism.
Since $\tau(q')$ represents the unique (up to multiplications by a unit) non-zero class in $HF(\tau(S_2),S)$,
we know that $\mu^2(\tau(q'),\cdot)$ induces the isomorphism $HF^{n-1}(S_1,\tau(S_2)) \simeq HF^{n}(S_1,S)$.
Since this is true for any characteristic of $\mathbb{K}$, we must have $\mu^2(\tau(q'),c)= \pm q$.


By the associativity of cohomological multiplication, we have $\mu^2(\tau(q'),\mu^2(c,q^\vee))=\pm f$.
It implies that $\mu^2(c,q^\vee)=\pm \tau(q')^\vee$.
Dually, we have $\mu^2(q^\vee,\tau(q'))=\pm c^\vee$ (it amounts to changing the asymptote $c$ from outgoing end to incoming end, and $\tau(q')$ from incoming end to outgoing end).

Since each  $u \in \eM^{J^{\tau}}(c_{i,h,j}^\vee;q_i^\vee,\tau_P(q_j'))$ can be lifted to $\fU$, there is a sign preserving bijective correspondence
$\eM^{J^{\tau}}(c_{i,h,j}^\vee;q_i^\vee,\tau_P(q_j')) \simeq \eM(c^\vee;q^\vee,\tau(q'))$ so we get the result.

\end{proof}

\begin{rmk}\label{r:GeomAlternative}
There is an alternative geometric argument as follows.
When the fibers corresponding $S_1$ and $S_2$ in the proof of Lemma \ref{l:microlocaltriangle} are fibers of antipodal points.
The moduli computing $c^\vee$-coefficient of $\mu^2(q^\vee,\tau(q'))$ is the constant map to the point $S_1 \cap S$.
One can check that this constant map is regular so the algebraic count is $\pm 1$.
In the more general case, where $S \cap S_2 $ is not the antipodal point of $S_1 \cap S$,
one can apply a homotopy type argument to conclude Lemma  \ref{l:microlocaltriangle}.
\end{rmk}

 Now we take the local system on $L_1$ and $L_1'$ into consideration.  Take the universal cover $\fU$ of the neighborhood of $P$, there is a unique path (up to homotopy) in $\tau_\fP(T^*_{g\fq_j'}\fP)$
from $\fc_{\fq_i,g\fq_j'}$ to $g\tau_{\fP}(\fq_j')$.
It descends to the unique path (up to homotopy) in $\tau_P(T^*_{q_j'}P)$ from 
$c_{\fq_i,g\fq_j'}$ to $\tau_{P}(q_j')$, which we denote by $[c_{\fq_i,g\fq_j'} \to \tau_{P}(q_j')]$.
Similarly, there is a unique path (up to homotopy) in $T^*_{q_i}P$ from 
 $q_i$ to $c_{\fq_i,g\fq_j'}$, which we denote by $[q_i \to c_{\fq_i,g\fq_j'}]$.  Then we have

\begin{prop}\label{p:xitauxj}
For $\tau \gg1$, we have (see \eqref{eq:EleSupportedAtTensor}), up to sign,
 \begin{multline}
\mu^2(\psi^2 \otimes \fq_i^\vee,g\tau_\fP(\fq_j') \otimes \psi^1)=I_{[q_i \to c_{\fq_i,g\fq_j'}]} (\psi^2) \otimes (\psi^1 \circ I_{[c_{\fq_i,g\fq_j'} \to \tau_{P}(q_j')]})\\
\in Hom_\K(\tau_P((\eE^1)')_{c_{\fq_i,g\fq_j'}}, \eE^1_{c_{\fq_i,g\fq_j'}}) \label{eq:LongFormula}
 \end{multline}
 for $\psi^2 \in \eE^1_{q_i}$ and $\psi^1 \in Hom_\K(\tau_P((\eE^1)')_{\tau_P(q_j')},\K)$.
In particular, the right hand side is supported at the intersection point $c_{i,g,j}^\vee$ only
and the morphism $\Phi^{\otimes}_{i,g,j}:=\mu^2(- \otimes \fq_i^\vee,g\tau_\fP(\fq_j') \otimes -)$
\begin{align}
 \Phi^{\otimes}_{i,g,j}:\eE^1_{q_i} \otimes Hom_\K(\tau_P((\eE^1)')_{\tau_P(q_j')},\K) \to Hom_\K(\tau_P((\eE^1)')_{c_{\fq_i,g\fq_j'}},\eE^1_{c_{\fq_i,g\fq_j'}}) \label{eq:mu2tensor}
\end{align}
is an isomorphism.

\end{prop}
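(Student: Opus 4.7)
The plan is to reduce the computation to the local model of Lemma \ref{l:microlocaltriangle} and then track how the local systems $\eE^1$ and $(\eE^1)'$ twist the purely geometric count. First, I would invoke Lemmas \ref{l:noOutside} and \ref{l:WrongBoundaryCondition}: these already force the output of $\mu^2(\psi^2 \otimes \fq_i^\vee, g\tau_\fP(\fq_j') \otimes \psi^1)$ to be concentrated at generators of the form $c_{i,h,j}^\vee$, so the right-hand side of the formula only needs to be checked on the $c_{i,g,j}^\vee$ component and vanishes when $h\neq g$ by the boundary-lift argument already used in Lemma \ref{l:microlocaltriangle}. In particular, every rigid $u \in \eM^{J^\tau}(c_{i,g,j}^\vee; q_i^\vee, \tau_P(q_j'))$ is contained in $U$ for $\tau \gg 1$, hence lifts to a rigid holomorphic triangle $\fu$ in $\fU$ with boundary on $\fP$, $T^*_{\fq_i}\fP$ and $\tau_\fP(T^*_{g\fq_j'}\fP)$, asymptotic to $\fc_{\fq_i,g\fq_j'}^\vee$, $\fq_i^\vee$ and $g\tau_\fP(\fq_j')$.

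Next I would plug the sign-preserving bijection $\eM^{J^\tau}(c_{i,g,j}^\vee; q_i^\vee, \tau_P(q_j')) \simeq \eM(c^\vee;q^\vee,\tau(q'))$ from Lemma \ref{l:microlocaltriangle} into the local-system formula \eqref{eq:AinfLocalSystem}: for each such $u$ the contribution to $\mu^2$ is $\sign(u) \cdot I_{\partial_2 u}\circ (g\tau_\fP(\fq_j')\otimes \psi^1) \circ (\psi^2 \otimes \fq_i^\vee)\circ I_{\partial_0 u}$, acted on by the parallel transports along the two boundary arcs lying on $L_1$ and $\tau_P(L_1')$. Because the triangle $u$ is contained in $U$, its boundary arc on $L_1$ (resp.\ $\tau_P(L_1')$) is contained in the single connected component $T^*_{q_i}P$ (resp.\ $\tau_P(T^*_{q_j'}P)$), on which $\pi_1=1$. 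Under our identification of the stalks of $\eE^1$ and $\tau_P((\eE^1)')$ along these contractible components (made just before Proposition \ref{p:matrixA}), these parallel transports are, up to sign, exactly the canonical transports along the paths $[q_i \to c_{\fq_i,g\fq_j'}]$ and $[c_{\fq_i,g\fq_j'}\to \tau_P(q_j')]$, which by construction are the unique homotopy classes induced from $\fU$. Summing the rigid triangles with their signs and using the $\pm 1$ count of Lemma \ref{l:microlocaltriangle} gives the claimed formula \eqref{eq:LongFormula}.

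It remains to check that $\Phi^{\otimes}_{i,g,j}$ in \eqref{eq:mu2tensor} is an isomorphism. This is essentially automatic from the formula: both source and target are finite-dimensional $\K$-vector spaces of the same rank (since $\tau_P((\eE^1)')_{c_{\fq_i,g\fq_j'}}\simeq (\eE^1)'_{q_j'}$ and the latter is isomorphic to $\eE^1_{q_j}$ under the Hamiltonian push-off identification), and the map is the tensor product of the two parallel transports $I_{[q_i\to c_{\fq_i,g\fq_j'}]}$ and $(- \circ I_{[c_{\fq_i,g\fq_j'}\to \tau_P(q_j')]})$, both of which are invertible.

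The main obstacle I anticipate is bookkeeping for the parallel transports on the two sides to ensure that, after passing to the universal cover and descending, one indeed recovers the canonical paths named $[q_i \to c_{\fq_i,g\fq_j'}]$ and $[c_{\fq_i,g\fq_j'}\to \tau_P(q_j')]$ rather than their concatenation with some loop in $\Gamma$; this requires using that the rigid triangle lifts uniquely to $\fU$ with prescribed asymptotes $\fq_i^\vee$ and $g\tau_\fP(\fq_j')$, which pins down the homotopy class of each boundary arc. The sign bookkeeping is routine and can be handled exactly as in Appendix \ref{sec:orientations}.
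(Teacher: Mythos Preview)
Your approach is the same as the paper's: reduce to the local count of Lemma \ref{l:microlocaltriangle}, note the triangles stay in $U$ and lift to $\fU$, and then read off the parallel transports. One correction: your displayed contribution has the composition in the wrong order and omits the middle transport along $P$. With the conventions of \eqref{eq:AinfLocalSystem} the contribution of $u$ is
\[
\sign(u)\,I_{\partial_2 u}\circ(\psi^2\otimes\fq_i^\vee)\circ I_{\partial_1 u}\circ(g\tau_\fP(\fq_j')\otimes\psi^1)\circ I_{\partial_0 u},
\]
and the paper makes explicit the step you defer to your ``obstacle'' paragraph: the scalar factor $\fq_i^\vee\circ I_{\partial_1 u}\circ g\tau_\fP(\fq_j')$ equals $1$ precisely because the (contractible) triangle lifts to $\fU$ with the prescribed asymptotes, forcing $I_{\partial_1 u}(g\tau_\fP(\fq_j'))=\fq_i$. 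Once that scalar is $1$, what remains is exactly $I_{[q_i\to c_{\fq_i,g\fq_j'}]}(\psi^2)\otimes(\psi^1\circ I_{[c_{\fq_i,g\fq_j'}\to\tau_P(q_j')]})$, and the $\pm 1$ count finishes \eqref{eq:LongFormula}. Your isomorphism argument for $\Phi^{\otimes}_{i,g,j}$ matches the paper's.
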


\begin{proof}
By Lemma \ref{l:noOutside}, \ref{l:WrongBoundaryCondition} and \ref{l:microlocaltriangle}, we already know that
$\mu^2(\psi^2 \otimes \fq_i^\vee,g\tau_\fP(\fq_j') \otimes \psi^1)$ is supported at the intersection point $c_{i,g,j}^\vee$.
Moreover, as explained in the proof of Lemma \ref{l:microlocaltriangle}, the rigid elements contributing to 
$\mu^2(\fq_i^\vee,g\tau_P(\fq_j'))$ lie completely inside $U$.

To obtain the result, it suffices to understand the parallel transport maps.
Let $u \in \eM^{J^{\tau}}(c_{i,g,j}^\vee;q_i^\vee,\tau_P(q_j'))$.
The contribution to $\mu^2(\psi^2 \otimes \fq_i^\vee,g\tau_\fP(\fq_j') \otimes \psi^1)$ by $u$ is given by (up to sign)
\begin{align}
 (I_{\partial_2 u} \circ \psi^2) \otimes (\fq_i^\vee \circ I_{\partial_1 u} \circ g\tau_\fP(\fq_j')) \otimes (\psi^1 \circ I_{\partial_0 u}) \label{eq:CompoParallelTran}
\end{align}
Since the domain of $u$ is contractible, $u$ can be lifted to the universal cover and therefore the generator 
$c_{i,g,j}^\vee$ uniquely determine the homotopy class of the path $\partial_1 u$ on $P$ (and also $\partial_0 u$ on $\tau_P(L_1')$ and $\partial_2 u$ on $L_1$), which is
exactly the path such that $\fq_i^\vee \circ I_{\partial_1 u} \circ g\tau_\fP(\fq_j')=1$, 
where $g\tau_\fP(\fq_j')$ is regarded as an element of the universal local system at $q_j'$ and 
$\fq_i^\vee$ is regarded as an element of the dual of the universal local system at $q_i$.
In other words, we have $I_{\partial_1 u}(g\tau_\fP(\fq_j'))=\fq_i$.
On ther other hand, we have
$I_{\partial_0 u}=I_{[c_{\fq_i,g\fq_j'} \to \tau_{P}(q_j')]}$ and $I_{\partial_2 u}=I_{[q_i \to c_{\fq_i,g\fq_j'}]}$
so \eqref{eq:CompoParallelTran} reduces to
$I_{[q_i \to c_{\fq_i,g\fq_j'}]} (\psi^2) \otimes (\psi^1 \circ I_{[c_{\fq_i,g\fq_j'} \to \tau_{P}(q_j')]})$.
Now, \eqref{eq:LongFormula} follows immediately from Lemma \ref{l:microlocaltriangle}.

On the other hand, since $I_{[q_i \to c_{\fq_i,g\fq_j'}]}$ and $I_{[c_{\fq_i,g\fq_j'} \to \tau_{P}(q_j')]}$
are isomorphisms from $\eE^1_{q_i}$ to $\eE^1_{c_{\fq_i,g\fq_j'}}$
and from $\tau_P((\eE^1)')_{c_{\fq_i,g\fq_j'}}$ to $\tau_P((\eE^1)')_{\tau_P(q_j')}$, respectively, \eqref{eq:LongFormula} clear induces
the isomorphism
\begin{align}
 \eE^1_{q_i} \otimes Hom_\K(\tau_P((\eE^1)')_{\tau_P(q_j')},\K)
  \to \eE^1_{c_{\fq_i,g\fq_j'}} \otimes Hom_\K(\tau_P((\eE^1)')_{c_{\fq_i,g\fq_j'}},\K) 
\end{align}
which is exactly \eqref{eq:mu2tensor}.

 \end{proof}
 
 Recall that we have defined $\eD$ in \eqref{eq:eD}.

 \begin{corr}\label{c:SupportingPoints}
  For $L_1'$ sufficienly close to $L_1$ and $\tau \gg 1$,
 every degree $0$ class in $H^0(\eD)$ admits a cochain representative $\beta$
 which is a sum of elements supported at $e_L$ and $\{q_i^\vee \otimes \tau(q_j')\}_{i,j}$ only.
 Moreover, the term of $\beta$ supported at $e_L$ cannot be zero unless $\beta=0$.
 \end{corr}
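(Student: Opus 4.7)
The plan is to deduce both claims directly from the structural analysis performed earlier in the section, with essentially no new geometric input needed. First I would establish the existence claim as a pure degree count: the complex $\eD$ in \eqref{eq:eD} has, in cohomological degree zero, contributions only from generators of Maslov degree zero on each side of the mapping cone. On the $CF(\tau_P((\eE^1)'),\eE^1)$ side, among $\{c_{i,g,j}^\vee, w_k\}$ only $e_L$ has degree zero: each $c_{i,g,j}^\vee$ has degree $1$, while every other $w_k$ has positive degree by the choice of $L_1'$. On the shifted tensor side, since $|\fq_i^\vee|=0$, $|g\tau_\fP(\fq_j')|=1$, and the fiber-hom factors are concentrated in degree zero, the tensor generators $\fq_i^\vee\otimes g\tau_\fP(\fq_j')$ themselves sit in degree $0$ after the shift. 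Thus every $\beta \in \eD^0$ is automatically supported at $e_L$ and $\{q_i^\vee\otimes\tau_P(q_j')\}$, so any cohomology class already has such a representative.

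The main content is the ``moreover'' statement. My plan is to assume $\beta\in\eD^0$ is a cocycle whose $e_L$-component vanishes, write
\[
\beta = \sum_{i,j,g} T_{i,j,g},\qquad T_{i,j,g}\in \eE^1_{q_i}\otimes Hom_\K(\tau_P((\eE^1)')_{\tau_P(q_j')},\K)
\]
supported at the generator $\fq_i^\vee\otimes g\tau_\fP(\fq_j')$, and argue $T_{i,j,g}=0$ for all triples. The strategy is to project $\mu^1(\beta)$ onto the $CF(\tau_P((\eE^1)'),\eE^1)$ factor of $\eD$. By the formula \eqref{eq:differentials}, the only piece of $\mu^1$ on a tensor generator that leaves the tensor part is the $\mu^2_\cF$ term, and the other contributions produced by the two intrinsic differentials stay in the shifted tensor summand.

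Next I would invoke the support results proved earlier. Lemma \ref{l:noOutside} rules out any contribution to the $w_k$-components, so $\mu^1(\beta)$ has no outside-$U$ component coming from the tensors. Lemma \ref{l:WrongBoundaryCondition} then forces the only nonvanishing $c_{\bar i,\bar g,\bar j}^\vee$-contribution from $T_{i,j,g}$ to occur when $(\bar i,\bar g,\bar j) = (i,g,j)$, and Proposition \ref{p:xitauxj} identifies that contribution as $\Phi^\otimes_{i,g,j}(T_{i,j,g})$ up to sign. Because $\beta$ has no $e_L$-component, no additional $\mu^1(t_\eD)$ contribution of Proposition \ref{p:matrixA} enters, so the $c_{i,g,j}^\vee$-component of the cocycle equation $\mu^1(\beta)=0$ reads simply $\Phi^\otimes_{i,g,j}(T_{i,j,g})=0$. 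Since $\Phi^\otimes_{i,g,j}$ is an isomorphism, each $T_{i,j,g}$ vanishes, hence $\beta=0$.

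I do not expect any serious obstacle: all the nontrivial inputs---the action filtration of Lemma \ref{l:actionFiltration}, the no-outside-bigon bound of Lemma \ref{l:noOutside}, the selection Lemma \ref{l:WrongBoundaryCondition}, and the isomorphism $\Phi^\otimes_{i,g,j}$ of Proposition \ref{p:xitauxj}---have been put in place, and the remaining work is bookkeeping that says ``the tensor-to-$c^\vee_{i,g,j}$ block of $\mu^1$ is injective.'' The only delicate point to be careful with is the identification of $T_{i,j,g}$ with the $(i,j,g)$-coefficient of $\beta$ modulo the $\Gamma$-relations on $\otimes_\Gamma$; this just requires fixing a set of coset representatives once and for all when writing $\beta$.
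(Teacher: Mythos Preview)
Your argument for the ``moreover'' clause is essentially the paper's own, and it is correct: once $\beta$ is supported only on the tensor generators, the component of $\mu^1(\beta)$ landing at $c_{i,g,j}^\vee$ is exactly $\Phi^{\otimes}_{i,g,j}$ applied to the $(i,g,j)$-piece of $\beta$, and injectivity of $\Phi^{\otimes}_{i,g,j}$ forces $\beta=0$.

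The gap is in your first step. You assert that every $c_{i,g,j}^\vee$ has degree $1$ in $\eD$, but the computation $|c_{i,g,j}^\vee|=1$ (and likewise $|\fq_i^\vee|=0$, $|\tau_P(q_j')|=1$) is stated \emph{with respect to the canonical relative grading}, not the actual grading carried by $L_1$ and $L_1'$. If the grading function of $L_1$ near $q_i$ differs from the canonical cotangent-fiber grading by an integer $m_i$ (and similarly $m_j'=m_j$ for $L_1'$, since $L_1'$ is a push-off), then in the actual grading one has $|\fq_i^\vee|=m_i$, $|\tau_P(\fq_j')|=1-m_j$, and $|c_{i,g,j}^\vee|=1+m_i-m_j$. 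Thus the degree-$0$ part of $\eD$ can contain generators $c_{i,g,j}^\vee$ with $m_j=m_i+1$, and your pure degree count does not exclude them. The paper handles this by an extra step you omitted: given a degree-$0$ cocycle with a nonzero component at some $c_{i,g,j}^\vee$, one uses the surjectivity of $\Phi^{\otimes}_{i,g,j}$ from Proposition~\ref{p:xitauxj} to find a degree-$(-1)$ tensor cochain whose $\mu^1$ cancels that component, and replaces $\beta$ by a cohomologous cocycle. Only after this modification is $\beta$ supported at $e_L$ and the tensor generators, and then your ``moreover'' argument applies.
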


 \begin{proof}
   Every degree $0$ cocycle in $\eD$
   is a sum of elements supported at $e_L$, $\{c_{i,g,j}^\vee\}_{i,j,g}$ and $\{q_i^\vee \otimes \tau(q_j')\}_{i,j}$ because $|w_k| \neq 0$ for $w_k \neq e_L$.
   Let $\beta$ be a degree $0$ cocycle which represents a class $[\beta]$.
   By Proposition \ref{p:xitauxj}, we can eliminate the terms of $\beta$ supported at $c_{i,g,j}^\vee$ by adding the differentials of certain 
   cochains supported at $q_i^\vee \otimes \tau(q_j')$. 
   Note that the term of $\beta$ supported at $c_{i,g,j}^\vee$ themselves might not be exact
   because  
   $\mu^1((\psi^2 \otimes \fq_i^\vee)  \otimes (g\tau_\fP(\fq_j') \otimes \psi^1))$
   involves more than just $\mu^2_\cF(\psi^2 \otimes \fq_i^\vee, g\tau_\fP(\fq_j') \otimes \psi^1)$ (see \eqref{eq:differentials}).
   However, it does not matter and there is a cochain $\beta'$ cohomologous to $\beta$ such that
   $\beta'$ is a sum of elements supported at $e_L$ and $\{q_i^\vee \otimes \tau(q_j')\}_{i,j}$ only.
   
   Now, suppose the term of $\beta'$ supported at $e_L$ is $0$.
   We write $\beta'=\sum_{(i,j)}\psi^{i,j}$, where, for all $i,j$, $\psi^{i,j}$ is an 
   element supported at $q_i^\vee \otimes \tau_P(q_j')$.
   If $\psi^{i_0,j_0} \neq 0$ for some $i_0,j_0$, then
   by the isomorphism statement in Proposition \ref{p:xitauxj}, the terms of $\mu^1(\beta')$ must contain a 
   non-trivial element supported at $c_{i_0,g,j_0}^\vee$ for some $g$, 
   because all other $\mu^1(\psi^{i,j})$ do not have non-zero element supported at $c_{i_0,g,j_0}^\vee$, which draws a contradiction.

 \end{proof}

By Corollary \ref{c:SupportingPoints}, we can write every degree $0$ cocyle $\beta$ of $\eD$ as 
\begin{align}
\beta=\psi_{e_L}+ \sum_{i,j} \psi_{q_i^\vee \otimes \tau_P(q_j')} \label{eq:firstForm}
\end{align}
where $\psi_{x}$ is an element supported at $x$.
Moreover, by \eqref{eq:EleSupportedAtTensor}, we can further decompose $\psi_{q_i^\vee \otimes \tau_P(q_j')}$
as
\begin{align}
 \psi_{q_i^\vee \otimes \tau_P(q_j')}= \sum_{g \in \Gamma} \sum_{k=1}^{n_{i,g,j}} \psi^2_{i,g,j,k} \otimes \fq_i^\vee \otimes g\tau_\fP(\fq_j') \otimes \psi^1_{i,g,j,k}
\end{align}
for some $\psi^2_{i,g,j,k} \in \eE^1_{q_i}$, $\psi^1_{i,g,j,k} \in Hom_\K(\tau_P((\eE^1)')_{\tau_P(q_j')},\K)$ and $n_{i,g,j} \in \N$.

 \begin{prop}[Cocycle elements]\label{p:cocycleElements}
 For $L_1'$ sufficienly close to $L_1$ and $\tau \gg 1$,
 there is a non-exact degree $0$ cocycle $c_\eD$ in $\eD$ of the form
 \begin{align}
  c_\eD= t_{\eD} +\sum_{g,k,i,j} \psi^2_{i,g,j,k} \otimes \fq_i^\vee \otimes g\tau_\fP(\fq_j') \otimes \psi^1_{i,g,j,k}
 \end{align}
 where
$ \psi^2_{i,g,j,k}=\psi^1_{i,g,j,k}= 0$ if either $j>i$ or ($j=i$ and $g \neq 1_\Gamma$), and (see \eqref{eq:mu2tensor})
 \begin{align}
   \Phi^{\otimes}_{i,1_\Gamma,i}(\sum_k(\psi^2_{i,1_\Gamma,i,k} \otimes \fq_i^\vee \otimes \tau_\fP(\fq_i') \otimes \psi^1_{i,1_\Gamma,i,k}))=\pm id \label{eq:DecomIdentity}
 \end{align}
 where $\pm id \in Hom_\K(\tau_P((\eE^1)')_{c_{\fq_i,\fq_i'}},\eE^1_{c_{\fq_i,\fq_i'}})$.
  \end{prop}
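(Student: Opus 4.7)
The plan is to construct $c_\eD$ explicitly and verify the cocycle property directly. Set $c_\eD := t_\eD + \sum_i \gamma_i^{(0)}$, where each $\gamma_i^{(0)}$ is a tensor element at index $(i,1_\Gamma,i)$ uniquely determined, via the isomorphism $\Phi^{\otimes}_{i,1_\Gamma,i}$ of Proposition \ref{p:xitauxj}, by the requirement
\[
\Phi^{\otimes}_{i,1_\Gamma,i}(\gamma_i^{(0)}) \;=\; \mp\, \mathrm{id},
\]
chosen so as to cancel exactly the contribution $\pm\mathrm{id}$ of $\mu^1(t_\eD)$ at $c_{i,1_\Gamma,i}^\vee$ supplied by Proposition \ref{p:matrixA}. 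The resulting $c_\eD$ is then already of the form stated in the proposition, with $\psi^2_{i,g,j,k} = \psi^1_{i,g,j,k} = 0$ whenever $(g,j) \neq (1_\Gamma,i)$, so in particular the vanishing conditions $j>i$ and $(j=i,\,g\neq 1_\Gamma)$ are satisfied.

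It remains to show $\mu^1(c_\eD) = 0$. The $c^\vee$-components at $c_{i,1_\Gamma,i}^\vee$ cancel by construction; at any other $c_{i,g,j}^\vee$ there is no contribution since $t_\eD$ contributes nothing there (Proposition \ref{p:matrixA}) and the tensor element $\gamma_i^{(0)}$ at index $(i,1_\Gamma,i)$ evaluates via Proposition \ref{p:xitauxj} only at $c_{i,1_\Gamma,i}^\vee$. The $w_k$-contribution vanishes by Lemma \ref{l:DiffIdentify} together with the cohomological unit property from Remark \ref{r:CohomologicalUnit}. The crucial step is the vanishing of the tensor-form part $d_{tens}(\sum_i \gamma_i^{(0)})$. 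For this I apply the $A_\infty$-relation
\[
\mu^1_\cF(\mu^2_\cF(a,b)) + \mu^2_\cF(\mu^1_\cF(a),b) + (-1)^{|a|}\mu^2_\cF(a,\mu^1_\cF(b)) = 0
\]
to each summand of $\gamma_i^{(0)}$ written as $\sum_k (\psi^2_k \otimes \fq_i^\vee) \otimes (\tau_\fP(\fq_i') \otimes \psi^1_k)$, yielding
\[
\mu^1_\cF\bigl(\Phi^{\otimes}_{i,1_\Gamma,i}(\gamma_i^{(0)})\bigr) = -\mu^2_\cF\bigl(d_{tens}(\gamma_i^{(0)})\bigr).
\]
Summing over $i$ and using $\sum_i \Phi^{\otimes}_{i,1_\Gamma,i}(\gamma_i^{(0)}) = -\mu^1_\cF(t_\eD)$, the left-hand side becomes $-\mu^1_\cF \circ \mu^1_\cF(t_\eD) = 0$. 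Hence $\mu^2_\cF(d_{tens}(\sum_i \gamma_i^{(0)})) = 0$. Because Proposition \ref{p:xitauxj} shows $\mu^2_\cF$ acts on tensor components at each index $(i',g,j')$ as the isomorphism $\Phi^{\otimes}_{i',g,j'}$ to the linearly independent $c^\vee$-generator $c_{i',g,j'}^\vee$, each component of $d_{tens}(\sum_i \gamma_i^{(0)})$ must vanish individually; therefore $d_{tens}(\sum_i \gamma_i^{(0)}) = 0$.

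Non-exactness is then immediate from Corollary \ref{c:SupportingPoints}: $c_\eD$ is in reduced form, its $e_L$-component is $t_\eD \neq 0$, so $[c_\eD] \neq 0$ in $H^0(\eD)$. The main obstacle I anticipate is bookkeeping the signs in the $A_\infty$-relation argument, including the Koszul signs introduced by the mapping cone shift in \eqref{eq:mappingCone}, and checking that the identification $\sum_i \Phi^{\otimes}_{i,1_\Gamma,i}(\gamma_i^{(0)}) = -\mu^1_\cF(t_\eD)$ is consistent with the chain-level description of $\mu^1(t_\eD)$ from Proposition \ref{p:matrixA}; once the signs are reconciled, the remaining steps are formal consequences of the isomorphism and $A_\infty$-structure already established.
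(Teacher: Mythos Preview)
Your argument contains a genuine gap arising from a misreading of Proposition~\ref{p:matrixA}. That proposition says $\psi_{c_{i,g,j}^\vee}=0$ when $j>i$, or when $j=i$ and $g\neq 1_\Gamma$; it says \emph{nothing} about the components with $j<i$. So the assertion ``at any other $c_{i,g,j}^\vee$ there is no contribution since $t_\eD$ contributes nothing there'' is unjustified, and in general $\mu^1(t_\eD)$ has nonzero lower--triangular pieces. Consequently your $c_\eD=t_\eD+\sum_i\gamma_i^{(0)}$, built only from diagonal tensor terms, need not be a cocycle: the $c_{i,g,j}^\vee$--components with $j<i$ are not cancelled. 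The same misreading invalidates the identity $\sum_i\Phi^{\otimes}_{i,1_\Gamma,i}(\gamma_i^{(0)})=-\mu^1_\cF(t_\eD)$ on which your $A_\infty$ step rests.

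The fix is small and your method then works. Define $\gamma:=\sum_{i,g,j}\gamma_{i,g,j}$ where $\gamma_{i,g,j}$ is the unique tensor element at index $(i,g,j)$ with $\Phi^{\otimes}_{i,g,j}(\gamma_{i,g,j})=-\psi_{c_{i,g,j}^\vee}$; by Proposition~\ref{p:matrixA} this forces $\gamma_{i,g,j}=0$ precisely when $j>i$ or $(j=i,\,g\neq 1_\Gamma)$, and $\Phi^{\otimes}_{i,1_\Gamma,i}(\gamma_{i,1_\Gamma,i})=\mp\mathrm{id}$, which are exactly the conclusions to be proved. Now $\mu^2_\cF(\gamma)=-\mu^1_\cF(t_\eD)$ holds on the nose, and your $A_\infty$ computation $\mu^2_\cF(d_{\mathrm{tens}}(\gamma))=-\mu^1_\cF(\mu^2_\cF(\gamma))=\mu^1_\cF\mu^1_\cF(t_\eD)=0$ goes through; since $\oplus_{i,g,j}\Phi^{\otimes}_{i,g,j}$ is an isomorphism onto linearly independent generators, $d_{\mathrm{tens}}(\gamma)=0$. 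The $w_k$-- and $c^\vee$--components vanish as you argued, and non-exactness follows from Corollary~\ref{c:SupportingPoints}.

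This corrected route is genuinely different from the paper's. The paper's proof is non-constructive: it uses the rank equality $\rank H^0(\eD)=\rank HF^0((\eE^1)',\eE^1)$ coming from \eqref{eq:CohIsom} (hence ultimately from Proposition~\ref{p:CohLevelIso}) to infer that \emph{some} cocycle with $e_L$--component $t_\eD$ exists, and then reads off the stated vanishing and identity conditions from the cocycle equation at each $c_{i,g,j}^\vee$ via Propositions~\ref{p:matrixA} and~\ref{p:xitauxj}. Your (corrected) argument is constructive and avoids invoking Proposition~\ref{p:CohLevelIso} at this stage; the paper's argument is shorter but imports the cohomological identification as a black box.
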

  
  \begin{proof}
   Let $\beta$ be a non-exact degree $0$ cocycle of $\eD$ (which exists from \eqref{eq:CohIsom}).
   We write $\beta$ in the form \eqref{eq:firstForm}.  Note that $\psi_{e_L}$ can be geometrically identified as an 
   element of $hom((\eE^1)',\eE^1)$.  Lemma \ref{l:DiffIdentify} implies that, for $\mu^1_{\eD}(\beta)=0$, we must have $\mu^1_{hom((\eE^1)',\eE^1)}(\psi_{e_L})=0$.

   Also, Corollary \ref{c:SupportingPoints} implies that the degree zero cocycle $\beta$ is uniquely determined by its $\psi_{e_L}$ component.   
   Therefore, 
   \begin{align}
    \rank(H^0(\eD)) \le \rank(H^0(hom((\eE^1)',\eE^1)))
   \end{align}
   However, as explained in \eqref{eq:CohIsom}, we have
   \begin{align}
    \rank(H^0(\eD))=\rank(HF^0((\eE^1)',\eE^1))
   \end{align}
   It implies that for each degree $0$ cocycle $\psi_{e_L} \in hom((\eE^1)',\eE^1)$, there exists $\psi_{q_i^\vee \otimes \tau_P(q_j')}$
   such that $\psi_{e_L}+ \sum_{i,j} \psi_{q_i^\vee \otimes \tau_P(q_j')}$ is a degree $0$ cocycle in $\eD$.
   
   In particular, we can take $\psi_{e_L}=t_\eD$.
   For $\mu^1(t_\eD+\sum_{i,j} \psi_{q_i^\vee \otimes \tau_P(q_j'))})$ to be zero, 
   the terms of it supported at $c_{i,g,j}^\vee$ must be zero for all $i,j,g$.
   Therefore, we obtain the result by Proposition \ref{p:matrixA} and \ref{p:xitauxj} (see \eqref{eq:differentials}).
  \end{proof}

\subsection{Quasi-isomorphisms}
 
 Let $c_\eD$ be the degree $0$ cocycle obtained from Proposition \ref{p:cocycleElements}.
 In this section, we are going to study the map \eqref{eq:QIsoms} for $\eE^0 \in Ob(\cF)$.

 We assume that $L_0 \pitchfork L_1$, $L_0 \pitchfork \tau_P(L_1')$ and $L_0 \cap U$
 is a union of cotangent fibers $\cup_{i=1}^{d_{L_0}} T^*_{p_i} P$, where $d_{L_0}=\#(L_0 \cap P)$.
 Let $\fp_i$ be a choice of lift of $p_i$ in $\fP$.
 Let $C_0:=hom(\eE^0,\tau_P((\eE^1)'))$ and $C_1:=hom(\eE^0,T_\eP(\eE^1))$.
 We know from Lemma \ref{l:stretchingStripD} that, when $\tau$ is large enough, there is a subcomplex $C_0^{s} \subset C_0$
 generated by generators of $C_0$ outside $U$.
 Let $C_0^q:=C_0 /C_0^s$ be the quotient complex, which is generated by generators of $C_0$ inside $U$.
 Similarly, $C_1^s:=hom(\eE^0,\eE^1) \subset C_1$ is a subcomplex and $C_1^q:=C_1/C_1^s$ is the quotient complex.
 By definition (see \eqref{eq:mappingMu2}), for $\psi \in C_0$,
 \begin{align}
  \mu^2(c_\eD,\psi)=&\mu^2_{\cF}(e_\eD,\psi)+\sum_{i,j,g,k} (\psi^2_{i,g,j,k} \otimes \fq_i^\vee) \otimes \mu^2_{\cF}(g\tau_\fP(\fq_j') \otimes \psi^1_{i,g,j,k},\psi) \nonumber \\
                    &+\sum_{i,j,g,k} \mu^3_{\cF}(\psi^2_{i,g,j,k} \otimes \fq_i^\vee,g\tau_\fP(\fq_j') \otimes \psi^1_{i,g,j,k},\psi) \label{eq:mu2cE}
 \end{align}
 We define $\mu^2_s(c_\eD,-):=\mu^2(c_\eD,-)|_{C_0^s}:C_0^s \to C_1$. 
 
 \begin{lemma}\label{l:chainmapSub}
  For $\tau \gg 1$, the image of $\mu^2_s(c_\eD,-)$ is contained in $C_1^s$. Therefore, $\mu^2_s(c_\eD,-):C_0^s \to C_1^s$ is a chain map. 
 \end{lemma}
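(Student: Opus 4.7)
The plan is to expand $\mu^2(c_\eD,\psi)$ via equation \eqref{eq:mu2cE} and analyze each of its three terms for $\psi \in C_0^s$. The first summand $\mu^2_\cF(e_\eD,\psi)$ has codomain $hom(\eE^0,\eE^1) = C_1^s$ because $e_\eD \in hom((\eE^1)',\eE^1)$. Each summand of the third term, $\mu^3_\cF(\psi^2_{i,g,j,k}\otimes\fq_i^\vee,\, g\tau_\fP(\fq_j')\otimes\psi^1_{i,g,j,k},\,\psi)$, likewise has codomain $hom(\eE^0,\eE^1) = C_1^s$ by the composition convention of the mapping cone formula \eqref{eq:mappingMu2}. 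Only the middle tensor sum potentially lands in the ``upstairs'' piece $(hom(\eP,\eE^1)\otimes_\Gamma hom(\eE^0,\eP))[1]$, so the content of the lemma reduces to proving
$$
\mu^2_\cF(g\tau_\fP(\fq_j')\otimes\psi^1_{i,g,j,k},\,\psi) \;=\; 0 \;\in\; hom(\eE^0,\eP)
$$
for every $\psi \in C_0^s$ and every index $(i,g,j,k)$ at which $\psi^1_{i,g,j,k}$ and $\psi^2_{i,g,j,k}$ are nonzero. By Proposition \ref{p:cocycleElements} these indices satisfy $j<i$ or $(j=i,\, g=1_\Gamma)$.

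The relevant moduli counts rigid pseudoholomorphic triangles with boundary conditions $L_0,\tau_P(L_1'),\eP$, one input supported at $\tau_P(q_j') \in U$, the other input $\psi$ at some $w \in L_0 \cap \tau_P(L_1')$ outside $U$, and output at some $p_{i'} \in L_0 \cap P \subset U$. I will apply SFT neck-stretching along $\partial U$ exactly as in Section \ref{sub:neck_stretching_limits_of_holomoprhic_strips_and_triangles} and invoke Proposition \ref{p:nosidebubble}. Topological considerations on how the three edges of the triangle can cross $\partial U$, together with Lagrangian-label consistency (no single Lagrangian in $SM^-$ has boundary on both $\Lambda_{q_j'}$ and $\Lambda_{p_{i'}}$), force the limiting building to decompose into a bottom triangle $u_{bot} \subset SM^-$ with asymptotes $p_{i'},\tau_P(q_j')$ and a single mixed Reeb chord $y_{mix}$ from $\Lambda_{q_j'}$ to $\Lambda_{p_{i'}}$, joined via $y_{mix}$ to a top bigon $u_{top} \subset SM^+$ carrying the asymptotes $\psi$ and $y_{mix}$.

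A direct computation with Lemma \ref{l:VirdimFormula2} in the canonical relative grading yields $\virdim(u_{bot}) = -1 - |y_{mix}|_{rel}$, so rigidity forces $|y_{mix}|_{rel} = -1$; by Corollary \ref{c:ReebDynamicMorsified} and the index formula \eqref{l:ChordNondegenGrading}, $y_{mix}$ must then lift to a specific short geodesic between chosen lifts of $q_j'$ and $p_{i'}$. To conclude emptiness, I will combine this rigidity constraint with the action filtration of Lemma \ref{l:action_chord}: since the nonzero indices satisfy $j \le i$, and $h_j' \approx h_j \le h_i$ after a sufficiently small Hamiltonian push-off, the chord $y_{mix}$ carries action bounded below by a definite $\epsilon>0$ depending only on $\{q_i,p_{i'}\}$ and $L_0,L_1$. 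On the other hand the total energy $E_\omega(u_{bot})+E_\omega(u_{top}) = A(\tau_P(q_j'))+A(\psi)-A(p_{i'})$ can be made arbitrarily small by taking $L_1'$ a small enough push-off of $L_1$ and combining with the $C^2$-small perturbation controlling $A(\psi)$ for $\psi \in C_0^s$ outside $U$. This yields the desired contradiction for $\tau \gg 1$, hence the required vanishing.

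The main obstacle will be upgrading Lemma \ref{l:action_chord}, originally stated for Reeb chords between the $\Lambda_{q_j'}$ and the $\Lambda_{q_i}$, to the mixed setting of Reeb chords from $\Lambda_{q_j'}$ to $\Lambda_{p_{i'}}$ (where $p_{i'} \in L_0 \cap P$) while retaining the key inequality $h_j' - f_P(p_{i'}) > -\epsilon$ for the restricted index range $j \le i$; this requires analyzing the positions of $\{p_{i'}\}$ relative to $\{q_i\}$ carefully and possibly an additional generic choice of Hamiltonian push-off. Once this vanishing is secured, the inclusion $\mu^2_s(c_\eD,C_0^s) \subset C_1^s$ is immediate, and the chain-map property of $\mu^2_s(c_\eD,-):C_0^s \to C_1^s$ follows automatically from $c_\eD$ being a degree-zero cocycle of $\eD$ together with $C_1^s$ being a subcomplex of $C_1$.
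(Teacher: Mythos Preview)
Your reduction to showing that $\mu^2_\cF(g\tau_\fP(\fq_j')\otimes\psi^1_{i,g,j,k},\psi)=0$ for $\psi\in C_0^s$ is correct and matches the paper exactly. However, your virtual dimension computation contains an error that sends you down an unnecessarily complicated path.

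In the canonical relative grading, the output $p_{i'}\in hom(L_0,\eP)$ is locally an element of $CF(T^*_{p_{i'}}P,P)$, which sits in degree $n$ (not $0$; it is $CF(P,T^*_{p_{i'}}P)$ that sits in degree $0$, see \eqref{eq:CanonicalGrading}). With this correction, the bottom component $u_{v_1}$ carrying $p_{i'}$, $\tau_P(q_j')$ and positive Reeb chords $y_1,\dots,y_m$ has
\[
\virdim(u_{v_1})=|p_{i'}|-|\tau_P(q_j')|-\sum_{l=1}^m|y_l|-(1-m)\;\ge\; n-1-(1-m)\;\ge\; n-2\;>\;0,
\]
since $|y_l|\le 0$ by Corollary \ref{c:ReebDynamicMorsified} and $n\ge 3$. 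This contradicts rigidity, so the moduli is empty for $\tau\gg 1$. This is precisely the paper's proof: a one-line dimension count, with no need to pin down the exact shape of the building or to invoke any action filtration.

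Your action argument, besides being unnecessary, has a genuine gap. The inequality $h_j'\approx h_j\le h_i$ from Lemma \ref{l:action_chord} concerns the primitives of $L_1$ at the points $q_i$, whereas the action of your chord $y_{mix}$ from $\Lambda_{q_j'}$ to $\Lambda_{p_{i'}}$ involves the primitive of $L_0$ at $p_{i'}$. There is no relation between these: $L_0$ is an arbitrary test Lagrangian and the index $i'$ labelling $L_0\cap P$ is unrelated to the index $i$ labelling $L_1\cap P$, so the constraint $j\le i$ from Proposition \ref{p:cocycleElements} gives you no control over $A(y_{mix})$. Similarly, $A(\psi)$ for $\psi$ supported at a point of $L_0\cap\tau_P(L_1')$ outside $U$ is close to the action of the corresponding point of $L_0\cap L_1$, which is a fixed nonzero quantity and cannot be made small by shrinking the push-off.
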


 \begin{proof}
  Note that the first and last term  on the right hand side of \eqref{eq:mu2cE} lie inside $C_1^s$.
  Therefore, it suffices to show that $\mu^2_{\cF}(g\tau_\fP(\fq_j') \otimes \psi^1_{i,g,j,k},\psi)=0$ for $\psi \in C_0^s$.
  We consider the moduli $\eM^{J^{\tau}}(p_s;\tau_P(q_j'),y)$ where $y \in (L_0 \cap \tau_P(L_1')) \backslash U$
  and $p_s \in L_0 \cap P$.
  Let $u_{\infty}=(u_v)_{v \in V(\eT)}$ be a holomorphic building converging from curves in $\eM^{J^{\tau}}(p_s;\tau_P(q_j'),y)$.
  By boundary condition, there exists $v_1 \in V(\eT)$ such that $p_s$ and $\tau_P(q_j')$ are asymptotes of $u_{v_1}$.
  The other asymptotes of $u_{v_1}$ are positive Reeb chords $y_1,\dots,y_m$. We have
  \begin{align}
   \virdim(u_{v_1})=|p_s|-|\tau_P(q_j')|- \sum_{l=1}^m |y_l|-(1-m) \ge n-1-(1-m) \ge n-2>0
  \end{align}
 Contradiction. Therefore, $\eM^{J^{\tau}}(p_s;\tau_P(q_j'),y)= \emptyset$ for $\tau \gg1$.
 \end{proof}

 \begin{lemma}\label{l:SubFirstTerm}
  For $\tau \gg 1$, $\mu^2_s(c_\eD,-)=\mu^2_{\cF}(t_\eD,-)$. 
 \end{lemma}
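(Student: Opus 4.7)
The plan is to analyze \eqref{eq:mu2cE} term by term and show that, for $\psi \in C_0^s$, only the first summand $\mu^2_\cF(t_\eD,\psi)$ survives when $\tau \gg 1$. Recall from Proposition \ref{p:cocycleElements} that every nonzero $(i,g,j,k)$ contribution to $c_\eD$ has $j \le i$ (with $g=1_\Gamma$ if $j=i$), so all terms we must kill involve $g\tau_\fP(\fq_j')$ with $\fq_j' \in L_1' \cap U$.

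First, the second summand $\sum (\psi^2_{i,g,j,k} \otimes \fq_i^\vee) \otimes \mu^2_\cF(g\tau_\fP(\fq_j') \otimes \psi^1_{i,g,j,k},\psi)$ vanishes: this is exactly the content of the SFT argument in the proof of Lemma \ref{l:chainmapSub}, where we showed that $\eM^{J^\tau}(p_s; \tau_P(q_j'), y) = \emptyset$ for $\tau \gg 1$ whenever $y \in (L_0 \cap \tau_P(L_1')) \setminus U$ and $p_s \in L_0 \cap P$, by a bottom-level virtual dimension count forcing $\virdim \ge n-2 > 0$.

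For the third summand $\sum \mu^3_\cF(\psi^2_{i,g,j,k} \otimes \fq_i^\vee, g\tau_\fP(\fq_j') \otimes \psi^1_{i,g,j,k}, \psi)$, I will run an entirely parallel SFT stretching argument on the moduli $\eM^{J^\tau}(z; \fq_i^\vee, g\tau_\fP(\fq_j'), y)$, where $y \in C_0^s$ is outside $U$ and $z$ is a generator of $C_1$. Take a limiting holomorphic building $u_\infty=(u_v)_{v\in V(\eT)}$ from Theorem \ref{t:SFTcompactness}, and apply Proposition \ref{p:nosidebubble} to see $n_\eT=1$ with all $u_v$ transversally cut out. Since the Lagrangian labels $\eP$ and $\tau_P(L_1')$ both meet $U$ in cotangent fibers, the boundary condition forces the two asymptotes $\fq_i^\vee$ and $g\tau_\fP(\fq_j')$ to lie on a common bottom-level component $u_{v_1}$ in $SM^-$; the remaining asymptotes of $u_{v_1}$ are positive Reeb chords $y_1,\dots,y_m$. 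Computing in canonical relative grading with $|\fq_i^\vee|=0$, $|g\tau_\fP(\fq_j')|=1$ and $|y_l|\le 0$ by Corollary \ref{c:ReebDynamicMorsified}, Lemma \ref{l:VirdimFormula2} gives
\begin{equation*}
\virdim(u_{v_1}) = n - 0 - 1 - \sum_{l=1}^m |y_l| + (m+2-3) \;\ge\; n-2+m \;\ge\; 1
\end{equation*}
since $n\ge 3$. This contradicts the requirement $\virdim(u_{v_1})=0$ coming from $\virdim(u_\infty)=0$ together with \eqref{eq:VirDimG} and the nonnegativity of each component's virtual dimension. Hence the relevant moduli space is empty for $\tau \gg 1$, so the $\mu^3$-term vanishes.

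Putting (a) and (b) together, \eqref{eq:mu2cE} collapses to $\mu^2_\cF(t_\eD,\psi)$ for every $\psi \in C_0^s$, which is precisely the claim. The only non-routine step is the virtual dimension computation above; the hypothesis $n\ge 3$ is exactly what makes the bottom-level bigon-with-two-fiber-asymptotes too high-dimensional to survive, and this is where the dimension restriction in Theorem \ref{t:Twist formula} enters.
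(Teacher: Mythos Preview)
Your proposal is correct and follows essentially the same approach as the paper: invoke Lemma~\ref{l:chainmapSub} to kill the second summand, then run the same SFT bottom-level virtual dimension count on $\eM^{J^\tau}(x;q_i^\vee,\tau_P(q_j'),y)$ to kill the $\mu^3$-term, arriving at $\virdim(u_{v_1}) \ge n-2+m > 0$. One cosmetic remark: the output $z$ of the $\mu^3$-term necessarily lies in $L_0 \cap L_1 \subset C_1^s$ (not a general generator of $C_1$), though this does not enter the dimension count and so does not affect your argument.
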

 
 \begin{proof}
  By Lemma \ref{l:chainmapSub}, it suffices to prove that $\eM^{J^{\tau}}(x;q_i^\vee, \tau_P(q_j'),y)= \emptyset$ for $\tau \gg1$,
  where $y \in (L_0 \cap \tau_P(L_1')) \backslash U$ and $x \in L_0 \cap L_1$.
  Let $u_{\infty}=(u_v)_{v \in V(\eT)}$ be a holomorphic building converging from curves in $\eM^{J^{\tau}}(x;q_i^\vee, \tau_P(q_j'),y)$.
  By boundary condition, there exists $v_1 \in V(\eT)$ such that $q_i^\vee$ and $\tau_P(q_j')$ are asymptotes of $u_{v_1}$.
  The other asymptotes of $u_{v_1}$ are positive Reeb chords $y_1,\dots,y_m$. We have
  \begin{align}
   \virdim(u_{v_1})=n-|q_i^\vee|-|\tau_P(q_j')|- \sum_{l=1}^m |y_l|-(1-m) \ge n-1-(1-m) \ge n-2>0
  \end{align}
 Therefore, $\eM^{J^{\tau}}(x;q_i^\vee, \tau_P(q_j'),y)= \emptyset$ for $\tau \gg1$,
 \end{proof}

 \begin{prop}\label{p:SubComQIso}
  For $\tau \gg 1$, $\mu^2_s(c_\eD,-)$ is a quasi-isomorphism.
 \end{prop}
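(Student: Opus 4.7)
The plan is to reduce this to the elementary fact that multiplication by the cohomological unit of $HF^0((\eE^1)',\eE^1)$ induces the continuation quasi-isomorphism $HF(\eE^0,(\eE^1)') \to HF(\eE^0,\eE^1)$, which is valid because $(\eE^1)'$ is Hamiltonian isotopic to $\eE^1$. By Lemma \ref{l:SubFirstTerm}, for $\tau \gg 1$ we have $\mu^2_s(c_\eD,-)=\mu^2_\cF(t_\eD,-)$ on $C_0^s$, and $t_\eD$ corresponds geometrically to the cohomological unit $e_\cE \in CF((\eE^1)',\eE^1)$ (Remark \ref{r:CohomologicalUnit}). The strategy is therefore to identify $C_0^s$ with $CF(\eE^0,(\eE^1)')$ at the chain level, and verify that under this identification the map becomes the standard multiplication $\mu^2_\cF(e_\cE,-):CF(\eE^0,(\eE^1)') \to CF(\eE^0,\eE^1)=C_1^s$.

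To match generators, I would first arrange by a small generic Hamiltonian perturbation that the sets $\{p_i\}_{i=1}^{d_{L_0}}$ and $\{q_j'\}_{j=1}^{d_{L_1}}$ are pairwise disjoint in $P$, which forces $L_0 \cap L_1' \cap U = \emptyset$ (cotangent fibers at distinct points have empty intersection). Consequently $CF(\eE^0,(\eE^1)')$ is generated by $L_0 \cap L_1' \cap (M \setminus U)$, which, because $\tau_P$ is supported in $U$, coincides with the generators of $C_0^s$. By the same reasoning, $C_1^s=CF(\eE^0,\eE^1)$ is generated purely outside $U$.

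To match differentials and the $\mu^2$-operation, I would invoke the neck-stretching results of Section \ref{sub:neck_stretching_limits_of_holomoprhic_strips_and_triangles}. For rigid $J^\tau$-holomorphic strips contributing to the differential of either $CF(\eE^0,(\eE^1)')$ or $C_0^s$, and for rigid $J^\tau$-holomorphic triangles contributing to $\mu^2_\cF(t_\eD,-)$ with all asymptotes outside $U$, the SFT limit would have any $J^-$-component requiring a non-Reeb-chord asymptote, by Lemma \ref{l:noUcurve}. Since $L_0 \cap L_1' \cap U = \emptyset = L_0 \cap L_1 \cap U$, no such Lagrangian intersection asymptote is available in $U$, so the building has no bottom-level component, and all rigid curves lie in $M \setminus U$. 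Because $L_1'=\tau_P(L_1')$ in $M \setminus U$, the respective moduli spaces then admit sign-preserving canonical bijections, yielding the desired chain-level identifications.

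Under these identifications, $\mu^2_s(c_\eD,-)$ becomes $\mu^2_\cF(e_\cE,-)$, which is a chain map because $e_\cE$ is a cocycle, and a quasi-isomorphism because $e_\cE$ represents the cohomological unit of $HF((\eE^1)',\eE^1)$ and $(\eE^1)'\simeq \eE^1$. The hardest step is the careful verification that for $\tau \gg 1$ all rigid curves in the relevant moduli spaces stay in $M \setminus U$; this relies on the same SFT compactness-and-index machinery developed in Sections \ref{sub:twist_local_model}--\ref{sub:matching_differentials}, together with Corollary \ref{c:ReebDynamicMorsified} and the action estimates of Lemma \ref{l:action_chord} applied to triangles with asymptotes $t_\eD$, $\bar x \in C_0^s$, and output $\bar y$ all outside $U$.
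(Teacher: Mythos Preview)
Your proposal is correct and follows essentially the same approach as the paper: reduce to $\mu^2_\cF(t_\eD,-)$ via Lemma~\ref{l:SubFirstTerm}, use the neck-stretching argument (the paper cites the proof of Lemma~\ref{l:stretchingStripB}, which is exactly your invocation of Lemma~\ref{l:noUcurve}) to show all rigid triangles stay outside $U$, and then identify the map with multiplication by the cohomological unit $e_\cE$. Your extra care in arranging $L_0 \cap L_1' \cap U = \emptyset$ and matching the differentials is implicit in the paper's setup, and the reference to Lemma~\ref{l:action_chord} at the end is unnecessary but harmless.
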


 \begin{proof}
  For $y \in (L_0 \cap \tau_P(L_1')) \backslash U$ and $x \in L_0 \cap L_1$, the proof of Lemma \ref{l:stretchingStripB} implies that
  all rigid elements in $\eM^{J^{\tau}}(x;e_L,y)$ have their image completely outside $U$.
  
  As a result, the computation of $\mu^2_s(c_\eD,-)=\mu^2_{\cF}(t_\eD,-)$ picks up exactly the same holomorphic triangles
  that contributes to $\mu^2_\cF(e_\cE,-): C_0^s\cong hom(\eE^0,(\eE^1)') \to hom(\eE^0,\eE^1)\cong C_1^s$ via the tautological identification between $e_\cE$ and $t_\eD$ 
  (see \eqref{eq:teD} and the paragraph after it).
  Since $e_\cE$ is the cohomological unit, $\mu^2_s(c_\eD,-)$ is also a quasi-isomorphism.
 \end{proof}

 By Lemma \ref{l:chainmapSub}, we know that $\mu^2(c_\eD,-)$ induces a chain map on the quotient complexes $\mu_q^2(c_\eD,-):C_0^q \to C_1^q$.
 Since the first and last term  on the right hand side of \eqref{eq:mu2cE} are, by definition, lying inside $C_1^s$, the map $\mu_q^2(c_\eD,-)$ is given by
   \begin{align}
  \mu^2_q(c_\eD,\psi)=\sum_{i,j,g,k} (\psi^2_{i,g,j,k} \otimes \fq_i^\vee) \otimes \mu^2_{\cF}(g\tau_\fP(\fq_j') \otimes \psi^1_{i,g,j,k},\psi) \label{eq:Ending}
 \end{align}
 By Proposition \ref{p:SubComQIso} and the five lemma, to show that $\mu^2(c_\eD,-)$ is a quasi-isomorphism, it suffices to show that
 $\mu^2_q(c_\eD,-)$ is a quasi-isomorphism.
 
 We recall from Lemma \ref{l:CorrIntersections} that there is a bijective correspondence
 \begin{align}
  \iota:  hom(\eP,L_1') \otimes_{\Gamma} hom(L_0,\eP)   \to  (L_0 \cap \tau_P(L_1')) \cap U \label{eq:CorrIntersections2}
 \end{align}
 so we can write a point $y \in (L_0 \cap \tau_P(L_1')) \cap U$ as $c_{h\fp_s,\fq_l'}:=\iota(\fq_l'^\vee \otimes h\fp_s)$
 for some $h \in \Gamma$ and some $s,l$.
 We want to understand the moduli $\eM^{J^\tau}(p_m;\tau_P(q_j'),c_{h\fp_s,\fq_l'})$ for various $j,s,l,m$, which is responsible for (part of) the operation
 \begin{align}
  hom(\tau_P(L_1'),\eP) \times hom(L_0,\tau_P(L_1')) \to hom(L_0,\eP) \label{eq:microlocaltriangle}
 \end{align}
 Notice that, by switching the appropriate strip-like ends from incoming to outgoing (and backwards) for the same holomorphic triangles, \eqref{eq:microlocaltriangle} can be dualized to
  \begin{align}
 hom(\eP,L_0) \times hom(\tau_P(L_1'),\eP) \to hom(\tau_P(L_1'),L_0) \label{eq:microlocaltriangle2}
 \end{align}
 If we replace $L_0$ by $L_1$ (both of them are union of cotangent fibers in $U$), then we see that \eqref{eq:microlocaltriangle2} has already
 been studied in Lemma \ref{l:WrongBoundaryCondition} and \ref{l:microlocaltriangle}.
 The outcome is the following:
 
 \begin{lemma}\label{l:mtriangle}
  For $\tau \gg 1$, for $\psi_{c_{h\fp_s,\fq_l'}} \in C_0$ supported at $c_{h\fp_s,\fq_l'}$
  \begin{align}
   \mu^2_\cF(g\tau_\fP(\fq_j')\otimes \psi^1_{i,g,j,k},\psi_{c_{h\fp_s,\fq_l'}}) \label{eq:microlocaltriangle3}
  \end{align}
is $0$ if  $l \neq j$. When $l=j$, \eqref{eq:microlocaltriangle3} becomes
\begin{align}
gh\fp_s \otimes (I_{[\tau_P(q_j') \to p_s]} \circ \psi^1_{i,g,j,k} \circ I_{[c_{h\fp_s,\fq_j'} \to \tau_P(q_j')]} \circ \psi_{c_{h\fp_s,\fq_j'}} \circ I_{[p_s \to c_{h\fp_s,\fq_j'}]})
\end{align}
where all the parallel transport maps are the unique one determined by the boundary condition inside $U$ (cf. Proposition \ref{p:xitauxj}).
 \end{lemma}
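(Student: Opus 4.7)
The plan is to reduce the computation to the moduli already analyzed in Lemma \ref{l:WrongBoundaryCondition} and Lemma \ref{l:microlocaltriangle} by dualizing strip-like ends, and then to track the parallel transports carefully. First I would observe that a holomorphic triangle $u \in \eM^{J^\tau}(p_m;\tau_P(q_j'),c_{h\fp_s,\fq_l'})$ has the same underlying map as a triangle contributing to a $\mu^2$ with Lagrangian labels $(T^*_{p_s}P,\tau_P(T^*_{q_j'}P),P)$ in some order; switching one incoming puncture to outgoing and vice versa does not alter which maps appear, only the algebraic role of the asymptotes. This is exactly the setup studied in Lemma \ref{l:WrongBoundaryCondition} and Lemma \ref{l:microlocaltriangle} with $L_1$ replaced by $L_0$.

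Next I would prove the vanishing when $l\neq j$. An SFT stretching argument, identical to the one used in Lemma \ref{l:WrongBoundaryCondition}, forces the $J^-$-component containing the two asymptotes on $\tau_P(L_1')$ to have $c_{h\fp_s,\fq_l'}$ among its asymptotes; but if $l\neq j$, the point $c_{h\fp_s,\fq_l'}$ does not lie in $T^*_{h\fp_s}\fP\cap \tau_\fP(T^*_{\fq_j'}\fP)$ for any lift, so this component would need an additional positive Reeb chord asymptote, yielding $\virdim \ge n-2>0$, a contradiction.

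For the case $l=j$, the same argument as in Lemma \ref{l:microlocaltriangle} shows that every rigid contributing curve has image inside $U$ and can be lifted to $\fU$. The local count then follows verbatim from Lemma \ref{l:microlocaltriangle} (possibly via the geometric variant described in Remark \ref{r:GeomAlternative}): there is, up to sign, exactly one rigid triangle in $\fU$ whose Lagrangian labels are the lifts determined by $g\fq_j',\fq_j'$ and $h\fp_s$, and it outputs $gh\fp_s$ viewed as an element of the universal local system $E_{p_s}$. The remaining content is bookkeeping of parallel transports: because each triangle is contractible, the homotopy classes of the three boundary arcs on $P$, on $L_0$ and on $\tau_P(L_1')$ are forced by the three intersection points, and are precisely $[p_s\to c_{h\fp_s,\fq_j'}]$, $[c_{h\fp_s,\fq_j'}\to \tau_P(q_j')]$ and $[\tau_P(q_j')\to p_s]$ respectively (in the order met by the boundary orientation). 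Composing the parallel transports in the order dictated by the definition \eqref{eq:AinfLocalSystem} of $\mu^u$ for Lagrangian with local systems produces exactly the formula in the statement.

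The main obstacle I anticipate is not the vanishing or the local count, both of which are direct transcriptions of earlier lemmas, but verifying that the signs and the order of parallel transports come out consistently after the dualization of strip-like ends, and that the resulting element really sits in $Hom_{\K}(\tau_P((\eE^1)')_{p_s},\K)\otimes g h\fp_s$ rather than some twisted version. This bookkeeping will be routine once the identification between the lift $gh\fp_s\in E_{p_s}$ and the output intersection $p_s$ is made explicit, exactly as done in the proof of Proposition \ref{p:xitauxj}.
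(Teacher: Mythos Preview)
Your proposal is correct and follows essentially the same approach as the paper: the paper explicitly sets up the dualization of strip-like ends just before the lemma (identifying \eqref{eq:microlocaltriangle} with \eqref{eq:microlocaltriangle2}) and then invokes Lemmas \ref{l:WrongBoundaryCondition}, \ref{l:microlocaltriangle} and Proposition \ref{p:xitauxj} for the vanishing, the confinement to $U$, the local count, and the parallel-transport chase. One small imprecision in your vanishing step: the $J^-$-component you should start with is the one containing $\tau_P(q_j')$ and $p_m$, which are forced together by the $P$-boundary arc (connected inside $U$); if $l\neq j$ or $m\neq s$ then $c_{h\fp_s,\fq_l'}$ cannot also lie on this component, and the remaining asymptotes are positive Reeb chords, giving the dimension contradiction.
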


\begin{proof} 
The argument largely resembles the proof of Lemma \ref{l:WrongBoundaryCondition}, \ref{l:microlocaltriangle}
and Proposition \ref{p:xitauxj}. 
A neck-stretching argument as in Lemma \ref{l:WrongBoundaryCondition} deduces that 
$\eM^{J^\tau}(p_m;\tau_P(q_j'),c_{h\fp_s,\fq_l'})$ is not empty only if $j=l$ and $m=s$.
The same dimension count implies that 
when $j=l$, $m=s$ and $\tau \gg 1$, every rigid element of $\eM^{J^\tau}(p_m;\tau_P(q_j'),c_{h\fp_s,\fq_l'})$
 has image inside $U$.
The local count and the chasing of local systems from Lemma \ref{l:microlocaltriangle} and Proposition \ref{p:xitauxj} 
applies directly to the current case because it is a computation in $\fU$ about cotangent fibers and their Dehn twists.
 In particular, if we remove the local systems on $L_0$ and $\tau_P(L_1')$, we get
 \begin{align}
  \mu^2_{\cF}(g\tau_\fP(\fq_j') ,c_{h\fp_s,\fq_j'})=\mu^2_{\cF}(g\tau_\fP(\fq_j') ,c_{gh\fp_s,g\fq_j'})=gh\fp_s \in hom(L_0, \eP)
 \end{align}
 The parallel transport maps are uniquely determined by boundary conditions, and after chasing all of them, we get the result.
 \end{proof}
 
 Let $V=hom(\eP,(\eE^1)') \otimes_{\Gamma} hom(\eE^0,\eP)$ which is generated by elements of the form
 \begin{align}
  (\Upsilon^2 \otimes (\fq_r')^\vee) \otimes (h\fp_t \otimes \Upsilon^1) \label{eq:EleSupportedAtTensors}
 \end{align}
 for $h \in \Gamma$, $r=1,\dots, d_{L_1}$, $t=1,\dots, d_{L_0}$, $\Upsilon^2 \in (\eE^1)'_{q_r'}$ and $\Upsilon^1 \in Hom_\K(\eE^0_{p_t},\K)$ (cf. \eqref{eq:EleSupportedAtTensor}).

 \begin{itemize}
   \item  For $s=1,\dots,d_{L_0}$, let $V_s$ be the subspace generated by elements in \eqref{eq:EleSupportedAtTensors} such that $t=s$.
   \item  For $s=1,\dots,d_{L_0}$ and
 $l=1,\dots,d_{L_1}$, let $V_{s,l}$ be the subspace of $V_s$ generated by elements in \eqref{eq:EleSupportedAtTensors} such that $r=l$.
 \item  For $s=1,\dots,d_{L_0}$,
 $l=1,\dots,d_{L_1}$ and $g \in \Gamma$, let 
 $V_{s,l,g}$ be the subspace of $V_{s,l}$ generated by elements in \eqref{eq:EleSupportedAtTensors} such that $h=g$.
 \end{itemize}

 Therefore, we have direct sum decompositions
 \begin{align}
  V=\oplus_s V_s, \text{   } V_s =\oplus_l V_{s,l}, \text{   } V_{s,l}= \oplus_{g} V_{s,l,g} \label{eq:DirectSums}
 \end{align}

 The bijective correspondence $\iota$  \eqref{eq:CorrIntersections2} extends to an isomorphism, also denoted by $\iota$,
 from $V$ to $C_0^q$ by keeping track of the (uniquely determined) parallel transport maps along Lagrangians inside $U$.
 On the other hand, there is an obvious isomorphism $F:hom(\eP,\eE^1) \otimes_{\Gamma} hom(\eE^0,\eP) \to V$ given by
 \begin{align}
  (\Upsilon^2 \otimes \fq_l^\vee) \otimes (h\fp_s \otimes \Upsilon^1) \mapsto (\Upsilon^2 \otimes (\fq_l')^\vee) \otimes (h\fp_s \otimes \Upsilon^1)
 \end{align}
where we used the identification $\eE^1_{q_l} \simeq (\eE^1)'_{q_l}$ by the Hamiltonian push-off.
As a result, we have a composition map
\begin{align}
 \Theta: V\xrightarrow{\iota} (L_0 \cap \tau_P(L_1')) \cap U\xrightarrow{\mu^2_q(c_\eD,-)} C_1^q\xrightarrow{F} V \label{eq:THETA}
\end{align}
which respects a filtration on $V$ in the following sense.

\begin{lemma}\label{l:FiltrationV}
We have
 \begin{align}
  \left\{
  \begin{array}{ll}
   \Theta(V_s) \subset V_s  &                          \text{ for all }s\\
   \Theta(V_{s,l}) \subset \oplus_{t \ge l} V_{s,t} & \text{ for all }s,l\\
   \Theta(V_{s,l,h}) \subset V_{s,l,h}+(\oplus_{t > l} V_{s,t}) & \text{ for all }s,l,h
  \end{array}
\right.
 \end{align}

\end{lemma}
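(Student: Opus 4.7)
The plan is to reduce Lemma \ref{l:FiltrationV} to the shape constraints already built into the cocycle $c_\eD$ from Proposition \ref{p:cocycleElements} together with the triangle count of Lemma \ref{l:mtriangle}. Notice that the third inclusion implies the second (by summing over $h \in \Gamma$) and the second implies the first (by summing over $l$), so it suffices to establish
\begin{align*}
 \Theta(V_{s,l,h}) \subset V_{s,l,h} + (\oplus_{t > l} V_{s,t}).
\end{align*}

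First, fix $\psi \in V_{s,l,h}$, so that $\iota(\psi) \in C_0^q$ is supported at the intersection $c_{h\fp_s,\fq_l'}$, by the definition of $\iota$ in \eqref{eq:CorrIntersections2}. Using the formula \eqref{eq:Ending} for $\mu^2_q(c_\eD,-)$ and the explicit form of $c_\eD$ from Proposition \ref{p:cocycleElements}, we write
\begin{align*}
 \mu^2_q(c_\eD, \iota(\psi)) = \sum_{i,j,g,k} (\psi^2_{i,g,j,k} \otimes \fq_i^\vee) \otimes \mu^2_\cF\bigl(g\tau_\fP(\fq_j') \otimes \psi^1_{i,g,j,k},\, \iota(\psi)\bigr).
\end{align*}
By Lemma \ref{l:mtriangle}, the inner factor $\mu^2_\cF(g\tau_\fP(\fq_j') \otimes \psi^1_{i,g,j,k}, \iota(\psi))$ vanishes unless $j = l$, and in that case it equals $gh\fp_s \otimes (\text{parallel transport})$, an element of $hom(\eE^0,\eP)$ supported at the $L_0$-index $s$ with $\Gamma$-index $gh$. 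Thus every non-zero contribution lands in $V_s$, which already gives the first inclusion.

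Now restrict to $j = l$ and invoke the support constraint from Proposition \ref{p:cocycleElements}: the coefficients $\psi^2_{i,g,l,k}$ and $\psi^1_{i,g,l,k}$ vanish unless either $i > l$, or $i = l$ and $g = 1_\Gamma$. After applying $F$, the $L_1$-index $i$ attached to $\fq_i^\vee$ becomes the $L_1'$-index, so these two cases land respectively in $V_{s,i} \subset \oplus_{t > l} V_{s,t}$ and in $V_{s,l,gh} = V_{s,l,h}$ (since $g = 1_\Gamma$ forces $gh = h$). The only bookkeeping issue is that the $\Gamma$-index survives unambiguously through the tensor $\otimes_\Gamma$; this is immediate because the equivalence $(\cdot \cdot g') \otimes (\cdot) \sim (\cdot) \otimes (g' \cdot)$ used to define $V_{s,l,h}$ is precisely what identifies the output with an element whose normal form has group factor $gh$ absorbed in $gh\fp_s \otimes \Upsilon^1$. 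This completes the argument, and I do not anticipate any genuine obstacle: all the analytic input has been assembled in Proposition \ref{p:cocycleElements} and Lemma \ref{l:mtriangle}, and the filtration statement is a direct index chase.
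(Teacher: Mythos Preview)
Your proof is correct and takes essentially the same approach as the paper: both arguments write out $\Theta$ explicitly using \eqref{eq:Ending} and Lemma~\ref{l:mtriangle} to pin down the indices $(s,gh)$ of the output, then invoke the vanishing constraints on $\psi^2_{i,g,l,k}$ from Proposition~\ref{p:cocycleElements} to force $i>l$ or $(i,g)=(l,1_\Gamma)$. Your only presentational difference is the observation that the third inclusion implies the first two, which lets you treat all three at once rather than sequentially as the paper does.
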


\begin{proof}
 Explicitly, $\Theta$ is given by (see \eqref{eq:Ending} and Lemma \ref{l:mtriangle} )
 \begin{align}
  &(\Upsilon^2 \otimes (\fq_l')^\vee) \otimes (h\fp_s \otimes \Upsilon^1) \\
  \mapsto & \sum_{i,g,k} (\psi^2_{i,g,l,k} \otimes (\fq_i')^\vee) \otimes (gh\fp_s \otimes R(\psi^1_{i,g,l,k},\Upsilon^2,\Upsilon^1))
 \end{align}
where $R(\psi^1_{i,g,l,k},\Upsilon^2,\Upsilon^1)$ is a term depending on $\psi^1_{i,g,l,k},\Upsilon^2,\Upsilon^1$ given by composing parallel transport maps.
It is therefore clear that $\Theta(V_s) \subset V_s$.
By Proposition \ref{p:cocycleElements}, we know that $\psi^2_{i,g,l,k}=0$ unless $j \le i$ so $\Theta(V_{s,l}) \subset \oplus_{t \ge l} V_{s,t}$.

When $i=l$, $\psi^2_{i,g,l,k} \neq 0$ only if $g=1_\Gamma$ (by Proposition \ref{p:cocycleElements}).
Therefore, $\Theta(V_{s,l,h}) \subset V_{s,l,h}+(\oplus_{t > l} V_{s,t})$
\end{proof}

\begin{prop}\label{p:FinalIsom}
 $\mu^2_q$ is a quasi-isomorphism.
\end{prop}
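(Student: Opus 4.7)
The plan is to use the filtration from Lemma~\ref{l:FiltrationV} together with the normalization \eqref{eq:DecomIdentity} to show that the linear map $\Theta$ from \eqref{eq:THETA} is an isomorphism of graded vector spaces, and then deduce that $\mu^2_q$ is a quasi-isomorphism. Since $\iota$ (from \eqref{eq:CorrIntersections2} plus the canonical parallel transports in $U$) and $F$ (from the Hamiltonian-pushoff identification of fibers) are linear bijections, establishing that $\Theta$ is a linear isomorphism is equivalent to establishing that $\mu^2_q$ is. Once $\mu^2_q$ is known to be bijective on underlying vector spaces, the fact that it is a chain map (from $c_\eD$ being a cocycle together with Lemma~\ref{l:chainmapSub}) promotes it to a chain isomorphism, and hence in particular a quasi-isomorphism.

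For the isomorphism statement, I would first use $V = \oplus_s V_s$ and the first line of Lemma~\ref{l:FiltrationV} to reduce to showing each $\Theta|_{V_s}$ is an isomorphism. Fixing $s$ and ordering $l = 1, \dots, d_{L_1}$, the decreasing filtration $\cF^l V_s := \oplus_{t \ge l} V_{s,t}$ is preserved by $\Theta|_{V_s}$ by the second line of Lemma~\ref{l:FiltrationV}, so it suffices to check that the induced map on each associated graded piece $V_{s,l}$ is an isomorphism. The third line of Lemma~\ref{l:FiltrationV} further tells us that, modulo $\oplus_{t>l} V_{s,t}$, the induced map is block-diagonal with respect to $V_{s,l} = \oplus_{h \in \Gamma} V_{s,l,h}$, so the question reduces to checking that each $V_{s,l,h} \to V_{s,l,h}$ is an isomorphism.

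For fixed $s, l, h$, unravelling \eqref{eq:Ending} via Lemma~\ref{l:mtriangle} and Proposition~\ref{p:xitauxj}, the induced map on $V_{s,l,h}$ sends $(\Upsilon^2 \otimes (\fq_l')^\vee) \otimes (h\fp_s \otimes \Upsilon^1)$ to a sum $\sum_k (\psi^2_{l,1_\Gamma,l,k} \otimes (\fq_l')^\vee) \otimes (h\fp_s \otimes R_k)$, where $R_k$ is a composition of $\psi^1_{l,1_\Gamma,l,k}$ with canonical parallel transports along the unique (simply-connected) pieces of $L_0$ and $\tau_P(L_1')$ connecting the relevant lifts. After absorbing these fixed isomorphisms, the map is identified with the action of $\Phi^\otimes_{l,1_\Gamma,l}$ on the coefficient $\sum_k \psi^2_{l,1_\Gamma,l,k} \otimes \fq_l^\vee \otimes \tau_\fP(\fq_l') \otimes \psi^1_{l,1_\Gamma,l,k}$ appearing in $c_\eD$. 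By \eqref{eq:DecomIdentity}, this coefficient is sent by $\Phi^\otimes_{l,1_\Gamma,l}$ to $\pm id$, so the diagonal block is an isomorphism.

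The main bookkeeping hurdle is keeping track of the parallel transports that appear in Lemma~\ref{l:mtriangle} and Proposition~\ref{p:xitauxj}: one must verify that the various paths occurring in the formulas indeed compose to produce isomorphisms between the expected fibers, and that the further parallel transports along $L_0$ do not mix different $V_{s,l,h}$ components. Since all paths involved lie inside simply-connected subsets (cotangent fibers and their Dehn twists inside $\fU$), the required compositions are canonical, making this hurdle a matter of careful bookkeeping rather than a substantive obstruction.
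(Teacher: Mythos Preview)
Your proposal is correct and follows essentially the same approach as the paper: reduce to showing $\Theta$ is bijective via the filtration of Lemma~\ref{l:FiltrationV}, and then verify that the diagonal block on each $V_{s,l,h}$ is an isomorphism using \eqref{eq:DecomIdentity}. The paper's proof is slightly terser (it phrases the filtration step as checking surjectivity of \eqref{eq:FilteredTheta}), but the argument is the same.
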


\begin{proof}
 Since $\mu^2_q$ is a chain map, it suffices to show that $\mu^2_q$ is bijective.
 We know that $\iota$ and $F$ are isomorphisms so it suffices to show that $\Theta$ is surjective (see \eqref{eq:THETA}).
 By \eqref{eq:DirectSums} and Lemma \ref{l:FiltrationV}, it suffices to show that 
 \begin{align}
  \Theta|_{V_{s,l,h}}:V_{s,l,h} \to (V_{s,l,h}+(\oplus_{t > l} V_{s,t}))/(\oplus_{t > l} V_{s,t}) \label{eq:FilteredTheta}
 \end{align}
is bijective for all $s,l,h$.
For fixed $s,l,h$, the map \eqref{eq:FilteredTheta} can be identified with the map
\begin{align}
 (\eE^1)'_{q_l'} \otimes Hom_\K(\eE^0_{p_s},\K) \to (\eE^1)'_{q_l'} \otimes Hom_\K(\eE^0_{p_s},\K) \nonumber \\
 \Upsilon^2 \otimes \Upsilon^1 \mapsto \sum_k (\psi^2_{l,1_{\Gamma},l,k} \otimes  R(\psi^1_{l,1_{\Gamma},l,k},\Upsilon^2,\Upsilon^1)) \label{eq:LastIsom}
\end{align}
By \eqref{eq:DecomIdentity} and keeping track of the uniquely determined parallel transport maps, 
it is clear that \eqref{eq:LastIsom} is an isomorphism.
\end{proof}

\begin{proof}[Concluing the proof of Theorem \ref{t:Twist formula}, \ref{t:reformulation}]
 For each $\eE^1 \in Ob(\cF)$, we apply Proposition \ref{p:cocycleElements} to find a degree $0$ cocycle
 $c_\eD \in hom^0_{\cF^{\perf}}(\tau_P((\eE^1)'),T_\eP(\eE^1))$.
 Given any object $(\eE^0)' \in Ob(\cF)$, we consider a quasi-isomorphic $\eE^0$, which is a Hamiltonian isotopic copy and the underlying Lagrangian $L_0$ intersects transversally with $L_1,\tau_P(L_1')$ and $L_0\cap U$.
 
 Proposition \ref{p:SubComQIso} and \ref{p:FinalIsom}, together with the five lemma, then conclude that \eqref{eq:QIsoms} is a quasi-isomorphism.
 
\end{proof}

\begin{proof}[Proof of Corollary \ref{c:PtwistSStwist}]
 When $P$ is diffeomorphic to $\mathbb{RP}^n$ and $n=4k-1$, $P$ is spin and can be equipped with the spin structure descended from $S^n$.
 When $\cchar(\K) \neq 2$, the universal local system $\eP$ is a direct sum of two rank $1$ local systems $\eE^1$ and $\eE^2$.
 This is because $\K[\Z_2]$ splits when $\cchar(\K) \neq 2$.
 Moreover, by Lemma \ref{l:semisimpleCal} and Corollary \ref{c:HSphere}, we have
 \begin{align}
  HF^*(\eE^i,\eE^j)=
  \left\{
  \begin{array}{ll}
  0 & \text{ if }i\neq j\\
  H^*(S^n)\text{ if }i= j
  \end{array}
\right.
 \end{align}
so $\eE^1$ and $\eE^2$ are orthogonal spherical objects.
In this case,
\begin{align}
 T_\eP(\eE)\simeq & Cone(\oplus_{i=1,2} (hom_\cF(\eE^i,\eE) \otimes \eE^i) \xrightarrow{ev} \eE) \label{eq:SStwist}
\end{align}
where $ev$ is the evaluation map.
The spherical twist to $\eE$ along $\eE^i$ is defined to be $Cone(hom_\cF(\eE^i,\eE) \otimes \eE^i \xrightarrow{ev} \eE)$.
A direct verification shows that \eqref{eq:SStwist} is the same as applying the spherical twist
to $\eE$ along $\eE^1$ and then $\eE^2$. It is the same as first applying spherical twist along $\eE^2$ and then $\eE^1$
because $\eE^1$ and $\eE^2$ are orthogonal objects.

When $P$ is diffeomorphic to $\mathbb{RP}^n$ and $\cchar(\K) = 2$, then $H^*(P)=H^*(\mathbb{RP}^n,\mathbb{Z}_2)$.
In this case, one can define a $\mathbb{P}$-twist along $P$ (see \cite{HT06}, \cite{Ha11}) which is an auto-equivalence on $\cF^{\perf}$.
More precisely, the algebra $H^*(\mathbb{RP}^n,\mathbb{Z}_2)$ is generated by a degree $1$ element instead of a degree $2$ element so the $\mathbb{P}$-twist along $P$ 
is not exactly, but a simple variant of, the $\mathbb{P}$-twist defined in \cite{HT06}.
To compare \eqref{eq:EqTwist} with the $\mathbb{P}$-twist, we note that $\K[\mathbb{Z}_2]$ fits into a non-split exact sequence
\begin{align}
 0 \to \K \to \K[\mathbb{Z}_2] \to \K \to 0
\end{align}
it implies that $\eP=Cone(P[-1] \to P)$ and the morphism in the cone is the unique non-trivial one.
In this case, the fact that $T_\eP(\eE)$ is the $\mathbb{P}$-twist of $\eE$ along $P$ is explained in \cite[Remark $4.4$]{Se17}.
\end{proof}


\section{Algebraic perspective}\label{s:Algebraic perspective}

Here, we present the algebraic mechanism underlying the auto-equivalence of Dehn twist along a spherical Lagrangian submanifold.
Every auto-equivalence is a twist along a spherical functor \cite{Se17}
and we will describe the spherical functor for our case.

We assume some basic terminology of $A_{\infty}$ categories (see \cite{Seidelbook} \cite{SeHom}, \cite{Gan13}, \cite[Section 7]{Seidelbook2}).
Every $A_{\infty}$ category is assumed to be small and c-unital (i.e. cohomological unital).
Every DG category/functor is regarded as an $A_{\infty}$ category/functor with no higher terms (see \cite[Section (1a)]{Seidelbook}).
Let $\cC$ be an $A_{\infty}$ category. Let $\text{\text{mod-}}\cC$ be the DG category of $A_{\infty}$ right $\cC$-module.
We have a cohomologically full and faithful Yoneda embedding $\cY_\cC: \cC \to \text{\text{mod-}}\cC$.


\begin{defn}[\cite{Seidelbook2}, Section 7, or \cite{Or16} 2.2, 2.3]\label{d:PerfMod}
 The DG category of {\it  perfect} right $\cC$-modules $\cC^{perf}$ is the full DG subcategory of \text{mod-}$\cC$ consisting of all 
 $\cC$-modules that are quasi-isomorphic to  $\cC$-modules which can be constructed from image of $\cY_\cC$ and taking shifts, mapping cones and homotopy retracts.
\end{defn}


\begin{rmk}\label{r:K-projective}
 Every object in $\cC^{perf}$ is $K$-projective and $K$-injective in the sense of \cite[Lemma 1.16]{Seidelbook} so every $A_{\infty}$ functor (eg. DG functor) from $\cC^{perf}$
 preserves quasi-isomorphism.
 Together with the fact that $\cC^{perf}$ is pre-triangulated (ie. $H^0(\cC^{perf})$ is triangulated), 
 we know that its homotopy category coincides with its derived category (ie. $H^0(\cC^{perf})=D(\cC^{perf})$).
\end{rmk}

Let $\cC$ and $\cD$ be two $A_{\infty}$ categories.
Let $\cS:\cC^{perf} \to \cD^{perf}$ and $\cR: \cD^{perf} \to \cC^{perf}$  be two $A_{\infty}$ functors.
An {\it adjunction unit} $u_{rs}$ for the pair $(\cS,\cR)$ is an $A_{\infty}$ natural transformation
from the identity functor $Id_{\cC^{perf}}$ to $\cR \circ \cS$ such that
\begin{align*}
 hom_{\cD^{perf}}(\cS(c),d) \xrightarrow{\cR}  hom_{\cC^{perf}}(\cR(\cS(c)),\cR(d)) \xrightarrow{ -\circ u_{rs}(c)} hom_{\cC^{perf}}(c,\cR(d))
\end{align*}
is a quasi-isomorphism for all $c \in Ob(\cC^{perf})$ and $d \in Ob(\cD^{perf})$.
Dually, an {\it adjunction  counit} $c_{sr}$ for the pair $(\cS,\cR)$ is an $A_{\infty}$ natural transformation
from $\cS \circ \cR$ to $Id_{\cD^{perf}}$ such that
\begin{align*}
 hom_{\cC^{perf}}(c,\cR(d)) \xrightarrow{\cS}  hom_{\cD^{perf}}(\cS(c)),\cS(\cR(d))) \xrightarrow{  c_{sr}(d) \circ -} hom_{\cD^{perf}}(\cS(c),d)
\end{align*}
is a quasi-isomorphism for all $c \in Ob(\cC^{perf})$ and $d \in Ob(\cD^{perf})$.
If $(\cS,\cR)$ admits an adjunction unit or an adjunction counit, we call $\cR$ a {\it right adjoint} of $\cS$.
In this case, the induced natural transformation between the induced functors 
on the homotopy categories $H^0(\cC^{perf})$ and $H^0(\cD^{perf})$ is the adjunction unit or adjunction counit
in the ordinary sense so $[\cR]:H^0(\cD^{perf}) \to H^0(\cC^{perf})$ is a right adjoint of $[\cS]:H^0(\cC^{perf}) \to H^0(\cD^{perf})$
in the sense of triangulated categories.

A left adjoint of $\cS$ is an $A_{\infty}$ functor $\cL: \cD^{perf} \to \cC^{perf}$ such that 
$(\cL,\cS)$ admits an adjunction unit or an adjunction counit.

Now, suppose we have the following adjunction unit and counit $A_{\infty}$ natural transformations
\begin{align*}
 Id_{\cC^{perf}} \xrightarrow{u_{rs}} \cR \circ \cS \\
 \cS \circ \cR \xrightarrow{c_{sr}} Id_{\cD^{perf}}  \\
 \cL \circ \cS \xrightarrow{c_{ls}} Id_{\cC^{perf}}  
\end{align*}

Let $\cF:\cC^{perf} \to \cC^{perf}$ and $\cT:\cD^{perf} \to \cD^{perf}$ be the mapping cone of the adjunction unit 
$u_{rs}$ and counit $c_{sr}$ in the DG category of $A_{\infty}$ functors $Fun(\cC^{perf},\cC^{perf})$
and $Fun(\cD^{perf},\cD^{perf})$, respectively.

\begin{thm}[\cite{AL17}, \cite{Me16}]\label{t:SphericalFun}
 If $\cF$ is an auto-equivalence and there is an natural equivalence of $A_{\infty}$ functors
 $\cR \simeq \cF \circ \cL$, then $\cT$ is an auto-equivalence.
 In this case, $\cS$ is called a spherical functor and $\cT$ is called the twist along
 the spherical functor $\cS$.
\end{thm}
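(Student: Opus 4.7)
The plan is to construct an explicit quasi-inverse to $\cT$ out of the adjunction data and verify the required equivalences at the $A_\infty$ level. The natural candidate, paralleling the situation for a spherical object, is
\begin{align*}
 \cT^{-1} := \mathrm{Cone}\bigl(Id_{\cD^{\mathrm{perf}}} \xrightarrow{u_{sl}} \cS \circ \cL\bigr)[-1],
\end{align*}
where $u_{sl}$ is the adjunction unit associated to the pair $(\cL,\cS)$; its existence follows formally from the given counit $c_{ls}$ by standard closed-model manipulations inside $Fun(\cD^{\mathrm{perf}}, \cD^{\mathrm{perf}})$. First I would verify that the cones $\cT$, $\cT^{-1}$, $\cF$ exist as actual objects in the DG category of $A_\infty$ functors, which is legitimate because $\cC^{\mathrm{perf}}$ and $\cD^{\mathrm{perf}}$ are pre-triangulated (Remark \ref{r:K-projective}).

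Next, I would compose $\cT \circ \cT^{-1}$ and reorganize the resulting iterated cone. Using the bilinearity of $\mathrm{Cone}$ in both slots and the obvious compatibilities between units and counits (the zig-zag identities, which hold up to coherent homotopy on the $A_\infty$ level), $\cT \circ \cT^{-1}$ becomes quasi-isomorphic to a total complex whose entries are $Id$, $\cS\cR$, $\cS\cL$ and $\cS\cR\cS\cL$, with connecting morphisms built from $u_{rs}, c_{sr}, u_{sl}, c_{ls}$. Exchanging the order of cone-taking (an $A_\infty$ refinement of the octahedral axiom) transforms this bicomplex into a cone on a morphism $\cS\cR \to \cS\cL$ constructed out of the adjunction maps.

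The key step now is to invoke the spherical hypothesis $\cR \simeq \cF \circ \cL$. Postcomposing with $\cS$ gives $\cS\cR \simeq \cS\cF\cL$, and substituting $\cF = \mathrm{Cone}(Id \to \cR\cS)$ realises $\cS\cR$ as the cone of a canonical morphism $\cS\cL \to \cS\cR\cS\cL$. The morphism $\cS\cR \to \cS\cL$ appearing after the reorganisation in the previous paragraph can then be matched, using the naturality of the unit/counit compositions, with the connecting map of this triangle up to homotopy. Consequently the residual bicone collapses and one obtains a quasi-isomorphism $\cT \circ \cT^{-1} \simeq Id_{\cD^{\mathrm{perf}}}$; the verification that $\cT^{-1} \circ \cT \simeq Id_{\cD^{\mathrm{perf}}}$ is entirely analogous, using the symmetric identity $\cL \simeq \cF^{-1} \circ \cR$ obtained by inverting $\cF$.

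The main obstacle is bookkeeping at the $A_\infty$ level: each rearrangement of nested cones and each application of the zig-zag identities holds only up to a higher homotopy, and one must produce a coherent system of such homotopies to upgrade the triangulated-level statement into an honest quasi-isomorphism of $A_\infty$ functors. This is precisely where the hypothesis $\cR \simeq \cF \circ \cL$, rather than a weaker isomorphism on cohomology, is essential, because it supplies the comparison quasi-isomorphism in $Fun(\cD^{\mathrm{perf}}, \cC^{\mathrm{perf}})$ needed to promote the formal cancellations to genuine homotopy equivalences. Once this coherence is established, Remark \ref{r:K-projective} ensures that the resulting natural transformations descend to isomorphisms in $H^0(\cD^{\mathrm{perf}})$, so that $\cT$ is an auto-equivalence as claimed.
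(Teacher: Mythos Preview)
Your outline is a reasonable sketch of how one would prove the underlying triangulated-category statement from scratch --- indeed it is essentially the argument of \cite{AL17} and \cite{Me16} --- but the paper takes a much shorter route. The paper simply observes that all the $A_\infty$ adjunction data descend to ordinary adjunction data on the homotopy categories $H^0(\cC^{\perf})$ and $H^0(\cD^{\perf})$, so the hypotheses of the cited theorems are satisfied at the triangulated level; those theorems then give that $[\cT]$ is an equivalence of $H^0(\cD^{\perf})$, and since a DG/$A_\infty$ functor between pre-triangulated categories is a quasi-equivalence iff its $H^0$ is an equivalence, $\cT$ itself is an auto-equivalence. No $A_\infty$-level cone manipulations or coherence of higher homotopies are needed.

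The ``main obstacle'' you identify --- producing a coherent system of homotopies for the iterated cone rearrangements --- is therefore a difficulty you have imposed on yourself. All of your octahedral and zig-zag manipulations only need to hold in $H^0$, where they are standard, and that already suffices. Relatedly, your appeal to ``standard closed-model manipulations'' to produce the unit $u_{sl}$ at the $A_\infty$ level is not justified by anything in the paper's setup (only $c_{ls}$ is assumed), whereas at the triangulated level the existence of a unit from a counit is elementary. If you wanted a self-contained argument rather than a citation, the cleanest fix is to run your entire outline in $H^0(\cD^{\perf})$ and then invoke the detection-on-$H^0$ principle at the end.
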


\begin{proof}
 The Theorems in \cite{AL17} and \cite{Me16} are stated on the level of derived category and it implies Theorem \ref{t:SphericalFun} as follows:
 On the homotopy categories $H^0(\cC^{perf})$ and $H^0(\cD^{perf})$, $[\cR]$ is the right adjoint of $[\cS]$
 with adjunction units and counits $[u_{rs}]$, $[c_{sr}]$, respectively.
 Similarly, $[\cL]$ is the left adjoint of $[\cS]$ with adjunction counit $[c_{ls}]$.
 We also know that $[\cF]$ is an auto-equivalence and $[\cR] \simeq [\cF] \circ [\cL]$ so $[\cT]$ is an auto-equivalence by \cite{Me16}.
 Therefore, $\cT$ is an auto-equivalence.
\end{proof}

\begin{rmk}
 In \cite{AL17}, they prove Theorem \ref{t:SphericalFun} under a stronger assumption that 
  the composition
 \begin{align}
 \cR \xrightarrow{u_{sl}} \cR \circ \cS \circ \cL \to \cF \circ \cL \label{eq:compositionEguiv}
 \end{align}
 is an equivalence of DG functors for an adjunction unit $u_{sl} \in hom(Id_{\cD^{perf}}, \cS \circ \cL)$.
 Meachan provides a shorter proof of Theorem \ref{t:SphericalFun} and shows that any equivalence of DG functors
 $\cR \simeq \cF \circ \cL$ is sufficient to imply that the composition \eqref{eq:compositionEguiv} is an equivalence of DG functors.
 Therefore, we do not need to specify the equivalence $\cR \simeq \cF \circ \cL$ in Theorem \ref{t:SphericalFun}.
\end{rmk}


This section is devoted to the proof of Theorem \ref{t:Twist formula(alg)} and the structure goes as follows.
In Section \ref{ss:bimodules}, we recall 
the definitions and properties of various functors and adjunctons obtained from $A_{\infty}$ bimodules.
In Section \ref{ss:rSphericalTwist}, we introduce $R$-spherical objects for a $\K$-algebra $R$
and explain how to get an auto-equivalence from an $R$-spherical object using Theorem \ref{t:SphericalFun}.
Finally, in Section \ref{ss:R-spherical-Lag}, we explain the relation between the auto-equivalence and Dehn twist and prove Theorem \ref{t:Twist formula(alg)}.

\subsection{Bimodules}\label{ss:bimodules}

In this section, we gather some properties of $A_{\infty}$ bimodules which are well-known to experts.
We follow the sign conventions in \cite{LF2} (see \cite[Conventions $2.1$]{LF2}) and \cite{Gan13}.
We use $| \cdot|$ and $\| \cdot \|$ to denote the degree and the reduced degree (i.e. degree minus one) of an element, respectively.

\subsubsection{Tensor-Hom}

In this subsection, we want to explain the tensor-hom adjunction in the $A_{\infty}$ setting (Proposition \ref{p:TensorHom}).
We denote the DG category of $A_{\infty}$  $\cC\text{-}\cD$ bimodules by $\cC\text{-mod-}\cD$.
Let $\cC,\cC',\cD$ be $A_{\infty}$ categories and $\cB$ be an $A_{\infty}$ $\cC\text{-}\cD$ bimodule.
We have strictly unital $DG$ functors
\begin{align*}
-\otimes_\cC^{\mathbb{L}} \cB &:\cC'\text{-mod-}\cC \to \cC'\text{-mod-}\cD \\
 \cB \otimes_\cD^{\mathbb{L}} - &: \cD \text{-mod-}\cC' \to  \cC\text{-mod-}\cC' \\
 hom_{\text{mod-}\cD}(\cB,-)&: \cC'\text{-mod-}\cD  \to \cC'\text{-mod-}\cC \\
 hom_{\cC-mod}(\cB,-)&: \cC\text{-mod-}\cC'  \to \cC'\text{-mod-}\cD \\
 hom_{\text{mod-}\cD}(-,\cB)&: \cC'\text{-mod-}\cD  \to \cC\text{-mod-}\cC' \\
 hom_{\cC-mod}(-,\cB)&: \cC\text{-mod-}\cC'  \to \cC'\text{-mod-}\cD 
\end{align*}
Some formulae defining the functors above are explicitly given in \cite{Gan13}. 
We want to spell out more details about these functors for our applications.

\begin{defnlemma}\label{dl:tensorDGFun}
Let $\cB$ be an $A_{\infty}$ $\cC\text{-}\cD$ bimodule. 
Then we have a strictly unital $DG$ functor $F_{\otimes}(\cB):=-\otimes_\cC^{\mathbb{L}} \cB :\text{mod-}\cC \to \text{mod-}\cD$.
\end{defnlemma}

\begin{proof}
It is most familiar if $\cB$ is a DG bimodule.
In the $A_{\infty}$ context, the formulae are given explicitly as follows.

 For an $A_{\infty}$ right $\cC$ module $V$, the $A_{\infty}$ right $\cD$ module $V \otimes_\cC^{\mathbb{L}} \cB$ is defined by the following data
 $$V \otimes_\cC^{\mathbb{L}} \cB(X):=V \otimes_\K T\cC \otimes_\K \cB(X)$$
 is a cochain complex with differential
 \begin{align*}
  \mu^{1|0}_{V \otimes_\cC^{\mathbb{L}} \cB}(v \otimes g_s \dots g_1 \otimes x)&=\sum_k (-1)^{\maltese_0^k}\mu^{1|s-k}_V(v,g_s \dots ,g_{k+1})\otimes g_k\dots g_1 \otimes x \\
  &+\sum_{k,l} (-1)^{\maltese_0^k} v\otimes g_s \dots g_{k+l+1} \otimes \mu_{\cC}^{j}(g_{k+l} \dots g_{k+1})\otimes g_k \dots g_1 \otimes x \\
  &+\sum_k v\otimes g_s \dots g_{k+1} \otimes \mu^{k|1|0}_{\cB}(g_{k}, \dots g_1;x)
 \end{align*}
 where $\maltese_0^k= \sum_{j=1}^k \| g_j\|+|x|$.
 The higher right $\cD$ module structure is given by
 \begin{align*}
 & \mu^{1|d}_{V \otimes_\cC^{\mathbb{L}} \cB}(v \otimes g_s \dots g_1 \otimes x,a_d,\dots,a_1)
  =\sum_j v\otimes g_s \dots \otimes g_{j+1} \otimes \mu_{\cB}^{j|1|d}(g_j,\dots,x,\dots,a_1)
 \end{align*}
 The fact that $(\cB,\mu^{r|1|s}_{\cB})$ is an $A_{\infty}$ $\cC\text{-}\cD$ bimodule implies that $(V \otimes_R^{\mathbb{L}} \cB, \mu^{1|d}_{V \otimes_R^{\mathbb{L}} \cB})$
 is an $A_{\infty}$ right $\cD$ module.

 On the morphism level, $hom_{\text{mod-}\cC}(V_0,V_1) \to hom_{\text{mod-}\cD}(V_0  \otimes_\cC^{\mathbb{L}} \cB,V_1  \otimes_\cC^{\mathbb{L}} \cB)$ is given by
 \begin{align*}
  &\psi \mapsto (F_{\otimes}(\cB))^1(\psi):=(t_{\psi}^{1|d})_d \\
  &t_{\psi}^{1|0}(v \otimes g_s \dots g_1 \otimes x):= \sum_k (-1)^{|\psi| \cdot \maltese_0^k}\psi^{1|s-k}(v, g_s, \dots ,g_{k+1}) \otimes g_{k} \dots \otimes x \\
  &t_{\psi}^{1|d}(v \otimes g_s \dots g_1 \otimes x,a_d,\dots,a_1):=0
 \end{align*}
 where $\maltese_0^k= \sum_{j=1}^k \| g_j\|+|x|$.
 As an $A_{\infty}$ functor, we define $(F_{\otimes}(\cB))^k=0$ for $k \ge 2$.
 
 We want to show that $F_{\otimes}(\cB)$ defines an $A_{\infty}$ functor.
 Therefore, we need to show that
 \begin{align}
  \mu_{\text{mod-}\cD}^1((F_{\otimes}(\cB))^1(\psi))=&(F_{\otimes}(\cB))^1(\mu^1_{\text{mod-}\cC}(\psi)) \label{eq:mu1DG}\\
  \mu_{\text{mod-}\cD}^2((F_{\otimes}(\cB))^1(\psi_1),(F_{\otimes}(\cB))^1(\psi_0))=&(F_{\otimes}(\cB))^1(\mu^2_{\text{mod-}\cC}(\psi_1,\psi_0)) \label{eq:mu2DG}
 \end{align}

 We first consider \eqref{eq:mu1DG}.
 By definition (see \cite[Definition 2.18]{Gan13}, \cite[Equation (1.3)]{Seidelbook}), we have
 \begin{align}
  \mu_{\text{mod-}\cD}^1(t_{\psi})=(-1)^{|t_\psi|}\mu_{V_1 \otimes_\cC^{\mathbb{L}} \cB} \circ \widehat{t}_{\psi}-t_{\psi} \circ \widehat{\mu}_{V_0 \otimes_\cC^{\mathbb{L}} \cB}
 \end{align}
  where $\widehat{t}_{\psi}$, $\widehat{\mu}_{V_0 \otimes_\cC^{\mathbb{L}} \cB}$ are the hat extension of 
  $t_\psi$ and $\mu_{V_0 \otimes_\cC^{\mathbb{L}} \cB}$, respectively (see \cite[Remark $2.5$]{Gan13}) and the $(-1)^{|t_\psi|}$ factor
  being in the front of $\mu_{V_1 \otimes_\cC^{\mathbb{L}} \cB} \circ \widehat{t}$ instead of $t_{\psi} \circ \widehat{\mu}_{V_0 \otimes_\cC^{\mathbb{L}} \cB}$
  comes from the convention of converting a DG category to an $A_{\infty}$ category (i.e. $\mu^1(a)=(-1)^{|a|}\partial (a)$).
  
  We first calculate $(\mu_{\text{mod-}\cD}^1(t_{\psi}))^{1|0}(v \otimes g_s  \dots g_1 \otimes x)$. One can easily check that
 \begin{align*}
  &\mu^{1|0}_{V_1 \otimes_\cC^{\mathbb{L}} \cB}(t^{1|0}_{\psi}(v \otimes \dots\otimes x)) \\
  =&\sum (-1)^{|\psi| \cdot \maltese_0^{k}} \mu_{V_1 \otimes_\cC^{\mathbb{L}} \cB}(\psi(v \dots g_{k+1}) \otimes g_k \dots \otimes x) \\
  =& \sum (-1)^{|\psi| \cdot \maltese_0^{k}+\maltese_0^{l}} \mu_{V_1}(\psi(v \dots g_{k+1}) \dots g_{l+1}) \dots  x 
  +  \sum (-1)^{|\psi| \cdot \maltese_0^{k}+\maltese_0^{l}} \psi(v \dots g_{k+1}) \dots \mu_\cC( \dots g_{l+1}) \dots x \\
  &+\sum (-1)^{|\psi| \cdot \maltese_0^{k}} \psi(v \dots g_{k+1})  \dots \mu_\cB( \dots x) \\
 \end{align*}
 \begin{align*}
  & t^{1|0}_{\psi}(\mu^{1|0}_{V_0 \otimes_\cC^{\mathbb{L}} \cB}(v \otimes\dots\otimes x))\\
  =&\sum (-1)^{\maltese_0^{k}} t_{\psi}(\mu_{V_0}(v \dots g_{k+1})\otimes g_k \dots \otimes x)
  + \sum (-1)^{\maltese_0^{k}} t_{\psi}(v \dots g_{l+1}\otimes\mu_\cC(g_l \dots,g_{k+1})\otimes g_k \dots x) \\
  &+ \sum  t_{\psi}(v \dots g_{k+1} \otimes \mu_{\cB}(g_k  \dots x)) \\  
  =& \sum (-1)^{\maltese_0^{k}+|\psi| \cdot \maltese_0^{l}} \psi(\mu_{V_0}(v \dots g_{k+1}) \dots g_{l+1}) \dots x  
  +  \sum (-1)^{\maltese_0^{k}+|\psi| \cdot \maltese_0^{l}} \psi(v \dots \mu_\cC( \dots g_{k+1}) \dots g_{l+1}) \dots x \\
  &+\sum (-1)^{\maltese_0^{k}+|\psi| \cdot (\maltese_0^{l}+1)} \psi(v \dots g_{l+1}) \dots \mu_\cC( \dots g_{k+1})\dots x  
  + \sum (-1)^{|\psi| \cdot (\maltese_0^{l}+1)}  \psi(v \dots g_{l+1})  \dots \mu_\cB(g_k \dots x) \\
   \end{align*}
 where  $\maltese_0^{k}=|x|+\sum_{j=1}^k \|g_j\|$ and $\maltese_0^{l}=|x|+\sum_{j=1}^l \|g_j\|$.
 On the other hand, we calculate $(t_{\mu^1_{\text{mod-}\cC}(\psi)})^{1|0}(v \otimes g_s  \dots g_1 \otimes x)$:
 \begin{align*}
  (t_{\mu_{V_1} \circ \widehat{\psi}})^{1|0}(v \otimes g_s \dots g_1 \otimes x)
  =&\sum (-1)^{\maltese_{l+1}^{k}+(|\psi|+1) \cdot \maltese_0^{l}} \mu_{V_1}(\psi(v \dots g_{k+1}) \dots g_{l+1}) \dots  x  \\
  (t_{\psi \circ \widehat{\mu}_{V_0}})^{1|0}(v \otimes g_s \dots g_1 \otimes x)
  =& \sum (-1)^{\maltese_{l+1}^{k}+(|\psi|+1) \cdot \maltese_0^{l}} \psi(\mu_{V_0}(v \dots g_{k+1}) \dots g_{l+1}) \dots x  \\
  &+ \sum (-1)^{\maltese_{l+1}^{k}+(|\psi|+1) \cdot \maltese_0^{l}} \psi(v \dots \mu_\cC( \dots g_{k+1}) \dots g_{l+1}) \dots x 
 \end{align*}
 where $\maltese_{l+1}^{k}=\sum_{j=l+1}^k \|g_j\|$.
 Combining all of them gives  $(\mu_{\text{mod-}\cD}^1(t_{\psi}))^{1|0}=(t_{\mu^1_{\text{mod-}\cC}\psi})^{1|0}$, which is the 
 verification of \eqref{eq:mu1DG} up to first order.
 
 For the higher order terms, we have
 \begin{align*}
  &(\mu_{\text{mod-}\cD}^1(t_{\psi}))^{1|d}(v \otimes g_s \dots g_1 \otimes x,a_d,\dots,a_1) \\
  =& (-1)^{|t_{\psi}|} \mu_{V_1  \otimes_\cC^{\mathbb{L}} \cB} \circ \widehat{t}_{\psi} (v \otimes g_s \dots g_1 \otimes x,a_d,\dots,a_1))
  -t_{\psi} \circ \widehat{\mu}_{V_0  \otimes_\cC^{\mathbb{L}} \cB}(v \otimes g_s \dots g_1,a_d,\dots,a_1)\\
  =& \sum (-1)^{|\psi|+|\psi| \cdot (A_1^d+\maltese_{0}^{l})} \psi(v,\dots,g_{l+1}) \dots \mu_{\cB}(g_k,\dots,a_1) \\
   &-\sum (-1)^{|\psi|(A_1^d+\maltese_{0}^{l}+1)} \psi(v,\dots,g_{l+1}) \dots \mu_{\cB}(g_k,\dots,a_1) \\
  =&0  =(t_{\mu^1_{\text{mod-}\cC}\psi})^{1|d}(v \otimes g_s \dots g_1 \otimes x,a_d,\dots,a_1)
 \end{align*}
 where $A_1^d=\sum_{j=1}^d \|a_j\|$ and $\maltese_{0}^{l}=|x|+\sum_{j=1}^l \|g_j\|$.
 Therefore, \eqref{eq:mu1DG} holds so $(F_{\otimes}(\cB))^1$ is a chain map.

 It is straightforward to check that $\mu_{\text{mod-}\cD}^2(t_{\psi_1},t_{\psi_0})=(-1)^{|t_{\psi_0}|} t_{\psi_1} \circ \widehat{t}_{\psi_0}=t_{\mu_{\text{mod-}\cC}^2(\psi_1,\psi_0)}$ 
 (i.e. \eqref{eq:mu2DG} holds)
 and $Id_{V \otimes_\cC^{\mathbb{L}} \cB}=t_{Id_V}$.
 Therefore, $F_{\otimes}(\cB):\text{mod-}\cC \to \text{mod-}\cD$ defines a strictly unital DG functor.
 
\end{proof}

In Definition/Lemma \ref{dl:tensorDGFun}, we associate a strictly unital DG functor $F_{\otimes}(\cB)$ to 
an $A_{\infty}$ $\cC\text{-}\cD$ bimodule $\cB$.
This can be upgraded to a strictly unital DG functor from 
the DG category of $A_{\infty}$ $\cC\text{-}\cD$ bimodules to the DG category of $A_{\infty}$-functors from $\text{mod-}\cC$ to $\text{mod-}\cD$, which is denoted by
$Fun(\text{mod-}\cC,\text{mod-}\cD)$, as follows:

\begin{defnlemma}\label{dl:BimodFunMor}(cf. \cite{SeHom})
There is a strictly unital DG functor $F_{\otimes}: \cC\text{-mod-}\cD \to Fun(\text{mod-}\cC,\text{mod-}\cD)$ such that $F_{\otimes}(\cB)=-\otimes_\cC^{\mathbb{L}} \cB$.
\end{defnlemma}

\begin{proof}
Let $\cB_0,\cB_1$ be $A_{\infty}$  $\cC\text{-}\cD$ bimodules and $t:\cB_0 \to \cB_1$ be an $A_{\infty}$ bimodule (pre-)homomorphism.
We want to define an $A_{\infty}$ (pre-)natural tranformation $F_{\otimes}^1(t)$ between the DG functors $ F_{\otimes}(\cB_0)$ and $F_{\otimes}(\cB_1)$.
Let $F_i:=F_{\otimes}(\cB_i)$.
 For each $V \in Ob(\text{mod-}\cC)$, we define
 \begin{align*}
 &(F_{\otimes}^1(t))^0_V \in hom_{\text{mod-}\cD}(F_0(V),F_1(V)) \\
  &((F_{\otimes}^1(t))^0_V)^{1|r}(v \otimes g_s \dots g_1 \otimes x,a_r,\dots,a_1):= \sum_j v \otimes \dots g_{j+1} \otimes t^{j|1|r}(g_j \dots \otimes x, a_r, \dots,a_1)
 \end{align*}
 The higher order terms $(F_{\otimes}^1(t))^r$ of the (pre-)natural tranformation $F_{\otimes}^1(t)$ are defined to be zero (for $r \ge 1$).
 Since we claim that $F_{\otimes}$ is a DG functor, we define $F_{\otimes}^k=0$ for $k \ge 2$ as an $A_{\infty}$ functor.
 We need to verify the analogue of \eqref{eq:mu1DG} and \eqref{eq:mu2DG}.
 
 To see that $F_{\otimes}^1$ is a chain map (i.e. the analogue of \eqref{eq:mu1DG} holds), one can check that for the first order:
 \begin{align*}
  &(\mu^1_{Fun(\text{mod-}\cC,\text{mod-}\cD)}(F_{\otimes}^1(t)))^0(v \otimes g_s \dots g_1 \otimes x,a_r,\dots,a_1) \\
  =&\mu^1_{\text{mod-}\cD}((F_{\otimes}^1(t))^0)(v \otimes g_s \dots g_1 \otimes x,a_r,\dots,a_1) \\
  =&((-1)^{|(F_{\otimes}^1(t))^0|}\mu_{F_1(V)} \circ \widehat{(F_{\otimes}^1(t))}^0- (F_{\otimes}^1(t))^0 \circ \widehat{\mu}_{F_0(V)})(v \otimes g_s \dots g_1 \otimes x,a_r,\dots,a_1) \\
  =& (F_{\otimes}^1((-1)^{|t|}\mu_{B_1} \circ \widehat{t} - t \circ \widehat{\mu}_{B_0}))^0(v \otimes g_s \dots g_1 \otimes x,a_r,\dots,a_1) \\
  =& (F_{\otimes}^1(\mu_{\cC\text{-mod-}\cD}^1(t)))^0(v \otimes g_s \dots g_1 \otimes x,a_r,\dots,a_1)
 \end{align*}
Some cancellation is made from the third line to the fourth line but it is straightforward so we do not spell it out.

We also need to check the higher order terms: for $\phi \in hom_{\text{mod-}\cC}(V_0,V_1)$, we have (see \cite[Equation $(1.8)$]{Seidelbook} for the sign convention of DG category of functors)
\begin{align*}
 (\mu^1_{Fun(\text{mod-}\cC,\text{mod-}\cD)}(F_{\otimes}^1(t)))^1(\phi):=\mu^2_{\text{mod-}\cD}(F_1(\phi),(F_{\otimes}^1(t))^0)+(-1)^{\|t\| \cdot \|\phi\|}\mu^2_{\text{mod-}\cD}((F_{\otimes}^1(t))^0,F_0(\phi))
\end{align*}
We can check that
\begin{align*}
 &(\mu^2_{\text{mod-}\cD}(F_1(\phi),(F_{\otimes}^1(t))^0)+(-1)^{\|t\| \cdot \|\phi\|}\mu^2_{\text{mod-}\cD}((F_{\otimes}^1(t))^0,F_0(\phi)))(v \otimes g_s \dots g_1 \otimes x,a_r,\dots,a_1) \\
 =&((-1)^{|t|}F_1(\phi) \circ \widehat{(F_{\otimes}^1(t))^0)}+(-1)^{\|t\| \cdot \|\phi\|+|\phi|}(F_{\otimes}^1(t))^0 \circ \widehat{F_0(\phi)})(v \otimes g_s \dots g_1 \otimes x,a_r,\dots,a_1) \\
 =&\sum (-1)^{|t|+(A_1^r + \maltese_0^l+|t|)|\phi|} \phi(v,\dots,g_{l+1})\otimes g_{l} \dots g_{k+1} \otimes t(g_k,\dots, g_1, x,a_r,\dots,a_1) \\
 &+\sum (-1)^{\|t\| \cdot \|\phi\|+|\phi|+(A_1^r + \maltese_0^l)|\phi|} \phi(v,\dots,g_{l+1})\otimes g_{l} \dots g_{k+1} \otimes  t(g_k,\dots, g_1, x,a_r,\dots,a_1) \\
 =&0=(F_{\otimes}^1(\mu_{\cC\text{-mod-}\cD}^1(t)))^1
\end{align*}
where $A_{1}^{r}=\sum_{j=1}^r \|a_j\|$ and $\maltese_{0}^{l}=|x|+\sum_{j=1}^l \|g_j\|$.
Therefore, $F_{\otimes}^1$ is a chain map.

It is straightforward to check that
\begin{align*}
 &\mu^2_{Fun(\text{mod-}\cC,\text{mod-}\cD)}(F_{\otimes}^1(t_1),F_{\otimes}^1(t_0)) =F_{\otimes}^1(\mu^2_{\cC\text{-mod-}\cD}(t_1, t_0)) \\
 &(F_{\otimes}^1(Id_{\cB}))=Id_{F_{\otimes}(\cB)}
\end{align*}
so $F_{\otimes}$ is a strictly unital DG functor.

\end{proof}

Using {\it hom} instead of {\it tensor}, we can construct a strictly unital DG functor from $\text{mod-}\cD$ to $\text{mod-}\cC$ by an $A_{\infty}$  $\cC\text{-}\cD$ bimodule.

\begin{defnlemma}\label{dl:homDGfun}
 $F_{hom}(\cB):=hom_{\text{mod-}\cD}(\cB,-): \text{mod-}\cD \to \text{mod-}\cC$ is a strictly unital DG functor.
\end{defnlemma}

\begin{proof}
 For $\cM \in Ob(\text{mod-}\cD)$, we have a cochain complex $(F_{hom}(\cB))(\cM):=hom_{\text{mod-}\cD}(\cB,\cM)$ 
 The right $\cC$-module structure on $(F_{hom}(\cB))(\cM)$ is given by
 \begin{align}
 &\mu^{1|0}_{(F_{hom}(\cB))(\cM)}(\psi):=(-1)^{|\psi|}\mu^1_{\text{mod-}\cD}(\psi) \\
 &\mu^{1|s}_{(F_{hom}(\cB))(\cM)}(\psi,g_s,\dots,g_1)=(\psi^{1|r}_{g_s,\dots,g_1})_r  \text{ for }s>0\\ 
 &\psi^{1|r}_{g_s,\dots,g_1}(x,a_r,\dots,a_1)=\sum_k (-1)^{\dagger_k}\psi^{1|k}(\mu_\cB^{s|1|r-k}(g_s,\dots,g_1,x,a_r,\dots,a_{k+1}),a_k,\dots,a_1) \label{eq:RightGaction}
 \end{align}
 where $\dagger_k=(\sum_{j=1}^s \|g_j\|)(|\psi|+\sum_{j=1}^r \|a_r\|+|x|)+ |\psi|+\sum_{j=1}^k \|a_k\|+1$.
It is straightforward but involed to check that  the $\cC\text{-}\cD$ bimodule relations of $\cB$ implies that $(F_{hom}(\cB))(\cM)$ is a right $\cC$-module.
 
 For $\cM_0,\cM_1 \in Ob(\text{mod-}\cD)$, the map on morphism space is given by
 \begin{align}
& hom_{\text{mod-}\cD}(\cM_0,\cM_1) \to hom_{\text{mod-}\cC}((F_{hom}(\cB))(\cM_0),(F_{hom}(\cB))(\cM_1))  \nonumber\\
& t_1=(t_1^{1|d})_d \mapsto (F_{hom}(\cB))^1(t_1) \nonumber \\
& ((F_{hom}(\cB))^1(t_1))^{1|0}(t_0)=(-1)^{|t_0|}\mu^2_{\text{mod-}\cD}(t_1,t_0)=t_1 \circ \widehat{t}_0 \label{eq:hom-morphism}\\
& ((F_{hom}(\cB))^1(t_1))^{1|s}(t_0,g_s,\dots,g_1)=0 \text{ for } s>0 \nonumber
 \end{align}
It is routine to check that $\mu^1_{\text{mod-}\cC}((F_{hom}(\cB))^1(t_1))=(F_{hom}(\cB))^1(\mu^1_{\text{mod-}\cD}(t_1))$ so $(F_{hom}(\cB))^1$ is a chain map.
As an $A_{\infty}$ functor, we define $(F_{hom}(\cB))^k=0$ for $k \ge 2$.

Moreover, 
\begin{align*}
(F_{hom}(\cB))^1(\mu^2_{\text{mod-}\cD}(t_2, t_1))(t_0)&=(-1)^{|t_1|}t_2 \circ \widehat{t}_1 \circ \widehat{t}_0=\mu^2_{\text{mod-}\cC}((F_{hom}(\cB))^1(t_2), (F_{hom}(\cB))^1(t_1))(t_0) \\
 (F_{hom}(\cB))^1(Id_{\cM})(t_0)&=(-1)^{|t_0|}\mu^2_{\text{mod-}\cD}(Id_{\cM},t_0)=t_0=Id_{(F_{hom}(\cB))(\cM)}(t_0)
\end{align*} so
$F_{hom}(\cB)$ is a strictly unital DG functor.

\end{proof}

We use $(\cC\text{-mod-}\cD)^{opp}$ to denote the opposite category of $(\cC\text{-mod-}\cD)$.
By definition, the objects in $(\cC\text{-mod-}\cD)^{opp}$ are the same as the objects in $(\cC\text{-mod-}\cD)$.
Given two objects $\cB_0,\cB_1$, we have $hom_{(\cC\text{-mod-}\cD)^{opp}}(\cB_0,\cB_1)=hom_{\cC\text{-mod-}\cD}(\cB_1,\cB_0)$
as vector spaces.
The $A_{\infty}$-structure on $(\cC\text{-mod-}\cD)^{opp}$ is given by
\begin{align}
\mu^1_{(\cC\text{-mod-}\cD)^{opp}}(t)&=\mu^1_{\cC\text{-mod-}\cD}(t) \\
\mu^2_{(\cC\text{-mod-}\cD)^{opp}}(t_0,t_1)&=(-1)^{\|t_0\| \|t_1\|-1}\mu^2_{\cC\text{-mod-}\cD}(t_1,t_0)
\end{align}
and $\mu^k_{(\cC\text{-mod-}\cD)^{opp}}=0$ for $k \ge 3$.

\begin{rmk}\label{r:oppSignCon}
 This definition of opposite category is the same as the one in \cite[Definition 3.6]{NickSign} except an additional factor of $-1$ on $\mu^2$.
 We use this sign convention so that the following holds.
\end{rmk}

\begin{defnlemma}\label{dl:homDGfunMor}
There is a strictly unital DG functor $F_{hom}:(\cC\text{-mod-}\cD)^{opp} \to Fun(\text{mod-}\cD,\text{mod-}\cC)$ such that $F_{hom}(\cB)=hom_{\text{mod-}\cD}(\cB,-)$.
 
\end{defnlemma}

\begin{proof}
Let $\cB_0,\cB_1$ be $A_{\infty}$  $\cC\text{-}\cD$ bimodules and 
$t \in hom_{(\cC\text{-mod-}\cD)^{opp}}(\cB_0,\cB_1)=hom_{(\cC\text{-mod-}\cD)}(\cB_1,\cB_0)$.
We want to define an $A_{\infty}$ (pre-)natural tranformation $F_{hom}^1(t)$ between the DG functors $ F_{hom}(\cB_0)$ and $F_{hom}(\cB_1)$.
Let $F_i:=F_{hom}(\cB_i)$.
 For $\cM \in Ob(\text{mod-}\cD)$ and $\psi \in F_0(\cM)$, we define
 \begin{align*}
 &(F_{hom}^1(t))^0_\cM \in hom_{\text{mod-}\cC}(F_0(\cM),F_1(\cM)) \\
& ((F_{hom}^1(t))^0_\cM)^{1|s}(\psi,g_s \dots g_1) \in F_1(\cM) \\
  &(((F_{hom}^1(t))^0_\cM)^{1|s}(\psi,g_s \dots g_1))^{1|r}(x, a_r, \dots,a_1)
  := \sum_k (-1)^{\dagger_k}\psi^{1|k}(t^{s|1|r-k}( g_s,\dots,x, \dots a_{k+1}),a_k,\dots,a_1)
 \end{align*}
 where $\dagger_k=\sum_{j=1}^s \|g_j\| (|\psi|+\sum_{j=1}^r \|a_j\|+|x|)+|t|(|\psi|+\sum_{j=1}^k \|a_j\|)$.
 The higher order terms $(F_{hom}^1(t))^r$ of the (pre-)natural tranformation $F_{hom}^1(t)$ are defined to be zero (for $r \ge 1$).
 We define $F_{hom}^k=0$ for $k \ge 2$ as an $A_{\infty}$ functor.
 
 To see that $F_{hom}^1$ is a chain map (i.e. the analogue of \eqref{eq:mu1DG} holds), we can check that for the first order:
 \begin{align*}
  &((\mu^1_{Fun(\text{mod-}\cD,\text{mod-}\cC)}(F_{hom}^1(t)))^0)^{1|s}(\psi,g_s \dots g_1))^{1|r}(x,a_r,\dots,a_1) \\
  =&((\mu^1_{\text{mod-}\cC}((F_{hom}^1(t))^0))^{1|s}(\psi,g_s \dots g_1))^{1|r}(x,a_r,\dots,a_1) \\
  =&(((-1)^{|t|}\mu_{F_1(\cM)} \circ \widehat{(F_{hom}^1(t))}^0-((F_{hom}^1(t))^0 \circ \widehat{\mu}_{F_0(\cM)}))^{1|s}(\psi,g_s \dots g_1))^{1|r}(x,a_r,\dots,a_1) \\
  =& ((F_{hom}^1((-1)^{|t|}\mu_{B_0} \circ \widehat{t} -t \circ \widehat{\mu}_{B_1}))^0)^{1|s}(\psi,g_s \dots g_1))^{1|r}(x,a_r,\dots,a_1) \\
  =& (((F_{hom}^1(\mu_{(\cC\text{-mod-}\cD)^{opp}}^1(t)))^0)^{1|s}(\psi,g_s \dots g_1))^{1|r}(x,a_r,\dots,a_1)
 \end{align*}
 For the higher order, we can check that
 \begin{align*}
  &(\mu^1_{Fun(\text{mod-}\cD,\text{mod-}\cC)}(F_{hom}^1(t)))^1(\phi) \\
  :=&\mu^2_{\text{mod-}\cC}(F_1(\phi),(F_{hom}^1(t))^0_{\cM_0})+(-1)^{\|t\| \|\phi\|}\mu^2_{\text{mod-}\cC}((F_{hom}^1(t))^0_{\cM_1},F_0(\phi))
 \end{align*}
 vanishes for every $\phi \in hom_{\text{mod-}\cD}(\cM_0,\cM_1)$ and every $\cM_0,\cM_1 \in Ob(\text{mod-}\cD)$.


We can also check that (note the sign convention of the opposite category, see Remark \ref{r:oppSignCon})
\begin{align*}
 &\mu^2_{Fun(\text{mod-}\cD,\text{mod-}\cC)}(F_{hom}^1(t_1), F_{hom}^1(t_0)) = F_{hom}^1(\mu^2_{(\cC\text{-mod-}\cD)^{opp}}(t_1, t_0)) \\
 &(F_{hom}^1(Id_{\cB}))=Id_{F_{hom}(\cB)}
\end{align*}
so $F_{hom}$ is a strictly unital DG functor.
\end{proof}

\begin{rmk}
 There is also a strictly unital DG functor 
 $F_{hom}':\cC\text{-mod-}\cD \to Fun(\text{mod-}\cD,\cC-mod)$ such that
 $F_{hom}'(\cB):=hom_{\text{mod-}\cD}(-,\cB)$ for $\cB \in Ob(\cC\text{-mod-}\cD)$.
\end{rmk}

\begin{prop}[Tensor-Hom adjunction]\label{p:TensorHom}
 The DG functor $hom_{\text{mod-}\cD}(\cB,-)$ is right adjoint to 
 the DG functor $-\otimes_\cC^{\mathbb{L}} \cB$.
\end{prop}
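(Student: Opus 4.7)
The plan is to exhibit an explicit adjunction unit $u_{\otimes h}: Id_{\text{mod-}\cC} \to F_{hom}(\cB) \circ F_{\otimes}(\cB)$ and then show it induces quasi-isomorphisms on morphism spaces as required in the paper's definition of adjunction. For $V \in Ob(\text{mod-}\cC)$, the component $(u_{\otimes h}(V))^0 \in hom_{\text{mod-}\cC}(V, F_{hom}(\cB)(V \otimes_\cC^{\mathbb{L}} \cB))$ is built from the obvious ``insertion'' map $v \mapsto (x \mapsto v \otimes () \otimes x)$, extended to higher $A_\infty$ terms by
\[
((u_{\otimes h}(V))^0)^{1|s}(v,g_s,\dots,g_1) = \phi_{v,g_s,\dots,g_1},
\]
where $\phi_{v,g_s,\dots,g_1} \in hom_{\text{mod-}\cD}(\cB, V \otimes_\cC^{\mathbb{L}} \cB)$ sends $(x, a_r, \dots, a_1)$ to $v \otimes g_s \otimes \dots \otimes g_1 \otimes x \otimes a_r \otimes \dots \otimes a_1$, with appropriate Koszul signs. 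Higher order terms $(u_{\otimes h})^k$ for $k \geq 1$ are taken to be zero.

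First, I would check that $u_{\otimes h}$ is an $A_\infty$ natural transformation, which amounts to a direct (if tedious) computation using the explicit $A_\infty$ bimodule relations for $\cB$, the $A_\infty$ module relations for $V$, and the formulas for $F_{\otimes}(\cB)$ and $F_{hom}(\cB)$ already spelled out in Definition-Lemmas \ref{dl:tensorDGFun} and \ref{dl:homDGfun}. This is a routine verification whose only subtlety is sign bookkeeping, carried out in the same conventions as Proposition 2.21 of \cite{Gan13}.

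Second, and this is the heart of the argument, I would show the induced map
\[
\Xi: hom_{\text{mod-}\cD}(V \otimes_\cC^{\mathbb{L}} \cB,\cM) \xrightarrow{F_{hom}(\cB)} hom_{\text{mod-}\cC}(F_{hom}(\cB)(V \otimes_\cC^{\mathbb{L}} \cB), F_{hom}(\cB)(\cM)) \xrightarrow{- \circ u_{\otimes h}(V)} hom_{\text{mod-}\cC}(V, F_{hom}(\cB)(\cM))
\]
is a quasi-isomorphism for every $V$ and $\cM$. In fact, I expect $\Xi$ to be an isomorphism of cochain complexes, not merely a quasi-isomorphism: unwinding definitions, $\Xi$ sends $t \in hom_{\text{mod-}\cD}(V \otimes_\cC^{\mathbb{L}} \cB,\cM)$ to the $A_\infty$ module pre-homomorphism whose $s$-th component applied to $(v, g_s, \dots, g_1)$ is the element of $F_{hom}(\cB)(\cM)$ given by $(x,a_r,\dots,a_1) \mapsto t^{1|r}(v \otimes g_s \otimes \dots \otimes g_1 \otimes x, a_r, \dots, a_1)$. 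This is manifestly a linear bijection (the inverse being the obvious uncurrying), and a direct comparison of the differentials on the two sides, using \eqref{eq:RightGaction} and \eqref{eq:hom-morphism}, shows it intertwines them once signs are tracked consistently.

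The main obstacle will be the sign bookkeeping in the chain isomorphism check: the formulas in Definition-Lemma \ref{dl:homDGfun} contain the sign $\dagger_k = (\sum \|g_j\|)(|\psi|+\sum\|a_j\|+|x|) + |\psi| + \sum_{j\le k}\|a_j\| + 1$, and one has to verify that after currying this matches exactly the corresponding signs appearing in $\mu^{1|r}_{V \otimes_\cC^{\mathbb{L}}\cB}$ together with the $(-1)^{|t_0|}$ appearing in \eqref{eq:hom-morphism} and the module differential of $V$. Once this bijection is established, the proposition follows because the composition defining the adjunction in Section \ref{s:Algebraic perspective} is precisely $\Xi$ up to strict unitality of $F_{hom}(\cB)$.
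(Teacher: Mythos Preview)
Your approach is essentially the paper's: the unit you write down (insertion of $v \otimes g_s \dots g_1 \otimes (-)$ into the bar tensor) is exactly the paper's $u_V^0$, and your currying argument showing that $\Xi$ is a bijection of complexes is correct. Two remarks. First, your formula for $\phi_{v,g_s,\dots,g_1}$ is mistyped: the target $(V \otimes_\cC^{\mathbb{L}} \cB)(X_0) = V \otimes_\K T\cC \otimes_\K \cB(X_0)$ has no room for the $\cD$-morphisms $a_r, \dots, a_1$, so the expression $v \otimes g_s \otimes \dots \otimes g_1 \otimes x \otimes a_r \otimes \dots \otimes a_1$ does not type-check. The correct statement (and the paper's) is $\phi^{1|0}(x) = \pm\, v \otimes g_s \dots g_1 \otimes x$ and $\phi^{1|r} = 0$ for $r \ge 1$; with this correction your computation of $\Xi$ goes through as you describe. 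Second, the paper takes a shortcut you do not: rather than verifying the chain isomorphism directly, it also writes down the counit $c_{\cM}^0(\phi \otimes x, a_r, \dots, a_1) = \pm\, \phi^{1|r}(x, a_r, \dots, a_1)$ (zero on tensors of positive $\cC$-length) and then simply observes that on homotopy categories $[u]$ and $[c]$ reduce to the classical tensor--hom unit and counit, which immediately gives the required quasi-isomorphism. Your route yields a slightly stronger conclusion (strict isomorphism of complexes), but the paper's is quicker, and in any case the explicit counit is needed later in the paper (it is precisely what is compared to the evaluation map in Lemma~\ref{l:conjugationInvariant} and Proposition~\ref{p:algebraicLagTwist}).
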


\begin{proof}
 The adjunction unit $u$ is defined by
 \begin{align*}
  &u_V^0 \in hom_{\text{mod-}\cC}( V, hom_{\text{mod-}\cD}(\cB,V \otimes_\cC^{\mathbb{L}} \cB)) \text{ for }V \in Ob(\text{mod-}\cC)\\
  &u_V^0(v,g_s,\dots,g_1):=v \otimes g_s,\dots,g_1 \otimes Id_\cB  \in hom_{\text{mod-}\cD}(\cB,V \otimes_\cC^{\mathbb{L}} \cB)\\
  &(v \otimes g_s,\dots,g_1 \otimes Id_\cB  )^{1|0}(x):=(-1)^{|x|(|v|+\sum_{j=1}^s \|g_j\|)}v \otimes g_s,\dots,g_1 \otimes x \\
  &(v \otimes g_s,\dots,g_1 \otimes Id_\cB )^{1|r}(x,a_r,\dots,a_1):=0 \text{ if } r \ge 1\\
  &u^k:=0 \text{ if } k \ge 1 
 \end{align*}
and the adjunction counit $c$ is defined by
 \begin{align*}
  &c_\cM^0 \in hom_{\text{mod-}\cD}( hom_{\text{mod-}\cD}(\cB,\cM) \otimes_\cC^{\mathbb{L}} \cB, \cM) \text{ for }\cM \in Ob(\text{mod-}\cD) \\
  &(c_\cM^0 )^{1|r}(\phi  \otimes x,a_r,\dots,a_1):=(-1)^{|\phi|(|x|+\sum_{j=1}^r \|a_j\|)}\phi^{1|r}(x,a_r,\dots,a_1) \\
  &(c_\cM^0 )^{1|r}(\phi \otimes g_s \dots g_1 \otimes x,a_r,\dots,a_1):=0 \text{ if $s>0$} \\
  &c^k:=0 \text{ if } k \ge 1 
 \end{align*}
 It is routine to check that $u$ and $c$ are natural transformations.
 To see that $u$ and $c$ are indeed adjunction unit and counit, it suffices to observes that the natural transformations defined by $[u]$ and $[c]$ on the homotopy categories
 are the ordinary adjunction unit and counit natural transformations.
\end{proof}

\subsubsection{Graph}

In this subsection, we explain some properties of $F_{\otimes}(\cB)$
and $F_{hom}(\cB)$ when $\cB$ is a  graph bimodule.

Given an $A_{\infty}$ functor $\cF:\cC \to \cD$, the {\it graph bimodule} $\cB_\cF$ of $\cF$ 
is the $A_{\infty}$ $\cC\text{-}\cD$ bimodule defined as follows (see \cite[(7.24)]{Seidelbook2}):
For $V \in Ob(\cC)$ and $X \in Ob(\cD)$, we have a cochain complex $\cB_\cF(X,V):=hom_\cD(X,\cF(V))$.
The higher order bimodule structures are defined by
\begin{align}
&\mu_{\cB_{\cF}}^{r|1|s}(g_r,\dots,g_1,x,a_s,\dots,a_1)  \label{eq:GraphModmorphism}\\
:=&(-1)^{1+\sum_{j=1}^s \|a_j\|}\sum_{j;i_1+\dots+i_j=s} \mu_\cD(\cF^{i_j}(g_s,\dots,g_{s-i_j+1}),\dots,\cF^{i_1}(g_{i_1},\dots,g_1),x,a_s,\dots,a_1)
\end{align}

\begin{example}[Yoneda module]\label{eg:Yoneda}
 When $\cC=\K$, viewed as an $A_{\infty}$ category with a single object, then 
 a $\cC\text{-mod-}\cD$ bimodule is the same as a right $\cD$-module.
 If $X \in Ob(\cD)$ and $\cF: \K \to \cD$ is an $A_{\infty}$ functor sending the unique object in $\K$ to $X$, then 
 $\cB_\cF \simeq \cY_{\cD}(X)$ is the Yoneda module.
\end{example}

Note that, Example \ref{eg:Yoneda} implies that
the sign convention of the $A_{\infty}$ structural maps of Yoneda modules is different from the one in Section \ref{ss:EquivariantEv}
because we use the sign convention in \cite{LF2} (see \cite[Conventions $2.1$]{LF2}) in this section (Section \ref{s:Algebraic perspective})
to have better compatiblity with the literature of $A_{\infty}$ bimodules; while the rest of the paper uses the sign convention 
in \cite{Seidelbook} to have better compatiblity with the literature on Dehn twist
and twisted complexes (see Appendix \ref{sec:orientations}).
Since this section is mostly independent from the rest of the paper, it should not cause serious confusion.

In this sign convention, for $x \in hom_\cD(X_0,X_1)$, we have
\begin{align}
 \cY^1_\cD(x)(y,a_r,\dots,a_1)=(-1)^{\|x\|\cdot |y|+|x| (\sum_{j=1}^r \|a_j\|)}\mu_\cD(x,y,a_r,\dots,a_1) \label{eq:Yonedamorphism}
\end{align}
and the evaluation map (cf. \eqref{eq:evaluationMap}) is given by
\begin{align}
 (x \otimes y, a_r,\dots,a_1) \mapsto (-1)^{|y|}\mu_\cD (x,y, a_r,\dots,a_1) \label{eq:TwistedEvMap}
\end{align}

\begin{example}[Diagonal bimodule]
 The {\bf diagonal bimodule} $\Delta_\cC$ of an $A_{\infty}$ category $\cC$ is the graph bimodule of the 
 identity functor $Id_\cC$. 
\end{example}

There are associated pull-back functors on modules for an $A_{\infty}$ functor (See \cite[Section 2.8]{Gan13}).
In this language, the graph bimodule is
$\cB_\cF=(\cF \times Id_\cD)^*\Delta_\cD$ where $(\cF \times Id_\cD)^*:\cD\text{-mod-}\cD \to \cC\text{-mod-}\cD$ is the pull-back functor.
If we have a natural equivalence $\cF \simeq \cG$, then $\cB_\cF=(\cF \times Id_\cD)^*\Delta_\cD \simeq (\cG \times Id_\cD)^*\Delta_\cD=\cB_\cG$.

The following is a basic property of tensor product with diagonal bimodule.

\begin{lemma}[\cite{SeHom}, \cite{Gan13}]\label{l:BarContraction}
$\cB \otimes_\cD^{\mathbb{L}} \Delta_\cD$ is canonically quasi-isomorphic to $\cB$ as an object in $\cC\text{-mod-}\cD$.
 Similarly,  $\Delta_\cC \otimes_\cC^{\mathbb{L}} \cB$ is canonically quasi-isomorphic to $\cB$ in $\cC\text{-mod-}\cD$.
\end{lemma}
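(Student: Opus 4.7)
The plan is to construct explicit DG bimodule morphisms realizing the quasi-isomorphism in both directions and then produce an extra-degeneracy homotopy, rather than invoke a spectral sequence. By the symmetry of the two assertions, I focus on the first; the second follows by interchanging the left and right roles and using the left bimodule equations of $\cB$.

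First, I will define a collapse map $\epsilon_\cB : \cB \otimes_\cD^{\mathbb{L}} \Delta_\cD \to \cB$. An element of the underlying graded vector space has the form $b \otimes g_s \otimes \dots \otimes g_1 \otimes x$ with $b \in \cB(Y_s, V)$, $g_i \in \cD(Y_{i-1}, Y_i)$, and $x \in \cD(X, Y_0)$. I set $\epsilon_\cB^{1|0}$ on this element to be a signed version of $\mu_\cB^{0|1|s+1}(b, g_s, \dots, g_1, x)$ and declare the higher components $\epsilon_\cB^{1|r}$ for $r \geq 1$ to vanish. Writing out the induced differential on $\cB \otimes_\cD^{\mathbb{L}} \Delta_\cD$ by the right-sided analogue of the formula in Definition--Lemma \ref{dl:tensorDGFun} and invoking the $A_\infty$ bimodule relations for $\cB$ yield $\mu^1_{\cC\text{-mod-}\cD}(\epsilon_\cB) = 0$; the sign verification is routine Koszul bookkeeping under the conventions of \cite{LF2}.

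Second, I will define a unit insertion $\eta_\cB : \cB \to \cB \otimes_\cD^{\mathbb{L}} \Delta_\cD$ that sends $b \in \cB(X,V)$ to $b \otimes e_X$, where $e_X \in \cD(X,X)$ represents the identity of $X$, with higher components $\eta_\cB^{1|r}$ dictated by the compatibilities between the homotopy unit $e_X$ and the $A_\infty$ multiplications $\mu_\cD^k$. The composition $\epsilon_\cB \circ \eta_\cB$ then equals $\mu_\cB^{0|1|1}(b, e_X)$, which represents $b$ up to a $\mu^1$-exact error by the cohomological unit axiom, so $\epsilon_\cB \circ \eta_\cB$ is chain homotopic to $\mathrm{id}_\cB$. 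To treat the opposite composition I will construct an extra-degeneracy homotopy $h$ of the form
\[
h(b \otimes g_s \otimes \dots \otimes g_1 \otimes x) = \pm\, b \otimes g_s \otimes \dots \otimes g_1 \otimes x \otimes e_X,
\]
with suitable higher components $h^{1|r}$, and verify $\mu^1(h) = \mathrm{id} - \eta_\cB \circ \epsilon_\cB$ term by term from the $A_\infty$ axioms for $\cD$.

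The main obstacle is precisely this homotopy step: in the c-unital setting the strict unit axioms $\mu_\cD^2(e_X, -) = \mathrm{id}$ and $\mu_\cD^k(\dots, e_X, \dots) = 0$ for $k \neq 2$ hold only up to coherent homotopy, so the naive formula for $h$ fails to give an exact chain homotopy and must be corrected by non-trivial higher components $h^{1|r}$. Two standard strategies are available. The first is to rectify $\cD$ to a strictly unital DG model, for example by replacing $\cD$ with its image under the cohomologically full and faithful Yoneda embedding $\cY_\cD$ into $\cD^{\perf}$, run the bar contraction strictly there, and then transport the quasi-isomorphism back; this uses that the derived tensor product and internal hom are compatible with quasi-equivalences. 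The second, followed in \cite{Gan13}, is to work directly with $e_X$ and distribute the error terms arising from non-strictness into the higher components of $\epsilon_\cB$, $\eta_\cB$, and $h$, exploiting the bar-resolution presentation of $\Delta_\cD$. Either route produces a canonical quasi-isomorphism in $\cC\text{-mod-}\cD$; naturality in $\cB$, which justifies the adjective ``canonical'', then follows from the functoriality of each ingredient.
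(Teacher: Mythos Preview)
Your overall plan (collapse map, unit insertion, extra-degeneracy homotopy) is the standard bar contraction, which is indeed what the references do, and the paper's own proof simply writes down the collapse map and cites those references. However, there is a concrete error in your formula for the collapse map.

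You set $\epsilon_\cB^{1|0}(b \otimes g_s \otimes \dots \otimes g_1 \otimes x) = \pm\,\mu_\cB^{0|1|s+1}(b, g_s, \dots, g_1, x)$ and declare all higher components zero. This is not a closed morphism of $\cC$--$\cD$ bimodules. A bimodule morphism has components $t^{r|1|s}$ indexed by \emph{two} integers (left $\cC$-inputs and right $\cD$-inputs), and the paper's explicit formula
\[
t^{r|1|s}(g_r,\dots,g_1,\, x \otimes a_l \dots a_1 \otimes a,\, a_s',\dots,a_1')
= \pm\,\mu_{\cB}^{r|1|s+l+1}(g_r,\dots,g_1,x,a_l,\dots,a_1,a,a_s',\dots,a_1')
\]
is nonzero for \emph{all} $r,s \ge 0$. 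If you kill $t^{0|1|1}$, for instance, then the $(0|1|1)$-component of $\mu^1_{\cC\text{-mod-}\cD}(t)$ fails to vanish: the $A_\infty$ bimodule relation for $\cB$ with inputs $(x,a_l,\dots,a_1,a,a')$ contains the term $\mu_\cB^{0|1|l+2}(x,a_l,\dots,a_1,a,a')$, and that term has nowhere to cancel unless $t^{0|1|1}$ supplies it. The same happens on the left for $t^{r|1|0}$. So your claim that ``invoking the $A_\infty$ bimodule relations for $\cB$ yields $\mu^1(\epsilon_\cB)=0$'' is false with the truncated formula; once you include all higher components as above, it becomes true by exactly those relations.

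Two smaller remarks. First, the collapse map requires no unit whatsoever; it is built entirely from $\mu_\cB$. The unitality issues you discuss are only relevant for the quasi-inverse and the homotopy, and the paper indeed relegates the explicit quasi-inverse $x \mapsto x \otimes a_r \dots a_1 \otimes e_\cD$ to a separate remark valid only when $\cD$ is strictly unital. Second, to prove the lemma one does not need the full deformation retraction: it suffices to check that the $(0|1|0)$-component of the (corrected) collapse map is a quasi-isomorphism of complexes, which is the classical bar contraction and works without any unit hypothesis.
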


\begin{proof}
This is proved in  \cite[Section 2]{SeHom} and \cite[Proposition 2.2]{Gan13}.
An explicit quasi-isomorphism from $\cB \otimes_\cD^{\mathbb{L}} \Delta_\cD$  to $\cB$ is given by
 \begin{align*}
&t^{r|1|s}(g_r,\dots,g_1,x \otimes a_l \dots a_1 \otimes a,a_s',\dots,a_1')\\
=&(-1)^{\sum_{j=1}^s \|a_j'\|+|a|+\sum_{j=1}^l\|a_j\|}\mu_{\cB}^{r|1|s+l+1}(g_r,\dots,g_1,x,a_l \dots a_1,a,a_s',\dots,a_1')
 \end{align*}
An explicit quasi-isomorphism from $\Delta_\cC \otimes_\cC^{\mathbb{L}} \cB$  to $\cB$ can be defined similarly.

\end{proof}

\begin{rmk}\label{r:InverseBarContraction}
 There is an quasi-inverse of the quasi-isomorphism in Lemma \ref{l:BarContraction} when $\cD$ is strictly unital (see \cite[Equation $(2.9)$]{SeHom}).
 We only need the case that $\cC=\K$. Explicitly, we have a natural equivalence as follows.
 \begin{align*}
 & \eta \in hom_{Fun(\text{mod-}\cD,\text{mod-}\cD)}(Id_{\text{mod-}\cD}, F_{\otimes}(\Delta_\cD))\\
  &\eta^0_\cM \in hom_{\text{mod-}\cD}(\cM , \cM \otimes_\cD^{\mathbb{L}} \Delta_\cD) \\
  &(\eta^0_\cM)^{1|r}(x,a_r,\dots,a_1):=x \otimes a_r,\dots,a_1 \otimes e_\cD \\
  &\eta^k=0 \text{ if } k \ge 1 
 \end{align*}
Here, $e_\cD$ is the strict unit for the appropriate object in $\cD$.
One can check that $\eta$ is a natural transformation: for the first order
\begin{align*}
 &(\mu^1_{Fun(\text{mod-}\cD,\text{mod-}\cD)}(\eta))^0(x,a_r,\dots,a_1)\\
 =&\mu^1_{\text{mod-}\cD}(\eta^0_\cM)(x,a_r,\dots,a_1) \\
 =& (\mu_{\cM \otimes_\cD^{\mathbb{L}} \Delta_\cD} \circ \widehat{\eta}^0_\cM - \eta^0_\cM \circ \widehat{\mu}_\cM)(x,a_r,\dots,a_1) \\
 =& \mu_{\cM \otimes_\cD^{\mathbb{L}}\Delta_\cD}^{1|0}(x\otimes a_r \dots a_1\otimes e_\cD) +\mu_{\cM \otimes_\cD^{\mathbb{L}}\Delta_\cD}^{1|1}((x\otimes a_r \dots a_2 \otimes e_\cD),a_1) \\
 & -\sum  (-1)^{A_1^k}\mu_\cM(x,a_r,\dots,a_{k+1})\otimes a_k,\dots,a_1 \otimes e_\cD  \\
 &-\sum (-1)^{A_1^k} x\otimes a_r \dots a_{l+1} \otimes \mu_{\cD}(a_{l},\dots,a_{k+1})\otimes a_k \dots \otimes a_1 \otimes e_\cD \\ 
 =&0
 \end{align*}
 where $A_1^k=\sum_{j=1}^k \|a_j\|$, and we used $\mu_{\Delta_\cD}^{0|1|1}(e_\cD,a_1)=a_1$, $\mu_{\Delta_\cD}^{1|1|0}(a_1,e_\cD)=-(-1)^{|e_\cD|}a_1=-a_1$ (see \cite[(2.7)]{LF2}). 
 One can also check that
 \begin{align*}
  (\mu^1_{Fun(\text{mod-}\cD,\text{mod-}\cD)}(\eta))^1(\phi):=\mu^2_{\text{mod-}\cD}(F_{\otimes}(\Delta_\cD)(\phi),\eta^0_{\cM_0})+(-1)^{\|\phi\|}\mu^2_{\text{mod-}\cD}(\eta^0_{\cM_1},\phi)
 \end{align*}
 vanishes for $\phi \in hom_{\text{mod-}\cD}(\cM_0,\cM_1)$ so $\eta$ is a natural transformation.
 For the quasi-isomoprhism $t \in hom_{\text{mod-}\cD}(\cM \otimes_\cD^{\mathbb{L}} \Delta_\cD, \cM)$ in Lemma \ref{l:BarContraction},
 it is clear that $t \circ \eta^0_\cM (x)=\mu_{\Delta_\cD}^{0|1|1}(x,e_\cD)=x$ and $(t \circ \widehat{\eta}^0_\cM)^{1|r}=0$ for $r>0$.
 Therefore, $t \circ \eta^0_\cM=Id_\cM$ and $\eta$ is a quasi-inverse of $t$.
\end{rmk}

One of the most important property of graph bimodules
is that tensor product is compatible with composition in the following sense.

\begin{lemma}\label{l:CompoGraphBimod}
 Let $\cF_0:\cC_0 \to \cC_1$ and $\cF_1: \cC_1 \to \cC_2$ be $A_{\infty}$ functors.
 Then $\cB_{\cF_1 \circ \cF_0}$ is quasi-isomorphic to $\cB_{\cF_0} \otimes_{\cC_1}^{\mathbb{L}} \cB_{\cF_1}$ as $\cC_0-\cC_2$ bimodules.
 
 As a result, we have a natural equivalence $F_{\otimes}(\cB_{\cF_1 \circ \cF_0}) \simeq F_{\otimes}(\cB_{\cF_1}) \circ F_{\otimes}(\cB_{\cF_0})$ in $Fun(\text{mod-}\cC_0,\text{mod-}\cC_2)$.
\end{lemma}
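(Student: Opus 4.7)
The plan is to construct an explicit bimodule quasi-isomorphism $\Phi \colon \cB_{\cF_0}\otimes_{\cC_1}^{\mathbb L}\cB_{\cF_1}\to \cB_{\cF_1\circ \cF_0}$ and then deduce the natural equivalence of functors by applying the DG functor $F_\otimes$ constructed in Definition-Lemma~\ref{dl:BimodFunMor}, combined with associativity of the bar tensor product.

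First I would write down $\Phi$ componentwise. An element of the tensor product has the form $\psi\otimes g_s\cdots g_1\otimes \phi$ with $\phi\in \hom_{\cC_1}(X_0,\cF_0(V))$, $g_i\in \hom_{\cC_1}(X_{i-1},X_i)$ and $\psi\in \hom_{\cC_2}(Y,\cF_1(X_s))$. Using the definition \eqref{eq:GraphModmorphism} of the graph bimodule structure as a model, I define
\begin{align*}
 \Phi^{0|1|0}(\psi\otimes g_s\cdots g_1\otimes\phi)
 := \sum\pm\,\mu_{\cC_2}\bigl(\psi,\cF_1^{i_k}(g_s,\dots),\dots,\cF_1^{i_1}(g_{i_0},\dots,g_1,\phi)\bigr),
\end{align*}
summing over all decompositions $s+1=i_1+\cdots+i_k$ with $i_1\ge 1$ (so that $\phi$ is swallowed by the innermost $\cF_1$), and similarly for $\Phi^{r|1|t}$ by inserting the extra entries from $\cC_0$ and $\cC_2$ on the appropriate sides via $\mu_{\cC_2}$. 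Signs are dictated by the Koszul rule, matching those implicit in \eqref{eq:GraphModmorphism}. The verification that $\Phi$ is a bimodule pre-morphism is a bookkeeping exercise that reduces, term by term, to (a) the $A_\infty$ relations in $\cC_2$, (b) the $A_\infty$ functor equations for $\cF_1$, and (c) the definition of the tensor bimodule differential from Definition-Lemma~\ref{dl:tensorDGFun}; I would pattern this after the verification that \eqref{eq:GraphModmorphism} is itself a bimodule structure.

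Second, I would argue that $\Phi$ is a quasi-isomorphism by a filtration argument. Filter both sides by the bar length $s$: on $\cB_{\cF_0}\otimes_{\cC_1}^{\mathbb L}\cB_{\cF_1}$ by the word length in $T\cC_1$, and on $\cB_{\cF_1\circ\cF_0}$ trivially (concentrated in length $0$). The $s=0$ piece of $\Phi$ specializes to $\psi\otimes\phi\mapsto \pm\mu_{\cC_2}(\psi,\cF_1^1(\phi))$, and Lemma~\ref{l:BarContraction} together with an augmentation/contracting homotopy on the bar complex identifies the associated graded of the source with $\cB_{\cF_1\circ\cF_0}$. The spectral sequence of this filtration then collapses and exhibits $\Phi$ as a quasi-isomorphism. (Alternatively, one can directly write down a contracting homotopy using the strict unit of $\cC_1$ as in Remark~\ref{r:InverseBarContraction}, which I would prefer since it avoids any convergence issues.)

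Third, for the functor-level statement, I would combine the quasi-isomorphism $\Phi$ with the strictly unital DG functor $F_\otimes$ from Definition-Lemma~\ref{dl:BimodFunMor}. Since $F_\otimes$ sends quasi-isomorphisms of bimodules to natural equivalences of functors (the target $\mathrm{mod}\text{-}\cC$ being formed of $K$-projective objects, cf.\ Remark~\ref{r:K-projective}), applying $F_\otimes$ to $\Phi$ yields
\begin{align*}
 F_\otimes(\cB_{\cF_1\circ \cF_0})\simeq F_\otimes(\cB_{\cF_0}\otimes_{\cC_1}^{\mathbb L}\cB_{\cF_1}).
\end{align*}
It remains to produce a natural equivalence $F_\otimes(\cB_{\cF_0}\otimes_{\cC_1}^{\mathbb L}\cB_{\cF_1})\simeq F_\otimes(\cB_{\cF_1})\circ F_\otimes(\cB_{\cF_0})$, which on a module $V$ reads
\begin{align*}
 V\otimes_{\cC_0}^{\mathbb L}\bigl(\cB_{\cF_0}\otimes_{\cC_1}^{\mathbb L}\cB_{\cF_1}\bigr)\simeq \bigl(V\otimes_{\cC_0}^{\mathbb L}\cB_{\cF_0}\bigr)\otimes_{\cC_1}^{\mathbb L}\cB_{\cF_1}.
\end{align*}
This is the associativity of the bar tensor product; a canonical chain-level isomorphism exists by re-parsing the bar resolution, and naturality in $V$ is immediate from the explicit formulas in Definition-Lemma~\ref{dl:tensorDGFun}.

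The main obstacle will be keeping track of signs, both when verifying that $\Phi$ intertwines the bimodule structures and when matching the associativity isomorphism against the DG functor formulas. The combinatorics is essentially dictated by the compatibility of the bar differential with the $A_\infty$ functor equations for $\cF_1$; the strategy to make this painless is to exploit Lemma~\ref{l:BarContraction}, rewriting $\cB_{\cF_1\circ\cF_0}\simeq \cB_{\cF_1\circ\cF_0}\otimes_{\cC_2}^{\mathbb L}\Delta_{\cC_2}$ and $\cB_{\cF_0}\otimes_{\cC_1}^{\mathbb L}\cB_{\cF_1}\simeq \cB_{\cF_0}\otimes_{\cC_1}^{\mathbb L}(\cF_1\times \mathrm{Id})^*\Delta_{\cC_2}$, so that both sides become pullbacks of the diagonal via $(\cF_1\circ \cF_0)\times \mathrm{Id}$ versus a two-step bar pullback; the desired comparison then reduces to the standard statement that pullback along a composition of $A_\infty$ functors agrees with the iterated pullback up to a canonical quasi-isomorphism.
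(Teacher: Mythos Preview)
Your approach is correct but takes a more laborious route than the paper. The paper's proof is essentially the ``alternative strategy'' you sketch in your final paragraph, promoted to the main argument: starting from the bar contraction $\cB_{\cF_1}\simeq \Delta_{\cC_1}\otimes_{\cC_1}^{\mathbb L}\cB_{\cF_1}$ of Lemma~\ref{l:BarContraction}, it applies the pullback $(\cF_0\times\mathrm{Id}_{\cC_2})^*$ to both sides. On the left this yields $\cB_{\cF_1\circ\cF_0}$, since iterated pullback of the diagonal equals pullback along the composition. On the right, the key observation is that the bimodule $\Delta_{\cC_1}\otimes_{\cC_1}^{\mathbb L}\cB_{\cF_1}$ has $\mu^{r|1|s}=0$ whenever $r,s\ge 1$, so pullback along $\cF_0\times\mathrm{Id}_{\cC_2}$ distributes strictly over the tensor, giving $(\cF_0\times\mathrm{Id}_{\cC_1})^*\Delta_{\cC_1}\otimes_{\cC_1}^{\mathbb L}\cB_{\cF_1}=\cB_{\cF_0}\otimes_{\cC_1}^{\mathbb L}\cB_{\cF_1}$ on the nose. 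This sidesteps entirely the explicit formula for $\Phi$, the sign bookkeeping, and the filtration argument.

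For the functor-level statement the paper also goes one step simpler than you: it notes that $F_\otimes(\cB_{\cF_0}\otimes_{\cC_1}^{\mathbb L}\cB_{\cF_1})=F_\otimes(\cB_{\cF_1})\circ F_\otimes(\cB_{\cF_0})$ is a strict equality of DG functors (associativity of the bar tensor product holds at the level of chain complexes, not merely up to quasi-isomorphism), so the only place where preservation of quasi-isomorphisms is invoked is in transporting the bimodule quasi-isomorphism through $F_\otimes$.

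Your direct construction would also succeed, but your filtration argument as written is imprecise: the associated graded of the length filtration on the source is not $\cB_{\cF_1\circ\cF_0}$; what you actually need is acyclicity of the positive-length part of the bar complex, which is precisely the contracting homotopy you mention parenthetically. The paper's pullback route packages this homotopy into a single invocation of Lemma~\ref{l:BarContraction}.
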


\begin{proof}
 By Lemma \ref{l:BarContraction}, we have a quasi-isomorphism $\cB_{\cF_1} \simeq \Delta_{\cC_1} \otimes_{\cC_1}^{\mathbb{L}} \cB_{\cF_1}$
 as $\cC_1-\cC_2$ bimodules.
 The pull-back $(\cF_0 \times Id_{\cC_2})^*$ preserves quasi-isomorphism.
On the left, we have
 \begin{align*}
(\cF_0 \times Id_{\cC_2})^*\cB_{\cF_1}&=   (\cF_0 \times Id_{\cC_2})^*((\cF_1 \times Id_{\cC_2})^*\Delta_{\cC_2}) \\
& \simeq (\cF_1 \circ \cF_0 \times Id_{\cC_2})^*\Delta_{\cC_2} \\
& =\cB_{\cF_1 \circ \cF_0}
 \end{align*}
On the right, since $\mu^{r|1|s}_{\Delta_{\cC_1} \otimes_{\cC_1}^{\mathbb{L}} \cB_{\cF_1}}=0$ if $r,s \ge 1$, one can easily check that
\begin{align*}
 (\cF_0 \times Id_{\cC_2})^*(\Delta_{\cC_1} \otimes_{\cC_1}^{\mathbb{L}} \cB_{\cF_1})&= (\cF_0 \times Id_{\cC_1})^*(\Delta_{\cC_1}) \otimes_{\cC_1}^{\mathbb{L}} \cB_{\cF_1} \\
 &=\cB_{\cF_0} \otimes_{\cC_1}^{\mathbb{L}} \cB_{\cF_1}
\end{align*}

It is clear from the definition that $F_{\otimes}(\cB_{\cF_0} \otimes_{\cC_1}^{\mathbb{L}} \cB_{\cF_1})=F_{\otimes}(\cB_{\cF_1}) \circ F_{\otimes}(\cB_{\cF_0})$.
Since $F_{\otimes}$ preserves quasi-isomorphism, we have $F_{\otimes}(\cB_{\cF_1 \circ \cF_0})=F_{\otimes}(\cB_{\cF_1}) \circ F_{\otimes}(\cB_{\cF_0})$.
\end{proof}

As a consequence of Example \ref{eg:Yoneda} and Lemma \ref{l:CompoGraphBimod}:

\begin{corr}\label{c:perfect}(\cite[Section 7, (7.25)]{Seidelbook2})
The following diagram is commutative up to quasi-isomorphisms of $A_{\infty}$ functors
\begin{align*}
\begin{CD}
\cC    @>\cF>> \cD\\
@VV\cY_\cC V                     @VV \cY_\cD V\\
\text{mod-}\cC     @>F_{\otimes}(\cB_\cF)>> \text{mod-}\cD
\end{CD}
\end{align*}
where $\cY_\cC$ and $\cY_\cD$ are the Yoneda embeddings of $\cC$ and $\cD$, respectively.
Therefore, $F_{\otimes}(\cB_\cF)$ sends perfect $\cC$-modules to perfect $\cD$-modules.
\end{corr}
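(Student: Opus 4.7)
The plan is to deduce both claims directly from Example \ref{eg:Yoneda} and Lemma \ref{l:CompoGraphBimod}. The central observation is that Yoneda modules are themselves graph bimodules: for any $V \in Ob(\cC)$, if we let $i_V \co \K \to \cC$ denote the $A_\infty$ functor sending the unique object to $V$, then Example \ref{eg:Yoneda} identifies $\cY_\cC(V)$ with $\cB_{i_V}$, and likewise $\cY_\cD(\cF(V)) = \cB_{\cF \circ i_V}$. By Lemma \ref{l:CompoGraphBimod},
\begin{align*}
F_{\otimes}(\cB_\cF)(\cY_\cC(V)) \;=\; \cB_{i_V} \otimes_\cC^{\mathbb{L}} \cB_\cF \;\simeq\; \cB_{\cF \circ i_V} \;=\; \cY_\cD(\cF(V))
\end{align*}
in $\text{mod-}\cD$. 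This establishes the commutativity on the level of objects.

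To upgrade the pointwise quasi-isomorphism to a natural equivalence of $A_\infty$ functors $\cC \to \text{mod-}\cD$, I would use Definition/Lemma \ref{dl:BimodFunMor}. The quasi-isomorphism in Lemma \ref{l:CompoGraphBimod} is built by applying the pullback $(\cF_0 \times \mathrm{Id}_{\cC_2})^*$ to the canonical quasi-isomorphism $\Delta_{\cC_1} \otimes^{\mathbb{L}}_{\cC_1} \cB_{\cF_1} \simeq \cB_{\cF_1}$ of Lemma \ref{l:BarContraction}, which is explicit and natural. Feeding this family of quasi-isomorphisms, parametrised by $V \in Ob(\cC)$, through the strictly unital DG functor $F_{\otimes}$ produces a cocycle in the morphism space of the functor category $Fun(\cC, \text{mod-}\cD)$ whose components at each $V$ are quasi-isomorphisms. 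By standard arguments on the homotopy category of $A_\infty$ functors, such a pre-natural transformation is an equivalence.

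For the second statement, recall from Definition \ref{d:PerfMod} that $\cC^{\perf}$ consists of objects obtained from $\cY_\cC(V)$'s by iterated shifts, mapping cones, and homotopy retracts. Since $F_{\otimes}(\cB_\cF)$ is a DG functor (Definition/Lemma \ref{dl:tensorDGFun}), it is additive and hence commutes with shifts, preserves mapping cones up to canonical isomorphism, and sends homotopy retracts to homotopy retracts. By Remark \ref{r:K-projective}, every perfect module is $K$-projective, so any $A_\infty$ functor between module categories preserves quasi-isomorphisms between perfect objects. Combining these properties with the commutativity just established, the image $F_{\otimes}(\cB_\cF)(M)$ of any perfect $\cC$-module $M$ is quasi-isomorphic to an object built from $\{\cY_\cD(\cF(V))\}$ by the same operations, which is therefore perfect.

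The main technical obstacle is the second paragraph: verifying that the pointwise quasi-isomorphisms from Lemma \ref{l:CompoGraphBimod} actually assemble into an $A_\infty$ natural transformation, as opposed to merely pointwise quasi-isomorphisms. This reduces to checking that the explicit formula in the proof of Lemma \ref{l:BarContraction} is natural in the bimodule argument in the DG sense, which is routine but notationally heavy; once phrased through the DG functor $F_{\otimes}$ of Definition/Lemma \ref{dl:BimodFunMor}, naturality is automatic because $F_{\otimes}$ sends bimodule quasi-isomorphisms to quasi-isomorphisms of $A_\infty$ functors, and the bimodule-level equivalence $\cB_{\cF \circ i_V} \simeq \cB_{i_V} \otimes^{\mathbb{L}}_\cC \cB_\cF$ is itself natural in $V$ since it arises from pullback of a fixed canonical equivalence.
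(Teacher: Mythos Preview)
Your proposal is correct and follows exactly the approach the paper indicates: the corollary is stated immediately after the phrase ``As a consequence of Example \ref{eg:Yoneda} and Lemma \ref{l:CompoGraphBimod}'' with no further proof, and you have simply spelled out that deduction in detail. Your additional care in upgrading the pointwise quasi-isomorphism to a natural equivalence and in tracking how $F_{\otimes}(\cB_\cF)$ preserves the operations defining perfect modules goes beyond what the paper writes, but is in the same spirit.
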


We have the dual statement of Lemma \ref{l:CompoGraphBimod}.
\begin{lemma}\label{l:GeneralTenHom}
  Let $\cF_0:\cC_0 \to \cC_1$ and $\cF_1: \cC_1 \to \cC_2$ be $A_{\infty}$ functors.
 Then  we have a natural equivalence 
 $F_{hom}(\cB_{\cF_1 \circ \cF_0}) \simeq F_{hom}(\cB_{\cF_0}) \circ F_{hom}(\cB_{\cF_1})$ in $Fun(\text{mod-}\cC_2,\text{mod-}\cC_0)$.
\end{lemma}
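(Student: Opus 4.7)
The plan is to deduce this from Lemma \ref{l:CompoGraphBimod} together with the tensor--hom adjunction established in Proposition \ref{p:TensorHom}. First, the DG functor $F_{hom}$ of Definition/Lemma \ref{dl:homDGfunMor} preserves quasi-isomorphisms (every object in the image is $K$-injective in the sense that $F_{hom}$ applied to a quasi-isomorphism of modules is still a quasi-isomorphism, which is built into the construction since we are working with $A_\infty$-modules and $F_{hom}$ is computed via morphism complexes). Hence from the bimodule quasi-isomorphism
\begin{equation*}
\cB_{\cF_1\circ\cF_0}\ \simeq\ \cB_{\cF_0}\otimes_{\cC_1}^{\mathbb{L}}\cB_{\cF_1}
\end{equation*}
proved in Lemma \ref{l:CompoGraphBimod}, it suffices to exhibit a natural equivalence
\begin{equation*}
F_{hom}\bigl(\cB_{\cF_0}\otimes_{\cC_1}^{\mathbb{L}}\cB_{\cF_1}\bigr)\ \simeq\ F_{hom}(\cB_{\cF_0})\circ F_{hom}(\cB_{\cF_1})
\end{equation*}
in $Fun(\text{mod-}\cC_2,\text{mod-}\cC_0)$.

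The second step is an ``iterated tensor--hom adjunction''. Proposition \ref{p:TensorHom} gives, for each $\cM\in\text{mod-}\cC_2$, a natural quasi-isomorphism
\begin{equation*}
hom_{\text{mod-}\cC_2}\bigl(\cB_{\cF_0}\otimes_{\cC_1}^{\mathbb{L}}\cB_{\cF_1},\cM\bigr)\ \simeq\ hom_{\text{mod-}\cC_1}\bigl(\cB_{\cF_0},\,hom_{\text{mod-}\cC_2}(\cB_{\cF_1},\cM)\bigr),
\end{equation*}
which is exactly the equality of modules $F_{hom}(\cB_{\cF_0}\otimes_{\cC_1}^{\mathbb{L}}\cB_{\cF_1})(\cM)\simeq\bigl(F_{hom}(\cB_{\cF_0})\circ F_{hom}(\cB_{\cF_1})\bigr)(\cM)$. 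I would write this adjunction isomorphism explicitly by sending $\psi\in hom_{\text{mod-}\cC_2}(\cB_{\cF_0}\otimes_{\cC_1}^{\mathbb{L}}\cB_{\cF_1},\cM)$ to the $A_\infty$-module homomorphism $\Phi(\psi)$ whose component
\begin{equation*}
\Phi(\psi)^{1|r}(g_r,\dots,g_1,x,a_r,\dots,a_1)\ \in\ hom_{\text{mod-}\cC_2}(\cB_{\cF_1},\cM)
\end{equation*}
is obtained by feeding $(g_r,\dots,g_1,x)$ into the first two tensor slots and leaving the $\cB_{\cF_1}$ slot free, decorated by suitable Koszul signs dictated by the conventions in Definition/Lemma \ref{dl:homDGfun}. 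The inverse map is the familiar one: given $\phi\in hom_{\text{mod-}\cC_1}(\cB_{\cF_0},hom_{\text{mod-}\cC_2}(\cB_{\cF_1},\cM))$, evaluate $\phi$ on the $\cB_{\cF_0}$ part of a bar-tensor and then evaluate on the $\cB_{\cF_1}$ part.

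Finally, to upgrade this pointwise quasi-isomorphism to a natural equivalence of $A_\infty$-functors, I would package $\Phi$ as the zero-order component $\eta^{0}_{\cM}$ of a pre-natural transformation $\eta\in hom_{Fun(\text{mod-}\cC_2,\text{mod-}\cC_0)}(F_{hom}(\cB_{\cF_0}\otimes_{\cC_1}^{\mathbb{L}}\cB_{\cF_1}),F_{hom}(\cB_{\cF_0})\circ F_{hom}(\cB_{\cF_1}))$ with higher components $\eta^k=0$ for $k\geq 1$, in direct analogy with the unit and counit constructions of Proposition \ref{p:TensorHom} and the natural transformations produced in Definition/Lemma \ref{dl:homDGfunMor} and Remark \ref{r:InverseBarContraction}. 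The closedness relation $\mu^{1}_{Fun}(\eta)=0$ reduces, after unravelling, to the $A_\infty$-bimodule structure equations for $\cB_{\cF_0}$, $\cB_{\cF_1}$ and $\cM$, which are automatic.

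The main obstacle I anticipate is bookkeeping Koszul signs: the conventions in this section follow \cite{LF2}, whereas Yoneda modules and the mapping-cone operations elsewhere in the paper use the Seidel sign convention, and the DG $Fun$-category introduces yet another family of signs (as in \cite[Equation (1.8)]{Seidelbook}). The cleanest way around this, which I would use if the direct sign check turns out to be ugly, is to bypass explicit formulas entirely: the derived tensor--hom adjunction identifies $F_{hom}(\cB_\cF)$ as a right adjoint to $F_{\otimes}(\cB_\cF)$, right adjoints reverse composition, and right adjoints in the DG category of $A_\infty$-functors are unique up to natural equivalence, so combining this with Lemma \ref{l:CompoGraphBimod} yields the desired equivalence without ever writing down the higher operations.
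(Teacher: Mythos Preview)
Your approach is essentially the same as the paper's: both first replace $\cB_{\cF_1\circ\cF_0}$ by $\cB_{\cF_0}\otimes_{\cC_1}^{\mathbb{L}}\cB_{\cF_1}$ via Lemma~\ref{l:CompoGraphBimod}, then produce an explicit ``currying'' natural transformation $\eta$ (with $\eta^k=0$ for $k\ge 1$) implementing the iterated tensor--hom adjunction, and check it is an equivalence. The paper writes out the formula for $\eta^0_\cM$ directly rather than invoking Proposition~\ref{p:TensorHom}, and observes that $\eta^0_\cM$ is actually a \emph{bijection} on underlying chain complexes, which immediately gives the quasi-isomorphism without any further argument; your alternative route via uniqueness of right adjoints would also work but is not what the paper does.
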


\begin{proof}
 One can regard it as a generalization of Proposition \ref{p:TensorHom}.
 Let $G_0:=F_{hom}(\cB_{\cF_0} \otimes_{\cC_1}^{\mathbb{L}} \cB_{\cF_1}) $ and $G_1:=F_{hom}(\cB_{\cF_0}) \circ F_{hom}(\cB_{\cF_1})$.
 An explicit natural equivalence is given as follows
 \begin{align*}
  &\eta \in hom_{Fun(\text{mod-}\cC_2,\text{mod-}\cC_0)}(G_0,G_1) \\
  & \eta^0_\cM \in hom_{\text{mod-}\cC_0}(G_0(\cM),G_1(\cM)) \\
  & (\eta^0_\cM)^{1|0}(\phi):= \widetilde{\phi}  \in G_1(\cM)\\
  &(\widetilde{\phi})^{1|s}(x,b_s,\dots,b_1):=\widetilde{\phi}_{[x\otimes b_s \dots \otimes b_1]} \\
  &(\widetilde{\phi}_{[x\otimes b_s \dots \otimes b_1]})^{1|t} (y,c_t,\dots,c_1):=(-1)^\dagger \phi^{1|t}(x\otimes b_s \dots \otimes b_1 \otimes y,c_t,\dots,c_1) \\
  & (\eta^0_\cM)^{1|r}(\phi,a_r,\dots,a_1):=0  \text{ if } r >0\\
  & \eta^k:=0 \text{ if } k >0
 \end{align*}
 where $\dagger=(|x|+\sum_{j=1}^s \|b_j\|)(|y|+\sum_{j=1}^t \|c_j\|)$, 
 $\cM \in Ob(\text{mod-}\cC_2)$, $\phi \in G_0(\cM)$, $a_i \in Mor(\cC_0)$, $b_i \in Mor(\cC_1)$, $c_i \in Mor(\cC_2)$, $x \in \cB_0$ and $y \in \cB_1$.
It is routine to check that $\eta$ is a natural transformation (i.e. $(\mu^1_{Fun(\text{mod-}\cC_2,\text{mod-}\cC_0)}(\eta))^0=0$
and $(\mu^1_{Fun(\text{mod-}\cC_2,\text{mod-}\cC_0)}(\eta))^1(\psi)=0$ for $\psi \in hom_{\text{mod-}\cC_2}(\cM_0,\cM_1)$).

To see that $\eta^0_\cM$ is a quasi-isomorphism for all $\cM$, it suffices to observe that $\eta^0_\cM$ is bijective.
\end{proof}

\begin{lemma}\label{l:uniquenessAdjoint}
 If $\cF:\cC \to \cD$ is a quasi-equivalence, then $F_{\otimes}(\cB_\cF) \circ F_{hom}(\cB_\cF) \simeq Id_{\text{mod-}\cD}$
 and $F_{hom}(\cB_\cF) \circ F_{\otimes}(\cB_\cF) \simeq Id_{\text{mod-}\cC}$.
\end{lemma}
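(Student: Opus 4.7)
The plan is to reduce the statement to a combination of Lemma \ref{l:CompoGraphBimod}, Lemma \ref{l:BarContraction} and Proposition \ref{p:TensorHom}. Let $\cG:\cD \to \cC$ be a quasi-inverse of $\cF$, so that there are natural equivalences $\cG \circ \cF \simeq Id_\cC$ and $\cF \circ \cG \simeq Id_\cD$ in the DG categories of $A_\infty$ functors. Passing to graph bimodules via the pull-back construction $(\cdot \times Id)^*\Delta$, these natural equivalences yield quasi-isomorphisms $\cB_{\cG \circ \cF} \simeq \cB_{Id_\cC} = \Delta_\cC$ in $\cC\text{-mod-}\cC$ and $\cB_{\cF \circ \cG} \simeq \Delta_\cD$ in $\cD\text{-mod-}\cD$.

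The first step is to show that $F_{\otimes}(\cB_\cF)$ is itself a quasi-equivalence of DG categories, with quasi-inverse $F_\otimes(\cB_\cG)$. Indeed, by Lemma \ref{l:CompoGraphBimod} we have natural equivalences
\begin{align*}
F_\otimes(\cB_\cG) \circ F_\otimes(\cB_\cF) \simeq F_\otimes(\cB_\cF \otimes^{\mathbb{L}}_\cD \cB_\cG) \simeq F_\otimes(\cB_{\cG \circ \cF}) \simeq F_\otimes(\Delta_\cC),
\end{align*}
and by Lemma \ref{l:BarContraction} (together with Remark \ref{r:InverseBarContraction}) $F_\otimes(\Delta_\cC) \simeq Id_{\text{mod-}\cC}$. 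The analogous chain of equivalences shows $F_\otimes(\cB_\cF) \circ F_\otimes(\cB_\cG) \simeq Id_{\text{mod-}\cD}$.

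The second step is to identify $F_{hom}(\cB_\cF)$ with $F_\otimes(\cB_\cG)$ up to natural equivalence. By Proposition \ref{p:TensorHom}, for all $\cM \in \text{mod-}\cC$ and $\cN \in \text{mod-}\cD$ there is a natural quasi-isomorphism
\begin{align*}
hom_{\text{mod-}\cD}(F_\otimes(\cB_\cF)(\cM),\cN) \simeq hom_{\text{mod-}\cC}(\cM, F_{hom}(\cB_\cF)(\cN)).
\end{align*}
On the other hand, applying the quasi-equivalence $F_\otimes(\cB_\cG)$ and its quasi-inverse $F_\otimes(\cB_\cF)$ from Step 1 gives
\begin{align*}
hom_{\text{mod-}\cD}(F_\otimes(\cB_\cF)(\cM),\cN) \simeq hom_{\text{mod-}\cC}(F_\otimes(\cB_\cG)F_\otimes(\cB_\cF)(\cM), F_\otimes(\cB_\cG)(\cN)) \simeq hom_{\text{mod-}\cC}(\cM, F_\otimes(\cB_\cG)(\cN)).
\end{align*}
By the $A_\infty$ Yoneda lemma, these identifications being natural in $\cM$ force $F_{hom}(\cB_\cF)(\cN) \simeq F_\otimes(\cB_\cG)(\cN)$, and one checks that this assembles into a natural equivalence $F_{hom}(\cB_\cF) \simeq F_\otimes(\cB_\cG)$. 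Combined with Step 1, this yields the two claimed quasi-isomorphisms.

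The main obstacle is the second step, where one must upgrade the abstract uniqueness of adjoints to an honest natural equivalence of $A_\infty$ functors (as opposed to a statement on the level of homotopy categories). A cleaner alternative that avoids invoking $A_\infty$ Yoneda is to verify directly on generators: one shows that the unit $u_\cM : \cM \to F_{hom}(\cB_\cF)(F_\otimes(\cB_\cF)(\cM))$ and counit $c_\cN : F_\otimes(\cB_\cF)(F_{hom}(\cB_\cF)(\cN)) \to \cN$ from Proposition \ref{p:TensorHom} are quasi-isomorphisms when $\cM = \cY_\cC(X)$ and $\cN = \cY_\cD(Y)$, using Corollary \ref{c:perfect} and the quasi-fully-faithfulness of $\cF$. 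Since every perfect module is built from Yoneda modules by cones and retracts and since both functors preserve such constructions, this extends to all perfect modules and then to all modules by the usual colimit argument, giving the desired natural equivalences without passing through $F_\otimes(\cB_\cG)$.
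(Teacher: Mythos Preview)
Your proof is correct and your Step~1 is identical to the paper's. For Step~2, however, the paper's argument is more direct and entirely sidesteps the obstacle you flag. Rather than comparing $F_{hom}(\cB_\cF)$ with $F_\otimes(\cB_\cG)$ via representability and then invoking uniqueness of adjoints, the paper uses the \emph{existing} adjunction counit $c_{\cB_\cF}$ from Proposition~\ref{p:TensorHom} and proves it is a natural equivalence by a 2-out-of-3 argument: the defining property of the counit says that the composite
\[
hom(V,F_{hom}(\cB_\cF)(X)) \xrightarrow{F_{\otimes}(\cB_\cF)} hom(F_{\otimes}(\cB_\cF)(V),F_{\otimes}(\cB_\cF)F_{hom}(\cB_\cF)(X)) \xrightarrow{c_{\cB_\cF}(X)\circ -} hom(F_{\otimes}(\cB_\cF)(V),X)
\]
is a quasi-isomorphism; since $F_\otimes(\cB_\cF)$ is a quasi-equivalence by Step~1, the first arrow is a quasi-isomorphism, hence so is postcomposition with $c_{\cB_\cF}(X)$ for all $V$, forcing $c_{\cB_\cF}(X)$ itself to be a quasi-isomorphism for every $X$. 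This immediately gives $F_\otimes(\cB_\cF)\circ F_{hom}(\cB_\cF)\simeq Id$, and as a byproduct $F_{hom}(\cB_\cF)\simeq F_\otimes(\cB_\cG)$. The advantage is that no upgrade of abstract adjoint-uniqueness to an $A_\infty$ natural equivalence is needed: the natural transformation is already in hand, and one only checks it is objectwise a quasi-isomorphism. Your alternative route via generators would also work, but is more laborious than necessary.
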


\begin{proof}
 Let $\cG$ be an quasi-inverse of $\cF$.
 By Lemma \ref{l:CompoGraphBimod}, we have  $F_{\otimes}(\cB_\cF) \circ F_{\otimes}(\cB_\cG) \simeq Id$ and 
 $F_{\otimes}(\cB_\cG) \circ F_{\otimes}(\cB_\cF) \simeq Id$.
 From Proposition \ref{p:TensorHom}, we have adjunction counit $c_{\cB_{\cF}}$ for the pair $(F_{\otimes}(\cB_\cF),F_{hom}(\cB_\cF))$.
 It means that the composition
 \begin{align*}
  hom(V,F_{hom}(\cB_\cF)(X)) \xrightarrow{F_{\otimes}(\cB_\cF)}& hom(F_{\otimes}(\cB_\cF)(V),F_{\otimes}(\cB_\cF)(F_{hom}(\cB_\cF)(X))) \\
  \xrightarrow{c_{\cB_{\cF}}(X) \circ -}& hom(F_{\otimes}(\cB_\cF)(V),X)
 \end{align*}
 is a quasi-isomorphism for all $V \in Ob(\text{mod-}\cC)$ and $X \in Ob(\text{mod-}\cD)$.
 Since $F_{\otimes}(\cB_\cF)$ is a quasi-equivalence, the first map in the composition is a quasi-isomorphism.
 Therefore, composition with $c_{\cB_{\cF}}(X)$ is a quasi-isomorphism for all $V$ and $X$.
 It means that $c_{\cB_{\cF}}$ is a natural equivalence and hence $F_{\otimes}(\cB_\cF) \circ F_{hom}(\cB_\cF) \simeq Id_{\text{mod-}\cD}$.
 In other words, $F_{hom}(\cB_\cF) \simeq F_{\otimes}(\cB_\cG)$.
\end{proof}

\begin{lemma}\label{l:conjugationInvariant}
 Let $\cG: \cD \to \cD'$ be a quasi-equivalence.
 Let $\cF:\cC \to \cD$ and $\cF':=\cG \circ \cF: \cC \to \cD'$ be two $A_{\infty}$ functors.
 Let $c_{sr}$ and $c_{sr}'$ be the adjunction counits for the pairs $(F_{\otimes}(\cB_{\cF}),F_{hom}(\cB_{\cF}))$
 and $(F_{\otimes}(\cB_{\cF'}),F_{hom}(\cB_{\cF'}))$ in Proposition \ref{p:TensorHom}, respectively.
 Then the adjunction counits are compatible with the quasi-equivalence $F_{\otimes}(\cB_{\cG})$ in the sense that
 $F_{\otimes}(\cB_{\cG}) \circ Cone(c_{sr}) \simeq Cone(c_{sr}') \circ F_{\otimes}(\cB_{\cG}) $.
\end{lemma}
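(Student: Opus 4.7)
\textbf{Proof plan for Lemma \ref{l:conjugationInvariant}.} The strategy is to rewrite both sides of the desired equivalence in a common form using the composition results for tensor/hom graph bimodules, and then to invoke the uniqueness (up to natural equivalence) of adjunction counits. Let $H := F_{\otimes}(\cB_\cG)$; by hypothesis $\cG$ is a quasi-equivalence, so $H$ is a quasi-equivalence of $A_\infty$ categories, and by Lemma \ref{l:uniquenessAdjoint} the functor $H^{-1} := F_{hom}(\cB_\cG)$ is a quasi-inverse for $H$ in the sense that $H \circ H^{-1} \simeq Id_{\text{mod-}\cD'}$ and $H^{-1} \circ H \simeq Id_{\text{mod-}\cD}$.

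The first step is to combine Lemma \ref{l:CompoGraphBimod} (applied to the composition $\cF' = \cG \circ \cF$) and Lemma \ref{l:GeneralTenHom} to obtain natural equivalences
\begin{align*}
F_{\otimes}(\cB_{\cF'}) &\simeq H \circ F_{\otimes}(\cB_{\cF}),\\
F_{hom}(\cB_{\cF'}) &\simeq F_{hom}(\cB_{\cF}) \circ H^{-1}.
\end{align*}
Putting these together yields a natural equivalence
\[
F_{\otimes}(\cB_{\cF'}) \circ F_{hom}(\cB_{\cF'}) \;\simeq\; H \circ F_{\otimes}(\cB_{\cF}) \circ F_{hom}(\cB_{\cF}) \circ H^{-1}.
\]
Whiskering the counit $c_{sr}: F_{\otimes}(\cB_{\cF}) \circ F_{hom}(\cB_{\cF}) \to Id_{\text{mod-}\cD}$ by $H$ on the left and $H^{-1}$ on the right, and then composing with the equivalence $H \circ H^{-1} \simeq Id_{\text{mod-}\cD'}$, produces a natural transformation
\[
\tilde c \;:\; F_{\otimes}(\cB_{\cF'}) \circ F_{hom}(\cB_{\cF'}) \;\longrightarrow\; Id_{\text{mod-}\cD'}.
\]

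The second step is to verify $\tilde c \simeq c_{sr}'$ as natural transformations. This reduces to the general fact that an adjunction counit is characterized (up to natural equivalence) by the requirement that the induced map on morphism complexes
\[
hom_{\text{mod-}\cC}(V, F_{hom}(\cB_{\cF'})(X)) \to hom_{\text{mod-}\cD'}(F_{\otimes}(\cB_{\cF'})(V), X)
\]
is a quasi-isomorphism for all $V, X$. Because $c_{sr}$ itself has this property (by Proposition \ref{p:TensorHom} applied to $\cB_\cF$) and because $H, H^{-1}$ are quasi-equivalences that induce quasi-isomorphisms on morphism complexes, the whiskered transformation $\tilde c$ also satisfies the defining quasi-isomorphism property; by uniqueness of the counit (standard, using that $[\cR]$ is determined up to equivalence by being right adjoint to $[\cS]$ in the derived setting, see Remark \ref{r:K-projective} and the discussion before Theorem \ref{t:SphericalFun}) we conclude $\tilde c \simeq c_{sr}'$.

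For the final step, I use that $H = F_{\otimes}(\cB_\cG)$ is a DG functor, hence exact, so it commutes with the formation of mapping cones. Applied objectwise to $X \in \text{mod-}\cD$,
\[
H \circ Cone(c_{sr})(X) \;=\; H\bigl(Cone\, c_{sr}(X)\bigr) \;\simeq\; Cone\bigl(H(c_{sr}(X))\bigr) \;=\; Cone\bigl((H \cdot c_{sr})(X)\bigr),
\]
where $H \cdot c_{sr}$ denotes the whiskered transformation. Combining with $\tilde c \simeq c_{sr}'$ and unwinding the whiskering by $H^{-1}$ on the right (using $H^{-1} \circ H \simeq Id$ to absorb it when evaluating at an object in the image of $H$), one obtains
\[
H \circ Cone(c_{sr})(X) \;\simeq\; Cone\bigl(c_{sr}'(H(X))\bigr) \;=\; Cone(c_{sr}') \circ H(X),
\]
naturally in $X$, giving the asserted natural equivalence $F_{\otimes}(\cB_\cG) \circ Cone(c_{sr}) \simeq Cone(c_{sr}') \circ F_{\otimes}(\cB_\cG)$. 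The most delicate ingredient is step two, the identification $\tilde c \simeq c_{sr}'$, because adjunction units and counits at the $A_\infty$ level are only defined up to homotopy and keeping track of the coherences through all the bimodule equivalences of Lemmas \ref{l:CompoGraphBimod} and \ref{l:GeneralTenHom} requires care; however, everything is ultimately controlled by the universal property after passing to $H^0$, so the main obstacle is organizational rather than conceptual.
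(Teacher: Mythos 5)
Your proof traces the same overall route as the paper: both proofs begin by decomposing $F_{\otimes}(\cB_{\cF'}) \simeq F_{\otimes}(\cB_\cG)\circ F_{\otimes}(\cB_\cF)$ and $F_{hom}(\cB_{\cF'}) \simeq F_{hom}(\cB_\cF)\circ F_{hom}(\cB_\cG)$ via Lemmas \ref{l:CompoGraphBimod} and \ref{l:GeneralTenHom}, conjugate $Cone(c_{sr})$ by $F_{\otimes}(\cB_\cG)$ and its quasi-inverse $F_{hom}(\cB_\cG)$, and invoke Lemma \ref{l:uniquenessAdjoint} to absorb $F_{\otimes}(\cB_\cG)\circ F_{hom}(\cB_\cG)$. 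The one place where you genuinely diverge from the paper is the identification of the whiskered transformation $\tilde c$ with $c_{sr}'$. The paper asserts this on the nose by a direct, formula-level check that the composite evaluation ``$\cB_\cF$-evaluation followed by $\cB_\cG$-evaluation'' coincides with the $\cB_{\cF'}$-evaluation from Proposition \ref{p:TensorHom}, and leaves that computation to the reader. You instead sidestep the computation by appealing to the universal property of adjunction counits: $\tilde c$ exhibits a right adjoint of $F_{\otimes}(\cB_{\cF'})$, and right adjoints together with their counits are unique up to canonical natural equivalence, so $Cone(\tilde c)\simeq Cone(c_{sr}')$. This is a sound alternative, since the mapping cone of a natural transformation is determined up to equivalence by its class in the triangulated category $H^0(Fun(\cD^{perf},\cD^{perf}))$ and the relevant functors take values in perfect (hence $K$-projective/$K$-injective) modules, so one may work at the $H^0$ level as you do. The paper's choice is a concrete verification; yours trades that for an appeal to uniqueness of adjoints --- the only caveat being that $\tilde c$ and $c_{sr}'$ do not have literally equal sources, so what one actually gets is a natural equivalence of sources through which the two counits are compared, which is exactly what is needed for the cone identification but should be stated a little more carefully than ``$\tilde c\simeq c_{sr}'$''.
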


\begin{proof}
 
By Lemma \ref{l:CompoGraphBimod}, we have $\cB_{\cF'} \simeq \cB_{\cF} \otimes_{\cD}^{\mathbb{L}} \cB_{\cG}$ 
and hence $F_{\otimes}(\cB_{\cF'}) \simeq  F_{\otimes}(\cB_\cG) \circ F_{\otimes}(\cB_{\cF})$.
By Lemma \ref{l:GeneralTenHom}, we also have
 $F_{hom}(\cB_{\cF'}) \simeq F_{hom}(\cB_{\cF}) \circ F_{hom}(\cB_{\cG})$
We consider
\begin{align}
 &F_{\otimes}(\cB_\cG) \circ  Cone(c_{sr}) \circ F_{hom}(\cB_{\cG})\\
 \simeq & Cone( F_{\otimes}(\cB_\cG) \circ F_{\otimes}(\cB_{\cF}) \circ F_{hom}(\cB_{\cF}) \circ F_{hom}(\cB_{\cG}) \to F_{\otimes}(\cB_\cG) \circ F_{hom}(\cB_{\cG})) \label{eq:conjugation1}\\
 \simeq& Cone(F_{\otimes}(\cB_{\cF'}) \circ F_{hom}(\cB_{\cF'}) \xrightarrow{c_{sr}'} Id_{(\cD')^{perf}} ) \label{eq:conjugation2}\\
 \simeq& Cone(c_{sr}') 
\end{align}
From \eqref{eq:conjugation1} to \eqref{eq:conjugation2}, we use that the adjunction counit of the pair $(F_{\otimes}(\cB_{\cG}),F_{hom}(\cB_{\cG}))$ 
is a natural equivalence, which is proved in
Lemma \ref{l:uniquenessAdjoint}, so that we can identify $F_{\otimes}(\cB_\cG) \circ F_{hom}(\cB_{\cG})$ with $Id_{(\cD')^{perf}}$.
One can directly compute and verify that the morphism in \eqref{eq:conjugation2} is indeed $c_{sr}'$, which boils down to checking evaluation of the pair 
$(F_{\otimes}(\cB_{\cF}),F_{hom}(\cB_{\cF}))$ followed by evaluation of the pair $(F_{\otimes}(\cB_{\cG}),F_{hom}(\cB_{\cG}))$ coincides with the evaluation
of the pair $(F_{\otimes}(\cB_{\cF'}),F_{hom}(\cB_{\cF'}))$. 
We leave this computation to the readers.

Finally, the result follows from Lemma \ref{l:uniquenessAdjoint} where we showed that $F_{hom}(\cB_{\cG})$ is an inverse of $F_{\otimes}(\cB_{\cG})$.
\end{proof}

\subsection{$R$-spherical twist}\label{ss:rSphericalTwist}

We have finished the preparation on $A_{\infty}$ bimodules.
In this section, we want to introduce $R$-spherical objects and its associated auto-equivalences.
Let $R$ be a $\K$-algebra viewed as an $A_{\infty}$ algebra concentrated at degree $0$ with $\mu^k_R=0$ for $k \neq 2$
and $\mu^2_R(g_2,g_1):=(-1)^{|g_1|}g_2g_1=g_2g_1$.
We are particularly interested in the case where $R$ is a group algebra $\K[\Gamma]$ but we do not impose this assumption in this section.

\begin{defn}\label{d:R-twist}
An $R$-spherical object $\eE$ is an object of a c-unital $A_{\infty}$ category $\mathcal{A}$ such that
\begin{itemize}
\item there is a graded $\K$-algebra isomorphism $\overline{\Theta}:H^*(S^n) \otimes_{\K} R \simeq Hom_{\cA}^*(\eE,\eE)$\footnote{we do not assume that $\eE$ is formal, cf. Remark \ref{r:Nonformal}}
\item the composition map $f:Hom_{\cA}^k(X,\eE) \to Hom_{\text{mod-}R}(Hom^{n-k}_{\cA}(\eE,X),Hom^n(\eE,\eE))$ is an isomorphism for all $X \in Ob(\cA)$ and for all $k$
\end{itemize} 
\end{defn}
Let $\Theta:R \to Hom_{\cA}^0(\eE,\eE)$ be the restriction of $\overline{\Theta}$ to the degree $0$ part.
Then, for $x \in Hom_{\cA}^k(X,\eE)$,
$y \in Hom^{n-k}_{\cA}(\eE,X)$ and $g \in R$, 
we define $f(x)(y):=\mu^2(x,y)$ and $(f(x)g)y:=\mu^2(x,\mu^2(y,\Theta(g)))=\mu^2(\mu^2(x,y),\Theta(g))=(f(x)(y))g$.
In particular, $f$ is well-defined.

\begin{lemma}\label{l:initialFunctor}
 There is a cohomologically faithful $A_{\infty}$ functor $\cF_\eE:R \to \cA$ with image 
 $\eE$, where $R$ is viewed as an $A_{\infty}$ category with one object.
 Moreover, this functor is unique up to homotopy and $\K$-algebra automorphism of $R$. 
\end{lemma}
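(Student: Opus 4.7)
The plan is to identify an $A_\infty$ functor $\cF_\eE : R \to \cA$ with image $\eE$ with an $A_\infty$ algebra morphism $R \to hom_\cA(\eE,\eE)$ and to construct such a morphism by standard obstruction theory. Since $R$ is concentrated in degree zero with $\mu^1_R=0$, $\mu^2_R$ the algebra multiplication and $\mu^{\geq 3}_R=0$, each component $\cF_\eE^d$ is a $\K$-multilinear map $R^{\otimes d}\to hom^{1-d}_\cA(\eE,\eE)$, and the $A_\infty$ functor equation at order $d$ reduces to solving
\begin{equation*}
\mu^1_\cA(\cF^d(g_d,\dots,g_1)) = O_d(g_d,\dots,g_1),
\end{equation*}
where $O_d$ is a definite expression (the ``obstruction'') built from $\cF^{<d}$, the products $\mu^{\geq 2}_\cA$, and the multiplication of $R$. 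First I would define $\cF^1 : R\to hom^0_\cA(\eE,\eE)$ by choosing, for each element of a $\K$-basis of $R$, a degree-zero cocycle representative of $\Theta(g)$, arranged so that $\cF^1(1_R)$ represents the cohomological unit, and extending $\K$-linearly. Then $\cF^d$ for $d\geq 2$ would be built inductively: once $\cF^{<d}$ satisfy the functor equations at lower orders, the $A_\infty$ relations already proved imply that $O_d$ is $\mu^1_\cA$-closed, and a lift $\cF^d$ exists precisely when $[O_d]$ vanishes in $H^{2-d}(hom_\cA(\eE,\eE))$ component-wise.

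The crucial input is Lemma \ref{l:universal}: $H^{\ast}(hom_\cA(\eE,\eE))\cong H^{\ast}(S^n)\otimes R$ is supported in degrees $0$ and $n$ only. Since $d\geq 2$ and $n\geq 2$, we have $2-d\leq 0$ and $2-d\neq n$, so $H^{2-d}(hom_\cA(\eE,\eE))=0$ except when $d=2$. At $d=2$ the class $[O_2(g_2,g_1)]$ equals $\Theta(g_2)\Theta(g_1)-\Theta(g_2g_1)$, which vanishes because $\Theta$ is a $\K$-algebra map. Hence the induction goes through at every stage and produces $\cF_\eE$. Cohomological faithfulness is immediate: $\cF_\eE$ induces $\Theta$ on $H^0$, which is injective as a direct summand of the isomorphism $\overline{\Theta}$.

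For uniqueness, given two such functors $\cF_\eE,\cF_\eE'$, their cohomological first components $\Theta,\Theta':R\to H^0(hom_\cA(\eE,\eE))$ differ by a $\K$-algebra automorphism $\phi$ of $R$, reflecting the non-uniqueness of the algebra isomorphism $\overline{\Theta}$. After precomposing $\cF_\eE'$ with the strict $A_\infty$ functor induced by $\phi$, we may assume $\Theta=\Theta'$, whence $\cF^1-\cF'^{1}$ is already a $\mu^1_\cA$-coboundary. An $A_\infty$ (pre-)natural transformation $T=\{T^d\}$ with components in degree $-d$ linking $\cF_\eE$ to $\cF_\eE'$ is then built by the same obstruction-theoretic induction; the obstructions now lie in $H^{1-d}(hom_\cA(\eE,\eE))$ for $d\geq 1$, which vanish for the identical degree reason. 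The main point is thus not really an obstacle but the observation that the thinness of $HF(\eE,\eE)$---concentrated in only two degrees---rigidifies the $A_\infty$ structure enough to force the existence and homotopy-uniqueness of all higher components; the only nontrivial content is the well-known vanishing of the associator for $\cF^1$, which is built into the hypothesis that $\Theta$ is a $\K$-algebra morphism.
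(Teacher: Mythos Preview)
Your argument is correct but takes a different route from the paper. The paper's proof is much shorter: it replaces $\cA$ by a minimal model (every c-unital $A_\infty$ category is quasi-equivalent to one), after which $hom_\cA(\eE,\eE)$ is concentrated in degrees $0$ and $n$, so $\cF_\eE^d$ lands in $hom^{1-d}_\cA(\eE,\eE)=0$ for all $d\geq 2$ by degree reason. An $A_\infty$ functor $R\to\cA$ with image $\eE$ thus reduces to a $\K$-algebra map $\cF^1:R\to hom^0_\cA(\eE,\eE)\cong R$, and both existence and uniqueness are immediate. Your obstruction-theoretic construction works directly in a non-minimal $\cA$; this is more hands-on and avoids invoking minimal models, at the cost of length. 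Both approaches exploit the same structural fact---that $H^*(hom_\cA(\eE,\eE))$ lives only in degrees $0$ and $n$---to kill the higher data.

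Two minor points. First, the input you cite as Lemma~\ref{l:universal} is really the hypothesis of Definition~\ref{d:R-twist}; Lemma~\ref{l:universal} concerns the specific Lagrangian $\eP$, while the present lemma is stated for an abstract $R$-spherical object. Second, in your uniqueness argument the obstruction at $d=1$ lies in $H^{0}\cong R$, which does \emph{not} vanish for degree reasons; it vanishes because you have already arranged $\Theta=\Theta'$, so that the class $[\mu^2_\cA(\cF'^1(g),T^0)]-[\mu^2_\cA(T^0,\cF^1(g))]=\Theta'(g)-\Theta(g)=0$. For $d\geq 2$ your degree argument is fine.
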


\begin{proof}
 Without loss of generality, we can assume that $\cA$ is minimal, then $hom_{\cA}(\eE,\eE)$ has non-negative degree and $\cF_\eE^k=0$ for $k \ge 2$ by degree reason.
 As a result, $A_{\infty}$ functors $\cF_\eE:R \to \cA$ with image $\eE$ one-to-one correspond to $\K$-algebra homomorphisms $\cF_{\eE}^1:R \to hom^0_{\cA}(\eE,\eE)$.
 Since $hom^0_{\cA}(\eE,\eE)=R$ as a $\K$-algebra, the result follows.
 
\end{proof}

Lemma \ref{l:initialFunctor} induces a graph bimodule $\cB_{\cF_\eE}$.
As a graph bimodule, the DG functor $F_{\otimes}(\cB_{\cF_\eE}): \text{mod-}R \to \text{mod-}\cA$ sends perfect modules to perfect modules (see Corollary \ref{c:perfect}).
Therefore, we can restrict the functor to $F_{\otimes}(\cB_{\cF_\eE}):R^{perf} \to \cA^{perf}$.
However, it is not necessarily true that $F_{hom}(\cB_{\cF_\eE})$ sends perfect modules to perfect modules so we want to make the following assumption.
\begin{assumption}\label{a:perfect}
 For any $X \in Ob(\cA^{perf})$, we assume $hom_{\text{mod-}\cA}(\cB_{\cF_\eE},X)$
 is an object in $R^{perf}$.
\end{assumption}
Assumption \ref{a:perfect} is automatically satisfied if $R$ is smooth (ie. the diagonal bimodule $R$ is a perfect $R-R$ bimodule), 
for example, when $R=\K[\Gamma]$ and $\cchar(\K)$ does not divide $|\Gamma|$.
In our geometric context (eg. Dehn twist), Assumption \ref{a:perfect} is always satisfied (see Lemma \ref{l:homIdnetify}).

We assume Assumption \ref{a:perfect} holds throughout this section.
In this case, $F_{hom}(\cB_{\cF_\eE})$ is a right adjoint of $F_{\otimes}(\cB_{\cF_\eE}):R^{perf} \to \cA^{perf}$ by Proposition \ref{p:TensorHom}.

\begin{prop}\label{p:R-twist}
 Let $\eE$ be an $R$-spherical object and $\cB_{\cF_\eE}$ be the $R-\cA$ bimodule as above.
 Suppose Assumption \ref{a:perfect} is satisfied.
 Then $\cS:=F_{\otimes}(\cB_{\cF_\eE}):R^{perf} \to \cA^{perf}$ is a spherical functor with right adjoint $\cR:=F_{hom}(\cB_{\cF_\eE})$,
 left adjoint $\cL:=F_{hom}(\cB_{\cF_\eE})[n]$ and the twist auto-equivalence is given by
 $\cT_\cS:=Cone(\cS \circ \cR \xrightarrow{c_{sr}} Id_{\cA^{perf}} )$.
\end{prop}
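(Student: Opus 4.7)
The plan is to verify the hypotheses of Theorem \ref{t:SphericalFun}: (i) $\cR$ is a right adjoint of $\cS$, (ii) $\cL$ is a left adjoint of $\cS$, and (iii) the cotwist $\cF = Cone(Id_{R^{perf}} \xrightarrow{u_{rs}} \cR \circ \cS)$ is an auto-equivalence with $\cR \simeq \cF \circ \cL$. Throughout write $\Delta := hom_\cA(\eE,\eE)$; by the first condition of Definition \ref{d:R-twist}, $\Delta$ is quasi-isomorphic as an $R$-bimodule (and as an $A_{\infty}$-algebra) to $H^*(S^n) \otimes_\K R \simeq R \oplus R[-n]$, with the algebra unit splitting off the degree-zero summand.

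Point (i) is the tensor-hom adjunction of Proposition \ref{p:TensorHom}, restricted to the perfect subcategories using Assumption \ref{a:perfect}. For (iii), combining Proposition \ref{p:TensorHom}, Lemma \ref{l:BarContraction}, and the Yoneda identification $hom_{\text{mod-}\cA}(\cB_{\cF_\eE},\cB_{\cF_\eE}) \simeq \Delta$ yields a natural quasi-isomorphism $(\cR\circ\cS)(V) \simeq V \otimes_R^\mathbb{L} \Delta$ for $V \in R^{perf}$, under which $u_{rs}(V)$ becomes the map $V \to V \otimes_R \Delta$, $v \mapsto v \otimes 1_\Delta$. Hence $\cF(V) \simeq V \otimes_R R[-n] \simeq V[-n]$ functorially, so $\cF \simeq [-n]$ is an auto-equivalence, and $\cF \circ \cL \simeq [-n] \circ [n] \circ \cR \simeq \cR$ tautologically. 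That this identification of $\cR\cS$ is functorial in $V$ follows from the fact that $\cS$ and $\cR$ are DG functors while $R^{perf}$ is split-generated by $R$ itself, so it suffices to match on $R$, which is done in the previous sentence.

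Point (ii) is the main content and encodes the Calabi--Yau hypothesis. The second condition of Definition \ref{d:R-twist}, after identifying $Hom^n(\eE,\eE) \simeq R$, produces a perfect pairing $Hom^k(X,\eE) \otimes_\K Hom^{n-k}(\eE,X) \to R$, natural in $X \in \cA^{perf}$, which cohomologically identifies $\cR[n]$ with a left adjoint of $\cS$. To upgrade this to an $A_\infty$ adjunction I would construct an explicit counit $c_{ls}\colon \cL \circ \cS \to Id_{R^{perf}}$ as follows: using the same reasoning as in point (iii) one gets $(\cL\cS)(V) \simeq V \otimes_R^\mathbb{L} \Delta[n]$, and the candidate counit is postcomposition with the ``trace'' map $\Delta[n] \to R$ extracting the $H^n(S^n) \otimes R$ summand. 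Verifying that the induced map $hom(V,\cR(X)[n]) \to hom(\cS(V),X)$ is a quasi-isomorphism reduces, by the pretriangulated structure of $R^{perf}$ and $\cA^{perf}$ together with the five-lemma, to the generating case $V=R$, $X=\eE$, where it is precisely the pairing isomorphism above. With (i)-(iii) established, Theorem \ref{t:SphericalFun} delivers the proposition.

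The main obstacle is the upgrade in point (ii) from a cohomological duality to an honest $A_\infty$-natural counit. Either of two standard routes should work: a minimal-model / obstruction-theory argument, exploiting the concentration of $H^*(hom_\cA(\eE,\eE))$ in degrees $[0,n]$ to kill the higher obstructions to extending the trace to an $A_\infty$ natural transformation; or an abstract argument that any $A_\infty$ functor between pretriangulated $A_\infty$ categories which induces an adjunction on cohomology is automatically an $A_\infty$ adjoint. In either case, the essential input is exactly the two conditions in Definition \ref{d:R-twist}, used in the form displayed above.
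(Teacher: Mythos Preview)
Your proposal follows the same route as the paper: tensor--hom for the right adjoint, identifying $\cR\cS$ with $-\otimes_R^{\mathbb{L}}\Delta$ to compute the cotwist as $[-n]$, a projection/trace $\Delta[n]\to R$ to build the left-adjoint counit, and then Theorem~\ref{t:SphericalFun}. What you sketch in (iii) is exactly Lemmas~\ref{l:RRstrEE}, \ref{l:tensorOUT}, \ref{l:simplifyCotwist}, \ref{l:Cotwist}; your (ii) is Lemma~\ref{l:LeftAdjoint} and Corollary~\ref{c:LeftAdjoint}; and the ``minimal-model'' option you name for the obstacle is precisely what the paper does (it passes to a minimal strictly unital model of $\cA$ at the outset and then writes down the bimodule map $\bar c$ explicitly).

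Two small points to correct. First, in your reduction for (ii) you cannot reduce to $X=\eE$: the category $\cA^{perf}$ is split-generated by the Yoneda image of $\cA$, not by $\eE$ alone, so you must verify the adjunction isomorphism for all $X\in Ob(\cA)$---which is exactly how the second condition in Definition~\ref{d:R-twist} is phrased and used (see Lemma~\ref{l:LeftAdjoint}). Second, in (iii) you should note explicitly, as the paper does in Lemma~\ref{l:Cotwist}, that $R[-n]$ supports a unique minimal $R$--$R$ bimodule structure by degree reasons; this is what guarantees the cotwist is the honest shift $[-n]$ rather than some nontrivial deformation of it.
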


Note that every c-unital $A_{\infty}$ category is a quasi-equivalent to a minimal and strictly unital $A_{\infty}$ category (see e.g. \cite[Lemma 2.1]{Seidelbook}).
By Lemma \ref{l:CompoGraphBimod}, \ref{l:GeneralTenHom} and \ref{l:conjugationInvariant}, we can assume that $\cA$ is minimal and strictly unital when we prove Proposition \ref{p:R-twist}.
The proof of Proposition \ref{p:R-twist} is a direct application of Theorem \ref{t:SphericalFun}
and we are going to verify the assumptions of Theorem \ref{t:SphericalFun} in the following subsections.

\subsubsection{Mapping cone of the adjunction unit}

Let $\cA$ be minimal and strictly unital.
Then $(hom_\cA(\eE,\eE),\mu^2_\cA)$ is isomorphic to $H^*(S^n) \otimes_\K R$ as a graded $R$-algebra (but $\mu^k_\cA$ on $hom_\cA(\eE,\eE)$ could be non-zero for some $k>2$)
so we have an isomorphism $\Theta:R \simeq hom^0_\cA(\eE,\eE)$.
We also have a strict unit $e_\eE:=\Theta(1) \in hom^0_\cA(\eE,\eE)$.

Let $\cS:=-\otimes_R^{\mathbb{L}} \cB_{\cF_\eE}$ and $\cR:=hom_{\cA^{perf}}(\cB_{\cF_\eE},-)$.
In this subsection, we want to show that $Cone(Id_R \xrightarrow{u_{rs}} \cR \circ \cS) \simeq Id_R[-n]$.
The idea goes as follows:
First, we show that the $R\text{-}R$ bimodule structure on 
$Hom_{\cA^{perf}}(\cB_{\cF_\eE},\cB_{\cF_\eE}) \simeq R \oplus R[-n]$ is the canonical one
and show that  $\cR \circ \cS$ 
is natural equivalent to $F_{\otimes}(hom_{\cA^{perf}}(\cB_{\cF_\eE},\cB_{\cF_\eE}))$.
Then we show that
$Cone(Id_R \xrightarrow{u_{rs}} \cR \circ \cS)$ is natural equivalent to 
$F_{\otimes}(Cone(R \to hom_{\cA^{perf}}(\cB_{\cF_\eE},\cB_{\cF_\eE})) \simeq F_{\otimes}(R[-n])$
and conclude the proof.

\begin{lemma}\label{l:RRstrEE}
 $Hom_{\cA^{perf}}(\cB_{\cF_\eE},\cB_{\cF_\eE})=R \oplus R[-n]$ as an ordinary $R-R$ bimodule.
\end{lemma}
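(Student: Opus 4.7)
The plan is to reduce the computation to the cohomology algebra $Hom_\cA^*(\eE,\eE)$ via the Yoneda embedding, and then read off the $R-R$ bimodule structure from the definition of an $R$-spherical object together with the algebra map $\Theta: R \to hom_\cA^0(\eE,\eE)$. Throughout, I will use that $\cA$ is minimal and strictly unital, so that $\Theta$ is defined on the chain level and the unit $e_\eE = \Theta(1)$ is strict.

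First, I would observe that $\cB_{\cF_\eE}$ is precisely the pull-back of the Yoneda module $\cY_\cA(\eE)$ along $\cF_\eE$, so that the cohomologically full and faithful Yoneda embedding (Remark \ref{r:K-projective} and the proof of Corollary \ref{c:perfect}) produces a quasi-isomorphism
\begin{equation*}
\cY_\cA^1 : hom_\cA(\eE,\eE) \xrightarrow{\;\simeq\;} hom_{\cA^{perf}}(\cB_{\cF_\eE},\cB_{\cF_\eE}).
\end{equation*}
By the definition of an $R$-spherical object we have a graded algebra isomorphism $\overline{\Theta}: H^*(S^n)\otimes_\K R \simeq Hom_\cA^*(\eE,\eE)$, so on cohomology the right hand side is $R \oplus R[-n]$ as a graded $\K$-vector space.

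The real content is to identify the $R-R$ bimodule structure. The $R$-actions on $Hom_{\cA^{perf}}(\cB_{\cF_\eE},\cB_{\cF_\eE})$ arise from the natural $R\text{-mod-}R$ bimodule structure on any self-Hom in the image of $F_\otimes$, and concretely they are implemented by pre- and post-composition with $\cY_\cA^1(\Theta(g))$ for $g \in R$. Since $\cY_\cA^1$ is an algebra quasi-isomorphism on the self-Hom, for any cochain representative $x$ of a class in $Hom_\cA^*(\eE,\eE)$ I would verify
\begin{align*}
g\cdot [\cY_\cA^1(x)] &= [\cY_\cA^1(\Theta(g))\circ \cY_\cA^1(x)] = [\cY_\cA^1(\mu^2_\cA(\Theta(g),x))],\\
[\cY_\cA^1(x)]\cdot g &= [\cY_\cA^1(x)\circ \cY_\cA^1(\Theta(g))] = [\cY_\cA^1(\mu^2_\cA(x,\Theta(g)))].
\end{align*}
This is the part that requires some bookkeeping: one has to match the bimodule operations $\mu^{1|1|0}$ and $\mu^{0|1|1}$ on the graph bimodule (defined via the formula \eqref{eq:GraphModmorphism}) with composition on the Yoneda side; however, because $\cA$ is minimal and strictly unital, all higher terms in $\cY_\cA^1$ that could contribute vanish by degree or unitality, so the identification is clean.

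With these identifications, the left and right $R$-actions on $Hom_{\cA^{perf}}(\cB_{\cF_\eE},\cB_{\cF_\eE})$ are transported via $\overline{\Theta}$ to the $R-R$ bimodule structure on $H^*(S^n)\otimes_\K R$ coming from multiplication by $1\otimes g$ in the graded algebra structure. This bimodule splits as $(\K \oplus \K[-n])\otimes_\K R \cong R \oplus R[-n]$ with $R$ being the diagonal (standard) $R-R$ bimodule, which is exactly the claim.

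The main obstacle will be the sign and convention check in the middle step, namely verifying that the graph bimodule's left and right $R$-actions, as spelled out in \eqref{eq:GraphModmorphism}, are compatible with pre- and post-composition under $\cY_\cA^1$. Everything else is essentially formal once the $R$-spherical assumption is invoked.
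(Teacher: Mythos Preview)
Your proposal is correct and follows essentially the same route as the paper: identify $hom_{\cA^{perf}}(\cB_{\cF_\eE},\cB_{\cF_\eE})$ with $hom_\cA(\eE,\eE)$ via Yoneda, then verify that the induced $R$--$R$ bimodule operations $\mu^{1|1|0}$ and $\mu^{0|1|1}$ coincide on cohomology with left and right composition by $\cY^1(\Theta(g))$, hence with multiplication in $H^*(S^n)\otimes R$. The paper carries out precisely the ``bookkeeping'' you flag as the obstacle, computing explicitly that $\mu^{1|1|0}(g,\cY^1(x)) = -\mu^2_{\cA^{perf}}(\cY^1(\Theta(g)),\cY^1(x))$ and $\mu^{0|1|1}(\cY^1(x),g) = \mu^2_{\cA^{perf}}(\cY^1(x),\cY^1(\Theta(g)))$, and then matches these signs against the graph-bimodule conventions on $R\oplus R[-n]$; your outline would be complete once this calculation is written down.
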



\begin{proof}
If we forget the left $R$-module structure on $\cB_{\cF_\eE}$, 
then the right $\cA$-module structure on $\cB_{\cF_\eE}$ gives $Hom_{\cA^{perf}}(\cB_{\cF_\eE},\cB_{\cF_\eE})=Hom_{\cA^{perf}}(\cY_\cA(\eE),\cY_\cA(\eE))=R \oplus R[-n]$.
Moreover, as an endomorphism algebra, we have 
$Hom_{\cA^{perf}}(\cY_\cA(\eE),\cY_\cA(\eE))=H^*(S^n) \otimes R$ by Defintion \ref{d:R-twist}.

On the other hand, $hom_{\cA^{perf}}(\cB_{\cF_\eE},\cB_{\cF_\eE})$ is equipped with an  $A_{\infty}$ $R-R$ bimodule structure,
by the left $R$-module structure on $\cB_{\cF_\eE}$, as follows:
\begin{align*}
&\mu^{0|1|0}(\psi)=(-1)^{|\psi|} \mu^{1}_{\cA^{perf}}(\psi)\\
&\mu^{0|1|s}(\psi,g_s,\dots,g_1)=(\psi^{1|r}_{g_s,\dots,g_1})_r \\
&\psi^{1|r}_{g_s,\dots,g_1}(x,a_r,\dots,a_1)=\sum (-1)^{\dagger_{1,k}}\psi^{1|r}(\mu_{\cB_{\cF_\eE}}(g_s,\dots,g_1,x,a_r,\dots,a_{k+1}),a_k, \dots,a_1) \\
&\mu^{s|1|0}(h_s,\dots,h_1,\psi)=(_{h_s,\dots,h_1}\psi^{1|r})_r \\
&_{h_s,\dots,h_1}\psi^{1|r}(x,a_r,\dots,a_1)=\sum (-1)^{\dagger_{2,k}} \mu_{\cB_{\cF_\eE}}(h_s,\dots,h_1,\psi^{1|r}(x,a_r,\dots, a_{k+1}),a_k,\dots,a_1) \\
&\mu^{r|1|s}=0 \text{ if $r,s \ge 1$}
 \end{align*}
where 
\begin{align*}
 \dagger_{1,k}&=(\sum_{j=1}^s \|g_j\|)(|\psi|+\sum_{j=1}^r \|a_r\|+|x|)+ |\psi|+\sum_{j=1}^k \|a_k\|+1 \\
 \dagger_{2,k}&=(|x|+ \sum_{j=1}^r \|a_j\|)\sum_{j=1}^s \|h_j\|+|\psi| \sum_{j=1}^k \|a_j\|
\end{align*}

We are interested in the first order $R-R$ bimodule structure.
For any $g \in R$, $x \in hom_\cA(\eE,\eE)$, $y \in \cB_{\cF_\eE}(X_d)=hom_\cA(X_d,\eE)$ and $a_i \in hom_\cA(X_{i-1},X_i)$, we have (see \eqref{eq:Yonedamorphism})
 \begin{align*}
  &(\mu^{1|1|0}_{hom_{\cA^{perf}}(\cB_{\cF_\eE},\cB_{\cF_\eE})}(g,\cY^1(x)))^{1|r}(y,a_r,\dots,a_1) \\
  =&\sum_j (-1)^{\dagger_1}\mu_{\cB_{\cF_\eE}}(g,\cY^1(x)(y,a_r,\dots,a_{p+1}),\dots,a_1) \\
  =&\sum_j (-1)^{\dagger_2}\mu_{\cA}(\Theta(g),\mu_\cA(x,y,a_r,\dots,a_{p+1}),\dots,a_1) \\
  =&(-1)^{|x|+1}(\cY^1(\Theta(g)) \circ \cY^1(x))^{1|r}(y,a_d,\dots,a_1) \\
  =&-\mu^2_{\cA^{perf}}(\cY^1(\Theta(g)), \cY^1(x))^{1|r}(y,a_d,\dots,a_1)
 \end{align*}
where $\dagger_1=|y|+\sum_{j=1}^r \|a_j\|+|x|(\sum_{j=1}^p \|a_j\|)$ and $\dagger_2=\dagger_1+(\sum_{j=1}^p \|a_j\|)+1+\|x\|\cdot |y|+|x|(\sum_{j=p+1}^r \|a_j\|)$.
 Similarly
  \begin{align*}
  &(\mu^{0|1|1}_{hom_{\cA^{perf}}(\cB_{\cF_\eE},\cB_{\cF_\eE})}(\cY^1(x),g))^{1|r}(y,a_r,\dots,a_1) \\
  =&\sum_j (-1)^{\dagger_1} \cY^1(x)(\mu_{\cB_{\cF_\eE}}(g,y,a_r,\dots,a_{p+1}),\dots) \\
  =&\sum_j (-1)^{\dagger_2} \mu_{\cA}(x,\mu_\cA(\Theta(g),y,a_r,\dots,a_{p+1}),\dots) \\
  =&(\cY^1(x) \circ \cY^1(\Theta(g)))^{1|r}(y,a_r,\dots,a_1)\\
  =&\mu^2_{\cA^{perf}}(\cY^1(x), \cY^1(\Theta(g)))^{1|r}(y,a_d,\dots,a_1)
 \end{align*}
 where $\dagger_1=(|x|+|y|+\sum_{j=1}^r \|a_j\|)+|x|+(\sum_{j=1}^p \|a_j\|)+1$ and 
 $\dagger_2=\dagger_1+(\sum_{j=p+1}^r \|a_j\|)+1+\|x\|(|y|+\sum_{j=p+1}^r \|a_j\|)+|x|(\sum_{j=1}^p \|a_j\|)$.
 
 Note that, as an $A_{\infty}$ $R-R$ bimodule, the structural map on $H^*(S^n) \otimes R$ is given by (see \eqref{eq:GraphModmorphism})
 $\mu^{1|1|0}(g,x)=-\mu^2(g,x)$ and $\mu^{0|1|1}(x,g)=(-1)^{\|g\|+1}\mu^2(x,g)=\mu^2(x,g)$
 for $x \in R \oplus R[-n]$ and $g \in R$.
 
 Therefore, the $R-R$ bimodule structure on $Hom_{\cA^{perf}}(\cB_{\cF_\eE},\cB_{\cF_\eE})$, which has been shown above to be exactly
 given by the left and right multiplication by elements in $Hom^0_{\cA^{perf}}(\cB_{\cF_\eE},\cB_{\cF_\eE})=R$, is the canonical 
 $R-R$ bimodule structure on $H^*(S^n) \otimes R \simeq R \oplus R[-n]$.
\end{proof}

\begin{lemma}\label{l:tensorOUT}
Under Assumption \ref{a:perfect}, the functor
 $-\otimes_R^{\mathbb{L}} hom_{\cA^{perf}}(\cB_{\cF_\eE},\cB_{\cF_\eE})$ is natural equivalent to $\cR \circ \cS$ in $Fun(R^{perf},R^{perf})$.
\end{lemma}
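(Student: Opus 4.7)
The plan is to construct an explicit comparison natural transformation and check it is a quasi-isomorphism by reducing to the Yoneda generator. First I would define a pre-natural transformation
\begin{align*}
\eta: -\otimes_R^{\mathbb{L}} hom_{\cA^{perf}}(\cB_{\cF_\eE},\cB_{\cF_\eE}) \to \cR \circ \cS
\end{align*}
in $Fun(R^{perf},\text{mod-}R)$, whose zeroth-order component $\eta_V^0$ sends a generator $v \otimes g_s \otimes \cdots \otimes g_1 \otimes \phi$ of $V \otimes_R^{\mathbb{L}} hom_{\cA^{perf}}(\cB_{\cF_\eE},\cB_{\cF_\eE})$ to the element of $hom_{\cA^{perf}}(\cB_{\cF_\eE}, V \otimes_R^{\mathbb{L}} \cB_{\cF_\eE})$ whose value on $(x,a_r,\dots,a_1)$ equals $v \otimes g_s \otimes \cdots \otimes g_1 \otimes \phi^{1|r}(x,a_r,\dots,a_1)$ up to a standard Koszul sign, while setting all higher components $\eta^k$ for $k\ge 1$ and all higher internal terms of $\eta_V^0$ to zero. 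This is the usual ``slant'' comparison map coming from the tensor-hom formalism, adapted to the $A_\infty$ setting.

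Second, I would verify that $\eta$ is a DG natural transformation. For each fixed $V$ this amounts to checking that $\eta_V^0$ is a right $R$-module map, i.e.\ commutes with the differentials and the higher right $R$-structural maps on both sides; for the morphism-level compatibility one must check that $\mu^2_{\text{mod-}R}\bigl(F_{\otimes}(hom_{\cA^{perf}}(\cB_{\cF_\eE},\cB_{\cF_\eE}))(\psi),\eta_{V_0}^0\bigr) + (-1)^{\|\psi\|}\mu^2_{\text{mod-}R}\bigl(\eta_{V_1}^0,(\cR\circ\cS)(\psi)\bigr) = 0$ for every $\psi \in hom_{\text{mod-}R}(V_0,V_1)$. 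Both are direct sign-bookkeeping calculations of the same flavour as the verifications carried out in the proofs of Definition/Lemma \ref{dl:BimodFunMor} and \ref{dl:homDGfunMor}.

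Third, I would show $\eta_V^0$ is a quasi-isomorphism by first testing on the Yoneda generator $V = \cY_R(R)$. By Lemma \ref{l:BarContraction}, the canonical map $R \otimes_R^{\mathbb{L}} hom_{\cA^{perf}}(\cB_{\cF_\eE},\cB_{\cF_\eE}) \to hom_{\cA^{perf}}(\cB_{\cF_\eE},\cB_{\cF_\eE})$ is a quasi-isomorphism, and by Lemma \ref{l:BarContraction} applied inside the $hom$ (with its quasi-inverse from Remark \ref{r:InverseBarContraction}) the map $hom_{\cA^{perf}}(\cB_{\cF_\eE}, R \otimes_R^{\mathbb{L}} \cB_{\cF_\eE}) \to hom_{\cA^{perf}}(\cB_{\cF_\eE},\cB_{\cF_\eE})$ is a quasi-isomorphism. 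A direct inspection of the explicit formula shows that under these identifications $\eta_R^0$ becomes the identity on $hom_{\cA^{perf}}(\cB_{\cF_\eE},\cB_{\cF_\eE})$.

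Finally, because both functors appearing in $\eta$ are DG functors and hence preserve mapping cones, shifts and homotopy retracts, the full subcategory of $R^{perf}$ on which $\eta^0_V$ is a quasi-isomorphism is closed under these operations. Since $R^{perf}$ is split-generated from $\cY_R(R)$, the previous step forces $\eta^0_V$ to be a quasi-isomorphism for every $V\in R^{perf}$. The resulting object-wise quasi-isomorphism together with Assumption \ref{a:perfect} (which guarantees that $\cR\circ\cS$ lands in $R^{perf}$) shows that $-\otimes_R^{\mathbb{L}} hom_{\cA^{perf}}(\cB_{\cF_\eE},\cB_{\cF_\eE})$ also takes values in $R^{perf}$, and hence $\eta$ is a natural equivalence in $Fun(R^{perf},R^{perf})$. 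The main obstacle is purely notational: the sign verifications in the second step are tedious but entirely mechanical, and there is no conceptual difficulty beyond applying the tensor-hom machinery already set up in Section \ref{ss:bimodules}.
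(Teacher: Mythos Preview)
Your proposal is correct and follows essentially the same approach as the paper: define the explicit ``slant'' map $\eta^0_V$ sending $v\otimes g_s\cdots g_1\otimes t$ to the morphism $(x,a_r,\dots,a_1)\mapsto v\otimes g_s\cdots g_1\otimes t(x,a_r,\dots,a_1)$ with higher terms zero, verify it is a natural transformation, check it is a quasi-isomorphism for $V=R$ using Lemma~\ref{l:BarContraction} and Remark~\ref{r:InverseBarContraction}, and conclude by split-generation. The paper's write-up is terser (in particular it does not insert a Koszul sign in the formula and leaves the natural-transformation check as ``straightforward''), but the argument is the same.
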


\begin{proof}
Let  $F_0:=-\otimes_R^{\mathbb{L}} hom_{\cA^{perf}}(\cB_{\cF_\eE},\cB_{\cF_\eE})$ and 
  $F_1:=hom_{\cA^{perf}}(\cB_{\cF_\eE},- \otimes_R^{\mathbb{L}} \cB_{\cF_\eE})$.
We want to construct a natural equivalence $\eta$ from $F_0$ to $F_1$.
 There is an inclusion $(\eta^0_V)^{1|0} :F_0(V) \to F_1(V)$
 given by
 \begin{align*}
 & v \otimes g_s \dots g_1 \otimes t \mapsto _{v \otimes g_s \dots g_1}t \\
 & _{v \otimes g_s \dots g_1}t(x,a_r,\dots,a_1)=v \otimes g_s \dots g_1 \otimes t(x,a_r,\dots,a_1)
 \end{align*}
 We define $(\eta^0_V)^{1|r}=0$ if $r>0$. It is straightforward to check that $\eta$ is a natural transformation.

  Moreover, when $V=R$, we have
  \begin{align*}
   (\eta^0_R)^{1|0}:R \otimes_R^{\mathbb{L}} hom_{\cA^{perf}}(\cB_{\cF_\eE},\cB_{\cF_\eE}) \to hom_{\cA^{perf}}(\cB_{\cF_\eE},R \otimes_R^{\mathbb{L}} \cB_{\cF_\eE})
  \end{align*}
 By Lemma \ref{l:BarContraction} and Remark \ref{r:InverseBarContraction},
 we know that $(\eta^0_R)^{1|0}$ a quasi-isomorphism.
 Since the Yoneda image of $R$ split-generates $R^{perf}$,
 we conclude that $(\eta^0_V)^{1|0}$ is a quasi-isomorphism for all $V \in Ob(R^{perf})$ and hence $\eta$ is a natural equivalence.
 \end{proof}

By Lemma \ref{l:RRstrEE}, $Hom_{\cA^{perf}}(\cB_{\cF_\eE},\cB_{\cF_\eE})=R \oplus R[-n]$ as an ordinary $R-R$ bimodule
so $hom_{\cA^{perf}}(\cB_{\cF_\eE},\cB_{\cF_\eE})$ is quasi-isomorphic to $(R \oplus R[-n],\mu^*_{RR})$
for some minimal $A_{\infty}$ $R-R$ bimodule structure $\mu^*_{RR}$.
Since $\cA$ is minimal, we can also identify $R \oplus R[-n]$ with $hom_\cA(\eE,\eE)$.

Now, we can construct an $A_{\infty}$ $R-R$ bimodule homomorphism $\bar{u}$ as follows.
\begin{align*}
 &\bar{u}:R=hom^0(\eE,\eE) \to (hom_\cA(\eE,\eE),\mu^*_{RR}) \\
 &\bar{u}^{0|1|0}(x):=x \\
 &\bar{u}^{r|1|s}=0 \text{ if $(r,s) \neq (0,0)$ }
\end{align*}
Since $hom_{\cA^{perf}}(\cB_{\cF_\eE},\cB_{\cF_\eE})$ is quasi-isomorphic to $(R \oplus R[-n],\mu^*_{RR})$,
we can push $\bar{u}$ to an $A_{\infty}$ $R-R$ bimodule homomorphism
from $R$ to $hom_{\cA^{perf}}(\cB_{\cF_\eE},\cB_{\cF_\eE})$, which is denoted by $u$.
In particular, we have
\begin{align}
 &u^{0|1|0}(x):=\cY^1(x) 
\end{align}
The higher order terms $u^{r|1|s}$ are not explicit.


\begin{lemma}\label{l:simplifyCotwist}
There is a natural equivalence
 \begin{align*}
 Cone(Id_{R^{perf}}\xrightarrow{u_{rs}} \cR \circ \cS) \simeq -\otimes_R^{\mathbb{L}} Cone(R  \xrightarrow{u} hom_{\cA^{perf}}(\cB_{\cF_\eE},\cB_{\cF_\eE}))  
 \end{align*}

\end{lemma}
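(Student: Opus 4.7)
The strategy is to convert both sides into images of the DG functor $F_{\otimes}$ on bimodule mapping cones, and then invoke the fact that DG functors commute with cones. More precisely, I would break the argument into the following steps.

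First, by Lemma \ref{l:tensorOUT} there is a natural equivalence $\eta_{\rho}: F_{\otimes}(\cE) \simeq \cR \circ \cS$ in $Fun(R^{perf},R^{perf})$, where $\cE := hom_{\cA^{perf}}(\cB_{\cF_\eE},\cB_{\cF_\eE})$. Likewise, Lemma \ref{l:BarContraction} together with Remark \ref{r:InverseBarContraction} provide a natural equivalence $\eta_{\iota}: Id_{R^{perf}} \simeq F_{\otimes}(\Delta_R)$. Under the identification $\Delta_R = R$ (coming from $R$ concentrated in degree zero), this identifies $Id_{R^{perf}}$ with $F_{\otimes}(R)$.

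Second, by Definition-Lemma \ref{dl:BimodFunMor} the assignment $F_{\otimes}:R\text{-mod-}R \to Fun(\text{mod-}R,\text{mod-}R)$ is a strictly unital DG functor. In particular it preserves mapping cones, so that for the bimodule morphism $u: R \to \cE$ constructed above we have a natural equivalence
\begin{align*}
 F_{\otimes}(\mathrm{Cone}(R \xrightarrow{u} \cE)) \;\simeq\; \mathrm{Cone}\bigl(F_{\otimes}(R) \xrightarrow{F_{\otimes}^1(u)} F_{\otimes}(\cE)\bigr)
\end{align*}
in $Fun(R^{perf},R^{perf})$. Combining this with the first step, conjugating by $\eta_{\iota}$ and $\eta_{\rho}$ yields
\begin{align*}
 F_{\otimes}(\mathrm{Cone}(R \xrightarrow{u} \cE)) \;\simeq\; \mathrm{Cone}\bigl(Id_{R^{perf}} \xrightarrow{\widetilde u_{rs}} \cR \circ \cS\bigr),
\end{align*}
where $\widetilde u_{rs} := \eta_\rho \circ F_{\otimes}^1(u) \circ \eta_\iota^{-1}$.

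The main remaining task, which is the technical heart of the argument, is to verify that $\widetilde u_{rs}$ is homotopic to the adjunction unit $u_{rs}$ of Proposition \ref{p:TensorHom}. I would prove this by showing that both natural transformations represent the same class in $H^0$ of the morphism complex $hom_{Fun(R^{perf},R^{perf})}(Id_{R^{perf}}, \cR \circ \cS)$. Since the Yoneda image of $R$ split-generates $R^{perf}$, it is enough to compare the two natural transformations at $V = R$. At this object both $\widetilde u_{rs}(R)$ and $u_{rs}(R)$ are degree zero cocycles in
\begin{align*}
 hom_{R^{perf}}(R,\cR\cS(R)) \;\simeq\; hom_{\cA^{perf}}(\cY_\cA(\eE),\cY_\cA(\eE))
\end{align*}
(using the tensor-hom adjunction and the contraction of $R \otimes_R^{\mathbb L} \cB_{\cF_{\eE}} \simeq \cB_{\cF_{\eE}}$), and by construction both are built from the data $\cY^1(e_{\eE}) = Id_{\cY_\cA(\eE)}$, namely $u^{0|1|0}(1_R) = \cY^1(e_{\eE})$ on the one side and the definition of an adjunction unit on the other. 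Since $H^0(hom_{\cA^{perf}}(\cY_\cA(\eE),\cY_\cA(\eE)))$ contains the identity class with multiplicity one (as a left $R$-module, in view of Lemma \ref{l:RRstrEE}), the two cocycles differ by a coboundary and the desired homotopy follows. The hard part here is tracking signs and the higher order terms of $u$ in a way compatible with the sign conventions used to define $u_{rs}$; the assumption that $\cA$ is minimal and strictly unital keeps the comparison manageable.

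Once $\widetilde u_{rs} \simeq u_{rs}$ is established, the mapping cone $\mathrm{Cone}(R \xrightarrow{u} \cE)$ is identified via $F_{\otimes}$ with $\mathrm{Cone}(Id_{R^{perf}} \xrightarrow{u_{rs}} \cR \circ \cS)$, which is exactly the statement of the lemma after rewriting $F_{\otimes}(-) = - \otimes_R^{\mathbb{L}} -$.
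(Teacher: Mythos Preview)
Your overall strategy matches the paper's: push everything through the DG functor $F_{\otimes}$ of Definition--Lemma~\ref{dl:BimodFunMor}, use that it preserves cones, and then identify the resulting natural transformation $\widetilde u_{rs}=\eta_\rho\circ F_\otimes^1(u)\circ\eta_\iota^{-1}$ with the adjunction unit $u_{rs}$ via the equivalences of Lemma~\ref{l:tensorOUT} and Remark~\ref{r:InverseBarContraction}.

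The difference lies in how you carry out this last identification. The paper does not pass to cohomology or restrict to $V=R$: it simply evaluates the composite
\[
V \;\simeq\; V\otimes_R^{\mathbb L} R \xrightarrow{\,F_\otimes^1(u)\,} V\otimes_R^{\mathbb L}\cE \xrightarrow{\,\eta^0_V\,} \cR\cS(V)
\]
on an arbitrary element $v\in V$, and observes (using $u^{0|1|0}(1_R)=\cY^1(e_\eE)=Id_{\cY(\eE)}$, which holds because $\cA$ is strictly unital) that the result is $(\eta^0_V)(v\otimes Id_{\cY(\eE)})$, which is precisely $u_{rs}(v)$ as defined in Proposition~\ref{p:TensorHom}. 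Thus $\widetilde u_{rs}=u_{rs}$ on the nose, and no homotopy argument is needed. Your proposed route through $H^0$ and the claim that ``checking at $V=R$ suffices'' is both unnecessary and not fully justified: agreement of two closed degree-zero natural transformations at a single split-generating object does not automatically force them to be homotopic as natural transformations. The direct element-wise computation the paper uses is shorter and avoids this issue entirely.
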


\begin{proof}
 The $A_{\infty}$ bimodule homomorphism $u$ induces a natural transformation $F_\otimes^1(u)$
 from $F_{\otimes}(R)$ to $F_{\otimes}(hom_{\cA^{perf}}(\cB_{\cF_\eE},\cB_{\cF_\eE}))$.
 For any $V \in Ob(\text{mod-}R)$ and $v \in V$, the composition
 \begin{align*}
  V &\simeq V\otimes_R^{\mathbb{L}} R=V\otimes_R^{\mathbb{L}}  hom^0(\eE,\eE) \\
    &\xrightarrow{F_\otimes^1(u)} V\otimes_R^{\mathbb{L}} hom_{\cA^{perf}}(\cB_{\cF_\eE},\cB_{\cF_\eE}) \\
    &\simeq hom_{\cA^{perf}}(\cB_{\cF_\eE},V\otimes_R^{\mathbb{L}} \cB_{\cF_\eE})=\cR \circ \cS(V)
 \end{align*}
 is given by
 \begin{align*}
  v &\mapsto v \otimes 1=v \otimes e_\eE \\
    &\mapsto v \otimes \cY^1(e_\eE)=v \otimes Id_{\cY(\eE)} \\
    &\mapsto (\eta^0_V)(v \otimes Id_{\cY(\eE)})
 \end{align*}
 where the first map is given in Remark \ref{r:InverseBarContraction} and the last map $(\eta^0_V)$ is the natural equivalence in Lemma \ref{l:tensorOUT}.
 
 By Proposition \ref{p:TensorHom}, we also have $u_{rs}(v)=(\eta^0_V)(v \otimes Id_{\cY(\eE)})$.
 In other words, $F_\otimes^1(u)$ coincides with $u_{rs}$ up to natural equivalences $Id \simeq -\otimes_R^{\mathbb{L}} R$ and $(\eta^0_V)$.
 Since the functor from  $R\text{-mod-}R$ to $Fun(\text{mod-}R,\text{mod-}R)$ is a DG functor, we have
 \begin{align*}
  F_\otimes^1(Cone(u)) \simeq Cone(F_\otimes^1(u)) \simeq Cone(Id_{R^{perf}}\xrightarrow{u_{rs}} \cR \circ \cS)
 \end{align*}

\end{proof}

\begin{lemma}\label{l:Cotwist}
 $ Cone(R \xrightarrow{u} hom_{\cA^{perf}}(\cB_{\cF_\eE},\cB_{\cF_\eE}))$ is quasi-isomorphic to the formal $R-R$ bimodule $R[-n]$.
 As a result, there is a natural equivalence of DG functors
 $Cone(Id_{R^{perf}}\xrightarrow{u_{rs}} \cR \circ \cS) \simeq Id[-n]$.
\end{lemma}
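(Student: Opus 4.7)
The plan is to establish the cone is formal by a degree argument, then read off the result. First I would compute cohomology: the triangle
\[ R \xrightarrow{u} hom_{\cA^{perf}}(\cB_{\cF_\eE},\cB_{\cF_\eE}) \to Cone(u) \]
together with Lemma \ref{l:RRstrEE} identifying $H^*(hom_{\cA^{perf}}(\cB_{\cF_\eE},\cB_{\cF_\eE})) = R \oplus R[-n]$ as the canonical $R\text{-}R$ bimodule, and the fact that $u^{0|1|0}(x) = \cY^1(x)$ realizes the inclusion of $R$ as the degree $0$ summand, immediately yield $H^*(Cone(u)) = R[-n]$ as a graded $\K$-vector space. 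Because the canonical bimodule structure on $R\oplus R[-n]$ splits as a direct sum, the long exact sequence identifies the induced $R$-$R$ bimodule structure on $H^n(Cone(u))$ with the standard one on $R$.

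Next I would run the degree argument to prove formality. For any cocycle $x \in Cone(u)$ representing a class of degree $n$, the structural map
\[ \mu^{r|1|s}(h_r,\dots,h_1,x,g_s,\dots,g_1) \]
lands in degree $|x|+1-r-s = n+1-r-s$. Since the cohomology is concentrated in a single degree $n$, any class-level operation requires $r+s = 1$. Hence all higher structure maps $\mu^{r|1|s}$ with $r+s \geq 2$ vanish cohomologically. Combined with the standard $R$-$R$ action established in the previous step, the minimal model theorem for $A_{\infty}$ $R\text{-}R$ bimodules (Kadeishvili-type homological perturbation) supplies a quasi-isomorphism $Cone(u) \simeq R[-n]$ onto the formal bimodule with only non-trivial operations $\mu^{1|1|0}$ and $\mu^{0|1|1}$ given by left and right multiplication.

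To conclude, I would combine Lemma \ref{l:simplifyCotwist} with the fact that $F_\otimes$ is a DG functor that preserves quasi-isomorphisms (since every object of $R^{perf}$ is $K$-projective, cf.\ Remark \ref{r:K-projective}):
\[ Cone(Id_{R^{perf}} \xrightarrow{u_{rs}} \cR \circ \cS) \simeq F_\otimes(Cone(u)) \simeq F_\otimes(R[-n]) \simeq Id_{R^{perf}}[-n], \]
where the last equivalence is Lemma \ref{l:BarContraction} (together with the shift compatibility of $F_\otimes$).

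The main obstacle is the formality step: one must be confident that the degree argument really does rule out all higher bimodule operations, and that the minimal model theorem is available in the $A_\infty$ bimodule setting with the sign conventions of \cite{LF2}. Once formality is granted the rest is routine bookkeeping of functors between bimodules and module categories.
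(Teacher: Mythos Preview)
Your proposal is correct and follows essentially the same route as the paper: long exact sequence plus Lemma~\ref{l:RRstrEE} to identify $H^*(Cone(u))\cong R[-n]$ with the standard bimodule structure, then the degree argument $\deg\mu^{r|1|s}=1-r-s$ forcing $r+s=1$ on any minimal model, then Lemma~\ref{l:simplifyCotwist} together with $-\otimes_R^{\mathbb L}R[-n]\simeq[-n]$. Your worry about the minimal-model step is unnecessary: once you pass to a minimal model (standard Kadeishvili for $A_\infty$ bimodules), the degree count kills the higher operations outright, so no delicate homological perturbation is needed beyond existence of the minimal model itself.
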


\begin{proof}
 By the cohomology long exact sequence, $H(Cone(R \xrightarrow{u} hom_{\cA^{perf}}(\cB_{\cF_\eE},\cB_{\cF_\eE})))=R[-n]$.
 Moreover, Lemma \ref{l:RRstrEE} implies that
 the ordinary $R-R$ bimodule structure on $H(Cone(R \to hom_{\cA^{perf}}(\cB_{\cF_\eE},\cB_{\cF_\eE})))$ coincide with the standard bimodule structure on $R[-n]$ 
 (ie. given by left and right multiplication).
 By degree reason, $R[-n]$ only supports one minimal $A_{\infty}$ $R-R$ bimdoule structure which is the formal one.
 As a result,
 \begin{align*}
  Cone(Id_{R^{perf}}\xrightarrow{u_{rs}} \cR \circ \cS) & \simeq -\otimes_R^{\mathbb{L}} Cone(R  \xrightarrow{u} hom_{\cA^{perf}}(\cB_{\cF_\eE},\cB_{\cF_\eE}))  \\
  & \simeq -\otimes_R^{\mathbb{L}} R[-n] \simeq [-n]
 \end{align*}

\end{proof}

\subsubsection{An auto-equivalence}\label{ss:Concluding}


By Lemma \ref{l:RRstrEE}, $Hom_{\cA^{perf}}(\cB_{\cF_\eE},\cB_{\cF_\eE})[n]=R[n] \oplus R$ as an ordinary $R-R$ bimodule
so $hom_{\cA^{perf}}(\cB_{\cF_\eE},\cB_{\cF_\eE})[n]$ is quasi-isomorphic to $(R[n] \oplus R,\mu^*_{RR'})$
for some minimal $A_{\infty}$ $R-R$ bimodule structure $\mu^*_{RR'}$.
We construct an $A_{\infty}$ $R-R$ bimodule homomorphism $\bar{c}$ as follows.
\begin{align*}
&\bar{c}:(R[n] \oplus R,\mu^*_{RR'}) \to R=Hom_{\cA^{perf}}^n(\cB_{\cF_\eE},\cB_{\cF_\eE})[n] \\
 &\bar{c}^{0|1|0}(r)=r \text{ if $r \in R$ } \\
 &\bar{c}^{0|1|0}(r)=0 \text{ if $r \in R[n]$ } \\
 &\bar{c}^{r|1|s}=0 \text{ if $(r,s) \neq (0,0)$ }
 \end{align*}
 Here, the $R-R$ bimodule structure on $Hom_{\cA^{perf}}^n(\cB_{\cF_\eE},\cB_{\cF_\eE})[n]$ is induced from the quotient 
 $(R[n] \oplus R,\mu^*_{RR'})/R[n]$, which is the same as the standard $R-R$ bimodule structure on $R$, and $\bar{c}$ is the quotient bimodule homomorphism.
 We can pull back $\bar{c}$ to an $A_{\infty}$  $R-R$ bimodule homomorphism from $hom_{\cA^{perf}}(\cB_{\cF_\eE},\cB_{\cF_\eE})[n]$ to $R$, which is denoted by $c$.
It induces a natural transformation $F^1_{\otimes}(c)$ which fits into
\begin{align*}
 \cR \circ \cS[n] \simeq -\otimes_R^{\mathbb{L}} hom_{\cA^{perf}}(\cB_{\cF_\eE},\cB_{\cF_\eE})[n] \xrightarrow{F^1_{\otimes}(c)} -\otimes_R^{\mathbb{L}} R \simeq Id_{\text{mod-}R}
\end{align*}

\begin{lemma}\label{l:LeftAdjoint}
For any $X \in Ob(\cA)$, the composition
 \begin{align*}
&hom_{\cA^{perf}}(\cY_{\cA}(X),\cS(R)) \\
&\xrightarrow{\cR[n]}  hom_{\text{mod-}R}(\cR(\cY_{\cA}(X))[n],\cR(\cS(R))[n]) \\
&\xrightarrow{F^1_{\otimes}(c)} hom_{\text{mod-}R}(\cR(\cY_{\cA}(X))[n],R)
 \end{align*}
is a quasi-isomorphism.

\end{lemma}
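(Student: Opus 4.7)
The plan is to reduce this to the duality axiom built into Definition \ref{d:R-twist}. First I would simplify both sides of the composition using the identifications worked out earlier. On the source, the natural equivalence $\cS(R) \simeq R \otimes_R^{\mathbb{L}} \cB_{\cF_\eE} \simeq \cB_{\cF_\eE} = \cY_{\cA}(\eE)$ (Lemma \ref{l:BarContraction}) together with the Yoneda embedding gives
\begin{align*}
H^*(hom_{\cA^{perf}}(\cY_{\cA}(X), \cS(R))) \simeq \mathrm{Hom}^*_{\cA}(X, \eE).
\end{align*}
On the target, the right $\cA$-module structure alone gives
\begin{align*}
H^*(\cR(\cY_{\cA}(X))) \simeq \mathrm{Hom}^*_{\cA}(\eE, X),
\end{align*}
and this cohomology carries a right $R$-module structure which, by exactly the same argument as in Lemma \ref{l:RRstrEE} (looking at the $\mu^{0|1|1}$ term), coincides with the standard right $R$-module structure on $\mathrm{Hom}^*_\cA(\eE, X)$ coming from $\Theta : R \simeq \mathrm{Hom}^0_\cA(\eE,\eE)$.

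Second, I would identify the natural transformation $F_\otimes^1(c)$ at $R$. Since $c$ is constructed so that $\bar c : (R[n] \oplus R, \mu^*_{RR'}) \to R$ is the projection onto the degree-$n$ summand (using the spherical identification $\mathrm{Hom}^n_\cA(\eE,\eE) \simeq R$), the composition
\begin{align*}
\cR(\cS(R))[n] \simeq hom_{\cA^{perf}}(\cB_{\cF_\eE}, \cB_{\cF_\eE})[n] \xrightarrow{c_R} R
\end{align*}
becomes, on cohomology, the $R$-bimodule map $\mathrm{Hom}^*_\cA(\eE,\eE)[n] \to R$ that projects to the degree-$n$ component. Applying this to the composition in the statement, and unwinding $\cR[n]$ on morphism spaces (which is essentially post-composition in $\cA$), I would show that the induced map on cohomology in degree $k$ is precisely
\begin{align*}
\mathrm{Hom}^k_\cA(X, \eE) \longrightarrow \mathrm{Hom}_{\text{mod-}R}(\mathrm{Hom}^{n-k}_\cA(\eE, X), R), \qquad x \mapsto \bigl(y \mapsto \pi_n \mu^2_\cA(x, y)\bigr),
\end{align*}
where $\pi_n$ is the projection onto the top degree summand. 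This is (up to the canonical identification $\mathrm{Hom}^n_\cA(\eE,\eE) \simeq R$) exactly the map $f$ appearing in Definition \ref{d:R-twist}, and hence it is an isomorphism by the $R$-spherical hypothesis.

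The main obstacle is the bookkeeping in the second step: tracing how $c_R$ interacts with the Yoneda identifications and the shift $[n]$ while respecting the left $R$-action on $\cB_{\cF_\eE}$, so that the composition really equals $f$ and not merely a twist of it. Once the identification is pinned down, the quasi-isomorphism statement follows directly from the second bullet of Definition \ref{d:R-twist}. Finally, since Yoneda-images of objects of $\cA$ split-generate $\cA^{perf}$ and both the source and target of the composition are cohomological functors in $X$ that commute with cones, shifts and retracts, this establishes the quasi-isomorphism for $\cY_\cA(X)$ and hence for every $X \in Ob(\cA^{perf})$, which is what will be needed to invoke Theorem \ref{t:SphericalFun}.
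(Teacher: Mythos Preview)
Your proposal is correct and follows essentially the same approach as the paper: identify the source and target via Yoneda with $\mathrm{Hom}_\cA(X,\eE)$ and $\mathrm{Hom}_{\text{mod-}R}(\mathrm{Hom}_\cA(\eE,X)[n],R)$ respectively, recognize that $F^1_\otimes(c)$ on cohomology is projection to the degree-$n$ summand of $\mathrm{Hom}_\cA(\eE,\eE)$, and conclude that the composite is the map $f$ from Definition~\ref{d:R-twist}. One small point you leave implicit that the paper makes explicit: the target identification $\mathrm{Hom}_{\text{mod-}R}(\cR(\cY_\cA(X))[n],R)\simeq \mathrm{Hom}_{\text{mod-}R}(\mathrm{Hom}_\cA(\eE,X)[n],R)$ requires that $hom_{\text{mod-}R}(-,R)$ preserve the quasi-isomorphism $\cR(\cY_\cA(X))\simeq hom_\cA(\eE,X)$, which holds because $R$ is free; your final paragraph on split-generation is not needed for the lemma as stated (it is for $X\in Ob(\cA)$ only) and is exactly the content of the subsequent Corollary~\ref{c:LeftAdjoint}.
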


\begin{proof}
We have Yoneda embeddings 
\begin{align*}
Hom_\cA(X,\eE) &\simeq Hom_{\cA^{perf}}(\cY_{\cA}(X),R \otimes_R^{\mathbb{L}} \cB_{\cF_\eE}) \\
&= Hom_{\cA^{perf}}(\cY_{\cA}(X),\cS(R))
\end{align*}
and 
\begin{align*}
Hom_{\text{mod-}R}(Hom_\cA(\eE,X)[n],R) &\simeq H(hom_{\text{mod-}R}(hom_{\cA^{perf}}(\cB_{\cF_\eE},\cY_{\cA}(X))[n],R)) \\
& =Hom_{\text{mod-}R}(\cR(\cY_{\cA}(X))[n],R)
\end{align*}
The second equality also uses that $R$ is free.

For any cocycle $t_1 \in hom_{\cA^{perf}}(\cY_{\cA}(X),\cS(R))$ and $t_0 \in \cR(\cY_{\cA}(X))[n]$, we have (see \eqref{eq:hom-morphism})
 \begin{align*}
  &(\cR[n](t_1))(t_0)=t_1 \circ \widehat{t}_0 \in \cR(\cS(R))[n]
 \end{align*}
On the other hand, $F^1_{\otimes}(c)$ is given by composition with $c \in hom_{\text{mod-}R}(\cR(\cS(R))[n],R)$.
On the cohomological level, $[c]: H^{*}(\cR(\cS(R)))[n] \simeq Hom^{*}(\eE,\eE)[n] \to R=Hom^n(\eE,\eE)[n]$ is given by
\begin{align*}
 &[c](x)=x \text{ if $x \in Hom^{n}(\eE,\eE)$}\\
 &[c](x)=0 \text{ if $x \in Hom^{0}(\eE,\eE)$}
\end{align*} 
Therefore, the cohomological level map $[F^1_{\otimes}(c) \circ \cR[n]]$ coincides with the map $f$ in Definition \ref{d:R-twist} 
which, by assumption, is an isomorphism.
 \end{proof}

 \begin{corr}\label{c:LeftAdjoint}
  $\cR[n]$ is a left adjoint of $\cS$.
 \end{corr}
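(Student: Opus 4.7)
The plan is to take the natural transformation
\[
c_{ls} := F^1_{\otimes}(c): \cR \circ \cS[n] \to Id_{R^{perf}}
\]
already constructed just before Lemma \ref{l:LeftAdjoint} and verify directly from the definition that $c_{ls}$ is an adjunction counit for the pair $(\cR[n], \cS)$. Set $\cL := \cR[n]$, so that, after commuting $[n]$ past $\cS$ (which is obvious since $\cS$ commutes with shifts), $c_{ls}$ becomes a natural transformation $\cL \circ \cS \to Id_{R^{perf}}$. What one must check is that, for every $Y \in Ob(\cA^{perf})$ and every $V \in Ob(R^{perf})$, the composition
\[
\Phi_{Y,V}: hom_{\cA^{perf}}(Y,\cS(V)) \xrightarrow{\cL} hom_{R^{perf}}(\cL(Y),\cL\cS(V)) \xrightarrow{c_{ls}(V)\circ -} hom_{R^{perf}}(\cL(Y),V)
\]
is a quasi-isomorphism.

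The first step is to read off Lemma \ref{l:LeftAdjoint} as precisely the statement that $\Phi_{Y,V}$ is a quasi-isomorphism in the special case $V=R$ and $Y = \cY_{\cA}(X)$ for some $X \in Ob(\cA)$. The remaining task is therefore a devissage: one propagates this quasi-isomorphism first along $Y$, keeping $V=R$ fixed, and then along $V$, keeping $Y$ arbitrary.

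For the propagation, observe that $\cS$, $\cL$, and $\Phi_{Y,V}$ itself are all built out of strictly unital DG functors between pre-triangulated DG categories (Proposition \ref{p:TensorHom}, Definition/Lemma \ref{dl:tensorDGFun}, Definition/Lemma \ref{dl:homDGfun}), and every object of $\cA^{perf}$ or $R^{perf}$ is $K$-projective and $K$-injective (Remark \ref{r:K-projective}). Consequently, $\Phi_{-,V}$ and $\Phi_{Y,-}$ both send shifts to shifts, mapping cones to mapping cones, and homotopy retracts to homotopy retracts. Since the class of maps of cochain complexes that are quasi-isomorphisms is stable under these operations, the full subcategory of $Y \in Ob(\cA^{perf})$ for which $\Phi_{Y,R}$ is a quasi-isomorphism is closed under shifts, cones, and homotopy retracts; by Definition \ref{d:PerfMod} and the fact that it contains the Yoneda image of $\cA$, it is all of $\cA^{perf}$. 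An identical argument in the second variable, using that $R$ (viewed as the Yoneda image of the unique object of the one-object category $R$) split-generates $R^{perf}$, extends the quasi-isomorphism to all $V \in Ob(R^{perf})$.

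The main content really is Lemma \ref{l:LeftAdjoint}, so the only delicate point left is the bookkeeping of the devissage in the paragraph above: one must verify that the shift $[n]$, the mapping cone construction in $\cA^{perf}$, and the tensor and hom bimodule functors are all compatible with $\Phi$ in the strong sense of commuting up to natural isomorphism of chain maps (not merely up to homotopy), so that the class of quasi-iso pairs $(Y,V)$ really is closed under the triangulated operations. This is routine given the strict unitality and DG-functoriality established in Section \ref{ss:bimodules}, but it is the only place where an error could be introduced, so it is the step that deserves the most care in a full write-up.
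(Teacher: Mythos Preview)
Your proposal is correct and follows essentially the same approach as the paper: use Lemma \ref{l:LeftAdjoint} as the base case and then propagate via split-generation. The paper's own proof is just the two-sentence version of your devissage, invoking that $\cA^{perf}$ is split-generated by the Yoneda image of $\cA$ and $R^{perf}$ by $R$; your write-up spells out the same argument with more care about why the class of $(Y,V)$ for which $\Phi_{Y,V}$ is a quasi-isomorphism is closed under shifts, cones, and retracts.
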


 \begin{proof}
  $\cA^{perf}$ is split-generated by the Yoneda image of $\cA$ and $R^{perf}$ is split-generated by $R$.
  Therefore, Lemma \ref{l:LeftAdjoint} proves that $F^1_{\otimes}(c)[n]$ is an adjunction counit form $\cR[n] \circ \cS$ to $Id_{\text{mod-}R}$
  and hence $\cR[n]$ is a left adjoint of $\cS$.
 \end{proof}

\begin{proof}[Proof of Proposition \ref{p:R-twist}]\label{pf:R-twist}
 By Lemma \ref{l:Cotwist}, we have an equivalence
 $\cF:=Cone(Id_{R^{perf}}\xrightarrow{u_{rs}} \cR \circ \cS) \simeq Id[-n]$, which is clearly an auto-equivalence.
 By Corollary \ref{c:LeftAdjoint}, we have 
 \begin{align*}
\cF \circ \cL   & \simeq Id[-n] \circ \cR[n] \\
& \simeq \cR 
 \end{align*}
 so Proposition \ref{p:R-twist} follows from Theorem \ref{t:SphericalFun}.
\end{proof}

\begin{rmk}\label{r:Nonformal}
 The proof of Proposition \ref{p:R-twist} works exactly the same when $R=\K[u]/u^2$ and $|u|=-1$.
 The corresponding auto-equivalence is a $\mathbb{P}$-twist which has been observed by \cite{Se17}.
 Our proof implies that the formality assumption he put is not needed, as he expected.
\end{rmk}

\subsection{$R$-spherical objects from spherical Lagrangians}\label{ss:R-spherical-Lag}

In this section, we consider $R$-spherical objects arising from spherical Lagrangians.
Let $\eP$ be the universal local system (see Section \ref{ss:UniSystem}) on $P$ and $P$ satisfies \eqref{eq:GammaCondition}.

We construct an $R-\cA$ bimodule $\widetilde{\cB}_\eP$ as follows.
For an object $X \in \cA$ 
(and the only object of $R$), we define $\widetilde{\cB}_\eP(X,\cdot)=hom_{\cA}(X,\eP)$ as a chain complex.
 Then, we define (cf. \eqref{eq:GraphModmorphism})
\begin{align}
 &\mu^{0|1|s}_{\widetilde{\cB}_\eP}(\psi,x_s,\dots,x_1)=(-1)^{\sum_{j=1}^s \|x_j\|+1}\mu^{s+1}_{\cA}(\psi,x_s,\dots,x_1) \label{eq:sphericalBimod1}\\
 &\mu^{1|1|0}_{\widetilde{\cB}_\eP}(g,\psi)=(-1)^{|\psi|+1}g \psi \label{eq:sphericalBimod2}
\end{align}
and $\mu^{r|1|s}_{\widetilde{\cB}_\eP}=0$ otherwise.

\begin{lemma}
 $(\widetilde{\cB}_\eP,\mu^{r|1|s}_{\widetilde{\cB}_\eP})$ is a $R-\cA$ bimodule.
\end{lemma}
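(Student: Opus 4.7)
The plan is to verify the $A_\infty$ bimodule relations
\[
\sum (-1)^{\ast}\, \mu^{r-i|1|s-j}_{\widetilde{\cB}_\eP}\bigl(g_r,\dots, \mu^{i|1|j}_{\widetilde{\cB}_\eP}(\ldots),\dots,x_1\bigr)
+ (\text{bar terms in }R \text{ and }\cA) = 0
\]
by splitting into the three cases determined by which $\mu^{r|1|s}_{\widetilde{\cB}_\eP}$ are non-zero, namely $(r,s)=(0,s)$, $(r,s)=(1,0)$, and the mixed case $(r,s)=(1,s)$ with $s\ge 1$.

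First I would handle the purely right-sided relations: when no factors from $R$ appear, the bimodule relations reduce exactly to the $A_\infty$ relations for $\cA$ applied to inputs whose first slot is $\psi\in hom_\cA(X,\eP)$. The sign $(-1)^{\sum\|x_j\|+1}$ prefactor in \eqref{eq:sphericalBimod1} is the standard sign for converting the Yoneda module structure of $\eP$ into the $\mu^{0|1|s}$ convention of \cite{LF2}, so this reduces to the well-known statement that Yoneda gives an $A_\infty$ right module (compare Example \ref{eg:Yoneda}), and nothing new needs to be checked.

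Next I would treat the purely left-sided relations: with no $\cA$-inputs, we need
$\mu^{1|1|0}(g,\mu^{1|1|0}(h,\psi)) + \mu^{2|1|0}(\mu^2_R(g,h),\psi,\cdot) = 0$ (up to signs). Since $\mu^{r|1|0}_{\widetilde{\cB}_\eP}=0$ for $r\ge 2$ and $R$ is concentrated in degree zero with $\mu_R^k=0$ for $k\neq 2$, associativity of the $\Gamma$-action on $\eP_{o_P}$ — equivalently, the $\K[\Gamma]$-module structure on the universal local system $E$ — delivers the identity after tracking the $(-1)^{|\psi|+1}$ signs in \eqref{eq:sphericalBimod2}. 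This is a short, purely algebraic check.

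The substantive case is the mixed relation with one $g\in R$ on the left and $s\ge 1$ morphisms $x_s,\dots,x_1$ from $\cA$ on the right. After expansion, the relation reads, schematically,
\[
\mu^{1|1|s-1}_{\widetilde{\cB}_\eP}\bigl(g,\mu^{0|1|s}_{\widetilde{\cB}_\eP}(\psi,x_s,\dots,x_1),\ldots\bigr)
\;\text{vs.}\;
\mu^{0|1|s}_{\widetilde{\cB}_\eP}\bigl(\mu^{1|1|0}_{\widetilde{\cB}_\eP}(g,\psi),x_s,\dots,x_1\bigr),
\]
plus bar differentials in the $\cA$-entries. Because $\mu^{1|1|s-1}_{\widetilde{\cB}_\eP}=0$ by construction, the non-trivial content is that the left $R$-action commutes, up to the prescribed Koszul signs, with $\mu_\cA^{s+1}(\psi,x_s,\dots,x_1)$; this is precisely the content of Corollary \ref{c:ActionCommute}, which was established geometrically from the definition of $\mu^u$ and the fact that the $\Gamma$-action on the universal local system is by deck transformations of $\fP \to P$. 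The main obstacle is therefore purely bookkeeping: matching the $A_\infty$ bimodule sign conventions of \cite{LF2} used here with the Fukaya-category sign conventions of \cite{Seidelbook} used to establish Corollary \ref{c:ActionCommute}. I would reconcile them by introducing the factor $(-1)^{|\psi|+1}$ in \eqref{eq:sphericalBimod2} precisely to absorb the discrepancy, and then each of the three cases closes on the nose.
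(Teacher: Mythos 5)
Your proposal is correct and follows essentially the same route as the paper: the only bimodule relation requiring work is the mixed one with one $R$-input and $s\ge 1$ $\cA$-inputs, and both you and the paper reduce it to Corollary~\ref{c:ActionCommute} after observing that $\mu^{r|1|s}_{\widetilde{\cB}_\eP}$ vanishes whenever $r,s\ge 1$ or $r\ge 2$. The additional discussion you give of the purely left-sided and purely right-sided relations is implicit but not spelled out in the paper's (very terse) proof, so nothing is lost.
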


\begin{proof}
First notice that $\mu^{r|1|s}$ is of degree $1-r-s$.
For bimodule relations, the only non-void relation involving both the left and right module structure is the following:
$$\mu_{\widetilde{\cB}_\eP}(g,\mu_{\widetilde{\cB}_\eP}(\psi,x_s,\dots,x_1))+(-1)^{\sum_{j=1}^s\|x_j\|}\mu_{\widetilde{\cB}_\eP}(\mu_{\widetilde{\cB}_\eP}(g,\psi),x_s,\dots,x_1)=0$$
which, by \eqref{eq:sphericalBimod1} and \eqref{eq:sphericalBimod2}, reduces to
$$g\mu_\cA(\psi,x_s,\dots,x_1)-\mu_\cA(g\psi,x_s,\dots,x_1)=0$$
and it is the content of Corollary \ref{c:ActionCommute}.
\end{proof}

We can assume that elements of $hom_\cA(\eP,\eP)$ have non-negative degree and 
\begin{align*}
 hom^0_\cA(\eP,\eP)=Hom_\K(E^0_{e_L},E^1_{e_L})=Hom_\K(R,R)
\end{align*}
for a geometric transversal intersection point $e_L$.
The identity map $Id_R \in Hom_\K(R,R)$ (with respect to the canonical identification between $E^0_{e_L}$ and $E^1_{e_L}$) is a representative of the cohomological unit
so we also denote it by $e_\eP$.
We have a cohomological faithful functor $\cF_\eP:R \to \cA$ with image $\eP$ such that $g \in R$ is sent to $g e_\eP$ (ie. using the left action of $hom^0_\cA(-,\eP)$).
Therefore, the graph of $\cF_\eP$ gives another $R-\cA$ bimodule $\cB_{\cF_\eP}$
and we have
\begin{align}
 [\mu_{\cB_{\cF_\eP}}^{1|1|0}(g,\psi)]=[-\mu^2_{\cA}(g e_\eP,\psi)]=(-1)^{|\psi|+1}[g\psi] \label{eq:sphericalBimod3}
\end{align}
where the last equality comes from Corollary \ref{c:ActionCoincide}.
One should note the similarity between \eqref{eq:sphericalBimod2} and  \eqref{eq:sphericalBimod3}. 

\begin{lemma}
 $\cB_{\cF_\eP}$ is quasi-isomorphic to $\widetilde{\cB}_\eP$ as $R-\cA$ bimodule.
\end{lemma}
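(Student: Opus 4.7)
The plan is to construct an explicit $A_\infty$ bimodule quasi-isomorphism $t\co \widetilde{\cB}_\eP \to \cB_{\cF_\eP}$ whose leading term $t^{0|1|0}$ is the identity on the underlying graded vector space, and then to fill in the higher-order corrections inductively. First I would note that, for every $X \in Ob(\cA)$, the two bimodules have the \emph{same} underlying cochain complex, namely $hom_\cA(X,\eP)$ with differential $\mu^1_\cA$; moreover the higher right $\cA$-module operations $\mu^{0|1|s}$ coincide (up to the common sign convention in \eqref{eq:GraphModmorphism}), because $\cF_\eP$ can be taken strict after passing to a minimal model, so that $\cF_\eP^k = 0$ for $k\ge 2$ and the graph-bimodule formula reduces to $(-1)^{1+\sum \|x_j\|}\mu^{s+1}_\cA(\psi,x_s,\dots,x_1)$, matching \eqref{eq:sphericalBimod1}. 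Only the $r\ge 1$ part of the bimodule structure really differs.

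Next I would compare the two $\mu^{1|1|0}$ operations cohomologically. For $\widetilde{\cB}_\eP$ the formula \eqref{eq:sphericalBimod2} gives $(-1)^{|\psi|+1}g\psi$, using the geometric left $\Gamma$-action from \eqref{eq:Left_on_cE}. For the graph bimodule, as computed in \eqref{eq:sphericalBimod3}, one has $[\mu^{1|1|0}_{\cB_{\cF_\eP}}(g,\psi)] = -[\mu^2_\cA(ge_\eP,\psi)]$; Corollary \ref{c:ActionCoincide} identifies $[\mu^2_\cA(ge_\eP,\psi)]$ with $(-1)^{|\psi|}g\psi$, so the two cohomological left actions agree on the nose. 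Since $\cA$ is strictly unital and $\mu_{\cF_\eP}^k=0$ for $k\ge 2$, the higher operations $\mu^{r|1|s}_{\cB_{\cF_\eP}}$ with $r\ge 2$ vanish, while those of $\widetilde{\cB}_\eP$ are zero by definition; so the two bimodules have identical cohomological $R$-$\cA$ bimodule structure.

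The construction of $t$ now proceeds by the standard inductive/obstruction argument for $A_\infty$ bimodule homomorphisms. Set $t^{0|1|0}=\id$ and $t^{0|1|s}=0$ for $s\ge 1$. The bimodule homomorphism equation at weight $(1,0)$ reads
\[
\mu^{1|1|0}_{\cB_{\cF_\eP}}(g,\psi) - \mu^{1|1|0}_{\widetilde{\cB}_\eP}(g,\psi) = \mu^1_\cA (t^{1|1|0}(g,\psi)) - t^{1|1|0}(g,\mu^1_\cA \psi),
\]
which is solvable because the left-hand side is a coboundary, by the cohomological agreement established above. Having fixed $t^{1|1|0}$, the obstruction to defining each subsequent $t^{r|1|s}$ is a cocycle in a bar-type complex whose class is manifestly zero because of the vanishing of $\mu^{r|1|s}_{\cB_{\cF_\eP}}$ and $\mu^{r|1|s}_{\widetilde{\cB}_\eP}$ for $(r,s)$ outside the ranges $(0,\cdot)$ and $(1,0)$: at each step the obstruction expression is a finite sum that can be primitived by making a direct choice. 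Since $t^{0|1|0}=\id$ is already a quasi-isomorphism of chain complexes, $t$ is automatically a quasi-isomorphism of $A_\infty$ bimodules.

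The only delicate point, and the one I would write out carefully, is sign bookkeeping in the inductive step, especially balancing the sign $(-1)^{|\psi|+1}$ in \eqref{eq:sphericalBimod2} against the sign produced by the graph-bimodule convention \eqref{eq:GraphModmorphism} and by the conversion between DG and $A_\infty$ structures on $R$. I do not expect any conceptual obstacle beyond Corollary \ref{c:ActionCoincide}, which has already done the geometric work of comparing the two left $\Gamma$-actions.
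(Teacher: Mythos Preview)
There is a real gap in your obstruction argument. You assert that ``since $\cA$ is strictly unital and $\cF_\eP^k=0$ for $k\ge 2$, the higher operations $\mu^{r|1|s}_{\cB_{\cF_\eP}}$ with $r\ge 2$ vanish.'' This is not correct. With $\cF_\eP^k=0$ for $k\ge 2$, the graph-bimodule formula \eqref{eq:GraphModmorphism} collapses to
\[
\mu^{r|1|s}_{\cB_{\cF_\eP}}(g_r,\dots,g_1,x,a_s,\dots,a_1) = (-1)^{1+\sum_j \|a_j\|}\,\mu_\cA^{r+s+1}(g_r e_\eP,\dots,g_1 e_\eP,x,a_s,\dots,a_1),
\]
and the inputs $g_i e_\eP \in hom^0_\cA(\eP,\eP)$ are \emph{not} the strict unit unless $g_i = 1_\Gamma$. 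Strict unitality of $\cA$ kills $\mu^k_\cA(\dots,e_\eP,\dots)$ for $k\ge 3$, but says nothing about $\mu^k_\cA(\dots,g e_\eP,\dots)$ for general $g$. So for $r\ge 2$ these operations are genuinely nonzero in general, and the two bimodules differ already at the chain level in infinitely many weights, not just at $(1,0)$.

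Consequently your inductive step does not go through as stated: at each stage $(r,s)$ the defect you must primitive involves the nonvanishing $\mu^{r|1|s}_{\cB_{\cF_\eP}}$ together with all the previously constructed $t^{r'|1|s'}$, and ``can be primitived by a direct choice'' is exactly the point requiring proof. An abstract obstruction argument would need you to identify where the obstruction cocycles live and why they are exact, and nothing in your outline does this.

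The paper avoids obstruction theory entirely by writing down an explicit closed bimodule morphism $t\co \cB_{\cF_\eP}\to\widetilde{\cB}_\eP$. The key input is a homotopy $\cH\in hom^{-1}_{\text{mod-}\cA}(\cY(\eP),\cY(\eP))$ between $Id_{\cY(\eP)}$ and $\cY^1(e_\eP)$ (which exists since Yoneda is cohomologically faithful). One then sets $t^{0|1|0}=\id$, $t^{0|1|s}=0$ for $s\ge 1$, and for $r\ge 1$
\[
t^{r|1|s}(g_r,\dots,g_1,x,a_s,\dots,a_1)=g_r\,\cH^{1|r+s-1}(g_{r-1}e_\eP,\dots,g_1 e_\eP,x,a_s,\dots,a_1).
\]
The verification that $t$ is closed reduces, after expanding both sides, to $g_r$ times the homotopy identity $\mu_{\cY(\eP)}\circ\widehat{\cH}+\cH\circ\widehat{\mu}_{\cY(\eP)}=Id_{\cY(\eP)}+\cY^1(e_\eP)$ applied to the string $(g_{r-1}e_\eP,\dots,g_1 e_\eP,x,a_s,\dots,a_1)$; the outer $g_r$ can be pulled through by Corollary~\ref{c:ActionCommute}. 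This is precisely the mechanism that absorbs the nonzero higher $\mu^{r|1|s}_{\cB_{\cF_\eP}}$ terms you overlooked.
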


\begin{proof}
First recall the bimodule structure on $\cB_{\cF_\eP}$.
Since we have arranged such that $hom_\cA(\eP,\eP)$ has non-negative degree, the $A_{\infty}$ functor $\cF_{\eP}:R \to \cA$
satisfies $\cF_{\eP}^k=0$ for $k>1$, by degree reason.
Therefore, we have
\begin{align*}
  &\mu_{\cB_{\cF_\eP}}^{r|1|s}(g_r,\dots,g_1,x,a_s,\dots,a_1) \\
   =&(-1)^{1+\sum_{j=1}^s \|a_j\|}\mu_\cA^{r+s+1}(\cF_{\eP}^1(g_r),\dots,\cF_{\eP}^1(g_1),x,a_s,\dots,a_1) \\
   =&(-1)^{1+\sum_{j=1}^s \|a_j\|} \mu_\cA^{r+s+1}(g_re_\eP,\dots,g_1e_\eP,x,a_s,\dots,a_1) 
\end{align*}

 The Yondea embedding $hom_\cA(\eP,\eP) \to hom_{\text{mod-}\cA}(\cY(\eP),\cY(\eP))$ is cohomologically full and faithful.
 In particular, we have $[Id_{\cY(\eP)}]=[\cY^1(e_\eP)]$.
 Let $\cH \in hom_{\text{mod-}\cA}^{-1}(\cY(\eP),\cY(\eP))$ be a homotopy between them. That is (up to sign)
 \begin{align}\label{eq:YonedaHomotopy}
  \mu_{\cY(\eP)} \circ \widehat{\cH} + \cH \circ \widehat{\mu}_{\cY(\eP)}=Id_{\cY(\eP)}+\cY^1(e_\eP)
 \end{align}
 We are going to construct an explicit quasi-isomorphism from  $\cB_{\cF_\eP}$ to $\widetilde{\cB}_\eP$  as follows 
 (we do it without signs and leave the checking for signs as an exercise).
 \begin{align*}
   &t^{0|1|0}(x)=x \\
   &t^{0|1|s}(x,a_s,\dots,a_1)=0 \\
   &t^{r|1|s}(g_r,\dots,g_1,x,a_s,\dots,a_1)=g_r \cH^{1|r+s-1}(g_{r-1}e_\eP,\dots,g_1e_\eP,x,a_s,\dots,a_1) \text{ if $r>0$}
 \end{align*}
 To check that $t \in hom_{R\text{-mod-}\cA}^0(\cB_{\cF_\eP},\widetilde{\cB}_\eP)$ is a bimodule homomorphism (ie. $t$ is closed), we need to compute
 \begin{align*}
  &\mu_{\widetilde{\cB}_\eP} \circ \widehat{t} (g_r,\dots,g_1,x,a_s,\dots,a_1) \\
 =& \mu_{\widetilde{\cB}_\eP}^{r|1|s}(g_r,\dots,g_1,x,a_s,\dots,a_1) \\
 &+\sum_{k,l} \mu_{\widetilde{\cB}_\eP}^{r-k|1|l}(g_r,\dots,g_{k+1},g_k\cH^{1|k+s-l-1}(g_{k-1}e_\eP,\dots ,g_{1}e_\eP,x,a_s,\dots,a_{l+1}),a_l,\dots,a_1) \\
 =&
 \begin{cases}
  g_rg_{r-1}\cH^{1|r+s-2}(g_{r-2}e_\eP,\dots ,g_{1}e_\eP,x,a_s,\dots,a_{1}) \\
  + \sum_l \mu_\cA^{l+1}(g_r\cH^{1|r+s-l-1}(g_{r-1}e_\eP,\dots ,g_{1}e_\eP,x,a_s,\dots,a_{l+1}),a_l,\dots,a_1 ) &\text{ if $r\ge 2$}\\
  \sum_l \mu_\cA^{l+1}(g_1\cH^{1|s-l}(x,a_s,\dots,a_{l+1}),a_l,\dots,a_1 ) &\text{ if $r=1$}\\
  \mu_\cA^{s+1}(x,a_s,\dots,a_{1}) &\text{ if $r=0$} \\
 \end{cases} \\
 \end{align*}
 We also have
\begin{align*}
 &t \circ \widehat{\mu}_{\cB_{\cF_\eP}}(g_r,\dots,g_1,x,a_s,\dots,a_1) \\
 =&\sum_{j=1}^{r-1} t^{r-1|1|s}(g_r,\dots,g_{j+2},g_{j+1}g_j,g_{j-1},\dots,g_1,x,a_s,\dots,a_1) \\
 &+\sum_{k,l} t^{r-k|1|l}(g_r,\dots,g_{k+1},\mu_{\cA}^{k+s-l+1}(g_ke_\eP,\dots,g_1e_\eP,x,a_s,\dots,a_{l+1}),a_l,\dots,a_1) \\
 &+\sum_{k,l} t^{r|1|k+l+1}(g_r,\dots,g_1,x,a_s,\dots,a_{s-k+1},\mu_\cA^{s-k-l}(a_{s-k},\dots,a_{l+1}),a_l,\dots,a_1)\\
 =& g_rg_{r-1}\cH^{1|r+s-2}(g_{r-2}e_\eP,\dots,g_1e_\eP,x,a_s,\dots,a_1) \\
 &+\sum_{j=1}^{r-2}g_r\cH^{1|r+s-2}(g_{r-1}e_\eP, \dots,g_{j+2}e_\eP,g_{j+1}g_je_\eP,g_{j-1}e_\eP,\dots,g_1e_\eP,x,a_s,\dots,a_1) \\
 &+\mu_{\cA}^{r+s+1}(g_re_\eP,\dots,g_1e_\eP,x,a_s,\dots,a_1) \\
 &+\sum_{k,l} g_r \cH^{1|r+l-k-1}(g_{r-1}e_\eP,\dots, g_{k+1}e_\eP, \mu_{\cA}^{k+s-l+1}(g_ke_\eP,\dots,g_1e_\eP,x,a_s,\dots,a_{l+1}),a_l,\dots,a_1 ) \\
 &+\sum_{k,l} g_r \cH^{1|r+l+k}(g_{r-1}e_\eP,\dots,g_1e_\eP,x,a_s,\dots,a_{s-k+1},\mu_\cA^{s-k-l}(a_{s-k},\dots,a_{l+1}),a_l,\dots,a_1)
\end{align*}
On the other hand,
\begin{align*}
 &(Id_{\cY(\eP)}+\cY^1(e_\eP))(g_{r-1}e_\eP,\dots,g_1e_\eP,x,a_s,\dots,a_1) \\
 =&\begin{cases}
    \mu_\cA^{r+s+1}(e_\eP,g_{r-1}e_\eP,\dots,g_1e_\eP,x,a_s,\dots,a_1) &\text{ if $(r,s)\neq (1,0)$} \\
    x+\mu^2_\cA(e_\eP,x) &\text{ if $(r,s)= (1,0)$}
   \end{cases}\\
\end{align*}
Notice that $\cH^{1|r-1}(h_re_\eP,\dots,h_1e_\eP)=0$ for degree reason so we have
\begin{align*}
 &(\mu_{\cY(\eP)} \circ \widehat{\cH} + \cH \circ \widehat{\mu}_{\cY(\eP)})(g_{r-1}e_\eP,\dots,g_1e_\eP,x,a_s,\dots,a_1) \\
 =&\sum_l \mu_\cA^{l+1}(\cH^{1|r+s-l-1}(g_{r-1}e_\eP,\dots,g_1e_\eP,x,a_s,\dots,a_{l+1}),a_l,\dots,1) \\
 &+\sum_{j=1}^{r-2}\cH^{1|r+s-2}(g_{r-1}e_\eP, \dots,g_{j+2}e_\eP,g_{j+1}g_je_\eP,g_{j-1}e_\eP,\dots,g_1e_\eP,x,a_s,\dots,a_1) \\
 &+\sum_{k,l}  \cH^{1|r+l-k-1}(g_{r-1}e_\eP,\dots, g_{k+1}e_\eP, \mu_{\cA}^{k+s-l+1}(g_ke_\eP,\dots,g_1e_\eP,x,a_s,\dots,a_{l+1}),a_l,\dots,a_1 ) \\
 &+\sum_{k,l}  \cH^{1|r+l+k}(g_{r-1}e_\eP,\dots,g_1e_\eP,x,a_s,\dots,a_{s-k+1},\mu_\cA^{s-k-l}(a_{s-k},\dots,a_{l+1}),a_l,\dots,a_1)
\end{align*}
By comparing the terms, one can see the multiplying the Equation \eqref{eq:YonedaHomotopy} by $g_r$ on the left implies that 
$\mu_{\widetilde{\cB}_\eP} \circ \widehat{t}+t \circ \widehat{\mu}_{\cB_{\cF_\eP}}=0$
and hence $t$ is a bimodule homomorphism.
The fact that $t^{0|1|0}(x)=x$ implies that $t$ is a quasi-isomorphism.
\end{proof}

\begin{corr}
 There are natural equivalences of DG functors
 \begin{align*}
 -\otimes_R^{\mathbb{L}} \widetilde{\cB}_\eP &\simeq -\otimes_R^{\mathbb{L}} \cB_{\cF_\eP} \\
hom_{\text{mod-}\cA}(\widetilde{\cB}_\eP,-) &\simeq  hom_{\text{mod-}\cA}(\cB_{\cF_\eP},-) 
 \end{align*}

\end{corr}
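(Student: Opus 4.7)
The preceding lemma supplies an explicit $R$-$\cA$ bimodule quasi-isomorphism $t: \cB_{\cF_\eP} \to \widetilde{\cB}_\eP$. The plan is simply to apply to $t$ the DG functors $F_\otimes$ and $F_{hom}$ constructed in Definition-Lemmata \ref{dl:BimodFunMor} and \ref{dl:homDGfunMor}, and then to verify that the resulting natural transformations become componentwise quasi-isomorphisms after restriction to perfect modules.

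Concretely, I would set $\eta_\otimes := F_\otimes^1(t)$ and $\eta_{hom} := F_{hom}^1(t)$. Because $F_\otimes$ and $F_{hom}$ are DG functors and $t$ is closed of degree $0$ (being a bimodule homomorphism), both $\eta_\otimes$ and $\eta_{hom}$ are automatically closed natural transformations between the DG functors appearing in the statement. The remaining content is then to check that $(\eta_\otimes)_V^0$ and $(\eta_{hom})_\cM^0$ are quasi-isomorphisms for every $V \in Ob(R^{perf})$ and $\cM \in Ob(\cA^{perf})$.

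For $\eta_{hom}$ this is immediate from Remark \ref{r:K-projective}: every $\cM \in \cA^{perf}$ is $K$-injective, so $hom_{\text{mod-}\cA}(-,\cM)$ sends the bimodule quasi-isomorphism $t$ to a quasi-isomorphism. For $\eta_\otimes$, I would exploit the explicit bar-type formula $V \otimes_R^{\mathbb{L}} \cB = V \otimes_\K TR \otimes_\K \cB$: filtering by the number of tensor factors from $TR$ produces a spectral sequence whose $E_1$-differentials are built out of $t$ alone, and the quasi-isomorphism property of $t$ then forces the comparison map to be an isomorphism on every page. Alternatively one may first reduce to $V = R$, which split-generates $R^{perf}$, and identify $R \otimes_R^{\mathbb{L}} \cB$ canonically with $\cB$ via Lemma \ref{l:BarContraction}, making the claim immediate, and then extend to all of $R^{perf}$ by the five lemma applied to cones and retracts.

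I do not expect any genuine obstacle here: the heavy lifting has already been done by the DG-functoriality established in Section \ref{ss:bimodules} and by the $K$-injectivity of perfect modules from Remark \ref{r:K-projective}. The corollary is the bookkeeping needed to translate a bimodule quasi-isomorphism into the two functorial natural equivalences stated, and in particular it allows the spherical functor in Proposition \ref{p:R-twist} to be realized either via $\cB_{\cF_\eP}$ (the model used to derive the algebraic formalism) or via $\widetilde{\cB}_\eP$ (the geometrically natural model in which the evaluation map $ev_\Gamma$ of Section \ref{ss:EquivariantEv} has its cleanest incarnation), thereby completing the proof of Theorem \ref{t:Twist formula(alg)}.
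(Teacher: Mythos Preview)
Your proposal is correct and takes essentially the same approach as the paper: apply the DG functors $F_\otimes$ and $F_{hom}$ from Definition-Lemmata \ref{dl:BimodFunMor} and \ref{dl:homDGfunMor} to the bimodule quasi-isomorphism $t$. The paper's proof is simply more economical: rather than checking componentwise on perfect modules via spectral sequences or reduction to $V=R$, it observes that \emph{every} object in $R\text{-mod-}\cA$ is both $K$-projective and $K$-injective (the same fact underlying Remark \ref{r:K-projective}, but applied to the bimodule category), so that $F_\otimes$ and $F_{hom}$ preserve quasi-isomorphisms outright, with no restriction to perfect modules needed.
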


\begin{proof}
 Since every object in $R\text{-mod-}\cA$ is both $K$-projective and $K$-injective, the DG functors $F_{\otimes}$ and $F_{hom}$ preserves quasi-isomorphisms.
 Therefore, we have $F_{\otimes}(\widetilde{\cB}_\eP) \simeq F_{\otimes}(\cB_{\cF_\eP}) $ and 
 $F_{hom}(\widetilde{\cB}_\eP) \simeq F_{hom}(\cB_{\cF_\eP}) $.
\end{proof}



\begin{lemma}\label{l:homIdnetify}
For any $X \in Ob(\cA)$,
 $hom_{\text{mod-}\cA}(\widetilde{\cB}_\eP,\cY_\cA(X))$ is quasi-isomorphic to $hom_{\cA}(\eP,X)$ as object in $\text{mod-}R$.
 As a result, $hom_{\text{mod-}\cA}(\widetilde{\cB}_\eP,\cM)$ is quasi-isomorphic to a perfect DG $R$-module for all $\cM \in \cA^{perf}$ so Assumption \ref{a:perfect} is satisfied.
\end{lemma}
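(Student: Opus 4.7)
The plan is to split the proof into three pieces, corresponding to the Yoneda identification, perfectness for $X$ itself, and extension to all of $\cA^{perf}$.

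First, I would treat $\widetilde{\cB}_\eP$ as the right $\cA$-module $\cY_\cA(\eP)$ endowed with an extra left $R$-action given by \eqref{eq:sphericalBimod2}. The usual Yoneda quasi-isomorphism $\Phi : hom_\cA(\eP,X) \to hom_{\text{mod-}\cA}(\cY_\cA(\eP),\cY_\cA(X))$ is a quasi-isomorphism of cochain complexes, and I need to check that it is right $R$-linear. The right $R$-action on the target is \eqref{eq:RightGaction}, built from the left $R$-action on $\widetilde{\cB}_\eP$; the right $R$-action on the source is \eqref{eq:Right_on_cE}. Both are controlled by Corollary \ref{c:ActionCommute}, which says that the left $\Gamma$-action on $\widetilde{\cB}_\eP$ commutes with every $\mu^d_\cA$ whose last entry is the Yoneda input. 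Substituting this equivariance into \eqref{eq:RightGaction}, every term collapses except the $k=r$ one, and one reads off that $\Phi(\psi\cdot g) = \Phi(\psi)\cdot g$ up to the sign $(-1)^{|\psi|+1}$ already built into the definition of $\widetilde{\cB}_\eP$. This gives the quasi-isomorphism in $\text{mod-}R$.

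Second, I would show $hom_\cA(\eP,X) \in R^{perf}$ directly. After a compactly supported Hamiltonian perturbation, the underlying Lagrangian $L_X$ intersects $P$ transversally at finitely many points, and
\begin{align*}
CF(\eP, X) \;=\; \bigoplus_{x \in L_X \cap P} Hom_\K(E_x, E^X_x)
\end{align*}
is finite-dimensional over $\K$ in each degree. Under the right $R$-action from \eqref{eq:Right_on_cE}, each fiber $E_x \simeq \K\langle \pi^{-1}(\pi(x))\rangle \simeq R$ is the right regular module, so each $Hom_\K(E_x, E^X_x) \simeq Hom_\K(R, E^X_x)$ is a free right $R$-module of rank $\dim_\K E^X_x$ (here I use that $R=\K[\Gamma]$ is a finite-dimensional Frobenius algebra, so $Hom_\K(R,\K)\simeq R$ as right $R$-modules). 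Hence $CF(\eP,X)$ is a bounded complex of finitely generated free right $R$-modules, and in particular is an object of $R^{perf}$.

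Third, for a general $\cM \in \cA^{perf}$, I would invoke that $\cM$ is quasi-isomorphic to a module built from the Yoneda image of $\cA$ by iterated shifts, mapping cones and homotopy retracts. The DG functor $F_{hom}(\widetilde{\cB}_\eP)$ preserves quasi-isomorphisms by Remark \ref{r:K-projective}, and it commutes with shifts, cones, and retracts because it is a DG functor out of the pre-triangulated DG category $\cA^{perf}$. Since $R^{perf}$ is closed under the same operations inside $\text{mod-}R$, the first two steps propagate: $F_{hom}(\widetilde{\cB}_\eP)(\cM) \in R^{perf}$, which is Assumption \ref{a:perfect}.

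The principal obstacle is the sign- and formula-bookkeeping in the first step: verifying that the Yoneda map is right $R$-linear is not entirely automatic because the left $R$-action on $\widetilde{\cB}_\eP$ was set up with the twist $(-1)^{|\psi|+1}$ in \eqref{eq:sphericalBimod2} precisely so that the induced right action on $hom_{\text{mod-}\cA}(\widetilde{\cB}_\eP,-)$ matches the geometric one on $CF(\eP,-)$. The other two steps are, by contrast, structural and should go through essentially formally once the finiteness of $\Gamma$ and the freeness of the universal local system over $R$ are in hand.
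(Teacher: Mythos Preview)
Your proposal is correct and follows essentially the same approach as the paper: show the Yoneda map $\cY^1_\cA$ is a strict morphism of right $R$-modules (the paper does this by the direct computation $\mu^{0|1|1}(\cY^1_\cA(x),g)=\cY^1_\cA(xg)$, which is exactly what your first step sketches), then observe that $hom_\cA(\eP,X)$ is perfect over $R$, and extend to $\cA^{perf}$ by functoriality. Two small clarifications: the collapsing of \eqref{eq:RightGaction} to a single term comes from the \emph{definition} of $\widetilde{\cB}_\eP$ (namely $\mu^{1|1|s}_{\widetilde{\cB}_\eP}=0$ for $s>0$), while Corollary~\ref{c:ActionCommute} is what gives $\mu_\cA(x,gy,\dots)=\mu_\cA(xg,y,\dots)$; and the signs you mention cancel exactly, so $\cY^1_\cA$ is strictly $R$-linear, not merely up to sign. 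Your Frobenius argument for freeness is a correct elaboration of what the paper summarizes as ``by construction a finitely generated semi-free $R$-module''.
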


\begin{proof}
To show that 
$hom_{\text{mod-}\cA}(\widetilde{\cB}_\eP,\cY_\cA(X))$ is quasi-isomorphic to $hom_{\cA}(\eP,X)$, it suffices to show that the Yoneda embedding 
\begin{align*}
 &hom_{\cA}(\eP,X) \to hom_{\text{mod-}\cA}(\widetilde{\cB}_\eP,\cY_\cA(X)) \\
 &x \mapsto \cY^1_\cA(x)
\end{align*}
is a right $R$-module homomorphism in the ordinary sense so $\cY^1_\cA$ is a quasi-isomorphism in $\text{mod-}R$.
One can compute, for $\phi \in hom_{\text{mod-}\cA}(\widetilde{\cB}_\eP,\cY_\cA(X))$, we have
\begin{align*}
 \mu^{0|1|1}_{hom_{\text{mod-}\cA}(\widetilde{\cB}_\eP,\cY_\cA(X))}(\phi,g)(y,a_r,\dots,a_1)
 =&(-1)^{|y|+1}\phi(\mu^{1|1|0}_{\widetilde{\cB}_\eP})(g,\phi),a_r,\dots,a_1) \\
 =&\phi(gy,a_r,\dots,a_1)
\end{align*}
When $\phi=\cY^1_\cA(x)$, it becomes
\begin{align*}
 (-1)^{\|x\| \cdot |y|+|x| (\sum_j \|a_j\|)}\mu_{\cA}(x,gy,a_r,\dots,a_1)
 =&(-1)^{\|x\| \cdot |y|+|x| (\sum_j \|a_j\|)}\mu_{\cA}(xg,y,a_r,\dots,a_1)\\
 =&\cY^1_\cA(xg)(y,a_r,\dots,a_1) \\
\end{align*}
so $\cY^1_\cA$ is a quasi-isomorphism in $\text{mod-}R$.

Moreover, $hom_{\cA}(\eP,X)$ is by construction a finitely generated semi-free $R$-module (ie. it can be constructed from free $R$ modules, taking shifts and taking mapping cones)
so $hom_{\cA}(\eP,X)$ is perfect.
\end{proof}

Given a DG right $R$-module $V$, we have an $A_{\infty}$ right $\cA$ module $V \otimes_R \eP$ (see Section \ref{ss:EquivariantEv})
which plays a key role (when $V=hom(\eP,L)$) in the previous sections. 
Now, we want to identify it with $V \otimes_R^{\mathbb{L}} \widetilde{\cB}_\eP$.

\begin{lemma}\label{l:tensorIdentify}
 For a DG right $R$-module $V$,
 $V \otimes_R^{\mathbb{L}} \widetilde{\cB}_\eP$ is quasi-isomorphic to $V \otimes_R \eP$ as object in $\text{mod-}\cA$.
\end{lemma}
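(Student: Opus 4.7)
The plan is to construct a natural comparison morphism $t : V\otimes_R^{\mathbb L}\widetilde{\cB}_\eP\to V\otimes_R\eP$ in $\text{mod-}\cA$ and prove it is a quasi-isomorphism by showing that $\widetilde{\cB}_\eP(X)=hom_\cA(X,\eP)$ is free as a left $R$-module for each $X\in \mathrm{Ob}(\cA)$, so that the bar complex computing the derived tensor collapses onto the ordinary tensor product.

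First I would define $t$ on each $X$ as the canonical quotient from the bar complex onto the ordinary tensor,
\begin{align*}
V\otimes_{\K}TR\otimes_{\K}hom_\cA(X,\eP) &\longrightarrow V\otimes_R hom_\cA(X,\eP),\\
v\otimes g_s\cdots g_1\otimes\psi &\longmapsto \begin{cases} v\otimes\psi & \text{if } s=0,\\ 0 & \text{if } s>0.\end{cases}
\end{align*}
A direct check, using the $R$-bilinearity built into $\otimes_R$ and the definitions \eqref{eq:sphericalBimod1}, \eqref{eq:sphericalBimod2}, shows that $t$ is an $A_\infty$ right $\cA$-module homomorphism: the higher $A_\infty$ module structural maps $\mu^{1|d}$ on both sides only touch the $\cA$-side of $\widetilde{\cB}_\eP$ resp.\ $\eP$, so they commute with the $R$-projection and no higher components of $t$ are needed.

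The heart of the argument is to show that $t$ is a quasi-isomorphism of chain complexes after evaluation at each $X$. By the standard theory of the two-sided bar construction, the augmentation $B(V,R,N)\to V\otimes_R N$ is a quasi-isomorphism whenever $N$ is flat as a left $R$-module; it therefore suffices to verify that $N:=hom_\cA(X,\eP)$ is in fact a \emph{free} left $R$-module. Writing $\eE^X=(X,E^X,\nabla^X)$ and using the Floer decomposition \eqref{eq:CochainComplex} (after a Hamiltonian perturbation making $X\pitchfork P$), we get $CF(\eE^X,\eP)=\bigoplus_{x\in X\cap P}\mathrm{Hom}_{\K}(E^X_{x},E_x)$; by Definition \ref{d:uniSystem} each stalk $E_x$ is (non-canonically) isomorphic to the regular representation $R$, so $\mathrm{Hom}_{\K}(E^X_{x},E_x)\cong (E^X_{x})^{*}\otimes_{\K}R$ is a free left $R$-module of rank $\dim E^X_x$. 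Corollary \ref{c:ActionCommute} then ensures that the Floer differential is $R$-linear, so $N$ is a DG $R$-module whose underlying graded $R$-module is free, in particular flat.

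The main technical point will be matching the conventions for the left $R$-action: the one implicit in the definition of $\widetilde{\cB}_\eP$ via \eqref{eq:sphericalBimod2} carries the sign $(-1)^{|\psi|+1}$, while the geometric one from \eqref{eq:Left_on_cE} does not. Once this translation is pinned down, the freeness identification is canonical up to choices of lifts of intersection points to $\fP$, and the naturality of $t$ in both $V$ and $X$ is then formal, completing the proof.
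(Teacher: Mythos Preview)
Your proposal is correct and follows essentially the same approach as the paper: both define the canonical quotient $t$ from the bar complex to the ordinary tensor (with vanishing higher components) and argue it is a quasi-isomorphism using that $hom_\cA(X,\eP)$ is free/K-projective as a left $R$-module. Your geometric justification of freeness via the stalks $E_x\cong R$ of the universal local system is more explicit than the paper's one-line appeal to K-projectivity, but the content is the same.
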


\begin{proof}

 For $X \in Ob(\cA)$, we have the projection
 \begin{align*}
  t^{1|0}:&V \otimes_\K TR \otimes_\K hom_\cA(X,\eP) \to V \otimes_R hom_\cA(X,\eP) \\
          & v \otimes g_s \dots g_1 \otimes x \mapsto 
          \begin{cases}
           v \otimes_R x &\text{ if $s =0$} \\
           0 &\text{ if $s \ge 1$} \\
          \end{cases}
 \end{align*}
Let $t^{1|r}=0$ for $r>0$.
One can check that $t=(t^{1|r})_r$ defines an $A_{\infty}$ right $\cA$ module homomorphism.
Moreover, the differential $\mu^1_{Cone(t)(X)}$ on $Cone(t)(X)$ does not depend on the $A_{\infty}$ right $\cA$ module on $\eP$.

It is clear that $t^{1|0}$ can be obtained by applying the (underived) functor $V \otimes_R -$ to the morphism $\phi$
\begin{align*}
 R \otimes_\K TR \otimes_\K hom_\cA(X,\eP) \to hom_\cA(X,\eP) \\
 r \otimes g_s \dots g_1 \otimes x \mapsto 
          \begin{cases}
           rx &\text{ if $s =0$} \\
           0 &\text{ if $s \ge 1$} \\
          \end{cases}
\end{align*}
under the identification $V \otimes_R R \otimes TR \otimes_\K hom_\cA(X,\eP) \simeq V \otimes_\K TR \otimes_\K hom_\cA(X,\eP)$.
Since $\phi$ is the standard bar resolution of $hom_\cA(X,\eP)$ and $hom_\cA(X,\eP)$ is already an {\it K-projective} object (see Remark \ref{r:K-projective}) as a left DG $R$ 
module before resolution, $Cone(t)(X) \simeq V \otimes_R Cone(\phi) $ is acyclic.


\end{proof}

\begin{prop}\label{p:algebraicLagTwist}
 $T_\eP(X)=\cT_{F_{\otimes}(\widetilde{\cB}_\eP)}(X)$ for all $X \in Ob(\cA)$, where $T_{F_{\otimes}(\widetilde{\cB}_\eP)}$ is the twist auto-equivalence.
\end{prop}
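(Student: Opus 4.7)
The plan is to prove the proposition by identifying the two mapping cones object by object (and morphism by morphism) via the quasi-isomorphisms already established in Lemmata \ref{l:homIdnetify} and \ref{l:tensorIdentify}. By definition,
\begin{align*}
\cT_{F_\otimes(\widetilde{\cB}_\eP)}(X) = Cone\bigl(\cS \circ \cR(X) \xrightarrow{c_{sr}(X)} X \bigr),
\end{align*}
where $\cS = F_\otimes(\widetilde{\cB}_\eP)$ and $\cR = F_{hom}(\widetilde{\cB}_\eP)$, and $c_{sr}$ is the adjunction counit from Proposition \ref{p:TensorHom}. On the other side, $T_\eP(X) = Cone(hom_\cA(\eP,X)\otimes_\Gamma \eP \xrightarrow{ev_\Gamma} X)$ by \eqref{eq:TcE}. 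Since taking a mapping cone preserves quasi-isomorphisms, it is enough to identify the two arrows inside the cones up to the relevant quasi-isomorphisms.

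First I would apply Lemma \ref{l:homIdnetify} to get a quasi-isomorphism $\cY^1_\cA: hom_\cA(\eP,X) \xrightarrow{\sim} \cR(\cY_\cA(X))$ of right DG $R$-modules, after viewing $X$ as $\cY_\cA(X)\in\cA^{perf}$. Tensoring with $\widetilde{\cB}_\eP$ then produces a quasi-isomorphism $hom_\cA(\eP,X)\otimes_R^{\mathbb{L}}\widetilde{\cB}_\eP \xrightarrow{\sim} \cS\circ\cR(X)$ in $\cA^{perf}$. Next, applying Lemma \ref{l:tensorIdentify} to the module $V=hom_\cA(\eP,X)$ yields a further quasi-isomorphism $hom_\cA(\eP,X)\otimes_R^{\mathbb{L}}\widetilde{\cB}_\eP \xrightarrow{\sim} hom_\cA(\eP,X)\otimes_R \eP = hom_\cA(\eP,X)\otimes_\Gamma \eP$, since $R=\K[\Gamma]$. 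Composing these identifies the source of $c_{sr}(X)$ with the source of $ev_\Gamma$, and the target $X$ is the same on both sides.

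The key step is to check that under the above identifications, the adjunction counit $c_{sr}(X)$ becomes $ev_\Gamma$. Tracing through the formulae, for $\phi = \cY^1_\cA(y)$ with $y\in hom_\cA(\eP,X)$ and $x\in \widetilde{\cB}_\eP(Y) = hom_\cA(Y,\eP)$, the counit from Proposition \ref{p:TensorHom} is
\begin{align*}
(c_X^0)^{1|r}(\cY^1_\cA(y)\otimes x, a_r,\dots,a_1) = \pm\, \mu_\cA^{r+2}(y,x,a_r,\dots,a_1),
\end{align*}
which, by the very formula \eqref{eq:evaluationMap}/\eqref{eq:TwistedEvMap}, is $ev_\Gamma^{r+1}(y\otimes x, a_r,\dots,a_1)$ up to sign. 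Since the bar-style projection $t^{1|0}$ constructed in the proof of Lemma \ref{l:tensorIdentify} kills the tensor factors in $TR$ (so only the length-zero bar piece contributes), the composite under the three quasi-isomorphisms is precisely the evaluation $ev_\Gamma$, and hence the cones are quasi-isomorphic.

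The main obstacle I anticipate is the bookkeeping of signs: Sections \ref{ss:EquivariantEv} uses the Seidel convention while Section \ref{s:Algebraic perspective} follows the LF2 convention, and the two differ in the definition of Yoneda (cf.\ \eqref{eq:Yonedamorphism}) and of $\mu^2$ in the opposite category (Remark \ref{r:oppSignCon}). I would resolve this by first rewriting $ev_\Gamma$ in the LF2 convention (so that the identity $y\otimes x \mapsto \mu_\cA(y,x,\dots)$ has the same sign as in the counit formula for $c_{sr}$), then verifying that the three quasi-isomorphisms $\cY^1_\cA$, the Lemma \ref{l:tensorIdentify} map $t^{1|0}$, and the Lemma \ref{l:BarContraction}/Remark \ref{r:InverseBarContraction} bar-map intertwine the two maps on the nose rather than merely up to homotopy, which follows because $t^{1|0}$ has vanishing higher components and $\cY^1_\cA$ respects composition on cohomology. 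Once the sign match is confirmed, Corollary \ref{c:functoralityAuto} (i.e.\ the independence of $T_\eP(X)$ from auxiliary choices) together with the cone identification yields the claim.
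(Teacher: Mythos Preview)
Your proposal is correct and follows essentially the same approach as the paper: you use Lemma \ref{l:homIdnetify} and Lemma \ref{l:tensorIdentify} to identify the source $\cS\circ\cR(X)$ with $hom_\cA(\eP,X)\otimes_\Gamma \eP$, and then verify explicitly that the adjunction counit $c_{sr}(X)$ becomes the evaluation map $ev_\Gamma$ under this identification. The paper carries out exactly this computation, arriving at $(c_{sr})^0_{\cY_\cA(X)}(\cY^1_\cA(x)\otimes y,a_r,\dots,a_1)=(-1)^{|y|}\mu_\cA(x,y,a_r,\dots,a_1)$, which matches \eqref{eq:TwistedEvMap}; your anticipated sign bookkeeping between the two conventions is precisely what is needed and is handled by the paper via the explicit formula \eqref{eq:Yonedamorphism}.
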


\begin{proof}

By Lemma \ref{l:homIdnetify} and \ref{l:tensorIdentify}, we have quasi-isomorphisms
\begin{align*}
 &hom_{\text{mod-}\cA}(\widetilde{\cB}_\eP,\cY_\cA(X)) \otimes_R \eP \to hom_{\text{mod-}\cA}(\widetilde{\cB}_\eP,\cY_\cA(X))\otimes_R^{\mathbb{L}} \widetilde{\cB}_\eP \\
 &x \otimes y \mapsto \cY^1_{\cA}(x) \otimes y
\end{align*}
and 
\begin{align*}
 &hom_{\text{mod-}\cA}(\widetilde{\cB}_\eP,\cY_\cA(X)) \otimes_R \eP \to hom_{\cA}(\eP,X) \otimes_R \eP  \\
 &x \otimes y \mapsto x \otimes y
\end{align*}
By Proposition \ref{p:TensorHom}, the adjunction counit $(c_{sr})_{\cY_\cA(X)}^0$ gives
 \begin{align}
  (c_{sr})_{\cY_\cA(X)}^0(\cY^1_{\cA}(x) \otimes y,a_r,\dots,a_1)&=(-1)^{|x|(|y|+\sum_{j=1}^r \|a_j\|)}\cY_\cA^1(x)(y,a_r,\dots,a_1) \\
  &=(-1)^{|y|} \mu_\cA(x,y,a_r,\dots,a_1) \label{eq:twistedEv}
 \end{align}
 which coincides with the $A_{\infty}$ evaluation morphism \eqref{eq:TwistedEvMap}.
 Therefore, we have $T_\eP(X)=\cT_{F_{\otimes}(\widetilde{\cB}_\eP)}(X)$ for all $X \in Ob(\cA)$.
\end{proof}


\begin{corr}\label{c:functoralityAuto}
 $T_{\cG(\eP)}(\cG(X)) \simeq \cG(T_\eP(X))$ for any quasi-equivalence $\cG:Fuk \to Fuk'$.
\end{corr}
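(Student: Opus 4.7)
The plan is to deduce Corollary \ref{c:functoralityAuto} from the categorical reformulation already established. By Proposition \ref{p:algebraicLagTwist}, for any $X\in Ob(\cF)$ we have $T_\eP(X)\simeq \cT_{F_\otimes(\widetilde{\cB}_\eP)}(\cY_\cF(X))$, and $\cT_{F_\otimes(\widetilde{\cB}_\eP)}=Cone(c_{sr})$ where $c_{sr}$ is the adjunction counit of the Tensor-Hom pair $(F_\otimes(\widetilde{\cB}_\eP),F_{hom}(\widetilde{\cB}_\eP))$. The same formula applied inside $\cF'$ gives $T_{\cG(\eP)}(\cG(X))\simeq \cT_{F_\otimes(\widetilde{\cB}_{\cG(\eP)})}(\cY_{\cF'}(\cG(X)))$. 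Thus the corollary amounts to comparing two twist auto-equivalences via the quasi-equivalence $\cG$.

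First I would reduce $\widetilde{\cB}_\eP$ to a graph bimodule: since $\widetilde{\cB}_\eP\simeq \cB_{\cF_\eP}$ as $R\text{-}\cF$ bimodules (proved in Section \ref{ss:R-spherical-Lag}), and analogously $\widetilde{\cB}_{\cG(\eP)}\simeq \cB_{\cG\circ\cF_\eP}$, it suffices to work with the graph bimodules of $\cF_\eP:R\to\cF$ and of $\cG\circ\cF_\eP:R\to\cF'$. Then I would invoke Lemma \ref{l:conjugationInvariant} directly, applied with $\cC_0=R$, $\cD=\cF$, $\cD'=\cF'$, and $\cF_0=\cF_\eP$: it yields the natural equivalence
\begin{equation*}
F_\otimes(\cB_\cG)\circ Cone(c_{sr})\;\simeq\;Cone(c_{sr}')\circ F_\otimes(\cB_\cG)
\end{equation*}
of functors $\cF^{perf}\to(\cF')^{perf}$, where $c_{sr}$ and $c_{sr}'$ are the adjunction counits for $\cF_\eP$ and $\cG\circ\cF_\eP$ respectively. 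Note that Assumption \ref{a:perfect} holds in both cases by Lemma \ref{l:homIdnetify}, so the relevant twists really are auto-equivalences of the perfect module categories.

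Next I would evaluate this natural equivalence on the Yoneda module $\cY_\cF(X)$. On the right-hand side, Corollary \ref{c:perfect} provides a quasi-isomorphism $F_\otimes(\cB_\cG)(\cY_\cF(X))\simeq \cY_{\cF'}(\cG(X))$, so
\begin{equation*}
Cone(c_{sr}')(F_\otimes(\cB_\cG)(\cY_\cF(X)))\;\simeq\;Cone(c_{sr}')(\cY_{\cF'}(\cG(X)))\;\simeq\;T_{\cG(\eP)}(\cG(X)).
\end{equation*}
On the left-hand side, using that $F_\otimes(\cB_\cG)$ is the canonical extension of $\cG$ to perfect modules (again by Corollary \ref{c:perfect}), the output is what we denote $\cG(T_\eP(X))$, namely $F_\otimes(\cB_\cG)(Cone(c_{sr})(\cY_\cF(X)))\simeq F_\otimes(\cB_\cG)(T_\eP(X))$. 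Chaining these identifications gives the desired quasi-isomorphism $T_{\cG(\eP)}(\cG(X))\simeq\cG(T_\eP(X))$ in $(\cF')^{perf}$.

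The only subtle point, and what I would treat as the main obstacle, is verifying that the morphism in \eqref{eq:conjugation2} in the proof of Lemma \ref{l:conjugationInvariant} is genuinely $c_{sr}'$ rather than some other natural transformation cohomologous to it; concretely, one must check that composing the evaluation for $(F_\otimes(\cB_{\cF_\eP}),F_{hom}(\cB_{\cF_\eP}))$ with the evaluation for $(F_\otimes(\cB_\cG),F_{hom}(\cB_\cG))$ yields the evaluation for $(F_\otimes(\cB_{\cG\circ\cF_\eP}),F_{hom}(\cB_{\cG\circ\cF_\eP}))$. This is a formal check using the explicit formulae for the adjunction units/counits in Proposition \ref{p:TensorHom} together with Lemmas \ref{l:CompoGraphBimod} and \ref{l:GeneralTenHom}, so the corollary is indeed an immediate consequence of the bimodule machinery of Section \ref{ss:bimodules} with no additional geometric input.
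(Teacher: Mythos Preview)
Your proposal is correct and follows precisely the paper's approach: the paper's proof consists of the single sentence ``It follows from Lemma \ref{l:conjugationInvariant} and Proposition \ref{p:algebraicLagTwist},'' and you have simply unpacked how these two results combine, including the reduction from $\widetilde{\cB}_\eP$ to the graph bimodule $\cB_{\cF_\eP}$ and the evaluation on Yoneda modules via Corollary \ref{c:perfect}. The ``subtle point'' you flag about the morphism in \eqref{eq:conjugation2} being genuinely $c_{sr}'$ is exactly the verification the paper leaves to the reader in the proof of Lemma \ref{l:conjugationInvariant}, so there is no gap.
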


\begin{proof}
 It follows from Lemma \ref{l:conjugationInvariant} and Proposition \ref{p:algebraicLagTwist}.
\end{proof}

\begin{proof}[Proof of Theorem \ref{t:Twist formula(alg)}]
Theorem \ref{t:Twist formula(alg)} is a consequence of Proposition \ref{p:algebraicLagTwist} and Theorem \ref{t:SphericalFun}.
\end{proof}

\appendix

\section{Orientations} 
\label{sec:orientations}

In this appendix, we will discuss the orientation of various moduli spaces appeared in this paper.  
Our goal is to prove Proposition \ref{p:CohLevelIso} when $\cchar(\K) \neq 2$.
We follow the sign convention in \cite{Seidelbook}.
Readers are referred to \cite[Section 11,12]{Seidelbook} for basic definitions, from which we follow largely in the expositions.

\subsection{Orientation operator}

A linear Lagrangian brane $\Lambda^{\#}=(\Lambda,\alpha^\#,P^\#)$
consists of 

\begin{itemize}
  \item a Lagrangian subspace $\Lambda \subset \C^n$
  \item a phase $\alpha^\# \in \R$ such that $e^{2\pi \sqrt{-1} \alpha^\#}=Det^2_{\Omega}(\Lambda)$
  \item a $Pin_n$-space $P^\#$ together with an isomorphism $P^\# \times_{Pin_n} \R^n \cong \Lambda$.
\end{itemize}

Here, $Det^2_{\Omega}$ is the square of the standard complex volume form on $\C^n$.
The $k$-fold shift $\Lambda^{\#}[k]$ of $\Lambda^{\#}$ is given by $(\Lambda,\alpha^\#-k,P^\# \otimes \lambda^{top}(\Lambda)^{\otimes k})$, where $\lambda^{top}$ is
the top exterior power.
For every pair of linear Lagrangian branes $(\Lambda_0^\#,\Lambda_1^\#)$,
one can define the index $\iota(\Lambda_0^\#,\Lambda_1^\#)$ and an orientation line (i.e. a rank one $\R$-vector space)
$o(\Lambda_0^\#,\Lambda_1^\#)$.

Now, we explain how the indices and orientation lines are related to Fredholm operators.
Let $S \in \cR^{d+1}$, and $E=S \times \C^n$ be regarded as a trivial symplectic vector bundle over $S$.
Let $F \subset E$ be a Lagrangian subbundle over $\partial S$.
For each strip-like end $\epsilon^i$, we assume $F|_{\epsilon^{i}(s,j)}$ is independent of $s$ for $j=0,1,$.
On top of that, we pick a continuous function $\alpha^\#:\partial S \to \R$ and a $Pin$-structure $P^\#$ on $F$ such that 
$e^{2\pi \sqrt{-1} \alpha^\#(x)}=Det^2_{\Omega}(F_x)$
for all $x \in \partial S$.
In this case, we get a pair of linear Lagrangian branes $(\Lambda^\#_{\xi^i,0},\Lambda^\#_{\xi^i,1})$ for each puncture $\xi^i$, where
$\Lambda^\#_{\xi^i,j}=(F|_{\epsilon^{i}(s,j)},\alpha^\#(\epsilon^{i}(s,j)), P^\#_{\epsilon^{i}(s,j)})$ for $j=0,1$.
We can associate a Fredholm operator $D_{S,F}$ to these data and we have \cite[Proposition $11.13$]{Seidelbook}
\begin{align}
 \ind(D_{S,F})=&\iota(\Lambda^\#_{\xi^0,0},\Lambda^\#_{\xi^0,1})- \sum_{i=1}^d \iota(\Lambda^\#_{\xi^i,0},\Lambda^\#_{\xi^i,1})\\
  o(\Lambda^\#_{\xi^0,0},\Lambda^\#_{\xi^0,1}) \cong&  \det(D_{S,F}) \otimes o(\Lambda^\#_{\xi^d,0},\Lambda^\#_{\xi^d,1}) \otimes \dots \otimes o(\Lambda^\#_{\xi^1,0},\Lambda^\#_{\xi^1,1}) \label{eq:operator}
\end{align}
where $\ind(D_{S,F})$ and $\det(D_{S,F})$ are the index and determinant line of the operator, respectively.

In the reverse direction, given $(\Lambda_0^\#,\Lambda_1^\#)$, one can pick $S$ to be the upper half plane $H$
and $(F,\alpha,P)$ such that the pair of linear Lagrangian branes at the puncture of $S$ is $(\Lambda_0^\#,\Lambda_1^\#)$.
In this special case, the operator $D_{H,F}$ has the property that $\ind(D_{H,F})=\iota(\Lambda_0^\#,\Lambda_1^\#)$
and $\det(D_{H,F}) \cong o(\Lambda_0^\#,\Lambda_1^\#)$.
We call $D_{H,F}$ an \textit{orientation operator} of $(\Lambda_0^\#,\Lambda_1^\#)$.

Let $\rho$ be a path of Lagrangian branes from $\Lambda_1^\#$ to $\Lambda_1^\#[1]$.
Let $S$ be the closed unit disk $D$ and $(F,\alpha^\#,P^\#)$ be given by $\rho(\theta)$ at the point $e^{2\pi \sqrt{-1} \theta} \in \partial S$.
We denote the corresponding operator by $D_{D,\rho}$ and call it a shift operator.
There are gluing theorems concerning how indices and determinant lines are related before and after gluing two operators at a puncture or a boundary point
\cite[$(11.9)$, $(11.11)$]{Seidelbook}.
In particular, we can glue an orientation operator of $(\Lambda_0^\#,\Lambda_1^\#)$ with $D_{D,\rho}$ at boundary points that both fibers are $\Lambda_1^\#$ and obtain
\begin{align}
o(\Lambda_0^\#,\Lambda_1^\#) \otimes \det(D_{D,\rho}) \cong o(\Lambda_0^\#,\Lambda_1^\#[1]) \otimes \lambda^{top}(\Lambda_1)
\end{align}
By \cite[Lemma $11.17$]{Seidelbook}, there is a canonical isomorphism $\det(D_{D,\rho}) \cong \lambda^{top}(\Lambda_1)$
so we have a canonical isomorphism
\begin{align}
 \sigma: o(\Lambda_0^\#,\Lambda_1^\#) \cong o(\Lambda_0^\#,\Lambda_1^\#[1]) \label{eq:shiftIsom}
\end{align}
Therefore, there is a canonical isomorphism between $o(\Lambda_0^\#,\Lambda_1^\#)$
and $o(\Lambda_0^\#,\Lambda_1^\#[k])$ for all $k \in \Z$.

Similarly, we can consider a path of Lagrangian branes $\tau$ from $\Lambda_0^\#[1]$ to $\Lambda_0^\#$.
We can use $S=D$ and $\tau$ to define an operator $D_{D,\tau}$ which we call a front-shift operator.
In this case, we can glue an  orientation operator of $(\Lambda_0^\#,\Lambda_1^\#)$ with $D_{D,\tau}$
at boundary points that both fibers are $\Lambda_0^\#$ and obtain
\begin{align}
o(\Lambda_0^\#,\Lambda_1^\#) \otimes \det(D_{D,\tau}) \cong o(\Lambda_0^\#[1],\Lambda_1^\#) \otimes \lambda^{top}(\Lambda_0)
\end{align}
By \cite[Lemma $11.17$]{Seidelbook}, there is a canonical isomorphism $\det(D_{D,\tau}) \cong \lambda^{top}(\Lambda_0)$
so we have a canonical isomorphism
\begin{align}
 \eta: o(\Lambda_0^\#,\Lambda_1^\#) \cong o(\Lambda_0^\#[1],\Lambda_1^\#) \label{eq:DshiftIsom}
\end{align}

\subsection{Floer differential and product}\label{ss:Ashift}

Let $L_i$, $i=0,1$, be closed Lagrangian submanifolds equipped with a grading function
$\theta_{L_i}:L_i \to \R$ (see Section \ref{ss:Grading})
and a spin structure.
We assume that $L_0 \pitchfork L_1$.
At each point $x \in L_i$, we have a Lagrangian brane $T_xL_i^\#=(T_xL_i, \theta_{L_i}(x),Pin_x)$ inside $T_xM$ where
$Pin_x$ is the $Pin_n$-space determined by the spin structure on $L_i$.
The $k$-fold shift $L_i[k]$ of $L_i$ is given by applying $k$-fold shift to $T_xL_i^\#$ for all $x \in L_i$.
For each $x \in L_0 \cap L_1$, we have a pair of Lagrangian branes $(T_xL_0^\#,T_xL_1^\#)$ inside $T_xM$.
Therefore, we have the grading $|x|:=\iota(T_xL_0^\#,T_xL_1^\#)$ and the orientation line $o(x):=o(T_xL_0^\#,T_xL_1^\#)$.
We define $|o(x)|_\K$ to be the one dimensional $\K$-vector space generated by the two orientations of $o(x)$
modulo the relation that their sum is zero.
An isomorphism $c:o(x) \to o(x')$ between two orientation lines can induces an isomorphism $|c|_\K: |o(x)|_\K \to |o(x')|_\K$.

Let $x_0,x_1 \in L_0 \cap L_1$ and $u:S=\R\times [0,1] \to M$
be a rigid element in $\eM(x_0;x_1)$.
Using the trivialization of $\Lambda^{top}_\C(M,\omega)$ together with the grading functions and spin structures on $L_i$,
we get a trivial bundle $E=u^*TM=S \times \C$ and a Lagrangian subbundle $F$ together with $(\alpha^\#,P^\#)$ over $\partial S$.
By \eqref{eq:operator}, we get a canonical isomorphism
\begin{align}
 \det(D_u) \cong o(x_0) \otimes o(x_1)^\vee
\end{align}
On the other hand, the $s$-translation $\R$-action on $u$ induces a short exact sequence
\begin{align}
 \R \to T_u\widetilde{\eM}(x_0;x_1) \to T_u\eM(x_0;x_1)
\end{align}
where $\widetilde{\eM}(x_0;x_1)$ is the moduli space of strips before modulo the $\R$-action.
Therefore, we have an identifcation of the top exterior power of $T_u\widetilde{\eM}(x_0;x_1)$
and $ T_u\eM(x_0;x_1)$, respectively.
As a result, an orientation of $\eM(x_0;x_1)$ gives an isomorphism (see \eqref{eq:operator})
\begin{align}
 c_u:o(x_1) \to o(x_0) \label{eq:diffOri}
\end{align}
Therefore, we can define the Floer cochain complex by
\begin{align}
 CF(L_0,L_1)=\oplus_{x \in L_0 \cap L_1} |o(x)|_\K
\end{align}
and the differential $\partial$ on $|o(x)|_\K$ is given by summing
\begin{align}
 \partial^{x',x}= \sum_{u \in \eM(x';x)} |c_u|_\K: |o(x)| \to |o(x')|
\end{align}
over all $x'$ such that $|x'|=|x|+1$. We have $\partial^2=0$ \cite[Section (12f)]{Seidelbook}.
Similarly, given a collection of pairwisely transversally intersecting Lagrangian branes $\{L_j\}_{j=0}^d$,
$x_j \in L_{j-1} \cap L_j$, $j=1,\dots,d$, and $x_0 \in L_0 \cap L_d$, we get an isomorphism (after an orientation of $\cR^{d+1}$ is chosen)
\begin{align}
 c_u:o(x_d) \otimes \dots \otimes o(x_1) \to o(x_0)
\end{align}
for each rigid element $u \in \eM(x_0;x_d,\dots,x_1)$, and hence a multilinear map between the relevant Floer cochain complexes.
Assuming the convention of orientations in \cite{Seidelbook}.
The actual $A_{\infty}$ structural map $\mu^d(x_d,\dots,x_1)$ is 
given by summing over all $|c_u|_\K$ with a sign twist given by $(-1)^{\dagger}$ (see \cite[Section (12g)]{Seidelbook}), where
\begin{align}
 \dagger=\sum_{k=1}^d k|x_k| \label{eq:signTwist}
\end{align}
In particular, $\mu^1(x)=(-1)^{|x|} \partial(x)$.

We are interested in how Floer differentials and $\mu^2$-products (i.e. $d=1,2$) behave under shifts \eqref{eq:shiftIsom}, \eqref{eq:DshiftIsom}.
Let $x \in L_0 \cap L_1$ be equipped with a pair of Lagrangian branes $(T_xL_0^\#,T_xL_1^\#)$.  We use $\tilde{x}$ (resp. $\bar{x}$) to denote the same intersection $x$ being
equipped with the pair of Lagrangian branes $(T_{x}L_0^\#,T_xL_1^\#[1])$ (resp. $(T_{x}L_0^\#[1],T_xL_1^\#)$).
We denote the canonical isomorphism \eqref{eq:shiftIsom} (resp. \eqref{eq:DshiftIsom}) at $x$ by $\sigma_x:o(x) \to o(\tilde{x})$
(resp. $\eta_x:o(x) \to o(\bar{x})$).
For $x_0,x_1 \in L_0 \cap L_1$ and a rigid element $u \in \eM(x_0;x_1)$, we denote $u$ by $\tilde{u}$ (resp. $\bar{u}$) when we regard it as an element in $\eM(\tilde{x}_0;\tilde{x}_1)$
(resp. $\eM(\bar{x}_0;\bar{x}_1)$).
It is explained in \cite[Section $12h$]{Seidelbook} that
\begin{align}
 \sigma_{x_0} \circ c_u = c_{\tilde{u}} \circ  \sigma_{x_1} \label{eq:DiffComShift}
\end{align}
It is instructive to recall the reasoning behind \eqref{eq:DiffComShift}.
Consider orientation operators $D_{H,x_i}$, $D_{H,\tilde{x}_i}$, the shift operators $D_{D,\rho,x_i}$ at $x_i$
and the linearized operator $D_{u}$ defining the Floer differential.  The left hand side of \eqref{eq:DiffComShift} $\sigma_{x_0} \circ c_u $ is obtained by first gluing $D_{u}$ with $D_{H,x_1}$,  then $D_{u}\# D_{H,x_1}$ with $D_{D,\rho,x_0}$; the right hand side $c_{\tilde{u}} \circ  \sigma_{x_1}$ is obtained from gluing $D_{H,x_1}$ with $D_{D,\rho,x_1}$ first, and then $D_{u}$ with $D_{H,x_1} \# D_{D,\rho,x_1}$.

Since the operators $(D_{u}\# D_{H,x_1}) \# D_{D,\rho,x_0}$ and $D_{u} \# (D_{H,x_1} \# D_{D,\rho,x_1})$
are homotopic (meaning the underlying path of Lagrangian subspace on the boundary are homotopic), the associativity of determinant line under gluing implies \eqref{eq:DiffComShift}.

Similarly, we have
\begin{align}
 \eta_{x_0} \circ c_u = c_{\bar{u}} \circ  \eta_{x_1} \label{eq:DiffComShift2}
\end{align}
so \eqref{eq:DiffComShift} and \eqref{eq:DiffComShift2} implies that
\begin{align}
   \sigma\circ\partial=\partial\circ\sigma, \hskip 1cm 
   \eta\circ\partial=\partial\circ\eta \label{e:commute2}
 \end{align} 
Now, we consider the Floer product.
Let $u \in \eM(x_0;x_2,x_1)$ where $x_0 \in L_0 \cap L_2$ and $x_j \in L_{j-1} \cap L_j$ for $j=1,2$.
We use $u'$ to denote $u$ when we regard it as an element in $\eM(x_0;\bar{x}_2,\tilde{x}_1)$.
We continue to use $D_{H,*}$ to denote an orientation operator of a Lagrangian intersection point $*$ (equipped with pair of Lagrangian branes).
The gluings of $D_{D,\rho,x_1}$ and $D_{D,\tau,x_2}$ induce the $\sigma$-operator at $x_1$ and $\eta$-operator at $x_2$, respectively.
The operator $(D_u \# D_{H,x_2}) \# D_{H,x_1}$
is homotopic to $(D_{u'} \# D_{H,\bar{x}_2}) \# D_{H,\tilde{x}_1}$,
and $D_{H,\bar{x}_2}\sim D_{H,x_2} \# D_{D,\tau,x_2}$, $D_{H,\tilde{x}_1}\sim D_{H,x_1} \# D_{D,\rho,x_1}$
are homotopies of operators.
It implies that there is an equality
\begin{align}
 c_u=(-1)^{|x_1|}c_{u'} \circ (\eta_{x_2} \otimes \sigma_{x_1}) \label{eq:ProdComShift}
\end{align}
where the sign $(-1)^{|x_1|}$ comes from \eqref{eq:DiffComShift} when moving $D_{D,\tau,x_2}$ pass $D_{H,x_1}$. 


We abuse the notation and denote the canonical isomorphism from $CF(L_0,L_1)$ to $CF(L_0,L_1[1])$ (resp. $CF(L_0[1],L_1)$) by $\sigma$ (resp. $\eta$).
Denote the operator 
\begin{align}
 (-1)^{\deg}:a\mapsto(-1)^{|a|}(a)
\end{align}
for elements of pure degree $|a|$ (and extend linearly), then $\mu^1=\partial\circ(-1)^{deg}$.
Combining \eqref{eq:signTwist}, \eqref{e:commute2}, \eqref{eq:ProdComShift} we have 
\begin{align}
 \mu^1 \circ ((-1)^{\deg} \circ \sigma)&= ((-1)^{\deg} \circ \sigma) \circ \mu^1     \label{eq:Fcomm1}\\
 \mu^1 \circ \eta&= - \eta \circ \mu^1 \label{eq:Fcomm3}\\
 \mu^2&=\mu^2 \circ  ( \eta \otimes  ((-1)^{\deg} \circ \sigma)) \label{eq:Fcomm2}
\end{align}
Note that \eqref{eq:Fcomm1} is equivalent to $\mu^1 \circ \sigma=-\sigma \circ \mu^1$
but $(-1)^{\deg} \circ \sigma$ will be used later so we prefer to write in this form.

\subsection{Matching orientations}\label{ss:FirstKind}

We use the notations in Section \ref{sec:quasi_isomorphism}.
In Section \ref{sub:matching_differentials}, we proved that there are bijective identifications between the moduli
\begin{align}
  \eM^{J^{\tau}}(\fp';\fp) &\simeq \eM^{J^{\tau}}(c_{\fp',\fq};c_{\fp,\fq}) \label{eq:biM1} \\
  \eM^{J^\tau}(x; \mathbf{q}^\vee,\mathbf{p}) &\simeq \eM^{J^\tau}(x; c_{\fp,\fq})\label{eq:biM2}\\
  \eM^{J^\tau}(\mathbf{q}'^\vee;\mathbf{q}^\vee) &\simeq \eM^{J^{\tau}}(c_{\fp,\fq'};c_{\fp,\fq}) \label{eq:biM3}
\end{align}
Let $\mu^{1,1}$, $\mu^{1,2}$ and $\mu^{1,3}$ be the terms of the differential of $CF(L_0,\tau_P(L_1))$ contributed by the moduli on the right hand side of \eqref{eq:biM1}, 
\eqref{eq:biM2} and \eqref{eq:biM3}, respectively.
In particular, we have
\begin{align}
 \mu^1=\mu^{1,1}+\mu^{1,2}+\mu^{1,3}
\end{align}
and (after modulo signs)
\begin{align}
 \iota \circ (id \otimes \mu^1) &=\mu^{1,1} \circ \iota \\
 \iota \circ \mu^2 &=\mu^{1,2} \circ \iota \\
 \iota \circ (\mu^1 \otimes id) &=\mu^{1,3} \circ \iota
\end{align}
To finish the proof of Proposition \ref{p:CohLevelIso}, it suffices to find a collection of isomorphisms
\begin{align}
 I_{\fp,\fq}: o(\fq^\vee) \otimes o(\fp) \to o(c_{\fp,\fq})
\end{align}
for all $\fq^\vee \otimes \fp \in \cX_a(C_0)$
such that 
\begin{align}
 |I|_{\K} \circ (id \otimes \mu^1) &= \mu^{1,1} \circ |I|_\K \label{eq:biMM1} \\
  |I|_{\K} \circ \mu^2 &= \mu^{1,2} \circ |I|_\K \label{eq:biMM2} \\
   |I|_{\K} \circ (\mu^1 \otimes (-1)^{\deg-1}) &= \mu^{1,3} \circ |I|_\K \label{eq:biMM3}
\end{align}
where $I=(\oplus_{\fq^\vee \otimes \fp \in \cX_a(C_0)} I_{\fp,\fq}) \oplus ( \oplus_{x \in \cX_b(C_0)} id_{o(x)})$,
and $id_{o(x)}$ is the identity morphism from $o(x)$ to $o(\iota(x))=o(x)$ for $x \in \cX_b(C_0)$.
Notice that, the sign in \eqref{eq:biMM3} (and no sign in \eqref{eq:biMM1}, \eqref{eq:biMM2}) comes from the fact that (see Section \ref{ss:EquivariantEv})
\begin{align}
 \mu^1(\fq^\vee \otimes \fp)=(-1)^{|\fp|-1} \mu^1(\fq^\vee) \otimes \fp+ \fq^\vee \otimes \mu^1(\fp)+\mu^2(\fq^\vee, \fp)
\end{align}

In this section, we give the definition of $I_{\fp,\fq}$ and check that \eqref{eq:biMM1}, \eqref{eq:biMM2}, \eqref{eq:biMM3} hold.
Since the sign computation is local in nature and it is preserved under the covering map $T^*\fU \to T^*U$,
we assume that $\eE=\fP=S^n$.

First, we consider the case that $\fq^\vee \otimes \fp \in CF(\fP,L_1) \otimes CF(L_0,\fP)$ satisfies $|\fq^\vee|=1$.
In this case, we can perform a graded Lagrangian surgery (see \cite{SeGraded} or \cite{MW15})
$\fP \#_\fq T^*_\fq \fP$,
which means that $\fP \#_\fq T^*_\fq \fP$ can be equipped with a grading function so that its restriction to $\fP \backslash \{\fq\}$
 and $T^*_\fq \fP \backslash \{\fq\}$ are the same as the grading functions on $\fP \backslash \{\fq\}$
 and on $T^*_\fq \fP \backslash \{\fq\}$, respectively.
 Moreover, all $\fP$, $T^*_\fq \fP$ and $\fP \#_\fq T^*_\fq \fP$ are spin and the (unique) spin structure on 
 $\fP \#_\fq T^*_\fq \fP$ restricts to the (unique) spin structure on $\fP \backslash \{\fq\}$ and on $T^*_\fq \fP \backslash \{\fq\}$, respectively.
 
 In this case, we have a canonical identification of $o(\fp)$, viewed as a subspace of 
 $CF(T^*_\fp \fP, \fP)$ and of $CF(T^*_\fp \fP, \fP \#_\fq T^*_\fq \fP)$, respectively.
 Moreover, $\fP \#_\fq T^*_\fq \fP$ is Hamiltonian isotopic to $\tau_\fP (T^*_\fq \fP)$, which sends $\fp$ to $c_{\fp,\fq}$,
 and the Hamiltonian interwines the brane structures (i.e. grading functions and spin structures on the Lagrangians).
 Therefore, we have an isomorphism
 \begin{align}
  \Phi_{Ham}: o(\fp)  \cong o(c_{\fp,\fq})
 \end{align}
from $o(\fp) \subset CF(T^*_\fp \fP, \fP)$ to $o(c_{\fp,\fq}) \subset CF(T^*_\fp \fP, \tau_\fP (T^*_\fq \fP))$.
Any choice of an isomorphism 
\begin{align}
 \Phi_{sur}: o(\fq^\vee) \to \R
\end{align}
will give us an isomorphism
\begin{align}
 \Phi:=\Phi_{sur} \otimes \Phi_{Ham}: o(\fq^\vee) \otimes o(\fp) \to \R \otimes o(c_{\fp,\fq}) =o(c_{\fp,\fq})
\end{align}
for every $\fq^\vee \otimes \fp$ such that $|\fq^\vee|=1$.
We assume that a choice of $\Phi_{sur}$ is made for the moment 
(the actually choice will be uniquely determined by a property used in Lemma \ref{l:biMM2}).

Now, for general $\fq^\vee \otimes \fp$, we consider the isomorphism (see Section \ref{ss:Ashift})
\begin{align}
 \phi:= \eta \otimes ((-1)^{\deg} \circ \sigma): CF(\fP,L_1) \otimes CF(L_0,\fP) \to CF(\fP[1],L_1) \otimes CF(L_0,\fP[1])
\end{align}
and we define $I_{\fp,\fq}$ by
\begin{align}
 I_{\fp,\fq}:= \Phi \circ \phi^{1-|\fq^\vee|}: o(\fq^\vee) \otimes o(\fp) \to o(c_{\fp,\fq})
\end{align}
Notice that $|\sigma^{1-|\fq^\vee|}(\fp)|=|\fp|+|\fq^\vee|-1=|c_{\fp,\fq}|$, and one should view this isomorphism as 
identifying $o(\fp)$ with $o((\sigma)^{1-|\fq^\vee|}(\fp))$ by a {\bf sign-twisted} shift followed by identifying  $o((\sigma)^{1-|\fq^\vee|}(\fp))$  and $o(c_{\fp,\fq})$
by a Hamiltonian isotopy.
Readers should be convinced from \eqref{eq:Fcomm1} that it is sensible to use the sign-twisted shift $(-1)^{\deg} \circ \sigma$.

\begin{lemma}\label{l:biMM2}
 There is a choice of $\Phi_{sur}$ such that \eqref{eq:biMM2} holds.
\end{lemma}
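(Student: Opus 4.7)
The plan is to reduce to the case $|\fq^\vee|=1$ using the shift intertwiners from Section \ref{ss:Ashift}, and then to pin down $\Phi_{sur}$ uniquely from a purely local computation near the surgery point.

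Step one (reduction): By construction $I_{\fp,\fq} = \Phi \circ \phi^{1-|\fq^\vee|}$, and \eqref{eq:Fcomm2} states that $\mu^2 \circ \phi = \mu^2$ on $CF(\fP,L_1) \otimes CF(L_0,\fP)$, so $\mu^2$ is invariant under all iterates of $\phi$. On the target side, the generator $c_{\fp,\fq}$ is acted on by a canonical shift isomorphism induced by grading-shifting $\fP$ (and hence $\tau_\fP(T^*_\fq\fP)$); using \eqref{eq:Fcomm1} and \eqref{eq:Fcomm3}, $\mu^{1,2}$ commutes with this shift up to exactly the sign twist built into $\phi$. Consequently, if \eqref{eq:biMM2} holds for all generators with $|\fq^\vee|=1$ then it holds in general. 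Thus it suffices to find a single $\Phi_{sur}: o(\fq^\vee) \to \R$ which works when $|\fq^\vee|=1$.

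Step two (local model in the case $|\fq^\vee|=1$): Here $\fP \#_\fq T^*_\fq\fP$ is a graded spin Lagrangian and $\Phi_{Ham}$ is already canonically defined via Hamiltonian isotopy to $\tau_\fP(T^*_\fq \fP)$. By Lemma \ref{l:C}, after neck-stretching any rigid triangle in $\eM^{J^\tau}(x;\fq^\vee,\fp)$ breaks into a $J^-$-triangle $u_1$ with positive asymptotes $\fp$, $\fq^\vee$ and a short Reeb chord $x_{\fq,\fp}$, glued to a $J^+$-bigon $u_2$; and a rigid bigon in $\eM^{J^\tau}(x;c_{\fp,\fq})$ breaks into a $J^-$-bigon $u_1'$ with positive asymptotes $c_{\fp,\fq}$ and $x_{\fq,\fp}$, glued to the same $u_2$. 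The Hamiltonian isotopy identifies $u_1' \leftrightarrow u_1$ as smooth maps; via the gluing axiom for determinant lines, the two resulting isomorphisms
\[
c_{u_1}: o(\fp) \otimes o(\fq^\vee) \otimes o(x_{\fq,\fp}) \to \R, \qquad c_{u_1'}: o(c_{\fp,\fq}) \otimes o(x_{\fq,\fp}) \to \R
\]
differ by multiplication by a nonzero scalar in $\Phi_{Ham}^{-1}$ applied to a single line canonically isomorphic to $o(\fq^\vee)$. This discrepancy depends only on the local surgery model at $\fq$ (compute it inside an arbitrarily small neighborhood where $L_0$ and $x$ are irrelevant), hence is independent of $\fp$, $L_0$, and $x$. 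Define $\Phi_{sur}$ to send this discrepancy to $1 \in \R$; then \eqref{eq:biMM2} holds on the nose for the case $|\fq^\vee|=1$, and Step one upgrades this to all $\fq^\vee$.

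Step three (checking sign conventions): The chief obstacle is reconciling three independent sign conventions: the Koszul twist $(-1)^{\sum_{k}k|x_k|}$ built into the Fukaya $\mu^2$ via \eqref{eq:signTwist}; the sign produced when converting a triangle with two positive asymptotes $(\fq^\vee,\fp)$ at adjacent strip-like ends into a bigon with the single positive asymptote $c_{\fp,\fq}$, which requires swapping a boundary puncture across the shift operator $D_{D,\tau}$ exactly as in the derivation of \eqref{eq:ProdComShift}; and the matching of $o(\fq^\vee)$ with the determinant line of the boundary-handle factor coming from the graded surgery $\fP \#_\fq T^*_\fq\fP$. Once each of these factors is tracked by the determinant-line associativity used throughout Section \ref{ss:Ashift}, they combine into a single scalar, and the freedom in $\Phi_{sur}$ is exactly what is needed to absorb it. The parallel arguments for \eqref{eq:biMM1} and \eqref{eq:biMM3} use the same local surgery model but with the roles of $\fp$ and $\fq^\vee$ fixed, and will follow by an entirely analogous analysis.
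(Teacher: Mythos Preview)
Your approach mirrors the paper's: reduce to $|\fq^\vee|=1$ via \eqref{eq:Fcomm2}, then use the graded-surgery interpretation to pin down $\Phi_{sur}$. Two points deserve tightening.

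In Step one, the discussion of shifting ``on the target side'' is confused and unnecessary. The target of both $\mu^2$ and $\mu^{1,2}$ lies in $\cX_b$, on which $I$ is the identity, so $|I|_\K \circ \mu^2 = \mu^2$ outright; the reduction then follows immediately from $\mu^2 = \mu^2 \circ \phi$ (i.e.\ \eqref{eq:Fcomm2}) iterated $1-|\fq^\vee|$ times, together with the base case. No shift on $o(c_{\fp,\fq})$ enters, and \eqref{eq:Fcomm1}, \eqref{eq:Fcomm3} play no role here.

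In Step two, the sentence ``this discrepancy depends only on the local surgery model at $\fq$ \dots hence is independent of $\fp$, $L_0$, and $x$'' is the crux and is underargued. The paper makes it precise by an operator homotopy: after identifying $o(\fp)$ with $o(c_{\fp,\fq})$ via $\Phi_{Ham}$, the linearized operator $D_{c_{\fp,\fq},x_{\fq,\fp}}$ for the local bigon is homotopic to $D_{\fq^\vee,\fp,x_{\fq,\fp}} \# D_{H,\fq^\vee}$, the local triangle operator glued with an orientation operator at $\fq^\vee$. This homotopy is exactly the unfolding of the graded surgery $\fP \#_\fq T^*_\fq\fP$ (available precisely when $|\fq^\vee|=1$), and it canonically identifies the discrepancy with $\det(D_{H,\fq^\vee}) = o(\fq^\vee)$, manifestly independent of $\fp$, $L_0$, $x$. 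Your localization heuristic gestures at this, but without the operator homotopy you have not established that a \emph{single} choice of $\Phi_{sur}$ works for all $\fp$ simultaneously. Once this homotopy is in hand, your Step three is superfluous: in the $|\fq^\vee|=1$ case no further sign-tracking is needed.
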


\begin{proof}
To prove \eqref{eq:biMM2}, we start with the case that $|\fq^\vee|=1$.
The bijection \eqref{eq:biM2} is obtained by the bijection  
$  \eM^{J^-}(\emptyset; \mathbf{q}^\vee,\mathbf{p}, x_{\fq,\fp}) \simeq \eM^{J^-}(\emptyset; c_{\fp,\fq},x_{\fq,\fp})$.
As before, we identify $o(c_{\fp,\fq})$ with $o(\fp)$
by the Hamiltonian isotopy defining $\Phi_{Ham}$.
In this case, the linearized operator $D_{c_{\fp,\fq},x_{\fq,\fp}}$ corresponding to the latter moduli
is homotopic to $D_{\mathbf{q}^\vee,\mathbf{p}, x_{\fq,\fp}} \# D_{H,\fq^\vee}$, where 
$D_{\mathbf{q}^\vee,\mathbf{p}, x_{\fq,\fp}}$ is the linearized operator corresponding to the former moduli
and $D_{H,\fq^\vee}$ is an orientation operator of $\fq^\vee$.
The fact that these two operators are homotopic is a reflection of the fact that we can perform a graded Lagrangian surgery 
$\fP \#_\fq T^*_\fq \fP$ compactible with the spin structures when $|\fq^\vee|=1$.
As a result, there is a choice of $\Phi_{sur}$ such that 
\begin{align}
 c_u = c_u' \circ (\Phi_{sur} \otimes \Phi_{Ham}): o(\fq^\vee) \otimes o(\fp) \to o(x)
\end{align}
where $u \in \eM^{J^\tau}(x; \mathbf{q}^\vee,\mathbf{p})$
and $u' \in \eM^{J^\tau}(x; c_{\fp,\fq})$ is the element corresponding to $u$ under the bijection \eqref{eq:biM2}.
We use such a choice of $\Phi_{sur}$ from now on.
In particular, it means that
\begin{align}
 \mu^2 = \mu^{1,2} \circ |\Phi|_\K \label{eq:FFcomm2}
\end{align}
for $\fq^\vee \otimes \fp$ such that $|\fq^\vee|=1$.
For general $\fq^\vee \otimes \fp$, we use \eqref{eq:Fcomm2} and \eqref{eq:FFcomm2} to deduce that
\begin{align}
 |I|_\K \circ \mu^2= |\Phi|_\K \circ \mu^2 \circ |\phi^{1-|\fq^\vee|}|_\K= \mu^{1,2} \circ |I|_\K
\end{align}
which is exactly the desired \eqref{eq:biMM2}.
\end{proof}

With the choice of $\Phi_{sur}$ chosen in Lemma \ref{l:biMM2}, we can now proceed and prove \eqref{eq:biMM1}, \eqref{eq:biMM3}.

\begin{lemma}\label{l:biMM1}
 The equation \eqref{eq:biMM1} holds.
\end{lemma}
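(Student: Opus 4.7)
The plan is to first handle the case $|\fq^\vee| = 1$ directly from the geometric setup of Lemma \ref{l:biMM2}, and then reduce the general case to this one via the compatibility \eqref{eq:Fcomm1} between $\mu^1$ and the sign-twisted shift $(-1)^{\deg} \circ \sigma$.

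For the base case $|\fq^\vee| = 1$: here $I_{\fp,\fq} = \Phi = \Phi_{sur} \otimes \Phi_{Ham}$ with no shifts. Given a rigid element $u \in \eM^{J^\tau}(\fp';\fp)$ corresponding under the bijection \eqref{eq:biM1} to $u' \in \eM^{J^\tau}(c_{\fp',\fq}; c_{\fp,\fq})$, both arise from the same SFT building (Lemma \ref{l:stretchingStripA}, \ref{l:stretchingStripA'}), and the top-level curves in $SM^+$ are identical while the bottom-level triangles in $SM^-$ differ only by the Hamiltonian isotopy between $\tau_\fP(T^*_\fq \fP)$ and the surgered Lagrangian $\fP \#_\fq T^*_\fq \fP$. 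Since this Hamiltonian isotopy preserves all brane data (grading and spin structure), the linearized operator $D_{u'}$ is homotopic (through Fredholm operators with Lagrangian boundary conditions) to $D_u \# D_{H,\fq^\vee}$ gluing in a copy of the orientation operator at $\fq^\vee$, exactly in the same manner used to fix $\Phi_{sur}$ in Lemma \ref{l:biMM2}. This gives $c_{u'} \circ \Phi = c_u$ after absorbing the $\Phi_{sur}$ factor, whence summing over $u$ yields $\mu^{1,1} \circ |\Phi|_\K = |\Phi|_\K \circ (id \otimes \mu^1)$ on generators with $|\fq^\vee|=1$.

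For general $\fq^\vee$, the definition $I_{\fp,\fq} = \Phi \circ \phi^{1-|\fq^\vee|}$ with $\phi = \eta \otimes ((-1)^{\deg} \circ \sigma)$ reduces to the previous case after applying the shift $\phi^{1-|\fq^\vee|}$, which lands us in a situation where the shifted $\fq^\vee$ has degree one. The calculation is then
\begin{align*}
 |I_{\fp',\fq}|_\K \circ (id \otimes \mu^1)
 &= |\Phi|_\K \circ |\phi^{1-|\fq^\vee|}|_\K \circ (id \otimes \mu^1) \\
 &= |\Phi|_\K \circ (id \otimes \mu^1) \circ |\phi^{1-|\fq^\vee|}|_\K \\
 &= \mu^{1,1} \circ |\Phi|_\K \circ |\phi^{1-|\fq^\vee|}|_\K \\
 &= \mu^{1,1} \circ |I_{\fp,\fq}|_\K,
\end{align*}
where the second equality uses that $\eta^{1-|\fq^\vee|}$ acts only on the first tensor factor (hence commutes with $id \otimes \mu^1$) while $((-1)^{\deg} \circ \sigma)^{1-|\fq^\vee|}$ commutes with $\mu^1$ on $CF(L_0,\fP)$ by \eqref{eq:Fcomm1}, and the third equality is the base case applied to the shifted branes.

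The main conceptual point to verify carefully will be the base case: that the homotopy of linearized operators $D_{u'} \sim D_u \# D_{H,\fq^\vee}$ is induced by precisely the same gluing/Hamiltonian identification that was used to normalize $\Phi_{sur}$ in the proof of Lemma \ref{l:biMM2}. Once this compatibility is confirmed (it follows from the associativity of determinant line gluing together with the fact that both $\mu^{1,2}$ and $\mu^{1,1}$ are detected at the same $|\fq^\vee|=1$ ``surgery boundary'' of moduli), the rest is a formal manipulation of shifts.
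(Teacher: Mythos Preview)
Your overall structure---establish the base case $|\fq^\vee|=1$ and then reduce via the sign-twisted shift using \eqref{eq:Fcomm1}---matches the paper, and your general-case reduction is correct. The base-case mechanism, however, is wrong. You claim $D_{u'} \sim D_u \# D_{H,\fq^\vee}$, importing the gluing from Lemma~\ref{l:biMM2}, but here both $u \in \eM^{J^\tau}(\fp';\fp)$ and $u' \in \eM^{J^\tau}(c_{\fp',\fq};c_{\fp,\fq})$ are bigons, and their bottom-level SFT pieces are both triangles $\eM^{J^-}(\fp';\fp,x_{\fp',\fp})$ and $\eM^{J^-}(c_{\fp',\fq};c_{\fp,\fq},x_{\fp',\fp})$ with the same number of punctures: there is no $\fq^\vee$-asymptote on $u$ to cap off, and gluing in $D_{H,\fq^\vee}$ would shift the index by $|\fq^\vee|=1$, giving a mismatch. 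The situation in Lemma~\ref{l:biMM2} was different precisely because there $u$ was a triangle with $\fq^\vee$ as an asymptote while $u'$ was a bigon. (Your formula ``$c_{u'}\circ\Phi=c_u$'' also does not type-check; the maps have different targets.)

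The correct argument is simpler: the two bottom-level triangles differ only in one Lagrangian boundary label, namely $\fP$ versus $\tau_\fP(T^*_\fq\fP)$, and these are identified by the Hamiltonian isotopy through $\fP \#_\fq T^*_\fq\fP$ that defines $\Phi_{Ham}$. Hence the linearized operators are directly homotopic, yielding $\Phi_{Ham} \circ c_u = c_{u'} \circ \Phi_{Ham}$. Since $|\fp|=|c_{\fp,\fq}|$, the sign twist \eqref{eq:signTwist} is the same on both sides, so $|\Phi_{Ham}|_\K \circ \mu^1 = \mu^{1,1} \circ |\Phi_{Ham}|_\K$. Tensoring with $\Phi_{sur}$, which acts only on the $o(\fq^\vee)$ factor and therefore commutes with $id \otimes \mu^1$, gives the base case \emph{independently of how $\Phi_{sur}$ was chosen}. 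Your worry in the last paragraph about compatibility with the normalization of $\Phi_{sur}$ is therefore misplaced; that normalization is relevant only to \eqref{eq:biMM2}.
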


\begin{proof}
To show \eqref{eq:biMM1}, we again first consider $\fq^\vee \otimes \fp$ such that $|\fq^\vee|=1$.
Let $\fp' \in L_0 \cap \fP$ such that $|\fp'|=|\fp|+1$.
The bijection \eqref{eq:biM1} is obtained from the bijection
$\eM^{J^{-}}(\fp';\fp,x_{\fp',\fp}) \simeq \eM^{J^{-}}(c_{\fp',\fq};c_{\fp,\fq},x_{\fp',\fp})$.
By the Hamiltonian isotopy defining $\Phi_{Ham}$, we see that the linearized operator corresponding to the former moduli is homotopic to 
the linearized operator corresponding to the latter moduli.
It implies that
\begin{align}
 \Phi_{Ham} \circ c_u =c_{u'} \circ \Phi_{Ham}: o(\fp) \to o(c_{\fp',\fq})
\end{align}
where $u \in \eM^{J^{\tau}}(\fp';\fp)$ and $u' \in \eM^{J^{\tau}}(c_{\fp',\fq};c_{\fp,\fq})$ is the element corresponding to $u$ under the bijection \eqref{eq:biM1}.
It implies that (note that $|\fp|=|c_{\fp,\fq}|$ and $\mu^1(a)=(-1)^{|a|} \partial(a)$, see \eqref{eq:signTwist})
\begin{align}
 |\Phi_{Ham}|_\K \circ \mu^1 = \mu^{1,1} \circ |\Phi_{Ham}|_\K 
\end{align}
for $\fq^\vee \otimes \fp$ such that $|\fq^\vee|=1$. It also means that, 
no matter what isomoprhism we choose for $\Phi_{sur}$, we have
\begin{align}
 |\Phi|_\K \circ (id \otimes \mu^1) = \mu^{1,1} \circ |\Phi|_\K \label{eq:FFcomm1}
\end{align}
For general $\fq^\vee \otimes \fp$, we use \eqref{eq:Fcomm1} and \eqref{eq:FFcomm1} to deduce that
\begin{align}
 |I|_\K \circ (id \otimes \mu^1)&=|\Phi_{sur} \circ \eta^{1-|\fq^\vee|}|_\K \otimes |\Phi_{Ham} \circ ((-1)^{\deg} \circ \sigma)^{1-|\fq^\vee|}|_\K \circ \mu^1   \\
 &=|\Phi_{sur} \circ \eta^{1-|\fq^\vee|}|_\K \otimes  (\mu^{1,1} \circ |\Phi_{Ham} \circ ((-1)^{\deg} \circ \sigma)^{1-|\fq^\vee|}|_\K) \\
 &= \mu^{1,1} \otimes |I|_\K
\end{align}
which is exactly the desired \eqref{eq:biMM1}.
\end{proof}




\begin{lemma}\label{l:biMM3}
 The equation \eqref{eq:biMM3} holds.
\end{lemma}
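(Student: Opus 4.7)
The plan is to mirror the two-step strategy used for Lemmas \ref{l:biMM1} and \ref{l:biMM2}: first establish \eqref{eq:biMM3} at a distinguished base grading, then propagate to arbitrary degrees via the shift isomorphisms $\sigma$ and $\eta$. The natural base case is $|\fq^\vee|=1$, for which $I_{\fp,\fq}=\Phi=\Phi_{sur}\otimes\Phi_{Ham}$ has no residual $\phi$-shift. In that case, the bijection \eqref{eq:biM3} is realized at the SFT level (Lemma \ref{l:A2}) by the identification $\eM^{J^-}(\fq'^\vee;x_{\fq,\fq'},\fq^\vee)\simeq\eM^{J^-}(c_{\fp,\fq'};x_{\fq,\fq'},c_{\fp,\fq})$ of bottom-level triangles, glued to the common top-level strip in $\eM^{J^+}(x_{\fq,\fq'};\emptyset)$. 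I would argue, exactly as in the proofs of Lemmas \ref{l:biMM1} and \ref{l:biMM2}, that the Hamiltonian isotopy underlying $\Phi_{Ham}$ provides a homotopy between the corresponding linearized Fredholm operators, after extending the $\fP$-side operator by an orientation operator along the extra $T^*_\fp\fP$-boundary. Combined with the normalization of $\Phi_{sur}$ already pinned down in Lemma \ref{l:biMM2}, this yields the base-case identity for $|\fq^\vee|=1$; because $|(\fq')^\vee|=2$, the identity takes the form $\Phi\circ\phi^{-1}\circ(\mu^1\otimes(-1)^{\deg-1})=\mu^{1,3}\circ\Phi$, with a leftover $\phi^{-1}$ built into $I_{\fp,\fq'}$.

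For general $|\fq^\vee|=k$, I would substitute $I_{\fp,\fq}=\Phi\circ\phi^{1-k}$ and $I_{\fp,\fq'}=\Phi\circ\phi^{-k}$ into \eqref{eq:biMM3} and reduce to the base case by sliding $\phi^{1-k}$ past $\mu^1\otimes(-1)^{\deg-1}$. The crucial algebraic identity is
\[
\phi\circ\bigl(\mu^1\otimes(-1)^{\deg-1}\bigr)\;=\;-\bigl(\mu^1\otimes(-1)^{\deg-1}\bigr)\circ\phi,
\]
which follows from \eqref{eq:Fcomm3} ($\mu^1\circ\eta=-\eta\circ\mu^1$) together with a Koszul sign computation on the tensor factors, using the operator degrees $|\eta|=+1$, $|(-1)^{\deg}\sigma|=-1$, $|\mu^1|=+1$, and $|(-1)^{\deg-1}|=0$. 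Iterating this anti-commutation $|1-k|$ times brings the general case into the form of the base case applied to the normalized element $\phi^{1-k}(\fq^\vee\otimes\fp)$, whose first factor has degree $1$.

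The main obstacle will be the sign bookkeeping. Both the base-case homotopy-of-operators argument and the bootstrap anti-commutation produce various $\pm 1$ signs, and one has to verify that these reconcile cleanly. I expect that the specific twist $(-1)^{\deg-1}$ placed on the $\fp$-factor in \eqref{eq:biMM3}—which originates, via Section \ref{ss:EquivariantEv}, in the Koszul sign $(-1)^{|\fp|-1}$ in the cone differential—is precisely the twist that makes these signs cancel against the leftover $\phi^{-1}$ in the base case, in parallel with how the twist $(-1)^{\deg}\sigma$ inside $\phi$ was engineered in Section \ref{ss:FirstKind} to absorb the sign in \eqref{eq:Fcomm1}. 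Verifying this cancellation carefully, including the correct treatment of Convention \ref{c:DualGenNotation} for the dual generators $\fq^\vee,\,(\fq')^\vee$ and the interaction of $\eta$ with the Maslov grading at those generators, is the only nontrivial content of the proof.
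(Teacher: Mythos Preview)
Your bootstrap strategy via $\phi$ is sound (a minor correction: $\phi$ actually \emph{commutes} with $\mu^1\otimes(-1)^{\deg-1}$, not anticommutes---the extra $(-1)$ from $\eta\circ\mu^1=-\mu^1\circ\eta$ is cancelled by the shift in the second-factor degree when $(-1)^{\deg-1}$ passes through $(-1)^{\deg}\sigma$---but commutation is all you need). The gap is in your base case.

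Your claim that the $|\fq^\vee|=1$ case follows ``exactly as in Lemmas \ref{l:biMM1} and \ref{l:biMM2}'' by a homotopy of linearized operators does not hold up. In those two lemmas the bottom-level curves being compared differ along a \emph{single} boundary arc---the one labelled $\fP$ versus $\tau_\fP(T^*_\fq\fP)\sim\fP\#_\fq T^*_\fq\fP$---and the isotopy $\Phi_{Ham}$ (plus, for \ref{l:biMM2}, gluing $D_{H,\fq^\vee}$) furnishes the homotopy directly. For \eqref{eq:biM3} the two bottom-level triangles carry boundary labels $(\fP,\,T^*_\fq\fP,\,T^*_{\fq'}\fP)$ and $(T^*_\fp\fP,\,\tau_\fP(T^*_\fq\fP),\,\tau_\fP(T^*_{\fq'}\fP))$: all three arcs differ, and two distinct surgeries (at $\fq$ and at $\fq'$) together with a replacement $\fP\rightsquigarrow T^*_\fp\fP$ are in play. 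Your phrase ``extending the $\fP$-side operator by an orientation operator along the extra $T^*_\fp\fP$-boundary'' does not locate where such an operator would be glued---the $\fq^\vee$-triangle has no puncture involving $\fp$---and the normalizations of $\Phi_{sur}$ at $\fq$ and at $\fq'$ (each fixed separately via Lemma \ref{l:biMM2}) would both enter. This is precisely why the paper abandons the direct geometric comparison here.

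The paper's proof instead runs as follows. From the moduli bijection one only knows that $|\Phi|_\K\circ(\mu^1\otimes id)|_{V_{1,a}}=(-1)^{f(a)}\mu^{1,3}\circ|\Phi|_\K|_{V_{1,a}}$ for some universal sign $f(a)\in\{0,1\}$ depending just on the local index data. After bootstrapping via the anticommutation $\phi\circ(\mu^1\otimes id)=-(\mu^1\otimes id)\circ\phi$, the unknown $f$ is determined \emph{algebraically}: applying $|I|_\K$ to the cone-side $A_\infty$ relation $\mu^1\mu^2+\mu^2(id\otimes\mu^1)+\mu^2(\mu^1\otimes(-1)^{\deg-1})=0$ and comparing with $\mu^1\mu^{1,2}+\mu^{1,2}\mu^{1,1}+\mu^{1,2}\mu^{1,3}=0$ (the square-zero relation on $CF(L_0,\tau_P(L_1))$), the already-proven \eqref{eq:biMM1} and \eqref{eq:biMM2} cancel two pairs of terms and force $f(a)\equiv a-1\pmod 2$, which is exactly \eqref{eq:biMM3}. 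The key idea you are missing is that the sign need not be computed geometrically at all---it is pinned down by consistency with the two identities already in hand.
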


\begin{proof}
To prove \eqref{eq:biMM3}, we appeal to an algebraic argument instead of identifying the moduli directly.
Let $V_{m,n}$ be the subspace of $CF(\fP,L_1) \otimes CF(L_0,\fP)$
generated by $o(\fq^\vee) \otimes o(\fp)$ such that $|\fq^\vee|=m$ and $|\fp|=n$.
The bijection \eqref{eq:biM3} comes from the bijection
$ \eM^{J^-}(\mathbf{q}'^\vee;x_{\fq,\fq'},\mathbf{q}^\vee) \simeq \eM^{J^{-}}(c_{\fp,\fq'};x_{\fq,\fq'},c_{\fp,\fq})$.
Therefore, for each $a \in \Z$, there is $f(a) \in \{0,1\}$
such that
\begin{align}
 |\Phi|_\K \circ (\mu^1 \otimes id)|_{V_{1,a}} =(-1)^{f(a)} \mu^{1,3} \circ |\Phi|_\K |_{V_{1,a}} \label{eq:FFcomm3}
\end{align}
We remark that the existence of $f$ follows from the fact that the sign only depends on $|\fp|$ and $|\fq^\vee|$ 
(because once $|\fp|$ and $|\fq^\vee|$ are determined, the local model computing the sign is determined).

By \eqref{eq:Fcomm3}, we have $\phi \circ (\mu^1 \otimes id)=-(\mu^1 \otimes id) \circ \phi$ so we get
\begin{align}
 (-1)^{1-k} |\Phi \circ \phi^{1-k}|_\K \circ (\mu^1 \otimes id)|_{V_{k,a+1-k}}=(-1)^{f(a)}\mu^{1,3} \circ |\Phi \circ \phi^{1-k}|_\K |_{V_{k,a+1-k}}
\end{align}
by precomposing \eqref{eq:FFcomm3} by $|\phi^{1-k}|_\K$.
By relabelling the subscripts, we have
\begin{align}
 |I|_\K \circ (\mu^1 \otimes id)|_{V_{m,n}}= (-1)^{f(m+n-1)+1-m} \mu^{1,3} \circ |I|_\K \label{eq:FFFcomm3}
\end{align}

The $A_{\infty}$-relations on $CF(\fP,L_1) \otimes CF(L_0,\fP)$ give
\begin{align}
 \mu^1 \circ \mu^2 + \mu^2 \circ (id \otimes \mu^1)+\mu^2 \circ (\mu^1 \otimes (-1)^{\deg-1})=0 \label{eq:AlgTrick1}
\end{align}
On the other hand, $CF(L_0, \tau_P(L_1))$ is a cochain complex so by considering the square of differential with input in $\cX_a(C_1)$
and output in $\cX_b(C_1)$, we get
\begin{align}
 \mu^1 \circ \mu^{1,2} + \mu^{1,2} \circ \mu^{1,1} +\mu^{1,2} \circ \mu^{1,3}=0 \label{eq:AlgTrick2}
\end{align}
Since we have already proved \eqref{eq:biMM1} and \eqref{eq:biMM2}, when we apply $|I|_\K$ to the left of \eqref{eq:AlgTrick1}
and on the right of \eqref{eq:AlgTrick2}, we get (after cancellation)
\begin{align}
 \mu^{1,2} \circ |I|_\K \circ (\mu^1 \otimes  (-1)^{\deg-1})=\mu^{1,2} \circ \mu^{1,3} \circ |I|_\K 
\end{align}
Applying it to $V_{m,n}$ and plugging in \eqref{eq:FFFcomm3}, we have
\begin{align}
 (-1)^{(f(m+n-1)+1-m)+(n-1)}\mu^{1,2} \circ \mu^{1,3} \circ |I|_\K =\mu^{1,2} \circ \mu^{1,3} \circ |I|_\K 
\end{align}
When $\mu^{1,2} \circ \mu^{1,3} \circ |I|_\K \neq 0$, it is possible only when $(f(m+n-1)+1-m)+(n-1)$ is even.
In particular, we have $f(a)=a-1$ modulo $2$.
Put it back to \eqref{eq:FFFcomm3}, we get \eqref{eq:biMM3}.
\end{proof}

\begin{proof}[Proof of Proposition \ref{p:CohLevelIso}]
 It follows from Lemma \ref{l:biMM1}, \ref{l:biMM2} and \ref{l:biMM3}.
\end{proof}

\end{document}